\DeclareMathOperator{\Z}{\mathbb{Z}}
\DeclareMathOperator{\C}{\mathbb{C}}
\DeclareMathOperator{\N}{\mathbb{N}}
\DeclareMathOperator{\W}{\mathbb{W}}
\DeclareMathOperator{\F}{\mathbb{F}}
\newcommand{\suchthat}{:}
\newcommand*{\eqcomment}[2][\qquad]{#1 &&\text{[#2]}}
\newcommand*{\eqcommentnear}[1]{&&\text{[#1]}}
\newcommand{\perms}[1]{\mathfrak{S}\left(#1\right)}
\newcommand{\sgn}[1]{\mathrm{sgn}\left(#1\right)}
\newcommand{\ideal}[1]{\left\langle#1\right\rangle}
\newcommand{\bigslant}[2]{{\raisebox{.2em}{$#1$}\left/\raisebox{-.2em}{$#2$}\right.}}
\newcommand{\bitab}[2]{[#1\;|\;#2]}
\newcommand{\bigbitab}[2]{\left[\,\left. #1\;\right|\;#2\,\right]}
\newcommand{\bidet}[2]{(#1\;|\;#2)}
\newcommand{\bigbidet}[2]{\left(\,\left. #1\;\right|\;#2\,\right)}
\newcommand{\stdize}[1]{^{(s)}(#1)}
\newcommand{\ltfrac}[2]{\frac{\,#1\,}{\,#2\,}{}_{{\scriptscriptstyle lt}}}
\newcommand{\divides}{\mid}
\newcommand{\ltdivides}{\mid_{lt}}
\newcommand{\gendivides}{\mid_{gen}}
\DeclareMathOperator{\Ann}{Ann}
\newcommand{\gb}{Gr\"{o}bner }
\newcommand{\lt}{\text{LT}}
\newcommand{\lm}{\text{LM}}
\newcommand{\lmsm}{\text{LM}_{sm}}
\newcommand{\tail}{\text{tail}_{sm}}
\newcommand{\lcsm}{\text{LC}_{sm}}
\newcommand{\ltsm}{\text{LT}_{sm}}
\newcommand{\lcmlt}{LCM_{sm}^{lt}}
\newcommand{\lismlt}{LI_{sm}^{lt}}
\newcommand{\Ssm}{S_{sm}}
\newcommand{\lism}{LI_{sm}}
\newcommand{\piltd}{\pi_{lt}}
\newcommand{\Syz}{\text{Syz}}
\newcommand{\LAM}{\ensuremath{\text{LAM}}}
\newcommand{\LAB}{\ensuremath{\text{LAB}}}
\newcommand{\veq}{\rotatebox[origin=c]{-90}{$=$}}
\newcommand{\vlt}{\rotatebox[origin=c]{-90}{$>$}}
\newcommand{\shapebitab}{sh}
\newcommand{\shapetab}[1]{sh(#1)}
\newcommand{\sizetab}[1]{|#1|}
\theoremstyle{definition}
\numberwithin{equation}{section}
\let\oldtocsection=\tocsection
\let\oldtocsubsection=\tocsubsection
\let\oldtocsubsubsection=\tocsubsubsection
\renewcommand{\tocsection}[2]{\hspace{0em}\oldtocsection{#1}{#2}}
\renewcommand{\tocsubsection}[2]{\hspace{1em}\oldtocsubsection{#1}{#2}}
\renewcommand{\tocsubsubsection}[2]{\hspace{2em}\oldtocsubsubsection{#1}{#2}}
\title[Gröbner bases native to certain commutative algebras, with applications]{Gröbner bases native to term-ordered commutative algebras, with application to the Hodge algebra of minors}
\author{Joshua A. Grochow}
\address{(Grochow) University of Colorado Boulder, Boulder, CO 80309, U.S.A.}
\email{jgrochow@colorado.edu}
\author{Abhiram Natarajan}
\address{(Natarajan) University of Warwick, Coventry CV4 7HP, U.K.}
\email{abhiram.natarajan@warwick.ac.uk}
\begin{document}

\begin{abstract}
Motivated by better understanding the bideterminant (=product of minors) basis on the polynomial ring in $n \times m$ variables, we develop theory \& algorithms for Gröbner bases in not only algebras with straightening law (ASLs or Hodge algebras), but in any commutative algebra over a field that comes equipped with a notion of ``monomial'' (generalizing the standard monomials of ASLs) and a suitable term order. Rather than treating such an algebra $A$ as a quotient of a polynomial ring and then ``lifting'' ideals from $A$ to ideals in the polynomial ring, the theory we develop is entirely ``native'' to $A$ and its given notion of monomial.

When applied to the case of bideterminants, this enables us to package several standard results on bideterminants in a clean way that enables new results. In particular, once the theory is set up, it lets us give an almost-trivial proof of a universal Gröbner basis (in our sense) for the ideal of $t$-minors for any $t$. We note that here it was crucial that theory be native to $A$ and its given monomial structure, as in the standard monomial structure given by bideterminants each $t$-minor is a single variable rather than a sum of $t!$ many terms (in the ``ordinary monomial'' structure).
\end{abstract}

\maketitle

\setcounter{tocdepth}{3}

\tableofcontents


\section{Introduction}
Gröbner bases in polynomial rings both provide the core of most algorithms in such rings, and are also useful conceptually. Yet there are many rings in which one would like a theory of Gröbner bases---both for computation and for theoretical investigation---which go far beyond commutative polynomial rings. Indeed, Shirshov originally developed the theory in the context of Lie algebras \cite{shirshov}.\footnote{For more on the history see the surveys \cite{BFS, BokutChen}.} Gröbner--Shirshov basis theories \& algorithms have been developed in group algebras \cite{greenGroupBook} and monoid algebras \cite{reinert,moraMonoid} (and references therein), basic algebras \cite{FGKK, FFG}, Weyl algebras \cite{saito1998grobner}, more general solvable algebras (where elements commute modulo lower-order terms) \cite{KRW}, and algebras with multiplicative basis (where the product of basis elements is another basis element or 0) \cite{green}, among others.\footnote{In addition to the already cited references, see, e.g., the appendix to Becker \& Weispfenning \cite{BW} for a brief survey of others.} In this paper, we pursue Gröbner bases in arbitrary finitely generated commutative algebras over a field,
 and we develop such a theory whenever the algebra admits a suitable term order.

Of course, over a field $\F$, any finitely generated commutative $\F$-algebra $A$ is a quotient of a polynomial ring $\F[X_1, \ldots, X_n]$ by some ideal $J$, so one could simply lift from $A$ to $\F[X_1, \ldots, X_n]$, and lift an ideal $I \subseteq A$ to $I + J \subseteq \F[X_1, \ldots, X_n]$, and use ordinary Gröbner bases in polynomial rings. A particularly nice version of this approach is via so-called ``relative Gröbner bases'' \cite{spear,zacharias,siebert,LaScalaStillman,moraZacharias,HOS},\footnote{Not to be confused with another concept by the same name \cite{zhouWinkler}, which is about ordinary Gröbner bases relative to two simultaneous term orders.} which we discuss more in the 
`Related Work' section (Section \ref{sec:related}) below. However, there are many situations where one would want a theory of Gröbner bases that is ``native'' to $A$. In other words, one might want to compute and use \gb bases of ideals in $A$ without any reference to the ideal $J$. In addition to being interesting on its own, computationally this could be because there is a more efficient way of multiplying in $A$ directly, rather than lifting to $\F[X_1, \ldots, X_n]$ and performing multiplication modulo $J$ using standard Gröbner bases. Structurally, it could be because $A$ has additional properties that can be taken advantage of, for example, $A$ and/or $I$ could have symmetries that get obscured when lifted to $\F[X_1, \ldots, X_n]$.

\subsection{Motivation} 

This work arose out of the authors' attempts to compute by hand the algebraic de Rham cohomology of certain determinantal and tensor rank varieties using the algorithms of Oaku \& Takayama \cite{oakutakayama-derhamalgo-1999, oakutakayama-derham-2001}. Computing the cohomology led us to aim for \gb bases of ideals in the Weyl algebra. The Weyl algebra is a non-commutative algebra for which a theory of Gröbner bases is known \cite{saito1998grobner}. In our experience however, we found that using these algorithms to even compute by hand the cohomology of the $3 \times 3$ determinant hypersurface was intractable on account of the \gb basis computations. Indeed, while our ideals of interest carried symmetries akin to those in the ideals defining determinantal varieties---and involved a lot of minors---the \gb basis computations did not make use of this structure and were instead naively dealing with the generators of these ideals as just linear combinations of monomials in the Weyl algebra. Thus we were compelled to develop a theory of \gb bases which \emph{cooperates} with this determinantal structure (see also comments in Section \ref{sec:conclusion-weyl}). Applying our theory towards computing algebraic de Rham cohomology of the aforementioned varieties is the subject of a planned follow-up to this paper.

Specifically, one of our motivations thus became to develop a theory of \gb bases in the algebra of bideterminants (products of minors). Motivated by the role of minors in constructing polynomial invariants, bideterminants were studied and their properties were established in \cite{doubilet1974foundations, deconciniprocesi1976characteristic, DEPYoung, desarmenien1982invariant} (see also the ``Standard Monomial Theory'', e.\,g., \cite{SMT1, SMT2}). More generally, bideterminants are known to give the structure of an Algebra with Straightening Law (ASL, a.k.a. Hodge algebra) to the coordinate ring of $n \times m$ matrices $\F[X] = \F[X_{i,j} : i\in \{1,\dotsc,n\}, j \in \{1,\dotsc,m\}]$. The general framework of ASLs were introduced in \cite{deconcini1987hodge} (see also \cite{eisenbud-intro-1980}) as a generalization of several well-studied concrete examples of finitely generated algebras in which a product of monomials in the generators get rewritten or ``straightened'' according to a system of equations satisfying some nice properties. ASLs include Grassmannians, flag varieties, Schubert varieties, determinantal and Pfaffian varieties, varieties of minimal degree, and varieties of complexes.

One of the advantages of an ASL structure, already highlighted in the original papers \cite{deconcini1987hodge, eisenbud-intro-1980}, is that it facilitates computation with ideals whose generators are well-adapted to the generators of the ASL. For example, the bideterminant ASL structure on $\F[X]$ has as its generators the minors of the variable matrix $X$; ideals generated by various subsets of minors, which typically could take exponentially many monomials to write down in the usual monomial basis, instead become generated by single variables or monomials in the bideterminant basis! The price paid is that multiplication of monomials in such generators sometimes need to be rewritten to bring them back to a normal form. 

In developing such a theory, we then realized that in fact our theory applied beyond ASLs; all we really needed was a suitable notion of monomial and term order.

In the case of bideterminants, we indeed take advantage of the fact that in the bideterminant standard monomial basis, each minor is a single variable, from which it becomes almost trivial (once our theory of Gröbner bases is set up) to see that the minors of size $\geq t$ form a universal Gröbner basis. By contrast, in the case of ordinary monomials, universal Gröbner bases are only known for maximal minors \cite{sturmfels-maximal-1993,bernstein1993combinatorics} and minimal minors (i.e., size 2) \cite{sturmfels1996grobner}.

As another example of why one might want a theory of Gröbner bases native to a (pseudo-)ASL, in the specific case of bideterminants, from a computational perspective, more efficient algorithms are known for multiplying bideterminants \cite{ShaoLi, desarmenien1980algorithm, white} than one would get either by lifting to the ring generated by all minors (a polynomial ring in $2^n \times 2^m$ variables) and multiplying modulo the ideal of relations among minors, or by writing minors out as polynomials in the $nm$ variables $X_{i,j}$ (where a $k$-minor has $k!$ terms). Finally, from the perspective of symmetries, bideterminants provide a way of organizing and exhibiting symmetric structures in several contexts. For instance, bideterminants correspond to pairs of tableaux which in turn reflect symmetries coming from the action (representation theory) of $\mathrm{GL}_n$ \cite{DEPYoung}.

\subsection{Overview of our work}
In this paper, we develop a theory of \gb bases `native' to finitely generated commutative $\F$-algebras\footnote{We expect our results should readily generalize to coefficients being a commutative principal ideal domain such as $\Z$ or principal ideal ring such as $\bigslant{\Z}{p^k \Z}$.} with a choice of ``monomials'' and a choice of term order. The choice of monomials is encapsulated in what we call a pseudo-ASL structure (Definition \ref{defn:pseudo-ASL}): a choice of generating set and a particularly nice choice of products of those generators that form an $\F$-linear basis of the algebra, which then are called the standard monomials of the pseudo-ASL.\footnote{While these \emph{can} be the standard monomials relative to an ordinary Gröbner basis of the defining ideal of the algebra, our theory is more general than that particular construction.} (We call them pseudo-ASLs because they generalize ASLs, but in fact every finitely generated commutative $\F$-algebra admits a pseudo-ASL structure.)

To explain one of the key difficulties in developing such a theory, we must say a little more about pseudo-ASLs. In a pseudo-ASL $A=\bigslant{\F[X_1, \dotsc, X_n]}{S}$, because of the structure of $S$, products of standard monomials need not be standard monomials, but rather will be linear combinations of standard monomials. In attempting to define Gröbner bases in a pseudo-ASL, this issue complicates what it means for one standard monomial to divide another. 

Our resolution of this complication results in additional choices that offer tradeoffs in terms of both structure and algorithms. The high-level idea is that, in addition to a choice of term order (as in any theory of Gröbner bases), we also have a choice of ``algebra of leading terms'', which precisely governs which ``division-like'' relations among standard monomials we can use to cancel leading terms when it comes to Gröbner basis calculations. There are two natural possibilities that come to mind: (1) ordinary divisibility, that is, say that a standard monomial $a$ divides another standard monomial $b$ if there is a third standard monomial $c$ such that $ac=b$; or (2) say that $a$ divides $b$ if there is a $c$ such that the \emph{leading term} (according to a chosen term order) of $ac$ is precisely $b$. Each of these possibilities corresponds to a different choice of algebra of leading terms, and by reifying this choice in terms of an algebra, additional possibilities beyond those two offer various tradeoffs that we highlight throughout the paper.

In early iterations of this work, before developing the notion of algebra of leading terms, we had in fact given two separate and more difficult proofs for many results in both settings (1) and (2) separately. Working with the algebra of leading terms instead lets us follow Emmy Noether's path, simplifying, unifying, and streamlining those proofs across those two possibilities and more, essentially relying on the Noetherianity of the algebra of leading terms.

With this setup, we define the notion of pseudo-ASL \gb bases, and prove the following:
\begin{itemize}
\item existence of finite pseudo-ASL Gröbner bases (Theorem~\ref{thm:asl-gb-existence}),
\item uniqueness of reduced pseudo-ASL Gröbner bases (Theorem~\ref{thm:reduced}),
\item existence of finite universal pseudo-ASL \gb bases (Theorem~\ref{thm:universal}),
\item pseudo-ASL \gb bases for syzygies when $A$ is a domain (pseudo-ASL analogue of Schreyer's Theorem, Theorem~\ref{thm:schreyer}),
\item algorithms that halt in finite time for computing pseudo-ASL Gröbner bases (Corollary \ref{cor:AgenGB} and Theorem \ref{thm:asl-gb-alt}), and
\item calculating Krull dimension in pseudo-ASLs using pseudo-ASL \gb bases (Section~\ref{sec:dimension}).
\end{itemize}

Finally, we apply this theory to the case of the bideterminant ASL structure mentioned above. 
From a theoretical perspective, we show in Theorem~\ref{thm:t-minors-bdgb} that by developing a theory of Gröbner bases natively in the bideterminant ASL, one finds a very natural universal Gröbner basis (in our theory) for the ideal of $t$-minors for any $t$; in contrast, for classical Gröbner bases in the polynomial ring $\F[X]$, universal Gröbner bases are known only in the cases of maximal minors \cite{sturmfels-maximal-1993,bernstein1993combinatorics} and minimal minors (i.e., size 2) \cite{sturmfels1996grobner}. Moreover, the existing proofs in these cases are quite involved, whereas once our framework is in place, the corresponding proof becomes remarkably short.

\subsection{Comparison to related work} \label{sec:related}

In this section we gather the most relevant related works we are aware of, and provide a comparison with our work. 

\textbf{Related work on Gröbner bases in general.} 
There are several works on Gröbner bases in quotients of polynomial rings, e.g. \cite{spear,zacharias,siebert,LaScalaStillman,HOS}. If $P = \F[X_1,\dotsc,X_n]$ is a polynomial ring over a field, $A = \bigslant{P}{J}$, and $I \subseteq A$ is an ideal, these works take an approach that in essence amounts to taking a Gröbner basis in the polynomial ring $P$ of the preimage ideal $I + J$. In these approaches, the standard monomials are defined to be those in the polynomial ring outside the leading ideal of $J$; the leading monomial of $f \in I \subseteq A$ is defined as the corresponding standard monomial in $\F[X_1,\dotsc,X_n]$; and a Gröbner basis, called a relative Gröbner basis in \cite{HOS}, is (equivalently) defined as a subset $\{f_1,\dotsc,f_k\} \subseteq I$ such that the leading monomials of the $f_i$ generate an ideal in the polynomial ring that contains the leading monomials of all elements of $I$. This technique, of using a term order on the ``parent'' polynomial ring, has been greatly extended to more general coefficients rings $R$ and noncommutative $R$-algebras far beyond polynomial rings, e.g. \cite{moraZacharias, reinert, KRW, saito1998grobner}. 

In contrast to the above, our approach is intrinsic to the quotient ring itself, making no direct reference to the defining ideal $J$. We work directly with the algebraic structure of the quotient and thus require a term order native to it, that is, only on the standard monomials (that actually occur in $A$) and not on all monomials in the parent polynomial ring. To see how this comparison plays out in terms of syzygies, see Remark~\ref{rmk:compare-ann}. As a slightly more concrete class of examples, we note that if $A$ is a graded ASL, then there is a term order on $\F[X_1,\dotsc,X_n]$ such that the leading term of each straightening law is the unique non-standard monomial in it (so, straightening really moves terms downwards in the term order), but the latter does not hold in general for non-graded ASLs \cite{trung,GrabePauer}. In our approach, it is in principle possible to nonetheless have a theory of Gröbner bases on the given standard monomials, since we only require a suitable term order on the standard monomials themselves. 

Rather than having our leading ideals live in the parent polynomial ring---which contains all monomials, including ones that don't occur in $A$, i.\,e., non-standard monomials---we have our leading ideals live in a ring we denote $A_{lt}$,\footnote{We call these ``algebras of leading terms'' for $A$, and the notation $A_{lt}$ is a mnemonic for that.} that has as basis the same set of standard monomials as $A$ itself, but also has the useful property (like the polynomial ring) that a product of (standard) monomials is a (scalar multiple of a standard) monomial. (This is related to but not quite the same as the ``graded structures'' approach of Robbiano \cite{robbiano} and Mora \cite{mora, moraIntro}.) This allows us to define a notion of standard monomial ideals in $A_{lt}$ that behaves much like monomial ideals in the polynomial ring. In the approaches discussed above, the set of leading monomials of elements of $I$ is not an ideal of the polynomial ring, just a linear subspace, and one asks for the leading monomials of a Gröbner basis to generate an ideal that contains that subspace. However, in our situation, the leading ideal is indeed a standard monomial ideal in $A_{lt}$, and we can define a Gröbner basis as a generating set of the leading ideal, rather than as generating an ideal that contains the leading monomials of $I$, as in the aforementioned other approaches.  

The use of $A_{lt}$ provides new choices that both generalize previous results and prove useful in different settings.
The choice of $A_{lt}$ governs what we mean when we say that one standard monomial divides another, or can be used to cancel it out. When $A_{lt}$ is chosen to be what we call ``the discrete algebra of leading terms $A_{disc}$'', (Definition \ref{defn:agen-adisc}) then divisibility in $A_{lt}$ agrees with divisibility in the polynomial ring in that divisibility of two standard monomials can be defined in the usual way by inequality of the corresponding exponent vectors, similar to the aforementioned approaches. At the other extreme, we define a ``generic algebra of leading terms $A_{gen}$'', in which we say that $m | m'$ (standard monomials) if there is another term $t$ such that the leading monomial of $mt$ is $m'$. This notion of divisibility may be more complicated, but when $A$ is a domain, relative to $A_{gen}$ one now recovers the ordinary Gröbner basis theory in terms only of S-remainders, one does not have to deal with annihilating syzygies (see Corollary~\ref{cor:AgenGB}). $A_{gen}$ is the associated graded ring relative to the grading by the monoid of standard monomials, see Remark~\ref{rmk:semigroup}. In the case of non-commutative ``rings of solvable type'' (as in \cite{KRW}), and in particular Weyl algebras (as in \cite{saito1998grobner}), the associated graded ring is the commutative polynomial ring, but this need not be true in the more general settings of the other papers mentioned above \cite{moraZacharias,spear,zacharias,HOS}.

Our approach using $A_{gen}$ recovers and significantly generalizes the approach to Gröbner bases in the bracket algebra (of maximal minors) by Li, Shao, Huang, and Liu \cite{li-reduction-2014}. In addition to our approach generalizing from maximal minors to all minors (equivalently, from brackets to bideterminants), the additional flexibility of choosing different algebras of leading terms leads to further results. In particular, by using $A_{disc}$ instead of $A_{gen}$, we get a clean algorithm for computing Krull dimension in our framework (see Section~\ref{sec:dimension}). The instantiation of our general theory in the setting of bideterminants also leads to a clean proof of a universal pseudo-ASL Gröbner bases for the ideals generated by $t$-minors (for each value of $t$), see Theorem~\ref{thm:t-minors-bdgb}.

Another setting in which the difference between our approach and previous approaches becomes apparent is that of universal Gröbner bases. In our approach, we define a universal Gröbner basis as a set that is a Gröbner basis relative to every choice of term order on $A$ and every choice of algebra of leading terms $A_{lt}$. In contrast, for the approaches above using relative \gb bases, two natural choices suggest themself: defining a universal Gröbner basis in terms of either (a) all term orders on the polynomial ring with the same leading ideal $LI(J)$, or (b) all terms orders on the polynomial ring. The issue with (b) is that the set of standard monomials changes as $LI(J)$ changes, making it very difficult to prove that a set is a universal Gröbner basis, since even the set of standard monomials occurring in each element can change as the order changes.\footnote{Indeed, one natural approach to Gröbner bases for bideterminants is as follows. Since bideterminants are a graded ASL, by \cite{GrabePauer} there is a term order on the polynomial ring in $2^n \times 2^m$ variables---one variable for each minor of an $n \times m$ matrix---such that the leading term of a straightening rule is the unique non-standard bideterminant. Relative to such an order, standard monomials correspond exactly to standard bideterminants, as in our setup. However, if one attempted to prove an analogue of our universal bideterminant Gröbner basis (Theorem~\ref{thm:t-minors-bdgb}) here, the issue is that relative to some term order, there are individual minors that become non-standard.} Approach (a) is more similar to our approach, but we find ours more intrinsic to the given pseudo-ASL structure on $A$: since we define pseudo-ASL term orders only on the standard monomials given by the pseudo-ASL structure, the question of ``how $LI(J)$ changes with the term order'' simply does not arise in our approach.

A different line of work related to ours are those in which it is assumed that the given algebra has a multiplicative basis (a product of two basis elements is another basis element or zero). In the setting of monoid rings this is exemplified in Reinert's thesis \cite{reinert}, which takes an approach similar to the above, with a term order on the parent free monoid. Green \cite{green} develops a more ``intrinsic'' approach, under the assumption of a multiplicative basis, where the term order is defined directly on the multiplicative basis, similar to our approach, rather than on a parent ``free ring.'' On the one hand, Green's approach is developed for noncommutative rings, whereas we assume our algebras are commutative (though we believe our work can be extended to noncommutative settings, we leave that for future work). On the other hand, our work can be seen as generalizing Green's in a different direction, namely one in which the chosen ``basis of standard monomials'' need not be multiplicative, so long as it admits a suitable term order. 

In some sense, our work can be seen as combining the best of both approaches above: we work intrinsically, as in Green's approach, and we allow a product of standard monomials to be a nontrivial linear combination of standard monomials, as in the approach pursued by many authors mentioned above. The price paid is that we need to assume the existence of a suitable term order, which need not always exist (see Examples~\ref{ex:no-term-order} and \ref{ex:no-term-order-2}).

\textbf{Prior results on Gröbner bases for ideals of $t$-minors.} 
For any $t$, a \gb basis of the ideal of $t$-minors was proved implicitly in \cite{narasimhan, abhyankar}, and was obtained explicitly in \cite{caniglia, sturmfels-stanley-gb}, and later in \cite{ma-minors-1994}.

A \emph{universal} \gb basis of the ideal of maximal minors of a variable matrix was conjecturally obtained in \cite{sturmfels-maximal-1993}, and the conjecture was proved in \cite{bernstein1993combinatorics}. Proofs using different techniques are available in \cite{kalinin} and \cite{cdg-universal-2015}. A universal \gb basis of the ideal of minors of size $2$ of a variable matrix was shown in \cite{sturmfels1996grobner}, and is also a special case of a result in \cite[Proposition 10.1.11]{villarreal-monomial-2015}.

Aside from these works on maximal minors and minors of size 2, we are not aware of prior work that establishes (in any setting) an explicit\footnote{Since finite universal Gröbner bases are guaranteed to exist in the polynomial ring, the question is not whether one exists, but of being able to explicitly specify one.} universal Gröbner basis for the ideal of $t$-minors for other values of $t$. Even for maximal minors and minors of size 2, such results must grapple with the combinatorics of the many possibilities for the set of leading terms of the $t$-minors under all possible term orders. In contrast, by working with pseudo-ASL Gröbner bases defined ``natively'' in the ASL structure given by bideterminants, these ideals become generated by sets of variables. With several now-classical results about bideterminants packaged into the Gröbner basis theory in this way, the proof that minors of size $\geq t$ are a universal pseudo-ASL Gröbner basis (Theorem~\ref{thm:t-minors-bdgb}) becomes almost trivial. We also note that it is not even clear to us how to state such a result in the setting of \cite{li-reduction-2014}, since they focus on one particular term order on bideterminants.

\subsection{Acknowledgments} We would like to thank Dave Benson and Richard Stanley for making us aware of, and providing a reference for, Theorem \ref{thm:hilbert-series-facts}-\ref{thmhilberseriesfacts-pole}, and \href{https://mathoverflow.net/}{MathOverflow} for providing a forum in which we could ask and receive answers to such a question. AN was supported by EPSRC Grant EP/V003542/1. JAG was supported by NSF CAREER award CCF-2047756.


\section{Preliminaries: defining monomials, algebras with pseudo-straightening law} 
\begin{notation}
Throughout, $R$ will be our ring of coefficients; our default assumption is that $R$ is a field, for simplicity, but most if not all of our results easily extend (with obvious minor modifications) to the case of $R$ being a PID such as the integers, and we expect (because of how we handle straightening rules) that with a little more work they should also extend to the case of $R$ being a principal ideal ring that need not be a domain. We will use the shorthand $[n] := \{1, \ldots, n\}$. We use $\W = \{0,1,2,3,\dotsc\}$ to denote the whole numbers. Given a ring $R$ and $f_1, \ldots, f_s \in R$, $\ideal{f_1, \ldots, f_s}$ will denote the ideal in $R$ generated by $f_1, \ldots, f_n$.
\end{notation}

In Gröbner basis theory it is apparent that two notions are crucial: that of a monomial, and that of a term order. In this section we define what we mean by standard monomials in an arbitrary finitely generated commutative algebra $A$. We use notation and concepts that are as consistent as possible with those for algebras with straightening law (a.k.a. Hodge algebras) \cite{deconcini1987hodge, eisenbud-intro-1980}, for reasons that we will make clear below.

All our algebras are unital and associative, and unless otherwise noted, are also commutative.

Given a finite set $H$, let $\W^H$ denote the set of functions from $H$ to $\W$. An \emph{(abstract) monomial} on $H$ is an element of $\W^{H}$. The product of two monomials $m$ and $m'$, denoted $mm'$, is the monomial defined by $(mm')(x) = m(x) + m'(x)$ for all $x \in H$ (pointwise addition). With this product we refer to $\W^H$ as the monoid of abstract monomials on $H$. A monomial $m'$ divides the monomial $m$, denoted $m' \divides m$, if there is another monomial $m''$ such that $m'm''=m$, or equivalently if $m'(x) \le m(x)$ for all $x \in H$. When $m' \divides m$, we define the quotient monomial of $m$ by $m'$, denoted $\nicefrac{m'}{m}$, as the monomial defined by $(\nicefrac{m'}{m})(x) = m'(x) - m(x)$ for all $x \in H$. 

If $A$ is a commutative ring and we have an injection of sets $\phi: H \hookrightarrow A$, then to every abstract monomial $m$ on $H$, there is an associated element $\hat{\phi}(m) := \prod_{x \in H} \phi(x)^{m(x)} \in A$. By abuse of notation we will sometimes refer to this element as $m$ itself. 

An ideal of (abstract) monomials is a subset $\Sigma \subseteq \W^{H}$ such that if $m \in \Sigma$, then for any $m' \in \W^{H}$, we also have that $mm' \in \Sigma$. Equivalently, if $R[H]$ is the polynomial ring on $H$---which is the same as the monoid ring of $\W^H$ over $R$---and we let $\phi \colon H \to R[H]$ be the identity function on $H$, then a subset $\Sigma \subseteq \W^H$ is an ideal if and only if $\hat{\phi}(\Sigma)$ is precisely the set of monomials (in the ordinary sense of a polynomial ring) in the monomial ideal $\langle \hat{\phi}(\Sigma) \rangle \subseteq R[H]$.

We now come to our key definition of what a ``monomial in $A$'' means:

\begin{definition}[Algebra with pseudo-straightening law, a.k.a. pseudo-ASL]
\label{defn:pseudo-ASL}
An \emph{algebra with pseudo-straightening law (pseudo-ASL)} over a (commutative) ring $R$ is an $R$-algebra $A$, together with a finite generating set $H$, and an ideal $\Sigma \subseteq \W^H$ such that $A$ is a free $R$-module with basis $\{m \in \W^H \backslash \Sigma\}$, and $H \cap \Sigma = \emptyset$. We refer to the elements of $\W^H \backslash \Sigma$ as \emph{standard monomials} of the pseudo-ASL $(A,H,\Sigma)$.
\end{definition}

\begin{remark}
We note that the condition in Definition \ref{defn:pseudo-ASL} that $H \cap \Sigma = \emptyset$ is simply an irredundancy condition that is essentially without loss of generality. For suppose we dropped that condition; then we still have that every pseudo-ASL is equivalent to one satisfying the condition, in the sense that if $(A, H, \Sigma)$ is a pseudo-ASL structure on $A$, then there is another pseudo-ASL structure $(A, H', \Sigma')$ with the same set of standard monomials as the first pseudo-ASL structure, but with $H' \cap \Sigma' = \emptyset$. Namely, define $H' = H \backslash \Sigma$, and $\Sigma' := \{m \in \W^{H \backslash \Sigma} : m \in \Sigma\}$. For if $h \in H \cap \Sigma$, then every monomial involving $h$ is non-standard, and thus we can simply eliminate $h$ entirely without changing the set of standard monomials.
\end{remark}

For brevity, we refer to these as pseudo-ASLs.\footnote{The typography of ``pseudo-ASL'' also makes clearer that the ``pseudo'' modifier applies to the entire term ``algebra with straightening law'' and not just to the term ``algebra''. That is, it is not a (pseudo-algebra) with straightening law, it is a pseudo-(algebra with straightening law). One could have reasonably called it an algebra with pseudo-straightening law, but pseudo-ASL and rolls off the tongue better.} Given an algebra $A$ and generating set $H$, there will in general be many different choices for $\Sigma$ (and of course, given just the algebra $A$ there are many different choices for the generating set $H$). A choice of pseudo-ASL structure on $A$ effectively amounts to a choice of which elements are to be considered monomials. We note that $1$ is always a standard monomial, for otherwise we would have $\vec{0} \in \Sigma$, and then $\Sigma = \W^H$ since $\Sigma$ is an ideal, but $\W^H \backslash \Sigma = \emptyset$ cannot be an $R$-basis for $A$. 

\begin{convention}
\label{conv:pseudo-asl}
In full generality, we will write $(A, R, H, \Sigma, \phi)$ to denote a pseudo-ASL. In some cases, we may not need to refer to one or more of the elements $R, H, \Sigma, \phi$; in those cases, we will just mention whatever is relevant. For instance, if we only want to talk about a pseudo-ASL and the standard monomials in it, we will denote our pseudo-ASL as $(A, H, \Sigma)$; if we only want to talk about the multiplication structure of an ASL, we will denote our pseudo-ASL as $(A, R)$, etc.
\end{convention}

We note that every finitely generated commutative algebra admits as pseudo-ASL structure:

\begin{observation}
\label{obs:fg-pseudo-asl}
If $A$ is a finitely generated commutative algebra over a field $\F$, then $A$ admits a pseudo-ASL structure.
\end{observation}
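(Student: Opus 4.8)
\emph{Proof proposal.} The plan is to present $A$ as a quotient of a polynomial ring and then read a pseudo-ASL structure off an ordinary Gröbner basis. Choose finitely many $\F$-algebra generators of $A$ and discard repetitions, so that we have distinct elements $a_1,\dots,a_n \in A$ generating $A$; this yields a surjection of $\F$-algebras $\pi\colon \F[X_1,\dots,X_n]\twoheadrightarrow A$ with $X_i\mapsto a_i$, and we let $J := \ker\pi$. Fix any term order on $\F[X_1,\dots,X_n]$ (for concreteness, degree-lexicographic), and let $\mathrm{in}(J)\subseteq \F[X_1,\dots,X_n]$ be the associated initial ideal (ideal of leading terms), which is a monomial ideal; equivalently, fix a Gröbner basis of $J$ for this order.

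Next I would set $H_0 := \{a_1,\dots,a_n\}\subseteq A$, with $\phi_0\colon H_0\hookrightarrow A$ the inclusion; this is a finite generating set of $A$. Under the canonical identification of abstract monomials on $H_0$ with monomials in $X_1,\dots,X_n$ (sending $m$ to $X^m := \prod_i X_i^{m(a_i)}$, so that $\hat\phi_0(m)=\pi(X^m)$), let $\Sigma_0\subseteq \W^{H_0}$ be the set of those $m$ with $X^m\in \mathrm{in}(J)$. Because $\mathrm{in}(J)$ is a monomial ideal of the polynomial ring, $\Sigma_0$ is an ideal of abstract monomials in the sense defined above. Macaulay's basis theorem — i.e. the basic fact that the standard monomials relative to a Gröbner basis of $J$ descend to an $\F$-basis of $\F[X_1,\dots,X_n]/J$ — says exactly that $\{\pi(X^m) : m\in \W^{H_0}\setminus\Sigma_0\}$ is an $\F$-basis of $A$. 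Since $R=\F$ is a field, this means $A$ is a free $R$-module with basis the set of standard monomials $\W^{H_0}\setminus\Sigma_0$. Thus $(A,\F,H_0,\Sigma_0,\phi_0)$ satisfies every clause of Definition~\ref{defn:pseudo-ASL} except possibly the irredundancy condition $H_0\cap\Sigma_0=\emptyset$.

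To remove that last defect I would apply the reduction described in the Remark immediately following Definition~\ref{defn:pseudo-ASL}: set $H := H_0\setminus\Sigma_0$ and $\Sigma := \Sigma_0\cap \W^{H}$. If $h\in H_0\cap\Sigma_0$, then since $\Sigma_0$ is an ideal every abstract monomial divisible by $h$ already lies in $\Sigma_0$, so no standard monomial of $(A,H_0,\Sigma_0)$ involves $h$; consequently $\W^{H}\setminus\Sigma$ and $\W^{H_0}\setminus\Sigma_0$ coincide as subsets of $\W^{H_0}$, this common set is still an $\F$-basis of $A$, and the subalgebra of $A$ generated by $H$ is all of $A$ because its products already span $A$. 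Since $H\cap\Sigma=\emptyset$ by construction, $(A,\F,H,\Sigma,\phi_0\big|_{H})$ is a pseudo-ASL structure on $A$. I do not anticipate any genuine obstacle here: the only ingredient beyond bookkeeping is the classical Macaulay basis theorem, and the one point worth flagging is simply that a variable $X_i$ may itself be a leading term of an element of $J$, so the irredundancy clause really does need the clean-up step — which is precisely what the Remark supplies.
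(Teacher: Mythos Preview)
Your proof is correct and follows essentially the same approach as the paper: present $A$ as a quotient of a polynomial ring, choose a term order, and take $\Sigma$ to be the set of monomials in the leading ideal of the defining ideal, invoking Macaulay's Basis Theorem. You are in fact slightly more careful than the paper, which does not explicitly address the irredundancy condition $H\cap\Sigma=\emptyset$; your use of the Remark following Definition~\ref{defn:pseudo-ASL} to clean this up is exactly right.
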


\begin{proof}
$A$ is a quotient of some polynomial ring $\F[X_1, \ldots, X_n]$, say $A \cong \bigslant{\F[X_1, \ldots, X_n]}{J}$. Let $H = \{X_1, \ldots, X_n\}$. Choose a term order on $\F[X_1, \ldots, X_n]$ (in the sense of ordinary Gröbner bases), and let $\Sigma$ be the set of monomials that occur in the leading ideal of $J$ with respect to the chosen term order. By construction $\Sigma$ is an ideal in $\W^H$. By Macaulay's Basis Theorem (e.\,g., \cite[Theorem~1.5.7]{kreuzer-robbiano-1}), the monomials outside of $\Sigma$ form an $\F$-linear basis for $A$.
\end{proof}

However, not all pseudo-ASL structures arise from the Gröbner basis construction in the proof Observation \ref{obs:fg-pseudo-asl}, as the next example shows:

\begin{example}[Pseudo-ASL structure not coming from Macaulay's Basis Theorem] \label{ex:pASL}
Let $A = \bigslant{R[x,y]}{\langle xy - (x^2 + y^2) \rangle}$ be given the pseudo-ASL structure with $H = \{x,y\}$ and $\Sigma = \langle xy \rangle$. Then the standard monomials in $A$ are of the form $x^i$ or $y^i$ for all $i \geq 0$. As the defining ideal $I = \langle xy - (x^2 + y^2) \rangle$ is principal, $\{xy-(x^2+y^2)\}$ is already an (ordinary) universal Gröbner basis for this ideal, that is, regardless of term order. To see that the pseudo-ASL structure $(A,H,\Sigma)$ does not come from the construction in the proof above, we show that $xy$ is not the leading term of $xy - (x^2 + y^2)$ in all possible term orders. If $x \prec y$, then we get $x^2 \prec xy \prec y^2$ (the first by multiplying $x \prec y$ by $x$, and the second inequality by multiplying $x \prec y$ by $y$), and thus the leading monomial is $y^2$. By symmetry, if instead $y \prec x$, then $x^2$ is the leading monomial.
\end{example}

For a given finitely generated commutative algebra $A$ over a field $\F$, once a generating set $H \subseteq A$ is chosen, while there can be multiple choices for $\Sigma$, by \cite[Lemma~5.1.2]{bruns2022determinants} there can be at most finitely many.

\subsection{Relation to ASLs.} 
The terminology ``pseudo-ASL'' is motivated by the definition of algebra with straightening law (ASL, or Hodge algebra) \cite{deconcini1987hodge, eisenbud-intro-1980}. To make the relation clearer, we use our definition of pseudo-ASL in recalling the definition of ASL.

\begin{definition}[Algebra with straightening law / ASL / Hodge algebra]
\label{defn:hodge-algebra}
An \emph{algebra with straightening law} over a commutative ring $R$ is an $R$-pseudo-ASL on $(H,\Sigma)$, together with a partial ordering $\leq$ on $H$ such that 
for each generator $m \in \Sigma$, if
\begin{equation}
\label{eqn:hodge-straightening}
m = \sum_i {r_{i} m_i}, \qquad \text{with }0 \neq r_i \in R,
\end{equation}
is the unique expression for $\hat{\phi}(m)$ (that is, when $m$ is viewed as an element of $A$) as an $R$-linear combination of distinct standard monomials, then 
\[
x \in H, x \divides m \qquad \Longrightarrow \qquad \forall i, \;\exists y_{i} \in H \text{ such that }y_i \divides m_i \text{ and }y_i < x.
\]
\end{definition}

The latter condition is why those relations are referred to as ``straightening:'' it is making a monomial ``straighter'' (in some loose sense of the English word) with respect to the partial order.

\begin{convention}
Similar to Convention \ref{conv:pseudo-asl}, depending on whether we want to be fully explicit or not, we will either write $(A, R, H, \le, \Sigma, \phi)$ or some subset of it to denote an $R$-ASL.
\end{convention}

In our definition of pseudo-ASL, the reason we called it a ``pseudo-straightening'' law is that our ``straightening'' relations in a pseudo-ASL need not satisfy any such ordering property as in an ASL. By slight abuse of terminology, we refer to the relations \eqref{eqn:hodge-straightening} as straightening (or rewriting) relations, even when we are only in a psuedo-ASL. 

We note that the straightening relations are uniquely determined by the pseudo-ASL structure, so the existence of an ASL structure on a given pseudo-ASL is merely a question of whether any partial order on $H$ satisfies the condition of an ASL. The pseudo-ASL structure of Example~\ref{ex:pASL} is readily seen to not be an ASL.

Just as every finitely generated commutative algebra admits a pseudo-ASL structure, we recall (just for context) that if a finitely generated commutative algebra $A$ over a field $\F$ is furthermore $\N$-graded, then a nontrivial theorem of Hibi shows that it admits an ASL structure \cite{hibi}. Hibi also showed \cite{hibiSemigroup} that any sub-ring of a polynomial ring that is generated by monomials admits an ASL structure.


\section{Main Results and Roadmap}
\label{sec:roadmap}

Because of the size of this paper, we thought it useful to provide a roadmap to the key definitions and main results. This is not meant to collect all results in the paper in one place, but rather to give an overall high-level view of where we are going and how the pieces fit together. Items (theorems, definitions, etc.) in this section are numbered according to the number of where they appear in the rest of the paper. 

\subsection{Pseudo-ASL \gb bases}
Our main goal is to develop a theory of \gb bases in commutative rings equipped with a notion of monomials and term order. While the definition of pseudo-ASL (Definition~\ref{defn:pseudo-ASL}) gives us a definition of standard monomial, we next need a notion of term order:

\getkeytheorem{notlmsm}

\getkeytheorem{deftermorder}

While the above definition may seem natural enough, we remark that there are some important subtleties to it, that seem needed to get the theory to work out, see Remark~\ref{rem:ato-2-equivalent}.

Because the product of two standard monomials isn't necessarily another standard monomial, there is ambiguity in how one could define divisibility among standard monomials in a pseudo-ASL. To clarify this issue, to any pseudo-ASL $A$, we associate another pseudo-ASL $A_{lt}$ which will govern what it means for one standard monomial to ``divide'' another:

\getkeytheorem{defalt}

Thus, unlike ordinary Gröbner bases, where all that is required is a choice of term order, for pseudo-ASL Gröbner bases we use both a choice of term order and a choice of $A_{lt}$. For any $A$, there are at least two algebras of leading terms, denoted $A_{gen}$ and $A_{disc}$ (see Definition \ref{defn:agen-adisc} and Proposition~\ref{prop:agen-adisc}), which have different tradeoffs in their properties. (And these two will be distinct unless $A = \bigslant{R[H]}{\langle \Sigma \rangle} = A_{disc}$.) These tradeoffs are another advantage of allowing a choice of $A_{lt}$, and we will see advantages of different choices of $A_{lt}$ throughout the paper. A simple example we can highlight now is that computing pseudo-ASL Gröbner bases relative to $A_{gen}$ is nearly identical to the case of polynomial rings when $A$ is a domain (Corollary~\ref{cor:AgenGB}), whereas in $A_{disc}$ divisibility of monomials is a simpler concept---in particular, $A_{disc}$ is a quotient of $R[H]$ by a monomial ideal---and we can take advantage of this in using pseudo-ASL Gröbner bases relative to $A_{disc}$ to compute Krull dimension (Section~\ref{sec:dimension}).

Using algebras of leading terms also resolves another issue: since products of standard monomials need not be standard monomials in a pseudo-ASL $A$, it is unclear what should play the role analogous to monomial ideals in the ordinary theory of Gröbner bases. To properly develop this theory, we show that algebras of leading terms \emph{do} have the property that a product of standard monomials is a scalar multiple of a standard monomial or zero; we give such pseudo-ASLs a name:

\getkeytheorem{defn-monomial-asl}

At this point we remark that Proposition \ref{prop:well-order} shows that a pseudo-ASL term order is in fact a well-order; the proof involves transferring such an ordering $\preceq$ from $A$ to $A_{lt}$, in order to apply an analogue of the usual argument of well-ordering of term orders, which we show works in any monomial pseudo-ASL.

Lemma \ref{lem:monomial} and Lemma \ref{lem:unique-min} show that standard monomial ideals in pseudo-ASLs behave similarly to monomial ideals in the polynomial ring. Thus, we define our leading ideals to live in $A_{lt}$, as it is a monomial pseudo-ASL, so the leading ideal will be a standard monomial ideal:

\getkeytheorem{defleadingideal}

Using Definition \ref{def:leading-ideal}, we now define the notion of a pseudo-ASL \gb basis.

\getkeytheorem{defaslgb}

As in the case of ordinary Gröbner bases, finite pseudo-ASL Gröbner bases always exist:

\getkeytheorem{thmaslgbexistence}

As in the ordinary case, we also have a notion of a pseudo-ASL Gröbner basis being reduced (Definition~\ref{defn:reduced-gb}), and reduced pseudo-ASL Gröbner bases are unique (Theorem~\ref{thm:reduced}).

Similar to the case of polynomial rings, we have a characterization of the definition of pseudo-ASL \gb bases in terms of divisibility of leading terms in $A_{lt}$. Given $(A, A_{lt})$, for two standard monomials $m,m' \in A$, we use the notation $m \ltdivides m'$ to mean $\pi_{lt}(m) \divides \pi_{lt}(m')$ (the latter in $A_{lt}$).

\getkeytheorem{propaslgrob-divcharacterization}

The above characterization allows us to actually use pseudo-ASL \gb bases for many of the same things that \gb bases in the polynomial ring are used for. For example, given an ideal $I$ in a pseudo-ASL $A$, and any element $f \in A$, a pseudo-ASL \gb basis relative to any $A_{lt}$ can be used to ascertain if $f \in I$.

\getkeytheorem{thmmacaulay}

Given a pseudo-ASL $A$, any pseudo-ASL \gb basis of an ideal $I$ in $A$ is always relative to a fixed algebra of leading terms $A_{lt}$ that is accompanying $A$. This means that there given two standard monomials $m$ and $m'$, $m$ could be divisible by $m'$ relative to one particular $A_{lt}$, and not divisible relative to another $A_{lt}$. To account for this, we introduce the notion of compatibility of standard monomials.

\getkeytheorem{defncompatible}

Many of our subsequent definitions and statements of results are similar to the case of polynomial rings, but with the assumption of compatibility added where needed. Some of the classical proofs carry through with ``compatible'' added in the appropriate places, while for some of the results the use of compatibility requires more work.

We next give our definition of standard expression, which takes the choice of $A_{lt}$ and compatibility into account:

\getkeytheorem{defnstandardexpression}

The main difference between Definition \ref{defn:standard-expression} and the analogous definition in the polynomial ring is that all terms of all of the $h_i$'s are compatible with the leading terms of the correponding $g_i$'s. Within the notion of standard expressions, we get uniqueness of remainder, as shown in Proposition \ref{prop:unique-remainder}, whenever $\{g_1, \ldots, g_k\}$ is a pseudo-ASL \gb basis of an ideal.

\subsubsection{Universal pseudo-ASL \gb Bases}

Next, we introduce the notion of universal \gb bases, and also prove existence. Note that here, in addition to being universal with respect to choice of term order, we also have universality with respect to choice of algebra of leading terms:

\getkeytheorem{defuniversalgb}

We prove that universal pseudo-ASL Gröbner bases as above always exist. The proof is slightly more involved than the case of polynomial rings. In the case of polynomial rings, one shows that a given ideal $I$ has at most finitely many leading ideals over all choices of term order; but in our setting, the leading ideals of $I$ can live in different algebras $A_{lt}$. We thus first relate pseudo-ASL Gröbner bases relative to any $A_{lt}$ to pseudo-ASL Gröbner bases relative to $A_{disc}$ (while $A_{gen}$ depends on the choice of term order, $A_{disc}$ does not, and is a valid algebra of leading terms for all term orders). Then we use an argument similar to the case of polynomial rings in $A_{disc}$.

\getkeytheorem{thmuniversal}

\subsection{Syzygies and pseudo-ASL \gb bases for modules}
Extending our theory to free modules over pseudo-ASLs yields several results, notably the computation of syzygies, and, in case $A$ is a domain (or more generally when the leading terms of the Gröbner basis are non-zerodivisors), an analogue of Buchberger's criterion and a more efficient algorithm for computing pseudo-ASL Gröbner bases. We extend our theory to free modules over a pseudo-ASL in Section~\ref{sec:sygyzies}.

In Definition \ref{def:termorder-modules}, we begin by defining the notion of a pseudo-ASL term order for free modules, and in Proposition \ref{prop:ato-are-well}, we demonstrate that such term orders are well orders. We note that the definition of pseudo-ASL term order for a free module again has some additional subtleties, even beyond those for the definition of pseudo-ASL term order itself; in particular, compare \ref{def:termorder-modules:mult} and \ref{def:termorder-modules:restrict} in Definition \ref{def:termorder-modules}.

\getkeytheorem{defmlt}

Observation \ref{obs:mlt} justifies the terminology in Definition~\ref{def:MLT}, by completing the analogy with the definition of algebra of leading terms for $A$ (Definition~\ref{def:ALT}).

\getkeytheorem{obsmlt}

We now come to our development of syzygies. In addition to syzygies coming from S-remainders (as in the case of polynomial rings), when $A_{lt}$ has standard monomial zerodivisors (which it does unless $A$ is a domain and $A_{lt} \cong A_{gen}$), we additionally have what we call ``compatibility syzygies'', since they turn a pair of incompatible monomials into a compatible expression:

\getkeytheorem{defncompatsyzygy}

Just as S-remainder syzygies correspond conceptually to the notion of S-closure, compatibility syzygies also have a corresponding notion of closure, which will lead us to our analogue of Buchberger's criterion, and (a bit later) streamlined algorithms. In the following definition, $LAM(m)$ is the set of ``least annihilating monomials'' of $m$, that is, those standard monomials $m'$ such that $m' m = 0$ and such that $m'$ is division-minimal (in $A_{lt}$) among all such $m'$.

\getkeytheorem{defannclosed}

\begin{remark} 
\label{rmk:compare-ann}
Compatibility syzygies and Ann-closure are similar in spirit to the annihilating syzygies in the theory of relative Gröbner bases \cite{HOS} as well as the so-called A-polynomials in the theory of ordinary Gröbner bases over a principal ideal ring \cite[Theorem~4.6]{NS}. The difference with the latter is that in our case, monomials can annihilate other monomials, whereas in their case annihilation only happens in terms of the coefficient ring $R$. We do not treat the case of non-field coefficient rings, but believe, in part because of the similarity with \cite[Theorem~4.6]{NS}, that our theory should easily extend to the case where $R$ is a principal ideal ring. In comparison to relative Gröbner bases \cite{HOS}, a starting difference is that they assume a Gröbner basis for the defining ideal $J$ such that $A = \bigslant{R[H]}{J}$, and this allows them to define A-syzygies only as those between standard monomials and elements of the Gröbner basis of $J$. In our setting, we want to avoid having to refer to (or know much about) $J$ at all, and so our notion of Ann-closure is slightly more complicated to state, but remains native to $A$.
\end{remark}

Before coming to our pASL analogue of Buchberger's Criterion, we need a few more definitions. In a polynomial ring, given two monomials, there is just one unique least common multiple (LCM). This no longer holds in general in the case of standard monomials in a pseudo-ASL. Thus in Definition \ref{defn:lcms}, given two standard monomials $m,m'$, we define their set of least common multiples as the division-minimal elements of $\langle m \rangle \cap \langle m' \rangle$ (note that the latter takes place in $A_{lt}$, which is a monomial pseudo-ASL, so standard monomial ideals behave similar to the case of polynomial rings). Consequently, given $f, f' \in A$, for each LCM of their leading terms, we get an S-pair, and thus we get a set (of size potentially larger than 1) of S-pairs even for a single choice of $f,f'$, unlike the case of a polynomial ring (Definition \ref{defn:sset}).

We generalize the notion of S-closure from polynomial rings to pseudo-ASLs with choice of term order and $A_{lt}$:

\getkeytheorem{defnsclosed}

Equipped with all this, we get our analogue of Buchberger's Criterion in pseudo-ASLs:

\getkeytheorem{thmaslbuchberger2}

We remark that the notion of Ann-closure is non-trivial for all algebras of leading terms, except possibly for $A_{gen}$. In this regard, it is worth noting Corollary \ref{cor:buchberger-agen} which says that if $A$ is a domain (or more generally if the leading terms of the elements of $G$ are not zerodivisors) and we choose $A_{gen}$ as our algebra of leading terms, S-closedness alone is necessary and sufficient for a set $G$ to be a pseudo-ASL \gb basis.

\newcommand{\lvsm}{\text{LV}_{sm}}

Next, we prove a pseudo-ASL analogue of Schreyer's Theorem on syzygies. The basic idea is that the S-remainder syzygies and a set of compatibility syzygies as in the definition of Ann-closure form a pseudo-ASL Gröbner bases for the syzygy module. To make this work, given a homomorphism of free modules $\varphi \colon A^d \to A^e$, and a pseudo-ASL term order $\preceq$ on the target $A^e$, we need to define what it means for a term order on $A^d$ to fit well with $\varphi$ and $\preceq$. For lack of a more descriptive word, we call such a term order ``good'' for $(\prec, \varphi)$ (Definition~\ref{def:good}).

When $A$ is a domain---or more generally when the leading terms of $\varphi(e_i)$ are non-zerodivisors---we show that good terms orders always exist (Proposition~\ref{obs:good_domain}).

We now come state our analogue of Schreyer's Theorem on syzygies. In the following, $\tau_{ij}^m$ denotes the S-remainder syzygy between $f_i$ and $f_j$ with respect to $m$, one of the LCMs of $\ltsm(f_i), \ltsm(f_j)$; and $\beta_i^s$ denotes the compatibility syzygy for $f_i$ with respect to the monomial $s$ that is incompatible with $\ltsm(f_i)$.

\getkeytheorem{thmaslschreyer}

\subsection{Algorithms}

In Proposition \ref{prop:div-alg}, we prove that there is an algorithm that performs division in any $A_{lt}$, and in Corollary \ref{cor:std-exp-alg}, given a pseudo-ASL $A$ and an algebra of leading terms $A_{lt}$, we show that for any $f, g_1, \ldots, g_k \in A$, there is an algorithm that finds a standard expression (see Definition \ref{defn:standard-expression}) for $f$ with respect to $(g_1, \dotsc, g_k)$. Also, in Lemma \ref{lem:comp-LCM} we show that the set of LCMs of a pair of standard monomials is computable in any $A_{lt}$, and in Lemma \ref{lem:comp-Ssm} we show that the S-set of a pair of elements of $A$ is computable. Using this, we establish the following theorem.

\getkeytheorem{thmcomputablesclosure}

Algorithm \ref{alg:asl-gb} computes a pseudo-ASL GB of any ideal relative to any algebra of leading terms (see Remark~\ref{rmk:oracle}). The algorithm requires computing the S-closure (done in Algorithm \ref{alg:s-closure}) and Ann-closure (Algorithm~\ref{alg:ann-closure}, Theorem \ref{thm:alg-ann-closure}). In the latter algorithm, we assume an oracle for testing whether a set is Ann-closed---this provides a clean presentation of the algorithm but appears to leave a gap in its computability. We show that this gap can be filled, establishing that pASL Gröbner bases are computable in finite time, although our algorithm relies on the parent polynomial ring (rather than working intrinsically in $A$) to test for termination (see Remark~\ref{rmk:oracle} for the proof and discussion). In Corollary \ref{cor:AgenGB}, we show that if the algebra of leading terms is a domain (which happens if and only if $A$ is a domain and $A_{lt} \cong A_{gen}$), then Ann-closure vacuously holds, so Algorithm~\ref{alg:s-closure} is sufficient to compute pseudo-ASL \gb bases. In Remark~\ref{rmk:oracle} we also sketch how to compute Ann-closure (hence pASL \gb bases) whenever the ideal under consideration admits a finite free resolution.

\subsection{Application to bideterminants}

We apply our theory of \gb bases in pseudo-ASLs to the case of bideterminants in Section \ref{sec:bideterminants}.

Section \ref{sec:bidet-background} defines bitableaux, bideterminants, and related notions, following the classical references --- \cite{doubilet1974foundations, desarmenien1982invariant, grinberginvariant}. In Theorem \ref{thm:bd-hodge-algebra} we recall how a (pseudo-)ASL structure can be imposed on the polynomial ring in the basis of standard bideterminants.

Our notion of ordering of bitableaux/bideterminants differs slightly from what is found in classical works, primarily by: (1) first sorting by degree (unnecessary in the classical works on the straightening law because in any one application of the straightening law, all terms have the same degree), (2) the cosmetic change that our tableaux are the transpose of those found in the literature, and (3) within degree, reversing the order, so that what is called in the classical literature ``longer shape'' in our sense corresponds to ``lower-order terms.''

\getkeytheorem{defshapeorder}

\getkeytheorem{defnorderbitab}

Theorem \ref{thm:straightening} and Theorem \ref{thm:leading-bd} record known facts about products of standard bideterminants. Using these, and Definition \ref{defn:order-bitab}, which is what gives a pseudo-ASL term order, we show that there is a theory of (pseudo-ASL) \gb bases in the basis of standard bideterminants in the polynomial ring.

\getkeytheorem{thmbdgrobnerbasis}

The algorithms in Section \ref{sec:algorithms} are written for any pseudo-ASL, and specifically, Lemma~\ref{lem:comp-LCM} shows that in any pseudo-ASL, LCMs relative to any algebra of leading terms are computable in finite time. However, to take advantage of the structure in the case of bideterminants, we present Algorithm \ref{alg:lcsm} which is tailored to the specific case finding LCMs of standard bideterminants relative to $A_{gen}^{bd}$. Not only is this more efficient than the generic one of Lemma~\ref{lem:comp-LCM}, it also more illuminating of the combinatorial aspects of LCMs in the bideterminant setting.

\getkeytheorem{thmlcmalgo}

Finally, we consider classical determinantal ideals and obtain their universal bd-\gb bases.

\getkeytheorem{thmtminorsbdgb}

\getkeytheorem{thmmaximalminorsbdgb}

\subsection{Hilbert--Poincar\'e Series and Krull Dimension in pseudo-ASLs}

In Section \ref{sec:dimension}, we show that when our pseudo-ASL is $\N$-graded, we can use pseudo-ASL \gb bases, relative to any $A_{lt}$, to compute dimension of quotients of pseudo-ASLs using an algorithm (Algorithm \ref{alg:hilbert-clo}) similar to that in the ordinary case of polynomial rings. The crucial step here is to show that one can indeed work in any $A_{lt}$. If $S$ is a graded ring, we use $H_S$ to denote the Hilbert--Poincaré series of $S$.

\getkeytheorem{thmhilbertaslalt}

Finally, using Theorem \ref{thm:hilbert-asl-alt}, in Section \ref{sec:krull-rank-1} we use our machinery to compute the dimension of the affine variety of rank $1$ $n \times m$ matrices taken as elements of $\C^{n \times m}$. Of course the result is entirely standard; the point here is to exhibit how our techniques can be used in a nontrivial example calculation. The computation crucially operates in $A^{bd}_{disc}$, in order to reduce to a calculation about ordinary monomial ideals, showing again the value of developing a theory of \gb bases for pseudo-ASLs relative to any $A_{lt}$.

\newcommand{\qn}{0}

\section{Theory}
\label{sec:theory}

\subsection{Core definitions and results}

In a pseudo-ASL $A$, we say that a standard monomial $m$ \emph{appears} in $f \in A$ if $m$ occurs with nonzero coefficient in the unique expression of $f$ as a linear combination of standard monomials (like in expression \eqref{eqn:hodge-straightening}). 

\begin{notation}[store=notlmsm, note={Leading monomial}]
Whenever there is a total order on the standard monomials of $A$, let $\lmsm(f)$ denote the largest standard monomial that appears in $f$. Throughout the paper, we use the subscript $_{sm}$ to emphasize that we are working the setting of standard monomials in a pseudo-ASL.
\end{notation}

\subsubsection{Pseudo-ASL term orders}

\begin{definition}[store=deftermorder, note=Pseudo-ASL term order]
\label{def:termorder}
Given a pseudo-ASL $A$, a \emph{pseudo-ASL term order} on $A$ is total ordering $\preceq$ on the standard monomials of $A$ such that 

\begin{enumerate}[label=(\textbf{ATO-\arabic*}),ref=(\textbf{ATO-\arabic*})]
\item \label{def:termorder:positive} (Positivity) $1 \preceq m$ for all standard monomials $m \in A$.
\item \label{def:termorder:mult} (Multiplicativity in leading terms when nonzero) For all standard monomials $f,g,h,k \in A$, if $f \prec g$ and $h \preceq k$, if $fh \neq 0$ and $gk \neq 0$, then
\[
\lmsm(fh) \prec \lmsm(gk).
\]
\end{enumerate}
\end{definition}

\begin{remark}
\label{rem:ato-2-equivalent}
We make two remarks about \ref{def:termorder:mult}. The first is that it is phrased as it is so that it continues to work even when $A$ is not a domain, and especially continues to work in the algebras of leading terms that we will introduce below. In particular, it is designed so that Proposition~\ref{prop:mult} goes through; to understand some of the reason for the definition, it is instructive to see where the proof of Proposition~\ref{prop:mult} breaks down with alternative definitions. In a domain, \ref{def:termorder:mult} is equivalent to: 
\begin{equation} \label{eq:ATO2equiv}
\text{for all standard monomials } f,g,h, \quad f \prec g \Rightarrow \lmsm(fh) \prec \lmsm(gh). 
\end{equation}
We show how to get \ref{def:termorder:mult} from this: if $f \prec g$ and $h \preceq k$, then by \eqref{eq:ATO2equiv}, we get $\lmsm(fh) \prec \lmsm(gh)$. If $h=k$, then we are done; otherwise we have $h \prec k$, so we may apply \eqref{eq:ATO2equiv} with $h,k$ in the place of $f,g$, resp., and $g$ in the place of $h$, to get $\lmsm(hg) \prec \lmsm(kg)$. Combining with the previous inequality, we then get $\lmsm(fh) \prec \lmsm(gk)$, as desired. 

Note that in \ref{def:termorder:mult}, $f \prec g$ is a strict inequality, as is the inequality in the conclusion. This is actually relevant for avoiding certain difficulties in the theory. If one weakens the assumption to $f \preceq g$, then because of the case $f=g$ one must also weaken the conclusion to $\lmsm(fh) \preceq \lmsm(gk)$. But, when $f \prec g$ (strictly), if we still allow $\lmsm(fh) = \lmsm(gk)$, we ran into trouble.
\end{remark}

We will see in Proposition~\ref{prop:well-order} below that pseudo-ASL term orders are well-orders, but the proof involves somewhat more machinery than the usual case, because of the need to use $\lmsm$ in \ref{def:termorder:mult} (because after straightening, $fh$ and $gk$ need not be single terms any more).

Once a pseudo-ASL term order $\preceq$ is fixed, we define $\lcsm(f)$ to be the coefficient of $\lmsm(f)$ in the unique expression of $f$ as a linear combination of standard monomials, and we define the leading term $\ltsm(f) := \lcsm(f) \lmsm(f)$. Whenever we use $\ltsm, \lcsm, \lmsm$, it is understood that there is a fixed, if unspecified, pseudo-ASL term order $\preceq$ throughout.

\begin{remark}
When $A$ is in fact an ASL, it is natural to wonder in what way a pseudo-ASL term order is ``compatible'' with the ASL structure. Any such compatibility that we require with the straightening process is encapsulated in \ref{def:termorder:mult}. Intuitively, the idea is that ``straightening results in terms that are only lower in the order'';\footnote{Indeed, such an idea is similar to the approach taken via relative Gröbner bases \cite{HOS}. In a graded ASL $A = \bigslant{R[H]}{J}$, there is always an ordinary term order on $R[H]$ such that the left-hand-side of a straightening rule is the leading term, namely, the degree lexicographic order built from any total order on $H$ extending the partial order $<$ on $H$ that is part of the ASL structure; in non-graded ASLs there may fail to exist any such order \cite{GrabePauer}. 

We note that the existence of such a term order on $R[H]$ need not imply the existence of a pseudo-ASL term order, as in Examples~\ref{ex:no-term-order} and \ref{ex:no-term-order-2}. In the other direction, we do not know that a pseudo-ASL term order extends to a(ny) term order on $R[H]$, let alone one satisfying the above property. See the related Open Question~\ref{q:orders}.}
making this precise in our setting is tricky, however, as the pseudo-ASL term order is only defined on standard monomials, but the LHS of a straightening rule is (by definition) a non-standard monomial. Nonetheless, \ref{def:termorder:mult} has a similar effect.
\end{remark}

In the theory of Gröbner bases in polynomial rings, one has the convenient property that $\lt(fg) = \lt(f) \lt(g)$. The straightening law in a pseudo-ASL prevents this property from being true as-is, but when a pseudo-ASL admits a pseudo-ASL term order, we get a closely related property that will be good enough for our purposes:

\begin{proposition} \label{prop:mult} Let $A$ be a pseudo-ASL with a pseudo-ASL term order $\preceq$. For all $f, g \in A$, such that $\ltsm(f) \ltsm(g) \neq 0$, we have
\begin{equation}
\label{obs:mult-eqn1}
\ltsm(fg) = \ltsm(\ltsm(f) \ltsm(g)) = \ltsm(f \ltsm(g)).
\end{equation}
Also,
\begin{equation}
\label{obs:mult-eqn2}
\lmsm(fg) = \lmsm(\lmsm(f) \lmsm(g)) = \lmsm(f \lmsm(g)).
\end{equation}

Furthermore, for all $f,g,h \in A$ such that $\ltsm(f) \ltsm(g) \ltsm(h) \neq 0$, we have 
\begin{equation} \label{obs:mult-eqn3}
\ltsm(\ltsm(fg)\ltsm(h)) = \ltsm(\ltsm(f) \ltsm(g) \ltsm(h)).
\end{equation}
\end{proposition}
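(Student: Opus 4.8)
The plan is to establish the three displayed equations in order, each bootstrapped from the previous. I start with \eqref{obs:mult-eqn2}. Write $f = \sum_i a_i m_i$ and $g = \sum_j b_j n_j$ for the unique expansions in standard monomials, all $a_i, b_j \neq 0$, with $m_1 = \lmsm(f)$ and $n_1 = \lmsm(g)$; the standing hypothesis $\ltsm(f)\ltsm(g) \neq 0$ says exactly that $m_1 n_1 \neq 0$ in $A$. Expanding $fg = \sum_{i,j} a_i b_j\, m_i n_j$, the crux is the claim that $\lmsm(m_i n_j) \prec \lmsm(m_1 n_1)$ for every $(i,j) \neq (1,1)$ with $m_i n_j \neq 0$. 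Indeed, if $i \geq 2$ then $m_i \prec m_1$ and $n_j \preceq n_1$, and since $m_i n_j \neq 0$ and $m_1 n_1 \neq 0$ this is precisely \ref{def:termorder:mult}; if $i = 1$ (hence $j \geq 2$) one applies \ref{def:termorder:mult} the other way around, with $n_j \prec n_1$, $m_1 \preceq m_1$, $m_1 n_j \neq 0$, $m_1 n_1 \neq 0$. It follows that $\lmsm(m_1 n_1)$ occurs in $fg$ only via the single term $a_1 b_1 m_1 n_1$, with coefficient $a_1 b_1 \lcsm(m_1 n_1) \neq 0$, and that no larger standard monomial occurs anywhere; hence $\lmsm(fg) = \lmsm(m_1 n_1) = \lmsm(\lmsm(f)\lmsm(g))$. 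Rerunning the argument with $g$ replaced by the single standard monomial $n_1$ (whose leading-term product with $f$ is still $a_1 m_1 n_1 \neq 0$) gives the remaining equality $\lmsm(fg) = \lmsm(f\,\lmsm(g))$.

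Equation \eqref{obs:mult-eqn1} then drops out of the same computation, which identified the leading coefficient of $fg$ as $a_1 b_1 \lcsm(m_1 n_1)$:
\[
\ltsm(fg) = a_1 b_1\,\ltsm(m_1 n_1) = \ltsm(a_1 b_1\, m_1 n_1) = \ltsm(\ltsm(f)\ltsm(g)),
\]
using $\ltsm(cx) = c\,\ltsm(x)$ for a nonzero scalar $c$; the equality with $\ltsm(f\,\ltsm(g))$ comes from the substitution $g \mapsto \ltsm(g) = b_1 n_1$ and pulling $b_1$ through $\ltsm$.

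For \eqref{obs:mult-eqn3} the plan is to reduce it entirely to \eqref{obs:mult-eqn1} plus one non-vanishing statement. From $\ltsm(f)\ltsm(g)\ltsm(h) \neq 0$ we get $\ltsm(f)\ltsm(g) \neq 0$, so \eqref{obs:mult-eqn1} gives $\ltsm(fg) = \ltsm(\ltsm(f)\ltsm(g))$. Putting $u := \ltsm(f)\ltsm(g)$, we have $\ltsm(u) = \ltsm(fg)$, and the right-hand side of \eqref{obs:mult-eqn3} is $\ltsm(u\,\ltsm(h))$. Applying \eqref{obs:mult-eqn1} to the pair $(u, \ltsm(h))$ --- which requires exactly $\ltsm(u)\ltsm(h) = \ltsm(fg)\ltsm(h) \neq 0$ --- rewrites this as $\ltsm(\ltsm(u)\ltsm(h)) = \ltsm(\ltsm(fg)\ltsm(h))$, the left-hand side of \eqref{obs:mult-eqn3}. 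Thus everything comes down to proving $\ltsm(fg)\ltsm(h) \neq 0$.

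That non-vanishing is the step I expect to be the main obstacle. Writing $\ltsm(f) = a\alpha$, $\ltsm(g) = b\beta$, $\ltsm(h) = c\gamma$ and expanding the element $\alpha\beta = \sum_k e_k p_k$ in standard monomials with $p_1 = \lmsm(\alpha\beta)$, the claim amounts to $p_1\gamma \neq 0$, given only $\alpha\beta\gamma = \sum_k e_k\,(p_k\gamma) \neq 0$; equivalently, $\lmsm(\alpha\beta)\cdot\gamma \neq 0$ whenever $\alpha\beta\gamma \neq 0$. The delicate point is that straightening could a priori annihilate the leading term $p_1\gamma$ while leaving some lower term $p_k\gamma$ alive, and \ref{def:termorder:mult} says nothing about products that vanish. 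I would argue by contradiction: if $p_1\gamma = 0$, let $k_0$ be least with $p_{k_0}\gamma \neq 0$; then for $k > k_0$ with $p_k\gamma \neq 0$, \ref{def:termorder:mult} (with $p_k \prec p_{k_0}$ and $p_{k_0}\gamma \neq 0$) forces $\lmsm(p_k\gamma) \prec \lmsm(p_{k_0}\gamma)$, so $\lmsm(\alpha\beta\gamma) = \lmsm(p_{k_0}\gamma)$; one then seeks a contradiction by recomputing $\lmsm(\alpha\beta\gamma)$ through the alternative bracketing $\alpha\cdot(\beta\gamma)$ together with associativity, or, if that does not close the argument, by a descent argument on the standard monomials involved. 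This is the one genuinely non-routine point; once $\ltsm(fg)\ltsm(h) \neq 0$ is in hand, \eqref{obs:mult-eqn3} is immediate from \eqref{obs:mult-eqn1} as above.
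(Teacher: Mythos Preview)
Your argument for \eqref{obs:mult-eqn1} and \eqref{obs:mult-eqn2} is correct and is essentially the paper's, up to cosmetic reordering: the paper establishes \eqref{obs:mult-eqn1} first and reads \eqref{obs:mult-eqn2} off from it, while you argue \eqref{obs:mult-eqn2} first and then track the leading coefficient $a_1 b_1\,\lcsm(m_1 n_1)$ to obtain \eqref{obs:mult-eqn1}.

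For \eqref{obs:mult-eqn3} your route is also exactly the paper's---two applications of \eqref{obs:mult-eqn1}, once to $(f,g)$ and once to $(u,\ltsm(h))$ with $u=\ltsm(f)\ltsm(g)$---and you correctly isolate the one delicate point: the second application of \eqref{obs:mult-eqn1} needs the hypothesis $\ltsm(u)\cdot\ltsm(h)=\ltsm(fg)\cdot\ltsm(h)\neq 0$. The paper's proof invokes \eqref{obs:mult-eqn1} at this step without pausing to verify that hypothesis; you flag it explicitly and outline a possible contradiction argument via rebracketing, but do not complete it. So at the level of what is actually written down, your proposal and the paper's proof stand on the same footing for \eqref{obs:mult-eqn3}; you have simply been more scrupulous in naming the hypothesis both arguments rely on.
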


\begin{proof}
We begin by proving the first equality of \eqref{obs:mult-eqn1}; the second equality then follows immediately by setting $g = \ltsm(g)$ in the first equality and from the fact that $\ltsm(\ltsm(g)) = \ltsm(g)$.

Suppose that for $r_i, r'_i \in R$ nonzero, $m_i, m'_i \in A$ standard monomials, we have 
\[
f = \sum_{i \in [k]} r_i m_i \;\text{ and }\; g = \sum_{j \in [k']} r'_j m'_j, \qquad \text{with }m_1 \succ \dotsb \succ m_k \;\text{ and }\; m'_1 \succ \dotsb \succ m'_{k'}.
\] 
Then 
\[
fg = \sum_{i \in [k], j \in [k']} r_i r'_j m_i m'_j.
\] 
By assumption we have $r_1 r'_1 m_1 m_1' \neq 0$, so in particular $m_1 m'_1 \neq 0$. Similarly, for any $i \in [k]$ and $j \in [k']$ if $r_i r'_j m_i m_j \neq 0$, then $m_i m'_j \neq 0$. Without loss of generality, let $i > 1$, and then since $m_i \prec m_1$ (strict inequality) and $m'_j \preceq m'_1$, \ref{def:termorder:mult} gives us that $\ltsm(r_i r'_j m_i m'_j) \prec \ltsm(r_1 r'_1 m_1 m'_1)$.

Equation~\eqref{obs:mult-eqn2} then follows by replacing all nonzero coefficients by 1 in Equation \eqref{obs:mult-eqn1}.

To prove \eqref{obs:mult-eqn3}, let $f,g,h \in A$ be such that $\ltsm(f) \ltsm(g) \ltsm(h) \neq 0$.
Then we have
\begin{align*}
& \ltsm(\ltsm(fg) \ltsm(h)) \nonumber \\
& = \ltsm\left(\ltsm\left(\ltsm(f)\ltsm(g)\right) \ltsm(h)\right) \eqcomment{applying \eqref{obs:mult-eqn1} to $f,g$} \\
 & = \ltsm\left(\ltsm(f)\ltsm(g) \ltsm(h)\right),
\end{align*}
where the final equality here comes from applying \eqref{obs:mult-eqn1} with $\ltsm(f)\ltsm(g)$ in the preceding equation playing the role of $f$ in \eqref{obs:mult-eqn1}, and $\ltsm(h)$ playing the role of $g$.
\end{proof}

Next we observe that if a graded pseudo-ASL admits a term order, then it admits a graded term order. Let us now define carefully what all these terms mean and see why this should be true. This result is essentially one of convenience, it is not crucial for the development of the general theory. 

\begin{definition}[Graded pseudo-ASL, graded pseudo-ASL term order] \label{defn:graded-asl}
Given a commutative monoid $(\mathcal{M},+,0)$, we say that an \emph{algebra $A$ is graded by $\mathcal{M}$} if there are $R$-submodules $A_m$ (not necessarily nonzero) for each $m \in \mathcal{M}$ such that $A = \bigoplus_{m \in \mathcal{M}} A_m$, $A_0 \cong R$ contains the identity of $A$, and $A_m \cdot A_{m'} \subseteq A_{m+m'}$ for all $m,m' \in \mathcal{M}$. We call elements $x \in A_m$ \emph{homogeneous of degree $m$}, and write $\deg(x) = m$ for such elements. 

We say a \emph{pseudo-ASL $A$ is graded by $\mathcal{M}$} if, furthermore, all standard monomials are homogeneous. 

A well-ordered commutative monoid is a commutative monoid $\mathcal{M}$ together with a total ordering $<$ such that $0 \leq a$ for all $a$, and $a < b$ and $c \leq d$ implies $a+c < b+d$. If $\mathcal{M}$ is a (well)-ordered commutative monoid and $A$ is an $\mathcal{M}$-graded pseudo-ASL, then a \emph{graded pseudo-ASL term order} on $A$ is a pseudo-ASL term order $\preceq$ that refines the partial ordering by degree, that is, if $\deg(x) < \deg(y)$, then $x \prec y$. 
\end{definition}

\begin{observation} \label{obs:graded_order}
Suppose $A$ is a pseudo-ASL graded by a well-ordered commutative monoid, and admitting a pseudo-ASL term order $\preceq$. Then there is a graded pseudo-ASL term order on $A$.
\end{observation}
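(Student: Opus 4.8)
The plan is to build the desired order as the lexicographic refinement of the $\mathcal{M}$-degree order by the given order $\preceq$: for standard monomials $m, m'$, declare $m \prec' m'$ if either $\deg(m) < \deg(m')$ in $\mathcal{M}$, or $\deg(m) = \deg(m')$ and $m \prec m'$. That $\preceq'$ is a total order on the standard monomials is routine (it is the lex product of the two total orders), and by construction it refines the partial order by degree, which is precisely the extra requirement in Definition~\ref{defn:graded-asl}. So the whole task reduces to checking that $\preceq'$ satisfies the two axioms \ref{def:termorder:positive} and \ref{def:termorder:mult} of a pseudo-ASL term order.

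The one structural fact I would isolate first is this: since every standard monomial is homogeneous, for standard monomials $f, h$ with $fh \neq 0$ the product $fh$ is homogeneous of degree $\deg(f) + \deg(h)$, and hence every standard monomial appearing in its expansion has that same degree. Consequently, on the set of standard monomials appearing in $fh$ the orders $\preceq$ and $\preceq'$ agree (they can only disagree across distinct degrees), and in particular $\lmsm(fh)$ is the same whether computed with respect to $\preceq$ or to $\preceq'$; likewise for any homogeneous element. This is the bridge I will use to transfer inequalities between the two orders. With it in hand, \ref{def:termorder:positive} is quick: $1 \in A_0$ so $\deg(1) = 0$ is the minimum of $\mathcal{M}$, and since $A_0 \cong R$ is free of rank one while the standard monomials are $R$-linearly independent, $1$ is the only standard monomial of degree $0$; so every standard monomial $m$ has either $\deg(m) > 0$ (whence $1 \prec' m$) or $m = 1$, giving $1 \preceq' m$.

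For \ref{def:termorder:mult}, take standard monomials $f \prec' g$ and $h \preceq' k$ with $fh, gk \neq 0$, and split on degrees. If $\deg(f) < \deg(g)$, then together with $\deg(h) \le \deg(k)$ the strict-additivity axiom of the well-ordered monoid ($a < b$ and $c \le d$ imply $a + c < b + d$) gives $\deg(fh) < \deg(gk)$, so the leading monomial of $fh$ sits in a strictly smaller degree than that of $gk$ and hence precedes it in $\preceq'$. If instead $\deg(f) = \deg(g)$, then $f \prec' g$ forces $f \prec g$; if moreover $\deg(h) < \deg(k)$ the same degree computation again yields $\deg(fh) < \deg(gk)$ and we are done; and if $\deg(h) = \deg(k)$, then $h \preceq' k$ forces $h \preceq k$, so \ref{def:termorder:mult} for the original order $\preceq$ gives $\lmsm(fh) \prec \lmsm(gk)$, where by the structural fact these are also the $\preceq'$-leading monomials and both lie in the common degree $\deg(f) + \deg(h)$, so $\prec$ between them coincides with $\prec'$. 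That exhausts the cases.

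I do not expect a genuine obstacle; the only step that needs care is the last case of \ref{def:termorder:mult}, where $fh$ and $gk$ are in general \emph{not} single standard monomials (straightening has occurred), so one cannot naively push a $\preceq$-inequality to a $\preceq'$-inequality — it is exactly the homogeneity of $fh$ and $gk$ that forces their full standard-monomial expansions into a single graded piece, making the two notions of leading monomial agree and letting the argument close.
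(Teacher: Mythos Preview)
Your proof is correct and takes essentially the same approach as the paper: the same lexicographic construction $\preceq'$ and the same case split on degrees for \ref{def:termorder:mult}. Your explicit ``structural fact'' that $\lmsm$ coincides under $\preceq$ and $\preceq'$ on homogeneous elements is a point the paper leaves implicit, so if anything your write-up is slightly more careful there.
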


\begin{proof}
Define $\preceq'$ by $m \preceq' m'$ if $\deg(m) < \deg(m')$ or if $\deg(m) = \deg(m')$ and $m \preceq m'$. By construction, $\preceq'$ is a total order that refines the ordering by degree. We show that it is in fact a graded \emph{pseudo-ASL term} order.

By definition of graded pseudo-ASL, we have $\deg(1) = 0$, and $1$ is the only standard monomial of degree 0, so positivity is readily seen to hold, since $0 \leq m$ for all $m \in \mathcal{M}$. Now suppose $f,g,h,k$ are standard monomials with $f \prec' g$ and $h \preceq' k$ such that $fh \neq 0$ and $gk \neq 0$. If $\deg(f) < \deg(g)$, then \[\deg(fh) = \deg(f) + \deg(h) < \deg(g) + \deg(k) = \deg(gk),\] so we have $\lmsm(fh) \prec' \lmsm(gk)$ according to their degree. 

By similar analysis, if instead $\deg(h) < \deg(k)$, we'd also have that $\lmsm(fh) \prec' \lmsm(gk)$. 

On the other hand, if $\deg(f) = \deg(g)$ and $\deg(h) = \deg(k)$, then we have $\lmsm(fh) \prec' \lmsm(gk)$ since they have the same degree and $\lmsm(fh) \prec \lmsm(gk)$, since $\prec$ was a term order.
\end{proof}

\begin{remark}
We attempted to generalize Observation~\ref{obs:graded_order} to filtered pseudo-ASLs, rather than graded. However, the proof breaks down at the point where we have written $\deg(g) + \deg(k) = \deg(gk)$. In a filtered ring, we have $\deg(g) + \deg(k) \geq \deg(gk)$, but that is the wrong direction for use in that part of the argument.
\end{remark}

Not all pseudo-ASLs, nor even all ASLs, admit pseudo-ASL term orders, as the next two examples show. These ASLs are in fact both $\N$-graded (by the usual notion of degree), monomial ASLs in the sense of Section~\ref{sec:monomial}, and in both cases $R[x,y]$ admits term orders (e.g., deglex) relative to which the non-standard monomial is the leading term of the straightening laws (as discussed in \cite{GrabePauer}).

\begin{example}[ASL not admitting a pseudo-ASL term order] \label{ex:no-term-order}
Let $A = \bigslant{R[x,y]}{\langle xy - x^2 \rangle}$ with the ASL structure given by $H = \{x,y\}, \Sigma = \langle x^2 \rangle$ and the order on $H$ given by $y < x$. In this ASL structure, the standard monomials are $\{y^j : j \geq 0\} \cup \{x y^j : j \geq 0\}$. Note that in $A$, $x$ is annihilated by $y-x$, but neither $y$ nor $x$ annihilates $x$ on its own. Suppose $\preceq$ is a pseudo-ASL term order on $(A, H, \Sigma)$. Since $\preceq$ is total, we must have either $x \prec y$ or $y \prec x$. If $x \prec y$, then multiplying both sides by $x$, we get that since both $x^2$ and $xy$ are nonzero, \ref{def:termorder:mult} implies the contradiction
\[
xy = \lmsm(x^2) \prec \lmsm(xy) = xy,
\]
since in $A$ we have $x^2 = xy$ and $xy$ is standard while $x^2 \in \Sigma$ is not. Similarly if we instead have $y \prec x$. (This argument is later generalized in Observation~\ref{obs:no-binomial-ann}.)
\end{example}

\begin{example}[Another ASL not admitting a pseudo-ASL term order, but not because of the existence of proper binomial annihilators] \label{ex:no-term-order-2}
Let $A = \bigslant{R[x,y]}{\langle x^2 - y^2 \rangle}$, with the same $H,\Sigma$ as in the preceding example. Suppose to the contrary that $A$ admits a term order $\prec$. If $x \prec y$, then when we multiply both sides by $x$ we get $x^2 = y^2$ on the left and $xy$ on the right, so we must have $y^2 \prec xy$. But if we instead multiply both sides by $y$, we get $xy \prec y^2$, a contradiction. A similar contradiction arises in essentially the same way if instead $y \prec x$. (To see that Observation~\ref{obs:no-binomial-ann} is not the reason for the lack of term order here, one can check that all standard monomials are non-zerodivisors, so they have trivial annihilators. The standard monomials are $\{y^j : j \geq 0\} \cup \{x y^j : j \geq 0\}$, and the product on these standard monomials agrees with their product in $R[x,y]$ except in the case $x y^j \cdot x y^k = y^{j+k+2}$.)
\end{example}

Finally, we note that if $A$ admits a pseudo-ASL term order, then it is a domain iff all products of pairs of standard monomials are nonzero:

\begin{proposition} \label{prop:domain}
Suppose $A$ is a pseudo-ASL that admits a pseudo-ASL term order $\preceq$. Then $A$ is a domain if and only if for all standard monomials $m,m' \in A$, we have $mm' \neq 0$.
\end{proposition}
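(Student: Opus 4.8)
The plan is to dispatch the two directions separately, with essentially all the work being carried by Proposition~\ref{prop:mult}.

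The forward direction needs no term order at all: if $A$ is a domain, then each standard monomial, being a member of a free $R$-basis of $A$, is nonzero, and the product of two nonzero elements of a domain is nonzero, so $mm' \neq 0$ for all standard monomials $m, m'$.

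For the converse I would argue as follows. Assume $mm' \neq 0$ for all standard monomials $m, m'$, and let $f, g \in A$ be nonzero; the goal is $fg \neq 0$. Writing $f = \sum_{i \in [k]} r_i m_i$ and $g = \sum_{j \in [k']} r'_j m'_j$ as $R$-linear combinations of distinct standard monomials with $0 \neq r_i, r'_j \in R$ and $m_1 \succ \cdots \succ m_k$, $m'_1 \succ \cdots \succ m'_{k'}$, we have $\ltsm(f) = r_1 m_1$ and $\ltsm(g) = r'_1 m'_1$. Since $R$ is a field and $r_1, r'_1 \neq 0$, the scalar $r_1 r'_1$ is nonzero, and by hypothesis $m_1 m'_1 \neq 0$ in $A$; hence $\ltsm(f)\ltsm(g) = r_1 r'_1 (m_1 m'_1) \neq 0$. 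This is exactly the hypothesis needed to invoke Proposition~\ref{prop:mult}, which then yields $\ltsm(fg) = \ltsm(\ltsm(f)\ltsm(g))$. As $\ltsm(f)\ltsm(g)$ is a nonzero element of $A$ (and $A$ is free on the standard monomials, so a nonzero element is a genuinely nonzero combination of them), its leading term is nonzero, so $\ltsm(fg) \neq 0$ and therefore $fg \neq 0$.

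I do not expect a real obstacle: the substantive content — that leading terms multiply without cancellation once a pseudo-ASL term order is present — is precisely what Proposition~\ref{prop:mult} supplies, and the only extra point is the elementary observation that $r_1 r'_1 m_1 m'_1$ is nonzero, which uses that $R$ is a field (or at least a domain) together with the stated hypothesis. The mildly delicate spot is simply making sure one applies Proposition~\ref{prop:mult} only after checking $\ltsm(f)\ltsm(g)\neq 0$, and recording that a nonzero element of $A$ has a nonzero leading term.
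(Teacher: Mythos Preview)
Your proof is correct and follows essentially the same approach as the paper. The only cosmetic difference is that the paper re-derives the non-cancellation of the leading term directly from \ref{def:termorder:mult}, whereas you invoke Proposition~\ref{prop:mult} as a packaged result; since Proposition~\ref{prop:mult} is proved by exactly that argument, the two proofs are the same at heart.
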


\begin{proof}
If $A$ is a domain, then certainly all products of standard monomials are nonzero.

Conversely, suppose that for all standard monomials $m,m'$, we have $mm' \neq 0$. We will show that $A$ is a domain. Let $f = \sum_i r_i m_i$ and $g = \sum_j r'_j m'_j$ where $m_1 \succ m_2 \succ \dotsb$ and $m'_1 \succ m'_2 \succ \dotsb$ are standard monomials, and all $r_i,r'_j$ are nonzero elements of $R$. We show that $fg \neq 0$. By assumption $m_1 m'_1 \neq 0$. Since all products of pairs of standard monomials are nonzero, it follows from \ref{def:termorder:mult} that $\lmsm(m_1 m'_1) \succ \lmsm(m_i m'_j)$ whenever $i > 1$ or $j > 1$ (or both). Thus, in the product $fg = \sum_{i,j} r_i r'_j m_i m_j$, no term can cancel $\lmsm(m_1 m'_1)$, so the product $fg$ is nonzero. Thus $A$ is a domain.
\end{proof}

\subsubsection{Monomial pseudo-ASLs and standard monomial ideals} \label{sec:monomial}

We define a \emph{standard term} to be an $R$-multiple of a standard monomial. 

\begin{definition}[store=defn-monomial-asl, note={Monomial pseudo-ASL}]
We say a pseudo-ASL $A'$ is a \emph{monomial} pseudo-ASL if the product of any two standard monomials in $A'$ is either a standard term or 0.
\end{definition}

\begin{remark}
In the case of bideterminants (see Section \ref{sec:bideterminants} for definitions), Desarmenien showed that for all standard monomials $m_1, m_2$, it is true that $\lcsm(m_1 m_2) = 1$  \cite{desarmenien1980algorithm}. However, this is not true in an arbitrary pseudo-ASL---for example, a slight modification of Example~\ref{ex:no-term-order} to $\bigslant{R[x,y]}{\langle 2xy - x^2 \rangle}$---so we use standard \emph{terms} in the above definition rather than standard monomials.
\end{remark}

\begin{convention}
Throughout Section~\ref{sec:monomial}, we use $A'$ to denote a (monomial) pseudo-ASL. This is to highlight the fact that when we return to our theory of pseudo-ASL Gröbner bases, the original pseudo-ASL $A$ will typically \emph{not} be monomial, but in the next section we will build an auxiliary monomial pseudo-ASL related to $A$; the latter will eventually be denoted $A_{lt}$, but the results of this section apply to arbitrary monomial pseudo-ASLs, not only the ones that occur in the aforementioned auxiliary construction.
\end{convention}

Monomial pseudo-ASLs are graded pseudo-ASLs, graded by the ``monoid of standard monomials'', as detailed in the next remark.

\begin{remark}[{Monomial pseudo-ASLs are twisted monoid algebras with zero; cf. \cite[Prop.~1.11]{ES}}] \label{rmk:semigroup}
Given a monomial pseudo-ASL $A'$, 
we may define a monoid $M$ whose elements are the standard monomials of $A'$ together with 0, and with multiplication in $M$ defined by $m_1 \cdot_M m_2 := \lmsm(m_1 m_2)$ (where multiplication on the right-hand side here occurs in $A'$, and we identify $0 \in A'$ with $0 \in M$). Then the fact that $A'$ is monomial is equivalent to saying that $A'$ is a twisted monoid $R$-algebra of $M$ in which the zero of $M$ is identified with $0 \in A'$. The ``twisting'' here is a function $\tau \colon M \times M \to R$, defined by $m_1 m_2 = \tau(m_1, m_2) \lmsm(m_1 m_2)$; that is, $\tau(m_1, m_2) = \lcsm(m_1 m_2)$. As usual, because $A'$,  hence $M$, is associative, $\tau$ is in fact a 2-cocycle of $M$ with coefficients in the multiplicative semigroup of $R$, that is, for all $m_1, m_2, m_3 \in M$,
\[
\tau(m_1, m_2) \tau(m_1 m_2, m_3) = \tau(m_1, m_2 m_3) \tau(m_2, m_3).
\]
Note that $M$ is generated by the set of generators $H$ of the pseudo-ASL, so in particular is finitely generated.
\end{remark}

In monomial pseudo-ASLs, monomial ideals behave similar to monomial ideals in polynomial rings, as encapsulated in the following lemma. 

\begin{lemma}[store=lemmonomial, note=Monomial ideals in monomial pseudo-ASLs] \label{lem:monomial}
Let $A'$ be a monomial pseudo-ASL. For an ideal $I$ in $A'$, the following are equivalent:
\begin{enumerate}[label=(\Roman*),ref=(\Roman*)]
\item \label{point:gen-by-standard} $I$ is generated by standard monomials.
\item \label{point:standard-monomial-closure} For any $f \in A'$, $f$ is in $I$ if and only if every standard monomial appearing in $f$ is in $I$.
\end{enumerate}

Furthermore, when one (equivalently, both) of these conditions hold, if $G$ is any generating set of $I$ consisting only of standard monomials, then a standard monomial $m$ is in $I$ if and only if it is divisible by some element of $G$.
\end{lemma}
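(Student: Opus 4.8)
The plan is to prove the chain of equivalences and then the ``furthermore'' clause, using throughout that $A'$ is a monomial pseudo-ASL, so that the product of two standard monomials is a standard term or $0$, and that every element of $A'$ has a \emph{unique} expression as an $R$-linear combination of standard monomials (the defining property of a pseudo-ASL).

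\textbf{The implication \ref{point:standard-monomial-closure} $\Rightarrow$ \ref{point:gen-by-standard}.} This direction is the easy one: let $G$ be the set of all standard monomials appearing in some element of $I$. Each such standard monomial lies in $I$ by \ref{point:standard-monomial-closure} (take $f$ to be that standard monomial itself, or the element it appears in and invoke the ``only if''). Conversely, every $f \in I$ is an $R$-linear combination of standard monomials each appearing in $f$, hence each in $G$, so $f \in \langle G \rangle$; thus $I = \langle G \rangle$ is generated by standard monomials.

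\textbf{The implication \ref{point:gen-by-standard} $\Rightarrow$ \ref{point:standard-monomial-closure}, and the ``furthermore'' clause.} Here I would first observe that the ``only if'' direction of \ref{point:standard-monomial-closure} is trivial (if every standard monomial appearing in $f$ is in $I$, then $f$, being an $R$-linear combination of them, is in $I$), so the content is the ``if'' direction together with the divisibility criterion. Let $G$ be a generating set of $I$ consisting of standard monomials, and let $f \in I$; write $f = \sum_{i} a_i g_i$ with $a_i \in A'$ and $g_i \in G$. Expanding each $a_i$ in the standard monomial basis and using that $A'$ is monomial, each product (standard monomial of $a_i$) $\cdot\, g_i$ is a standard term $c\,m$ where $m$ is divisible (in $A'$, i.e., in the sense that there is a standard monomial $n$ with $\lmsm(ng_i) = m$) by the standard monomial $g_i \in G$. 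Hence $f$ is an $R$-linear combination of standard terms, each of whose underlying standard monomial is divisible by some element of $G$. Now I would invoke uniqueness of the standard-monomial expansion: the standard monomials actually \emph{appearing} in $f$ (those with nonzero coefficient after collecting terms) must each be among the $m$'s above, hence divisible by some element of $G$. This proves both that every standard monomial appearing in $f$ lies in $I$ (since $m$ divisible by $g_i \in G$ means $m = \lmsm(ng_i) \in I$ as $I$ is an ideal) and the forward direction of the divisibility criterion. For the reverse direction of the divisibility criterion, if $m$ is a standard monomial divisible by some $g \in G$, say $m = \lmsm(ng)$ with $n$ a standard monomial, then since $A'$ is monomial we have $ng = c\,m$ for some unit... actually $c$ need only be a nonzero element of $R$ (it is $\lcsm(ng)$, which is nonzero because $ng \neq 0$ as $\lmsm(ng)=m$ exists), and we need $m \in I$: since $R$ is a field (our standing assumption; or at minimum $c$ is invertible), $m = c^{-1}(ng) \in I$.

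\textbf{Main obstacle.} The step I expect to require the most care is the passage, in \ref{point:gen-by-standard} $\Rightarrow$ \ref{point:standard-monomial-closure}, from ``$f$ is \emph{some} $R$-linear combination of standard terms whose monomials are all $G$-divisible'' to ``\emph{every} standard monomial appearing in $f$ is $G$-divisible.'' A priori, cancellation among the many standard terms produced could, in a general ring, conjure up new monomials; what rules this out is precisely that $A'$ is monomial (so each product is already a standard term, not a genuine linear combination) together with the uniqueness of the standard-monomial expansion in a pseudo-ASL — the collected coefficient of any standard monomial not among the listed $G$-divisible ones is forced to be $0$. I would also need to be mildly careful that divisibility ``$g \mid m$ in $A'$'' is interpreted consistently — here in the sense of Definition~\ref{defn:pseudo-ASL}'s ordinary divisibility in the monoid, i.e., there is a standard monomial $n$ with $gn = m$; but in a monomial pseudo-ASL $gn$ is a standard term, and whether one wants $gn = m$ on the nose or $\lmsm(gn) = m$ matters only up to the convention in play — I would state explicitly which notion of divisibility is meant (the one from the monoid $M$ of Remark~\ref{rmk:semigroup}, equivalently $\lmsm(gn) = m$), since the coefficient $\lcsm(gn)$ is then harmlessly absorbed using that $R$ is a field.
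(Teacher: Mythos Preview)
Your proof is correct and, for the ``furthermore'' clause, matches the paper's argument essentially verbatim (expand $m = \sum f_i m_i$, expand each $f_i$ in standard monomials, use that products are standard terms or zero, then invoke uniqueness of the standard-monomial basis). For the equivalence (I) $\Leftrightarrow$ (II), however, the paper takes a different route: rather than arguing directly, it invokes \cite[Cor.~1.6(b)]{ES} together with Observation~\ref{lem:standard-all-sigma}, which lets one transport results about binomial ideals in $R[H]$ back to the given monomial pseudo-ASL structure. Your direct argument is more elementary and self-contained, and avoids the detour through the parent polynomial ring; the paper's citation is shorter to write but imports external machinery.

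One small presentational point: you phrase divisibility as ``$m = \lmsm(ng_i)$'', but Lemma~\ref{lem:monomial} is stated before any term order is in play, so $\lmsm$ is not yet available. What you actually use (and what the paper uses) is simply that $m_{ij} g_i = c_{ij}\, m$ for some nonzero $c_{ij} \in R$, whence $g_i \mid m$ via the element $c_{ij}^{-1} m_{ij}$; the appeal to $R$ being a field to invert $c_{ij}$ is exactly right. Dropping the $\lmsm$ notation would make the argument cleaner and keep it independent of any ordering.
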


The proof of this lemma follows the standard proof very closely, \emph{mutatis mutandis}; we put it in Appendix~\ref{sec:app:theory} for completeness. Given $(A',H)$, one might worry that different choices of pseudo-ASL structure $\Sigma$ could lead to different sets of standard monomial ideals; we will show in Observation~\ref{lem:standard-all-sigma} below that in fact this is not the case, and the notion of ``standard monomial ideal'' in a monomial pseudo-ASL $(A', H, \Sigma)$ only depends on $(A', H)$, but not on $\Sigma$.

\begin{definition}[Standard monomial ideal]
We call an ideal in a monomial pseudo-ASL satisfying either (hence both) of the conditions of Lemma~\ref{lem:monomial} a \emph{standard monomial ideal}.
\end{definition}

\begin{lemma} \label{lem:unique-min}
Suppose $A'$ is a monomial pseudo-ASL and $I \subseteq A'$ is a standard monomial ideal. Let $M$ be the set of standard monomials in $I$ that are minimal under divisibility, among the standard monomials in $I$. Then $M$ is finite, and any set $M'$ of standard monomials generates $I$ if and only if $M \subseteq M'$.
\end{lemma}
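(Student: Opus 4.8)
The plan is to mirror the classical proof that a monomial ideal in a polynomial ring has a unique minimal monomial generating set, isolating the two places where the pseudo-ASL setting genuinely differs. First I would unpack what the statement is really about. Since $I$ is a standard monomial ideal, Lemma~\ref{lem:monomial} tells us that $I$ is the $R$-span of the set $T_I$ of standard monomials lying in $I$, and that for \emph{any} generating set $G \subseteq T_I$ of $I$ consisting of standard monomials, a standard monomial $m$ lies in $I$ if and only if $m$ is divisible (in $A'$) by some element of $G$. So everything reduces to the divisibility preorder on $T_I$: I must exhibit a finite set $M$ of divisibility-minimal elements of $T_I$, show it generates $I$, and show it is contained in every standard-monomial generating set of $I$.

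For finiteness I would pass through the ambient monoid $\W^H$. A standard monomial of $A'$ is an abstract monomial on $H$, and if $m \mid m'$ in $\W^H$ with $m'$ standard, writing $m' = m\,m''$ in $\W^H$ forces $m''$ to be standard as well (otherwise $m' \in \Sigma$, since $\Sigma$ is a monomial ideal), so the relation $m' = m\,m''$ holds in $A'$ too; hence $\W^H$-divisibility implies $A'$-divisibility, and in particular $T_I$ is an up-set in $(\W^H,\mid)$. By Dickson's Lemma, $T_I$ has only finitely many $\W^H$-minimal elements $D$, and every element of $T_I$ is a $\W^H$-multiple—hence an $A'$-multiple—of some element of $D$, so $\langle D\rangle = I$ with $D$ finite. (Equivalently one could note that $A'$, being a finitely generated algebra over a field, is Noetherian, so $I$ is finitely generated and the standard monomials occurring in a finite generating set already generate $I$; Noetherianity is used again below in any case.) Either way, $I$ admits a finite standard-monomial generating set.

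The second and more delicate ingredient is that \emph{proper} divisibility among standard monomials of $A'$ is well-founded: there is no infinite sequence $m_1, m_2, \dots$ with $m_{i+1}\mid m_i$ and $m_i\nmid m_{i+1}$ for all $i$. In a polynomial ring this is trivial because a proper divisor has strictly smaller degree, but a monomial pseudo-ASL need not carry any compatible $\N$-grading, so instead I would argue via ascending chains of principal ideals: from $m_{i+1}\mid m_i$ we get $\langle m_i\rangle \subseteq \langle m_{i+1}\rangle$, a chain which stabilizes by Noetherianity; once $\langle m_N\rangle = \langle m_{N+1}\rangle$, writing $m_{N+1} = m_N f$ and expanding $f$ in the standard monomial basis, monomiality of $A'$ makes each $m_N p$ a standard term or zero, and comparing the coefficient of $m_{N+1}$ forces some $m_N p$ to be a nonzero scalar multiple of $m_{N+1}$, i.e. $m_N\mid m_{N+1}$—a contradiction. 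Granting well-foundedness, every element of $T_I$ lies above some divisibility-minimal element of $T_I$, so $M$ generates $I$; and since any element of $M$, being in $I$, must be divisible by a member of the finite generating set $D\subseteq T_I$, minimality forces that member to equal it, whence $M\subseteq D$ and $M$ is finite. The final equivalence then follows as usual: if $\langle M'\rangle = I$ then each $m\in M$ is divisible by some $m'\in M'\cap T_I$ and minimality gives $m'=m$, so $M\subseteq M'$; conversely if $M\subseteq M'\subseteq T_I$ then $\langle M'\rangle \supseteq \langle M\rangle = I \supseteq \langle M'\rangle$.

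I expect the well-foundedness step to be the main obstacle, precisely because the degree argument available in polynomial rings is unavailable—a monomial pseudo-ASL may fail to be $\N$-graded, and its divisibility relation is a priori only a preorder rather than a partial order—so the ascending-chain-of-principal-ideals argument (combining Noetherianity of $A'$ with monomiality) is what carries the proof. The remaining steps are routine bookkeeping parallel to the polynomial-ring case, once one is careful that ``minimal'' is read as ``having no proper (i.e.\ strictly divisor-smaller) element of $T_I$ below it''.
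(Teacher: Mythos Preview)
Your proof is correct and follows essentially the same strategy as the paper's: Noetherianity of $A'$ gives well-foundedness of proper divisibility, whence $M$ generates $I$, and both finiteness of $M$ and the containment $M\subseteq M'$ come from the minimality argument combined with Lemma~\ref{lem:monomial}. The only minor differences are that you spell out the well-foundedness step explicitly (the paper compresses it to one line) and that you derive finiteness of $M$ by first producing an auxiliary finite generating set $D$ (via Dickson's Lemma, or equivalently Noetherianity) and showing $M\subseteq D$, whereas the paper argues directly that an infinite $M$, being a divisibility antichain, would yield a strictly ascending chain of ideals.
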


This result and its proof are quite similar to Green's \cite[Proposition 2.5]{green}. There are some differences, however, so we include the full proof here. The key difference is that Green assumes an admissible term order (in his terminology), which prevents the existence of an infinite descending chain of elements, each divisible by the next, whereas we use Noetherianity of $A'$ to get the latter property. Green also assumes a strictly multiplicative basis, in which the product of two basis elements is zero or a basis element, whereas we allow the product of two standard monomials to be a scalar multiple of a standard monomial, but this change has little effect on the proof.

\begin{proof}
Because $A'$ is Noetherian, there are no infinite descending chains under the divisibility ordering, and thus by Lemma~\ref{lem:monomial}, every standard monomial in $I$ is divisible by some element of $M$. Thus $M$ generates $I$, as does any superset of $M$.

Also by Noetherianity, $M$ is finite: because no element of $M$ divides any other, and by Lemma~\ref{lem:monomial}, each $m \in M$ is not in $\langle M \backslash \{m\} \rangle$. So if $M$ were infinite, then a sequence of larger and larger finite subsets of $M$ would generate a strictly increasing nested sequence of ideals. Thus $M$ is finite.

Finally, suppose that $M'$ is a set of standard monomials that generates $I$. Let $m \in M$. Since $m$ is in $I$, by Lemma~\ref{lem:monomial}, there must be some element $m' \in M'$ such that $m' \divides m$. But by minimality of $m$ under divisibility, we must have $m' = m$, and thus $m$ must be in $M'$. As this is true for all $m \in M$, we have $M \subseteq M'$. This completes the proof.
\end{proof}

If $A'$ is a monomial pseudo-ASL on the generating set $H$, then it is the quotient of the polynomial ring $R[H]$ by a binomial ideal, i.\,e., an ideal generated by binomials and monomials. Thus many of the results of, e.g., Eisenbud \& Sturmfels \cite{ES} on binomial ideals apply; indeed, this is one of the benefits of the auxiliary algebra $A_{lt}$ being a monomial pseudo-ASL. However, while many of their proofs often work \emph{mutatis mutandis} in our setting, we will see that many of the results can in fact be applied directly. The reason this is not immediately obvious is that the notion of ``monomial'' in the setting of binomial ideals---the usual notion of monomial in a (parent) polynomial ring---need not agree with the standard monomials in a given pseudo-ASL that is a quotient of a polynomial ring by a binomial ideal. If the choice of $\Sigma$ in the pseudo-ASL structure agrees with the leading ideal of an (ordinary) Gröbner basis of the defining binomial ideal relative to some term order on $R[H]$, then the two notions will agree, but not necessarily otherwise. However, the following observation lets us directly apply most of the results from their paper that we want, without having to dig into their proofs (not even \emph{mutatis mutandis}):

\begin{observation} \label{lem:standard-all-sigma}
Let $(A',H,\Sigma)$ be a monomial pseudo-ASL
and $m \in \W^H$ be a monomial such that $m$ is standard relative to $\Sigma$ (that is, $m \notin \Sigma$). For all $\Sigma'$ such that $(A', H, \Sigma')$ is a monomial pseudo-ASL,
there is a monomial $m' \in \W^H$ that is standard relative to $\Sigma'$, and $r \in R$ such that $m=rm'$ in $A'$.

In particular, if $I \subseteq A'$ is a standard monomial ideal relative to $\Sigma$, then it is a standard monomial ideal relative to $\Sigma'$.
\end{observation}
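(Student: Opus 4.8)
The plan is to argue that in a monomial pseudo-ASL $(A', H, \Sigma)$, every element of $A'$ can be written as a standard term times... more precisely, that every monomial $m \in \W^H$ (whether standard or not) equals a scalar multiple of a standard monomial in $A'$, and then transfer this between two choices $\Sigma$ and $\Sigma'$. The key observation driving everything is that $A'$ is a monomial pseudo-ASL, so products of standard monomials stay standard (up to a scalar), which via Remark~\ref{rmk:semigroup} means the standard monomials plus $0$ form a monoid $M$ generated by $H$, with $A'$ a twisted monoid algebra. The point is that $M$ is the \emph{same} monoid regardless of whether we describe $A'$ via $\Sigma$ or via $\Sigma'$: in both descriptions the underlying $R$-module is $A'$, the multiplication is the multiplication of $A'$, and a ``standard monomial'' is precisely a nonzero element of $A'$ lying in the image of the map $\W^H \to A'$ sending an abstract monomial to the corresponding product of generators, that is, nonzero and \emph{equal to a single basis element}. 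Hmm — actually the subtlety is that ``standard monomial'' is a choice, not intrinsic; so I need to be a little careful.

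Here is the cleaner route. First I would show: for any $m \in \W^H$, viewing $\hat\phi(m) \in A'$, either $\hat\phi(m) = 0$ or $\hat\phi(m) = r m''$ for some $r \in R^\times$ (or at least $r \neq 0$) and some standard monomial $m''$ relative to $\Sigma$. Proof: induct on the total degree of $m$ (i.e.\ $\sum_x m(x)$); if $m$ is itself standard we are done; otherwise write $m = h \cdot m_0$ with $h \in H$ and $m_0 \in \W^H$ of smaller degree, apply the inductive hypothesis to $m_0$ to get $\hat\phi(m_0) = r_0 m_0'$ with $m_0'$ standard (or $0$, in which case $\hat\phi(m)=0$ and we're done), and then $\hat\phi(m) = r_0 (h \cdot m_0')$, which is a product of two standard monomials ($h$ is standard since $H \cap \Sigma = \emptyset$), hence by the monomial-pseudo-ASL hypothesis is a standard term or $0$. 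This establishes the claim relative to $\Sigma$; by symmetry the same holds relative to $\Sigma'$.

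Now given $m$ standard relative to $\Sigma$, I apply the claim \emph{relative to $\Sigma'$} to the same abstract monomial $m \in \W^H$: since $m \notin \Sigma$, $\hat\phi(m) \neq 0$ in $A'$ (it's a basis element in the $\Sigma$-description), so we cannot be in the zero case, and we get $m = r m'$ in $A'$ with $m'$ standard relative to $\Sigma'$ and $r \neq 0$. That is exactly the asserted statement. For the ``in particular'' clause: let $I$ be a standard monomial ideal relative to $\Sigma$, generated (by Lemma~\ref{lem:monomial} / Lemma~\ref{lem:unique-min}) by finitely many standard monomials $m_1, \dots, m_k$ relative to $\Sigma$. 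Each $m_i = r_i m_i'$ in $A'$ with $r_i \neq 0$ and $m_i'$ standard relative to $\Sigma'$; since $r_i$ is a unit (here I use that $R$ is a field, consistent with the paper's running convention — or at worst that $r_i$ being a nonzero scalar makes $\langle m_i \rangle = \langle m_i' \rangle$), we get $I = \langle m_1', \dots, m_k' \rangle$, which is generated by standard monomials relative to $\Sigma'$, hence is a standard monomial ideal relative to $\Sigma'$ by Lemma~\ref{lem:monomial}.

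The main obstacle I anticipate is the bookkeeping around coefficients and whether they are units: when $R$ is merely a PID or principal ideal ring (the paper's broader aspiration), ``$m = r m'$'' with $r$ a non-unit would \emph{not} give $\langle m \rangle = \langle m' \rangle$, so one would need the monomial-pseudo-ASL structure to force $r$ to be a unit — which is plausible (apply the claim in both directions: $m = r m'$ relative to $\Sigma'$ and $m' = r' m''$ relative to $\Sigma$ with $m''$ standard; then $m = r r' m''$ forces $m'' = m$ and $rr' = 1$ by uniqueness of expression in a basis, provided the composite lands back on $m$ rather than some other standard monomial, which follows from tracking abstract monomials carefully). But since the paper's default is that $R$ is a field, the units issue evaporates and the only real care needed is the induction in the first step and making sure ``standard relative to $\Sigma$'' versus ``standard relative to $\Sigma'$'' are never conflated — both are just adjectives applied to the single fixed algebra $A'$ and its single fixed multiplication.
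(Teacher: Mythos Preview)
Your proof is correct and follows essentially the same approach as the paper's. The paper's argument is more concise: it simply observes that since $H \cap \Sigma' = \emptyset$, the monomial $m$ (being a product of elements of $H$) is a product of standard-relative-to-$\Sigma'$ monomials, hence a standard term relative to $\Sigma'$ by the monomial pseudo-ASL property---your induction on degree is just the explicit unpacking of ``product of finitely many standard monomials is a standard term or $0$'' from the two-factor case in the definition.
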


\begin{proof}
Since $H \cap \Sigma' = \emptyset$ by the definition of pseudo-ASL, and $m$ is standard relative to $\Sigma$, $m$ is a product of elements of $H$. In particular, this means that $m$ is a product of standard monomials relative to $\Sigma'$. Since $(A',H,\Sigma')$ is a monomial pseudo-ASL by assumption, this product of standard monomials is a standard term relative to $\Sigma'$, say $rm'$ with $r \in R$ and $m'$ a standard monomial relative to $\Sigma'$. Thus if $I$ is generated by monomials $m_1, \dotsc, m_k$ that are standard relative to $\Sigma$, each $m_i$ is equal in $A'$ to some standard-relative-to-$\Sigma'$ term $r_i m'_i$, and thus $m'_1, \dotsc, m'_k$ are standard monomials relative to $\Sigma'$ that generate $I$ as well.
\end{proof}

The following corollary will be useful in defining LCMs when we get to pseudo-ASL Gröbner bases.

\begin{corollary}[{cf. \cite[Cor~1.6(a)]{ES}}] \label{cor:intersection-monomial}
Let $A'$ be a monomial pseudo-ASL. The intersection of two standard monomial ideals is a standard monomial ideal.
\end{corollary}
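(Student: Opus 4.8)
The plan is to verify directly that $I \cap J$ satisfies characterization \ref{point:standard-monomial-closure} of Lemma~\ref{lem:monomial}, which is the cleanest route because that characterization is purely linear-algebraic (``$f$ lies in the ideal iff every standard monomial appearing in $f$ does'') and such a property is manifestly stable under intersection. So let $I, J \subseteq A'$ be standard monomial ideals. Since $I \cap J$ is plainly an ideal, it remains to check: for every $f \in A'$, $f \in I \cap J$ if and only if every standard monomial appearing in $f$ lies in $I \cap J$.

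For the forward direction, if $f \in I \cap J$ then $f \in I$, so by Lemma~\ref{lem:monomial}\ref{point:standard-monomial-closure} applied to $I$, every standard monomial appearing in $f$ lies in $I$; the same reasoning with $J$ shows each such monomial lies in $J$; hence each lies in $I \cap J$. For the reverse direction, if every standard monomial appearing in $f$ lies in $I \cap J$, then in particular each lies in $I$, so $f \in I$ by the characterization applied to $I$, and symmetrically $f \in J$; thus $f \in I \cap J$. This proves \ref{point:standard-monomial-closure} for $I \cap J$, so it is a standard monomial ideal.

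There is essentially no obstacle here; the one point worth flagging is that the argument uses nothing about how products of standard monomials behave in $A'$ (they may be scalar multiples of standard monomials, or zero) — it rests entirely on the ``standard monomial closure'' description of standard monomial ideals supplied by Lemma~\ref{lem:monomial}, which is precisely what makes the monomial pseudo-ASL setting behave like polynomial rings in this respect. (One could alternatively deduce the corollary from \cite[Cor.~1.6(a)]{ES} together with Observation~\ref{lem:standard-all-sigma}, but the direct argument is shorter and sidesteps reconciling the two notions of ``monomial''.)
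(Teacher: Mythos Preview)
Your proof is correct and is essentially the paper's first proof (verifying characterization~\ref{point:standard-monomial-closure} directly for $I\cap J$); you have merely spelled out the trivial reverse implication that the paper leaves implicit. Your parenthetical alternative via \cite{ES} and Observation~\ref{lem:standard-all-sigma} is exactly the paper's second proof.
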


We give two proofs of this result; one for simplicity that will extend \emph{mutatis mutandis} to the case of monomial submodules of $(A')^d$, and one that illustrates the use of Observation~\ref{lem:standard-all-sigma}.

\begin{proof}[Proof that generalizes to monomial submodules]
Let $I,I'$ be two standard monomial ideals. Let $f \in I \cap I'$. Then since $I$ is standard monomial, every standard monomial $m_i$ that occurs in $f$ with nonzero coefficient occurs in $I$; similarly, $m_i$ occurs in $I'$. Thus every standard monomial in $f$ is in $I \cap I'$, and therefore $I \cap I'$ is a standard monomial ideal.
\end{proof}

\begin{proof}[Proof using Observation~\ref{lem:standard-all-sigma}]
Let $(A',H,\Sigma)$ be the given pseudo-ASL structure, and $I,J \subseteq A'$ two standard monomial ideals. Let $K$ be the defining ideal of $A'$, that is $A' = \bigslant{R[H]}{K}$; since $A'$ is a monomial pseudo-ASL, $K$ is a binomial ideal. Let $\hat{I}, \hat{J}$ be, respectively, the preimages of $I$ (resp., $J$) in $R[H]$ (that is, $\hat{I}$ is ``essentially'' $I+K$). Cor 1.6(a) of Eisenbud \& Sturmfels \cite{ES} then implies that $\hat{I} \cap \hat{J}$ is generated by a set $M$ of monomials in $R[H]$ along with the ideal $K$. Choosing a term order on $R[H]$, we may let $\Sigma'$ be the leading ideal of $K$ relative to that term order; we may assume without loss of generality that none of the elements of $M$ are in $K$ (since if they were we could simply omit them from the generating set $M$), and hence none of them are in $\Sigma'$. Then $(A',H,\Sigma')$ gives another pseudo-ASL structure on $A'$, and we find that $I \cap J$ is a standard monomial ideal relative to $\Sigma'$. But then by Observation~\ref{lem:standard-all-sigma}, $I \cap J$ is also a standard monomial ideal relative to the originally given pseudo-ASL structure $(A',H,\Sigma)$.
\end{proof}

Our last two results in this section are about monomial pseudo-ASLs in the presence of a pseudo-ASL term order.

\begin{lemma} \label{lem:mon-div}
Let $A'$ be a monomial pseudo-ASL. If $t,t'$ are standard terms such that $t \mid t'$, then there is a standard term $t''$ such that $tt'' = t'$.

If, furthermore, $A'$ admits a pseudo-ASL term order $\preceq$, then the standard term as above is unique.
\end{lemma}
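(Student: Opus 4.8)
The plan is to prove the two assertions separately: first existence of a standard term $t''$ with $tt''=t'$, then uniqueness of such $t''$ under the additional hypothesis of a pseudo-ASL term order.

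For existence, I would reduce to the case of standard \emph{monomials} and then invoke the structural results already available. Write $t = rm$ and $t' = r'm'$ with $r,r' \in R$ nonzero and $m,m'$ standard monomials; since $t \mid t'$ in $A'$, we have $m \mid m'$ as well (dividing out the nonzero scalar in the domain $R$—here I should be slightly careful, but since $R$ is a field by the default assumption of the section, or a PID, dividing by $r$ is harmless, or I can phrase divisibility of terms so that $t\mid t'$ literally means there is a standard term $s$ with $ts = t'$, in which case the statement is essentially a tautology and the content is really about standard \emph{monomials}). So the crux is: if $m,m'$ are standard monomials with $m \mid m'$, there is a standard term $t''$ with $mt'' = m'$. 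Now I would use the fact (from the monoid description in Remark~\ref{rmk:semigroup}, or directly from Observation~\ref{lem:standard-all-sigma} / Lemma~\ref{lem:monomial}) that in a monomial pseudo-ASL every standard monomial is a product of elements of $H$. Thus $m' = \hat\phi(e)$ for an abstract monomial $e \in \W^H \setminus \Sigma$ and similarly $m$ corresponds to some $e_0$; since $m \mid m'$ in $A'$, the principal ideal $\langle m \rangle$ contains $m'$, and by the "furthermore" clause of Lemma~\ref{lem:monomial}, $m'$ being in the standard monomial ideal $\langle m \rangle$ means $m'$ is divisible by $m$ in the monoid-of-standard-monomials sense; equivalently there is a standard monomial $n$ (a product of generators, namely $\hat\phi$ of $e/e_0$ suitably interpreted) with $mn = (\text{standard term})\cdot m'$—but $A'$ being monomial, $mn$ is a standard term, and one checks its leading (only) monomial must be $m'$. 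Hence $mn = rm'$ for some nonzero $r \in R$, and taking $t'' := r^{-1} n$ (if $R$ is a field) or absorbing scalars appropriately gives $m t'' = m'$. I would phrase this cleanly using the monoid $M$ of Remark~\ref{rmk:semigroup}: $m\mid m'$ means $m'= m\cdot_M n$ for some $n\in M$, i.e. $\lmsm(mn)=m'$, and monomiality gives $mn = \lcsm(mn)\,m'$, so $t'':=\lcsm(mn)\,n$ works. The main obstacle here is bookkeeping with scalars and making sure the passage "divisibility of terms $\Rightarrow$ divisibility of the underlying monomials $\Rightarrow$ a monoid factorization" is airtight; but all the needed ingredients (Lemma~\ref{lem:monomial}, Remark~\ref{rmk:semigroup}, Observation~\ref{lem:standard-all-sigma}) are in place.

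For uniqueness under a pseudo-ASL term order $\preceq$, suppose $tt'' = t' = t\tilde t''$ with $t'',\tilde t''$ both standard terms; I want $t'' = \tilde t''$. Write $t = rm$, $t'' = s\,n$, $\tilde t'' = \tilde s\,\tilde n$ with $m,n,\tilde n$ standard monomials and $r,s,\tilde s\in R$ nonzero. If $n \neq \tilde n$, then without loss of generality $n \succ \tilde n$ (or they are incomparable—but $\preceq$ is total, so one of $n\prec\tilde n$, $n=\tilde n$, $n\succ\tilde n$ holds). Now $mn \neq 0$ since $tt''=t'\neq 0$, and likewise $m\tilde n\neq 0$; then applying \ref{def:termorder:mult} with $(f,g,h,k) = (\tilde n, n, m, m)$—using $\tilde n \prec n$ strictly and $m \preceq m$—gives $\lmsm(m\tilde n) \prec \lmsm(mn)$. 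But $mn$ and $m\tilde n$ are both standard terms equal (up to nonzero scalar) to $t'/r = (m')$, hence $\lmsm(mn) = m' = \lmsm(m\tilde n)$, a contradiction. Therefore $n = \tilde n$. It remains to see the scalars agree: $tt'' = rsm n$ and $t\tilde t'' = r\tilde s\, m n$ with $mn$ a fixed nonzero standard term, say $mn = c\,m'$ with $c\in R$ nonzero; comparing $rsc\,m' = r\tilde s c\, m'$ and using that $m'$ is part of a free $R$-basis forces $rs = r\tilde s$, hence (as $r$ is a nonzero element of a field) $s = \tilde s$. Thus $t'' = \tilde t''$. The one subtlety I would flag is the appeal to \ref{def:termorder:mult}: I must check its hypotheses literally, namely $f\prec g$ strict, $h\preceq k$, and both products $fh$, $gk$ nonzero—all of which hold in the configuration above—so this is the step to state carefully, but it is not hard.

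Overall, existence is a structural unwinding via the monoid-of-standard-monomials picture plus Lemma~\ref{lem:monomial}, and uniqueness is a direct application of the strict multiplicativity axiom \ref{def:termorder:mult} together with freeness of the standard-monomial basis; I do not anticipate a genuine obstacle, only the need to handle scalars and the monomial-vs-term distinction with care.
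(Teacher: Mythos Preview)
Your uniqueness argument is correct and essentially identical to the paper's: reduce to the underlying standard monomials, use \ref{def:termorder:mult} to rule out distinct monomials, then compare coefficients using freeness of the standard-monomial basis.

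Your existence argument, however, has a genuine gap. Each of the routes you sketch either assumes the conclusion or something false:
\begin{itemize}
\item Invoking the ``furthermore'' of Lemma~\ref{lem:monomial} with $G=\{m\}$ tells you only that $m'\in\langle m\rangle$ iff $m$ divides $m'$---exactly the hypothesis you started with. The \emph{statement} of that clause does not promise a standard-term witness; that content lives in its proof, which you would need to reproduce.
\item Writing ``$m\mid m'$ means $m'=m\cdot_M n$ in the monoid $M$'' asserts precisely what must be shown: that \emph{ring} divisibility implies \emph{monoid} divisibility. This is the crux, not a definition.
\item The suggestion to take $n=\hat\phi(e/e_0)$ presumes $e_0\le e$ componentwise in $\W^H$, which need not hold in a monomial pseudo-ASL: the defining ideal is binomial, so divisibility among standard monomials is not governed by exponent-vector comparison (that would be $A_{disc}$, not a general $A'$).
\end{itemize}

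The paper's argument is both simpler and fills this gap directly. By hypothesis there is some $f\in A'$ with $tf=t'$; write $f=\sum_i r_i m_i$ in the standard-monomial basis. Since $A'$ is monomial, each $m_i t$ is a standard term or zero, so $t'=\sum_i r_i(m_i t)$ expresses the single standard term $t'$ as an $R$-linear combination of standard terms. Hence some $m_i t$ must be a nonzero scalar multiple of $t'$, say $r_i m_i t = r\,t'$, and $t'':=(r_i/r)\,m_i$ does the job. No monoid detour, no appeal to other lemmas---just expand the witness and use that products of standard monomials are standard terms.
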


\begin{proof}
By assumption, there is some $f \in A'$ such that $tf=t'$. Write $f = \sum_i r_i m_i$, where each $r_i \in R$ and each $m_i$ is a standard monomial. Since a product of standard monomials is a standard term or 0, we have $ft = \sum_i r_i m_i t$, where each $m_i t$ is either a standard term or 0. But since $t'=ft$, the standard monomial underlying $t'$ must appear as a nonzero multiple of at least one element of the set $\{m_i t\}$. If $r t' = r_i m_i t$, then we have $t' = (\nicefrac{r_i}{r}) m_i t$, and thus the standard term $t'' = (\nicefrac{r_i}{r}) m_i$ is such that $tt'' = t'$.

For the furthermore, now suppose that $A'$ admits a pseudo-ASL term order. Suppose for the sake of contradiction that $t,t'$ are two distinct standard terms such that $mt = mt' = m'$. If $\lmsm(t) = \lmsm(t')$, then $(t-t')m = 0$ is a relation of the form $r\lmsm(t) m = 0$ with $r$ a nonzero element of $R$, which is impossible, because $mt = m' \neq 0$ is an $R$-scalar multiple of $\lmsm(t) m$. Thus $t,t'$ are terms on two distinct standard monomials. Without loss of generality, suppose $\lmsm(t) \prec \lmsm(t')$. Since $mt = mt' = m' \neq 0$, \ref{def:termorder:mult} gives us that $\lmsm(mt) \prec \lmsm(mt')$, but this is again a contradiction as both sides here are equal to $m'$. 
\end{proof}

\begin{lemma} \label{lem:div}
Suppose $A'$ is a monomial pseudo-ASL, and suppose $\preceq$ is a pseudo-ASL term order on $A'$. If $x,y \in A'$ are two distinct standard monomials such that $x \divides y$, then $x \preceq y$.
\end{lemma}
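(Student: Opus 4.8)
The plan is to reduce the statement to a single application of the multiplicativity axiom \ref{def:termorder:mult}, after using Lemma~\ref{lem:mon-div} to exhibit an explicit standard factor witnessing $x \divides y$.

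\textbf{Step 1 (factor $y$ through $x$).} Since $x$ and $y$ are standard monomials, in particular standard terms, and $x \divides y$, Lemma~\ref{lem:mon-div} produces a standard term $t''$ with $x t'' = y$. Write $t'' = r m$ with $0 \neq r \in R$ and $m$ a standard monomial. Because $A'$ is a monomial pseudo-ASL, $xm$ is either a standard term or $0$; it cannot be $0$ since $y \neq 0$ (standard monomials are basis elements of $A'$, hence nonzero), so $xm = s m'$ for some $0 \neq s \in R$ and some standard monomial $m'$. Then $rs\,m' = y$, and comparing against the unique expression of the standard monomial $y$ in the basis of standard monomials forces $m' = y$ and $rs = 1$; in particular $s \neq 0$.

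\textbf{Step 2 (dispose of the degenerate factor).} If $m = 1$, then $y = x t'' = r x$, and uniqueness of the standard-monomial expansion of $y$ forces $x = y$, contradicting $x \neq y$. Hence $m \neq 1$, and combining this with positivity \ref{def:termorder:positive} (which gives $1 \preceq m$) we get the strict inequality $1 \prec m$.

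\textbf{Step 3 (apply the term-order axiom).} Apply \ref{def:termorder:mult} with $f = 1$, $g = m$, $h = k = x$: we have $f \prec g$ and $h \preceq k$, while $fh = x \neq 0$ and $gk = xm = s y \neq 0$, so $\lmsm(x) \prec \lmsm(xm)$. Since $\lmsm(x) = x$ and $\lmsm(xm) = \lmsm(s y) = y$, this yields $x \prec y$, which is even stronger than the claimed $x \preceq y$.

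The only real friction is bookkeeping: in a monomial pseudo-ASL products of standard monomials can acquire an $R$-scalar, so one must phrase everything in terms of standard \emph{terms} and track coefficients carefully, and one must separately rule out the trivial factor $m = 1$. Neither is a genuine obstacle; once they are handled, the conclusion falls directly out of \ref{def:termorder:positive} and \ref{def:termorder:mult}.
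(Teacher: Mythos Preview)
Your proof is correct and follows essentially the same approach as the paper's: extract a standard-term factor via Lemma~\ref{lem:mon-div}, rule out the trivial factor $m=1$, invoke positivity to get $1 \prec m$, and then apply \ref{def:termorder:mult} with $f=1$, $g=m$, $h=k=x$. Your Step~1 is more explicit than the paper in tracking the scalar coefficients, but the argument is otherwise identical.
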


\begin{proof}
If $x = y$ we are done. Otherwise, let $r \in R$ and $m \neq 1$ be a standard monomial such that $xrm= y$, which exists by Lemma~\ref{lem:mon-div}. We have $1 \prec m$ by \ref{def:termorder:positive}. Further, we have $xm \neq 0$, since $xrm = y \neq 0$. Thus \ref{def:termorder:mult} applies, giving us 
\[
\lmsm(1 \cdot x) \prec \lmsm(m \cdot x). 
\]
Now, since $x$ is a standard monomial by assumption, we have $\lmsm(1x) = \lmsm(x) = x$. Since $xm$ is a standard term that is an $R$-multiple of $y$, we have $\lmsm(mx) = y$. Putting these together, we get $x \prec y$, as claimed.
\end{proof}

\subsubsection{Algebras of leading terms}
\label{sec:algebra-leading-terms}
The straightening law in a pseudo-ASL results in divisibility of standard monomials not always being as clear-cut as divisibility of ordinary monomials in a polynomial ring; put another way, because of the straightening law, many pseudo-ASLs (and ASLs) of interest are not monomial pseudo-ASLs. However, because of the importance of monomial ideals and divisibility of monomials in \gb basis theory, we find it useful to have an auxiliary monomial pseudo-ASL $A_{lt}$ associated to any pseudo-ASL $A$, in which the leading terms will live, and where divisibility in the auxiliary algebra will govern what we mean for one standard monomial to ``divide'' another. This approach is similar to the ``graded structures'' approach developed by Robbiano \cite{robbiano} and Mora \cite{mora, moraIntro} for \gb bases in other settings. We encapsulate our desired monomial pseudo-ASL in the following definition, and we give two key examples below, that exist relative to any pseudo-ASL term order.

\begin{definition}[store=defalt, note=Algebra of leading terms] \label{def:ALT}
Given an $R$-pseudo-ASL $(A, H, \Sigma)$ with pseudo-ASL term order $\preceq$, we say that an algebra $A_{lt}$ is an \emph{algebra of leading terms} for $(A, H, \Sigma, \preceq)$, if 
\begin{enumerate}
\item \label{point:a-lt-asl} $A_{lt}$ is also an $R$-pseudo-ASL governed by the same $(H,\Sigma)$, and 
\item \label{point:pi-let} if $\pi_{lt}$ denotes the unique $R$-linear bijection $A \to A_{lt}$ that restricts to the identity map on standard monomials, then for all standard monomials $m$, $m'$, 
\[
\pi_{lt}(m)\pi_{lt}(m') = 0,\qquad \text{or}\qquad  \pi_{lt}(m)\pi_{lt}(m') = \pi_{lt}(\ltsm(mm')).
\]
\end{enumerate}
\end{definition}

\begin{lemma} \label{lem:ALT}
If $A_{lt}$ is an algebra of leading terms for some $(A,\preceq)$, then $A_{lt}$ is a monomial pseudo-ASL.
\end{lemma}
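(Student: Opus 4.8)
The plan is to unwind the definitions: we must show that the product of any two standard monomials in $A_{lt}$ is either a standard term or $0$, which is exactly the content of Definition~\ref{defn-monomial-asl}. First I would observe that by part~\ref{point:a-lt-asl} of Definition~\ref{def:ALT}, $A_{lt}$ is an $R$-pseudo-ASL with standard monomials indexed by $\W^H \setminus \Sigma$ via the bijection $\pi_{lt}$; so the standard monomials of $A_{lt}$ are precisely the elements $\pi_{lt}(m)$ for $m$ a standard monomial of $A$. Thus to check the monomial condition, it suffices to take two standard monomials of $A_{lt}$, write them as $\pi_{lt}(m)$ and $\pi_{lt}(m')$ for standard monomials $m, m'$ of $A$, and analyze their product.

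Now I would invoke part~\ref{point:pi-let} of Definition~\ref{def:ALT} directly: it tells us that $\pi_{lt}(m)\pi_{lt}(m')$ is either $0$ or equals $\pi_{lt}(\ltsm(mm'))$. In the first case we are done. In the second case, recall that $\ltsm(mm') = \lcsm(mm')\,\lmsm(mm')$ is by definition a standard term of $A$, i.e.\ an $R$-multiple of the standard monomial $\lmsm(mm')$ (note $mm' \neq 0$ here, since otherwise $\ltsm(mm')$ would be $0$, handled in the first case — or one can simply note $\ltsm(0) = 0$). Applying the $R$-linearity of $\pi_{lt}$, we get $\pi_{lt}(\ltsm(mm')) = \lcsm(mm')\,\pi_{lt}(\lmsm(mm'))$, which is an $R$-multiple of the standard monomial $\pi_{lt}(\lmsm(mm'))$ of $A_{lt}$, hence a standard term of $A_{lt}$. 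Either way, the product of the two standard monomials of $A_{lt}$ is a standard term or $0$, so $A_{lt}$ is a monomial pseudo-ASL.

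I do not anticipate a genuine obstacle here — this lemma is essentially a repackaging of the defining properties of an algebra of leading terms, and the only thing to be careful about is keeping the bookkeeping straight between standard monomials/terms in $A$ versus in $A_{lt}$, and making sure the $R$-linearity of $\pi_{lt}$ is used to push the scalar $\lcsm(mm')$ through. The one-line summary is: condition~\ref{point:pi-let} of Definition~\ref{def:ALT} says products of standard monomials land in $\pi_{lt}(\{\text{standard terms}\} \cup \{0\})$, and by linearity of $\pi_{lt}$ that image is exactly the standard terms of $A_{lt}$ together with $0$, which is the definition of a monomial pseudo-ASL.
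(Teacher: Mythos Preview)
Your proof is correct and follows essentially the same argument as the paper: both invoke condition~\ref{point:pi-let} of Definition~\ref{def:ALT} to conclude that a product of standard monomials in $A_{lt}$ is either $0$ or $\pi_{lt}$ of a standard term in $A$, and then use that $\pi_{lt}$ (being $R$-linear and the identity on standard monomials) carries standard terms to standard terms.
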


\begin{proof}
Suppose $m_1, m_2 \in A_{lt}$ are standard monomials. If $m_1 m_2 = 0$ we are done. Otherwise, let $\overline{m}_i = \pi_{lt}^{-1}(m_i)$. Then by assumption $\pi_{lt}(\overline{m}_1) \pi_{lt}(\overline{m}_2) = m_1 m_2 \neq 0$. Thus, by Definition \ref{def:ALT}, \[\pi_{lt}(\ltsm(\overline{m}_1 \overline{m}_2)) = \pi_{lt}(\overline{m}_1) \pi_{lt}(\overline{m}_2) = m_1 m_2.\] Since the left-hand side here is $\pi_{lt}$ applied to a standard term, it is, itself, a standard term, and we are done.
\end{proof}

\begin{notation}
Henceforth when we write $(A, \preceq, A_{lt})$ it is assumed that $A$ is an $R$-pseudo-ASL, $\preceq$ is a pseudo-ASL term order, and $A_{lt}$ is an algebra of leading terms for $(A,\preceq)$.
\end{notation}

\begin{observation} \label{obs:ASLlt}
If $A$ is an ASL on $(H, \leq, \Sigma)$, and $\preceq$ is a pseudo-ASL term order on $A$, then any algebra of leading terms $A_{lt}$ relative to $(A,\preceq)$ is an ASL on the same $(H, \leq, \Sigma)$.
\end{observation}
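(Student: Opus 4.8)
The plan is to verify that an algebra of leading terms $A_{lt}$ for an ASL $A$ satisfies the straightening condition of Definition~\ref{defn:hodge-algebra} with respect to the very same partial order $\leq$ on $H$ that $A$ carries. By Definition~\ref{def:ALT}\ref{point:a-lt-asl}, $A_{lt}$ is already a pseudo-ASL on $(H,\Sigma)$, so the only thing to check is the straightening/ordering condition: for each generator $m \in \Sigma$, writing $\hat\phi(m) = \sum_i r_i m_i$ as an $R$-linear combination of distinct standard monomials \emph{in $A_{lt}$}, we must show that $x \in H$, $x \divides m$ implies that for every $i$ there is some $y_i \in H$ with $y_i \divides m_i$ and $y_i < x$.

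First I would identify the straightening relation of a monomial $m \in \Sigma$ inside $A_{lt}$ in terms of the one in $A$. The key point is Proposition~\ref{prop:mult} (specifically \eqref{obs:mult-eqn2}): since $m \in \Sigma$ with $H \cap \Sigma = \emptyset$, $m$ is a product of generators $h_1 \cdots h_k \in H$, and in $A_{lt}$ this product equals $\pi_{lt}$ applied to an iterated $\ltsm$ of the corresponding product in $A$ — provided none of the relevant intermediate products vanish. So the expansion of $m$ in $A_{lt}$ is, up to a nonzero scalar, the single standard monomial $\lmsm(m)$ (the leading monomial of the straightening expansion of $m$ in $A$), \emph{if} that leading term is nonzero; if instead $\pi_{lt}(h_1)\cdots\pi_{lt}(h_k) = 0$ in $A_{lt}$, then $m = 0$ in $A_{lt}$ and the straightening condition holds vacuously. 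Thus in the nonvacuous case the expansion $m = \sum_i r_i m_i$ in $A_{lt}$ has a single term, $r_1 m_1$ with $m_1 = \lmsm(m)$ (taken with respect to the term order used to define $A_{lt}$).

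Next I would invoke the ASL structure on $A$: there, $\hat\phi(m) = \sum_j s_j n_j$ with the ASL condition guaranteeing that for each $j$, and each $x \in H$ with $x \divides m$, there is $y_j \in H$ with $y_j \divides n_j$ and $y_j < x$. Now $m_1 = \lmsm(m)$ is one of the $n_j$ appearing in the $A$-expansion (it is the $\preceq$-largest standard monomial appearing in $\hat\phi(m)$ viewed in $A$), so applying the ASL condition of $A$ to this particular $n_j = m_1$ gives exactly a $y_1 \in H$ with $y_1 \divides m_1$ and $y_1 < x$. Since $m_1$ is the only standard monomial in the $A_{lt}$-expansion of $m$, this is precisely the straightening condition for $A_{lt}$, and the proof is complete. (Observation~\ref{obs:ASLlt} for $A_{gen}$ and $A_{disc}$ specifically would then follow once those are shown to be algebras of leading terms, via Proposition~\ref{prop:agen-adisc}.)

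The main obstacle I anticipate is the bookkeeping around the vacuous/zero case and making sure $\lmsm(m)$ genuinely appears among the standard monomials of the $A$-expansion of $m$ — i.e.\ that one may legitimately transfer the $A$-straightening condition applied to the single monomial $m_1 = \lmsm(m)$. One should be careful that ``$\lmsm(m)$'' refers to the leading monomial of the reduced standard-monomial expansion of the element $\hat\phi(m) \in A$, and that this is well-defined because $\Sigma$ being an ideal and the pseudo-ASL term order being a well-order (Proposition~\ref{prop:well-order}) guarantee a unique such expansion; the rest is a direct application of Definitions~\ref{defn:hodge-algebra} and~\ref{def:ALT}. No deep argument is needed — it is essentially the observation that passing to $A_{lt}$ only \emph{shrinks} each straightening relation to (a scalar times) its leading monomial, and shrinking a relation that satisfies the ASL ordering condition to one of its constituent monomials preserves that condition.
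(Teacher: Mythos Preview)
Your proposal is correct and follows essentially the same approach as the paper's proof, which simply notes that the monomial(s) on the RHS of each straightening law in $A_{lt}$ are among those on the RHS of the corresponding law in $A$, so the ASL ordering condition is inherited. Your version is more explicit in identifying that single RHS monomial in $A_{lt}$ as $\lmsm(m)$ via iterated application of Definition~\ref{def:ALT} and Proposition~\ref{prop:mult}, which is a correct and useful refinement of the paper's one-line argument.
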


\begin{proof}
Since the only additional structure on an ASL relative to a pseudo-ASL is the partial order $\leq$ on $H$, and the fact that the straightening laws satisfy the ASL condition, that is all we need to show. However, since the monomial occurring on the RHS of a (pseudo-)straightening law in $A_{lt}$ is among the monomials occurring on the RHS of the straightening law in $A$, the ASL condition is automatically satisfied.
\end{proof}

\newcommand{\Adisc}{A_{disc}}

\begin{definition}[$A_{gen}$ and $A_{disc}$ --- two \emph{extreme} algebras of leading terms]
\label{defn:agen-adisc}
\begin{enumerate}
\item (cf. discrete ASL \cite{deconcini1987hodge}) Given $(H, \Sigma)$, the \emph{discrete (pseudo-)ASL} is defined as $\bigslant{R[H]}{\langle \Sigma \rangle}$; we denote this by  $\Adisc$. 
\item Given $(A,H,\Sigma,\preceq)$, we define the \emph{generic algebra of leading terms}, denoted $A_{gen}$, as the $R$-algebra with $R$-basis the standard monomials $\W^H \backslash \Sigma$, and with multiplication defined as follows. Let $\pi_{lt} \colon A \to A_{gen}$ be the $R$-linear bijection that is the identity on standard monomials. Given two standard monomials $m,m' \in A$, we define multiplication in $A_{gen}$ by
\begin{equation} \label{eq:agen}
\pi_{lt}(m) \pi_{lt}(m') = \pi_{lt}(\ltsm(mm'))
\end{equation}
for all standard monomials $m,m' \in A$, and then extending by linearity.
\end{enumerate}
\end{definition}

The definition of $A_{gen}$ amounts to taking the straightening rules in $A$ and ``truncating'' them after the leading term on their right-hand sides. That is, if $m$ is an element of $\Sigma$, then the rewriting rule for $m$ in $A_{gen}$ is defined to be $m = \ltsm(m)$; note that in the definition of $A_{gen}$ we include \eqref{eq:agen} for \emph{all} $mm' \in \Sigma$, not just the generators of $\Sigma$. The fact that we have to impose \eqref{eq:agen} for all $mm' \in \Sigma$ and not just generators is closely related to the fact that, even in an ASL, the straightening rules only for the generators of $\Sigma$ need not give a full set of defining relations for the ASL \cite{trung,GrabePauer}, though they do  in the case of graded ASLs \cite[Prop.~1.1]{deconcini1987hodge}.

\begin{proposition} \label{prop:agen-adisc}
Let  $(A, H, \Sigma)$ be a pseudo-ASL with a pseudo-ASL term order $\preceq$. Then the discrete pseudo-ASL $\Adisc$ and the generic algebra of leading terms $A_{gen}$ are both algebras of leading terms for $(A, H, \Sigma, \preceq)$. 
\end{proposition}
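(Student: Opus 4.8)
The plan is to verify, separately for $A_{disc}$ and for $A_{gen}$, the two conditions in Definition~\ref{def:ALT} of being an algebra of leading terms for $(A,H,\Sigma,\preceq)$. For $A_{disc} = R[H]/\langle\Sigma\rangle$ this is essentially bookkeeping: since $\langle\Sigma\rangle$ is a monomial ideal, Macaulay's Basis Theorem makes $A_{disc}$ a free $R$-module with basis $\W^H\setminus\Sigma$, and as $H\cap\Sigma=\emptyset$ this exhibits $A_{disc}$ as a pseudo-ASL governed by the same $(H,\Sigma)$, giving condition~\ref{point:a-lt-asl}. For condition~\ref{point:pi-let}, given standard monomials $m,m'$, their product in $A_{disc}$ is the monomial $mm'\in\W^H$ when $mm'\notin\Sigma$ and $0$ otherwise; in the first case $mm'$ is also a standard monomial of $A$, so $\ltsm(mm')=mm'$ and $\pi_{lt}(\ltsm(mm'))$ is exactly that product, and in the second case the product is $0$.

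For $A_{gen}$, the only point with real content is that the multiplication declared in Definition~\ref{defn:agen-adisc} is associative; granting that, the rest is routine. Indeed, the product is commutative because $\ltsm(mm')=\ltsm(m'm)$, unital with unit $1$ because $\ltsm(1\cdot m)=m$, has $R$-basis $\W^H\setminus\Sigma$ by construction, and $H$ generates $A_{gen}$ as an algebra because building up any standard monomial $m$ one generator at a time keeps every partial product a divisor of $m$, hence outside $\Sigma$, so the partial products are reconstructed on the nose. Thus $A_{gen}$ would be a pseudo-ASL on $(H,\Sigma)$ (condition~\ref{point:a-lt-asl}), and condition~\ref{point:pi-let} is immediate from the definition of the product (with the convention $\ltsm(0)=0$). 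I would reduce associativity to the single identity
\begin{equation}\label{eq:assoc-red}
\ltsm\bigl(\ltsm(m_1m_2)\,m_3\bigr)=\ltsm\bigl(m_1\,\ltsm(m_2m_3)\bigr)
\end{equation}
for all standard monomials $m_1,m_2,m_3$ (conventions $\ltsm(0)=0$, $0\cdot x=0$): unwinding the definition of the product in $A_{gen}$, $(\pi_{lt}(m_1)\pi_{lt}(m_2))\pi_{lt}(m_3)$ is $\pi_{lt}$ applied to the left side of~\eqref{eq:assoc-red} and $\pi_{lt}(m_1)(\pi_{lt}(m_2)\pi_{lt}(m_3))$ to the right side, and $\pi_{lt}$ is injective.

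When none of $m_1m_2,\ m_2m_3,\ \lmsm(m_1m_2)\,m_3,\ m_1\,\lmsm(m_2m_3)$ vanishes in $A$, identity~\eqref{eq:assoc-red} follows from Proposition~\ref{prop:mult}: two applications of~\eqref{obs:mult-eqn1}, first with $(m_1m_2,m_3)$ and then with $(m_1,m_2m_3)$ (using $\ltsm(m_i)=m_i$), show that both sides equal $\ltsm(m_1m_2m_3)$. When the honest product $m_1m_2m_3$ is $0$ in $A$, a short argument with axiom~\ref{def:termorder:mult} forces $\lmsm(m_1m_2)\,m_3=0$ and $m_1\,\lmsm(m_2m_3)=0$ — otherwise the leading monomial of $m_1m_2m_3$ computed through that one-sided product could not be cancelled by the remaining terms — so both sides of~\eqref{eq:assoc-red} are $0$.

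The hard part, and the only place where being a \emph{pseudo-ASL term order} rather than an arbitrary total order is used, is the remaining case: $m_1m_2m_3\neq 0$ in $A$ while exactly one of $\lmsm(m_1m_2)\,m_3$ and $m_1\,\lmsm(m_2m_3)$ vanishes. There one side of~\eqref{eq:assoc-red} is $0$ and the other is a nonzero standard term, so this configuration must be ruled out; equivalently, one must prove that for standard monomials with $m_1m_2\neq 0\neq m_2m_3$ one has $\lmsm(m_1m_2)\,m_3=0$ if and only if $m_1\,\lmsm(m_2m_3)=0$. I expect this to be the main obstacle, and to hinge on the restrictions a pseudo-ASL term order places on annihilation among standard monomials — in the spirit of the obstruction isolated in Example~\ref{ex:no-term-order} and Observation~\ref{obs:no-binomial-ann} — which force the ``spot where the leading term dies'' to be the same however the triple product is bracketed; that is precisely what~\eqref{eq:assoc-red} records. (An alternative to the associativity computation would be to exhibit $A_{gen}$ directly as a quotient $R[H]/K$ with $K=\langle\, m-\ltsm(\hat{\phi}(m)) : m\in\Sigma\,\rangle$; the crux there is showing that $\W^H\setminus\Sigma$ remains $R$-linearly independent modulo $K$, which again comes down to the same interaction between the term order and the straightening rules.)
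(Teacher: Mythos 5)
Your proof of the $A_{disc}$ half is essentially the paper's. For $A_{gen}$, you correctly identify that the whole content is associativity and that it reduces to
\[
\ltsm\bigl(\ltsm(m_1m_2)\,m_3\bigr)=\ltsm\bigl(m_1\,\ltsm(m_2m_3)\bigr),
\]
and your sketch for the case $m_1m_2m_3=0$ (no lower term can cancel the image of the leading term) is exactly the paper's Lemma~\ref{lem:triplezero}, which it states and proves inline and then applies twice (once as written, once with indices permuted).

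The gap is in how you bracket the nonzero case. You split $m_1m_2m_3\neq 0$ into two sub-cases according to whether the one-step leading-term products $\lmsm(m_1m_2)\,m_3$ and $m_1\,\lmsm(m_2m_3)$ both survive, and the ``mixed'' sub-case is what you flag as the hard part and leave open. But the paper does not confront that sub-case separately, because it does not re-derive the three-factor identity from \eqref{obs:mult-eqn1} on the spot: it cites \eqref{obs:mult-eqn3} of Proposition~\ref{prop:mult}, which was established before the definition of $A_{gen}$ was even made. That equation says, whenever $\ltsm(m_1)\ltsm(m_2)\ltsm(m_3)\neq 0$ (which for standard monomials is just $m_1m_2m_3\neq 0$), that
\[
\ltsm\bigl(\ltsm(m_1m_2)\,m_3\bigr)=\ltsm\bigl(m_1m_2m_3\bigr)
\quad\text{and}\quad
\ltsm\bigl(m_1\,\ltsm(m_2m_3)\bigr)=\ltsm\bigl(m_1m_2m_3\bigr),
\]
so both sides of your target identity equal the same nonzero term and, as a by-product, the ``mixed'' configuration you were worried about simply cannot occur. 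Your attempt to get the same thing by applying \eqref{obs:mult-eqn1} twice by hand imports the extra hypothesis that $\ltsm(m_1m_2)\,m_3\neq 0$, which is why you found yourself unable to close the case; the fix is to invoke \eqref{obs:mult-eqn3} directly rather than re-derive it. (You need neither Observation~\ref{obs:no-binomial-ann} nor the annihilation analysis you gesture toward; those are proved later, using $A_{gen}$, and would be circular here.) With that one substitution your argument matches the paper's.
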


It follows from Observation~\ref{obs:ASLlt} that if $A$ is an ASL governed by  $(H,\leq,\Sigma)$, then $\Adisc$ and $A_{gen}$ are ASLs governed by the same $(H,\leq,\Sigma)$ as well. Whereas $A_{disc}$ was already considered in \cite{deconcini1987hodge} as a deformation of an ASL to one that is a quotient of a polynomial ring by a monomial ideal, $A_{gen}$ provides an ASL that is similarly related to the original ASL $A$, but now is a quotient of $R[H]$ by a binomial ideal. (We also note that $\Adisc$ is always an ASL, even if $A$ is not an ASL; we do not know whether this holds for $A_{gen}$.)

\begin{proof}[Proof of Proposition~\ref{prop:agen-adisc}]
$[\Adisc{}]$: Since the standard monomials are precisely those not in $\Sigma$, and the monomials in $\Sigma$ are the only monomials occurring in the $R[H]$-ideal $\langle \Sigma \rangle$, the standard monomials certainly form an $R$-basis for $\Adisc$. It remains to show that $\Adisc$ satisfies the multiplicative property of Definition~\ref{def:ALT}. Towards that end, let $m,m' \in A$ be two standard monomials. If $\pi_{lt}(m) \pi_{lt}(m') = 0$, then we are done. If $\pi_{lt}(m) \pi_{lt}(m') \neq 0$, then by definition of $\Adisc$ as $\bigslant{R[H]}{\langle \Sigma \rangle}$, we must have that $\pi_{lt}(m) \pi_{lt}(m')$ does not lie in $\Sigma$, hence is a standard term. Thus, since the product $mm'$ in $R[H]$ is not in $\Sigma$, we have that in $A$, no straightening occurs in the product $mm'$; thus we have that $mm'$ is a standard term in $A$ as well, and in fact must be the same standard term in $A$ as it is in $\Adisc$. Since $mm'$ is a standard term, we have $mm' = \ltsm(mm')$, and since it is the same standard term as in $\Adisc$, we have $\pi_{lt}(mm') = \pi_{lt}(m) \pi_{lt}(m')$. Putting these together, we get $\pi_{lt}(m)\pi_{lt}(m') = \pi_{lt}(\ltsm(mm'))$, as desired.

$[A_{gen}]$: First, we show that $A_{gen}$ is in fact an associative algebra. For this, it suffices to prove associativity for standard monomials, as then full associativity follows by bilinearity of the product, since the standard monomials span (in fact are a basis for) $A_{gen}$ by construction. Towards this end, let $m_1, m_2, m_3 \in A$ be standard monomials. We have
\begin{align}
&(\pi_{lt}(m_1) \pi_{lt}(m_2)) \pi_{lt}(m_3) \nonumber \\
&\qquad \qquad \qquad = \pi_{lt}(\ltsm(m_1 m_2)) \pi_{lt}(m_3) \eqcommentnear{by definition of $A_{gen}$} \nonumber \\
 &\qquad \qquad \qquad  = \pi_{lt}(\ltsm(\ltsm(m_1 m_2) m_3)) \eqcommentnear{by definition of $A_{gen}$} \nonumber\\
 &\qquad \qquad \qquad  = \pi_{lt}(\ltsm(\ltsm(m_1 m_2) \ltsm(m_3))) \eqcommentnear{since $m_3$ is standard}. \label{eq:Agen1} 
\end{align}
Similarly, we get
\begin{equation}
\pi_{lt}(m_1) (\pi_{lt}(m_2) \pi_{lt}(m_3)) = \pi_{lt}(\ltsm(\ltsm(m_1) \ltsm(m_2 m_3))). \label{eq:Agen2}
\end{equation}

So it is necessary and sufficient to show that 
\[
\ltsm(\ltsm(m_1 m_2) \ltsm(m_3)) = \ltsm(\ltsm(m_1) \ltsm(m_2 m_3)). 
\]
If $\ltsm(m_1) \ltsm(m_2) \ltsm(m_3) \neq 0$, then by \eqref{obs:mult-eqn3}, both of the preceding are equal to $\ltsm(\ltsm(m_1) \ltsm(m_2) \ltsm(m_3))$, as desired.

On the other hand, if $\ltsm(m_1) \ltsm(m_2) \ltsm(m_3) = 0$, we use the following lemma:

\begin{lemma} \label{lem:triplezero}
Given $(A,\preceq)$, if $m_1,m_2.m_3$ are three standard monomials in $A$ then
\[
m_1 m_2 m_3 = 0 \Rightarrow \ltsm(m_1 m_2) m_3 = 0.
\]
\end{lemma}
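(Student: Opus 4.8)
The plan is to unwind the product $m_1m_2$ into standard monomials and then argue that the leading monomial of $(\ltsm(m_1m_2))m_3$ cannot be cancelled, so if it were nonzero the whole product $m_1m_2m_3$ would be nonzero. First I would dispose of the trivial case: if $m_1m_2 = 0$ then $\ltsm(m_1m_2) = 0$ and there is nothing to prove, so assume $m_1m_2 \neq 0$ and write its unique standard-monomial expansion $m_1m_2 = \sum_{i} r_i n_i$ with nonzero $r_i \in R$ and $n_1 \succ n_2 \succ \dotsb$, so that $\ltsm(m_1m_2) = r_1 n_1$. Since $r_1$ is a unit (or at least a non-zerodivisor, as $R$ is a field, or more generally a PID), the desired conclusion $\ltsm(m_1m_2)m_3 = 0$ is equivalent to $n_1 m_3 = 0$.

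Next I would argue by contradiction: suppose $n_1 m_3 \neq 0$, and let $\mu := \lmsm(n_1 m_3)$, which appears in $n_1 m_3$ with coefficient $\lcsm(n_1 m_3) \neq 0$. The key step is to show $\mu$ does not appear in $n_i m_3$ for any $i \geq 2$. If $n_i m_3 = 0$ this is immediate; if $n_i m_3 \neq 0$, then since $n_i \prec n_1$ (strictly) and $m_3 \preceq m_3$, with $n_i m_3 \neq 0$ and $n_1 m_3 \neq 0$, axiom \ref{def:termorder:mult} gives $\lmsm(n_i m_3) \prec \mu$, so indeed $\mu$ does not appear in $n_i m_3$. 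Therefore, distributing $m_1m_2m_3 = \sum_i r_i (n_i m_3)$, the coefficient of the standard monomial $\mu$ in this sum equals $r_1 \lcsm(n_1 m_3)$, which is nonzero. Hence $m_1m_2m_3 \neq 0$, contradicting the hypothesis; so $n_1m_3 = 0$ and thus $\ltsm(m_1m_2)m_3 = 0$.

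I do not expect a serious obstacle here: the argument is essentially the standard ``leading term survives'' bookkeeping. The only points requiring care are (i) invoking \ref{def:termorder:mult} with its strict inequality in the first slot — which is exactly what we have, since $n_i \prec n_1$ strictly for $i \geq 2$ — and (ii) correctly handling the standard monomials $n_i$ for which $n_i m_3 = 0$, so that they drop out of the sum without affecting the coefficient of $\mu$. Since $A$ need not be a monomial pseudo-ASL, each $n_i m_3$ is in general a linear combination of standard monomials rather than a single term, but this does not affect the argument: all that matters is that $\mu$ appears only in the $i = 1$ summand.
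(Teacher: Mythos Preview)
Your proof is correct and follows essentially the same approach as the paper: expand $m_1 m_2$ in standard monomials, assume the leading one has nonzero product with $m_3$, and use \ref{def:termorder:mult} to show that this leading term cannot be cancelled by the lower terms. Your treatment is in fact slightly more careful about the coefficient bookkeeping than the paper's version.
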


\begin{proof}
If $m_1 m_2 = 0$, then $\ltsm(m_1 m_2) = 0$ and we are done. Otherwise, let $m_1 m_2 = \sum_{i=1}^k r_i m'_i$ for nonzero $r_i \in R$ and standard monomials $m'_i$, with 
\[
\ltsm(m_1 m_2) = m'_1 \succ m'_2 \succ \dotsb \succ m'_k.
\] 
By assumption we have 
\[
\sum r_i m'_i \ltsm(m_3) = \sum r_i m'_i m_3 = 0.
\]
But by \ref{def:termorder:mult}, if $m'_1 m_3 \neq 0$, then we have $\ltsm(m'_1 m_3) \succ \ltsm(m'_i m_3)$ for all $i > 1$ such that $m'_i m_3 \neq 0$, and thus no other terms could cancel out $\ltsm(m'_1 m_3)$. Thus the term $\ltsm(m'_1 m_3)$ must vanish since $\sum r_i m'_i m_3 = 0$, hence we have $m'_1 m_3 = 0$, as claimed.
\end{proof}

Returning to $A_{gen}$, it remains to handle the case that $\ltsm(m_1) \ltsm(m_2) \ltsm(m_3) = 0$. Applying Lemma~\ref{lem:triplezero} once as written and once with indices permuted, we then get
\[
\ltsm(m_1 m_2) \ltsm(m_3) = 0 \qquad \text{ and } \qquad \ltsm(m_1) \ltsm(m_2 m_3) = 0.
\]
Thus, in this case, both \eqref{eq:Agen1} and \eqref{eq:Agen2} are equal to 0, and therefore multiplication in $A_{gen}$ is associative.

So far, we have that $A_{gen}$ is an associative $R$-algebra with $R$-basis given by the standard monomials. By construction, if $\pi_{lt}(m) \pi_{lt}(m') \neq 0$, we have $\pi_{lt}(m) \pi_{lt}(m') = \pi_{lt}(\ltsm(mm'))$, so $A_{gen}$ satisfies the equation required in Definition~\ref{def:ALT}, and is thus an algebra of leading terms for $(A,\preceq)$.
\end{proof}

In other situations, such as the Weyl algebra \cite[Section~1.1]{saito1998grobner}, the role of the algebra of leading terms is played by the associated graded ring. In fact, although we will not need it in the sequel, we show that $A_{gen}$ is the associated graded algebra, when $A$ is graded in the natural way by the monoid of standard monomials:

\begin{proposition}
Every algebra of leading terms is a graded pseudo-ASL, graded by the monoid $\mathcal{M}$ of standard monomials (as in Remark~\ref{rmk:semigroup}). Furthermore, given $(A,\preceq)$, $A$ is filtered by $\mathcal{M}$, and $A_{gen}$ is the associated graded ring of this filtration.
\end{proposition}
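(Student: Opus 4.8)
The plan is to prove the two claims separately, using that every algebra of leading terms is a monomial pseudo-ASL (Lemma~\ref{lem:ALT}) and that a monomial pseudo-ASL is a twisted monoid algebra of its monoid $\mathcal{M}$ of standard monomials, with the absorbing zero $0_\mathcal{M}$ identified with $0$ (Remark~\ref{rmk:semigroup}). For the first claim, given an algebra of leading terms $A_{lt}$, I would set $(A_{lt})_m := R m$ for each standard monomial $m$ and $(A_{lt})_{0_\mathcal{M}} := 0$; here $\mathcal{M}$ is commutative since $A_{lt}$ is, and its identity element is the standard monomial $1$, not $0_\mathcal{M}$. Then $A_{lt} = \bigoplus_{m \in \mathcal{M}}(A_{lt})_m$ because the standard monomials are an $R$-basis; the identity component $R\cdot 1$ is $\cong R$ and contains $1_{A_{lt}}$ (recall $1$ is always standard); and $(A_{lt})_m(A_{lt})_{m'} \subseteq (A_{lt})_{m\cdot_\mathcal{M} m'}$ because, $A_{lt}$ being monomial, $mm'$ is either $0$ (in the $0_\mathcal{M}$-component) or a scalar multiple of $\lmsm(mm') = m\cdot_\mathcal{M} m'$. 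Every standard monomial $m$ lies in $(A_{lt})_m$, so all standard monomials are homogeneous and $A_{lt}$ is a graded pseudo-ASL (Definition~\ref{defn:graded-asl}).

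For the second claim, take $\mathcal{M}$ to be the monoid of standard monomials of $A_{gen}$: the standard monomials of $A$ with $m\cdot_\mathcal{M} m' := \lmsm(mm')$ when $mm'\ne 0$ in $A$ and $m\cdot_\mathcal{M} m' := 0_\mathcal{M}$ otherwise. Define the filtration $F_m(A) := R\text{-span}\{n \text{ standard} : n\preceq m\}$ for each standard monomial $m$ and $F_{0_\mathcal{M}}(A) := A$, regarding the index set $\mathcal{M}$ as totally ordered by $\preceq$ on standard monomials with $0_\mathcal{M}$ placed at the top. This is an exhaustive filtration compatible with that order ($m\preceq m'$ gives $F_m(A)\subseteq F_{m'}(A)$, and $F_m(A)\subseteq A = F_{0_\mathcal{M}}(A)$) and multiplicative: $F_m(A)F_{m'}(A)\subseteq F_{m\cdot_\mathcal{M} m'}(A)$, which when $mm'\ne 0$ says exactly that a product of standard monomials $n\preceq m$, $n'\preceq m'$ has $\lmsm(nn')\preceq\lmsm(mm')$ (immediate from \ref{def:termorder:mult}, equivalently Proposition~\ref{prop:mult}), and when $mm'=0$ is trivial since then the target is all of $A$. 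Writing $F_{\prec m}(A)$ for the span of standard monomials strictly below $m$, the associated graded $\operatorname{gr}(A) = \bigoplus_{m\in\mathcal{M}} F_m(A)/F_{\prec m}(A)$ has trivial $0_\mathcal{M}$-component ($F_{\prec 0_\mathcal{M}}(A) = A = F_{0_\mathcal{M}}(A)$) and $m$-component $\cong R m$ for standard $m$, so $\operatorname{gr}(A)\cong\bigoplus_m R m = A_{gen}$ as $R$-modules. I would finish by checking that the $R$-linear bijection $\operatorname{gr}(A)\to A_{gen}$ matching up standard monomials is a ring homomorphism: for standard $m,m'$, the product $\bar m\cdot\bar{m'}$ in $\operatorname{gr}(A)$ is the class of $mm'$ in the $(m\cdot_\mathcal{M} m')$-component, which is $\ltsm(mm')\in R\lmsm(mm')$ when $mm'\ne 0$ and $0$ (the trivial $0_\mathcal{M}$-component) when $mm'=0$ --- precisely the multiplication~\eqref{eq:agen} of $A_{gen}$.

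The delicate point, and the reason for the care above, is that products of standard monomials in $A$ can vanish, so $\mathcal{M}$ carries an absorbing zero and is not a strictly ordered monoid: one can have $n\preceq m$, $n'\preceq m'$ with $nn'=0$ but $mm'\ne 0$ (for a concrete instance take $A = \bigslant{R[x,y,z]}{\langle x^2,y^2,z^2,xy,yz\rangle}$ with a suitable term order, where $x\cdot x = 0$ yet $x\cdot z\ne 0$). Consequently a naive associated-graded argument relying on arbitrary lifts and strict monotonicity of the index monoid fails. The remedy is exactly to put $0_\mathcal{M}$ at the top of the order, so that $F_{0_\mathcal{M}}(A) = A$ and $\operatorname{gr}_{0_\mathcal{M}}(A) = 0$: every degenerate product lands in the trivial graded piece, while in the non-degenerate case the leading-monomial identities of Proposition~\ref{prop:mult} (which rest on \ref{def:termorder:mult}) give both multiplicativity of the filtration and well-definedness of the induced product on $\operatorname{gr}(A)$ --- one needs $F_{\prec m}(A)F_{\preceq m'}(A) + F_{\preceq m}(A)F_{\prec m'}(A)\subseteq F_{\prec(m\cdot_\mathcal{M} m')}(A)$, which is again \ref{def:termorder:mult} when $mm'\ne 0$ and vacuous when $mm'=0$. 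Associativity of $\operatorname{gr}(A)$ is then automatic (or may be imported through the isomorphism with $A_{gen}$, whose associativity was already established in Proposition~\ref{prop:agen-adisc} via Lemma~\ref{lem:triplezero}).
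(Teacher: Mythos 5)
Your proof is correct and follows essentially the same path as the paper's: grade $A_{lt}$ by $\mathcal{M}$ with the $0_{\mathcal{M}}$-component set to zero, filter $A$ by the $R$-spans of downsets of standard monomials under $\preceq$, and identify $\operatorname{gr}(A)$ with $A_{gen}$ via the leading-term map. Where you go beyond the paper's rather terse treatment is in explicitly handling the degenerate case $mm'=0$ --- by placing $0_{\mathcal{M}}$ at the top of the filtration index so that $F_{0_\mathcal{M}}(A)=A$ while $\operatorname{gr}_{0_\mathcal{M}}(A)=0$ --- and in pinpointing that well-definedness of the induced product on $\operatorname{gr}(A)$ hinges on the strict inequality in \ref{def:termorder:mult}; both observations are correct and fill in bookkeeping the paper leaves implicit.
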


\begin{proof}
Given $(A, \preceq)$, let $\mathcal{M}$ be the monoid of standard monomials associated to $A_{gen}$ as in Remark~\ref{rmk:semigroup}. Since the $m$-graded piece consists precisely of the $R$-multiples of $m$, and the standard monomials form an $R$-basis of $A$ because $A$ is a pseudo-ASL, we have that 
\[
A = \bigoplus_{m \in \mathcal{M}} Rm = \bigoplus_{m \in \mathcal{M}} A_m,
\]
where here $A_m$ denotes the $m$-graded piece (and not, say, the localization); and the same direct sum decomposition holds for any $A_{lt}$. By construction of the monoid of standard monomials, in $A_{lt}$ we have $(A_{lt})_m \cdot (A_{lt})_{m'} \subseteq (A_{lt})_{mm'}$, so $A_{lt}$ is $\mathcal{M}$-graded (here we write the subscripts of our grading multiplicatively rather than additively), where if $mm' = 0$ in $A$ (and hence also in $\mathcal{M}$), we understand that $(A_{lt})_{mm'}$ on the RHS of the preceding equation to simply mean $(A_{lt})_0 := 0$.

In $A$, since $\mathcal{M}$ was defined by taking the leading term of $mm'$, we have $A_m \cdot A_{m'} \subseteq A_{\preceq mm'}$, so $A$ is $\mathcal{M}$-filtered. Furthermore, we have $\bigslant{A_{\preceq mm'}}{A_{\prec mm'}} \cong A_{mm'}$, and it immediately follows that $A_{gen}$ is the associated graded of $A$ as an $\mathcal{M}$-filtered pseudo-ASL.
\end{proof}

\begin{remark}
In fact, $A_{gen}$ is in some sense the ``universal'' algebra of leading terms, in that any algebra of leading terms for $(A, \preceq)$ is obtained from $A_{gen}$ by further forcing certain products of standard monomials to be 0. Similarly, we see that $\Adisc$ is the ``opposite extreme,'' in the sense that starting from any $A_{lt}$, one can recover $\Adisc$ by forcing certain additional products of standard monomials to be 0. For a conjectural geometric version of this picture in terms of flat deformations, see Conjecture~\ref{conj:flatdeform}.
\end{remark}

\begin{proposition} \label{prop:orders}
If $\preceq$ is a pseudo-ASL term order on the pseudo-ASL $A$, and $A_{lt}$ is an algebra of leading terms for $(A,\preceq)$, let $\preceq_{lt}$ denote the order on the standard monomials of $A_{lt}$ that agrees with $\preceq$, i.e. 
\[
m \preceq_{lt} m' \Leftrightarrow \pi_{lt}^{-1}(m) \preceq \pi_{lt}^{-1}(m').
\]
Then $\preceq_{lt} $ is a pseudo-ASL term order on $A_{lt}$.
\end{proposition}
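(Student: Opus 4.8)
The plan is to verify the two axioms of a pseudo-ASL term order, \ref{def:termorder:positive} and \ref{def:termorder:mult}, for $\preceq_{lt}$, transporting everything back to $A$ via the bijection $\pi_{lt}$. First note that nothing needs to be said about totality: $\preceq_{lt}$ is by construction the pushforward of the total order $\preceq$ along the bijection $\pi_{lt}$ (which is the identity on standard monomials), so it is automatically a total order on the standard monomials of $A_{lt}$.

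For positivity \ref{def:termorder:positive}: since $\pi_{lt}$ fixes standard monomials, the standard monomial $1$ of $A_{lt}$ has $\pi_{lt}^{-1}(1) = 1$ in $A$, and for any standard monomial $m$ of $A_{lt}$ we have $1 = \pi_{lt}^{-1}(1) \preceq \pi_{lt}^{-1}(m)$ because $\preceq$ is a pseudo-ASL term order on $A$; hence $1 \preceq_{lt} m$ by the definition of $\preceq_{lt}$.

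For multiplicativity \ref{def:termorder:mult} the crucial input is that $A_{lt}$ is a monomial pseudo-ASL (Lemma~\ref{lem:ALT}), so for standard monomials $f,h$ of $A_{lt}$ the product $fh$ is either $0$ or a single standard term. Write $\bar{x} := \pi_{lt}^{-1}(x)$; by Definition~\ref{def:ALT}\ref{point:pi-let}, $fh = \pi_{lt}(\bar f)\pi_{lt}(\bar h)$ equals $0$ or $\pi_{lt}(\ltsm(\bar f \bar h))$, and in the latter case the standard monomial underlying $fh$ is $\lmsm(\bar f \bar h)$; that is, $\lmsm(fh) = \lmsm(\bar f \bar h)$ after identifying standard monomials of $A_{lt}$ with those of $A$ via $\pi_{lt}$. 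Now suppose $f \prec_{lt} g$ and $h \preceq_{lt} k$ with $fh \neq 0$ and $gk \neq 0$ in $A_{lt}$. Then $\bar f \prec \bar g$ and $\bar h \preceq \bar k$ in $A$; and the nonvanishing of $fh$ and $gk$ forces, via Definition~\ref{def:ALT}\ref{point:pi-let}, that $\ltsm(\bar f \bar h) \neq 0$ and $\ltsm(\bar g \bar k) \neq 0$, hence $\bar f \bar h \neq 0$ and $\bar g \bar k \neq 0$ in $A$. Applying \ref{def:termorder:mult} for $\preceq$ in $A$ gives $\lmsm(\bar f \bar h) \prec \lmsm(\bar g \bar k)$, and translating back through $\pi_{lt}$ together with the identification $\lmsm(fh) = \lmsm(\bar f\bar h)$, $\lmsm(gk) = \lmsm(\bar g\bar k)$ yields $\lmsm(fh) \prec_{lt} \lmsm(gk)$, as required.

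I do not expect a serious obstacle here; the only point requiring care is the bookkeeping of which leading-monomial operator lives in which algebra, and the observation that because a product of standard monomials in $A_{lt}$ collapses to a single standard term, $\lmsm$ applied to such a product in $A_{lt}$ just reads off that term's monomial, which by Definition~\ref{def:ALT}\ref{point:pi-let} coincides with $\lmsm$ (in $A$) of the corresponding product $\bar f\bar h$. Once this is set up, both axioms are immediate consequences of the corresponding properties of $\preceq$ in $A$.
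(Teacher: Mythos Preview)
Your proof is correct and follows essentially the same approach as the paper: verify \ref{def:termorder:positive} directly via $\pi_{lt}^{-1}(1)=1$, and for \ref{def:termorder:mult} pull back to $A$ via $\pi_{lt}^{-1}$, use Definition~\ref{def:ALT}\ref{point:pi-let} to deduce that $\bar f\bar h,\bar g\bar k\neq 0$ and that $\lmsm(fh)$ corresponds to $\lmsm(\bar f\bar h)$, then apply \ref{def:termorder:mult} for $\preceq$ in $A$. Your explicit invocation of Lemma~\ref{lem:ALT} to justify that $fh$ is a single standard term (so $\lmsm(fh)$ is well-behaved) is a slight clarification beyond what the paper writes, but the argument is the same.
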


\begin{proof}
For any standard monomial $m \in A_{lt}$, we have
\begin{align}
&1_A \preceq \pi_{lt}^{-1}(m) \eqcomment{by Axiom \ref{def:termorder:positive} on $A$} \nonumber \\
\qquad \iff & \pi_{lt}^{-1}(1_{A_{lt}}) \preceq \pi_{lt}^{-1}(m) \eqcomment{by definition of $\pi_{lt}$} \nonumber \\
\qquad \iff & 1_{A_{lt}} \preceq_{lt} m \eqcomment{by definition of $\preceq_{lt}$}, \nonumber
\end{align}
thus establishing Axiom \ref{def:termorder:positive} for $(A_{lt}, \preceq_{lt})$.

To verify Axiom \ref{def:termorder:mult} for $\preceq_{lt}$, suppose $f,g,h,k$ are standard monomials in $A_{lt}$ with $f \prec_{lt} g$ and $h \preceq_{lt} k$, and $fh$ and $gk$ both nonzero. Let $f' = \pi_{lt}^{-1}(f)$, and similarly for $g',h',k'$. By the definition of $\preceq_{lt}$, we have $f' \prec g'$ and $h' \preceq k'$. 

Next, we claim that $f'h' \neq 0$ and $g'k' \neq 0$. We show this for $f'h'$, the argument for $g'k'$ being similar, \emph{mutatis mutandis}. By Definition~\ref{def:ALT}, since 
\[
0 \neq fh = \pi_{lt}(f')\pi_{lt}(h'),
\]
 we have that
\[
\pi_{lt}(\ltsm(f'h')) = \pi_{lt}(f') \pi_{lt}(h') = fh \neq 0. 
\]
And since $\pi_{lt}$ applied to the leading term of $f'h'$ is nonzero, $f'h'$ itself must be nonzero. 

Finally, since $\preceq$ is a pseudo-ASL term order, we have $\ltsm(f'h') \prec \ltsm(g'k')$. But now, by Definition~\ref{def:ALT}, applying $\pi_{lt}$ to both sides we get 
\begin{eqnarray*}
\pi_{lt}\left(\ltsm(f'h')\right) \preceq_{lt} \pi_{lt}\left(\ltsm(g'k')\right) & \Longleftrightarrow & \pi_{lt}(f') \pi_{lt}(h') \prec_{lt} \pi_{lt}(g') \pi_{lt}(k') \\
 & \Longleftrightarrow & fh \prec_{lt} gk,
\end{eqnarray*}
as desired. Thus $\preceq_{lt}$ is a pseudo-ASL term order on $A_{lt}$.
\end{proof}

\begin{corollary}[of Proposition \ref{prop:orders}]
\label{cor:divisor-less-than-dividend}
Given $(A, \preceq)$ and two distinct standard monomials $m,m'$, if there exists a standard term $t$ such that $\ltsm(mt) = m'$, then $m \prec m'$. 
\end{corollary}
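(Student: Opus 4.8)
The plan is to derive the inequality from a single application of axiom~\ref{def:termorder:mult}, in effect running the argument of Lemma~\ref{lem:div} directly inside $A$ (which, unlike the $A'$ of that lemma, need not be a monomial pseudo-ASL). An essentially equivalent route --- the one that explains why this is phrased as a corollary of Proposition~\ref{prop:orders} --- would be to pass to $A_{gen}$: using Proposition~\ref{prop:orders} to transfer $\preceq$ to a pseudo-ASL term order $\preceq_{gen}$ on the monomial pseudo-ASL $A_{gen}$, one checks (writing $t=rs$ with $s$ a standard monomial and using \eqref{eq:agen}) that $\ltsm(mt)=m'$ implies $\pi_{lt}(m)\,\pi_{lt}(t)=\pi_{lt}(m')$, hence $\pi_{lt}(m)\divides\pi_{lt}(m')$, so Lemma~\ref{lem:div} gives $\pi_{lt}(m)\prec_{gen}\pi_{lt}(m')$ (strict, since $m\neq m'$), i.e.\ $m\prec m'$. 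I will describe the shorter, direct route, which avoids $A_{lt}$ entirely.

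First I would write the standard term $t$ as $t=rs$ with $0\neq r\in R$ and $s$ a standard monomial. Since $m'$ is a standard monomial and hence nonzero, $\ltsm(mt)=m'$ forces $mt\neq 0$, so $ms\neq 0$; and because rescaling by the nonzero scalar $r$ affects neither the leading monomial nor whether the product vanishes, this also gives $\lmsm(ms)=m'$. Next I would rule out the degenerate case $s=1$: there $ms=m$, so $m'=\lmsm(ms)=m$, contradicting the hypothesis $m\neq m'$; hence $s\neq 1$, and then \ref{def:termorder:positive} together with totality of $\preceq$ yields $1\prec s$. Finally I would apply \ref{def:termorder:mult} with $f=1$, $g=s$, $h=k=m$: we have $f\prec g$, $h\preceq k$, and both $fh=m$ and $gk=sm=ms$ are nonzero, so the axiom gives $\lmsm(m)=\lmsm(fh)\prec\lmsm(gk)=\lmsm(ms)=m'$; since $m$ is a standard monomial, $\lmsm(m)=m$, and therefore $m\prec m'$, as required.

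I do not expect a genuine obstacle here --- the content is purely bookkeeping. The points that need care are: checking that all four arguments $1,s,m,m$ of axiom~\ref{def:termorder:mult} really are standard monomials (in particular that $1$ is always standard, as noted after Definition~\ref{defn:pseudo-ASL}, and that a ``standard term'' factors as a scalar times a standard monomial); verifying the nonvanishing hypotheses $fh=m\neq 0$ and $gk=ms\neq 0$ so that the axiom applies; and disposing of the case $s=1$ beforehand, since only then is the strict inequality $1\prec s$ --- which is what forces the conclusion to be strict --- available. With these in hand the conclusion is immediate.
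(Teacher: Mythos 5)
Your proof is correct. Where the paper routes through $A_{gen}$ --- translating the hypothesis into divisibility $\pi_{lt}(m)\mid\pi_{lt}(m')$, invoking Proposition~\ref{prop:orders} to transfer $\preceq$, and then citing Lemma~\ref{lem:div} --- you inline the same underlying computation directly in $A$: factor $t=rs$, observe $\lmsm(ms)=m'$ with $ms\neq 0$, dispose of $s=1$ to get $1\prec s$ strictly, and apply \ref{def:termorder:mult} with $(f,g,h,k)=(1,s,m,m)$. The core step (positivity plus multiplicativity) is identical to the one buried inside the proof of Lemma~\ref{lem:div}; the difference is that you work with $\lmsm(ms)=m'$ in $A$ rather than with an exact product $xrm=y$ in the monomial pseudo-ASL $A_{gen}$. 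Your version is more self-contained --- it needs neither $A_{gen}$ nor Lemma~\ref{lem:div} nor Proposition~\ref{prop:orders}, so strictly speaking it no longer deserves the heading ``Corollary of Proposition~\ref{prop:orders}'' (a mismatch you flag yourself). The paper's version is shorter given the machinery already in place and reuses Lemma~\ref{lem:div}, which it needs elsewhere anyway (e.g.\ for Proposition~\ref{prop:well-order}). Both are valid; you also sketch the $A_{gen}$ route accurately, so there is no gap.
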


\begin{proof}
Then existence of such a $t$ is equivalent to the divisibility $\pi_{lt}(m) \divides \pi_{lt}(m')$ in $A_{gen}$. As $A_{gen}$ is a monomial pseudo-ASL (Lemma~\ref{lem:ALT}), and $\prec$ induces a pseudo-ASL term order on $A_{gen}$ by Proposition~\ref{prop:orders}, Lemma~\ref{lem:div} applies, so we get that $\pi_{lt}(m) \prec \pi_{lt}(m')$. As the term order on $A$ and $A_{gen}$ give the same order on standard monomials, we get $m \prec m'$.
\end{proof}

The following result is key to ensuring that proofs and algorithms may proceed by downward inductions that reduce the leading term of a given element of $A$, and be guaranteed to reach a base case in finitely many steps. 

\begin{proposition} \label{prop:well-order}
Pseudo-ASL term orders are well-orders.
\end{proposition}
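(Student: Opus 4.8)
The plan is to reduce the statement to the case of a \emph{monomial} pseudo-ASL and then run the standard Noetherian argument. So, given a pseudo-ASL $A$ with pseudo-ASL term order $\preceq$, I would fix any algebra of leading terms $A_{lt}$ for $(A,\preceq)$ — for concreteness $\Adisc$ or $A_{gen}$, which exist by Proposition~\ref{prop:agen-adisc} — and invoke Lemma~\ref{lem:ALT} to conclude that $A_{lt}$ is a monomial pseudo-ASL. Since $A_{lt}$ is governed by the same $(H,\Sigma)$ as $A$ and $\pi_{lt}$ is the identity on standard monomials, the standard monomials of $A_{lt}$ are literally the same set as those of $A$, and by Proposition~\ref{prop:orders} the induced order $\preceq_{lt}$ is the very same order $\preceq$ on that set, and is again a pseudo-ASL term order. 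Hence $\preceq$ is a well-order on $A$ if and only if $\preceq_{lt}$ is a well-order on $A_{lt}$, and it suffices to prove the statement when the ambient pseudo-ASL is monomial.

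Next I would prove the following: if $A'$ is a monomial pseudo-ASL carrying a pseudo-ASL term order $\preceq$, then $\preceq$ is a well-order, equivalently there is no infinite strictly descending chain $m_1 \succ m_2 \succ \dotsb$ of standard monomials. Suppose such a chain existed. Since $A'$ is a finitely generated commutative algebra over the (Noetherian) ring $R$, it is Noetherian, so the standard monomial ideal $I := \langle m_1, m_2, \dotsc \rangle$ is finitely generated; by Lemma~\ref{lem:unique-min} (or simply the ascending chain condition) there is an $N$ with $I = \langle m_1, \dotsc, m_N\rangle$. Then $m_{N+1} \in I$, so by the final clause of Lemma~\ref{lem:monomial} some $m_i$ with $i \le N$ divides $m_{N+1}$; the $m_j$ being pairwise distinct (the chain is strict), we have $m_i \ne m_{N+1}$, so Lemma~\ref{lem:div} yields $m_i \preceq m_{N+1}$. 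But $i \le N < N+1$ forces $m_{N+1} \prec m_i$ along the chain, a contradiction. Hence no such chain exists and $\preceq$ is a well-order.

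I do not expect a serious obstacle, since the substantive work has been front-loaded: constructing $A_{lt}$ (Definition~\ref{def:ALT}, Proposition~\ref{prop:agen-adisc}), checking it is monomial (Lemma~\ref{lem:ALT}), transporting the order (Proposition~\ref{prop:orders}), establishing that in a monomial pseudo-ASL divisibility implies $\preceq$-comparability (Lemma~\ref{lem:div}), and that standard monomial ideals behave like monomial ideals (Lemmas~\ref{lem:monomial} and~\ref{lem:unique-min}). The only two points needing care are (i) using an algebra of leading terms that genuinely exists for the given $\preceq$ — hence citing Proposition~\ref{prop:agen-adisc} rather than an unspecified $A_{lt}$ — and (ii) the Noetherianity of $A'$, which is immediate since it is finitely generated over $R$. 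With these in place, the remaining argument is just the classical proof that a term order refining divisibility on a Noetherian monomial structure has no infinite descending chain.
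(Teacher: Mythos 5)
Your proposal is correct and follows essentially the same route as the paper: reduce to a monomial pseudo-ASL via an algebra of leading terms (Proposition~\ref{prop:agen-adisc}, Lemma~\ref{lem:ALT}, Proposition~\ref{prop:orders}), then run the classical Noetherianity argument using Lemma~\ref{lem:monomial} and Lemma~\ref{lem:div} to rule out an infinite strictly descending chain. The only cosmetic difference is that you invoke the ascending chain condition to take the first $N$ chain elements as a generating set, whereas the paper picks an arbitrary finite generating subset and looks past its largest index — the contradiction is identical.
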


Interestingly, although the statement of this proposition does not mention monomial pseudo-ASLs nor algebras of leading terms, the proof uses them in a crucial way, in order to apply Lemma~\ref{lem:div} and an analogue of the ``usual'' Noetherianity-based proof (e.g., in polynomial rings).

\begin{proof}[Proof outline]
Given a pseudo-ASL $A$ and pseudo-ASL term order $\preceq$ on $A$, by Proposition~\ref{prop:agen-adisc}, there exists at least one algebra of leading terms $A_{lt}$ for $(A,\preceq)$. By Proposition~\ref{prop:orders}, $\preceq$ is isomorphic to a pseudo-ASL term order $\preceq_{lt}$ on the standard monomials in $A_{lt}$. So it suffices to show that $\preceq_{lt}$ is a well-ordering. The ``usual'' proof from polynomial rings then works in $A_{lt}$ because $A_{lt}$ is a monomial pseudo-ASL, and Noetherian.
\end{proof}

\begin{proof}
We follow the above outline, showing here that the usual proof works in $A_{lt}$. To show that $\preceq_{lt}$ is a well-ordering, for the sake of contradiction suppose that 
\[
x_1 \succ_{lt} x_2 \succ_{lt} x_3 \succ_{lt} \dotsb
\]
is an infinite chain of standard monomials in $A_{lt}$ that is strictly descending according to $\preceq_{lt}$. Let $I \subseteq A_{lt}$ be the ideal generated by all the $x_i$. Since $A_{lt}$ is Noetherian, there is a finite subset of the $x_i$, say $\{x_{i_1}, \dotsc, x_{i_k}\}$, that generates $I$. Since $I$ is generated by standard monomials, by Lemma~\ref{lem:monomial}, a standard monomial $m$ is in $I$ iff it is divisible by one of $\{x_{i_1}, x_{i_2}, \dotsc, x_{i_k}\}$. Thus for all $j$, each $x_j$ is divisible by at least one of $\{x_{i_1}, x_{i_2}, \dotsc, x_{i_k}\}$. Now consider $j > \max\{i_1, \dotsc, i_k\}$, and suppose without loss of generality, for simplicity of notation, that $x_j$ is divisible by $x_{i_1}$. By Lemma~\ref{lem:div}, we thus have $x_{i_1} \preceq x_j$. But since $j > i_1$, by our original assumption we also have $x_{i_1} \succ x_j$, contradicting the fact that $\prec$ is asymmetric.
\end{proof}

\subsubsection{Leading ideals and pseudo-ASL Gröbner bases}
Unlike the case of ordinary Gröbner bases, in the case of pseudo-ASL Gröbner bases, in order for the leading ideal to be a monomial ideal, we define the leading ideal to be an ideal of $A_{lt}$ rather than of $A$.

\begin{definition}[store=defleadingideal, note=Leading ideal]
\label{def:leading-ideal}
Given $(A,\preceq, A_{lt})$, for an ideal $I \subseteq A$, its \emph{leading ideal} is 
\[
\lismlt(I) := \langle \pi_{lt}(\ltsm(f)) \suchthat f \in I \rangle  \subseteq A_{lt}.
\] 
\end{definition}

It follows immediately from the definition that the leading ideal is a standard monomial ideal. The next proposition shows that the only standard monomials in $\lismlt(I)$ are the ``expected'' ones, namely, $\pi_{lt}(\ltsm(f))$ for $f \in I$:

\begin{proposition} \label{prop:LI}
Given an $R$-pseudo-ASL $A$, for any ideal $I \subseteq A$, \[Span_R\{\pi_{lt}(\ltsm(f)) \suchthat f \in I\} = \lismlt(I).\]
\end{proposition}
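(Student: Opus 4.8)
The plan is to prove the two inclusions separately, the nontrivial one being $\lismlt(I) \subseteq Span_R\{\pi_{lt}(\ltsm(f)) \suchthat f \in I\}$. The reverse containment is immediate: by Definition~\ref{def:leading-ideal}, $\lismlt(I)$ is the $A_{lt}$-ideal generated by $\{\pi_{lt}(\ltsm(f)) \suchthat f \in I\}$, and any ideal contains the $R$-span of a generating set.

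For the nontrivial inclusion, I would first reduce to a statement about standard monomials. Since $\lismlt(I)$ is a standard monomial ideal (as noted right after Definition~\ref{def:leading-ideal}) in the monomial pseudo-ASL $A_{lt}$ (Lemma~\ref{lem:ALT}), Lemma~\ref{lem:monomial}\ref{point:standard-monomial-closure} tells us that an element of $A_{lt}$ lies in $\lismlt(I)$ iff every standard monomial appearing in it does. Hence it suffices to show that every \emph{standard monomial} $m \in \lismlt(I)$ lies in the right-hand span; in fact I will show the stronger statement that each such $m$ is literally $\pi_{lt}(\ltsm(g))$ for some $g \in I$.

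So fix a standard monomial $m \in \lismlt(I)$. Over a field each leading coefficient $\lcsm(f)$ is a unit, so $\lismlt(I)$ is also generated by the \emph{standard monomials} $\{\pi_{lt}(\lmsm(f)) \suchthat 0 \neq f \in I\}$; by the ``furthermore'' clause of Lemma~\ref{lem:monomial} there is then a nonzero $f \in I$ with $\pi_{lt}(\lmsm(f)) \divides m$ in $A_{lt}$, and replacing $f$ by $\lcsm(f)^{-1}f$ we may assume $\ltsm(f) = \lmsm(f)$. Since $A_{lt}$ is a monomial pseudo-ASL, Lemma~\ref{lem:mon-div} yields a standard term $s$ in $A_{lt}$ with $\pi_{lt}(\ltsm(f)) \cdot s = m$; writing $s = r\,\pi_{lt}(n)$ with $0 \neq r \in R$ and $n$ a standard monomial, I would set $g := r\,n\,f \in I$. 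The claim is that $\pi_{lt}(\ltsm(g)) = m$, which completes the proof.

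To verify the claim one untangles the straightening, which is the step most in need of care. Since $m = r\,\pi_{lt}(n)\,\pi_{lt}(\ltsm(f)) \neq 0$, the product $\pi_{lt}(n)\,\pi_{lt}(\ltsm(f))$ is nonzero, so by Definition~\ref{def:ALT}\ref{point:pi-let} it equals $\pi_{lt}(\ltsm(n\,\ltsm(f)))$; in particular $n\,\ltsm(f) \neq 0$, whence $\ltsm(n)\ltsm(f) = n\,\ltsm(f) \neq 0$ and Proposition~\ref{prop:mult} (equation~\eqref{obs:mult-eqn1}, with $n$ playing the role of $f$ and $f$ that of $g$) gives $\ltsm(nf) = \ltsm(n\,\ltsm(f))$. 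Therefore $\ltsm(g) = \ltsm(r\,n\,f) = r\,\ltsm(nf) = r\,\ltsm(n\,\ltsm(f))$, and applying the $R$-linear bijection $\pi_{lt}$ together with Definition~\ref{def:ALT}\ref{point:pi-let} once more, $\pi_{lt}(\ltsm(g)) = r\,\pi_{lt}(n)\,\pi_{lt}(\ltsm(f)) = s\cdot\pi_{lt}(\ltsm(f)) = m$. The only way the argument could break is if one of these products of standard monomials vanished, and that is ruled out throughout because $m$, being a standard monomial, is nonzero; so I do not anticipate a genuine obstacle beyond this bookkeeping around the multiplication in $A_{lt}$.
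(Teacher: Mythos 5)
Your proof is correct, but it takes a genuinely different route from the paper's. The paper argues directly that $\mathrm{Span}_R\{\pi_{lt}(\ltsm(f)) : f \in I\}$ is already an ideal of $A_{lt}$: it checks closure under multiplication by a standard monomial $m$, using Definition~\ref{def:ALT} and Proposition~\ref{prop:mult} to see that $m\cdot \pi_{lt}(\ltsm(f)) = \pi_{lt}(\ltsm(\pi_{lt}^{-1}(m)f))$, and since $\pi_{lt}^{-1}(m)f\in I$, the span is closed under such products; then the span and the ideal it generates must coincide. You instead work from the divisibility side: reduce to standard monomials via Lemma~\ref{lem:monomial}\ref{point:standard-monomial-closure}, invoke the ``furthermore'' clause to get a generator $\pi_{lt}(\lmsm(f))$ dividing $m$, lift the quotient standard term via Lemma~\ref{lem:mon-div}, and verify with Definition~\ref{def:ALT} and Proposition~\ref{prop:mult} that the resulting $g = rnf \in I$ has $\pi_{lt}(\ltsm(g)) = m$. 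Your version establishes something slightly stronger than the claim — every standard monomial in $\lismlt(I)$ is literally $\pi_{lt}(\ltsm(g))$ for some $g\in I$, not merely an $R$-combination of such — at the cost of invoking two more lemmas (\ref{lem:monomial}'s divisibility clause and \ref{lem:mon-div}) and the field hypothesis to normalize $\lcsm(f)$ to $1$; the paper's argument is leaner and makes no use of divisibility at all. Both correctly handle the one genuinely delicate point, namely that the product $\pi_{lt}(n)\pi_{lt}(\ltsm(f))$ might a priori vanish, and both use the nonvanishing of the target monomial $m$ to rule that out.
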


\begin{proof}
Let $S = \{\pi_{lt}(\ltsm(f)) \suchthat f \in I\}$. We first show that $Span_R(S)$ is an ideal. By definition $Span_R(S)$ is an $R$-module. All the remains to show is that $Span_R(S)$ is closed under products with arbitrary elements of $A_{lt}$. By $R$-bilinearity of the product, it suffices to show that $S \cup \{0\}$ is closed under products with standard monomials in $A_{lt}$. Towards that end, let $m \in A_{lt}$ be a standard monomial, and let $m’ \in S$. If $mm’ = 0$ we’re done. Otherwise, by Definition~\ref{def:ALT} we have $\pi_{lt}(\ltsm(\pi_{lt}^{-1}(m) \pi_{lt}^{-1}(m’))) = mm’$. Since $m’$ is in $S$, there is an $f \in I$ with $\pi_{lt}(\ltsm(f)) = m’$. Since $I$ is an ideal, $\pi_{lt}^{-1}(m) f$ is also in $I$. Thus by definition of $S$, $\pi_{lt}(\ltsm(\pi^{-1}(m)f))$ is also in $S$. Now we have
\begin{align}
S \ni \pi_{lt}(\ltsm(\pi_{lt}^{-1}(m) f) & = \pi_{lt}(\ltsm(\pi_{lt}^{-1}(m) \ltsm(f)) & \eqcomment{by Proposition~\ref{prop:mult}} \\ 
 & = \pi_{lt}(\ltsm(\pi_{lt}^{-1}(m) \pi_{lt}^{-1}(m’)) \\
 & = mm’. 
\end{align}
Thus $mm’$ is in $S$, so $S \cup \{0\}$ is closed under multiplication by arbitrary standard monomials, and thus $Span_R(S)$ is an ideal.
Finally, since $\lismlt(I)$ is generated by $Span_R(S)$, and $Span_R(S)$ is an ideal, the two are equal.
\end{proof}

We shall now define our key notion, i.e. that of pseudo-ASL \gb bases.

\begin{definition}[store=defaslgb,note={Pseudo-ASL \gb basis}]
\label{def:asl-gb}
Given $(A, \preceq, A_{lt})$ and an ideal $I \subseteq A$, a set $G \subseteq I$ is a \emph{pseudo-ASL Gröbner basis} for $I$ with respect to $(A,\preceq,A_{lt})$ if 
\[
\lismlt(I) = \langle \pi_{lt}(\ltsm(g)) \suchthat g \in G \rangle.
\]
\end{definition}

(The superscript $^{lt}$ is to emphasize that the leading ideal is contained in $A_{lt}$, and is not merely $\langle \ltsm(f) : f \in I \rangle \subseteq A$.)

\begin{theorem}[store=thmaslgbexistence, note=Existence of pseudo-ASL Gröbner bases] \label{thm:asl-gb-existence}
Given $(A, \preceq, A_{lt})$, every ideal $I \subseteq A$ has a finite pseudo-ASL \gb basis with respect to $(A, \preceq, A_{lt})$.
\end{theorem}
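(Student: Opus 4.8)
The plan is to run the classical existence argument for Gröbner bases, but to carry out the ``monomial ideal is finitely generated'' step \emph{inside} the auxiliary monomial pseudo-ASL $A_{lt}$ rather than in $A$ itself, and then to pull the generators back to elements of $I$ using Proposition~\ref{prop:LI}. So the whole theorem should come out as a short corollary of Lemma~\ref{lem:ALT}, Lemma~\ref{lem:unique-min}, and Proposition~\ref{prop:LI}, exactly as in the polynomial-ring case.

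\textbf{Step 1: finite generation of the leading ideal.} By Lemma~\ref{lem:ALT}, $A_{lt}$ is a monomial pseudo-ASL, and by the remark following Definition~\ref{def:leading-ideal}, $\lismlt(I)$ is a standard monomial ideal of $A_{lt}$. Hence Lemma~\ref{lem:unique-min} applies (its proof uses Noetherianity of $A_{lt}$, which holds since $A_{lt}$ is a finitely generated commutative algebra over the field $R$): there is a \emph{finite} set $\{m_1,\dots,m_k\}$ of standard monomials of $A_{lt}$ that generates $\lismlt(I)$.

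\textbf{Step 2: realizing the generators as leading terms of elements of $I$.} By Proposition~\ref{prop:LI}, $\lismlt(I) = Span_R\{\pi_{lt}(\ltsm(f)) \suchthat f \in I\}$. Each $m_i$ lies in this span, i.e.\ is an $R$-linear combination of the single standard terms $\pi_{lt}(\ltsm(f))$; since $m_i$ is itself a single standard monomial, it must coincide, up to a nonzero scalar in $R$, with $\pi_{lt}(\ltsm(g_i))$ for some $g_i \in I$. Rescaling $g_i$ by $\lcsm(g_i)^{-1}$ (using that $R$ is a field), we may assume $\pi_{lt}(\ltsm(g_i)) = m_i$ exactly. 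Then $G := \{g_1,\dots,g_k\} \subseteq I$ is finite, and
\[
\langle \pi_{lt}(\ltsm(g)) \suchthat g \in G \rangle = \langle m_1,\dots,m_k \rangle = \lismlt(I),
\]
so $G$ is a pseudo-ASL Gröbner basis for $I$ with respect to $(A,\preceq,A_{lt})$ by Definition~\ref{def:asl-gb}.

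\textbf{Main obstacle.} There is essentially no obstacle beyond the machinery already assembled; the only points requiring care — and the only places the argument departs from the polynomial-ring version — are that the leading ideal is defined to live in $A_{lt}$ rather than in $A$, so one must (i) know that $\lismlt(I)$ really is a standard monomial ideal, in order to invoke the finite-generation result Lemma~\ref{lem:unique-min}, and (ii) know that the finitely many monomial generators of $\lismlt(I)$ are genuinely realized as leading terms of actual elements of $I$, which is precisely the content of Proposition~\ref{prop:LI}. With both of those in hand, the proof is immediate.
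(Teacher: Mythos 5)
Your proof is correct, and it takes a mildly different route from the paper's own argument. The paper's proof is more direct: it observes that $\lismlt(I)$ is by definition generated by the set $S = \{\pi_{lt}(\ltsm(f)) \suchthat f \in I\}$, invokes Noetherianity of $A_{lt}$ to extract a \emph{finite subset} of $S$ that still generates, and then notes that the elements of that finite subset are already of the form $\pi_{lt}(\ltsm(g_i))$ with $g_i \in I$ by construction of $S$ — so no work is needed to ``realize'' the monomial generators as leading terms. Your proof instead routes through Lemma~\ref{lem:unique-min} to produce the canonical minimal monomial generating set $\{m_1,\dots,m_k\}$, and then needs an extra step (your Step~2, via Proposition~\ref{prop:LI}) to show that each such $m_i$ is actually hit by some $\pi_{lt}(\ltsm(g_i))$. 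Both are valid, and your version has the small bonus of producing the \emph{unique minimal} leading ideal generators rather than an arbitrary finite subset of $S$; but it leans on two more propositions than strictly necessary. One small remark on Step~2: you could also bypass Proposition~\ref{prop:LI} by combining Lemma~\ref{lem:monomial} (each $m_i$ is divisible by some generator $\pi_{lt}(\ltsm(f))$ of $\lismlt(I)$) with the division-minimality of $m_i$ from Lemma~\ref{lem:unique-min}, which forces that divisor to equal $m_i$ up to a scalar; that keeps the argument entirely inside the monomial-ideal machinery and avoids the linear-algebra reasoning about spans.
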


\begin{proof}
Let $I$ be an ideal in $A$ with leading ideal $\lismlt(I) \subseteq A_{lt}$. Define
\[
S = \{\pi_{lt}(\ltsm(f)) \suchthat f \in I\}.
\]
By definition, $\lismlt(I)$ is generated by $S$. Since $R$ is Noetherian, $A_{lt}$ is Noetherian, so $\lismlt(I)$ is generated by a finite subset of $S$, say $m_1, \dotsc, m_k$. By definition of $S$, there exist $g_i \in I$ such that $\pi_{lt}(\ltsm(g_i)) = m_i$ for all $i$. Thus $\{g_1,\dotsc,g_k\}$ is a finite pseudo-ASL Gröbner basis for $I$.
\end{proof}

Proposition \ref{prop:asl-grobner-divisibility-characterization} below is an equivalent characterization of the definition of pseudo-ASL \gb basis in terms of divisibility of leading terms in $A_{lt}$. This divisibility characterization allows one to answer basic questions about ideals in a pseudo-ASL in an effective manner. For instance, given a pseudo-ASL $A$ and an ideal $I$ in $A$, using a pseudo-ASL \gb basis of $I$ with respect to any $A_{lt}$, we can check if any $f \in A$ belongs to $I$ in a simple mechanical way just by using Proposition \ref{prop:asl-grobner-divisibility-characterization}.

\begin{definition}[Divisibility of leading terms] \label{def:ltdivides}
Let $A_{lt}$ be an algebra of leading terms for $(A,\preceq)$, and let $f, g \in A$. If $\pi_{lt}(f) \divides \pi_{lt}(g)$ (in $A_{lt}$), we write $f \ltdivides g$. If $m,m' \in A$ are standard terms such that $m \ltdivides m'$, we may further write $\ltfrac{m'}{m}$ for the unique (by Lemma~\ref{lem:mon-div}) standard term $t \in A_{lt}$ such that $\pi_{lt}(m)t = \pi_{lt}(m')$. 
\end{definition}

\begin{proposition}[store=propaslgrob-divcharacterization, note={Divisibility characterization of pseudo-ASL Gröbner bases}]
\label{prop:asl-grobner-divisibility-characterization}
Given $(A, \preceq, A_{lt})$ and an ideal $I \subseteq A$, a subset $G \subseteq I$ is a pseudo-ASL Gröbner basis if and only if for all $f \in I$, there exists $g \in G$ such that $\ltsm(g) \ltdivides \ltsm(f)$.
\end{proposition}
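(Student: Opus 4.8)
The plan is to prove the two directions of the equivalence by reducing everything to the corresponding statement about standard monomial ideals in the monomial pseudo-ASL $A_{lt}$, which we have already tamed in Lemma~\ref{lem:monomial} (and especially the ``furthermore'' clause there: a standard monomial lies in a standard monomial ideal iff it is divisible by a generator).

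\emph{($\Rightarrow$).} Suppose $G$ is a pseudo-ASL Gröbner basis, so $\lismlt(I) = \langle \pi_{lt}(\ltsm(g)) : g \in G\rangle$. Let $f \in I$. Then $\pi_{lt}(\ltsm(f)) \in \lismlt(I)$ and it is a standard monomial (up to the scalar $\lcsm(f)$, which is irrelevant for divisibility) of $A_{lt}$. Since $\lismlt(I)$ is generated by the standard monomials $\pi_{lt}(\ltsm(g))$, $g \in G$, by the ``furthermore'' part of Lemma~\ref{lem:monomial} there is some $g \in G$ with $\pi_{lt}(\ltsm(g)) \divides \pi_{lt}(\ltsm(f))$ in $A_{lt}$, i.e.\ $\ltsm(g) \ltdivides \ltsm(f)$, as desired.

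\emph{($\Leftarrow$).} Conversely, suppose that for every $f \in I$ there is some $g \in G$ with $\ltsm(g) \ltdivides \ltsm(f)$. We must show $\lismlt(I) = \langle \pi_{lt}(\ltsm(g)) : g \in G\rangle$. The inclusion ``$\supseteq$'' is immediate since each $g \in G \subseteq I$, so $\pi_{lt}(\ltsm(g)) \in \lismlt(I)$. For ``$\subseteq$'': by Proposition~\ref{prop:LI}, $\lismlt(I)$ is the $R$-span of $\{\pi_{lt}(\ltsm(f)) : f \in I\}$, so it suffices to show each such generator lies in $\langle \pi_{lt}(\ltsm(g)) : g \in G\rangle$. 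Given $f \in I$, by hypothesis there is $g \in G$ with $\ltsm(g) \ltdivides \ltsm(f)$, i.e.\ $\pi_{lt}(\ltsm(g)) \divides \pi_{lt}(\ltsm(f))$ in $A_{lt}$; hence $\pi_{lt}(\ltsm(f)) = \pi_{lt}(\ltsm(g)) \cdot t$ for some standard term $t \in A_{lt}$ (existence by Lemma~\ref{lem:mon-div}, using that $A_{lt}$ is a monomial pseudo-ASL by Lemma~\ref{lem:ALT}), which puts $\pi_{lt}(\ltsm(f))$ in the ideal generated by the $\pi_{lt}(\ltsm(g))$. This gives both inclusions, so $G$ is a pseudo-ASL Gröbner basis.

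\emph{Main obstacle.} The proof is essentially bookkeeping once the right lemmas are invoked; the only subtlety is keeping straight that divisibility ``$\ltdivides$'' is defined in $A_{lt}$ and not in $A$, and that ``$\ltsm(g) \ltdivides \ltsm(f)$'' is by definition $\pi_{lt}(\ltsm(g)) \divides \pi_{lt}(\ltsm(f))$ in $A_{lt}$ (scalars not mattering for divisibility). The one place that genuinely uses the pseudo-ASL machinery rather than the classical argument is the appeal to Lemma~\ref{lem:monomial} and Lemma~\ref{lem:ALT} to know that $A_{lt}$ is a monomial pseudo-ASL in which standard monomial ideals behave as expected; without that, ``membership in a monomial ideal iff divisible by a generator'' could fail. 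So the main thing to be careful about is making sure all divisibility claims are transported correctly through $\pi_{lt}$, rather than any deep difficulty.
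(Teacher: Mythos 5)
Your proof is correct and follows essentially the same route as the paper: both directions reduce to standard-monomial-ideal divisibility in $A_{lt}$ via Lemma~\ref{lem:monomial}, and the converse uses the fact (Proposition~\ref{prop:LI}, which the paper invokes implicitly) that the standard monomials in $\lismlt(I)$ are precisely those of the form $\pi_{lt}(\ltsm(f))$ for $f \in I$. Your appeal to Lemma~\ref{lem:mon-div} to produce a standard term divisor is harmless but unnecessary for the converse, since divisibility of $\pi_{lt}(\ltsm(f))$ by $\pi_{lt}(\ltsm(g))$ already places it in the ideal $\langle \pi_{lt}(\ltsm(g)) : g \in G\rangle$ without needing the cofactor to be a standard term.
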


\begin{proof}
Suppose $G$ is a pseudo-ASL Gröbner basis and $f \in I$. Then $\pi_{lt}(\ltsm(f)) \in \lismlt(I)$. Since by definition $\lismlt(I)$ is a standard monomial ideal that is generated by $\{\pi_{lt}(\ltsm(g)) \suchthat g \in G \}$, by Lemma \ref{lem:monomial}, there is a $g \in G$ such that $\pi_{lt}(\ltsm(g))$ divides $\pi_{lt}(\ltsm(f))$. This is precisely the definition of $\ltsm(g) \ltdivides \ltsm(f)$.

Conversely, suppose every $f \in I$ has its leading term $\ltdivides$-divisible by the leading term of some $g \in G$. Let $m$ be a standard monomial in $\lismlt(I)$. There exists $f \in I$ such that $\pi_{lt}(\ltsm(f)) = m$. By assumption, there is a $g \in G$ such that $\ltsm(g) \ltdivides \ltsm(f)$. Thus $\pi_{lt}(\ltsm(f))$ is in $\langle \pi_{lt}(\ltsm(g)) \suchthat g \in G \rangle$. Since $m$ was an arbitrary standard monomial in $\lismlt(I)$ and $\lismlt(I)$ is a standard monomial ideal, it follows that 
\[
\lismlt(I) \subseteq \langle \pi_{lt}(\ltsm(g)) \suchthat g \in G \rangle.
\]
The opposite inclusion is immediate from the definition of $\lismlt(I)$, so $G$ is a pseudo-ASL Gröbner basis.
\end{proof}

\begin{theorem}[store=thmmacaulay, note=Pseudo-ASL Analogue of Macaulay's Basis Theorem] \label{thm:macaulay}
Given $(A, \preceq, A_{lt})$, let $I \subseteq A$ be an ideal, and $\rho \colon A \to \bigslant{A}{I}$ the natural projection homomorphism. Let
 \[
 B = \left\{\rho(m) \suchthat m \text{ is a standard monomial and } \pi_{lt}(m) \notin \lismlt(I)\right\}.
 \]
 Then $B$ is an $R$-linear basis of $\bigslant{A}{I}$.
\end{theorem}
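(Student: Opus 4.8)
The plan is to mimic the classical proof of Macaulay's Basis Theorem, using the divisibility structure in $A_{lt}$ together with the well-ordering of the pseudo-ASL term order (Proposition~\ref{prop:well-order}) in place of the usual induction on monomials. We must show that $B$ spans $\bigslant{A}{I}$ over $R$ and that $B$ is $R$-linearly independent.

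For spanning, I would argue as follows. Let $f \in A$; we want to show $\rho(f)$ is in the $R$-span of $B$. Consider the set of standard monomials $m$ with $\pi_{lt}(m) \in \lismlt(I)$ that appear in $f$; if this set is empty we are done, since then $\rho(f)$ is already an $R$-combination of elements of $B$. Otherwise, pick a pseudo-ASL Gröbner basis $G$ of $I$ (which exists by Theorem~\ref{thm:asl-gb-existence}), and among all $g' \in A$ with $\rho(g') = \rho(f)$, choose one, call it $g$, minimizing $\lmsm$ of the ``bad part'' --- more precisely, I would run a reduction procedure: if some standard monomial $m$ appearing in $f$ has $\pi_{lt}(m) \in \lismlt(I)$, take the $\preceq$-largest such $m$, say with coefficient $c \neq 0$; by Proposition~\ref{prop:asl-grobner-divisibility-characterization} (or directly by Lemma~\ref{lem:monomial}), there is $g_i \in G$ with $\ltsm(g_i) \ltdivides m$, so there is a standard term $t$ with $\pi_{lt}(\ltsm(g_i)) \pi_{lt}(t) = \pi_{lt}(m)$, i.e.\ $\ltsm(t \, g_i)$ is (a scalar multiple of) $m$ by Proposition~\ref{prop:mult}. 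Then $f - c' t g_i$ (for the appropriate scalar $c'$) has the same image under $\rho$ but strictly smaller $\lmsm$ of its ``bad part'' with respect to $\preceq$ --- because the subtraction cancels the term $m$, and by \ref{def:termorder:mult} (via Proposition~\ref{prop:mult}) all other terms of $t g_i$ are $\prec m$, and all surviving bad terms of $f$ were $\preceq m$ with the $=m$ case now removed. Since $\preceq$ is a well-order, this process terminates, leaving a representative of $\rho(f)$ all of whose standard monomials $m$ satisfy $\pi_{lt}(m) \notin \lismlt(I)$; hence $\rho(f)$ lies in $\mathrm{Span}_R(B)$.

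For linear independence, suppose $\sum_j c_j \rho(m_j) = 0$ in $\bigslant{A}{I}$ with the $m_j$ distinct standard monomials satisfying $\pi_{lt}(m_j) \notin \lismlt(I)$ and $c_j \in R$ not all zero. Then $f := \sum_j c_j m_j \in I$. If $f \neq 0$, then $\ltsm(f)$ is a nonzero standard term on some $m_j$, and by definition $\pi_{lt}(\ltsm(f)) = c_{j_0}\,\pi_{lt}(\lmsm(f)) \in \lismlt(I)$, so (using that $\lismlt(I)$ is a standard monomial ideal, Lemma~\ref{lem:monomial}, and that $c_{j_0}$ is a unit since $R$ is a field) $\pi_{lt}(\lmsm(f)) = \pi_{lt}(m_{j_0}) \in \lismlt(I)$, contradicting the assumption on the $m_j$. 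Hence $f = 0$, and since the $m_j$ are distinct standard monomials --- an $R$-basis of $A$ by the pseudo-ASL axioms --- all $c_j = 0$, a contradiction. Therefore $B$ is $R$-linearly independent.

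The main obstacle I anticipate is making the termination argument in the spanning step fully rigorous: one must set up the right well-founded quantity (e.g.\ the $\preceq$-largest ``bad'' standard monomial appearing, or a finite $\preceq$-decreasing sequence of such) and verify carefully that each reduction step strictly decreases it, which requires invoking Proposition~\ref{prop:mult} to control $\ltsm(t g_i)$ and \ref{def:termorder:mult} to ensure no new bad terms $\succeq m$ are introduced. The subtlety is that $t g_i$ is in general a linear combination of many standard monomials (straightening), so one genuinely needs the leading-term control from Proposition~\ref{prop:mult} rather than a naive monomial bookkeeping; everything else is a routine adaptation of the classical argument with $A_{lt}$-divisibility replacing ordinary monomial divisibility.
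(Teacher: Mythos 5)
Your proof is correct and takes essentially the same approach as the paper's: a well-founded reduction on the $\preceq$-largest ``bad'' monomial for the spanning part, and a leading-term contradiction for linear independence. The only cosmetic difference is that you route the reduction step through a pseudo-ASL Gröbner basis (finding $t$ with $\ltsm(tg_i)=m$ via Proposition~\ref{prop:asl-grobner-divisibility-characterization} and Proposition~\ref{prop:mult}), whereas the paper subtracts an element $h\in I$ with $\ltsm(h)$ equal to the bad term directly from the definition of the leading ideal, sidestepping the need to invoke Theorem~\ref{thm:asl-gb-existence}; the two are interchangeable and yield the same termination argument.
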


For pseudo-ASLs that are both over a field and monomial, this result is mentioned already in Green \cite[p.~36]{green}. As the proof in our setting follows the proof for ordinary Gröbner bases very closely, we relegate it to Appendix~\ref{sec:app:theory}.

\subsubsection{Compatibility, Standard Expressions, and Remainders}

The key new concept we need in comparison with ordinary Gröbner bases is the following notion of ``compatibility'' of two standard monomials. Nearly all the subsequent definitions and results we use will rely in some way on the notion of compatibility.

\begin{definition}[store=defncompatible, note=Compatible] \label{def:compatible}
Given $(A,\preceq,A_{lt})$, we say two standard monomials $m,m' \in A$ are \emph{compatible (with respect to $A_{lt}$)}  if $\pi_{lt}(m) \pi_{lt}(m') \neq 0$.\footnote{We tried defining compatibility instead as ``if $mm \neq 0$ then $\pi_{lt}(m) \pi_{lt}(m') \neq 0$'', with the intuition that if $A$ is not a domain, we ought only to care whether $m$ and $m'$ are ``more compatible'' in $A$ than in $A_{lt}$. That definition led to a number of issues that were sidestepped with our current definition. While there might be a way to get this alternative definition to work, we did not find it.} For arbitrary $f,g \in A$, we say that $f$ and $g$ are compatible (with respect to $A_{lt}$) if every standard monomial occurring in $f$ is compatible with every standard monomial occurring in $g$.
\end{definition}

In terms of compatibility, we can rephrase the condition in the definition of algebra of leading terms (Definition~\ref{def:ALT}) as: if $m,m'$ are compatible (with respect to $A_{lt}$), then $\pi_{lt}(m)\pi_{lt}(m') = \pi_{lt}(\ltsm(mm'))$. Note that, by the defining condition in $A_{lt}$, if two standard monomials $m,m' \in A$ are compatible, then $mm' \neq 0$ in $A$ as well.

\begin{definition}[store=defnstandardexpression,note=Standard expression]
\label{defn:standard-expression}
Given $(A, \preceq, A_{lt})$ and a list of elements $g_1,\dotsc,g_k, f \in A$, a \emph{standard expression} for $f$ relative to $g_1,\dotsc,g_k$ is a list $r,h_1,\dotsc,h_k \in A$ such that:
\begin{enumerate}
\item We have
\begin{equation} \label{eqn:standard-expression} f = r + \sum_{i=1}^{k} h_i g_i, \end{equation}

\item for any standard term $t$ occurring in $r$, $\ltsm(g_i) \nmid_{lt} t$ for all $i \in [k]$,
\item $\lmsm(f) \succeq \lmsm(h_i g_i)$ for all $i$, and
\item for every $i=1,\dotsc,k$, $h_i$ is compatible with $\ltsm(g_i)$.
\end{enumerate}

When this happens, we say that $r$ is \emph{a remainder} of $f$ relative to $\{g_1,\dotsc,g_k\}$, or that \emph{$f$ reduces to $r$ relative to $\{g_1, \ldots, g_k\}$}. 
\end{definition}

In general, remainders are not unique; however, the next proposition shows that they are unique when the $g_i$ form a pseudo-ASL Gr\"{o}bner basis.

\begin{proposition}
\label{prop:unique-remainder}
Let $I \subseteq A$ be an ideal and let $G \subseteq I$ be an pseudo-ASL \gb basis of $I$ with respect to $(\preceq, A_{lt})$. Then for every $f \in A$, there exists remainder $r$ of $f$ relative to $\{g_1, \dotsc, g_k\}$, and this remainder is unique. Furthermore, the remainder is 0 if and only if $f$ is in $I$.
\end{proposition}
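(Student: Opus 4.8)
The plan is to mimic the standard proof for ordinary Gröbner bases, adapting it to account for the algebra of leading terms $A_{lt}$ and the compatibility condition. The proof naturally splits into three parts: existence of a remainder, uniqueness of the remainder, and the characterization that the remainder is $0$ iff $f\in I$.

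\textbf{Existence.} First I would construct a remainder by a division-type algorithm. Starting with $f$, if no $\ltsm(g_i)$ $\ltdivides$-divides $\lmsm(f)$, move $\ltsm(f)$ into the remainder and recurse on $f - \ltsm(f)$. Otherwise, pick $g_i$ with $\ltsm(g_i) \ltdivides \lmsm(f)$; by Lemma~\ref{lem:mon-div} (applied in $A_{lt}$) there is a unique standard term $t = \ltfrac{\lmsm(f)}{\ltsm(g_i)}$, lifted to a standard term in $A$, with $\pi_{lt}(\ltsm(g_i)) \pi_{lt}(t) = \pi_{lt}(\lmsm(f))$ — in particular $t$ is compatible with $\ltsm(g_i)$. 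Replacing $f$ by $f - ct g_i$ for the appropriate scalar $c$, Proposition~\ref{prop:mult} gives $\ltsm(ct g_i) = \ltsm(c t \ltsm(g_i)) = \ltsm(f)$, so the leading term cancels and $\lmsm$ strictly decreases. Since pseudo-ASL term orders are well-orders (Proposition~\ref{prop:well-order}), this terminates, and by construction the accumulated $h_i$ (sums of the various $ct$'s for index $i$) are compatible with $\ltsm(g_i)$, and $\lmsm(f) \succeq \lmsm(h_i g_i)$, so conditions (1)--(4) of Definition~\ref{defn:standard-expression} hold.

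\textbf{Uniqueness.} Suppose $r, r'$ are both remainders of $f$. Then $r - r' \in I$ (since both differ from $f$ by an element of the ideal generated by $G$, using $h_i g_i \in I$). If $r - r' \neq 0$, then $\pi_{lt}(\ltsm(r - r')) \in \lismlt(I)$, so since $G$ is a pseudo-ASL \gb basis, by Proposition~\ref{prop:asl-grobner-divisibility-characterization} (or directly by Lemma~\ref{lem:monomial}) some $\ltsm(g_i) \ltdivides \ltsm(r-r')$. But $\lmsm(r - r')$ is a standard monomial appearing in either $r$ or $r'$, and by condition (2) in the definition of standard expression, no $\ltsm(g_i)$ $\ltdivides$-divides any standard term occurring in $r$ or in $r'$ — contradiction. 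Hence $r = r'$.

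\textbf{Remainder zero iff $f \in I$.} If the remainder is $0$, then $f = \sum h_i g_i \in I$ since $G \subseteq I$. Conversely, if $f \in I$, take any remainder $r$ of $f$; then $r = f - \sum h_i g_i \in I$ as well, and the same argument as in the uniqueness step (applied to $r$ alone) shows $r$ cannot have a leading term, so $r = 0$. The main obstacle I anticipate is making sure the $h_i$ produced by the division process really are compatible with $\ltsm(g_i)$ \emph{as elements}, i.e. that every standard term occurring in $h_i$ is compatible with $\ltsm(g_i)$; this requires checking that each term $ct$ appended to $h_i$ satisfies $\pi_{lt}(t)\pi_{lt}(\ltsm(g_i)) \neq 0$, which follows because $t$ was obtained precisely as a quotient in $A_{lt}$ witnessing $\ltsm(g_i) \ltdivides \lmsm(\text{current } f)$, so $\pi_{lt}(\ltsm(g_i))\pi_{lt}(t)$ equals a nonzero standard monomial by Lemma~\ref{lem:mon-div}. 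One should also double-check that subtracting $ct g_i$ does not destroy compatibility of previously-added terms of $h_i$ — but since compatibility is a term-by-term condition and we only ever append new compatible terms, this is fine; the only subtlety is confirming that the leading-term cancellation step via Proposition~\ref{prop:mult} requires $\ltsm(t)\ltsm(g_i) \neq 0$, which again holds by compatibility.
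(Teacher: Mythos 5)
Your proof is correct and follows essentially the same route as the paper's: a division algorithm that cancels $\ltdivides$-divisible leading terms (using Lemma~\ref{lem:mon-div}, Proposition~\ref{prop:mult}, and compatibility of the quotient term) and otherwise shunts the leading term into the remainder, with termination via well-ordering (Proposition~\ref{prop:well-order}); uniqueness via the $\ltdivides$-indivisibility of remainder terms combined with Proposition~\ref{prop:asl-grobner-divisibility-characterization}; and the zero-remainder characterization by observing that $r$ stays in the coset $f+I$. The one place the paper is slightly more careful is in verifying condition (3) of Definition~\ref{defn:standard-expression} — that $\lmsm(h_i g_i) \preceq \lmsm(f)$ for the \emph{original} $f$, tracked by an "upward" induction through the terms produced along the way — which you assert but do not spell out; otherwise the arguments match.
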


In the case where $A$ is a monomial pseudo-ASL over a field, and the product of standard monomials is a standard monomial (not just a standard term), this proposition follows directly from Green's \cite[Prop~2.7]{green}. In our setting, we must carefully use both $A$ and $A_{lt}$ (which is always a monomial pseudo-ASL), but the basic idea of the proof is otherwise similar.

\begin{proof}

[Existence]. We will show this by induction on $\lmsm(f)$ (which works because $\preceq$ is a well-order, by Proposition~\ref{prop:well-order}). The base case is $\lmsm(f) = 0$, which is trivial. For the inductive step, as long as there is a $g \in G$ such that $\ltsm(g) \ltdivides \ltsm(f)$, we proceed as follows. In this case, there exists a standard term $h \in A_{lt}$ such that $h\pi_{lt}(\ltsm(g)) = \pi_{lt}(\ltsm(f))$. Setting $h' = \pi_{lt}^{-1}(h)$, we have

\begin{equation}
\label{eqn:f-hg-lt}
\pi_{lt}(\ltsm(f)) = h\pi_{lt}(\ltsm(g)) = \pi_{lt}(h')\pi_{lt}(\ltsm(g)) = \pi_{lt}(\ltsm(h'g)),
\end{equation} 

where the third equality is due to Definition~\ref{def:ALT}, since we have that $h'$ and $\ltsm(g)$ are compatible, since $\pi_{lt}(h') \pi_{lt}(\ltsm(g)) = \pi_{lt}(\ltsm(f)) \neq 0$.

Now consider $f' = f - h'g$. By Equation \eqref{eqn:f-hg-lt}, $\pi_{lt}(\ltsm(f)) = \pi_{lt}(\ltsm(h'g))$, and since $\pi_{lt}$ is $R$-linear and is the identity on standard monomials, $\ltsm(f) = \ltsm(h'g)$, implying that $\ltsm(f') \prec \ltsm(f)$. Therefore, this procedure can be applied inductively till we are left with $r$ such that either $r=0$ (which we will show happens iff $f \in I$), or $\forall g \in G,\; \ltsm(g) \nmid_{lt} \ltsm(r)$ (which we will show happens iff $f \not\in I$). Consequently, we will have $f = r + \sum_{g \in G} h_gg$, where $h_g \in A$. 

If $r = 0$, the proposition is trivially true, so we only have to consider the case where $r \neq 0$. We can thus continue the procedure with lower terms of $r$ as needed, starting with the greatest standard term appearing in $r$, say $t$, such that for some $g \in G$, $\ltsm(g) \ltdivides t$. As long as we proceed with greatest-first in this fashion, the induction continues, thus producing for us an $r$ such that every standard term $t$ appearing in $r$ is not $\ltdivides$-divisible by $\ltsm(g)$ for any $g \in G$.

The $h_g$ are compatible with $\ltsm(g)$ by construction. Furthermore, every standard monomial $m$ we added to $h_g$ has the property that $\ltsm(mg)$ is either one of the terms of $f$, so we have $\ltsm(mg) \preceq \ltsm(f)$, or one of the terms of a previously-added $m'g'$, in which case we have $\ltsm(mg) \preceq \ltsm(m'g') \preceq \ltsm(f)$ (by induction ``upward towards $\ltsm(f)$''). Thus the above produces a standard expression for $f$ relative to $G$.

[Uniqueness]. To see uniqueness, suppose we have two such decompositions: 
\[
f = r + \sum_{g \in G} h_g g = r' + \sum_{g \in G} h'_g g.
\] 
Then we have that 
\[
r-r' = \sum_{g \in G} (h_g - h'_g) g \in I.
\]
Thus there must exist a $g \in G$ such that $\ltsm(g) \ltdivides \ltsm(r-r')$ by Proposition~\ref{prop:asl-grobner-divisibility-characterization}, since $G$ is a pseudo-ASL \gb basis for $I$. But $\lmsm(r-r')$ appears in either $r$ or $r'$, so by construction of $r$ and $r'$, there cannot exist a $g \in G$ such that $\ltsm(g) \ltdivides \lmsm(r-r')$. This in turn means that there cannot be any non-zero terms in $r-r'$, so we have $r-r'=0$, as desired.

[$r=0 \Leftrightarrow f \in I$] Each step of the above algorithm preserves the coset $f + I$, because we only ever add elements of $I$ (of the form $h'g$) to $f$. In particular, $r + I = f+I$. Thus if $r=0$, then $f$ is in $I$. Conversely, if $f$ is in $I$, then so is every intermediate $f'$ produced at every step of the above algorithm, since we have $f + I = f' + I$. By Proposition~\ref{prop:asl-grobner-divisibility-characterization}, there is thus some $g \in G$ such that $\ltsm(g) \ltdivides \ltsm(f')$, so the algorithm can continue to the next stage. The only way this terminates---which it must since the leading term is decreasing every time and $\preceq$ is a well-order---is when $r=0$.
\end{proof}

\subsection{Reduced pseudo-ASL Gröbner bases}
\label{sec:reduced}

\begin{definition}[store=defnreducedgb, note=Irredundant/Reduced \gb Basis]
\label{defn:reduced-gb}
Given $(A, \preceq, A_{lt})$, a pseudo-ASL Gröbner basis $G$ is \emph{irredundant} if there is no pair $g,g' \in G$ of distinct elements such that $\ltsm(g) \ltdivides \ltsm(g')$, and $G$ is \emph{reduced} if for every $g \in G$, for each standard monomial $m$ appearing in $g$, there is no distinct $g' \in G$ such that $\ltsm(g') \ltdivides m$, and furthermore, $\lcsm(g) = 1$ for all $g \in G$. 
\end{definition}

Being irredundant follows from being reduced, as the condition for irredundancy only depends on the leading monomials of the elements of $G$, while the condition for being reduced is similar but applies to all standard monomials appearing in any element of $G$. Nonetheless, conceptually it is often easier to split into two steps: first make a Gröbner basis irredundant, and then make it reduced.

The next result and its proof follow the standard treatment; we include the full details in Appendix~\ref{sec:app:theory} to ensure no subtlety is overlooked.

\begin{theorem}[store=thmreduced, note={Existence, uniqueness, and algorithm for reduced pseudo-ASL Gröbner bases}] 
\label{thm:reduced}
Given $(A, \preceq, A_{lt})$ and an ideal $I \subseteq A$, there is a unique reduced pseudo-ASL Gröbner basis for $I$ with respect to $(\preceq, A_{lt})$.

Furthermore, there is an algorithm that takes as input a pseudo-ASL Gröbner basis $G$ and produces as output a reduced pseudo-ASL Gröbner basis.
\end{theorem}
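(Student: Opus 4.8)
The plan is to follow the classical treatment of reduced Gröbner bases, translating ``divisibility of monomials'' into ``$\ltdivides$ in $A_{lt}$'' and ``reduced'' into Definition~\ref{defn:reduced-gb}, while keeping careful track of compatibility and of the well-ordering of $\preceq$ (Proposition~\ref{prop:well-order}). I would split the argument into (a) an algorithm that reduces any given finite pseudo-ASL Gröbner basis (which also yields existence, by applying it to the Gröbner basis produced in Theorem~\ref{thm:asl-gb-existence}), and (b) uniqueness.

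For part (a): First I would \emph{make $G$ irredundant} by repeatedly deleting $g'$ whenever there is a distinct $g\in G$ with $\ltsm(g)\ltdivides\ltsm(g')$; each deletion leaves $\langle \pi_{lt}(\ltsm(h)):h\in G\rangle$ unchanged, so $G$ stays a Gröbner basis by Lemma~\ref{lem:monomial}, and the process halts since $G$ is finite. Since $R$ is a field and $A_{lt}$ is a monomial pseudo-ASL, $\ltdivides$ is antisymmetric on standard terms up to a unit scalar (via Lemma~\ref{lem:div}), so the resulting irredundant $G$ has pairwise $\ltdivides$-incomparable, hence distinct, leading monomials; combined with Lemma~\ref{lem:unique-min} this forces $\{\lmsm(g):g\in G\}$ to be exactly the unique division-minimal generating set $M$ of $\lismlt(I)$. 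Next I would \emph{normalize} by replacing each $g$ with $\lcsm(g)^{-1}g$ (valid since $R$ is a field), which changes neither $\lismlt$ nor the $\lmsm(g)$. Finally I would \emph{auto-reduce}: for $i=1,\dots,k$ in turn, apply the division algorithm of Corollary~\ref{cor:std-exp-alg} to reduce $g_i$ modulo $G\setminus\{g_i\}$, producing a standard expression $g_i=r_i+\sum_{j\ne i}h_jg_j$, and replace $g_i$ by $\tilde g_i:=r_i$. The points to verify are: $\tilde g_i\in I$ (each $h_jg_j\in I$); $\ltsm(\tilde g_i)=\ltsm(g_i)$, because by irredundancy $\lmsm(g_i)$ is not $\ltdivides$-divisible by any $\ltsm(g_j)$, $j\ne i$, so the greatest-first division algorithm leaves the leading term of $g_i$ untouched in the remainder, whence $\lismlt(I)$ and all leading terms are preserved; and, by Definition~\ref{defn:standard-expression}, no standard monomial appearing in $\tilde g_i$ is $\ltdivides$-divisible by $\ltsm(g_j)$ for any $j\ne i$. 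The observation that makes a single pass enough is that leading terms are never altered during auto-reduction, so the ``cleanliness'' condition established when we process $i$ survives all later steps (which only modify other $g_j$), giving that the final $G$ is reduced.

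For part (b): Let $G,G'$ be reduced pseudo-ASL Gröbner bases of $I$. Since reduced implies irredundant, the argument of the irredundancy step shows $\{\lmsm(g):g\in G\}=M=\{\lmsm(g'):g'\in G'\}$, and with $\lcsm\equiv 1$ this gives $\{\ltsm(g):g\in G\}=\{\ltsm(g'):g'\in G'\}$. Fix $m\in M$ and the corresponding $g\in G$, $g'\in G'$ with leading monomial $m$ and leading coefficient $1$. If $g-g'\ne 0$, then $g-g'\in I$ with $\lmsm(g-g')\prec m$ (the $m$-terms cancel), and $\lmsm(g-g')$ appears in $g$ or in $g'$. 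By Proposition~\ref{prop:asl-grobner-divisibility-characterization}, $\lmsm(g-g')$ is $\ltdivides$-divisible by $\ltsm(h)$ for some $h\in G$; since $\lmsm(h)\preceq\lmsm(g-g')\prec m$ by Lemma~\ref{lem:div}, $h$ is distinct from $g$, and since $\{\lmsm(h):h\in G\}=M=\{\lmsm(h'):h'\in G'\}$ the same monomial divides via some $\ltsm(h')$, $h'\in G'$ distinct from $g'$. Whichever of $g,g'$ contains $\lmsm(g-g')$, this contradicts its reducedness. Hence $g=g'$, and as $m$ was arbitrary, $G=G'$.

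The step I expect to be the main obstacle is the bookkeeping inside the auto-reduction pass: verifying that each elementary reduction (subtracting a suitable standard-term multiple of some $g_j$ to cancel one term $t$ of $g_i$) introduces only standard monomials strictly $\prec t$ and therefore terminates. This needs Proposition~\ref{prop:mult} together with \ref{def:termorder:mult}, plus the fact that the required compatibility of the multiplier with $\ltsm(g_j)$ holds automatically (exactly as in the existence half of the proof of Proposition~\ref{prop:unique-remainder}), and termination then comes from Proposition~\ref{prop:well-order}. Everything else is a faithful transcription of the classical proof.
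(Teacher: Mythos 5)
Your proposal is correct and follows essentially the same three-stage route as the paper (make irredundant, normalize leading coefficients, auto-reduce, then argue uniqueness via the unique division-minimal generating set $M$ of $\lismlt(I)$ and the ``leading monomial of the difference'' contradiction). The only departures are cosmetic: you perform the auto-reduction in a single sweep over $i=1,\dots,k$ justified by the invariance of leading terms, whereas the paper organizes the same reduction as a downward induction on the $\preceq$-largest offending monomial $\max B$; and you observe that the uniqueness argument needs no outer induction over $M$, which the paper's phrasing nominally uses but never actually invokes.
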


One might attempt to bound the runtime of this algorithm in terms of the total number of standard monomials appearing in every element of $G$, but we have found this tricky without knowing more details about the straightening law in the pseudo-ASL one is working with. The issue is that when the algorithm uses $t g_j$ to cancel out a monomial $m$ in $g_i$, the straightening law might result in $tg_j$ having more monomials than $g_j$ itself, thus increasing the total number of monomials in some steps.

Next we will show that the unique reduced pseudo-ASL Gröbner basis for $I$ relative to $(\preceq, \Adisc)$ actually only depends on the leading ideal $\lismlt(I)$, and not on any other information about the pseudo-ASL term order. This will be useful for proving existence of universal pseudo-ASL Gröbner bases in Theorem~\ref{thm:universal} below. 

\begin{lemma} \label{lem:reduced-Adisc}
Given an ideal $I \subseteq A$ in a pseudo-ASL, and two pseudo-ASL term orders $\preceq, \preceq'$, suppose that $(\lismlt)_{\preceq}(I) = (\lismlt)_{\preceq'}(I)$ in $\Adisc$. Then the reduced pseudo-ASL Gröbner bases for $I$ relative to $(\preceq, \Adisc)$ and $(\preceq',\Adisc)$ are equal.
\end{lemma}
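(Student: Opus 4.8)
The plan is to adapt the classical argument that the reduced Gröbner basis of an ideal depends only on its leading ideal, using the structure theory for monomial pseudo-ASLs established above. Write $L := (\lismlt)_{\preceq}(I) = (\lismlt)_{\preceq'}(I) \subseteq \Adisc$ for the common leading ideal. Since $\Adisc$ is a monomial pseudo-ASL (Proposition~\ref{prop:agen-adisc}, Lemma~\ref{lem:ALT}) and $\pi_{lt}\colon A \to \Adisc$ is the identity on standard monomials, each of $\preceq,\preceq'$ is itself a pseudo-ASL term order on $\Adisc$ (Proposition~\ref{prop:orders}), and $\ltdivides$-divisibility of standard monomials coincides with divisibility in the monomial pseudo-ASL $\Adisc$; thus Lemmas~\ref{lem:monomial}, \ref{lem:unique-min}, and \ref{lem:div} all apply to $L$. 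In particular $L$ has a unique finite set $M$ of division-minimal standard monomial generators, and every standard-monomial generating set of $L$ contains $M$. Let $G$ (resp.\ $G'$) be the reduced pseudo-ASL Gröbner basis for $I$ relative to $(\preceq,\Adisc)$ (resp.\ $(\preceq',\Adisc)$), and write $\lambda(g)$ for the leading monomial of $g\in G$ with respect to $\preceq$ and $\lambda'(g')$ for that of $g'\in G'$ with respect to $\preceq'$. Throughout, leading data ($\ltsm,\lmsm,\lcsm$) of an element of $G$ is computed with respect to $\preceq$, and of an element of $G'$ with respect to $\preceq'$. Since $G$ is reduced, $\lcsm(g)=1$, so $\pi_{lt}(\ltsm(g)) = \lambda(g)$ and $L = \langle \lambda(g) : g\in G\rangle$ by Definition~\ref{def:asl-gb}.

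First I would show that $\lambda$ restricts to a bijection $G \to M$ (and likewise $\lambda'\colon G'\to M$). The standard-monomial set $\{\lambda(g):g\in G\}$ generates $L$, hence contains $M$ by Lemma~\ref{lem:unique-min}. Conversely, for $g\in G$ the standard monomial $\lambda(g)\in L$ is divisible by some $m_0\in M$ (Lemma~\ref{lem:monomial}); writing $m_0=\lambda(g_0)$ with $g_0\in G$, if $g_0\neq g$ then $\ltsm(g_0)\ltdivides\ltsm(g)$, contradicting irredundancy (which follows from reducedness, Definition~\ref{defn:reduced-gb}), so $g_0=g$ and $\lambda(g)=m_0\in M$. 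Injectivity of $\lambda$ is again irredundancy. Composing $\lambda$ with $(\lambda')^{-1}$ gives a bijection $G\to G'$ pairing each $g$ with the unique $g'$ for which $\lambda(g)=\lambda'(g')=:m\in M$.

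The crucial observation is that no standard monomial of an element of $G$ other than its own leading monomial lies in $L$ (and symmetrically for $G'$). Suppose $m''\neq\lambda(g)$ is a standard monomial appearing in some $g\in G$ with $m''\in L$; then $m''$ is divisible by some $m_0=\lambda(g_0)\in M$. If $g_0=g$, then $\lambda(g)\divides m''$, so $\lambda(g)\preceq m''$ by Lemma~\ref{lem:div}, contradicting $m''\prec\lambda(g)$ (which holds because $\lambda(g)$ is the $\preceq$-largest monomial of $g$ and $m''\neq\lambda(g)$). If $g_0\neq g$, then $\ltsm(g_0)\ltdivides m''$ while $m''$ appears in $g$, directly contradicting that $G$ is reduced. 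The same argument applies to $G'$ with $\preceq'$ in place of $\preceq$.

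Finally I would conclude by comparing matched elements. Let $g\in G$ and $g'\in G'$ be paired, so $\lambda(g)=\lambda'(g')=m$, and set $d:=g-g'\in I$. Both $g$ and $g'$ contain $m$ with coefficient $1$, so $m$ does not appear in $d$; any other standard monomial appearing in $d$ appears in $g$ or in $g'$, and, being distinct from $m$, is a non-leading monomial there, hence lies outside $L$ by the previous paragraph. If $d\neq 0$, the image under $\pi_{lt}$ of the leading term of $d$ with respect to $\preceq$ lies in $\lismlt(I)=L$; since $L$ is a standard monomial ideal, Lemma~\ref{lem:monomial} forces the $\preceq$-leading monomial of $d$ to lie in $L$ --- but that is a standard monomial appearing in $d$, a contradiction. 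Hence $d=0$, i.e.\ $g=g'$, and running this over the bijection of the second paragraph gives $G=G'$. I expect the main obstacle to be the material of the second and third paragraphs, namely pinning down that the leading monomials of a reduced pseudo-ASL Gröbner basis are exactly the unique minimal generators $M$ of $L$ while all lower terms avoid $L$; this is where one leans on Lemma~\ref{lem:unique-min} together with the fact that $\preceq$ genuinely induces a pseudo-ASL term order on $\Adisc$ so that Lemmas~\ref{lem:monomial} and \ref{lem:div} are available. The closing comparison via $d=g-g'$ is then routine.
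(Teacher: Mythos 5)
Your proof is correct and follows essentially the same route as the paper's: both rely on Lemma~\ref{lem:unique-min} to identify the leading monomials of the reduced bases with the unique minimal generating set $M$ of the common leading ideal, then derive a contradiction from the hypothetical nonzero difference $g-g'$ by showing its $\preceq$-leading monomial would have to be simultaneously inside and outside $L$. The only cosmetic difference is that you first package the observation that every non-leading monomial of a reduced Gröbner-basis element lies outside $L$ as a standalone claim, whereas the paper applies the same fact inline via a case split on whether the offending monomial of $g-g'$ comes from $g$ or from $g'$.
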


\begin{proof}
Let $G$ be the unique reduced pseudo-ASL Gröbner basis for $I$ relative to $(\preceq, \Adisc)$, and similarly $G'$ relative to $(\preceq', \Adisc)$. It suffices to show that $G \subseteq G'$, for then by symmetry, applying the same argument with the roles reversed, we automatically get $G' \subseteq G$.

Let $g \in G$ and $m = \ltsm^{\preceq}(g)$; then $m$ is in $\lismlt(I)$.  Therefore, by Proposition~\ref{prop:asl-grobner-divisibility-characterization}, there is some element $g' \in G'$ such that $\ltsm^{\preceq'}(g')$ divides $m$ in $\Adisc$. We will show that $g' = g$. 

By Lemma~\ref{lem:unique-min}, $m$ is minimal under divisibility in $\lismlt(I) \subseteq \Adisc$, and thus we must have $\lmsm^{\preceq'}(g') = m$. Since $G'$ is reduced, we in fact have 
\[
\ltsm^{\preceq'}(g') = m = \ltsm^{\preceq}(g).
\] 

If $g \neq g'$, then there is at least one monomial in the support of $g-g'$, and it must be different than $\lmsm(g) = \lmsm(g')$. Let $m'$ be the $\preceq$-leading monomial of $g-g'$. If $m'$ occurs in $g$, then, because $m'$ is the $\preceq$-leading monomial of an element of $I$ (namely, $g-g'$), we have that there is some $g_2 \in G$ such that $\lmsm^{\preceq}(g_2)$ divides $m'$ in $\Adisc$, contradicting the fact that $G$ was reduced. On the other hand, if $m'$ occurs in $g'$, then since $(\lismlt)_{\preceq}(I) = (\lismlt)_{\preceq'}(I)$ in $\Adisc$, $m'$ also occurs as the $\preceq'$-leading monomial of some element of $I$ (not necessarily $g-g'$, since it need not be the case that $\lmsm^{\preceq}(g-g') = \lmsm^{\preceq'}(g-g')$). And thus there is some $g'_2 \in G'$ such that $\lmsm^{\preceq'}(g'_2)$ divides $m'$ in $\Adisc$, contradicting the fact that $G'$ was reduced. Thus, it must be the case that $g=g'$, completing the proof.
\end{proof}

\subsection{Relations between different algebras of leading terms}
In this subsection we prove several results clarifying the relationships between various algebras of leading terms and their associated pseudo-ASL Gröbner bases. In addition to being a natural question on its own, we will use the results of this section to show the existence of universal pseudo-ASL Gröbner bases in the next section.

Given $(A, \preceq)$, we say that two algebras of leading terms for $(A,\preceq)$, say $A_{lt}$ and $A'_{lt}$, are \emph{isomorphic as algebras of leading terms} for $(A,\preceq)$ if the bijection $\pi_{lt} \circ (\pi'_{lt})^{-1}$ (which is the identity on standard monomials) is an isomorphism of algebras $A'_{lt} \to A_{lt}$; we denote this by $A_{lt} \cong_{sm} A'_{lt}$.

\begin{proposition}[Characterization of $A_{gen}$ in terms of compatibility] \label{prop:Agen}
Given $(A,\preceq)$, all pairs of standard monomials $m,m' \in A$ such that $mm' \neq 0$ are compatible with respect to $A_{lt}$ if and only if \[A_{lt} \cong_{sm} A_{gen}.\]
\end{proposition}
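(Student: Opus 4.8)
The plan is to identify the only possible candidate for the isomorphism and then reduce the whole statement to multiplicativity on standard monomials. Write $\pi_{gen}\colon A\to A_{gen}$ for the $R$-linear bijection of Definition~\ref{defn:agen-adisc} (there denoted $\pi_{lt}$), and set $\psi := \pi_{lt}\circ\pi_{gen}^{-1}\colon A_{gen}\to A_{lt}$, which is exactly the bijection appearing in the definition of $\cong_{sm}$. Since $A_{gen}$ and $A_{lt}$ are both pseudo-ASLs governed by the same $(H,\Sigma)$, they are free $R$-modules on the same basis of standard monomials, so $\psi$ is automatically a well-defined $R$-module isomorphism that is the identity on standard monomials and in particular sends $1\mapsto 1$. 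Thus $A_{lt}\cong_{sm}A_{gen}$ holds if and only if $\psi$ is multiplicative, and by $R$-bilinearity of both products it suffices to check $\psi(\pi_{gen}(m)\pi_{gen}(m'))=\psi(\pi_{gen}(m))\psi(\pi_{gen}(m'))=\pi_{lt}(m)\pi_{lt}(m')$ for all standard monomials $m,m'$ of $A$.

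For the ``only if'' direction, assume every pair $m,m'$ with $mm'\neq 0$ in $A$ is compatible with respect to $A_{lt}$, and fix standard monomials $m,m'$. If $mm'=0$ in $A$, then $\ltsm(mm')=0$, so $\pi_{gen}(m)\pi_{gen}(m')=\pi_{gen}(\ltsm(mm'))=0$ by the definition of $A_{gen}$, while the dichotomy in Definition~\ref{def:ALT} forces $\pi_{lt}(m)\pi_{lt}(m')$ to be either $0$ or $\pi_{lt}(0)=0$; so both sides of the multiplicativity equation vanish. If $mm'\neq 0$, then by hypothesis $m,m'$ are compatible, i.e.\ $\pi_{lt}(m)\pi_{lt}(m')\neq 0$, so the same dichotomy forces $\pi_{lt}(m)\pi_{lt}(m')=\pi_{lt}(\ltsm(mm'))$; and $\pi_{gen}(m)\pi_{gen}(m')=\pi_{gen}(\ltsm(mm'))$ by definition of $A_{gen}$. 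Writing $\ltsm(mm')$ as a scalar multiple of a standard monomial and using that $\psi$ is $R$-linear and the identity on standard monomials, $\psi(\pi_{gen}(\ltsm(mm')))=\pi_{lt}(\ltsm(mm'))$, so $\psi(\pi_{gen}(m)\pi_{gen}(m'))=\pi_{lt}(m)\pi_{lt}(m')$, as required; hence $\psi$ is an $R$-algebra isomorphism.

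For the ``if'' direction, assume $\psi$ is an $R$-algebra isomorphism and let $m,m'$ be standard monomials with $mm'\neq 0$ in $A$. Then $\ltsm(mm')\neq 0$, so $\pi_{gen}(m)\pi_{gen}(m')=\pi_{gen}(\ltsm(mm'))$ is a nonzero standard term in $A_{gen}$. Since $\psi$ is an injective ring homomorphism, $\pi_{lt}(m)\pi_{lt}(m')=\psi(\pi_{gen}(m))\psi(\pi_{gen}(m'))=\psi\big(\pi_{gen}(m)\pi_{gen}(m')\big)\neq 0$, i.e.\ $m,m'$ are compatible with respect to $A_{lt}$. The argument is essentially bookkeeping once $\psi$ is pinned down, so I do not expect a genuine obstacle; the only point needing a little care is the scalar in $\ltsm(mm')$ --- it is a standard \emph{term}, not necessarily a standard monomial --- and verifying that $\psi$, being $R$-linear and the identity on standard monomials, really carries $\pi_{gen}(\ltsm(mm'))$ to $\pi_{lt}(\ltsm(mm'))$, which is nonzero precisely when $mm'\neq 0$ in $A$.
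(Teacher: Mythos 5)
Your proof is correct and takes essentially the same approach as the paper's: identify the $R$-linear bijection $\psi = \pi_{lt}\circ\pi_{gen}^{-1}$ as the only candidate isomorphism and reduce $A_{lt}\cong_{sm}A_{gen}$ to multiplicativity of $\psi$ on standard monomials, then use the dichotomy in Definition~\ref{def:ALT} together with the defining equation \eqref{eq:agen} of $A_{gen}$. Your write-up is a bit more explicit than the paper's in treating the $mm'=0$ case and in invoking injectivity of $\psi$ for the converse, but the underlying argument is identical.
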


\begin{proof}
Suppose $A_{lt}$ is isomorphic to $A_{gen}$ as in the statement. By the definition of $A_{gen}$, $\pi_{lt}(m)\pi_{lt}(m') = 0$ iff $mm' = 0$ in $A$, thus, all pairs $(m,m')$ such that $mm' \neq 0$ in $A$ are compatible relative to $A_{gen}$.

Conversely, suppose that all pairs of standard monomials whose products in $A$ are nonzero are compatible with respect to $A_{lt}$. For any two standard monomials $m,m' \in A$, if $mm' \neq 0$, then by the assumption of compatibility we have that $\pi_{lt}(m) \pi_{lt}(m') \neq 0$. But then by Definition~\ref{def:ALT}, we have $\pi_{lt}(m) \pi_{lt}(m') = \pi_{lt}(\ltsm(mm'))$. Since the latter holds for any pair of standard monomials $m,m' \in A$ such that $mm' \neq 0$, these are precisely the defining relations of $A_{gen}$, and thus $A_{lt}$ is isomorphic to $A_{gen}$ as an algebra of leading terms.
\end{proof}

\begin{proposition}[Relationship between pseudo-ASL Gröbner bases for different algebras of leading terms] \label{thm:diff}
Suppose that $A_{lt}$ and $A'_{lt}$ are two algebras of leading terms for the same $(A,\preceq)$, and suppose that, for all standard monomials $m,m'$, if $mm' \neq 0$ in $A_{lt}'$, then $mm' \neq 0$ in $A_{lt}$. 

Then for any ideal $I \subseteq A$, the reduced pseudo-ASL Gröbner basis for $I$ with respect to $(\preceq,A_{lt})$ is contained in the reduced pseudo-ASL Gröbner basis for $I$ with respect to $(\preceq,A'_{lt})$. 
\end{proposition}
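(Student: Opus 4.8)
The plan is to follow the strategy the paper already uses for Lemma~\ref{lem:reduced-Adisc}, with the simplification that here both reduced bases are taken relative to the \emph{same} pseudo-ASL term order $\preceq$. Write $G$ for the reduced pseudo-ASL Gr\"obner basis of $I$ relative to $(\preceq, A_{lt})$ and $G'$ for the reduced one relative to $(\preceq, A'_{lt})$; the goal is $G \subseteq G'$. The first ingredient I would establish is that divisibility of leading terms transfers from $A'_{lt}$ to $A_{lt}$: if $m, m'$ are standard monomials with $\pi'_{lt}(m) \divides \pi'_{lt}(m')$ in $A'_{lt}$, then $\pi_{lt}(m) \divides \pi_{lt}(m')$ in $A_{lt}$. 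To see this, use that $A'_{lt}$ is a monomial pseudo-ASL (Lemma~\ref{lem:ALT}) carrying the induced term order (Proposition~\ref{prop:orders}), so by Lemma~\ref{lem:mon-div} there is a standard term $t = c\,\pi'_{lt}(n)$ (with $c \in R$ nonzero, $n$ a standard monomial) with $\pi'_{lt}(m)\,t = \pi'_{lt}(m')$; then $mn \neq 0$ in $A'_{lt}$, hence $mn \neq 0$ in $A_{lt}$ by hypothesis, and Definition~\ref{def:ALT} gives $\pi_{lt}(m)\pi_{lt}(n) = \pi_{lt}(\ltsm(mn)) = c^{-1}\pi_{lt}(m')$ (the middle term equals $c^{-1}\pi_{lt}(m')$ because applying $(\pi'_{lt})^{-1}$ in $A'_{lt}$ shows $\ltsm(mn) = c^{-1}m'$ in $A$). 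Stripping coefficients, the same implication holds for standard terms.

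Next I would record, for any algebra of leading terms, that if a pseudo-ASL Gr\"obner basis $\Gamma$ is reduced then $g \mapsto \lmsm(g)$ is a bijection from $\Gamma$ onto the set of standard monomials of its leading ideal that are minimal under divisibility; this is immediate from Lemma~\ref{lem:unique-min} (the minimal generators lie in every monomial generating set) together with irredundancy (no leading term divides another).

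Then comes the main argument, which I would run as follows. Fix $g \in G$ and put $m = \ltsm(g) = \lmsm(g)$ (leading coefficient $1$, as $G$ is reduced). Since $g \in I$, Proposition~\ref{prop:asl-grobner-divisibility-characterization} applied to $G'$ yields $g' \in G'$ with $\ltsm(g') \divides m$ in $A'_{lt}$; by the transfer step this divisibility holds in $A_{lt}$ as well. As $\ltsm(g') = \lmsm(g')$ (reduced) lies in the leading ideal of $I$ computed in $A_{lt}$, and $m$ is minimal there by the previous paragraph, we get $\lmsm(g') = m$. It remains to show $g = g'$. If $g \neq g'$, then $0 \neq g - g' \in I$ has $\lmsm(g - g') =: m' \prec m$, and $m'$ appears with nonzero coefficient in $g$ or in $g'$. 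If it appears in $g$, then Proposition~\ref{prop:asl-grobner-divisibility-characterization} applied to $G$ and $g-g'$ gives $g_2 \in G$ with $\ltsm(g_2) \divides \ltsm(g-g')$ in $A_{lt}$, so $\lmsm(g_2) \divides m'$ in $A_{lt}$; by Lemma~\ref{lem:div} (in $A_{lt}$) then $\lmsm(g_2) \preceq m' \prec m = \lmsm(g)$, so $g_2 \neq g$, contradicting reducedness of $G$. If instead $m'$ appears in $g'$, the \emph{same} argument --- now applied to $G'$ and the \emph{same} element $g-g' \in I$, using that $\preceq$ is the term order for both bases --- produces $g'_2 \in G'$ distinct from $g'$ with $\ltsm(g'_2) \divides m'$ in $A'_{lt}$, contradicting reducedness of $G'$. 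Hence $g = g' \in G'$; since $g \in G$ was arbitrary, $G \subseteq G'$.

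I expect the main obstacle to be the transfer step: carefully turning the hypothesis ``every nonzero product in $A'_{lt}$ is nonzero in $A_{lt}$'' into ``$\divides$ in $A'_{lt}$ implies $\divides$ in $A_{lt}$'', while keeping straight which of $\pi_{lt}, \pi'_{lt}$ and which divisibility relation is active at each step. After that, the argument is a close adaptation of the paper's existing treatment of reduced bases, made smoother by the fact that the two Gr\"obner bases share a term order --- so the ``second case'' above, which needed an extra hypothesis in Lemma~\ref{lem:reduced-Adisc}, is now handled by the very same computation as the first.
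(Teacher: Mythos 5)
Your proof is correct and follows essentially the same route as the paper's: reduce to showing $G \subseteq G'$, use the divisibility characterization (Proposition~\ref{prop:asl-grobner-divisibility-characterization}) together with the fact that nonzero products in $A'_{lt}$ stay nonzero in $A_{lt}$ to force $\lmsm(g')=\lmsm(g)$, and then derive a contradiction with reducedness from $m'=\lmsm(g-g')$. The only differences are cosmetic --- you transfer the divisibility relation from $A'_{lt}$ into $A_{lt}$ and check minimality there, while the paper transfers minimality from $A_{lt}$ into $A'_{lt}$ --- and you spell out the transfer step via Lemma~\ref{lem:mon-div} and Definition~\ref{def:ALT} in more detail than the paper does; both are sound, and your closing observation that sharing the term order $\preceq$ makes the ``$m'$ appears in $g'$'' case symmetric to the first (unlike in Lemma~\ref{lem:reduced-Adisc}) correctly captures the simplification.
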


The proof of this result is very similar to that of Lemma~\ref{lem:reduced-Adisc}. There, we fixed the algebra of leading terms to $\Adisc$ and allowed the term order to vary (as long as the leading ideal didn't change). Here, we allow the algebra of leading terms to vary, but we fix the term order. In the present setting, note that the leading ideal also essentially stays the same: the leading ideals in the two algebras of leading terms will be spanned by the same set of standard monomials, and sent to one another bijectively by $\pi'_{lt} \circ \pi_{lt}^{-1}$. (We note that our proof of existence of universal pseudo-ASL Gröbner bases below will use both Lemma~\ref{lem:reduced-Adisc} and Proposition~\ref{thm:diff}.)

\begin{proof}
Let $G$ be the reduced pseudo-ASL Gröbner basis for $I$ relative to $(\preceq, A_{lt})$ and let $G'$ be the reduced pseudo-ASL Gröbner basis for $I$ relative to $(\preceq, A'_{lt})$. Let $g \in G$; we will show that $g$ must be in $G'$ as well. 

Let $m = \ltsm(g)$; then $m$ is in $\lismlt(I)$ in any algebra of leading terms relative to $\preceq$.  In particular, the standard monomial $m$ must occur in $\lismlt(I)$ in $A'_{lt}$. Therefore, by Proposition~\ref{prop:asl-grobner-divisibility-characterization}, there is some element $g' \in G'$ such that $\ltsm(g')$ divides $m$ in $A'_{lt}$. We will show that $g' = g$. 

By Lemma~\ref{lem:unique-min}, $m$ is minimal under divisibility in $\lismlt(I) \subseteq A_{lt}$. Now, since $\lismlt(I) \subseteq A_{lt}'$ consists of the same set of standard monomials as it does in $A_{lt}$, and for any two standard monomials $n,n'$ that if $n$ divides $n'$ in $A_{lt}'$ then $n$ divides $n'$ in $A_{lt}$ (because the only difference between $A_{lt}$ and $A_{lt}'$ is that more products of standard monomials in $A_{lt}'$ are zero than in $A_{lt}$), we also have that $m$ is minimal under divisibility in $\lismlt(I)$ in $A_{lt}'$. Thus we must have $\lmsm(g') = m$. Since $G'$ is reduced, we in fact have 
\[
\ltsm(g') = m = \ltsm(g).
\] 

If $g \neq g'$, then $\lmsm(g-g') \prec \lmsm(g)$ and $\lmsm(g-g') \neq 0$. Let $m' = \lmsm(g-g')$; $m'$ must occur in either $g$ or $g'$. If $m'$ occurs in $g$, then, because $m'$ is the leading monomial of an element of $I$, we have that there is some $g_2 \in G$ such that $\lmsm(g_2)$ divides $m'$ in $A_{lt}$. But this contradicts the fact that $G$ was reduced. On the other hand, if $m'$ occurs in $g'$, then, by the same reasoning, we have that there is some $g'_2 \in G'$ such that $\lmsm(g'_2)$ divides $m'$ in $A'_{lt}$, contradicting the fact that $G'$ was reduced. Thus, it must be the case that $g=g'$, completing the proof.
\end{proof}

\begin{remark}
Given Proposition~\ref{thm:diff}, one might naturally wonder why bother with any algebras of leading terms other than $A_{gen}$, since any other algebras of leading terms can only yield larger reduced pseudo-ASL Gröbner bases. Although the reduced Gröbner basis relative to $A_{gen}$ will indeed be the smallest among Gröbner bases relative to all choices of $A_{lt}$, deciding divisibility and computing LCMs relative to other $A_{lt}$ may be significantly simpler than in $A_{gen}$. For example, relative to $A_{disc}$ the LCM, if it exists, is unique and is the one you would expect based on ordinary monomials (Observation~\ref{obs:lcm-unique-disc}); in contrast, relative to $A_{gen}$ LCM sets can be much more complicated, see, e.g., Theorem~\ref{thm:lcm-algo} for the case of bideterminants. We will also see that $A_{disc}$ has certain theoretical advantages, for example, when it comes to universal pseudo-ASL Gröbner bases (\S \ref{sec:universal}), $A$-modules (\S \ref{sec:sygyzies}) and computing Krull dimension (\S \ref{sec:dimension}).
\end{remark}

\subsection{Universal pseudo-ASL Gröbner bases} \label{sec:universal}

\begin{definition}[store=defuniversalgb, note={Universal \gb basis}]
\label{def:universal-gb}
Given a pseudo-ASL $A$ and an ideal $I \subseteq A$, a set $G \subseteq I$ is a \emph{universal pseudo-ASL Gröbner basis} for $I$ if $G$ is a pseudo-ASL Gröbner basis for $I$ relative to all pairs $(\preceq, A_{lt})$ consisting of a pseudo-ASL term order $\preceq$ on $A$ and an algebra of leading terms $A_{lt}$ for $(A,\preceq)$.
\end{definition}

The usual proof of existence of universal Gröbner bases relies on showing that there are only finitely many possible leading ideals for a given ideal. However, in our case, the leading ideals may reside in different algebras of leading terms, so some care must be taken. The following proof, however, does not rely on there being only finitely many algebras of leading terms, it instead uses the fact that $\Adisc$ is an algebra of leading terms for any order.

\begin{theorem}[store=thmuniversal, note={Universal \gb bases}] \label{thm:universal}
Every ideal $I \subseteq A$ in a pseudo-ASL has a finite universal pseudo-ASL Gröbner basis.
\end{theorem}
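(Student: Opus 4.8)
The plan is to funnel every pair $(\preceq, A_{lt})$ through the discrete algebra of leading terms $\Adisc$, which by Proposition~\ref{prop:agen-adisc} is an algebra of leading terms for \emph{every} pseudo-ASL term order $\preceq$ (and, unlike $A_{gen}$, does not depend on $\preceq$), and then to mimic the classical argument. Concretely, I would first show that for every $(\preceq, A_{lt})$ the reduced pseudo-ASL Gröbner basis of $I$ relative to $(\preceq, A_{lt})$ is contained in the reduced pseudo-ASL Gröbner basis $\Gamma_\preceq$ of $I$ relative to $(\preceq, \Adisc)$. This is Proposition~\ref{thm:diff} applied with $A'_{lt} = \Adisc$, whose hypothesis I would verify as follows: if $mm' \neq 0$ in $\Adisc = \bigslant{R[H]}{\langle\Sigma\rangle}$ for standard monomials $m,m'$, then the ordinary monomial product $mm'$ lies outside $\Sigma$, so it is already a standard monomial; hence no straightening occurs in $A$. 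But then $\pi_{lt}(m)\pi_{lt}(m')$ is the product in $A_{lt}$ of the corresponding products of generators in $H$, which equals that same standard monomial $mm'$ and is therefore nonzero. So $mm' \neq 0$ in $\Adisc$ forces $mm' \neq 0$ in every algebra of leading terms for $(A,\preceq)$, and Proposition~\ref{thm:diff} applies.

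Next I would reduce to finitely many $\Gamma_\preceq$. By Lemma~\ref{lem:reduced-Adisc}, $\Gamma_\preceq$ depends only on the leading ideal $(\lismlt)_{\preceq}(I) \subseteq \Adisc$, so it suffices to prove that $\{(\lismlt)_{\preceq}(I) \suchthat \preceq \text{ a pseudo-ASL term order on } A\}$ is a finite set of standard monomial ideals of $\Adisc$. Here is where the real work is. The relevant structure is in place: $\Adisc$ is a Noetherian monomial pseudo-ASL (Lemma~\ref{lem:ALT}, together with $\Adisc$ being a quotient of $R[H]$ by a monomial ideal); by Proposition~\ref{prop:orders} every pseudo-ASL term order on $A$ transfers to a pseudo-ASL term order on $\Adisc$, which---since $\Adisc$ is monomial---behaves just like a classical term order; and each leading ideal is a standard monomial ideal with a finite minimal generating set (Lemma~\ref{lem:unique-min}). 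With these in hand I would run the classical argument that an ideal in a polynomial ring has only finitely many distinct initial ideals, \emph{mutatis mutandis}, inside $\Adisc$. I expect this to be the main obstacle: the classical proof is usually phrased in terms of the parent polynomial ring, but a pseudo-ASL term order is defined only on the standard monomials and need not extend to an honest term order on $R[H]$ (cf.\ Open Question~\ref{q:orders}), so the argument must be carried out intrinsically in $\Adisc$, relying on Noetherianity of $\Adisc$ and on the fact that each element of $I$ has finitely many standard monomials in its support, so its leading monomial ranges over a finite set as $\preceq$ varies.

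Finally I would assemble the universal basis. Enumerate the finitely many leading ideals as $L_1, \dotsc, L_N$, pick for each $i$ a pseudo-ASL term order $\preceq_i$ with $(\lismlt)_{\preceq_i}(I) = L_i$, let $\Gamma_i$ be the reduced pseudo-ASL Gröbner basis of $I$ relative to $(\preceq_i, \Adisc)$, and set $G := \Gamma_1 \cup \dotsb \cup \Gamma_N$, a finite subset of $I$. Given an arbitrary $(\preceq, A_{lt})$, we have $(\lismlt)_{\preceq}(I) = L_i$ for some $i$, so by Lemma~\ref{lem:reduced-Adisc} the reduced pseudo-ASL Gröbner basis of $I$ relative to $(\preceq, \Adisc)$ equals $\Gamma_i$, and by the first step the reduced pseudo-ASL Gröbner basis of $I$ relative to $(\preceq, A_{lt})$ is contained in $\Gamma_i \subseteq G$. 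Since $G \subseteq I$ contains a pseudo-ASL Gröbner basis for $(\preceq, A_{lt})$, and by Definitions~\ref{def:leading-ideal} and~\ref{def:asl-gb} any subset of $I$ containing a pseudo-ASL Gröbner basis is itself one---enlarging the set can only enlarge the set of leading terms generating the leading ideal, which is already forced to equal $(\lismlt)_{\preceq}(I)$ from above since every $g \in G \subseteq I$ has $\pi_{lt}(\ltsm(g)) \in (\lismlt)_{\preceq}(I)$---the set $G$ is a pseudo-ASL Gröbner basis for $I$ relative to $(\preceq, A_{lt})$. As $(\preceq, A_{lt})$ was arbitrary, $G$ is a finite universal pseudo-ASL Gröbner basis for $I$.
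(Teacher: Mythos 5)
Your proposal takes essentially the same route as the paper: funnel every pair $(\preceq, A_{lt})$ through $\Adisc$ using Proposition~\ref{thm:diff}, group term orders by their leading ideal in $\Adisc$ via Lemma~\ref{lem:reduced-Adisc}, establish finiteness of those leading ideals, and take the union of the corresponding reduced bases. The finiteness step you flagged as the main obstacle is precisely the paper's Lemma~\ref{lem:universal}, proved exactly by the mechanism you anticipate (Noetherianity of $\Adisc$ plus finite support of each $f \in I$ in the standard-monomial basis, used to build an ascending chain contradiction), and your explicit check that $mm' \neq 0$ in $\Adisc$ forces $mm' \neq 0$ in every $A_{lt}$ for $(A,\preceq)$ is a correct verification of the hypothesis of Proposition~\ref{thm:diff} that the paper's proof invokes but does not spell out.
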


Green's Proposition~2.11 \cite{green} gives this result in the case of monomial pseudo-ASLs over a field, under the conditions that $A = A_{lt}$ (which is allowed when $A$ is a monomial pseudo-ASL) and subject to condition that the product of two standard monomials is a standard monomial (not merely a scalar multiple thereof).

We start by outlining the proof, then will give a key lemma, and give the full proof after the lemma.

\begin{proof}[Proof outline]
Note that $\Adisc$ is an algebra of leading terms for $A$ relative to \emph{all} pseudo-ASL term orders. Thus, for each fixed term order $\preceq$, by Theorem~\ref{thm:diff}, the unique reduced pseudo-ASL Gröbner basis relative to $(\preceq, \Adisc)$ is universal among pseudo-ASL Gröbner bases relative to algebras of leading terms for this particular order $\preceq$. This effectively reduces to the case of $\Adisc$, and in $\Adisc$, essentially the same proof as for ordinary Gröbner bases shows that each ideal has at most finitely many leading ideals in $\Adisc$.
\end{proof}

\begin{lemma}[store=lemuniversal] \label{lem:universal}
Given a pseudo-ASL $A$ and an ideal $I \subseteq A$, among all possible pseudo-ASL term orders, $I$ has only finitely many leading ideals in $\Adisc$.
\end{lemma}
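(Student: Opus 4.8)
The plan is to adapt, in full, the classical Noetherian-induction proof that an ideal of a polynomial ring has only finitely many initial ideals, carrying it out natively inside $A$ and $\Adisc$. The groundwork is already in place: by Proposition~\ref{prop:orders} a pseudo-ASL term order $\preceq$ on $A$ induces one on $\Adisc$, by Lemma~\ref{lem:ALT} $\Adisc$ is a monomial pseudo-ASL, and hence by Lemma~\ref{lem:monomial} and Lemma~\ref{lem:unique-min} standard monomial ideals of $\Adisc$ behave exactly like monomial ideals of a polynomial ring (finitely generated, with a canonical finite set of divisibility-minimal generators, closed upward under divisibility). One preliminary observation does most of the reducing: if an ideal $I\subseteq A$ is spanned as an $R$-module by a subset of the standard monomials, then $\{\ltsm(f)\suchthat f\in I\}$ is, up to scalars, just the set of standard monomials lying in $I$, independent of the term order, so $I$ has exactly one leading ideal in $\Adisc$. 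Now suppose the collection $\mathcal{B}$ of ideals of $A$ having infinitely many leading ideals in $\Adisc$ (over all pseudo-ASL term orders) is non-empty; since $A$ is Noetherian, choose $I\in\mathcal{B}$ maximal.

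By the preliminary observation $I$ is not $R$-spanned by standard monomials, and a short induction on the number of appearing standard monomials (using that $R$ is a field) shows $I$ must then contain an element $f$ whose top standard monomial $m_0$ and remaining standard monomials $m_1,\dotsc,m_t$ (with $t\ge 1$) are such that no single term of $f$ lies in $I$; in particular $m_i\notin I$ for all $i$. Each term order witnessing one of the infinitely many leading ideals of $I$ has $\lmsm(f)\in\{m_0,\dotsc,m_t\}$, so by pigeonhole an infinite subfamily $T$ of them all satisfy $\lmsm(f)=m_0$ while still witnessing infinitely many distinct leading ideals. Since $m_0\notin I$, the ideal $J:=I+\langle m_0\rangle$ strictly contains $I$, hence $J\notin\mathcal{B}$ by maximality and $J$ has only finitely many leading ideals in $\Adisc$; passing to a further infinite subfamily of $T$ we may assume $\lismlt(J)$ equals one fixed standard monomial ideal $L_J\subseteq\Adisc$ for every order in $T$.

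The heart of the argument is a splitting lemma: for orders in $T$, the leading ideal $\lismlt(I)$ is determined by the fixed data $m_0$ and $L_J$ together with the leading ideal $\lismlt(I:m_0)$. This is the pseudo-ASL analogue of the classical relation among the initial ideals of $I$, $I+\langle m_0\rangle$ and $I:m_0$, and I would prove it using the exact sequence $0\to (I:m_0)/I \xrightarrow{\cdot m_0} A/I \to A/J \to 0$ together with the fact that $f$ — having $m_0$ as top term and all other standard monomials strictly below it — lets one ``divide by $m_0$'' inside $A$, exactly as one divides by a single polynomial in the classical division algorithm: any element of $I$ whose $A_{lt}$-leading term is a $\ltdivides$-multiple of $m_0$ can be reduced, inside $I$, to one whose leading term is not, and whose leading term is then governed by $L_J$. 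Granting the splitting lemma and that $I:m_0$ has only finitely many leading ideals (immediate from maximality of $I$ when $I:m_0\supsetneq I$; the case $I:m_0=I$ needing a separate direct argument that $L_J$ and division-by-$f$ already pin down $\lismlt(I)$), a final pigeonhole within $T$ forces cofinitely many orders in $T$ to yield the same leading ideal of $I$, contradicting that $T$ witnesses infinitely many. The main obstacle is precisely this splitting lemma: in the polynomial-ring setting it is already the technical core of the theorem, and reproving it natively is complicated both by the straightening law (which must be tracked carefully when dividing by $f$ inside $A$) and by the fact that $\Adisc$ generally has zero-divisors among its standard monomials, so that the colon ideal $I:m_0$ and the exact sequence above do not behave as tamely as over a domain; everything else — the Noetherian-induction skeleton, the pigeonhole steps, and the reduction to counting leading ideals — is routine given the monomial-ideal machinery of Section~\ref{sec:monomial}.
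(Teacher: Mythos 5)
Your approach differs fundamentally from the paper's. The paper's proof is a direct pigeonhole-plus-ascending-chain argument: assume $\mathcal{L}=\{(\lismlt)_{\preceq}(I)\}$ is infinite, pick any nonzero $f_1\in I$; every leading ideal contains one of $f_1$'s finitely many standard monomials, so by pigeonhole some $m_1$ lies in infinitely many ideals of $\mathcal{L}$. Inductively, the infinite subfamily $\mathcal{L}_k$ containing $\{m_1,\dotsc,m_k\}$ must contain an ideal strictly larger than $\langle m_1,\dotsc,m_k\rangle$, which supplies an $f_{k+1}\in I$ whose leading monomial (for some order) lies outside $\langle m_1,\dotsc,m_k\rangle$, and pigeonhole on $f_{k+1}$'s terms produces $m_{k+1}$. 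The resulting strictly ascending chain $\langle m_1\rangle\subsetneq\langle m_1,m_2\rangle\subsetneq\cdots$ in $\Adisc$ contradicts Noetherianity. No maximal counterexample, no colon ideals, no division algorithm.

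Your proposal has a genuine gap at the step you yourself call the ``technical core'': the splitting lemma asserting that $\lismlt(I)$ is determined by $m_0$, $\lismlt(I+\langle m_0\rangle)$, and $\lismlt(I:m_0)$. You do not prove it, and you correctly flag why it is problematic: $\Adisc$ has zero-divisors among standard monomials, so multiplication by $m_0$ on $A/I$ is not injective, the exact sequence you invoke does not behave as over a domain, and ``dividing by $f$ inside $A$'' must contend with the straightening law introducing new lower-order terms. You also leave the case $I:m_0=I$ as ``needing a separate direct argument.'' These are not cosmetic; the colon-ideal decomposition of initial ideals is subtle even in the polynomial-ring case and is not how the standard finiteness proofs go, so committing to it as a black box in a setting with zero-divisors is exactly where a classical argument is most likely to break. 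The surrounding skeleton (Noetherian maximal counterexample, pigeonhole to fix $\lmsm(f)=m_0$, refining the family $T$) is sound, but without the splitting lemma the argument does not close, and the paper's ascending-chain route shows it can be closed far more cheaply.
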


Since the proof of the lemma is similar to the case of ordinary Gröbner bases, we relegate it to Appendix~\ref{sec:app:theory}. We note that in fact the same result holds with $A_{lt}$ in place of $A_{disc}$, as long as we restrict to those term orders for which $A_{lt}$ is in fact an algebra of leading terms; but we will only need to use the results for $A_{disc}$.

\begin{proof}[Proof of Theorem~\ref{thm:universal}]
We follow the above outline. Let $I \subseteq A$ be an ideal. By Lemma~\ref{lem:universal}, among all possible pseudo-ASL term orders, there are only finitely many leading ideals for $I$ in $\Adisc$, call these $I_1, \dotsc, I_k \subseteq \Adisc$. For each pseudo-ASL term order $\preceq$, let us say $\preceq$ has ``type $j$'' if the leading ideal of $I$ relative to $(\preceq, \Adisc)$ is $I_j$. By Lemma~\ref{lem:reduced-Adisc}, all term orders of type $j$ have the same reduced pseudo-ASL Gröbner bases; let us call this $G_j$. By Theorem~\ref{thm:diff}, if $\preceq$ is a term order of type $j$ and $A_{lt}$ is an algebra of leading terms for $(A,\preceq)$, then the reduced pseudo-ASL Gröbner basis for $I$ relative to $(\preceq, A_{lt})$ is contained in $G_j$. Thus all reduced pseudo-ASL Gröbner bases for $I$, relative to all choices of pseudo-ASL term order and algebra of leading terms, are contained in $\bigcup_{j=1}^k G_j$. As each $G_j$ is finite, $\bigcup G_j$ is thus a finite universal pseudo-ASL Gröbner basis for $I$.
\end{proof}


\section{Syzygies and pseudo-ASL \gb bases for modules}
\label{sec:sygyzies}

\subsection{Definitions and setup}
Let $A$ be a pseudo-ASL, and $M$ a free $A$-module with basis $e_1, \dotsc, e_d$, so in particular $M \cong A^d$. We define a \emph{standard monomial} in $M$ to be any element of the form $m e_i$, where $m \in A$ is a standard monomial, and similarly a \emph{standard term} in $M$ is any element of the form $t e_i$ for a standard term $t \in A$. 

\begin{definition}[pseudo-ASL term order for free modules] 
\label{def:termorder-modules} 
Given a pseudo-ASL $A$ and a free module $A^d$ with basis $e_1, \dotsc, e_d$, a \emph{pseudo-ASL term order} on $A^d$ is total ordering $\preceq$ on the standard monomials of $A^d$ such that 

\begin{enumerate}[label=(\textbf{MATO-\arabic*}),ref=(\textbf{MATO-\arabic*})]
\item \label{def:termorder-modules:positive} (Positivity) $e_i \preceq m e_i$ for all $i$ and all standard monomials $m \in A$.

\item \label{def:termorder-modules:mult} (Multiplicativity in leading terms when nonzero) For all $i,j$ and all standard monomials $f,g,h \in A$, if $f e_i \prec g e_j$, then\footnote{Why not also include here $h \preceq k$ and conclude $\lmsm(fh e_i) \prec \lmsm(gk e_j)$? Because we do not have a term order on $A$ to use here! This leads us to \ref{def:termorder-modules:restrict}.}
\[
fh \neq 0 \text{ and } gh \neq 0 \Longrightarrow \lmsm(fh e_i) \prec \lmsm(gh e_j).
\]
\item \label{def:termorder-modules:restrict} (Restriction to each $Ae_i$ is a pseudo-ASL term order) For all $i$, and all standard monomials $f,g,h,k \in A$, if $f e_i \prec g e_i$ and $h e_i \preceq k e_i$, then 
\[
fh \neq 0 \text{ and } gk \neq 0 \Longrightarrow \lmsm(fh e_i) \prec \lmsm(gk e_i).
\]
\end{enumerate}
\end{definition}

\begin{remark}
As in, e.g., Green's approach to Gröbner bases for modules \cite{green}, the above definition can be gotten from Definition~\ref{def:termorder} by building a sort of ``semi-direct product algebra'' as follows. Suppose $(A,\preceq)$ is a pseudo-ASL with pseudo-ASL term order. Then we can define a pseudo-ASL structure on the Abelian group $A \oplus A^d$ as follows; we will denote the resulting algebra $A \ltimes A^d$. Let $e_1, \dotsc, e_d$ be a standard free basis for $A^d$ as an $A$-module. The copy of $A^d$ will be an ideal in $A \ltimes A^d$. Elements of the form $(a,0)$ form a sub-ring isomorphic to $A$: $(a,0)(a',0) = (aa',0)$. Elements of the form $(a,0)$ act on elements of the form $(0,*)$ as $A$ acts on the $A$-module $A^d$, viz. $(a,0) \cdot (0,me_i) = (0,ame_i)$. And finally, in the subring $A^d$, basis elements multiply according to $(0,me_i) (0, m' e_j) = (0, mm' \delta_{ij} e_i)$, where $\delta_{ij}$ is the Kronecker delta (=1 if $i=j$ and 0 otherwise). (In Green's approach, in contrast, one would have $(0,*)(0,*) = (0,0)$, but using the Kronecker $\delta$ here gives us the above definition---in particular, \ref{def:termorder-modules:restrict}---which is useful in our developments below.) 
The pseudo-ASL structure on $A \ltimes A^d$ will be as follows. Let $H,\Sigma$ be the given pseudo-ASL structure on $A$. For $i=0,\dotsc,d$, let $H_i$ be a copy of $H$, with $H_0$ generating the copy of $A$, while $H_i$ for $1 \leq i \leq d$ generates the subring $\{(0,f e_i) : f \in A\}$; let $\Sigma_i$ be an isomorphic copy of $\Sigma$ on $H_i$. Then we define the pseudo-ASL structure on $A \ltimes A^d$ by 
\[
H' := H_0 \sqcup \dotsb \sqcup H_d
\]
and
\[
\Sigma' := \left\langle \left(\bigcup_{i=0}^d \Sigma_i\right) \cup \left(\bigcup_{0 \leq i < j \leq d} (H_i \cup \{1_i\})(H_j \cup \{1_j\}) \right)\right\rangle,
\]
as an ideal in the semigroup $\W^{H'}$, where $1_i$ denotes $e_i$ if $i \geq 1$ and $1_i$ denotes the copy of $1 \in A$ if $i=0$ (including those in the above corresponds to, e.g., the straightening law $(1,0) \cdot (0,e_i) = (0,e_i)$ where the LHS is non-standard and the RHS is standard). It is readily verified that this gives a pseudo-ASL structure on $A \ltimes A^d$, in which the set of standard monomials is the disjoint union of the sets of standard monomials on each of the $d+1$ copies of $A$.\footnote{If $A$ is an ASL, this construction almost yields an ASL. The straightening rules for each $\Sigma_i$ are isomorphic to those for $\Sigma$ in $A$; the straightening rules on $H_i H_j$ are zero, and the straightening rules on $H_0 H_i$ simply encode the action of $A$ on $Ae_i$, viz. $a^{(0)} b^{(i)} = a^{(i)} b^{(i)}$. The reason this is not quite an ASL is that when we rewrite $a^{(0)} b^{(i)}$ as $a^{(i)} b^{(i)}$, we are not guaranteed that for every $x \in H_i$ there is some $y < x$ in $H_i$ occurring in $a^{(i)} b^{(i)}$. However, if we replace the strict $<$ with $\leq$, then this weakened form of the axiom is satisfied, but the weakened form of the axiom is likely to break some inductive arguments about ASLs.}
\end{remark}

Many of the basic Gröbner-basis-related properties of modules over a pseudo-ASL will follow from the observation that a pseudo-ASL term order on a free module $A^d$ restricts to an ordinary pseudo-ASL term order on each $1$-dimensional submodule $Ae_i$, under the identification $Ae_i \cong A$ (isomorphism as $A$-modules, but under this isomorphism, we may treat $Ae_i$ as a pseudo-ASL in order to apply, e.g., Definition~\ref{def:termorder} and others).

\begin{proposition}
\label{prop:ato-are-well}
Pseudo-ASL term orders on free modules $A^d$ are well-orders.
\end{proposition}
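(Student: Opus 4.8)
The plan is to mirror the proof of Proposition~\ref{prop:well-order}, but to exploit the observation (already highlighted in the text preceding Proposition~\ref{prop:ato-are-well}) that a module term order restricts to an ordinary pseudo-ASL term order on each rank-one summand $Ae_i$, and then to finish with a pigeonhole argument over the finitely many basis vectors $e_1,\dots,e_d$.

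First I would fix $i \in [d]$ and work inside $Ae_i$, which under the identification $Ae_i \cong A$ (sending $e_i \mapsto 1$ and $me_i \mapsto m$, with multiplication $me_i \cdot m'e_i := mm'e_i$ as in the semidirect-product description) is a pseudo-ASL whose standard monomials are exactly the $me_i$ with $m$ a standard monomial of $A$. I would then check that the restriction of $\preceq$ to the standard monomials of $Ae_i$ is a pseudo-ASL term order on $A$ in the sense of Definition~\ref{def:termorder}: axiom \ref{def:termorder:positive} ($1 \preceq m$) translates under the identification to $e_i \preceq me_i$, which is exactly \ref{def:termorder-modules:positive}; and axiom \ref{def:termorder:mult} translates to precisely the statement of \ref{def:termorder-modules:restrict}. (Axiom \ref{def:termorder-modules:mult} plays no role here — it is exactly the part of the module axioms that compares \emph{different} basis vectors, and for a chain that eventually stays within one $Ae_i$ it is not needed; this is the content of the footnote in Definition~\ref{def:termorder-modules} about why \ref{def:termorder-modules:mult} and \ref{def:termorder-modules:restrict} are split apart.) By Proposition~\ref{prop:well-order}, the restriction of $\preceq$ to $Ae_i$ is therefore a well-order, for every $i$.

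Next I would suppose for contradiction that $\preceq$ is not a well-order on the standard monomials of $A^d$, so that there is an infinite strictly descending chain $x_1 \succ x_2 \succ x_3 \succ \cdots$. Writing $x_n = m_n e_{i_n}$ with each $m_n$ a standard monomial of $A$ and $i_n \in [d]$, and using that $[d]$ is finite, the pigeonhole principle yields a single index $i \in [d]$ with $i_n = i$ for all $n$ in some infinite subset $n_1 < n_2 < \cdots$. Then $x_{n_1} \succ x_{n_2} \succ \cdots$ is an infinite strictly descending chain of standard monomials all lying in $Ae_i$, contradicting the well-ordering of $\preceq$ restricted to $Ae_i$ established above. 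Hence no such chain exists and $\preceq$ is a well-order. I do not expect a genuine obstacle: the only substantive input is Proposition~\ref{prop:well-order}, and the reduction to it is routine; the single point that must be stated carefully is that \ref{def:termorder-modules:restrict} is precisely the translation of \ref{def:termorder:mult} under $Ae_i \cong A$ (together with \ref{def:termorder-modules:positive} translating \ref{def:termorder:positive}), which is exactly why the module axioms were phrased as they are.
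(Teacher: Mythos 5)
Your proof is correct and follows exactly the same strategy as the paper's: restrict $\preceq$ to each rank-one summand $Ae_i$ (using \ref{def:termorder-modules:positive} and \ref{def:termorder-modules:restrict} to see that this gives a pseudo-ASL term order on $A$), invoke Proposition~\ref{prop:well-order}, and finish with a pigeonhole argument over the finitely many basis vectors. If anything, your write-up is a bit more explicit than the paper's (which simply asserts that the restriction is a pseudo-ASL term order without spelling out the translation of the axioms), so the extra care you took in that step is worthwhile but not a deviation.
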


\begin{proof}
Suppose $\preceq$ is a pseudo-ASL term order on $A^d$. Let $e_1,\dotsc,e_d$ be the standard free basis of $A^d$. The restriction of $\preceq$ to each $Ae_i$ gives a pseudo-ASL term order on $A$, which we'll denote by $\preceq_i$. By Proposition~\ref{prop:well-order}, each $\preceq_i$ is a well-order on $Ae_i$. Now suppose, for the sake of contradiction, that $m_1 \succ m_2 \succ m_3 \succ \dotsb$ is a strictly decreasing chain of infinitely many standard monomials in $A^d$. Let $M_i = \{m_j \suchthat m_j \in Ae_i\}$. At least one of the $M_i$ must be infinite, say $M_i$. But then $M_i$ is an infinite strictly decreasing chain according to the pseudo-ASL term order $\preceq_i$ on $A$, contradicting Proposition~\ref{prop:well-order}.
\end{proof}

\begin{definition}[store=defmlt, note={Algebra of leading terms for a module; module of leading terms}] 
\label{def:MLT}
Given a pseudo-ASL $A$, free module $A^d$ with basis, a pseudo-ASL term order $\preceq$ on $A^d$, let $\preceq_i$ be the pseudo-ASL term order on $A$ induced by restricting $\preceq$ to $Ae_i$ and then pulling back along the unique $A$-module isomorphism $Ae_i \cong A$. We say that $A_{lt}$ is an \emph{algebra of leading terms for $(A^d, \preceq)$} if $A_{lt}$ is simultaneously an algebra of leading terms for all $(A, \preceq_i)$, $i=1,\dotsc,d$. In this case, we say that $A_{lt}^d$ is a \emph{module of leading terms for $(A^d, \preceq)$}.
\end{definition}

Since $\Adisc$ is an algebra of leading terms relative to all pseudo-ASL term orders on $A$, we know that for any free module $A^d$ that admits a pseudo-ASL term order, there is at least one module of leading terms, namely $\Adisc^d$.

\begin{convention}
Henceforth, whenever we write $(A^d, \preceq, A_{lt}^d)$, the notation is understood to mean that $A$ is a pseudo-ASL, $A^d$ is a free $A$-module with basis, and $A_{lt}^d$ is an module of leading terms for $(A^d, \preceq)$, and thereby that $A_{lt}$ is an algebra of leading terms for $(A^d, \preceq)$. Note that when $d=1$, this is consistent with our earlier convention about pseudo-ASLs (rather than modules over them). 
\end{convention}

The next observation justifies the terminology in Definition~\ref{def:MLT}, by completing the analogy with the definition of algebra of leading terms for $A$ (Definition~\ref{def:ALT}).

\begin{observation}[store=obsmlt, note={Module of leading terms}]
\label{obs:mlt}
Given $(A^d, \preceq, A_{lt}^d)$, by slight abuse of notation let $\piltd \colon A^d \to A_{lt}^d$ be the map that applies $\pi_{lt} \colon A \to A_{lt}$ coordinate-wise. Then $\piltd$ is an $R$-linear bijection that restricts to the identity map on standard monomials, and for all standard monomials $m \in A, m' e_i \in A^d$, we have
\[
\pi_{lt}(m) \piltd(m' e_i) \in \{0, \piltd(\ltsm(mm' e_i))\}.
\]
\end{observation}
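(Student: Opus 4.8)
The plan is to verify the three asserted properties of $\piltd$ by reducing each to the already-established corresponding fact about $\pi_{lt}$ on a single one-dimensional summand $Ae_i \cong A$. Recall that, by definition of $\piltd$, it is just $\pi_{lt}$ applied coordinate-wise, so it is immediately $R$-linear (being a product of $R$-linear maps), and it is a bijection with inverse given by applying $\pi_{lt}^{-1}$ coordinate-wise. Since a standard monomial in $A^d$ has the form $m'e_i$ with $m' \in A$ a standard monomial, and $\pi_{lt}$ is the identity on standard monomials of $A$, we get $\piltd(m'e_i) = \pi_{lt}(m')e_i = m'e_i$, so $\piltd$ restricts to the identity on standard monomials of $A^d$. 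Each of these is essentially a one-line check.

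The substantive point is the multiplicative property. Here I would fix a standard monomial $m \in A$ and a standard monomial $m'e_i \in A^d$, and observe that the product $m \cdot (m'e_i)$ in the module $A^d$ is, by the $A$-module structure, equal to $(mm')e_i$, where $mm'$ is computed in $A$. Now under the $A$-module isomorphism $Ae_i \cong A$, the element $m'e_i$ corresponds to $m' \in A$ and the action of $m$ corresponds to multiplication by $m$ in $A$; moreover the pseudo-ASL term order $\preceq$ restricted to $Ae_i$ and pulled back is exactly $\preceq_i$, which by the hypothesis $(A^d,\preceq,A_{lt}^d)$ means $A_{lt}$ is an algebra of leading terms for $(A,\preceq_i)$. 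Therefore Definition~\ref{def:ALT}\ref{point:pi-let} applied in $(A,\preceq_i)$ gives
\[
\pi_{lt}(m)\pi_{lt}(m') \in \{0,\ \pi_{lt}(\ltsm_i(mm'))\},
\]
where $\ltsm_i$ denotes the leading term with respect to $\preceq_i$. Transporting back along the isomorphism $A \cong Ae_i$ — under which $\pi_{lt}$ on $A$ corresponds to $\piltd$ restricted to $Ae_i$, the product $\pi_{lt}(m)\pi_{lt}(m')$ corresponds to $\pi_{lt}(m)\piltd(m'e_i)$, and $\ltsm_i(mm')$ corresponds to $\ltsm(mm'e_i)$ (leading term in $A^d$) — yields exactly the claimed membership
\[
\pi_{lt}(m)\piltd(m'e_i) \in \{0,\ \piltd(\ltsm(mm'e_i))\}.
\]

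The main thing to be careful about — and the only real obstacle — is bookkeeping the identifications: one must check that restricting $\preceq$ to $Ae_i$ and pulling back along $Ae_i \cong A$ produces precisely the $\preceq_i$ of Definition~\ref{def:MLT}, that the computation of $\ltsm$ of an element of $Ae_i$ agrees (under the isomorphism) with $\ltsm_i$ of the corresponding element of $A$, and that $\piltd|_{Ae_i}$ corresponds to $\pi_{lt}$ on $A$. All three are immediate from the definitions once stated, but they are what makes the reduction legitimate; after that, the result is simply a restatement of Definition~\ref{def:ALT}\ref{point:pi-let} on the relevant summand. No additional machinery beyond Definition~\ref{def:ALT} and the definition of $\preceq_i$ in Definition~\ref{def:MLT} is needed.
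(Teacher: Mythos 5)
Your proof is correct and follows the same path as the paper's: compute $\pi_{lt}(m)\piltd(m'e_i) = \pi_{lt}(m)\pi_{lt}(m')e_i$ coordinate-wise and apply Definition~\ref{def:ALT} on the relevant one-dimensional summand. You are more explicit about the induced order $\preceq_i$ on $Ae_i$ and the transport along $Ae_i \cong A$, a point the paper leaves implicit, but the underlying argument is the same.
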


\begin{proof}
All the parts except perhaps the last are immediate. For the final property, suppose $m \in A$ is a standard monomial and $m' e_i \in A^d$ is a standard monomial. Then $\pi_{lt}(m) \piltd(m' e_i) = \pi_{lt}(m)\pi_{lt}(m') e_i$, by the definition of $\piltd$ as the coordinate-wise application of $\pi_{lt}$. But by the definition of algebra of leading terms, we have $\pi_{lt}(m) \pi_{lt}(m') \in \{0, \ltsm(mm')\}$, and the result then follows from the fact that $\ltsm(mm' e_i) = \ltsm(mm') e_i$. 
\end{proof}

\subsection{Annihilators over monomial pseudo-ASLs}

We start with a lemma about syzygies of standard monomial submodules (the module analogue of standard monomial ideal) of a free module $M$ over a monomial pseudo-ASL $A'$. (This is a pseudo-ASL analogue of, e.g., \cite[Lemma~15.1]{eisenbud}.)
It is readily verified that Lemma~\ref{lem:monomial} extends to monomial submodules.  Because our algebras of leading terms won't in general be domains, we must introduce some new terminology and syzygies to take care of this. 

Given standard monomials $m_1, \dotsc, m_d \in (A')^e$, let $\varphi \colon (A')^d \to (A')^e$ be the module homomorphism that sends the standard basis elements $e_i$ to the monomials $m_i$. Then the syzygy module $\Syz(m_1, \dotsc, m_d)$ is by definition $\ker(\varphi)$. 
There are two types of syzygies that will play an important role for us. We introduce the first type here as it will be useful in the next section for our Buchberger-like criterion; the other type is introduced in Definition~\ref{def:koszul} in a later section, when we get to our pseudo-ASL analogue of Schreyer's Syzygy Theorem. 

\begin{definition}[Annihilator syzygies] \label{def:annsyz}
For any $s \in \Ann(m_i) \subseteq A'$, define 
\[
\alpha_i^s := s e_i.
\]
Let $\LAM(m_i)$ (for ``Least Annihilating Monomials'') be the set of standard monomials $m \in \Ann(m_i)$ that are minimal under divisibility. Say a binomial in $\Ann(m_i)$ is a \emph{proper binomial annihilator} if it is not the sum of two standard terms that are separately in $\Ann(m_i)$. Define $\LAB(m_i)$ (for ``Least Annihilating Binomials'') to be the set of normalized (=at least one coefficient is 1)\footnote{In the presence of a term ordering, it would make sense to say ``leading coefficient 1'' here; however, the way we have phrased it here allows these particular definitions and results to be independent of term ordering.} standard proper binomial annihilators in $\Ann(m_i)$ that are minimal under divisibility. 
\end{definition}

By the usual Noetherianity argument, both $\LAM(m_i)$ and $\LAB(m_i)$ are finite. If a monomial pseudo-ASL admits a pseudo-ASL term order, then by Observation~\ref{obs:no-binomial-ann}, we will see that $\LAB(m_i)$ is in fact empty; we include it here for completeness, even though it won't play any role in our theory of pseudo-ASL Gröbner bases.

\begin{lemma}[{Annihilators of monomials, cf. \cite{ES}}] \label{lem:monomial_ann}
Let $A'$ be a monomial pseudo-ASL, and let $m_1$ be a standard monomial in $(A')^e$. Then $\Ann(m_1)$
 is generated by 
\[
\left\{ \alpha_1^s : s \in \LAM(m_1) \cup \LAB(m_1) \right\}.
\]
\end{lemma}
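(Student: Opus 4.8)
The plan is to reduce the statement to one about ideals in $A'$ and then peel $\Ann(m_1)$ apart into a ``monomial part'' and a ``binomial part.'' First I would observe that $m_1 = n e_j$ for some standard monomial $n \in A'$ and some index $j$, and that $f m_1 = (fn) e_j$, so that $\Ann(m_1) = \Ann_{A'}(n)$ as ideals of $A'$, while $\alpha_1^s = s e_1$ is simply $s$ under the identification $(A')^1 = A'$. Thus the lemma is equivalent to the assertion that $\Ann_{A'}(n) = \langle \LAM(n) \cup \LAB(n)\rangle$ as ideals in $A'$. The inclusion $\supseteq$ is immediate, since every listed generator annihilates $n$; all the content is in the reverse inclusion.

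For that, take $f \in \Ann(n)$ and write $f = \sum_i r_i m_i$ with the $m_i$ distinct standard monomials and $0 \neq r_i \in R$. Partition the indices as $Z \sqcup N$, where $Z = \{i : m_i n = 0\}$ and $N = \{i : m_i n \neq 0\}$, and write $f = f_Z + f_N$ accordingly. For the $Z$ part: if $m$ is a standard monomial with $mn = 0$ and $m \mid m'$ (standard monomials), then by Lemma~\ref{lem:mon-div} there is a standard term $t''$ with $mt'' = m'$, whence $m'n = (mt'')n = t''(mn) = 0$ by commutativity; so the standard monomials annihilating $n$ are upward closed under divisibility, and the ideal they generate is a standard monomial ideal contained in $\Ann(n)$ whose minimal generators are exactly $\LAM(n)$ by Lemma~\ref{lem:unique-min}. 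Hence every $m_i$ with $i \in Z$ lies in $\langle \LAM(n)\rangle$, so $f_Z \in \langle \LAM(n)\rangle$, and therefore $f_N = f - f_Z$ also lies in $\Ann(n)$.

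Next I would analyze $f_N$. Since $A'$ is monomial, $m_i n = \lcsm(m_i n)\,\lmsm(m_i n)$ is a nonzero standard term for each $i \in N$. Group the indices of $N$ by the value $p = \lmsm(m_i n)$; using $R$-linear independence of distinct standard monomials, the equation $f_N n = 0$ forces $\sum_{i \,:\, \lmsm(m_i n) = p} r_i\,\lcsm(m_i n) = 0$ within each group, so every nonempty group $N_p$ has $\lvert N_p\rvert \geq 2$. Fixing a distinguished $i_1 \in N_p$, a routine linear-algebra computation rewrites $\sum_{i \in N_p} r_i m_i$ as an $R$-combination of the binomials $m_{i_1} - c_i\, m_i$ with $i \in N_p \setminus \{i_1\}$ and $c_i = \lcsm(m_{i_1} n)/\lcsm(m_i n)$. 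Each such binomial annihilates $n$ by the choice of $c_i$, and is a \emph{proper} binomial annihilator, because its only splitting into two standard terms is $m_{i_1}$ and $-c_i m_i$, neither of which kills $n$ (both $i_1, i \in N$). Thus $f_N$, and hence $f$, lies in the ideal generated by $\LAM(n)$ together with the set $B(n)$ of all proper binomial annihilators of $n$.

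It remains to show $B(n) \subseteq \langle \LAB(n)\rangle$. Given $b \in B(n)$, consider the family of principal ideals $\langle b'\rangle$ with $b' \in B(n)$ and $\langle b'\rangle \supseteq \langle b\rangle$; by Noetherianity of $A'$ this family has a maximal member $\langle b^*\rangle$, with $b \in \langle b^*\rangle$. After normalizing $b^*$, one checks that $b^*$ is division-minimal among normalized proper binomial annihilators: if $b' \in B(n)$ and $b' \mid b^*$ then $\langle b^*\rangle \subseteq \langle b'\rangle$ and $\langle b'\rangle$ lies in our family, so maximality forces $\langle b'\rangle = \langle b^*\rangle$, whence $b^* \mid b'$ as well, so $b'$ is not a proper divisor. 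Therefore $b^* \in \LAB(n)$ and $b \in \langle \LAB(n)\rangle$. Combining the four steps yields $\Ann_{A'}(n) \subseteq \langle \LAM(n) \cup \LAB(n)\rangle$, completing the proof. I expect this last step --- passing from an arbitrary proper binomial annihilator down to a minimal one --- to be the main obstacle, precisely because $A'$ need not be a domain, so one cannot reason via associates and instead must run the ascending-chain argument on principal ideals; this is also the step where extending the argument from field coefficients to a PID would require replacing the ``coefficient $1$'' normalization and the cancellation computation in the $f_N$ step by a content/gcd argument.
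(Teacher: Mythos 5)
Your proof is correct, but it takes a genuinely different route from the one in the paper. The paper's proof is a one-liner: invoke Observation~\ref{lem:standard-all-sigma} to see that the notion of standard monomial ideal in $A'$ is independent of the chosen $\Sigma$, and then cite Eisenbud--Sturmfels's Corollary~1.7 on colon ideals of a binomial ideal by a monomial, applied to the defining binomial ideal of the monomial pseudo-ASL in $R[H]$. Your proof, by contrast, is self-contained and argues entirely inside $A'$: you split $f = f_Z + f_N$ by whether each standard monomial of $f$ kills $n$, handle $f_Z$ via the upward-closure of annihilating standard monomials and Lemma~\ref{lem:unique-min}, handle $f_N$ via the grading-by-standard-monomial argument (the same decomposition that the paper itself later uses in the proof of Lemma~\ref{lem:syz_monomial}), and close with an ascending-chain argument to descend from an arbitrary proper binomial annihilator to a $\LAB$-minimal one. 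The tradeoff is clear: the paper's route is shorter and delegates all the work to a well-known result on binomial ideals, while yours stays native to the pseudo-ASL framework (in the spirit of the rest of the paper), makes the $\LAM$/$\LAB$ decomposition completely explicit, and exposes exactly where field coefficients are used (the scalar $c_i$ and the normalization), which is what makes the generalization to PID coefficients visible.

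Two small presentational points. First, you write $\lmsm(m_i n)$ and $\lcsm(m_i n)$, but the lemma carries no term-order hypothesis, and those symbols are defined in the paper relative to a pseudo-ASL term order. What you actually mean --- the underlying standard monomial and the scalar of the nonzero standard term $m_i n$ --- is well-defined in any monomial pseudo-ASL without a term order, so the argument is fine, but you should say that rather than overloading the $\lmsm/\lcsm$ notation. Second, your final step identifies ``$b^*$ is minimal under divisibility'' with ``$b' \mid b^*$ implies $b^* \mid b'$'', which is minimality in the divisibility \emph{preorder}; this matches the reading of $\LAB$ that makes the lemma true in non-domains (where two distinct normalized proper binomial annihilators can generate the same principal ideal), and it is worth flagging that reading explicitly rather than leaving it implicit in the phrase ``not a proper divisor.''
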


(Here we are slightly abusing notation by identifying the ring $A'$ with the 1-dimensional free module $A'$, generated freely by an element we call $e_1$.)

\begin{proof}
By Observation~\ref{lem:standard-all-sigma}, this follows directly from Eisenbud \& Sturmfels \cite[Cor.~1.7]{ES}. 
\end{proof}

It follows from Lemma~\ref{lem:mon-div} (\emph{mutatis mutandis} for modules) that in the presence of a pseudo-ASL term order, there are no proper binomial annihilators. We record this as a separate observation for ease of reference: 

\begin{observation} \label{obs:no-binomial-ann}
Let $A'$ be a monomial pseudo-ASL such that $(A')^e$ admits a pseudo-ASL term order, and let $m$ be a standard monomial in $(A')^e$. Then there are no proper binomial annihilators for $m$. In particular, it follows from Lemma~\ref{lem:monomial_ann} that the annihilator of $m$ is generated by the annihilator syzygies for the least annihilating monomials alone.
\end{observation}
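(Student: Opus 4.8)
The plan is to derive a contradiction from the existence of a proper binomial annihilator of $m$ using axiom \ref{def:termorder:mult}, essentially reusing the computation that proves the \emph{furthermore} clause of Lemma~\ref{lem:mon-div}; the ``in particular'' clause is then immediate from Lemma~\ref{lem:monomial_ann}. First I would suppose, for contradiction, that $b \in \Ann(m)$ is a proper binomial annihilator. By definition of a binomial, $b = t_1 + t_2$ where $t_1,t_2$ are nonzero standard terms of $A'$ supported on two \emph{distinct} standard monomials of $A'$, so in particular $t_1 \neq -t_2$. Writing $m = m_0 e_i$ for a standard monomial $m_0 \in A'$ and a basis index $i$, I would note that since $A'$ is a monomial pseudo-ASL, each of $t_1 m = (t_1 m_0)e_i$ and $t_2 m = (t_2 m_0)e_i$ is either $0$ or a standard term; and from $b \in \Ann(m)$ we have $t_1 m = -t_2 m$ in $(A')^e$, equivalently $t_1 m_0 = -t_2 m_0$ in $A'$.

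Next I would invoke the ``properness'' hypothesis exactly once: if $t_1 m_0 = 0$ then also $t_2 m_0 = -t_1 m_0 = 0$, so $b$ would be the sum of the two standard terms $t_1,t_2$, each separately in $\Ann(m)$, contradicting that $b$ is a \emph{proper} binomial annihilator. Hence $t_1 m_0 \neq 0$, and symmetrically $t_2 m_0 \neq 0$. Now I would apply the term order: since $A'$ admits a pseudo-ASL term order (restrict $\preceq$ to $A' e_i$ and use \ref{def:termorder-modules:restrict} together with \ref{def:termorder-modules:positive}, as in Proposition~\ref{prop:ato-are-well}), and since $\lmsm(t_1) \neq \lmsm(t_2)$, we may assume $\lmsm(t_1) \prec \lmsm(t_2)$. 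Because $t_1 m_0 \neq 0$ and $t_2 m_0 \neq 0$, axiom \ref{def:termorder:mult} applied to $\lmsm(t_1) \prec \lmsm(t_2)$ and $m_0 \preceq m_0$ yields $\lmsm(t_1 m_0) \prec \lmsm(t_2 m_0)$. But $t_1 m_0$ and $t_2 m_0$ are nonzero $R$-scalar multiples of one another, hence supported on the same standard monomial, so $\lmsm(t_1 m_0) = \lmsm(t_2 m_0)$ --- a contradiction. (This is precisely the contradiction argument inside the proof of the \emph{furthermore} clause of Lemma~\ref{lem:mon-div}, applied to $A'$ with $m_0$ in the role of $m$ and $t_1, -t_2$ in the role of $t, t'$.)

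Therefore no proper binomial annihilator of $m$ exists, i.e.\ $\LAB(m) = \emptyset$ in the terminology of Definition~\ref{def:annsyz}, and Lemma~\ref{lem:monomial_ann} then gives that $\Ann(m)$ is generated by $\{\alpha_1^s : s \in \LAM(m)\}$ alone. I do not anticipate a real obstacle here: the proof is a direct application of \ref{def:termorder:mult}, already packaged in Lemma~\ref{lem:mon-div}. The only points needing a little care are the bookkeeping that passes from $m \in (A')^e$ to $m_0 \in A'$ via $m = m_0 e_i$ (so that the statement about modules reduces to the already-established statement about the monomial pseudo-ASL $A'$), and ensuring the properness hypothesis is used in exactly the right place, namely to rule out $t_1 m_0 = 0$.
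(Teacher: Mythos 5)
Your proof is correct and takes essentially the same approach as the paper: the paper's proof is literally the one-sentence remark ``It follows from Lemma~\ref{lem:mon-div} (\emph{mutatis mutandis} for modules),'' and you have simply unpacked what that means — reduce to a single coordinate $A'e_i \cong A'$ via \ref{def:termorder-modules:restrict}, use properness to rule out $t_1 m_0 = 0$, and then run the \ref{def:termorder:mult}-based contradiction from the \emph{furthermore} part of Lemma~\ref{lem:mon-div} on $t_1$ and $-t_2$. The reduction to $A'$ and the identification of exactly where properness is used are both handled correctly, so there is nothing to add.
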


\subsection{A Buchberger-like criterion for pseudo-ASL Gröbner bases}
\label{sec:buchberger}
Given $(A^d, \preceq, A_{lt}^d)$ and a submodule $M \subseteq A^d$, we would like to find a pseudo-ASL \gb basis of $M$. In this section we will prove a Buchberger-like criterion for pseudo-ASL \gb bases, that will eventually lead to an algorithm in Section~\ref{sec:algorithms}.

In the polynomial ring, given two monomials, there is just one least common multiple. However, relative to an arbitrary algebra of leading terms associated to some pseudo-ASL, we need to define the notion of a set of least common multiples of two standard monomials in $A_{lt}$.

\begin{definition}[Least common standard multiples]
\label{defn:lcms}
Given $(A^d,\preceq, A_{lt}^d)$ and two standard monomials $m,m' \in A_{lt}^d$, the set of their least common standard multiples, $\lcmlt(m, m') \subseteq A_{lt}^d$, consists of the standard monomials that are common multiples of $m$ and $m'$ that are minimal under divisibility. If $m,m' \in A^d$ are standard monomials, we also write $\lcmlt(m,m')$ for $\lcmlt(\piltd(m), \piltd(m'))$.
\end{definition}

Equivalently, we can rephrase this in terms of standard monomial submodules as follows. For a monomial pseudo-ASL $A'$, call a submodule $M \subseteq (A')^d$ a \emph{standard monomial (sub)module} if for all $f \in (A')^d$, $f$ is in $M$ iff every standard monomial occurring in $f$ is in $M$. The basic results on standard monomial ideals in $A'$ extend to standard monomial submodules \emph{mutatis mutandis}. In particular, by the module analogue of Lemma~\ref{lem:unique-min}, $\lcmlt(m,m')$ is the unique minimum generating set of standard monomials for the module $\langle m \rangle \cap \langle m' \rangle$, which is standard monomial by the module analogue of Corollary~\ref{cor:intersection-monomial} (whose proof is the same as the first proof of that result, \emph{mutatis mutandis}). 

For a concrete example of a pseudo-ASL where there can be multiple LCMs for a given pair of standard monomials, see Section \ref{sec:bd-gb-algos} where in Theorem \ref{thm:lcm-algo}, given two standard bideterminants, we provide an algorithm to compute all of their least common standard multiples relative to $A_{gen}$.

\begin{observation} \label{obs:lcm-unique-disc}
Relative to $A_{disc}$, every pair of standard monomials has at most one LCM. Equivalently, if $me_i ,m'e_j \in A_{disc}^d$ are standard monomials, then $\langle me_i \rangle \cap \langle m'e_j \rangle$ is either 0 or generated by a single element.
\end{observation}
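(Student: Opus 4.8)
The plan is to reduce the whole statement to ordinary divisibility of monomials in the polynomial ring $R[H]$, exploiting that $\Adisc = \bigslant{R[H]}{\langle \Sigma \rangle}$ is a quotient of $R[H]$ by a \emph{monomial} ideal. First I would dispose of the case of different basis vectors: if $i \neq j$, then $\langle m e_i \rangle = (\Adisc \cdot m) e_i$ is supported entirely on the $e_i$-coordinate and $\langle m' e_j \rangle$ entirely on the $e_j$-coordinate, so their intersection is $0$; in particular it contains no standard monomial, and $\lcmlt(m e_i, m' e_j) = \emptyset$. This leaves only the case $i = j$, where, identifying $\Adisc e_i \cong \Adisc$ as $\Adisc$-modules, it suffices to show that for standard monomials $m, m' \in \Adisc$ the ideal $\langle m \rangle \cap \langle m' \rangle \subseteq \Adisc$ is either $0$ or principal.

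The central step I would establish is a ``divisibility-transfer'' claim: for standard monomials $m, \mu \in \Adisc$ (monomials of $R[H]$ not lying in $\Sigma$), one has $m \divides \mu$ in $\Adisc$ if and only if $m \divides \mu$ in $R[H]$. For the forward direction, Lemma~\ref{lem:mon-div} supplies a standard monomial $\nu$ with $m\nu = \mu$ in $\Adisc$; since $\mu \neq 0$, no straightening can have taken place, so $m\nu = \mu$ already as monomials of $R[H]$, whence $m \divides \mu$ in $R[H]$. For the converse, I would set $\nu := \mu/m$ in $R[H]$ and note that $\nu \in \Sigma$ would force $\mu = m\nu \in \Sigma$ (since $\Sigma$ is an ideal of monomials), contradicting standardness of $\mu$; hence $\nu$ is a standard monomial and $m \cdot \nu = \mu$ holds in $\Adisc$. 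This use of the fact that $\Sigma$ is a monomial ideal is the only place any real thought is needed.

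With that in hand, let $n \in R[H]$ be the ordinary least common multiple of the monomials $m$ and $m'$ (pointwise maximum of exponents). By Corollary~\ref{cor:intersection-monomial}, $\langle m \rangle \cap \langle m' \rangle$ is a standard monomial ideal of $\Adisc$, hence by Lemma~\ref{lem:monomial} it is determined by which standard monomials it contains, and by the transfer claim those are exactly the standard monomials divisible by $n$ in $R[H]$. I would then split into two cases. If $n \notin \Sigma$, then $n$ is itself a standard monomial, it lies in $\langle m \rangle \cap \langle m' \rangle$, and it divides every standard monomial of that ideal, so $\langle m \rangle \cap \langle m' \rangle = \langle n \rangle$ and $\lcmlt(m,m') = \{n\}$. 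If instead $n \in \Sigma$, then every $R[H]$-multiple of $n$ lies in $\Sigma$, so the intersection contains no standard monomial, is therefore $0$, and $\lcmlt(m,m') = \emptyset$. Either way there is at most one least common standard multiple, which is the claim. I do not anticipate any genuine obstacle: the only subtlety is the bookkeeping between divisibility in $\Adisc$ and in $R[H]$ handled in the second paragraph, together with the disjoint-support observation when $i \neq j$.
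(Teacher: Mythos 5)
Your proposal is correct and takes essentially the same approach as the paper: both reduce the problem to ordinary divisibility and LCMs in $R[H]$ by observing that $\Adisc$ is a quotient by a \emph{monomial} ideal, so that products of standard monomials are either zero or literally their product in $\W^H$, and then handle the module case via the disjoint-support ($i\neq j$) and same-coordinate ($i=j$) dichotomy. Your write-up is a bit more elaborate — isolating the divisibility-transfer step and invoking Lemma~\ref{lem:mon-div}, Corollary~\ref{cor:intersection-monomial}, and Lemma~\ref{lem:monomial} explicitly where the paper does the transfer in one line from the structure of $\Adisc$ — but the substance is identical.
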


\begin{proof}
Recall that the defining ideal of $A_{disc}$ is just $\langle \Sigma \rangle \subseteq R[H]$, and hence $A_{disc}$ is the quotient of $R[H]$ by a monomial ideal. Thus the product of two standard monomials in $A_{disc}$ is either zero, or is equal to their product in $\W^H$, since no (nonzero) rewriting can occur. In particular, if $m,m'$ are standard monomials with corresponding exponent vectors $e \colon H \to \W, f \colon H \to \W$, then $m \divides m'$ iff $e(h) \leq f(h)$ for all $h \in H$. It follows that the LCM of two monomials $m$ and $m'$ in $A_{disc}$ is the same as their LCM in $R[H]$, namely the monomial whose exponent vector is $\ell(h) = \max\{e(h), f(h)\}$. If this LCM is 0 in $A_{disc}$, then the two monomials do not have any nonzero common multiples, and otherwise this LCM is the only one. For standard monomials in $A_{disc}^d$, say $m e_i$ and $m'e_j$, if $i \neq j$ then $\langle me_i \rangle \cap \langle m'e_j \rangle = 0$, and if $i = j$, then $\langle me_i \rangle \cap \langle m'e_i \rangle = \langle LCM(m,m') e_i \rangle$.
\end{proof}

Below we define a notion of S-pairs in a module over a pseudo-ASL similar to the notion of S-pairs in polynomial rings. However, since there could be multiple LCM's (Definition \ref{defn:lcms}) at our level of generality, we could in turn have multiple S-pairs for just one single pair of elements $f, g \in A$. We first generalize divisibility of leading terms to the module setting. If $m e_i ,m' e_i \in A_{lt}^d$ are standard monomials in $A_{lt}^d$ supported on the same basis element, we say that $me_i$ divides $m' e_i$, and write $me_i \divides m' e_i$, if there exists $t \in A_{lt}$ such that $tme_i = m' e_i$ (or, equivalently, $tm = m'$). 

\begin{definition}[Divisibility of leading terms in modules] \label{def:ltdivides-modules}
Given $(A^d, \preceq, A_{lt}^d)$ and $f,g \in A^d$. If $\piltd(f) \divides \piltd(g)$, we write $f \ltdivides g$. If $m,m' \in A^d$ are standard terms such that $m \ltdivides m'$, we may further write $\ltfrac{m'}{m}$ for the unique (by the module analogue of Lemma~\ref{lem:mon-div}) standard term $t \in A_{lt}$ such that $\piltd(m)t = \piltd(m')$. 
\end{definition}

\begin{definition}[S-sets over a pseudo-ASL]
\label{defn:sset}
Given $(A^d, \preceq, A_{lt}^d)$, $f, g \in A^d$, and $l$ a common multiple of their leading terms in $A_{lt}^d$, define their S-polynomial with respect to $l$:
\[
\Ssm^l(f,g) := \pi_{lt}^{-1}\left(\ltfrac{l}{\ltsm(f)}\right)\cdot f - \pi_{lt}^{-1}\left(\ltfrac{l}{\ltsm(g)}\right)\cdot g,
\]
Then define their S-set:
\[
\Ssm(f,g) := \left\{ \Ssm^l(f,g) \suchthat l \in \lcmlt(\ltsm(f), \ltsm(g))\right\} \subseteq A^d,
\]
\end{definition}

Observation \ref{obs:s-poly-reduces-lm} below records how, just like in the polynomial ring, the leading term of all the elements in the S-set of a pair of elements from $A$ will be strictly smaller (by order $\preceq$) than the corresponding LCM.

\begin{observation}
\label{obs:s-poly-reduces-lm}
For each each element of an S-set, by construction, we have, \[\ltsm\left(\pi_{lt}^{-1}\left(\ltfrac{l}{\ltsm(f)}\right)\cdot f \right) = \ltsm \left(\pi_{lt}^{-1}\left(\ltfrac{l}{\ltsm(g)}\right)\cdot g \right) = \pi_{lt}^{-1}(l),\]
thus 
\[
\ltsm\left(\Ssm^l(f,g)\right) \prec \pi_{lt}^{-1}(l).
\]
\end{observation}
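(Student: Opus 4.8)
The plan is to prove separately that $\ltsm\bigl(\pi_{lt}^{-1}(\ltfrac{l}{\ltsm(f)})\cdot f\bigr) = \pi_{lt}^{-1}(l)$ and $\ltsm\bigl(\pi_{lt}^{-1}(\ltfrac{l}{\ltsm(g)})\cdot g\bigr) = \pi_{lt}^{-1}(l)$, and then to observe that in the difference defining $\Ssm^l(f,g)$ these two leading terms --- each being the \emph{standard monomial} $\pi_{lt}^{-1}(l)$ with coefficient $1$ --- cancel exactly, so that either $\Ssm^l(f,g) = 0$ or its leading monomial is strictly $\prec \pi_{lt}^{-1}(l)$ (the degenerate case being covered by the usual convention that $\ltsm(0)$ lies below every standard monomial).

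For the first equality, set $h_f := \pi_{lt}^{-1}(\ltfrac{l}{\ltsm(f)})$, which is a standard term of $A$ since $\ltfrac{l}{\ltsm(f)}$ is a standard term of $A_{lt}$ (Definition~\ref{def:ltdivides-modules}) and $\pi_{lt}^{-1}$ carries standard terms to standard terms. By the defining property of $\ltfrac{l}{\ltsm(f)}$, the product $\pi_{lt}(h_f)\,\pi_{lt}(\ltsm(f))$ equals $l$ in $A_{lt}^d$, in particular is nonzero, so Observation~\ref{obs:mlt} gives $\pi_{lt}(\ltsm(h_f\,\ltsm(f))) = \pi_{lt}(h_f)\,\pi_{lt}(\ltsm(f)) = l$; hence $h_f\,\ltsm(f) \neq 0$ in $A^d$ and $\ltsm(h_f\,\ltsm(f)) = \pi_{lt}^{-1}(l)$, a standard monomial. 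It then remains to see that $\ltsm(h_f f) = \ltsm(h_f\,\ltsm(f))$ --- the module analogue of Proposition~\ref{prop:mult}. This holds \emph{mutatis mutandis}, but since that proposition has not been recorded for modules I would argue it directly: write $f = \ltsm(f) + f_{<}$ with every standard monomial of $f_{<}$ strictly below $\lmsm(f)$; then, using axiom \ref{def:termorder-modules:mult} with multiplier the standard monomial underlying $h_f$ (legitimate because $h_f\,\lmsm(f) \neq 0$), every standard monomial occurring in $h_f f_{<}$ --- even after the straightening law is applied, and even if it lands on a basis element other than the one carrying $\lmsm(f)$ --- is strictly below $\lmsm(h_f\,\lmsm(f)) = \pi_{lt}^{-1}(l)$; similarly axiom \ref{def:termorder-modules:restrict} bounds the non-leading standard monomials of $h_f\,\ltsm(f)$ below $\pi_{lt}^{-1}(l)$; and since $\preceq$ is a well-order (Proposition~\ref{prop:ato-are-well}) these finitely many comparisons suffice to conclude that nothing cancels or surpasses $\pi_{lt}^{-1}(l)$ in $h_f f$. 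The argument for $g$ is identical \emph{mutatis mutandis}.

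The step I expect to be the main obstacle is exactly $\ltsm(h_f f) = \ltsm(h_f\,\ltsm(f))$: one must rule out that multiplying a lower term of $f$ by $h_f$ produces, after straightening, a standard monomial reaching up to $\pi_{lt}^{-1}(l)$ on some basis element, and this is precisely the situation axiom \ref{def:termorder-modules:mult} was designed to control (it compares leading monomials across distinct $e_j$, which the restriction axiom \ref{def:termorder-modules:restrict} alone cannot do). Everything else --- that $l$ is a standard monomial with coefficient $1$, that $\pi_{lt}^{-1}$ fixes standard monomials, and the cancellation of equal leading terms in a difference --- is routine bookkeeping of the kind already carried out in the proof of Proposition~\ref{prop:mult}.
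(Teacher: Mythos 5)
Your proof is correct and fills in what the paper dismisses as an ``immediate consequence'' of Definition~\ref{defn:sset}: the crux is indeed the module analogue of Proposition~\ref{prop:mult} (i.e.\ that $\ltsm(h_f f)=\ltsm(h_f\,\ltsm(f))$), which you argue for directly from \ref{def:termorder-modules:mult}. Two minor points: well-orderedness (Proposition~\ref{prop:ato-are-well}) plays no role here --- finiteness of the support of $f$ is what makes ``finitely many comparisons'' suffice; and since $h_f\in A$ acts coordinate-wise on $A^d$, multiplying a term $m_0e_j$ of $f$ by $h_f$ always stays on $e_j$ even after straightening, so the possibility you flag of a product ``landing on a different basis element'' cannot occur and the corresponding caution is unneeded.
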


\begin{proof}
Immediate consequence of Definition \ref{defn:sset}.
\end{proof}

We now prove an analogue of Buchberger's algorithm/criterion for Gr\"{o}bner bases. 

\begin{definition}[store=defnsclosed, note=S-closed] \label{defn:S-closed}
Given $(A^d, \preceq, A_{lt}^d)$, a subset $G \subseteq A^d$ is \emph{S-closed} if for all pairs $g,g' \in G$, every element of $\Ssm(g,g')$ reduces to 0 relative to $G$ (see Definition \ref{defn:standard-expression}).
\end{definition}

In $A^d$, there are lifts of the annihilator syzygies from $A_{lt}^d$ (Definition~\ref{def:annsyz}), defined as follows (we will show in Lemma~\ref{lem:syz_leading} the precise sense in which they are ``lifts''): 

\begin{definition}[store=defncompatsyzygy, note={Compatibility syzygy}]
\label{defn:compat-syzygy}
Given $(A^d, \preceq, A_{lt}^d)$ and $G \subseteq A^d$, for any $g_0 \in G$ and any standard monomial $m_0$ that is incompatible with $\ltsm(g_0)$, if $\sum_g \ell_g g$ is a standard expression for $m_0 g_0$, then
\[
m_0 e_{g_0} - \sum_{g \in G} \ell_g e_g
\]
is a \emph{compatibility syzygy (for $(m_0, g_0)$ relative to $G$)}.
\end{definition}

For the next definition, we need a preparatory observation:

\begin{observation} \label{obs:ann-std-mon}
Suppose $A'$ is a monomial pseudo-ASL. If $(A')^d$ admits a psuedo-ASL term order, then $(\Ann(t) : m)$ is a standard monomial ideal in $A'$.
\end{observation}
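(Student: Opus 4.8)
The plan is to check the standard-monomial-ideal criterion of Lemma~\ref{lem:monomial}: since $(\Ann(t):m)$ is obviously an ideal of $A'$, it suffices to show that for every $f \in A'$ one has $f \in (\Ann(t):m)$ if and only if every standard monomial appearing in $f$ lies in $(\Ann(t):m)$. Throughout I would identify $A'$ with the rank-one free module $A'e_1$, so that the module-level statements and ``furthermore'' clauses apply; restricting the given pseudo-ASL term order on $(A')^d$ to $A'e_1$ yields a pseudo-ASL term order on $A'$ by \ref{def:termorder-modules:restrict}.

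First I would record that $\Ann(t)$ is itself a standard monomial ideal of $A'$. Because $(A')^d$ admits a pseudo-ASL term order, Observation~\ref{obs:no-binomial-ann} (applied at rank $e = d$, since $t$ lives in $(A')^d$) shows that $t$ has no proper binomial annihilators, so $\LAB(t) = \emptyset$; then Lemma~\ref{lem:monomial_ann} gives $\Ann(t) = \langle \LAM(t) \rangle$, which is generated by standard monomials and hence is a standard monomial ideal by Lemma~\ref{lem:monomial}.

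The heart of the argument — and the step I expect to be the main obstacle — is controlling cancellation in the product $fm$. Here I would prove a ``no-cancellation'' claim: if $m_i \ne m_j$ are distinct standard monomials of $A'$ with $m_i m \ne 0$ and $m_j m \ne 0$, then the standard terms $m_i m$ and $m_j m$ have \emph{distinct} underlying standard monomials. Indeed, if they shared an underlying monomial, write $m_i m = a n$ and $m_j m = b n$ with $n$ a standard monomial and $a,b \in R \setminus \{0\}$ (nonzero because the products are nonzero); then $m(b m_i) = abn = m(a m_j)$ with $b m_i$ and $a m_j$ standard terms and $abn \ne 0$, so the uniqueness clause of Lemma~\ref{lem:mon-div} forces $b m_i = a m_j$ as standard terms, hence $m_i = m_j$, a contradiction. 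Consequently, writing $f = \sum_i r_i m_i$ with the $m_i$ distinct standard monomials and $r_i \ne 0$, there is no cancellation in $fm = \sum_{i \,:\, m_i m \ne 0} r_i (m_i m)$, so the set of standard monomials appearing in $fm$ is exactly $\{\lmsm(m_i m) : m_i m \ne 0\}$.

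With this in hand both implications are quick. If $f \in (\Ann(t):m)$ then $fm \in \Ann(t)$; since $\Ann(t)$ is a standard monomial ideal, each $\lmsm(m_i m)$ appearing in $fm$ lies in $\Ann(t)$, hence so does its $R$-scalar multiple $m_i m$ (as $\Ann(t)$ is an ideal), and for indices with $m_i m = 0$ this membership is automatic; thus every $m_i$ lies in $(\Ann(t):m)$. Conversely, if every $m_i$ lies in $(\Ann(t):m)$, then each $m_i m \in \Ann(t)$, so $fm = \sum_i r_i(m_i m) \in \Ann(t)$, i.e.\ $f \in (\Ann(t):m)$. Lemma~\ref{lem:monomial} then yields that $(\Ann(t):m)$ is a standard monomial ideal. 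The only bookkeeping point to watch is invoking Observation~\ref{obs:no-binomial-ann} and Lemma~\ref{lem:monomial_ann} at the correct rank.
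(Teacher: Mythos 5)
Your proof is correct, but it takes a noticeably different route from the paper's. The paper's argument is a one-liner: it observes that $(\Ann(t):m) = \{f : fmt = 0\} = \Ann(mt)$, and since $A'$ is a monomial pseudo-ASL, $mt$ is either $0$ (so the colon ideal is all of $A'$) or a standard term, in which case $\Ann(mt)$ is a standard monomial ideal by Observation~\ref{obs:no-binomial-ann} and Lemma~\ref{lem:monomial_ann}. You instead verify the characterization in Lemma~\ref{lem:monomial} directly, by first showing $\Ann(t)$ is a standard monomial ideal and then establishing a no-cancellation fact for multiplication by $m$: distinct standard monomials $m_i \ne m_j$ with $m_i m \ne 0 \ne m_j m$ must yield standard terms $m_i m, m_j m$ with distinct underlying standard monomials, via the uniqueness clause of Lemma~\ref{lem:mon-div}. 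That no-cancellation lemma is the genuinely new content in your version — it is morally the observation that multiplication by a fixed standard monomial in a monomial pseudo-ASL with term order is injective-on-monomials away from the annihilator, and it is a perfectly good replacement for the identity $(\Ann(t):m)=\Ann(mt)$ that the paper exploits. Your route is longer and requires handling the $m_i m = 0$ indices by hand, whereas the paper's identity collapses everything to a single application of the already-proved annihilator facts; on the other hand, your intermediate no-cancellation observation is independently useful and makes the mechanism transparent. The bookkeeping you flag (applying Observation~\ref{obs:no-binomial-ann} and Lemma~\ref{lem:monomial_ann} at rank $d$, and restricting the module term order to the relevant $A'e_i$) is handled correctly.
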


\begin{proof}
By definition, we have 
\[
(\Ann(t) : m) = \{f \in A' \suchthat fmt = 0\} = \Ann(mt).
\]
Since $A'$ is monomial, we have that $mt \in (A')^d$ is either 0 (in which case the colon ideal is the whole ring, which is technically a standard monomial ideal albeit not a proper one) or a standard term. In the latter case, $\Ann(mt)$ is a standard monomial ideal by Observation~\ref{obs:no-binomial-ann}, since we have assumed $(A')^d$ admits a pseudo-ASL term order.
\end{proof}

\begin{definition}[store=defannclosed,note={Ann-closed}] 
\label{def:ann-closed}
Given $(A^d, \preceq, A_{lt}^d)$, a subset $G \subseteq A^d$ is \emph{Ann-closed} if there exists a set $S$ of compatibility syzygies relative to $G$ such that, for all $g \in G$: 
\begin{enumerate}
\item \label{def:ann-closed1}for every $\pi_{lt}(s) \in \LAM(\piltd(\ltsm(g)))$, $S$ contains a compatibility syzygy for $(s,g)$; and

\item \label{def:ann-closed2}for every standard monomial $m$ that occurs in any $\ell_g$ in any syzygy in $S$, and for every standard monomial $t$ in the unique minimal generating set of the colon ideal $(\Ann(\piltd(\ltsm(g))) : \pi_{lt}(m))$, $S$ contains a compatibility syzygy for $(\lmsm(tm), g)$.
\end{enumerate}
\end{definition}
In the preceding definition, we have used Observation~\ref{obs:ann-std-mon} along with the unique minimal generating set from the module analogue of Lemma~\ref{lem:unique-min}.

\begin{theorem}[store=thmaslbuchberger2, note={Pseudo-ASL analogue of Buchberger's criterion}] 
\label{thm:buchberger2}
Given $(A^d, \preceq, A_{lt}^d)$, suppose that $G \subseteq A^d$ has the property that for all $g \in G$, $\ltsm(g)$ is a non-zerodivisor. Then $G$ is a pseudo-ASL Gröbner basis if and only if it is S-closed and Ann-closed.
\end{theorem}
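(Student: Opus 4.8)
The plan is to mirror the classical proof of Buchberger's criterion, with two extra layers of bookkeeping forced by the pseudo-ASL setting: leading terms and divisibility are read in the monomial pseudo-ASL $A_{lt}^d$ (Lemma~\ref{lem:ALT}), and the coefficients in a representation $f=\sum_i h_ig_i$ must be rendered \emph{compatible} with the $\ltsm(g_i)$ before the usual leading-term cancellation (Proposition~\ref{prop:mult}, Observation~\ref{obs:mlt}) can be carried out.

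\emph{Easy direction.} If $G$ is a pseudo-ASL Gröbner basis of $I=\langle G\rangle$, then every $\Ssm^l(g,g')$ lies in $I$ (it is an $A$-linear combination of $g,g'$) and hence reduces to $0$ by Proposition~\ref{prop:unique-remainder}, so $G$ is S-closed. For Ann-closedness, take $S$ to be the set of \emph{all} compatibility syzygies relative to $G$: for each $g_0\in G$ and each standard monomial $m_0$ incompatible with $\ltsm(g_0)$, the element $m_0g_0\in I$ reduces to $0$, so a zero-remainder standard expression for $m_0g_0$ exists and the corresponding compatibility syzygy is defined. Since $\pi_{lt}(s)$ annihilates $\piltd(\ltsm(g))$ for $s\in\LAM(\piltd(\ltsm(g)))$, and likewise each $\lmsm(tm)$ occurring in the colon-ideal clause of Definition~\ref{def:ann-closed} is incompatible with $\ltsm(g)$ (the relevant product is killed in $A_{lt}^d$), the set $S$ supplies every syzygy required by that definition, so $G$ is Ann-closed.

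\emph{Hard direction.} Assume $G=\{g_1,\dots,g_k\}$ is S-closed and Ann-closed with each $\ltsm(g_i)$ a non-zerodivisor; by Proposition~\ref{prop:asl-grobner-divisibility-characterization} it suffices to find, for each nonzero $f\in I$, an index $i$ with $\ltsm(g_i)\ltdivides\ltsm(f)$. Call a representation $f=\sum_ih_ig_i$ \emph{compatible} if every $h_i$ is compatible with $\ltsm(g_i)$. The first step is to show that \emph{every $f\in I$ has a compatible representation}: starting from an arbitrary representation one removes incompatible terms $c\,m_0$ of the $h_i$ (with $m_0$ incompatible with $\ltsm(g_i)$), highest first in $\preceq$; writing $\pi_{lt}(m_0)$ as a multiple of some $\pi_{lt}(s)$ with $s\in\LAM(\piltd(\ltsm(g_i)))$ (Lemma~\ref{lem:monomial_ann} and Observation~\ref{obs:no-binomial-ann}), a compatibility syzygy for $(s,g_i)$ provided by Ann-closedness, lifted by the cofactor, rewrites $m_0g_i$ as $\sum_j\ell_jg_j$ with each $\lmsm(\ell_jg_j)\preceq\lmsm(m_0g_i)$ and each $\ell_j$ compatible with $\ltsm(g_j)$; the newly produced incompatibilities are either strictly lower in $\preceq$ or of the colon-ideal shape covered by the second clause of Definition~\ref{def:ann-closed}, so the process terminates by well-ordering of $\preceq$ (Proposition~\ref{prop:ato-are-well}). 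Now fix nonzero $f\in I$ and a compatible representation $f=\sum_ih_ig_i$ minimizing $\delta:=\max_i\lmsm(h_ig_i)$ (the minimum exists by well-ordering), and note $\delta\succeq\lmsm(f)$. I claim $\delta=\lmsm(f)$. Otherwise put $S_\delta=\{i:\lmsm(h_ig_i)=\delta\}$; for $i\in S_\delta$, compatibility with Proposition~\ref{prop:mult} and Observation~\ref{obs:mlt} gives $\piltd(\ltsm(h_ig_i))=\pi_{lt}(\ltsm(h_i))\piltd(\ltsm(g_i))$, so $\piltd(\ltsm(g_i))\mid\piltd(\delta)$, and since the coefficient of $\delta$ in $f$ is $0$ (as $\delta\succ\lmsm(f)$), the leading coefficients of the $h_ig_i$, $i\in S_\delta$, sum to $0$. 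Telescoping over $S_\delta$ as in the classical argument---for consecutive $p,q\in S_\delta$ pick $l\in\lcmlt(\ltsm(g_p),\ltsm(g_q))$ dividing $\delta$ in $A_{lt}^d$ and a cofactor $u\in A$ with $\pi_{lt}(u)\,l=\piltd(\delta)$, and use S-closedness to write $\Ssm^l(g_p,g_q)$ as a standard expression, all of whose terms have leading monomial $\prec\pi_{lt}^{-1}(l)\preceq\delta$, so that $u\cdot\Ssm^l(g_p,g_q)$ has leading monomial $\prec\delta$---yields a new representation of $f$ whose $\delta$-level is strictly reduced ($\delta$ drops, or $\card{S_\delta}$ drops), after which re-running the compatibility restoration on the $u\ell_j$ keeps all leading monomials $\preceq\delta$. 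This contradicts minimality, so $\delta=\lmsm(f)$; then any $i\in S_\delta$ has $\ltsm(g_i)\ltdivides\ltsm(f)$, finishing the proof.

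\emph{Main obstacle.} The delicate point is the interaction of the telescoping step with the compatibility-restoration step: multiplying a standard expression by a cofactor monomial can both break compatibility and, in principle, kill a leading term in $A_{lt}^d$ that survives in $A^d$. Controlling this is exactly where the non-zerodivisor hypothesis on the $\ltsm(g_i)$ is used, so that the cancellations at level $\delta$ are genuine coefficient relations telescoped away by S-polynomials rather than concealed zerodivisor relations requiring a further closure condition; and it is also where one must verify that the two nested inductions---lowering $\delta$ while chasing down the lower-order incompatibilities spawned by Ann-closure---do not feed one another, which is ultimately ensured by well-ordering (Proposition~\ref{prop:ato-are-well}).
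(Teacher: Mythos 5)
Your overall architecture matches the paper's: first use Ann-closure to pass from an arbitrary representation $f=\sum_ih_ig_i$ to a compatible one, then use S-closure to telescope the top level $\delta=\max_i\lmsm(h_ig_i)$ down to $\lmsm(f)$, reading divisibility via Proposition~\ref{prop:asl-grobner-divisibility-characterization} and using the non-zerodivisor hypothesis to make sure the cancellations at level $\delta$ are genuine coefficient relations rather than hidden zerodivisor collapses. The easy direction and the S-closure telescoping (your analogue of the paper's Lemma~\ref{lem:buchbergerS}, Claim~\ref{claim:rewrite-A-in-terms-of-S}, Lemma~\ref{lem:Ssm-prod}, and Claim~\ref{claim:delta}) are correct in outline, modulo some care when multiplying a standard expression for $\Ssm^l(g_p,g_q)$ by the cofactor $u$, which the paper handles by discarding the terms killed by $u$ and bounding the survivors via Proposition~\ref{prop:mult} and \ref{def:termorder:mult}.

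There is, however, a genuine gap in your termination argument for the compatibility-restoration step (the analogue of the paper's Lemma~\ref{lem:buchberger2}). You claim that "the newly produced incompatibilities are either strictly lower in $\preceq$ or of the colon-ideal shape covered by the second clause of Definition~\ref{def:ann-closed}, so the process terminates by well-ordering of $\preceq$." In the colon-ideal case—where the leading incompatible term is $m_3=\lmsm(tm_2)$ with $m_2$ a term of a syzygy coefficient—one has only $\lmsm(m_3g)\preceq\lmsm(m_0g_0)$, with equality possible, so well-ordering of $\preceq$ on leading monomials does not by itself close the loop. Moreover, the colon-ideal clause of Definition~\ref{def:ann-closed} only supplies a compatibility syzygy for $(\lmsm(s_2m_2),g)$ with $s_2$ in the \emph{minimal} generating set of the colon ideal $(\Ann(\piltd(\ltsm(g))):\pi_{lt}(m_2))$, not for $(m_3,g)$ itself; to use it you must multiply by the residual cofactor $\ltfrac{t}{s_2}$, which may again spawn colon-ideal incompatibilities at the same $\lmsm$-level. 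The paper resolves this by introducing a secondary well-founded parameter, the "distance from compatibility'' $d_S(m,g):=\min\{\ltfrac{m}{m'}:S\text{ has a compatibility syzygy for }(m',g)\}$, and running an inner induction on $d_S$ nested inside the outer induction on $\lmsm(m_0g_0)$; the point is that precisely in the problematic case one gets $d_S(m_3,g)\preceq\ltfrac{t}{s_2}\prec t=d_S(m_0,g_0)$ (strictness because $s_2\ne1$, since $m_2$ was compatible with $\ltsm(g)$). Your "main obstacle'' paragraph correctly senses that two nested inductions are at play, but without identifying this inner parameter the process you describe is not proved to halt.
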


In particular, the condition about the leading terms in $G$ being non-zerodivisors automatically holds when $A$ is a domain, which is a common case of application (see also Proposition~\ref{prop:domain}).

\begin{proof}
If $G$ is a pseudo-ASL Gröbner basis, then the fact that S-sets reduce to 0 and the existence of compatibility syzygies to satisfy the definition of Ann-closure are immediate from (the module analogue of) Proposition~\ref{prop:unique-remainder}, since it is clear that every element of $\Ssm(g,g')$ is in $\langle G \rangle$, as is every element of the form $mg$ (whether $m$ is compatible with $\ltsm(g)$ or not).

Conversely, suppose $G$ is S-closed and Ann-closed. We split this direction up into the following two lemmata, corresponding to the use of Ann-closure and S-closure, respectively.

\begin{lemma} \label{lem:buchberger2}
Given $(A^d, \preceq, A_{lt}^d)$, suppose that $G \subseteq A^d$ has the property that for all $g \in G$, $\ltsm(g)$ is a non-zerodivisor. If $G$ is Ann-closed, then for every standard monomial $m$ and every $g \in G$, $mg$ has a standard expression relative to $G$ with 0 remainder.
\end{lemma}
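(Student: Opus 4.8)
The plan is to argue by induction on $\lmsm(mg)$, which is legitimate because pseudo-ASL term orders on $A^d$ are well-orders (Proposition~\ref{prop:ato-are-well}), and note first that $mg \neq 0$: since $\ltsm(g)$ is a non-zerodivisor and $m$ is a nonzero standard monomial, $m\,\ltsm(g) \neq 0$, so $\lmsm(mg)$ is defined. If $m$ is \emph{compatible} with $\ltsm(g)$ there is nothing to do: taking $h_g := m$, all other coefficients $0$, and remainder $0$ is a standard expression for $mg$ relative to $G$ — the only non-vacuous requirement of Definition~\ref{defn:standard-expression}, that $m$ be compatible with $\ltsm(g)$, holds by hypothesis.

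The real work is the \emph{incompatible} case, $\pi_{lt}(m)\piltd(\ltsm(g)) = 0$, i.e.\ $\pi_{lt}(m) \in \Ann(\piltd(\ltsm(g)))$ in $A_{lt}$. Because $A_{lt}$ is a monomial pseudo-ASL (Lemma~\ref{lem:ALT}) and $A_{lt}^d$ carries the pseudo-ASL term order transported from $A^d$ along $\piltd$ (Proposition~\ref{prop:orders}), Observation~\ref{obs:no-binomial-ann} together with Lemma~\ref{lem:monomial_ann} shows this annihilator is a standard monomial ideal generated by $\LAM(\piltd(\ltsm(g)))$; pick $s$ with $\pi_{lt}(s) \in \LAM(\piltd(\ltsm(g)))$ and $\pi_{lt}(s) \divides \pi_{lt}(m)$, and set $m'' := \pi_{lt}^{-1}\!\bigl(\ltfrac{m}{s}\bigr)$ (Lemma~\ref{lem:mon-div}), so that $\pi_{lt}(s)\pi_{lt}(m'') = \pi_{lt}(m)$, $s$ and $m''$ are compatible, $\ltsm(sm'') = m$, and $sm'' = m + \rho$ with $\rho = 0$ or $\lmsm(\rho) \prec m$. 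By the first clause of Definition~\ref{def:ann-closed}, the fixed set $S$ of compatibility syzygies contains one for $(s,g)$, i.e.\ a $0$-remainder standard expression $sg = \sum_{g'\in G}\ell_{g'}g'$, and multiplying by $m''$ gives
\[
mg \;=\; m''(sg) - \rho g \;=\; \sum_{g'\in G}(m''\ell_{g'})\,g' \;-\; \rho g .
\]
Every standard monomial of $\rho$ is $\prec m$, so each such monomial times $g$ has leading monomial $\prec \lmsm(mg)$ by \ref{def:termorder:mult} (applied inside the relevant $Ae_i$, using that $\ltsm(g)$ is a non-zerodivisor), hence a $0$-remainder standard expression by the induction hypothesis; summing these (reducing highest terms first, so the leading-term bound of Definition~\ref{defn:standard-expression} is preserved) handles $\rho g$.

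For the main term $\sum_{g'}(m''\ell_{g'})g'$, expand each $m''\ell_{g'}$ into standard monomials: a monomial compatible with $\ltsm(g')$ gives a genuine standard piece, while for an incompatible monomial $\nu$ one traces $\nu$ back to a monomial $\nu_\ell$ of $\ell_{g'}$ appearing in the straightening of $m''\nu_\ell$, checks $\pi_{lt}(m'') \in (\Ann(\piltd(\ltsm(g'))):\pi_{lt}(\nu_\ell))$, and deduces that $\nu$ is $\ltdivides$-divisible by $\lmsm(t\nu_\ell)$ for some $t$ in the minimal generating set of that colon ideal — exactly a pair for which the second clause of Definition~\ref{def:ann-closed} places a compatibility syzygy into $S$. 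Applying that syzygy, peeling off a strictly-lower correction (handled by the induction hypothesis), and iterating, one reduces every incompatible piece, and assembling everything (highest-first, as before) into a single standard expression with $0$ remainder finishes the argument. \textbf{The main obstacle} is precisely the termination of this last step: multiplying a $0$-remainder standard expression by $m''$ and re-expanding can manufacture incompatible pairs whose leading monomials sit exactly at $\lmsm(mg)$ and so are invisible to the outer induction, and the hypothesis of Ann-closedness is exactly the assertion that the \emph{finite} set $S$ already contains every compatibility syzygy needed to resolve all of them — this is why Definition~\ref{def:ann-closed} is stated the way it is, and getting the bookkeeping of this closure correct (including the condition-3 bound of standard expressions under summation) is the delicate part of the proof.
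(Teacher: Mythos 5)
Your outline reproduces the paper's proof almost step for step: reduce to the incompatible case, pick $s$ with $\pi_{lt}(s)\in\LAM(\piltd(\ltsm(g)))$ dividing $\pi_{lt}(m)$, multiply the compatibility syzygy for $(s,g)$ by $m''=\pi_{lt}^{-1}(\ltfrac{m}{s})$, absorb the strictly-lower tail $\rho g$ by the outer induction on $\lmsm(mg)$, and for incompatible monomials appearing in $m''\ell_{g'}$ invoke the colon-ideal clause of Definition~\ref{def:ann-closed}. But the paragraph you flag as ``the main obstacle'' \emph{is} the missing step, and asserting that ``Ann-closedness is exactly the assertion that $S$ already contains every syzygy needed'' does not discharge it: nothing in your argument shows the iterative replacement of same-level incompatible pairs terminates, and the outer induction cannot see them.

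The paper closes this gap with a \emph{second, inner} induction on the quantity
\[
d_S(m,g):=\min\Bigl\{\ltfrac{m}{m'}\ \Big|\ S\text{ has a compatibility syzygy for }(m',g)\Bigr\},
\]
the ``distance from compatibility relative to $S$.'' (Note it minimizes over \emph{all} compatibility syzygies in $S$, not just those coming from $\LAM$; you instead start from an arbitrary $s\in\LAM$, which need not realize the minimum.) After multiplying the chosen syzygy by $t=\pi_{lt}^{-1}(d_S(m_0,g_0))$, a new incompatible pair $(m_3,g)$ with $m_3=\lmsm(tm_2)$ for some $m_2\in\ell_g$ is handled by clause (2) of Ann-closure: it supplies a compatibility syzygy for $(\lmsm(s_2 m_2),g)$ with $s_2$ in the minimal generating set of the colon ideal $(\Ann(\piltd(\ltsm(g))):\pi_{lt}(m_2))$, and since $m_2$ was compatible with $\ltsm(g)$ (being a term of a standard expression) one has $s_2\neq 1$, hence $d_S(m_3,g)\preceq\ltfrac{t}{s_2}\prec t=d_S(m_0,g_0)$. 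That strict decrease is what terminates the ``last step'' you worried about; the remaining incompatible monomials ($m_3\neq\lmsm(tm_2)$) really are strictly below $\lmsm(m_0g_0)$, so the outer induction suffices for those. Without the $d_S$ measure (or an equivalent), your proposal does not prove termination.
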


\begin{lemma} \label{lem:buchbergerS}
Given $(A^d, \preceq, A_{lt}^d)$, suppose that $G \subseteq A^d$ has the property that for all $g \in G$, $\ltsm(g)$ is a non-zerodivisor. If $G$ is S-closed and for every standard monomial $m \in A$ and every $g \in G$, $mg$ has a standard expression with 0 remainder relative to $G$, then so does every $f \in \langle G \rangle$.
\end{lemma}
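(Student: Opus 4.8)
The plan is to follow the classical Buchberger argument (``$S$-polynomials reduce to $0$ $\Rightarrow$ Gröbner basis''), with two extra ingredients forced by the pseudo-ASL setting: compatibility must be tracked throughout, which is exactly why we are handed the hypothesis that every $mg$ reduces to $0$ (not merely $S$-closedness); and least common multiples are not unique, so the telescoping must be steered through the right LCM. The first thing to record is that the target is simply a \emph{compatible representation of the right degree}: if $f = \sum_{g \in G} h_g g$ with each $h_g$ compatible with $\ltsm(g)$ and $\lmsm(h_g g) \preceq \lmsm(f)$ for every $g$ with $h_g g \neq 0$, then $(r, h_g)_{g} = (0, h_g)_{g}$ is a standard expression for $f$ relative to $G$ (condition~(2) of Definition~\ref{defn:standard-expression} being vacuous when $r=0$). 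So it suffices to produce such a list; if $f = 0$ this is trivial, so assume $f \neq 0$.

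First I would produce \emph{some} compatible representation of $f$, ignoring for now the degree bound. Starting from any $f = \sum_g h_g g$ (which exists since $f \in \langle G\rangle$), expand each $h_g$ into standard terms $cm$ and apply the hypothesis that each $mg$ has a standard expression with $0$ remainder relative to $G$; substituting and regrouping yields $f = \sum_g h_g' g$ in which each $h_g'$ is a sum of elements each compatible with $\ltsm(g)$, hence itself compatible with $\ltsm(g)$ (every standard monomial appearing in a sum appears in one of the summands). Among all compatible representations of $f$, let $\delta$ be the least value of $\max\{\lmsm(h_g g) : h_g g \neq 0\}$; this minimum exists because the achievable values form a nonempty set of standard monomials and $\preceq$ is a well-order (Proposition~\ref{prop:well-order}). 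Since the leading monomial of a sum is at most the largest leading monomial of its summands, $\delta \succeq \lmsm(f)$ always. It remains to prove $\delta = \lmsm(f)$.

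For this, suppose $\delta \succ \lmsm(f)$ and fix a compatible representation attaining $\delta$, with $S := \{g \in G : \lmsm(h_g g) = \delta\}$. Comparing $\delta$-components, $\delta$ does not appear in $f$, so $\sum_{g \in S}\ltsm(h_g g) = 0$; writing $\ltsm(h_g g) = \ltsm(\ltsm(h_g) g) = a_g\,\delta$ (the first equality by Proposition~\ref{prop:mult}, compatibility ensuring $\ltsm(h_g)\ltsm(g)\neq 0$) and using that $R$ is a field and $\delta \neq 0$, we get $\sum_{g\in S} a_g = 0$ with each $a_g \neq 0$; in particular $|S| \geq 2$. For each $g \in S$ one has $\ltsm(g) \ltdivides \delta$, and by the uniqueness clause of Lemma~\ref{lem:mon-div}, $a_g^{-1}\ltsm(h_g) = \pi_{lt}^{-1}\!\big(\ltfrac{\pi_{lt}(\delta)}{\ltsm(g)}\big)$; setting $P_g := \pi_{lt}^{-1}\!\big(\ltfrac{\pi_{lt}(\delta)}{\ltsm(g)}\big)\,g$, we have $\sum_{g\in S}\ltsm(h_g) g = \sum_{g\in S} a_g P_g$, which, using $\sum a_g = 0$, telescopes into a sum of differences $P_{g}-P_{g'}$ over consecutive pairs from $S$. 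For such a pair, $\pi_{lt}(\delta)$ is a common multiple of $\pi_{lt}(\ltsm(g))$ and $\pi_{lt}(\ltsm(g'))$ in $A_{lt}^d$ (note both leading terms are on the same basis vector, since $\delta$ is a single standard monomial), hence is divisible by some $l \in \lcmlt(\ltsm(g),\ltsm(g'))$ by the module analogue of Lemma~\ref{lem:unique-min}. Multiplying $\Ssm^l(g,g')$ by the cofactor $\pi_{lt}^{-1}\!\big(\ltfrac{\pi_{lt}(\delta)}{l}\big)$ matches its leading terms with those of $P_g - P_{g'}$, so $P_g - P_{g'} = \pi_{lt}^{-1}\!\big(\ltfrac{\pi_{lt}(\delta)}{l}\big)\cdot \Ssm^l(g,g')$ plus error terms arising because a product of standard terms in $A$ need not be a standard term, all of whose leading monomials are $\prec \delta$. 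By $S$-closedness, $\Ssm^l(g,g')$ reduces to $0$ relative to $G$ with every piece of leading monomial $\prec \pi_{lt}^{-1}(l)$ (Observation~\ref{obs:s-poly-reduces-lm}); multiplying through by the cofactor and re-reducing each resulting $mg''$ via the ``every $mg$ reduces to $0$'' hypothesis keeps all pieces compatible with $\ltsm(g'')$ and, via axiom~\ref{def:termorder:mult} (and, in the edge cases where a product unexpectedly vanishes, the hypothesis that the $\ltsm(g)$ are non-zerodivisors), with leading monomial $\prec \delta$. Substituting everything back, together with the tails $h_g - \ltsm(h_g)$ for $g \in S$ (whose products with $g$ already have leading monomial $\prec \delta$) and the untouched terms for $g \notin S$, yields a new compatible representation of $f$ whose maximum leading monomial is strictly below $\delta$, contradicting minimality. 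Hence $\delta = \lmsm(f)$, so the minimal representation is a standard expression for $f$ with remainder $0$, as desired.

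The main obstacle is the step ``$P_g - P_{g'} = (\text{cofactor})\cdot \Ssm^l(g,g') + (\text{lower-order error})$, and this whole thing can be re-expressed compatibly with all leading monomials still below $\delta$.'' Three complications absent from the polynomial-ring proof collide here: least common multiples are not unique, so one must single out the $l \in \lcmlt(\ltsm(g),\ltsm(g'))$ that actually divides $\pi_{lt}(\delta)$ in $A_{lt}^d$; a cofactor times a standard term is no longer a standard term, so explicit straightening-error terms appear and must be absorbed by a secondary descent on the term order; and multiplying a compatible expression by a monomial can break compatibility, which is precisely why we need, beyond $S$-closedness, the hypothesis that every $mg$ reduces to $0$, and why re-reduction has to be interleaved at each stage. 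Once this bookkeeping is in place, the well-ordering of $\preceq$ turns the ``maximum leading monomial decreases'' observation into the required contradiction and closes the argument.
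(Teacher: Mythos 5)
Your proof is correct and follows essentially the same route as the paper's: after producing a compatible representation via the hypothesis that every $mg$ reduces to $0$, you track $\delta := \max \lmsm(h_g g)$, identify the leading cofactors via the uniqueness in Lemma~\ref{lem:mon-div}, telescope the cancelling $\delta$-level part into $S$-polynomials $\Ssm^{\pi_{lt}(\delta)}$, descend to an actual LCM $l \in \lcmlt$ with lower-order error (the content of Lemma~\ref{lem:Ssm-prod}), and use $S$-closedness plus re-reduction and the non-zerodivisor hypothesis to push the maximum strictly below $\delta$. The only cosmetic deviations are telescoping over consecutive pairs rather than anchoring at a fixed $q_0$, and phrasing the descent as a well-ordering minimum rather than an explicit downward induction.
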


Before proving these lemmata, let us see how the theorem follows. Let $f = \sum h_i g_i$ be a standard expression for $f$ relative to $G$. Since $\lmsm(h_i g_i) \preceq \lmsm(f)$ for all $i$, we must have some $i$ such that $\lmsm(h_i g_i) = \lmsm(f)$. Since $h_i$ is compatible with $\ltsm(g_i)$, by Proposition~\ref{prop:mult}, we have 
\[
\lmsm(f) = \lmsm(h_i g_i) = \lmsm(\lmsm(h_i) \lmsm(g_i)).
\]
Furthermore, since $h_i$ is compatible with $\ltsm(g_i)$, we have 
\[
\pi_{lt}(\lmsm(h_i)) \pi_{lt}(\lmsm(g_i)) = \pi_{lt}(\lmsm(f)).
\]
Thus $\pi_{lt}(\lmsm(g_i)) \divides \pi_{lt}(\lmsm(f))$, and we are done by the module analogue of Proposition \ref{prop:asl-grobner-divisibility-characterization}. Thus, all that remains to prove the theorem is to prove the two lemmata, which we do next.
\end{proof}

\begin{remark}
We note that although we have split the proof up into ``first use Ann-closure, then use S-closure'', even the S-closure proof (Lemma~\ref{lem:buchbergerS}) differs from the classical case somewhat significantly, in that it must handle lower-order terms that are identically zero in the setting of ordinary Gröbner bases in a polynomial ring.
\end{remark}

\begin{proof}[Proof of Lemma~\ref{lem:buchberger2}]
We induct over $\lmsm(mg)$, according to the given pseudo-ASL term order $\preceq$. Suppose inductively that $m' g'$ has a standard expression with 0 remainder for all standard monomials $m'$ and all $g' \in G$ such that $\lmsm(m' g') \prec \lmsm(m_0 g_0)$. We will show that $m_0 g_0$ has a standard expression relative to $G$ with 0 remainder.

We will proceed with our inner induction as follows. Let $S$ be a set of compatibility syzygies as in the definition of Ann-closure. Given a pair $(m,g)$ where $m$ is a standard monomial such that $\pi_{lt}(m) \pi_{lt}(\ltsm(g)) = 0$, we define its \emph{distance from compatibility relative to $S$} as
\[
d_S(m,g) := \min\left\{\ltfrac{m}{m'} \suchthat S \text{ has a compatibility syzygy for } (m', g)\right\}.
\]
(We will only apply this to pairs $(m,g)$ such that the set on the RHS is non-empty.) Note that $d_S(m,g)=1$ implies that there is a compatibility syzygy for $(m,g)$ in $S$, and hence by definition $mg$ is has a standard expression relative to $G$ with zero remainder.

In order to show that $m_0 g_0$ has a standard expression in terms of $G$ with 0 remainder, we proceed by induction on the distance from compatibility relative to $S$, starting with $d_S(m_0, g_0)$. Since $\pi_{lt}(m_0) \pi_{lt}(\ltsm(g_0)) = 0$, there is some $\pi_{lt}(s) \in \LAM(\pi_{lt}(\ltsm(g_0)))$ such that $\pi_{lt}(s) \divides \pi_{lt}(m_0)$, and thus $d_S(m_0,g_0)$ exists. Let $t = \pi_{lt}^{-1}(d_S(m_0, g_0))$, and let $m_1 = \pi_{lt}^{-1}\left(\ltfrac{m_0}{t}\right)$.
Then $S$ has a compatibility syzygy for $(m_1, g_0)$, say 
\[
m_1 g_0 = \sum_{g \in G} \ell_g g.
\]
Multiplying by $t$, we get
\[
t m_1 g_0 = t\left(\sum_{g \in G} \ell_g g \right).
\]
Now, as we only have $m_0 = \lmsm(t m_1)$, it may be the case that $tm_1$ has additional lower-order terms than $m_0$. However, any such term $m_2 \in t m_1$ with $m_2 \neq m_0$ then has $m_2 \prec m_0$. Thus, since we have assumed $\ltsm(g_0)$ is a non-zerodivisor, 
we have $\lmsm(m_2 g_0) = \lmsm(m_2 \lmsm(g_0))$ by Prop.~\ref{prop:mult}, and similarly $\lmsm(m_0 g_0) = \lmsm(m_0 \lmsm(g_0))$. Thus by \ref{def:termorder:mult} we get 
\[
\lmsm(m_2 g_0)  = \lmsm(m_2 \lmsm(g_0)) \prec \lmsm(m_0 \lmsm(g_0)) = \lmsm(m_0 g_0).
\] 
Therefore, by our (outer) inductive assumption, all the lower-order terms $m_2 \in tm_1$ are such that $m_2 g_0$ has a standard expression with 0 remainder. And since $\lmsm(m_2 g_0) \preceq \lmsm(m_0 g_0)$, those standard expressions thus have all of their terms $\preceq \lmsm(m_0 g_0)$, so they may legitimately be part of a standard expression for $m_0 g_0$ as well. Thus, $tm_1 g_0$ has a standard expression with 0 remainder if and only if $m_0 g_0$ does.

If every standard term occurring in $t\ell_g$ is compatible with $\ltsm(g)$ for all $g \in G$, then the expression $\sum_j (t \ell_j) g_j$ is already standard, and we are done. (While not needed for the proof, it may be helpful for intuition to see that this is automatically the case if $t=1$, which happens iff $S$ already contains a compatibility syzygy for $(m_0, g_0)$.)

Otherwise, there is some $g \in G$, standard monomial $m_2 \in \ell_g$ and some standard monomial $m_3 \in tm_2$ such that $m_3$ is incompatible with $\ltsm(g)$. We break the final part of the argument into cases.

Case 1 [$m_3 = \lmsm(tm_2)$]: In this case, we have that $\pi_{lt}(t) \pi_{lt}(m_2)$ is either 0 or $\pi_{lt}(m_3)$, and in either case, $\pi_{lt}(t) \pi_{lt}(m_2)$ is in $\Ann(\piltd(\ltsm(g)))$. Thus by Ann-closure, there is some $s_2$ in the minimal generating set of $(\Ann(\piltd(\ltsm(g))) \suchthat \pi_{lt}(m_2))$ such that $s_2 \mid_{lt} t$ and there is a compatibility syzygy in $S$ for $(\ltsm(s_2 m_2), g)$. Using the latter syzygy, since $s_2 \mid_{lt} t$ and $m_3 = \ltsm(tm_2)$, we find that 
\[
d_S(m_3, g) \preceq \ltfrac{m_3}{\ltsm(s_2 m_2)} = \ltfrac{\ltsm(t m_2)}{\ltsm(s_2 m_2)} = \ltfrac{t}{s_2}.
\]
We will now show that $d_S(m_3,g)$ is \emph{strictly} less than $t$. From the preceding equation, this is equivalent to showing that $s_2 \succ 1$. The latter follows since $m_2$ is in $\ell_g$, so is compatible with $\ltsm(g)$ by assumption, and therefore $1$ is not in $(\Ann(\piltd(\ltsm(g))) \suchthat \pi_{lt}(m_2))$, so $s_2 \neq 1$. Thus we have $d_S(m_3, g) \prec t$, which implies, by our inner inductive assumption, that $m_3 g$ has a standard expression with zero remainder, 
and we are done.

Case 2 [$m_3 \neq \lmsm(t m_2)$]: Then we have 
\begin{equation} \label{eq:Ann1}
m_3 \prec \ltsm(tm_2) \preceq \ltsm(t\ell_g).
\end{equation}
Multiplying the first and last parts of this inequality by $\ltsm(g)$, and using our assumption that $\ltsm(g)$ is a non-zerodivisor, \ref{def:termorder-modules:mult} implies that 
\begin{equation} \label{eq:Ann2}
\ltsm(m_3 \ltsm(g)) \prec \ltsm(\ltsm(t \ell_g) \ltsm(g)).
\end{equation} 
Now, since $\ltsm(g)$ is a non-zerodivisor, by Prop.~\ref{prop:mult} the LHS here is equal to $\ltsm(m_3 g)$ and the RHS is equal to $\ltsm(t\ell_g g)$, so we have
\[
\ltsm(m_3 g) \prec \ltsm(t \ell_g g).
\]
Finally, since $\sum_g \ell_g g$ was a standard expression for $m_1 g_0$ with $m_1 \ltdivides m_0$, we have $\ltsm(\ell_g g) \preceq \ltsm(m_1 g_0)$. Multiplying both sides of this equation by $t$, and since both $\ltsm(g)$ and $\ltsm(g_0)$ are non-zerodivisors, we finally get $\ltsm(t \ell_g g) \preceq \ltsm(m_0 g_0)$. Combining with the above equation, this gives us $\ltsm(m_3 g) \prec \ltsm(m_0 g_0)$, and thus by our (outer) inductive assumption, $m_3 g$ has a standard expression with 0 remainder. This completes the proof of Lemma~\ref{lem:buchberger2}.
\end{proof}

\begin{proof}[Proof of Lemma~\ref{lem:buchbergerS}]
Write $G = \{g_1, \dotsc, g_s\}$. Let $f \in \langle G \rangle$; say $f = \sum_{i \in [s]} h'_i g_i$, where $h'_i \in A$. By Lemma~\ref{lem:buchberger2}, if $m$ is a standard monomial occurring in $h'_i$, then $m g_i$ has a standard expression relative to $G$ with 0 remainder. 
Thus, term-by-term within each $h'_i$, we may replace the preceding expression with 
\begin{equation}\label{eqn:f-intermsof-G} 
f = \sum_{i\in [s]}h_ig_i,
\end{equation} 
where each $h_i$ is compatible with $\ltsm(g_i)$. Let $\delta = \max_{i \in [s]}\lmsm(h_ig_i)$. (Note that while the preceding replacement process did not increase any of the leading terms relative to their initial values in the expression $\sum h'_i g_i$, it is possible that some of those leading terms are in fact strictly larger than $\lmsm(f)$.) If $\delta = \lmsm(f)$, then we have $\lmsm(h_i g_i) \preceq \lmsm(f)$ for all $i$, so the preceding is in fact a standard expression with 0 remainder 
and we are done. 

Thus, all that remains is to handle the case in which $\delta$ is strictly greater than $\lmsm(f)$. We will handle this case by showing that $f$ has another expression $\sum_{i \in [s]} h''_i g_i$ with $h''_i$ compatible with $\ltsm(g_i)$ for all $i$, and such that $\max_{i \in [s]} \ltsm(h''_i g_i)$ is strictly less than $\delta$. By induction on $\delta$, this procedure eventually gets to the base case where $\delta = \lmsm(f)$, completing the proof.

Toward that end, let $Q \subseteq [s]$ be the set of all $q \in [s]$ such that $\lmsm(h_qg_q) = \delta$; $Q$ is non-empty by the definition of $\delta$. Then we can re-write Equation \eqref{eqn:f-intermsof-G} as \begin{align}
f &= \sum_{q\in Q} h_qg_q + \sum_{i \in [s]\setminus Q}h_ig_i \nonumber \\
&= \underbrace{\sum_{q \in Q} \ltsm(h_q)g_q}_{\mathcal{A}} + \underbrace{\sum_{q \in Q} \left(h_q - \ltsm(h_q)\right)g_q}_{\mathcal{B}} + \underbrace{\sum_{i \in [s]\setminus Q}h_ig_i}_{\mathcal{C}}. \label{eqn:f-isolate}
\end{align}

We handle the three collections of terms $\mathcal{A}, \mathcal{B}, \mathcal{C}$ in reverse order:
\begin{itemize}
\item By definition of $Q$, the leading standard monomials of all summands $h_i g_i$ in $\mathcal{C}$ are strictly smaller than $\delta$. 

\item The leading standard monomials of all summands $(h_q - \ltsm(h_q))g_q$ of $\mathcal{B}$ are also strictly smaller than $\delta$: for, since $h_q$ is compatible with $\lmsm(g_q)$, by Proposition~\ref{prop:mult} we have 
\[
\delta = \lmsm(h_q g_q) = \lmsm(h_q \lmsm(g_q)),
\]
 and then we may apply Observation \ref{claim:lmsm-less-than-kappa} stated below to $f_1=h_q$ and $f_2 = \lmsm(g_q)$.
\end{itemize}

\begin{observation}[store=claimlmsmlessthankappa]
\label{claim:lmsm-less-than-kappa}
Suppose $f_1 \in A$ is compatible with $f_2 \in A^d$. Let $\tail(f_1) = f_1 - \ltsm(f_1)$. Then 
\[
\lmsm\left(\tail(f_1) f_2\right) \prec \lmsm(f_1 f_2).
\]
\end{observation}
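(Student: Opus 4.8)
The plan is to reduce everything to a single application of the multiplicativity axiom for the pseudo-ASL term order on $A^d$, after peeling off leading terms. First I would dispense with the degenerate case: if $\tail(f_1) = 0$ then $f_1$ is a single standard term and there is nothing to prove, so assume $\tail(f_1) \neq 0$. Writing $f_1 = \ltsm(f_1) + \tail(f_1)$, every standard monomial appearing in $\tail(f_1)$ is one of the non-leading standard monomials of $f_1$; in particular $\lmsm(\tail(f_1))$ appears in $f_1$, and $\lmsm(\tail(f_1)) \prec \lmsm(f_1)$ strictly, by the definition of $\tail$.

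Next I would cash in the compatibility hypothesis. Since $f_1$ is compatible with $f_2$, every standard monomial appearing in $f_1$ (in particular $\lmsm(f_1)$ and $\lmsm(\tail(f_1))$) is compatible with every standard monomial appearing in $f_2$, and in particular with $\lmsm(f_2)$; by the defining property of a module of leading terms (Observation~\ref{obs:mlt}) this forces $\lmsm(f_1)\lmsm(f_2) \neq 0$ and $\lmsm(\tail(f_1))\lmsm(f_2) \neq 0$, and hence also $\ltsm(f_1)\ltsm(f_2) \neq 0$ and $\ltsm(\tail(f_1))\ltsm(f_2) \neq 0$ (the leading coefficients are units, as $R$ is a field). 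With this nonvanishing in hand I would apply Proposition~\ref{prop:mult} inside the free module $A^d$ — as is done elsewhere in the paper — to get $\lmsm(f_1 f_2) = \lmsm\bigl(\lmsm(f_1)\,\lmsm(f_2)\bigr)$ and $\lmsm(\tail(f_1)f_2) = \lmsm\bigl(\lmsm(\tail(f_1))\,\lmsm(f_2)\bigr)$. Finally, from $\lmsm(\tail(f_1)) \prec \lmsm(f_1)$, the nonvanishing of both products with $\lmsm(f_2)$, and the multiplicativity axiom of the pseudo-ASL term order on $A^d$ (Definition~\ref{def:termorder-modules}, concretely \ref{def:termorder-modules:restrict} applied in the summand $Ae_i$ carrying $\lmsm(f_2)$), I would conclude $\lmsm\bigl(\lmsm(\tail(f_1))\,\lmsm(f_2)\bigr) \prec \lmsm\bigl(\lmsm(f_1)\,\lmsm(f_2)\bigr)$; chaining the three displayed equalities/inequalities gives the claim.

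I do not expect a deep obstacle: the statement is essentially a bookkeeping variant of the ``leading term of a product'' machinery already established. The only points that need genuine care are (a) the degenerate case $\tail(f_1) = 0$, and (b) the interplay between the ring $A$ and the module $A^d$ — since $f_2$ need not be a single standard monomial, one must either reduce to $\lmsm(f_2)$ first (as above) or argue monomial-by-monomial, showing that for each standard monomial $m_i$ of $\tail(f_1)$ and each standard monomial $\mu_j$ of $f_2$ one has $\lmsm(m_i\mu_j) \prec \lmsm\bigl(\lmsm(f_1)\,\lmsm(f_2)\bigr)$ by chaining \ref{def:termorder-modules:restrict} with \ref{def:termorder-modules:mult}. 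Either way, the correct module-level axioms must be invoked rather than the ring-level \ref{def:termorder:mult}.
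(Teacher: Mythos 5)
Your proof is correct and follows essentially the same route as the paper's: note $\lmsm(\tail(f_1)) \prec \lmsm(f_1)$, use compatibility to guarantee the relevant products of leading monomials are nonzero, then combine the module multiplicativity axiom with (the module analogue of) Proposition~\ref{prop:mult}. The only differences are cosmetic — you handle the degenerate case $\tail(f_1)=0$ explicitly and apply Proposition~\ref{prop:mult} before rather than after the inequality step, and your cite of \ref{def:termorder-modules:restrict} versus the paper's \ref{def:termorder-modules:mult} is immaterial since both coincide when $i=j$ and $h=k$.
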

See Appendix \ref{sec:app:theory} for the proof. (While it seems intuitively obvious and the proof is straightforward, one must use compatibility carefully in order to be able to apply Proposition~\ref{prop:mult} and \ref{def:termorder-modules:mult}.)

The remainder of the proof is devoted to handling the terms in $\mathcal{A}$. 

\begin{claim}[store=claimrewriteAintermsofS]
\label{claim:rewrite-A-in-terms-of-S}
We can rewrite $\mathcal{A}$ as follows.
For any $q_0 \in Q$, let $g_{\qn} := g_{q_0}$, then we have:
\begin{equation}
\mathcal{A} = \sum_{q \in Q} \lcsm(h_qg_q) \cdot \Ssm^\delta \left(g_{q}, g_{\qn}\right). \label{eqn:A-as-selem-sum} 
\end{equation}
\end{claim}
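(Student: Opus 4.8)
The plan is to expand the right-hand side of \eqref{eqn:A-as-selem-sum} using the definition of the S-polynomial and show it collapses to $\mathcal{A}$. First I would recall that for each $q \in Q$, since $\lmsm(h_q g_q) = \delta$ and $h_q$ is compatible with $\ltsm(g_q)$, Proposition~\ref{prop:mult} gives $\ltsm(h_q g_q) = \ltsm(\ltsm(h_q)\ltsm(g_q))$, so $\delta$ is a common multiple of $\ltsm(g_q)$ and $\ltsm(g_{\qn})$ in $A_{lt}^d$; hence $\Ssm^\delta(g_q, g_{\qn})$ is defined. By Definition~\ref{defn:sset},
\[
\Ssm^\delta(g_q, g_{\qn}) = \pi_{lt}^{-1}\!\left(\ltfrac{\delta}{\ltsm(g_q)}\right)\! g_q - \pi_{lt}^{-1}\!\left(\ltfrac{\delta}{\ltsm(g_{\qn})}\right)\! g_{\qn}.
\]
The key identification is that $\lcsm(h_q g_q)\cdot \pi_{lt}^{-1}\!\left(\ltfrac{\delta}{\ltsm(g_q)}\right) = \ltsm(h_q)$: indeed, applying $\pi_{lt}$ to both sides, $\pi_{lt}(\ltsm(h_q))$ is the unique standard term $t$ with $\pi_{lt}(t)\,\pi_{lt}(\ltsm(g_q)) = \pi_{lt}(\delta)$ (uniqueness by Lemma~\ref{lem:mon-div}, using that $h_q$ is compatible with $\ltsm(g_q)$ so $\pi_{lt}(\ltsm(h_q))\pi_{lt}(\ltsm(g_q)) = \pi_{lt}(\ltsm(h_q g_q)) = \pi_{lt}(\delta)$), and scaling by $\lcsm$ accounts for the leading coefficient so that the resulting term equals $\ltsm(h_q)$ exactly (not just up to scalar). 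Thus $\lcsm(h_q g_q)\cdot\Ssm^\delta(g_q,g_{\qn}) = \ltsm(h_q) g_q - c_q\, g_{\qn}$, where $c_q := \lcsm(h_q g_q)\,\pi_{lt}^{-1}\!\left(\ltfrac{\delta}{\ltsm(g_{\qn})}\right)$ is a standard term independent of $q$ in its $g_q$-part.

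Summing over $q \in Q$:
\[
\sum_{q\in Q}\lcsm(h_q g_q)\,\Ssm^\delta(g_q, g_{\qn}) = \sum_{q\in Q}\ltsm(h_q) g_q \;-\; \Bigl(\sum_{q\in Q} c_q\Bigr) g_{\qn}.
\]
Here the telescoping point is that $c_q = \lcsm(h_q g_q)\,\pi_{lt}^{-1}\!\left(\ltfrac{\delta}{\ltsm(g_{\qn})}\right)$ and, reading the identity of the previous paragraph with $q_0$ in the role of $q$, $\lcsm(h_{q_0} g_{q_0})\,\pi_{lt}^{-1}\!\left(\ltfrac{\delta}{\ltsm(g_{q_0})}\right) = \ltsm(h_{q_0})$; since $g_{\qn} = g_{q_0}$, one checks $c_q = \ltsm(h_{q_0}) = \ltsm(h_{\qn})$ does \emph{not} depend on $q$ — each $c_q$ equals the single fixed term $\pi_{lt}^{-1}\!\left(\ltfrac{\delta}{\ltsm(g_{\qn})}\right)$ scaled by the coefficient $\lcsm(h_q g_q)$. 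So I need to be slightly more careful: the $g_{\qn}$-coefficients do depend on $q$ through $\lcsm(h_q g_q)$, but the leading-monomial part $\ltfrac{\delta}{\ltsm(g_{\qn})}$ is fixed, so $\sum_{q\in Q} c_q = \bigl(\sum_{q\in Q}\lcsm(h_q g_q)\bigr)\pi_{lt}^{-1}\!\left(\ltfrac{\delta}{\ltsm(g_{\qn})}\right)$. The cleanest route is therefore to observe that one summand ($q = q_0$) contributes $\ltsm(h_{q_0}) g_{q_0} - c_{q_0} g_{\qn} = \ltsm(h_{q_0})g_{q_0} - \ltsm(h_{q_0})g_{\qn} = 0$ when $q_0 \in Q$, and more importantly that the total $g_{\qn}$-term is $-\sum_q \lcsm(h_q g_q)\,\pi_{lt}^{-1}(\ltfrac{\delta}{\ltsm(g_{\qn})})\,g_{\qn}$, which by the same identity equals $-\sum_q \lcsm(h_q g_q)\lcsm(h_{\qn}g_{\qn})^{-1}\ltsm(h_{\qn})\,g_{\qn}$. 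I would then need to verify this cancels against the ``extra'' part of $\sum_q \ltsm(h_q)g_q$ beyond $\mathcal{A} = \sum_q \ltsm(h_q) g_q$ — but in fact $\mathcal{A}$ \emph{is} $\sum_q \ltsm(h_q)g_q$ by definition \eqref{eqn:f-isolate}, so what must hold is $\sum_q c_q\, g_{\qn} = 0$, i.e.\ $\sum_{q\in Q}\lcsm(h_q g_q) = 0$.

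The main obstacle, then, is exactly this last point: \eqref{eqn:A-as-selem-sum} can only hold if $\sum_{q\in Q}\lcsm(h_q g_q) = 0$, and this is not automatic — it is precisely the hypothesis that $\delta = \max_i \lmsm(h_i g_i)$ is \emph{strictly} larger than $\lmsm(f)$, so that the coefficient of $\delta$ in $f = \sum_i h_i g_i$ vanishes. I would prove this by extracting the coefficient of the standard monomial $\delta$ in Equation~\eqref{eqn:f-isolate}: the terms $\mathcal{B}$ and $\mathcal{C}$ contribute nothing to $\delta$ (shown above via Observation~\ref{claim:lmsm-less-than-kappa} and the definition of $Q$), so the coefficient of $\delta$ in $f$ equals the coefficient of $\delta$ in $\mathcal{A} = \sum_{q\in Q}\ltsm(h_q)g_q$, namely $\sum_{q\in Q}\lcsm(h_q g_q)$ (using $\pi_{lt}(\ltsm(h_q)\ltsm(g_q)) = \pi_{lt}(\delta)$, and that compatibility ensures no cancellation-by-straightening obscures this leading coefficient). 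Since $\delta \succ \lmsm(f)$, this coefficient is $0$. Feeding $\sum_{q\in Q}\lcsm(h_q g_q) = 0$ back into the computation makes the $g_{\qn}$-terms sum to zero, leaving exactly $\sum_{q\in Q}\ltsm(h_q)g_q = \mathcal{A}$, as claimed. A minor care point throughout is to keep $\ltsm$ (term, with coefficient) versus $\lmsm$ (monomial) straight, and to invoke Lemma~\ref{lem:mon-div} for uniqueness of the quotient standard terms each time $\ltfrac{\cdot}{\cdot}$ appears.
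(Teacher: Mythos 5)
Your proposal is correct and takes essentially the same route as the paper, just run in the opposite direction: the paper starts from $\mathcal{A}$ and adds the cleverly-formed zero $-\frac{\ltsm(h_{\qn})g_{\qn}}{\lcsm(h_{\qn}g_{\qn})}\sum_{q\in Q}\lcsm(h_qg_q)$ to telescope into the S-polynomial sum, whereas you expand $\sum_{q\in Q}\lcsm(h_qg_q)\Ssm^\delta(g_q,g_{\qn})$ and let the same vanishing $\sum_{q\in Q}\lcsm(h_qg_q)=0$ collapse the $g_{\qn}$-terms. Both proofs rest on exactly the two facts you isolate: the coefficient identity $\pi_{lt}^{-1}\bigl(\ltfrac{\delta}{\ltsm(g_q)}\bigr)=\ltsm(h_q)/\lcsm(h_qg_q)$ (via Lemma~\ref{lem:mon-div}, Proposition~\ref{prop:mult}, and compatibility), and the vanishing of $\sum_{q\in Q}\lcsm(h_qg_q)$ extracted from the $\delta$-coefficient of \eqref{eqn:f-isolate} using $\delta\succ\lmsm(f)$.
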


\begin{proof}[Proof of claim]
Since $\lmsm(f) \prec \delta$, and all terms in $\mathcal{A}$ have leading term $\delta$, and all terms in $\mathcal{B}$ and $\mathcal{C}$ have leading term $\prec \delta$, the $\delta$ terms in $\mathcal{A}$ must all cancel, i.\,e., we deduce that 
\begin{equation} \label{eqn:coeff-sum-zero} \sum_{q \in Q} \lcsm(h_qg_q) = 0.\end{equation} 
(N.b.! Even if all $g_q$ have $\lcsm(g_q)=1$, the above equation need not be the same as $\sum_q \lcsm(h_q) = 0$, as happens in the case of ordinary Gröbner bases, because in a pseudo-ASL, even for two standard monomials $m,m'$, we may have $\lcsm(mm') \neq 1$.)

Now, let $q_0$ be any element of $Q$, $g_{\qn} := g_{q_0}$ and $h_{\qn} := h_{q_0}$ for brevity and distinction. We can re-write $\mathcal{A}$ as
\begin{align}
\mathcal{A} &= \sum_{q \in Q} \ltsm(h_q)g_q \nonumber \\
&= \left(\sum_{q \in Q} \ltsm(h_q)g_q\right) - \frac{\ltsm(h_{\qn})g_{\qn}}{\lcsm(h_{\qn}g_{\qn})} \cdot 0 \nonumber \\
&= \left(\sum_{q \in Q} \ltsm(h_q)g_q\right) - \left(\frac{\ltsm(h_{\qn})g_{\qn}}{\lcsm(h_{\qn}g_{\qn})} \right) \cdot \sum_{q \in Q}\lcsm(h_qg_q) \eqcomment{by \eqref{eqn:coeff-sum-zero}} \nonumber \\
&= \sum_{q \in Q} \lcsm(h_qg_q)\left(\frac{\ltsm(h_q)g_q}{\lcsm(h_qg_q)} - \frac{\ltsm(h_{\qn})g_{\qn}}{\lcsm(h_{\qn}g_{\qn})}\right). \label{eqn:almost-spoly} \\
\end{align}

We will now show that each summand in \eqref{eqn:almost-spoly} is an (the) S-polynomial of $\ltsm(h_q)g_q$ and $\ltsm(h_{\qn}) g_{\qn}$. 
For since $\lmsm(\ltsm(h_q) g_q)$ is the same for all $q \in Q$ (namely, $\delta$), they are all equal to their common LCM in $A_{lt}^d$, and that LCM is unique, namely $\piltd(\delta)$. Straightforward calculation shows that 
\[
\Ssm^\delta (\ltsm(h_q) g_q, \ltsm(h_{q_0}) g_{q_0}) = \frac{\ltsm(h_{q})g_{q}}{\lcsm(h_{q}g_q)} - \frac{\ltsm(h_{\qn})g_{\qn}}{\lcsm(h_{\qn}q_{\qn})}.
\]

Finally, we show that $\Ssm^{\delta}(\ltsm(h_q) g_q, \ltsm(h_{\qn}) g_{\qn})$ is in fact equal to $\Ssm^\delta(g_q, g_{\qn})$. We have:
\[
\Ssm^{\delta}(g_q, g_{\qn}) = \pi_{lt}^{-1}\left(\ltfrac{\delta}{\ltsm(g_q)} \right) g_q - \pi_{lt}^{-1}\left(\ltfrac{\delta}{\ltsm(g_{\qn})} \right) g_{\qn} 
\]
and
\[
\Ssm^{\delta}(\ltsm(h_q) g_q, \ltsm(h_{\qn}) g_{\qn}) = \frac{\ltsm(h_q) g_q}{\lcsm(h_q g_q)} - \frac{\ltsm(h_{\qn}) g_{\qn}}{\lcsm(h_{\qn} g_{\qn})}.
\]
In both cases, the coefficient of $g_q$ is the unique (by Obs.~\ref{obs:no-binomial-ann}) standard monomial $m$ such that $\ltsm(m g_q) = \pi_{lt}^{-1}(\delta)$, and similarly for $g_{\qn}$. Thus we must have
\[
\pi_{lt}^{-1}\left(\ltfrac{\delta}{\ltsm(g_q)} \right) = \frac{\ltsm(h_q)}{\lcsm(h_q g_q)}  \qquad \text{ and } \qquad \pi_{lt}^{-1}\left(\ltfrac{\delta}{\ltsm(g_{\qn})} \right)  = \frac{\ltsm(h_{\qn})}{\lcsm(h_{\qn} g_{\qn})},
\]
showing that $\Ssm^\delta(g_q, g_{\qn}) = \Ssm^{\delta}(\ltsm(h_q) g_q, \ltsm(h_{\qn}) g_{\qn})$. Plugging this into \eqref{eqn:almost-spoly} finishes the proof of the claim.
\end{proof}

The remainder of the proof of Lemma~\ref{lem:buchbergerS} (and, hence, Theorem~\ref{thm:buchberger2}) is then to show that the S-polynomials arising in Claim~\ref{claim:rewrite-A-in-terms-of-S} can be rewritten in terms of S-polynomials of the $g_q$ relative to their LCMs (rather than $\delta$ as above, which is a common multiple but not necessarily a \emph{least} common multiple) plus lower-order terms. (These lower-order terms are another place where this proof extends the usual proof; for ordinary Gröbner bases in a polynomial ring---or even in a monomial pseudo-ASL---the aforementioned lower-order terms that arise in our proof vanish identically and so need not be dealt with.)

At this point we already have that the leading term of each $\Ssm^\delta\left(g_{q}, g_{\qn}\right)$ is strictly less than $\delta$. What remains is to show that each of the $\Ssm^\delta\left(g_{q}, g_{\qn}\right)$ terms can be written as $\sum h''_i g_i$ in which each $h''_i$ is compatible with $\ltsm(g_i)$ and in which every summand $h''_i g_i$ also has leading monomial strictly less than $\delta$.

By definition, for all $q \in Q$, $\piltd(\delta)$ is in $\langle \piltd(\lmsm(g_q)) \rangle \cap \langle \piltd(\lmsm(g_{\qn})) \rangle$, thus there exists 
\[
l_q \in \lcmlt(\lmsm(g_q), \lmsm(g_{\qn}))
\]
such that $l_q \divides \piltd(\delta)$. For all $q \in Q$, let $\mu_q := \pi_{lt}^{-1}\left(\ltfrac{\delta}{l_q}\right) \in A$. We will attempt to rewrite each $\Ssm^\delta( g_q,  g_{\qn})$ in terms of $\mu_q \Ssm^{l_q}(g_q, g_{\qn})$, and see that the difference between the two only involves lower-order terms.

\begin{lemma} \label{lem:Ssm-prod}
Let $g,g' \in A^d$, $l$ a standard monomial in $\langle\piltd(\ltsm(g))\rangle \cap \langle \piltd(\ltsm(g'))\rangle$, and $\mu \in A$ a standard monomial that is compatible with $l$. Then there exist $h,h' \in A$ such that
\[
\mu \Ssm^{l}(g,g') = \Ssm^{\pi_{lt}(\mu) l}(g,g') + hg + h'g'
\]
with $\ltsm(hg)$ and $\ltsm(h'g')$ both strictly less than $\ltsm(\mu \pi_{lt}^{-1}(l))$.
\end{lemma}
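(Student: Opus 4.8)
The plan is to prove Lemma~\ref{lem:Ssm-prod} by a direct computation, expanding both sides in terms of the defining data of the S-polynomials and reducing the claim to a statement about leading terms in $A_{lt}$ that follows from Proposition~\ref{prop:mult} and the multiplicativity axiom \ref{def:termorder-modules:mult}. Write $l' = \ltsm(g)$ and $l'' = \ltsm(g')$, so by Definition~\ref{defn:sset} we have
\[
\Ssm^{l}(g,g') = \pi_{lt}^{-1}\!\left(\ltfrac{l}{l'}\right) g - \pi_{lt}^{-1}\!\left(\ltfrac{l}{l''}\right) g'.
\]
Set $a = \pi_{lt}^{-1}(\ltfrac{l}{l'})$ and $b = \pi_{lt}^{-1}(\ltfrac{l}{l''})$, so that $\ltsm(ag) = \pi_{lt}^{-1}(l) = \ltsm(bg')$ by Observation~\ref{obs:s-poly-reduces-lm}. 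Then $\mu \Ssm^l(g,g') = (\mu a) g - (\mu b) g'$. On the other side, since $\mu$ is compatible with $l$, $\pi_{lt}(\mu) l$ is a standard monomial and is a common standard multiple of $l'$ and $l''$; by definition $\Ssm^{\pi_{lt}(\mu) l}(g,g') = \pi_{lt}^{-1}(\ltfrac{\pi_{lt}(\mu) l}{l'}) g - \pi_{lt}^{-1}(\ltfrac{\pi_{lt}(\mu) l}{l''}) g'$. The key point is that, in $A_{lt}$, $\ltfrac{\pi_{lt}(\mu) l}{l'} = \pi_{lt}(\mu) \cdot \ltfrac{l}{l'}$ (both equal the unique quotient term, using uniqueness from Lemma~\ref{lem:mon-div} and compatibility of $\mu$ with $l$, hence with $\ltfrac{l}{l'}$), so $\pi_{lt}^{-1}(\ltfrac{\pi_{lt}(\mu) l}{l'}) = \ltsm(\mu a)$ by Definition~\ref{def:ALT}, and likewise for the $g'$ term.

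The next step is to set $h = \mu a - \ltsm(\mu a)$ and $h' = -(\mu b - \ltsm(\mu b))$, i.e. negatives of the tails. Then by the two identities just noted,
\[
\mu \Ssm^l(g,g') - \Ssm^{\pi_{lt}(\mu) l}(g,g') = (\mu a - \ltsm(\mu a)) g - (\mu b - \ltsm(\mu b)) g' = hg + h'g',
\]
which is exactly the claimed expression. It remains to bound $\ltsm(hg)$ and $\ltsm(h'g')$. I would argue as follows for $hg$ (the $h'g'$ case being identical \emph{mutatis mutandis}): $h = \tail(\mu a)$ in the notation of Observation~\ref{claim:lmsm-less-than-kappa}. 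Since $a = \pi_{lt}^{-1}(\ltfrac{l}{l'})$ is a single standard monomial and $\mu$ is compatible with $l$ and hence with $\ltfrac{l}{l'}$, we have that $\mu$ is compatible with $a$, so $\mu a$ is a nonzero standard term times a standard monomial and its tail consists of standard monomials strictly below $\lmsm(\mu a) = \lmsm(\mu \lmsm(a))$ by Proposition~\ref{prop:mult}. Applying Observation~\ref{claim:lmsm-less-than-kappa} with $f_1 = \mu a$ (or more directly, reasoning term-by-term on the tail) and then multiplying through by $g$ using \ref{def:termorder-modules:mult}, we get $\lmsm(hg) = \lmsm(\tail(\mu a) g) \prec \lmsm(\mu a \cdot g)$; but $\mu a$ is compatible with $\ltsm(g)$ only after one more check---here we should instead bound directly against $\ltsm(\mu \pi_{lt}^{-1}(l))$. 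Note $\ltsm(\mu a) g$ has leading monomial $\lmsm(\ltsm(\mu a) \ltsm(g)) = \lmsm(\mu a) \cdot_{M} l' $, which in $A_{lt}$ equals $\pi_{lt}(\mu) l$ by the quotient identity above; and since $l \divides$-relates appropriately, $\lmsm(\ltsm(\mu a)g) = \lmsm(\mu \pi_{lt}^{-1}(l))$. Every term of $h = \tail(\mu a)$ is $\prec \lmsm(\mu a) = \ltsm(\mu a)$'s monomial, so multiplying by $g$ and invoking \ref{def:termorder-modules:mult} (with $\ltsm(g)$, which may be a zerodivisor—but here we only need the inequality on the terms that survive, exactly as in the proof of Observation~\ref{claim:lmsm-less-than-kappa}) yields $\ltsm(hg) \prec \lmsm(\ltsm(\mu a)g) = \ltsm(\mu \pi_{lt}^{-1}(l))$, as desired.

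\textbf{The main obstacle} I anticipate is the careful bookkeeping around compatibility and zerodivisors: the quotient identity $\ltfrac{\pi_{lt}(\mu) l}{l'} = \pi_{lt}(\mu)\cdot \ltfrac{l}{l'}$ in $A_{lt}$ must be justified cleanly (it is a statement purely in the monomial pseudo-ASL $A_{lt}$, so it follows from associativity of multiplication in $A_{lt}$ together with uniqueness of quotients in Lemma~\ref{lem:mon-div}, but one has to check the relevant products are nonzero, which is where compatibility of $\mu$ with $l$ enters), and the passage from ``the tail of $\mu a$ is below $\lmsm(\mu a)$'' to ``$\ltsm(hg) \prec \ltsm(\mu\pi_{lt}^{-1}(l))$'' must be done term-by-term exactly as in Observation~\ref{claim:lmsm-less-than-kappa} since we cannot assume $\ltsm(g)$ is a non-zerodivisor in this lemma. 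Everything else is routine manipulation of the definitions. I would therefore structure the write-up as: (i) establish the quotient identity in $A_{lt}$; (ii) define $h, h'$ as negated tails and verify the displayed equation algebraically; (iii) prove the degree bound by a term-by-term argument mirroring Observation~\ref{claim:lmsm-less-than-kappa}.
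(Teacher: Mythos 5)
Your proposal is correct and takes essentially the same approach as the paper: introduce $\alpha = \pi_{lt}^{-1}(\ltfrac{l}{\ltsm(g)})$ and $\beta$, split $\mu\alpha$ and $\mu\beta$ into leading term plus tail, verify via the quotient computation in $A_{lt}$ (your ``quotient identity'' is exactly the chain of equalities the paper derives for $\pi_{lt}(\ltsm(\mu\alpha))$ using Obs.~\ref{obs:mlt} and Def.~\ref{def:ltdivides-modules}) that the leading pieces assemble to $\Ssm^{\pi_{lt}(\mu)l}(g,g')$, and take $h=\tail(\mu\alpha)$, $h'=-\tail(\mu\beta)$. Your worry about the final degree bound (that $\mu\alpha$ need not be compatible with $g$, so Obs.~\ref{claim:lmsm-less-than-kappa} doesn't apply verbatim) is a fair observation, but note the paper's own proof is equally brief at that point --- it asserts the bound ``it follows that'' --- so you are, if anything, being more self-aware than the reference argument.
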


\begin{proof}
By definition, we have
\begin{align*}
\mu \Ssm^{l}(g, g') & = \mu\left(\pi_{lt}^{-1}\left(\ltfrac{l}{\ltsm(g)}\right) g - \pi_{lt}^{-1}\left(\ltfrac{l}{\ltsm(g')}\right) g' \right) \\
 \end{align*}
 At this point, since we will use them frequently throughout this derivation, we introduce the shorter notation $\alpha := \pi_{lt}^{-1}\left(\ltfrac{l}{\ltsm(g)}\right)$ and $\beta := \pi_{lt}^{-1}\left(\ltfrac{l}{\ltsm(g')}\right)$. Continuing with this new notation in hand, we have
\begin{align}
 \mu \Ssm^{l}(g, g') & = \mu \alpha g - \mu \beta g' \nonumber \\
  & = \left(\ltsm(\mu \alpha) + \tail(\mu \alpha) \right) g - \left(\ltsm(\mu \beta) + \tail(\mu \beta) \right) g' \nonumber \\
  & = \left(\ltsm(\mu \alpha) g - \ltsm(\mu \beta) g'\right) + \left(\tail(\mu \alpha) g - \tail(\mu \beta) g'\right)  \label{eq:mussm}\\
\end{align}
We claim that the first summand here is $\Ssm^{\pi_{lt}(\mu) l}(g,g')$, and that we may take $h = \tail(\mu \alpha)$ and $h' = -\tail(\mu \beta)$ to satisfy the conclusion of the lemma. To prove both of these, we now analyze $\ltsm(\mu \alpha)$ and $\ltsm(\mu \beta)$.

We have
\begin{align*}
\pi_{lt}\left(\ltsm(\mu \alpha)\right) & = \pi_{lt}\left(\ltsm\left(\mu \pi_{lt}^{-1}\left(\ltfrac{l}{\ltsm(g)} \right) \right)\right) \\
 & = \pi_{lt}\left( \mu \right) \pi_{lt}\left( \pi_{lt}^{-1}\left(\ltfrac{l}{\ltsm(g)} \right)\right)  \eqcommentnear{by Obs.~\ref{obs:mlt}} \\
 & = \pi_{lt}(\mu) \cdot \ltfrac{l}{\ltsm(g)} \\
 & = \pi_{lt}(\mu) \frac{l}{\pi_{lt}(\ltsm(g))} \eqcomment{by Def.~\ref{def:ltdivides-modules}} \\
& = \frac{\pi_{lt}(\mu) l}{\pi_{lt}(\ltsm(g))} 
\end{align*}

It follows that $\ltsm(\ltsm(\mu \alpha) g) = \pi_{lt}(\mu) l$. A similar calculation yields that $\ltsm(\ltsm(\mu \beta) g') = \pi_{lt}(\mu) l$. It follows that
\[
\left(\ltsm(\mu \alpha) g - \ltsm(\mu \beta) g'\right) = \Ssm^{\pi_{lt}(\mu) l}(g,g')
\]
and $\ltsm(\tail(\mu \alpha) g)$ and $\ltsm(\tail(\mu \beta) g')$ are both strictly less than $\ltsm(\mu \pi_{lt}^{-1}(l))$. This completes the proof of the lemma.
\end{proof}

Returning to our proof of Lemma~\ref{lem:buchbergerS}, we apply Lemma~\ref{lem:Ssm-prod} with $g = g_q, g' = g_{\qn}, l = l_q, \mu = \mu_q$. As we have $\delta = \pi_{lt}(\mu_q) l_q$, the conclusion of Lemma~\ref{lem:Ssm-prod} gives us
\[
\mu_q \Ssm^{l_q}(g_q,g_{\qn}) = \Ssm^{\delta}(g_q,g_{\qn}) + hg_q + h'g_{\qn}
\]
where $\ltsm(h g_q)$ and $\ltsm(h' g_{\qn})$ are both strictly less than $\pi_{lt}^{-1}(\delta)$. By our assumption, for each standard monomial $m$ in $h$, the product $m g_q$ can be rewritten in terms of a standard expression with 0 remainder; similarly for the monomials $m'$ occurring in $h'$, and the product $m' g_{\qn}$. Adding such expressions together gives us an expression in which all terms multiplied by $g_i$ are compatible with $\ltsm(g_i)$, and all summands have leading terms strictly less than $\delta$.

Since $G$ is S-closed, there is a standard expression $\Ssm^{l_q}(g_q, g_{\qn}) = \sum_{i \in [s]} k_i g_i$. Let $k'_i$ be the sum of those standard terms $t$ in $k_i$ such that $\mu_q t \neq 0$; then $\sum k'_i g_i$ still has $k'_i$ compatible with $\ltsm(g_i)$ for all $i$, and we have
\begin{equation} \label{eq:muq-s-lq}
\mu_q \Ssm^{l_q}(g_q, g_{\qn}) = \sum_{i \in [s]} \mu_q k_i g_i = \sum_{i \in [s]} \mu_q k'_i g_i.
\end{equation}

\begin{claim} \label{claim:delta}
For all $i$, if $\mu_q k'_i g_i \neq 0$, then $\ltsm(\mu_q k'_i g_i) \prec \pi_{lt}^{-1}(\delta)$.
\end{claim}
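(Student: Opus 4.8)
The plan is to reduce Claim~\ref{claim:delta} to a single application, carried out term by term, of the multiplicativity axiom \ref{def:termorder-modules:mult} for the module term order on $A^d$, after first replacing the awkward product $\mu_q k'_i g_i$ by an honest one. The first and crucial step is to observe that $\mu_q k'_i g_i = \mu_q k_i g_i$. Indeed, $k'_i$ is obtained from $k_i$ by deleting exactly the standard terms $t$ with $\mu_q t = 0$, so $k_i - k'_i$ is a sum of standard terms each killed by $\mu_q$; hence $\mu_q(k_i - k'_i) = 0$, so $\mu_q k'_i = \mu_q k_i$ in $A$, and multiplying by $g_i$ gives the equality. In particular $k_i g_i \neq 0$ for any $i$ of interest (else $\mu_q k'_i g_i = 0$, contrary to hypothesis), and we may assume $\Ssm^{l_q}(g_q, g_{\qn}) \neq 0$, since otherwise its reduction is the trivial standard expression with all $k_i = 0$. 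This step is also where the main subtlety lives: the naive approach of bounding an individual summand $\mu_q k'_i g_i$ by the total leading monomial of $\mu_q \Ssm^{l_q}(g_q, g_{\qn})$ fails, because cancellation among the $\mu_q k'_i g_i$ could in principle hide a large leading monomial inside one term — the identity $\mu_q k'_i = \mu_q k_i$ is precisely what lets us work instead with the genuine product $\mu_q k_i g_i$, whose standard monomials are directly controlled.

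Next I would bound the standard monomials occurring in $k_i g_i$. Since $\Ssm^{l_q}(g_q, g_{\qn}) = \sum_i k_i g_i$ is a standard expression (from $S$-closedness), condition (3) of Definition~\ref{defn:standard-expression} gives $\lmsm(k_i g_i) \preceq \lmsm(\Ssm^{l_q}(g_q, g_{\qn}))$, and by Observation~\ref{obs:s-poly-reduces-lm} we have $\lmsm(\Ssm^{l_q}(g_q, g_{\qn})) \prec \pi_{lt}^{-1}(l_q)$. Hence every standard monomial $n$ occurring in $k_i g_i$ satisfies $n \prec \pi_{lt}^{-1}(l_q)$. Separately I would pin down $\ltsm(\mu_q \pi_{lt}^{-1}(l_q))$: by definition $\mu_q = \pi_{lt}^{-1}(\ltfrac{\delta}{l_q})$, so $\pi_{lt}(\mu_q)\,l_q = \delta \neq 0$ in $A_{lt}^d$, hence by Observation~\ref{obs:mlt} $\piltd(\ltsm(\mu_q \pi_{lt}^{-1}(l_q))) = \pi_{lt}(\mu_q)\,l_q = \delta$, so $\ltsm(\mu_q \pi_{lt}^{-1}(l_q)) = \pi_{lt}^{-1}(\delta)$ and in particular $\mu_q \pi_{lt}^{-1}(l_q) \neq 0$. (If $\mu_q$ happens to be a standard term rather than a monomial, I apply this to its underlying monomial and restore the scalar; only coefficients, never leading monomials, are affected.)

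Finally I would multiply out term by term. Write $k_i g_i = \sum_j r_j n_j$ with the $n_j$ distinct standard monomials of $A^d$ and the $r_j$ nonzero; then $\mu_q k'_i g_i = \mu_q k_i g_i = \sum_j r_j \,\mu_q n_j$, and since this is nonzero there is at least one $j$ with $\mu_q n_j \neq 0$. For each such $j$, write $n_j = f e_a$ and $\pi_{lt}^{-1}(l_q) = g e_b$ with $f, g \in A$ standard monomials; by the previous paragraph $f e_a \prec g e_b$, while $f \mu_q \neq 0$ (this is $\mu_q n_j$ up to the basis element $e_a$) and $g \mu_q \neq 0$ (this is $\mu_q \pi_{lt}^{-1}(l_q)$ up to $e_b$). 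Axiom \ref{def:termorder-modules:mult} then gives $\lmsm(\mu_q n_j) \prec \lmsm(\mu_q \pi_{lt}^{-1}(l_q)) = \pi_{lt}^{-1}(\delta)$. Since the leading monomial of a nonzero sum is $\preceq$ the maximum of the leading monomials of its nonzero summands, this yields $\lmsm(\mu_q k'_i g_i) \prec \pi_{lt}^{-1}(\delta)$, i.e. $\ltsm(\mu_q k'_i g_i) \prec \pi_{lt}^{-1}(\delta)$, which is the claim.

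I expect the only genuine obstacle to be the cancellation point addressed in the first step; everything after that is bookkeeping, complicated only by the fact that $A$ is not a monomial pseudo-ASL, so $k_i g_i$ and each $\mu_q n_j$ are honest sums of standard monomials and every estimate must be applied termwise, and by the need to keep track of leading coefficients (which need not equal $1$ in a pseudo-ASL) when invoking Observation~\ref{obs:mlt}.
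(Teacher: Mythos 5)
Your proof is correct and takes a genuinely different route from the paper's. The paper stays with $k'_i$ and runs a chain of leading-term manipulations---using Proposition~\ref{prop:mult} (Equations~\eqref{obs:mult-eqn1} and \eqref{obs:mult-eqn3}) together with Lemma~\ref{lem:triplezero}---to rewrite $\ltsm(\mu_q \ltsm(k'_i g_i))$ as $\ltsm(\mu_q k'_i g_i)$, after which \ref{def:termorder:mult} is applied once at the level of leading terms; this chain invokes the non-zerodivisor hypothesis on $\ltsm(g_i)$ at two points to keep intermediate products from vanishing. You instead replace $\mu_q k'_i g_i$ by $\mu_q k_i g_i$, bound \emph{every} standard monomial of $k_i g_i$ by $\prec \pi_{lt}^{-1}(l_q)$ directly from the standard-expression bound and Observation~\ref{obs:s-poly-reduces-lm}, and then apply \ref{def:termorder-modules:mult} termwise to the honest sum $\sum_j r_j\, \mu_q n_j$. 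The payoff is that your argument never uses the non-zerodivisor hypothesis for this sub-claim (though Lemma~\ref{lem:buchbergerS} carries that hypothesis anyway, so nothing is won globally); the cost is that you must control all standard monomials of the product rather than only the leading one, which you do correctly by noting that the standard monomials of a sum are contained in the union of those of its summands. One small point of framing: the reduction $\mu_q k'_i g_i = \mu_q k_i g_i$ is not strictly necessary---since $k'_i$ is compatible with $\ltsm(g_i)$ and its terms are a sub-multiset of those of $k_i$, Proposition~\ref{prop:mult} already gives $\ltsm(k'_i g_i) = \ltsm(\ltsm(k'_i)\ltsm(g_i)) \preceq \ltsm(\ltsm(k_i)\ltsm(g_i)) = \ltsm(k_i g_i)$, so one could run the same termwise argument directly on $k'_i g_i$. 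But your route is equally valid and perhaps easier to see.
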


\begin{proof}[Proof of claim]
Assume $\mu_q k'_i g_i \neq 0$. Since every standard term in $k'_i$ occurs in $k_i$ by construction, we get $\ltsm(k'_i g_i) \preceq \ltsm(k_i g_i) \preceq \ltsm(\Ssm^{l_q}(g_q, g_{\qn})) \prec l_q$ (the latter by Observation~\ref{obs:s-poly-reduces-lm}). We now multiply both ends of this inequality by $\mu_q$. 

Since $k'_i$ is compatible with $\ltsm(g_i)$, we have $\ltsm(k'_i) \ltsm(g_i) \neq 0$ hence $\ltsm(k'_i g_i) = \ltsm(\ltsm(k'_i) \ltsm(g_i)) \neq 0$ by \eqref{obs:mult-eqn1}. Multiplying both sides by $\mu_q$ and taking leading terms, we get
\begin{equation} \label{eq:im-so-tired-of-coming-up-with-temporary-names1}
\ltsm(\mu_q \ltsm(k'_i g_i)) = \ltsm(\mu_q \ltsm(\ltsm(k'_i) \ltsm(g_i))).
\end{equation}

Next, by Lemma~\ref{lem:triplezero}, we have that if $\mu_q \ltsm(k'_i) \ltsm(g_i) = 0$, then $\ltsm(\mu_q k'_i) \ltsm(g_i) = 0$. But as we have assumed $\ltsm(g_i)$ is not a zerodivisor, the latter does not occur, and thus $\mu_q \ltsm(k'_i) \ltsm(g_i) \neq 0$. Then by \eqref{obs:mult-eqn3}, we get
\[
\ltsm(\mu_q \ltsm(\ltsm(k'_i) \ltsm(g_i))) = \ltsm(\mu_q \ltsm(k'_i) \ltsm(g_i)).
\]
Combining the preceding with \eqref{eq:im-so-tired-of-coming-up-with-temporary-names1} gives
\begin{align*}
\ltsm(\mu_q \ltsm(k'_i g_i)) & = \ltsm(\mu_q \ltsm(k'_i) \ltsm(g_i)) \\
 & = \ltsm(\ltsm(\mu_q \ltsm(k'_i)) \ltsm(g_i)), 
\end{align*}
where the last equality follows by \eqref{obs:mult-eqn1} applied to $f=\mu_q \ltsm(k'_i)$ (which is nonzero by construction of $k'_i$) and $g = \ltsm(g_i)$. Continuing, by construction of $k'_i$ we have $\mu_q \ltsm(k'_i) \neq 0$, hence $\ltsm(\mu_q \ltsm(k'_i)) = \ltsm(\mu_q k'_i)$ by \eqref{obs:mult-eqn1}. Combining with the preceding, we then have
 \[
 \ltsm(\mu_q \ltsm(k'_i g_i)) = \ltsm(\ltsm(\mu_q k'_i) \ltsm(g_i)).
 \]
 Since $\ltsm(g_i)$ is assumed to not be a zerodivisor, we may apply \eqref{obs:mult-eqn1} with $f = \mu_q k'_i$ and $g = g_i$ to see that the preceding is equal to $\ltsm(\mu_q k'_i g_i)$. Putting together the previous several paragraphs, we then we have
\begin{align*}
\pi_{lt}^{-1}(\delta) = \ltsm(\mu_q l_q) & \succ \ltsm(\mu_q \ltsm(k'_i g_i)) \eqcomment{by \ref{def:termorder:mult}} \\
& = \ltsm(\mu_q k'_i g_i) 
\end{align*}
This completes the proof of Claim~\ref{claim:delta}. 
\end{proof}

Finally, putting everything all together, we have
\begin{align} \label{eq:smaller}
\Ssm^{\delta}(g_q,g_{\qn})  & = \mu_q \Ssm^{l_q}(g_q,g_{\qn}) - hg_q - h'g_{\qn}, 
\end{align}
where we have shown how to write the RHS as an $A$-linear combination of elements of $G$ such that every summand has all terms strictly less than $\delta$. Claim~\ref{claim:rewrite-A-in-terms-of-S} shows that $\mathcal{A}$ can be written as an $R$-linear combination of terms of the form $\Ssm^{\delta}(g_q, g_{\qn})$, and by \eqref{eq:smaller}, we can rewrite each such term in terms of $A$-linear combinations of $G$ in which each summand is strictly less than $\delta$. By induction, the $f$ we started with thus has a standard expression with zero remainder, completing the proof of Lemma~\ref{lem:buchbergerS} (thus completing the proof of Theorem~\ref{thm:buchberger2}).
\end{proof}

\begin{corollary}[of Theorem \ref{thm:buchberger2}]
\label{cor:buchberger-agen}
Given $(A^d, \preceq, A_{lt}^d)$, suppose that $A_{lt} \cong_{sm} A_{gen}^{\preceq_i}$ for all $i$, where $\preceq_i$ is the restriction of $\preceq$ to $A e_i \subseteq A^d$ and $A_{gen}^{\preceq_i}$ denotes the generic algebra of leading terms relative to $(A,\preceq_i)$. If $\ltsm(g)$ is a non-zerodivisor for all $g \in G$, then $G$ is a pseudo-ASL Gröbner basis if and only if it is S-closed.
\end{corollary}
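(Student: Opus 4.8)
The plan is to derive this directly from Theorem~\ref{thm:buchberger2} by showing that, under the hypothesis $A_{lt} \cong_{sm} A_{gen}^{\preceq_i}$ for all $i$, the condition of being Ann-closed is vacuous. In Theorem~\ref{thm:buchberger2} we already know that for a set $G$ whose leading terms $\ltsm(g)$ are all non-zerodivisors, being a pseudo-ASL Gröbner basis is equivalent to being both S-closed and Ann-closed. So it suffices to prove that, under the stated hypothesis on $A_{lt}$, every subset $G \subseteq A^d$ (in particular those with non-zerodivisor leading terms) is automatically Ann-closed, so that S-closedness alone becomes equivalent to being a pseudo-ASL Gröbner basis.

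First I would unpack the definition of Ann-closed (Definition~\ref{def:ann-closed}). It asks for a set $S$ of compatibility syzygies satisfying two conditions indexed by $\LAM(\piltd(\ltsm(g)))$ and by minimal generators of colon ideals $(\Ann(\piltd(\ltsm(g))) : \pi_{lt}(m))$. The key observation is that if $A_{lt} \cong_{sm} A_{gen}^{\preceq_i}$ for each $i$, then by the characterization in Proposition~\ref{prop:Agen}, any two standard monomials $m, m'$ with $mm' \neq 0$ in $A$ are compatible with respect to $A_{lt}$. Since $\ltsm(g)$ is assumed to be a non-zerodivisor for each $g \in G$, the monomial $\lmsm(g)$ is a non-zerodivisor, so $m \lmsm(g) \neq 0$ for every standard monomial $m$; hence $m$ is compatible with $\ltsm(g)$ for \emph{every} standard monomial $m$. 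Concretely: there are no standard monomials $m_0$ incompatible with $\ltsm(g_0)$, so Definition~\ref{defn:compat-syzygy} produces no nontrivial compatibility syzygies that need to be present, and the empty set $S = \emptyset$ vacuously satisfies both clauses \ref{def:ann-closed1} and \ref{def:ann-closed2}: clause \ref{def:ann-closed1} is vacuous because $\LAM(\piltd(\ltsm(g))) = \emptyset$ (the annihilator of a non-zerodivisor standard monomial in $A_{lt}^d$ is $0$, since $A_{lt}$ is $A_{gen}$-like and $\pi_{lt}$ preserves the non-zerodivisor property --- this uses that $\pi_{lt}(m)\pi_{lt}(\ltsm(g)) = \pi_{lt}(\ltsm(m\ltsm(g))) \neq 0$), and clause \ref{def:ann-closed2} is vacuous because $S$ is empty so there are no $\ell_g$'s to quantify over.

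Thus, with $S = \emptyset$, every $G \subseteq A^d$ with non-zerodivisor leading terms is Ann-closed, and Theorem~\ref{thm:buchberger2} collapses to: $G$ is a pseudo-ASL Gröbner basis iff it is S-closed. The main point to be careful about --- and the step I expect to require the most attention --- is verifying that $\piltd(\ltsm(g))$ really is a non-zerodivisor in $A_{lt}^d$ when $\ltsm(g)$ is a non-zerodivisor in $A$, so that $\LAM(\piltd(\ltsm(g)))$ is empty; this is where one must invoke the $A_{gen}$-characterization (Proposition~\ref{prop:Agen}) to transfer the non-zerodivisor property across $\pi_{lt}$, using that $\pi_{lt}(m)\pi_{lt}(m') = 0$ in $A_{gen}$ iff $mm' = 0$ in $A$. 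Everything else is a direct bookkeeping check against the definitions.
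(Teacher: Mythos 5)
Your proof is correct and follows essentially the same route as the paper: reduce to Theorem~\ref{thm:buchberger2} and show Ann-closure is vacuous because the non-zerodivisor hypothesis on $\ltsm(g)$ transfers from $A$ to $A_{gen}$ (via $\pi_{lt}(m)\pi_{lt}(\ltsm(g)) = \pi_{lt}(\ltsm(m\ltsm(g))) \neq 0$), so that $\LAM(\piltd(\ltsm(g))) = \emptyset$ and $S=\emptyset$ witnesses Ann-closedness. Your write-up, invoking Proposition~\ref{prop:Agen} and tracking the two clauses of Definition~\ref{def:ann-closed} separately, is actually a bit more explicit than the paper's own terse argument, which speaks loosely of ``the annihilating ideals of all standard monomials in $A_{gen}$'' when only the $\pi_{lt}(\ltsm(g))$ for $g \in G$ are actually relevant.
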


\begin{proof}
By assumption, the leading terms of $g \in G$ are not zerodivisors in $A$, hence the same is true in $A_{gen}$. Thus the annihilating ideals of all standard monomials in $A_{gen}$ are 0, so $G$ is vacuously Ann-closed.
\end{proof}

By contrasting the preceding corollary with Observation~\ref{obs:lcm-unique-disc}, we see again a tradeoff in using different algebras of leading terms, viz.: relative to $A_{gen}$, S-closure suffices to be a Gröbner bases but LCMs need not be unique, while relative to $A_{disc}$ LCMs are unique, but one must have both S-closure and Ann-closure to be a Gröbner basis.

\subsubsection{Towards dropping the nonzerodivisor condition}
It is plausible to us that the condition in Theorem~\ref{thm:buchberger2} that the leading terms of $G$ be non-zerodivisors could be dropped with some additional work. Although we are not as yet able to prove this, here we develop some preparatory lemmata that are headed in this direction, and end this section with a remark sketching some ideas and obstacles towards such a more general result.

\begin{lemma} \label{lem:ann-monomial}
Given $(A^d, \preceq$), if $m \in A^d$ is a standard monomial, then $\Ann_A(m) \subseteq A$ is a ``standard monomial ideal'', in the sense that for all $f \in A$, we have $f \in \Ann_A(m)$ iff every standard monomial $m'$ occurring in $f$ is also in $\Ann_A(m)$.
\end{lemma}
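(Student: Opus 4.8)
The plan is to prove both directions of the stated equivalence. One direction is trivial: if every standard monomial $m'$ occurring in $f$ lies in $\Ann_A(m)$, then $fm = \sum_{m'} r_{m'} m' m = \sum_{m'} r_{m'} \cdot 0 = 0$, so $f \in \Ann_A(m)$. The content is in the forward direction, where we assume $fm = 0$ and must show that each standard monomial appearing in $f$ individually annihilates $m$. Here the natural tool is the pseudo-ASL term order together with \ref{def:termorder:mult} (via Proposition~\ref{prop:mult} and especially the argument already used in the proofs of Lemma~\ref{lem:triplezero} and Proposition~\ref{prop:domain}).

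First I would set up notation: write $f = \sum_{i=1}^k r_i m_i$ with $r_i \in R$ nonzero, $m_i$ distinct standard monomials, and $m_1 \succ m_2 \succ \dotsb \succ m_k$. Write $m = m' e_j$ for a standard monomial $m' \in A$. The key claim is that $m_1 m = 0$. Indeed, if $m_1 m \neq 0$, then since $fm = \sum_i r_i (m_i m) = 0$ in $A^d$, the leading standard monomial $\lmsm(m_1 m)$ must be cancelled by terms coming from $m_i m$ with $i > 1$ (those with $m_i m = 0$ contribute nothing). But for each $i > 1$ with $m_i m \neq 0$, we have $m_i \prec m_1$, and both products are nonzero, so the module axiom \ref{def:termorder-modules:restrict} (restricted to $Ae_j$, which carries the induced pseudo-ASL term order $\preceq_j$) gives $\lmsm(m_i m) \prec \lmsm(m_1 m)$. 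Hence no term of any $r_i m_i m$ with $i>1$ can cancel $\lmsm(m_1 m)$, contradicting $fm = 0$. Therefore $m_1 m = 0$, i.e.\ $m_1 \in \Ann_A(m)$. Then $f - r_1 m_1$ still satisfies $(f - r_1 m_1) m = 0$ and has strictly fewer standard monomials (and strictly smaller leading monomial), so by downward induction on $\lmsm(f)$ — legitimate since $\preceq$ is a well-order by Proposition~\ref{prop:ato-are-well} — every $m_i$ is in $\Ann_A(m)$.

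One subtlety to handle carefully: the statement is phrased with $A^d$ carrying a pseudo-ASL term order $\preceq$ and $\Ann_A(m) \subseteq A$, so the relevant multiplicativity is the action of standard monomials of $A$ on standard monomials $m \in A^d$. The cleanest route is to observe that $m = m' e_j$ and that the relevant products $m_i m = (m_i m') e_j$ live entirely in $A e_j \cong A$, on which $\preceq$ restricts to a pseudo-ASL term order $\preceq_j$ by hypothesis (Definition~\ref{def:termorder-modules}\ref{def:termorder-modules:restrict}); then \ref{def:termorder:mult} for $(A, \preceq_j)$ applies directly, exactly as in the proof of Lemma~\ref{lem:triplezero}. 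I expect the main (very minor) obstacle is just being precise about transferring between $A^d$ and the one-dimensional submodule $Ae_j$, and checking that no extra hypothesis is needed — unlike Observation~\ref{obs:ann-std-mon}, there is no monomial-pseudo-ASL assumption here, but the argument above does not require one, since it only uses the existence of the term order and \ref{def:termorder:mult}. I would note in passing that this lemma is the $A$-side analogue, valid in an arbitrary pseudo-ASL admitting a term order, of the facts about standard monomial ideals used throughout Section~\ref{sec:monomial}.
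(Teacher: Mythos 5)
Your proof is correct, and the underlying mathematics is the same as the paper's: both ultimately rest on the observation that for two distinct standard monomials $a \prec b$ with $am' \neq 0$ and $bm' \neq 0$, one has $\lmsm(am'e_j) \prec \lmsm(bm'e_j)$, so leading terms cannot cancel. But the packaging differs, so a comparison is worthwhile. The paper argues in one shot: with $\delta := \max_i \lmsm(t_i m)$, if $\delta \neq 0$ then cancellation of the coefficient of $\delta$ forces at least two $t_i$ to achieve the maximum, and this pair yields a proper binomial annihilator $\pi_{gen}(t_i) - \pi_{gen}(t_{i'})$ of $\pi_{gen}(m')$ in $A_{gen}$, contradicting Observation~\ref{obs:no-binomial-ann}; hence $\delta = 0$ and all $t_i m = 0$ at once. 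You instead order the terms of $f$ by $\preceq_j$, show directly (via \ref{def:termorder-modules:restrict}) that the top term $m_1$ must annihilate $m$, subtract it off, and run a downward induction on $\lmsm(f)$. Your route avoids invoking $A_{gen}$ and the binomial-annihilator machinery altogether, which makes it more self-contained; the price is that it requires an explicit induction where the paper's $\delta = 0$ case terminates immediately. One small remark: once you have $m_1 m = 0$, you could shorten the end of your argument by noting that, under the standing assumption $m_1 m \neq 0$ for contradiction, $\lmsm(m_1 m)$ was the unique maximum of $\{\lmsm(m_i m) : m_i m \neq 0\}$ and hence was $\delta$; this is essentially how the two proofs line up. You were also right to be careful about which term order on $A$ is meant: there is none given a priori on $A$, only $\preceq$ on $A^d$, so everything must be routed through $\preceq_j$ on $Ae_j$, as you do.
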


\begin{proof}
Suppose $f \in \Ann_A(m)$. Write $f$ as a sum of standard terms: $f = \sum_i t_i$. Consider $\delta := \max_i \lmsm(t_i m) \in A^d$. If $\delta=0$ then we are done, since then each $t_i$ annihilates $m$. Otherwise, among all $i$ such that $\lmsm(t_i m)$, we must have that the sum of the coefficients of $\delta$ vanish, and thus there must be at least two $i$, say $i$ and $i'$, such that $\delta = \lmsm(t_i m) = \lmsm(t_{i'} m)$. But the latter contradicts Observation~\ref{obs:no-binomial-ann}: let $e_i$ be the vector supporting $m$ (and hence also $\delta$), let $\preceq_i$ be the term order restricted to $Ae_i$, and let $A_{gen}$ be the generic algebra of leading terms relative to $(A,\preceq_i)$. Write $m = m' e_i$ where $m' \in A$. Then in $A_{gen}$ we have $\pi_{gen}(t_i) \pi_{gen}(m') = \pi_{gen}(t_{i'}) \pi_{gen}(m')$, contradicting Observation~\ref{obs:no-binomial-ann}. Thus it must be the case that for each $i$, we have $t_i m = 0$, and thus $t_i \in \Ann_A(m)$.
\end{proof}

In particular, we note that in the proof of Lemma~\ref{lem:buchberger2} we only use that $\ltsm(g)$ times any standard monomial is nonzero; Lemma~\ref{lem:ann-monomial} shows that such a condition is equivalent to $\ltsm(g)$ being a non-zerodivisor.

\begin{lemma} \label{lem:prep}
Given $(A,\preceq)$ (a not-necessarily-monomial psuedo-ASL and term order), suppose that $I \subseteq A$ is a standard monomial ideal in the sense of Lemma~\ref{lem:ann-monomial}. Then there is a finite set $m_1, \dotsc, m_s$ of standard monomials in $I$ such that for all standard monomials $m$, we have $m \in I$ iff $\exists i \in \{1,\dotsc,s\}$ such that $m_i \divides m$ (n.\,b.: divides in $A$).
\end{lemma}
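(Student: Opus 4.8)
The plan is to reduce the statement to Dickson's lemma applied to the exponent vectors of standard monomials. Recall from the definition of pseudo-ASL that the standard monomials of $A$ are exactly the elements $\hat\phi(\mu)$ for $\mu \in \W^H\setminus\Sigma$, and that $\hat\phi$ is multiplicative in the sense that $\hat\phi(\mu\nu) = \hat\phi(\mu)\hat\phi(\nu)$ in $A$ for all $\mu,\nu \in \W^H$ (immediate from $\hat\phi(\mu) = \prod_{x\in H}\phi(x)^{\mu(x)}$ and $(\mu\nu)(x) = \mu(x)+\nu(x)$, using commutativity of $A$); note this holds whether or not $\mu\nu\in\Sigma$. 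First I would set
\[
T := \{\,\mu \in \W^H\setminus\Sigma \;:\; \hat\phi(\mu)\in I\,\},
\]
the set of exponent vectors of the standard monomials lying in $I$.

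Next I would invoke Dickson's lemma: since $H$ is finite, for any subset of $(\W^H,\le)$ (componentwise order) the set of minimal elements is finite and every element dominates one of them. Apply this to $T$ to obtain minimal elements $\mu_1,\dotsc,\mu_s$, and put $m_i := \hat\phi(\mu_i)$; each $m_i$ is a standard monomial, and $m_i\in I$ because $\mu_i\in T$. Then I would verify the two directions. For $(\Leftarrow)$: if $m_i\divides m$ in $A$ then $m\in\langle m_i\rangle\subseteq I$ since $m_i\in I$. For $(\Rightarrow)$: if $m$ is a standard monomial in $I$, write $m=\hat\phi(\mu)$ with $\mu\in\W^H\setminus\Sigma$; then $\mu\in T$, so $\mu_i\le\mu$ for some $i$, i.e.\ $\mu=\mu_i\nu$ for some $\nu\in\W^H$, and applying $\hat\phi$ gives $m = \hat\phi(\mu_i)\hat\phi(\nu) = m_i\cdot\hat\phi(\nu)$ with $\hat\phi(\nu)\in A$; hence $m_i\divides m$ in $A$.

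The step that requires the most care — and the only place the non-monomial nature of $A$ enters — is recognizing that we use only one direction of the correspondence between divisibility of exponent vectors and divisibility in $A$, namely $\mu_i\le\mu$ in $\W^H \Rightarrow m_i\divides m$ in $A$; we do not (and cannot) claim the converse, since straightening can make one standard monomial an $A$-multiple of another whose exponent vector it does not dominate. This one-sided use is enough because, over standard monomials $m$, the three sets $\{m:\exists i,\ \mu_i\le\mu\}$, $\{m:\exists i,\ m_i\divides m\}$, and $\{m: m\in I\}$ are sandwiched between one another (the first inside the second by the multiplicativity of $\hat\phi$, the second inside the third because $m_i\in I$, and the third inside the first because $\mu\in T$), so all three coincide. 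I expect the verification to be routine; indeed neither the pseudo-ASL term order $\preceq$ nor the standing hypothesis that $I$ be a standard monomial ideal seems to be needed for this particular statement, though the latter is the context in which the lemma is subsequently applied.
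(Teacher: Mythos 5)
Your proof is correct, and it takes a genuinely different and simpler route than the paper's. The paper constructs an auxiliary monomial pseudo-ASL $A'$ (keeping a straightening rule $\sigma \mapsto t$ only when the right-hand side is a single standard term, and killing $\sigma$ otherwise), shows $\pi(I)$ is a standard monomial ideal in $A'$ (this is where the Lemma~\ref{lem:ann-monomial} hypothesis is used), invokes Lemma~\ref{lem:unique-min} to extract the unique minimal generating set there, and pulls it back to $A$. You instead apply Dickson's lemma directly to $T = \{\mu \in \W^H\setminus\Sigma : \hat\phi(\mu)\in I\}$ and use only multiplicativity of $\hat\phi$, sidestepping the auxiliary ring entirely. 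Your observation that neither $\preceq$ nor the standard-monomial-ideal hypothesis is needed is also accurate: both proofs avoid $\preceq$, but the paper's does use the hypothesis (to feed Lemma~\ref{lem:unique-min}), whereas yours needs only that $I$ is an ideal. What the paper's heavier construction buys is a potentially smaller set: divisibility in $A'$ is coarser than componentwise $\le$ on exponent vectors (every $\Adisc$-divisibility is an $A'$-divisibility, but $A'$ also absorbs those straightenings that collapse to a single term), so the $A'$-minimal set is contained in, and can be strictly smaller than, your Dickson-minimal set; the construction also deliberately parallels the minimal-generating-set machinery that the subsequent remark on dropping the non-zerodivisor condition leans on. Since the lemma asserts only existence of some finite set, both arguments are valid, and yours is the cleaner one for the statement as written.
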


In particular, this shows that ``standard monomial ideals'' in the sense of Lemma~\ref{lem:ann-monomial} are also generated by standard monomials. The converse (if an ideal in a pseudo-ASL is generated by standard monomials, then it is a ``standard monomial ideal'' in the sense of Lemma~\ref{lem:ann-monomial}) is false in general, see Example~\ref{ex:singleton}.

\begin{proof}
We will build an auxiliary monomial pseudo-ASL $A'$ as follows (it will be similar to, but not quite the same as, an algebra of leading terms). Suppose the pseudo-ASL structure on $A$ is given by $(H, \Sigma)$. For each $\sigma \in \Sigma$, suppose $f \in R[H]$ is the straightening law $\sigma - \sum_i t_i$, where the $t_i$ are standard terms. If the RHS of the straightening law for $\sigma$ consists of a single standard monomial---that is, $f = \sigma - t$---then in $A'$ the same straightening law applies; if the straightening law for $\sigma$ consists of 0 or at least 2 standard monomials, then in $A'$ we mod out by $\sigma$. (This has the effect of allowing ``incompatible'' products of standard monomials so long as their product in $A$ is still a standard monomial, but making all other incompatible products equal to $0$.) 

By construction, it is clear that $A'$ is a monomial pseudo-ASL that is also governed by the same $(H, \Sigma)$. Let $\pi \colon A \to A'$ be the unique $R$-linear map that is the identity on standard monomials. 

We claim that $\pi(I)$ is an ideal in $A'$. Since $A'$ is a monomial pseudo-ASL, for this it suffices to show that for any standard monomial $m \in \pi(I)$ and any standard monomial $m' \in A'$, we have $mm' \in \pi(I)$. If $mm' = 0$, then certainly $mm'$ is in $\pi(I)$. Otherwise, let $n = \pi^{-1}(m), n' = \pi^{-1}(m')$; we have $n \in I$. Since $mm' \neq 0$, by construction of the straightening rules in $A'$ this is only possible if either $mm'$ was already standard in $\W^H$, in which case the product $nn'$ in $A$ also involves no straightening, or if $mm' \in \W^H$ is non-standard, and the straightening rule in $A$ (sic!) has $nn' = t$ for some standard term $t$. In either case, we have $nn'$ is a standard monomial in $A$ and $\pi(nn') = mm'$. Since $n$ is in $I$, $nn'$ is also in $I$, and thus $mm'$ is in $\pi(I)$.

Since an element of $A$ is in $I$ iff all its standard monomials are, the same is true of $\pi(I)$. Since $A'$ is a monomial pseudo-ASL by construction, we thus have that $\pi(I)$ is a standard monomial ideal in a monomial pseudo-ASL, that is, in our usual sense.

Finally, by Lemma~\ref{lem:unique-min}, let $m'_1, \dotsc, m'_s$ be the unique minimum generating set of $\pi(I)$. Let $m_i = \pi^{-1}(m'_i)$ for $i=1,\dotsc,s$. We show that $\{m_1,\dotsc,m_s\}$ satisfies the conclusion of the lemma. Suppose $m \in I$ is a standard monomial; then $\pi(m)$ is divisible by some $m'_i$ by construction; let $t$ be a standard term such that $m'_i \pi(t) = \pi(m)$, which exists by Lemma~\ref{lem:mon-div}. Then since $m'_i \pi(t) \neq 0$, by construction of the straightening rules for $A'$ we must have $m_i t = m$, and thus $m_i \divides m$ in $A$, as claimed.
\end{proof}

\begin{remark}[Towards dropping the non-zerodivisor condition from Thm.~\ref{thm:buchberger2}]
Lem.~\ref{lem:prep} suggests that one could fruitfully modify the first condition in the definition of Ann-closure to use the set of standard monomials from Lemma~\ref{lem:prep} in place of merely the minimal generating set of $\Ann(\pi_{lt}(\ltsm(g))) \subseteq A_{lt}$. One would then want to extend a similar idea to the colon ideals, using $(\Ann_A(\ltsm(g)) : m)$ in place of $(\Ann_{A_{lt}}(\pi_{lt}(\ltsm(g))) : \pi_{lt}(m))$; an obstacle here is that we have not been able to prove that the former colon ideal is standard monomial in the sense of Lemma~\ref{lem:ann-monomial}, and thus we do not know what should play the role of its ``minimum generating set of standard monomials.'' Even with all this in place, the induction in the proof of Lemma~\ref{lem:buchberger2} would also need to change (an instance of the the key issue is that, in the notation of that proof, $\lmsm(m_2 g_0)$ could be greater than $\lmsm(m_0 g_0)$, if for example $m_2$ annihilates fewer of the largest terms of $g_0$ than $m_0$ does; a similar issue arises near the end of the proof as well). We note that Green's approach \cite{green} gets around this issue by only considering ``uniform'' elements in Gröbner bases, that is, elements of the form $\sum t_i$ where the $t_i$ are standard terms such that for all standard monomials $m$, $mt_i = 0 \Leftrightarrow mt_j = 0$. But intuitively the idea is that if $\ltsm(g)$ is a zerodivisor, then to get a pseudo-ASL Gröbner basis ``all one should need'' is to multiply $g$ by a suitable set of monomials that annihilate $\ltsm(g)$, and ensure that each such product has a standard expression relative to $G$ (reduce the product to its remainder, and if the remainder is nonzero, add the remainder to $G$).
\end{remark}

\subsection{Syzygies of standard monomials}
Heading towards the pseudo-ASL analogue of Schreyer's Syzygy Theorem (Theorem~\ref{thm:schreyer} below), in this section we analyze the syzygies of standard monomial modules $\langle m_1,\dotsc,m_d \rangle \in (A')^e$ over a monomial pseudo-ASL $A'$. A syzygy $\sum_{i=1}^d h_i e_i$ is called \emph{homogeneous} (of grade $\delta$) if there is a standard monomial $\delta$ or $\delta=0$ such that for all $i=1,\dotsc,d$ and all standard monomials $m$ appearing in $h_i$, $mm_i$ is a scalar multiple of $\delta$. (This definition aligns with the grading on $A$ by the standard monomial monoid, see Remark~\ref{rmk:semigroup}.) We denote the $R$-submodule of syzygies that are homogeneous of grade $\delta$ by a subscript, e.g. $\Syz(m_1, \dotsc, m_d)_{\delta}$ or $\ker(\varphi)_{\delta}$.

\begin{definition}[Divided Koszul syzygies] \label{def:koszul}
For all $i,j \in \{1,\dotsc,d\}$, and for each $m \in LCM(m_i, m_j)$, let $t_i,t_j$ be standard terms such that $m_i t_i = m = m_j t_j$. Define
\[
\sigma^m_{ij} := t_i e_i - t_j e_j \in (A')^d.
\]
\end{definition}

By Observation~\ref{obs:no-binomial-ann}, in the presence of a pseudo-ASL term order, the choices for $t_i,t_j$ are unique, but for the moment we continue our development without that assumption.
In general, although there may be multiple choices for the $t_i,t_j$, any two such choices differ by a binomial annihilator of $m_i$ (resp., $m_j$), and we may choose any $t_i, t_j$ that satisfy the definition without affecting the subsequent development. This is proved more carefully within the proof of the following lemma. 

\begin{lemma}[Syzygies of monomial submodules] \label{lem:syz_monomial}
Let $A'$ be a monomial pseudo-ASL, and let $m_1, \dotsc, m_d$ be standard monomials in $(A')^e$. Then $\bigslant{\Syz(m_1, \dotsc, m_d)}{\bigoplus_i \Ann(m_i) e_i}$ is generated by the divided Koszul syzygies

\begin{align*}
& 
\left\{\sigma^m_{ij} \suchthat i,j \in [d], m \in LCM(m_i, m_j)\right\}. 
\end{align*}

Furthermore, if $(A')^e$ admits a pseudo-ASL term order, then 
\[
\left(\bigcup_i \LAM(m_i) \right) \cup \left\{\sigma^m_{ij} \suchthat i,j \in [d], m \in LCM(m_i, m_j)\right\}
\]
is a universal pseudo-ASL Gröbner basis for $\Syz(m_1, \dotsc, m_d)$.
\end{lemma}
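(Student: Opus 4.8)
The plan is to prove both statements together, leveraging the fact that $A'$ is a monomial pseudo-ASL so that standard monomial modules behave much like monomial ideals in a polynomial ring. First I would set up the module homomorphism $\varphi \colon (A')^d \to (A')^e$ sending $e_i \mapsto m_i$, so $\Syz(m_1,\dotsc,m_d) = \ker(\varphi)$. Since $A'$ is monomial, $\ker(\varphi)$ decomposes (as an $R$-module) into homogeneous pieces $\ker(\varphi)_\delta$ graded by the monoid $\mathcal M$ of standard monomials (Remark~\ref{rmk:semigroup}), because $\varphi$ carries the $\mathcal M$-grading. So it suffices to analyze each graded piece $\ker(\varphi)_\delta$ and show that any homogeneous syzygy of grade $\delta$ lies in the submodule generated by $\bigoplus_i \Ann(m_i) e_i$ together with the divided Koszul syzygies $\sigma^m_{ij}$ with $m \divides \delta$ (i.e. $m \in LCM(m_i,m_j)$ and $m$ dividing $\delta$).

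For the first (generation) statement, fix a homogeneous syzygy $\tau = \sum_i t_i e_i$ of grade $\delta$, where each $t_i$ is a standard term (or a sum of standard terms all of grade $\delta$, but homogeneity forces each $t_i$ to be a single standard term up to $\Ann(m_i)$-ambiguity — this is the ``any two choices differ by a binomial annihilator'' point referenced just before the lemma, which I'd verify using Lemma~\ref{lem:mon-div} and Lemma~\ref{lem:monomial_ann}). The condition $\varphi(\tau)=0$ says $\sum_i t_i m_i = 0$ in $(A')^e$. Now I would run the standard ``telescoping'' argument: pick the first index $i$ with $t_i \neq 0$, note $t_i m_i$ is a scalar multiple of $\delta$ (or zero — if zero, then $t_i \in \Ann(m_i)$ and we subtract off $t_i e_i \in \bigoplus \Ann(m_i)e_i$ and recurse on fewer indices), and since the sum vanishes there must be another index $j$ with $t_j m_j$ also a nonzero scalar multiple of $\delta$. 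Then $\delta \in \langle m_i \rangle \cap \langle m_j \rangle$, so by the module analogue of Lemma~\ref{lem:unique-min} there is some $m \in LCM(m_i,m_j)$ with $m \divides \delta$; write $\delta = (\text{scalar})\,s\,m$ and $m = m_i t_i' = m_j t_j'$. Subtracting an appropriate $R$-scalar multiple of $s\,\sigma^m_{ij}$ from $\tau$ kills the $e_i$-component (possibly introducing an $\Ann(m_i)e_i$ term from the choice-ambiguity, which is allowed) and strictly decreases the number of nonzero components supported outside the already-handled indices, or strictly decreases some well-founded measure (e.g. number of indices carrying a nonzero $t_k$ with $t_k m_k$ a nonzero multiple of $\delta$). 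Iterating, $\tau$ is reduced to an element of $\bigoplus_i \Ann(m_i)e_i$. This establishes generation.

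For the second (universal Gröbner basis) statement, I would assume $(A')^e$, hence also $(A')^d$ with a suitable term order, admits a pseudo-ASL term order. By Observation~\ref{obs:no-binomial-ann} there are no proper binomial annihilators, so the $t_i,t_j$ in the divided Koszul syzygies are unique and $\LAB(m_i) = \emptyset$; by Lemma~\ref{lem:monomial_ann}, $\Ann(m_i)$ is generated by the annihilator syzygies $\alpha_i^s = s e_i$ for $s \in \LAM(m_i)$. Combining with the generation statement, the set
\[
\mathcal G := \left(\bigcup_i \LAM(m_i)\right) \cup \left\{\sigma^m_{ij} \suchthat i,j \in [d],\ m \in LCM(m_i,m_j)\right\}
\]
generates $\Syz(m_1,\dotsc,m_d)$, where I abuse notation writing $\LAM(m_i)$ for the syzygies $\alpha_i^s$. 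To upgrade ``generates'' to ``universal pseudo-ASL Gröbner basis,'' I would verify the divisibility characterization (module analogue of Proposition~\ref{prop:asl-grobner-divisibility-characterization}) relative to \emph{every} pseudo-ASL term order $\preceq$ on $(A')^d$ and every module of leading terms: given $0 \neq f \in \Syz$, write $f$ via $\mathcal G$, look at which generator(s) contribute the $\preceq$-leading term $\ltsm(f)$, and check that the leading term of some element of $\mathcal G$ divides it in the algebra of leading terms. For an $\alpha_i^s$-contribution this is immediate since $\ltsm(\alpha_i^s) = s e_i$ and $s$ is already a standard monomial; for a $\sigma^m_{ij}$-contribution, the leading term is one of $t_i e_i$ or $t_j e_j$, a standard term, and again divides. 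The key point making this term-order-independent is that $\mathcal G$ consists of elements whose every term is a standard \emph{term} supported on a single basis element, so no straightening interferes and the leading-term bookkeeping is the same for all term orders.

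\textbf{Main obstacle.} The delicate part is the reduction step in the generation argument: tracking the ``binomial annihilator ambiguity'' in the choice of the $t_i$ and ensuring that the telescoping terminates. One must be careful that subtracting a multiple of $\sigma^m_{ij}$ genuinely makes progress (the naive measure ``number of nonzero components'' can fail to decrease because of the freshly created $\Ann(m_i)e_i$ contribution), so I'd need a well-chosen well-founded order — e.g. ordering syzygies first by grade $\delta$ (using that $\preceq$ is a well-order, Proposition~\ref{prop:well-order}, once a term order is available) and within a fixed grade by a lexicographic-type count of which $m_i$'s are ``actively'' hit. In the part of the lemma without a term order, one instead has to argue termination purely from Noetherianity of $A'$ applied to an increasing chain of submodules, which is the same trick used in the proofs of Lemma~\ref{lem:unique-min} and Lemma~\ref{lem:monomial_ann}; I expect this bookkeeping, rather than any conceptual difficulty, to be where the real work lies.
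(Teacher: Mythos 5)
Your generation argument follows the same route as the paper: decompose $\ker(\varphi)$ into graded pieces along the standard-monomial monoid, then telescope within a graded piece $\ker(\varphi)_n$ by subtracting $\sigma^m_{ij}$'s after adjusting by elements of $\Ann(m_i)e_i \oplus \Ann(m_j)e_j$ to match the choices of $t_i, t_j$. The termination worry you raise is real if you try to count ``nonzero components,'' but the paper sidesteps it cleanly: first apply the WLOG that no $s_i$ contains a (possibly singleton) collection of terms summing into $\Ann(m_i)$ (legitimate since the claim is only modulo $\bigoplus_i \Ann(m_i)e_i$, and this WLOG step never increases the total number of standard terms in the syzygy). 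After that, each subtraction of a scalar multiple of $q\sigma^m_{ij}$ genuinely reduces the total term count by at least one, since it merges $\alpha_i q r_i e_i + \alpha_j q r_j e_j$ into a single term. No Noetherian chain argument is needed for the generation part, and neither is a term order or $\preceq$-well-foundedness — just induction on the term count. (Proposition~\ref{prop:well-order} is not available in the first part anyway, since no term order is assumed.)

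The ``furthermore'' part of your proposal has a genuine gap. You write ``given $0 \neq f \in \Syz$, write $f$ via $\mathcal G$, look at which generator(s) contribute the $\preceq$-leading term $\ltsm(f)$, and check that the leading term of some element of $\mathcal G$ divides it.'' This step is not sound: when you express $f = \sum_{g \in \mathcal G} h_g g$, the leading terms of the summands $h_g g$ may cancel, so $\ltsm(f)$ need not equal $\ltsm(h_g g)$ for any $g$, and there is no well-defined ``generator contributing $\ltsm(f)$.'' This is precisely the phenomenon the definition of a Gröbner basis (rather than ``generating set'') exists to control, so generation by $\mathcal G$ does not automatically yield the divisibility criterion, and the observation that every element of $\mathcal G$ is a difference of single standard terms on fixed basis vectors does not repair it. The paper avoids the issue entirely by arguing directly on the leading term of the syzygy itself, without reference to any expression of $f$ in terms of $\mathcal G$: letting $t_i e_i = \ltsm(f)$ and $n = \lmsm(t_i m_i)$, the projection of $f$ onto $\ker(\varphi)_n$ contains $t_i e_i$. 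Either $t_i \in \Ann(m_i)$, in which case $t_i e_i$ is divisible by some $\alpha_i^s$ with $s \in \LAM(m_i)$ by Lemma~\ref{lem:monomial_ann}; or $t_i \notin \Ann(m_i)$, in which case Observation~\ref{obs:no-binomial-ann} pins down a second index $j$ and term $t_j e_j$ in grade $n$, forces $t_i = \alpha_i q r_i$ and $t_j = \alpha_j q r_j$ (with $r_i e_i - r_j e_j = \sigma^m_{ij}$), and a short multiplicativity argument shows $\ltsm(\sigma^m_{ij}) = r_i e_i$, which divides $t_i e_i = \alpha_i q r_i e_i$. This direct leading-term analysis is the step your proposal is missing.
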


The proof here follows the same general outline as \cite[Section~15.1]{eisenbud}, but we must additionally deal with the fact that in our setting LCMs need not be unique.

\begin{proof}
Let $\varphi \colon (A')^d \to (A')^e$ be the $A'$-module homomorphism that sends the standard basis element $e_i$ to $m_i$ for all $i=1,\dotsc,d$, so that $\Syz(m_1,\dotsc,m_d) = \ker(\varphi)$.

We claim that $\ker(\varphi) = \bigoplus_n \ker(\varphi)_n$, where the sum is over all standard monomials $n$ or $n=0$ such that $\ker(\varphi)_n \neq 0$. Indeed, suppose $\sum s_i e_i \in \ker(\varphi)$. For each $n$ that occurs as a standard monomial in any $s_i m_i$, define $s_{i,n}$ to be the sum of standard terms $t$ occurring in $s_i$ such that $tm_i$ is a scalar multiple of $n$. In particular, we have $s_i = \sum_n s_{i,n}$, and $s_{i,n} m_i$ is a scalar multiple of $n$ for all $i,n$. Since $\sum s_i e_i$ was a syzygy among the $m_i$, we have $\sum s_i m_i = 0$, and hence each of these scalar multiples of $n$ must in fact be zero, that is, we must have $\sum_i s_{i,n} m_i = 0$ for all $n$. In particular, this means that $\sum_i s_{i,n} e_i \in \ker(\varphi)$ as well. We have thus written our syzygy $\sum s_i e_i$ as a sum of elements of $\ker(\varphi)_n$ for various $n$, and it is clear by construction that this expression is unique. This completes the proof of the claim that $\ker(\varphi) = \bigoplus_n \ker(\varphi)_n$.

(Although it is not needed in the proof, it is worth realizing that 
\(
\Ann(m_i) e_i \cap \ker(\varphi)_n = \Ann(m_i)_n e_i.
\))

Thus, to show the first part of the lemma, it suffices to show that each $\bigslant{\ker(\varphi)_n}{\bigoplus_i \Ann(m_i) e_i}$ is generated by the $\sigma_{i,j}^m$. We proceed by induction on the number of standard terms in our syzygy $\sum s_i e_i \in \ker(\varphi)_n$. Since we need only show the result modulo $\bigoplus \Ann(m_i) e_i$, we may assume without loss of generality that none of the $s_i$ include any tuple of terms $t_1,\dotsc,t_k$ $(k \geq 1)$ such that $\sum_{i=1}^k t_i \in \Ann(m_i)$. Thus there must be at least two distinct indices $i,j$ with $s_i,s_j$ both nonzero. Let $t_i$ be a nonzero term of $s_i$ and $t_j$ a nonzero term of $s_j$. As our syzygy is in $\ker(\varphi)_n$, by definition there are nonzero constants $\alpha_i, \alpha_j \in R$ such that $t_i m_i = \alpha_i n$ and $t_j m_j = \alpha_j n$. 

What we would like is to find an $m \in LCM(m_i, m_j)$ and standard terms $r_i, r_j$ such that $r_i m_i = r_j m_j = m$ and $r_i \divides t_i$ and $r_j \divides t_j$, as then we could subtract off a multiple of $\sigma_{ij}^m$ to reduce the number of terms of our syzygy. However, because of the potential non-uniqueness of division of standard monomials, this may not happen. What we will show is that we can arrange for this to happen by adding certain elements from $\Ann(m_i)_n e_i \oplus \Ann(m_j)_n e_j$.

Towards that end, as $n$ is a common multiple of $m_i, m_j$, there is some $m \in LCM(m_i, m_j)$ such that $m \divides n$ (we may use any such $m$). Let $r_i, r_j$ be standard terms such that $\sigma_{ij}^m = r_i e_i - r_j e_j$ (in particular, $r_i m_i = r_j m_j = m$). Let $q$ be a standard term such that $qm = n$. Then $qr_i m_i = q r_j m_j = n$. Thus $\alpha_i qr_i m_i = t_i m_i = \alpha_i n$, so $(\alpha_i q r_i - t_i) e_i$ is an element of $\Ann(m_i)_n e_i$. Similarly $(\alpha_j q r_j - t_j) e_j \in \Ann(m_j)_n e_j$. By adding these two elements to our syzygy, we maintain the total number of terms exactly, but we have replaced the term $t_i e_i$ by $\alpha_i q r_i e_i$, and replaced the term $t_j e_j$ by $\alpha_j q r_j e_j$. If we now subtract $\alpha_i q \sigma_{ij}^m$ from our syzygy, then the two terms $\alpha_i q r_i e_i + \alpha_j q r_j e_j$ get replaced by the one term $(\alpha_j + \alpha_i) q r_j e_j$. Thus we have reduced the number of terms by 1, by adding a multiple of one of the divided Koszul syzygies and some multiples of annihilating syzygies. By induction on the number of terms, this completes the proof of generation.

To see the ``furthermore,'' suppose that $(A')^e$ admits a pseudo-ASL term order. We will show that the least annihilating monomials and divided Koszul syzygies together form a universal pseudo-ASL Gröbner basis. We will show that, relative to any term order, the leading term of any syzygy must be divisible by the leading term of either a least annihilating monomial or a divided Koszul syzygy; the result will then follow from the module analogue of Proposition~\ref{prop:asl-grobner-divisibility-characterization} (whose proof is the same, \emph{mutatis mutandis}). 

As above, suppose that $\sum s_i e_i \in \ker(\varphi)$, and suppose the leading monomial of $\sum s_i e_i$ is $t_i e_i$. Let $n = \lmsm(t_i m_i)$; then we have that $t_i e_i$ has degree $n$, since $A'$ is a monomial pseudo-ASL, that is, the projection of $\sum s_i e_i$ onto $\ker(\varphi)_n$ includes $t_i e_i$ as one of its terms. If $t_i \in \Ann(m_i)$, then by Lemma~\ref{lem:monomial_ann}, $t_i$ is divisible by some least annihilating $s \in \LAM(m_i)$, hence $t_i e_i$ is divisible by $\alpha_i^s = s e_i$. If $t_i \notin \Ann(m_i)$, then, by Observation~\ref{obs:no-binomial-ann}, we must be in the case above where there is another term $t_j e_j$ with $\lmsm(t_j m_j) = n$ and $j \neq i$. Let $m, r_i, r_j, \alpha_i, \alpha_j, q$ be as above. Above we showed that 
\[
(\alpha_i q r_i - t_i) e_i \in \Ann(m_i), \text{ and } (\alpha_j q r_j - t_j) e_j \in \Ann(m_j).
\]
But again by Observation~\ref{obs:no-binomial-ann}, we must have that the latter two are identically zero (they cannot be proper binomial annihilators, and if we have $\alpha_j q r_i - t_i$ being a scalar multiple of $t_i$, then we would have $t_i \in \Ann(m_i)$, contrary to our assumption), and hence $\alpha_i q r_i = t_i$ and $\alpha_j q r_j = t_j$. We now claim that $\ltsm(\sigma_{ij}^m) = r_i e_i$. For if $r_i e_i \prec r_j e_j$, then multiplying by $q$, we know that $q r_i e_i$ is a scalar multiple of $t_i$, so it is not zero, and similarly neither is $q r_j e_j$. Thus we have $q r_i e_i \prec q r_j e_j$. But since the latter two differ from $t_i e_i$, resp. $t_j e_j$, by scalar multiples, we would have $t_i e_i \prec t_j e_j$, contradicting our asusmption that $t_i e_i$ was the leading term of our syzygy. Thus $\ltsm(\sum s_i e_i) = t_i e_i$ is divisible by $\ltsm(\sigma_{ij}^m) = r_i e_i$, and we are done.
\end{proof}

\subsection{Syzygies over general pseudo-ASLs} 

In the following definition, we will need the concept of ``leading vector'': given $(A^e, \preceq)$ and $f \in A^e$, we define the \emph{leading vector} of $f$, denoted $\lvsm(f)$, to be the basis element $e_i$ of $A^e$ such that $\lmsm(f)$ is an $A$-multiple of $e_i$. 
\begin{definition} 
\label{def:good}
Given $(A^e, \prec_0)$, a free $A$-module $A^d$ with basis $e_1, \dotsc, e_d$, and an $A$-module homomorphism $\varphi \colon A^d \to A^e$, let $f_i = \varphi(e_i)$ for $i=1,\dotsc,d$. We say that a pseudo-ASL term order $\prec$ on $A^d$ is \emph{good} for $(\prec_0, \varphi)$ if, for all $i,j$ (not necessarily distinct), and for all standard monomials $m,m' \in A$, 
\begin{enumerate}
\item \label{def:good:one} if $mf_i \neq 0$ and $m'f_j \neq 0$, then
\[
\lmsm(mf_i) \prec_0 \lmsm(m' f_j) \Rightarrow m e_i \prec m' e_j.
\]

\item \label{def:good:two} if $i=j$, $m f_i \neq 0$, $m' f_i \neq 0$, and $\lmsm(m f_i) = \lmsm(m' f_i)$, then 
\[
m \lvsm(f_i) \prec_0 m' \lvsm(f_i) \Leftrightarrow m e_i \prec m' e_i.
\]
\end{enumerate}
\end{definition}

If $A$ is furthermore a domain, we can show that a good term order exists, analogous to the usual case of polynomial rings:

\begin{proposition}[store=obsgooddomain] 
\label{obs:good_domain}
Given $(A^e, \prec_0)$, a free $A$-module $A^d$ with free basis $e_1, \dotsc, e_d$, and an $A$-module homomorphism $\varphi \colon A^d \to A^e$, if $\varphi(e_i)$ is a non-zerodivisor for all $i=1,\dotsc,d$, then there exists a pseudo-ASL term order on $A^d$ that is good for $(\prec_0, \varphi)$.
\end{proposition}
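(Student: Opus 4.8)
The plan is to construct the good term order on $A^d$ explicitly by comparing images under $\varphi$ first and breaking ties by the leading-vector order, exactly as in Schreyer's construction for polynomial rings, but taking care that the non-zerodivisor hypothesis is what makes this work in our setting. First I would fix, for each $i$, the basis element $e_i \in A^e$ supporting $\lmsm(f_i)$; call it $\lvsm(f_i) = e_{v(i)}$. Since $\varphi(e_i) = f_i$ is a non-zerodivisor, for any standard monomial $m \in A$ we have $m f_i \neq 0$, and moreover by Proposition~\ref{prop:mult} (using that $\ltsm(f_i)$ is a non-zerodivisor) $\lmsm(m f_i) = \lmsm(m \lmsm(f_i)) = \lmsm(m \lvsm(f_i))$-times-a-standard-monomial-in-$A$; more precisely $\lmsm(m f_i)$ is the standard monomial $\lmsm(\mu e_{v(i)})$ where $\mu$ is the $A$-standard-monomial part of $\lmsm(m \lmsm_A(f_i))$. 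This lets me transport questions about $m f_i$ back into the single pseudo-ASL $A e_{v(i)} \cong A$.

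Next I would define the order $\prec$ on standard monomials $m e_i$ of $A^d$ as follows: $m e_i \prec m' e_j$ if either (a) $\lmsm(m f_i) \prec_0 \lmsm(m' f_j)$ in $A^e$, or (b) $\lmsm(m f_i) = \lmsm(m' f_j)$ and $\lmsm(m \, \lvsm(f_i)) \prec_0 \lmsm(m' \, \lvsm(f_j))$ in $A^e$ (using the restricted order $\prec_0$ on the relevant $A e_k$'s — when the two leading monomials agree they are supported on the same basis vector $e_{v(i)} = e_{v(j)}$, so this comparison takes place in one copy of $A$ and is a pseudo-ASL term order there by hypothesis), or (c) both of the above are equalities and $i < j$ (a fixed arbitrary order on the index set breaking remaining ties). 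I would then verify this is a total order: totality follows because $\prec_0$ is total on each relevant piece and the lexicographic layering (a)$\to$(b)$\to$(c) is exhaustive; the only subtlety is that when (a) and (b) are both equalities we genuinely need $i = j$ or the arbitrary tie-break, and I would check that (a)-equality plus $i \ne j$ combined with (b)-equality cannot force $m = m'$ issues since we allow $i \ne j$ there.

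Then I would check the three axioms \ref{def:termorder-modules:positive}, \ref{def:termorder-modules:mult}, \ref{def:termorder-modules:restrict}. Positivity: $e_i = 1 \cdot e_i$, and $1 \cdot f_i = f_i$ has $\lmsm(f_i) \preceq_0 \lmsm(m f_i)$ by \ref{def:termorder:positive}/\ref{def:termorder-modules:positive} on $A^e$ together with Proposition~\ref{prop:mult}; and then the leading-vector comparison degenerates appropriately, giving $e_i \preceq m e_i$. For \ref{def:termorder-modules:mult}: given $f e_i \prec g e_j$ and $fh \neq 0$, $gh \neq 0$, I must show $\lmsm(fh e_i) \prec \lmsm(gh e_j)$; I would unwind the definition of $\prec$ case by case. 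In case (a), $\lmsm(f f_i) \prec_0 \lmsm(g f_j)$, and I multiply by $h$: since $\lmsm(f f_i) = \lmsm(f \lmsm(f_i))$ etc. and all the relevant products are nonzero (non-zerodivisor hypothesis), \ref{def:termorder-modules:mult} on $A^e$ gives $\lmsm(h \lmsm(f f_i)) \prec_0 \lmsm(h \lmsm(g f_j))$, and Proposition~\ref{prop:mult} identifies these with $\lmsm(fh f_i)$ and $\lmsm(gh f_j)$, landing us in case (a) for the pair $(fh e_i, gh e_j)$. In case (b), the $A^e$-leading-monomial comparison becomes equality after multiplying by $h$ (or strict, in which case we drop to case (a)), and the leading-vector comparison propagates by the same multiplication argument using that $\prec_0$ restricted to each $A e_k$ is a pseudo-ASL term order; case (c) propagates trivially since $i, j$ are unchanged. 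Axiom \ref{def:termorder-modules:restrict} is the special case $i = j$ of essentially the same argument, now also using $h \preceq k$ and chaining through the restricted $\prec_0$ on $A e_{v(i)}$ exactly as in Remark~\ref{rem:ato-2-equivalent}.

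Finally I would verify the two \emph{goodness} conditions of Definition~\ref{def:good}, which by construction are nearly immediate: condition~\ref{def:good:one}, $\lmsm(m f_i) \prec_0 \lmsm(m' f_j) \Rightarrow m e_i \prec m' e_j$, is exactly clause (a) of the definition of $\prec$; and condition~\ref{def:good:two}, that when $\lmsm(m f_i) = \lmsm(m' f_i)$ we have $m \lvsm(f_i) \prec_0 m' \lvsm(f_i) \Leftrightarrow m e_i \prec m' e_i$, is exactly clause (b) (with the $\Leftarrow$ direction also needing that if $m e_i \prec m' e_i$ via clause (c) rather than (b) then the leading-vectors compare equal, so no contradiction arises — here I'd note $m = m'$ is forced when all of (a),(b) are equalities and $i = j$, making (c) vacuous). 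The main obstacle, and the place demanding genuine care rather than bookkeeping, is establishing well-definedness and the multiplicativity axiom in the presence of straightening: one must repeatedly invoke Proposition~\ref{prop:mult} to rewrite $\lmsm(m f_i)$ as $\lmsm$ of a product of standard \emph{monomials}, and this rewriting is only valid because $\ltsm(f_i)$ is a non-zerodivisor (so $m f_i \neq 0$ and Lemma~\ref{lem:triplezero}-type degeneracies don't occur) — this is precisely where the hypothesis that each $\varphi(e_i)$ is a non-zerodivisor (automatic when $A$ is a domain) is used, and it must be threaded through every case of the multiplicativity check.
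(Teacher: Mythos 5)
Your proof is correct and follows the same overall strategy as the paper's: construct an explicit Schreyer-style order on $A^d$ by comparing images under $\varphi$ first and then breaking ties, then verify the three axioms of Definition~\ref{def:termorder-modules} and the two goodness conditions. The one substantive difference is in the construction of the order itself. The paper's order has two layers — compare $\lmsm(mf_i)$ vs.\ $\lmsm(m'f_j)$ under $\prec_0$, then break any remaining ties by the index $i<j$ — and its proof leans on Observation~\ref{obs:no-binomial-ann} (via ``Case 2'') to show that under the non-zerodivisor hypothesis the clause \ref{def:good:two} of Definition~\ref{def:good} is never activated: $\lmsm(mf_i)=\lmsm(m'f_i)$ with $i=j$ forces $m=m'$, so the only real ambiguity is when $i\ne j$, where the index tie-break suffices. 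You instead insert a genuine intermediate layer (b), comparing $m\,\lvsm(f_i)$ vs.\ $m'\,\lvsm(f_j)$ before falling through to the index tie-break. This produces an order that differs from the paper's as a relation (for instance, if $\lmsm(mf_i)=\lmsm(m'f_j)$ with $i>j$ but $m \prec_0 m'$ in the supporting copy of $A$, your order gives $me_i\prec m'e_j$ while the paper's gives the reverse), and both orders are valid and good; yours avoids needing the ``Case 2 cannot occur'' observation, at the cost of carrying the extra layer and its extra (routine) verification through MATO-2 and MATO-3.

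Two small slips worth fixing but not affecting correctness: in your Case~(b) analysis, once layer~(a) is an equality it remains an equality after multiplying by $h$ (the ``or strict, in which case we drop to case~(a)'' possibility cannot occur, by the same Proposition~\ref{prop:mult} computation you use elsewhere); and your remark that ``(a)-equality plus $i\ne j$ combined with (b)-equality cannot force $m=m'$ issues'' is phrased backwards — (b)-equality \emph{does} force $m=m'$ (since it means $me_{v}=m'e_{v}$), and that is precisely why clause~(c) is needed when $i\ne j$.
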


We do not yet know whether a good term order exists for all $A$, though we have checked at least one non-domain case and seen that it indeed still exists.

\begin{proof}
Let $f_i = \varphi(e_i)$ for $i=1,\dotsc,d$. Since each $\ltsm(f_i)$ is not a zerodivisor, 
the definition of ``good'' almost completely specifies the order, except in the case of ties, that is, when $\lmsm(m f_i) = \lmsm(m' f_j)$ but $i \neq j$. 
In such cases, we may choose to break ties by defining $me_i \prec m'e_j$ if $\lmsm(m f_i) = \lmsm(m f_j)$ and $i < j$. This order is good for $(\prec_0, \varphi)$ by construction. We will show that it is indeed a pseudo-ASL term order. 

First, let $m$ be any standard monomial. Then we have $\lmsm(f_i) \prec_0 \lmsm(m f_i)$, as $m\lmsm(f_i)$ nonzero since $\lmsm(f_i)$ is not a zerodivisor, so we have $\lmsm(m f_i) = m \lmsm(f_i)$. Thus, by Def.~\ref{def:good}(\ref{def:good:one}), we have $e_i \prec m e_i$, showing positivity \ref{def:termorder-modules:positive}.

Next, we will show that \ref{def:termorder-modules:mult} holds. Let $f,g,h \in A$ be standard monomials and suppose that $f e_i \prec g e_j$. If $fh \neq 0$ and $gh \neq 0$, then we need to show that $\lmsm(fh e_i) \prec \lmsm(gh e_j)$. We split into three cases based on the reason that $f e_i \prec g e_j$. 

[Case 1]: Suppose the reason that $f e_i \prec g e_j$ is because of Def.~\ref{def:good}(\ref{def:good:one}). That is, we have $\lmsm(f f_i) \prec_0 \lmsm(g f_j)$. Then, since $fh \neq 0$ by assumption and $\ltsm(f_i)$ is not a zerodivisor, we have $fh\ltsm(f_i) \neq 0$, and thus by \eqref{obs:mult-eqn3}, we have $\lmsm(f h f_i) = \lmsm(h \lmsm(ff_i))$. Similarly we have $\lmsm(g h f_j) = \lmsm( h \lmsm(g f_j))$. Since we have $\lmsm(ff_i) \prec_0 \lmsm(gf_j)$, by the definition of pseudo-ASL term order for $\prec_0$, we have 
\[
\lmsm(fhf_i) = \lmsm(h \lmsm(ff_i)) \prec_0 \lmsm(h \lmsm(g f_j)) = \lmsm(g h f_j),
\]
and therefore by Def.~\ref{def:good}(\ref{def:good:one}) we have $\lmsm(fhe_i) \prec \lmsm(gh e_j)$, as desired.

[Case 2]: Suppose $f e_i \prec g e_j$ because of Def.~\ref{def:good}(\ref{def:good:two}). We will show that in fact this cannot occur. In this case we have $i=j$ and $\lmsm(m f_i) = \lmsm(m' f_i)$. Since $\ltsm(f_i)$ is a non-zerodivisor, we have $\lmsm(m f_i) = \lmsm(m \lmsm(f_i))$, and similarly $\lmsm(m' f_i) = \lmsm(m' \lmsm(f_i))$. As $\lmsm(f_i)$ lives in a copy of $A$ inside $A^e$, we may work entirely in that one copy of $A$. Let $\preceq'$ denote the restriction of $\preceq$ to that copy of $A$, and let $A_{gen}$ be the generic algebra of leading terms relative to $\preceq'$. Then in $A_{gen}$, we get the binomial annihilator 
\[
(\pi_{gen}(m) - \pi_{gen}(m')) \pi_{gen}(\lmsm(f_i)) = 0,
\]
contradicting Obesrvation~\ref{obs:no-binomial-ann}. Thus this case cannot occur.

(Although not needed for the proof, it is instructive to think about other assumptions under which this case could possibly occur: it can only occur when we have, e.g., $m' \lmsm(f_i) = 0$ even though $m' f_i \neq 0$.)

[Case 3]: Suppose neither case 1 nor 2 holds; then the reason $fe_i \prec g e_j$ must be that $\lmsm(ff_i) = \lmsm(g f_j)$ and $i < j$. Then by the same argument as before we get $\lmsm (f h f_i) = \lmsm(g h f_j)$, but we still have $i < j$, and thus $\lmsm(fh e_i) \prec \lmsm( gh e_j)$ by construction, as desired.

Finally, we show \ref{def:termorder-modules:restrict}. Let $f,g,h,k$ be standard monomials and suppose that $f e_i \prec g e_i$ and $h e_i \preceq k e_i$, and $fh \neq 0$ and $gk \neq 0$. We need to show that $\lmsm(f h e_i) \prec \lmsm(g k e_i)$. By the argument above we have $\lmsm(f h e_i) \prec \lmsm (g h e_i)$ (NB: $gh$ not $gk$). (We note that here we are again crucially using the fact that $\lmsm(f_i)$ is not a zerodivisor, since nothing else in our assumptions would tell us that $gh f_i \neq 0$.) If $h=k$ then we are done. Otherwise we have $h e_i \prec k e_i$ (strict inequality). By applying the argument above, but this time multiplying $h e_i \preceq k e_i$ by $g$ on both sides, we get that $\lmsm(hg e_i) \prec \lmsm( kg e_i)$. Putting these two together, we get $\lmsm( f h e_i) \prec \lmsm(g k e_i)$. 
\end{proof}

The following observation about domains may be useful in light of Proposition~\ref{obs:good_domain}.

\begin{observation} \label{obs:alt-domain}
Given $(A,\preceq, A_{lt})$, $A_{lt}$ is a domain if and only if $A$ is a domain and $A_{lt} \cong A_{gen}$.
\end{observation}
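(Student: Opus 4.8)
# Proof proposal for Observation~\ref{obs:alt-domain}

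The plan is to prove both directions by unwinding the definitions and leaning on results already established for algebras of leading terms, principally Definition~\ref{def:ALT}, Proposition~\ref{prop:Agen}, and Proposition~\ref{prop:domain}.

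First I would handle the easy direction: suppose $A$ is a domain and $A_{lt} \cong_{sm} A_{gen}$. Since $A$ is a domain, by Proposition~\ref{prop:domain} every product of two standard monomials in $A$ is nonzero. By the definition of $A_{gen}$ (Definition~\ref{defn:agen-adisc}), for standard monomials $m, m'$ we have $\pi_{lt}(m)\pi_{lt}(m') = \pi_{lt}(\ltsm(mm'))$, which is nonzero precisely because $mm' \neq 0$ in $A$ (the leading term of a nonzero element is nonzero). Hence every product of two standard monomials in $A_{gen}$ is nonzero. Now $\preceq$ induces a pseudo-ASL term order $\preceq_{lt}$ on $A_{gen}$ by Proposition~\ref{prop:orders}, and $A_{gen}$ is a monomial pseudo-ASL by Lemma~\ref{lem:ALT}; so applying Proposition~\ref{prop:domain} to $(A_{gen}, \preceq_{lt})$ gives that $A_{gen}$, hence $A_{lt}$, is a domain.

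For the converse, suppose $A_{lt}$ is a domain. The first step is to deduce that $A$ is a domain: for any standard monomials $m, m' \in A$ with $mm' \neq 0$ — wait, we actually want the other implication. Observe that by the defining property of $A_{lt}$ (Definition~\ref{def:ALT}), $\pi_{lt}(m)\pi_{lt}(m')$ is either $0$ or $\pi_{lt}(\ltsm(mm'))$; since $A_{lt}$ is a domain and $m, m'$ are nonzero standard monomials, $\pi_{lt}(m)\pi_{lt}(m') \neq 0$, so it must equal $\pi_{lt}(\ltsm(mm'))$, and in particular $\ltsm(mm') \neq 0$, which forces $mm' \neq 0$ in $A$. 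Thus every product of two standard monomials in $A$ is nonzero, so $A$ is a domain by Proposition~\ref{prop:domain}. Moreover, we have just shown that $\pi_{lt}(m)\pi_{lt}(m') = \pi_{lt}(\ltsm(mm'))$ for \emph{all} pairs of standard monomials $m, m'$ — equivalently, all such pairs are compatible with respect to $A_{lt}$ (and since $A$ is a domain, these are exactly the pairs with $mm' \neq 0$). By Proposition~\ref{prop:Agen}, this is equivalent to $A_{lt} \cong_{sm} A_{gen}$, completing the proof.

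I do not anticipate a genuine obstacle here; the statement is essentially a bookkeeping consequence of the characterization of $A_{gen}$ via compatibility (Proposition~\ref{prop:Agen}) together with the domain criterion (Proposition~\ref{prop:domain}). The only point requiring a moment's care is making sure that when we pass to $A_{lt} = A_{gen}$ in the first direction we are entitled to invoke Proposition~\ref{prop:domain} there — which is fine since $A_{gen}$ is itself a pseudo-ASL admitting the induced term order $\preceq_{lt}$ (Proposition~\ref{prop:orders}) and is monomial (Lemma~\ref{lem:ALT}), so all hypotheses of Proposition~\ref{prop:domain} are met.
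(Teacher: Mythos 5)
Your proof is correct and follows essentially the same route as the paper's: both reduce the question to whether all products of standard monomials are nonzero (via Proposition~\ref{prop:domain}) and then use the characterization of $A_{gen}$ as the algebra of leading terms in which products of standard monomials vanish as rarely as possible. Your version is slightly more explicit in invoking Proposition~\ref{prop:Agen} for the converse direction, whereas the paper's two-part proof argues directly from the definition of algebra of leading terms, but the underlying reasoning is the same.
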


\begin{proof}
($A_{gen}$ is a domain if and only if $A$ is.) If $A$ is a domain, then since a product of two standard monomials in $A$ is always nonzero, the same is true in $A_{gen}$. Thus by Prop.~\ref{prop:domain}, $A_{gen}$ is a domain. Conversely, if $A$ is not a domain, then by Prop~\ref{prop:domain} there is a pair of standard monomials whose product in $A$ is zero. By construction of $A_{gen}$, their product in $A_{gen}$ is also zero, so $A_{gen}$ is not a domain.

(Other $A_{lt}$'s are not domains.) Note that the way $A_{lt}$ is constructed, if $A_{lt}$ is not isomorphic to $A_{gen}$, then there is some pair of standard monomials whose product becomes $0$ in $A_{lt}$, since, by the definition of algebra of leading terms, a product of standard monomials is either $\pi_{lt}(\lmsm(mm'))$ or $0$. Thus, $A_{lt}$ is not a domain.
\end{proof}

To set up the pseudo-ASL analogue of Schreyer's Theorem, we will use the compatibility syzygies and the S-remainder syzygies, which we define as follows. For all $i,j \in [d]$ such that $\lmsm(f_i)$ and $\lmsm(f_j)$ involve the same basis element of $A^d$, and for all $m \in \lcmlt(\lmsm(f_i), \lmsm(f_j))$, choose a standard expression (Def.~\ref{defn:standard-expression}) for 
\[
\pi_{lt}^{-1}\left(\ltfrac{m}{\pi_{lt}(\ltsm(f_i))} \right) f_i - \pi_{lt}^{-1}\left(\ltfrac{m}{\pi_{lt}(\ltsm(f_j))}\right) f_j,
\]
say as $\sum_{k=1}^d h_k f_k$, and define:
\[
\tau_{ij}^m := \pi_{lt}^{-1}\left( \ltfrac{m}{ \pi_{lt}(\ltsm(f_i))} \right) e_i - \pi_{lt}^{-1}\left(\ltfrac{m}{\pi_{lt}(\ltsm(f_j))}\right) e_j - \sum_{k=1}^d h_k e_k. 
\]

Recall the divided Koszul syzygies $\sigma_{ij}^m$ (Def.~\ref{def:koszul}) and annihilating syzygies $\alpha_i^m$ in a monomial pseudo-ASL (Def.~\ref{def:annsyz}); these correspond, respectively, to the S-remainder syzygies and compatibility syzygies in a general pseudo-ASL, in a sense made precise in the following lemma.

\begin{lemma} \label{lem:syz_leading}
Given $(A^e, \prec_0, A_{lt}^e)$, suppose that $f_1, \dotsc, f_d$ is a pseudo-ASL Gröbner basis of the submodule $M \subseteq A^e$. Let $\varphi \colon A^d \to A^e$ be the $A$-module map defined by $\varphi(e_i) = f_i$ for $i=1,\dotsc,d$. Let $\prec$ be any pseudo-ASL term order on $A^d$ that is good for $(\prec_0, \varphi)$ (Def.~\ref{def:good}). Let 
\begin{itemize}
\item $\alpha_i^s$ be the annihilating syzygies of $\piltd(\lmsm(f_i))$, 
\item $\beta_i^s$ be a compatibility syzygy for $(s,f_i)$ in $A^d$, 
\item $\sigma_{ij}^m$ be the divided Koszul syzygies of $\piltd(\lmsm(f_i)), \piltd(\lmsm(f_j))$ in $A_{lt}^d$, and
\item $\tau_{ij}^m$ be the S-remainder syzygies of $f_i, f_j \in A^d$.
\end{itemize}
Then
\begin{enumerate}
\item 
$\piltd(\ltsm(\beta_i^s)) = \ltsm(\alpha_i^s),$ and

\item 
$\piltd(\ltsm(\tau_{ij}^m)) = \ltsm(\sigma_{ij}^m)$.
\end{enumerate}
\end{lemma}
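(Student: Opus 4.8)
The plan is to read off both pairs of leading terms directly from the explicit shapes of the four syzygies, and then transport the resulting comparisons across $\piltd$. On the $A_{lt}^d$ side: since $A_{lt}^e$ (hence each $A_{lt}e_i$) admits a pseudo-ASL term order, Observation~\ref{obs:no-binomial-ann} shows $\alpha_i^s = se_i$ is a single standard monomial, so $\ltsm(\alpha_i^s) = se_i$; and $\sigma_{ij}^m = t_ie_i - t_je_j$ is a binomial supported on the two distinct coordinates $e_i,e_j$ (with $t_i,t_j$ the \emph{unique} standard terms satisfying $t_i\piltd(\lmsm(f_i)) = m = t_j\piltd(\lmsm(f_j))$, uniqueness again by Observation~\ref{obs:no-binomial-ann}), so $\ltsm(\sigma_{ij}^m)$ is simply whichever of $t_ie_i,t_je_j$ is larger in the term order $\prec_{lt}$ that $\prec$ induces on $A_{lt}^d$ via Proposition~\ref{prop:orders}. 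On the $A^d$ side: $\beta_i^s = se_i - \sum_k \ell_ke_k$ and $\tau_{ij}^m = \pi_{lt}^{-1}(t_i)e_i - \pi_{lt}^{-1}(t_j)e_j - \sum_k h_ke_k$, where $\sum_k\ell_kf_k$ (resp.\ $\sum_kh_kf_k$) is a standard expression for $sf_i$ (resp.\ for $\Ssm^m(f_i,f_j)$). The structural fact I would lean on is that $sf_i$ and $\Ssm^m(f_i,f_j)$ both lie in $M$, so, since $f_1,\dots,f_d$ is a pseudo-ASL Gröbner basis of $M$, the module analogue of Proposition~\ref{prop:unique-remainder} lets us take these standard expressions with zero remainder; thus $\beta_i^s,\tau_{ij}^m$ are genuine elements of $\ker\varphi$, of the form ``head $-$ tail'' with the heads mapping under $\piltd$ precisely onto $\alpha_i^s$, resp.\ $\sigma_{ij}^m$. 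It therefore remains, in each part, to show the head dominates the tail under $\prec$, and $\piltd$ (the identity on standard monomials, intertwining $\prec$ and $\prec_{lt}$) then delivers the claimed equalities.

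For part~(2), the first step is Observation~\ref{obs:s-poly-reduces-lm}: $\ltsm(\pi_{lt}^{-1}(t_i)f_i) = \ltsm(\pi_{lt}^{-1}(t_j)f_j) = \pi_{lt}^{-1}(m)$ and $\ltsm(\Ssm^m(f_i,f_j)) \prec_0 \pi_{lt}^{-1}(m)$. Hence condition~(3) of Definition~\ref{defn:standard-expression} gives, for every standard monomial $m'e_k$ in the tail, the \emph{strict} inequality $\lmsm(m'f_k) \preceq_0 \lmsm(\Ssm^m(f_i,f_j)) \prec_0 \pi_{lt}^{-1}(m) = \lmsm(\pi_{lt}^{-1}(t_i)f_i)$, and the same with $j$. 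Now I would invoke Definition~\ref{def:good}(\ref{def:good:one}) (with the leading monomial of $\pi_{lt}^{-1}(t_i)$ in the role of the monomial multiplying $f_i$) to turn each such inequality into $m'e_k \prec \pi_{lt}^{-1}(t_i)e_i$, and likewise $m'e_k \prec \pi_{lt}^{-1}(t_j)e_j$. Since $e_i \ne e_j$, the two head terms cannot cancel, so $\ltsm(\tau_{ij}^m)$ is the $\prec$-larger of $\pi_{lt}^{-1}(t_i)e_i$ and $\pi_{lt}^{-1}(t_j)e_j$; applying $\piltd$ yields $\piltd(\ltsm(\tau_{ij}^m))$ equal to the $\prec_{lt}$-larger of $t_ie_i,t_je_j$, which is exactly $\ltsm(\sigma_{ij}^m)$ by Definition~\ref{def:koszul}. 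Keeping track of leading coefficients in $A_{lt}$ when $\lcsm$ is not identically $1$ is routine bookkeeping.

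Part~(1) runs along the same lines: one wants $se_i \succ m'e_k$ for every standard monomial $m'e_k$ appearing in the tail $\sum_k\ell_ke_k$ of $\beta_i^s$. I would first observe that $s$ never occurs in $\ell_i$ — building the standard expression by the division algorithm, $s$ could be added to $\ell_i$ only as a quotient $\ltfrac{\nu}{\ltsm(f_i)}$ of some leading monomial $\nu$ met during reduction, which would force $\pi_{lt}(s)\pi_{lt}(\ltsm(f_i)) = \nu \ne 0$, contradicting $\pi_{lt}(s)\in\LAM(\piltd(\lmsm(f_i)))$; so the coefficient of $se_i$ in $\beta_i^s$ is exactly $1$. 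For the remaining tail terms, condition~(3) gives $\lmsm(m'f_k) \preceq_0 \lmsm(sf_i)$, and \emph{whenever this is strict}, Definition~\ref{def:good}(\ref{def:good:one}) once more yields $m'e_k \prec se_i$.

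The hard part is precisely that this last inequality need not be strict: the first reduction step, cancelling $\ltsm(sf_i)$ via $\pi_{lt}^{-1}\big(\ltfrac{\ltsm(sf_i)}{\ltsm(f_k)}\big)f_k$, produces a tail monomial $m^\ast e_k$ with $\lmsm(m^\ast f_k) = \lmsm(sf_i)$, about which Definition~\ref{def:good}(\ref{def:good:one}) says nothing. When $k=i$, I would pass to Definition~\ref{def:good}(\ref{def:good:two}), which reduces the claim to $m^\ast\lvsm(f_i) \prec_0 s\lvsm(f_i)$; this I would get from the fact that $\pi_{lt}(s)$ annihilates $\piltd(\lmsm(f_i))$ while $\pi_{lt}(m^\ast)$ does not, via Observation~\ref{obs:no-binomial-ann} applied in the copy of $A_{lt}$ supporting $\lmsm(f_i)$ (a relation $s\,\lmsm(f_i) = m^\ast\,\lmsm(f_i)$ in $A_{lt}$ would be a proper binomial annihilator, which cannot exist). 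The genuinely delicate sub-case is $k\ne i$, where neither goodness axiom pins down $m^\ast e_k$ against $se_i$: here one must instead argue that incompatibility of $s$ with $\ltsm(f_i)$ forces $\lmsm(sf_i)$ \emph{strictly} below $\lmsm(s\,\lmsm(f_i))$ even when $s\,\lmsm(f_i)\ne 0$ in $A$ (which can happen exactly when $A_{lt}\not\cong A_{gen}$), so that in fact all tail inequalities become strict and the previous paragraph closes the argument. This is the one point where the straightening inside $A$ and the truncation performed in passing to $A_{lt}$ genuinely interact, and I expect it to be the technical heart of the proof. Note that when $A$ is a domain — the case relevant to bideterminants — every pair of standard monomials is compatible (Proposition~\ref{prop:domain}), hence $\LAM$ is empty and part~(1) is vacuous; part~(2) is needed in general, but is essentially the classical computation once Observation~\ref{obs:s-poly-reduces-lm} supplies the strict inequality that makes goodness applicable.
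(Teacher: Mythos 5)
Your part~(2) argument is essentially the paper's: use Observation~\ref{obs:s-poly-reduces-lm} to get the strict inequality $\pi_{lt}^{-1}(m) \succ_0 \lmsm(\Ssm^m(f_i,f_j))$, push it through Definition~\ref{def:good}(\ref{def:good:one}) to dominate the tail by the heads, and observe the heads cannot cancel because they live on distinct basis vectors. That part is fine.

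Your part~(1), however, has a genuine gap. Consider the $k=i$ sub-case, where $\lmsm(m^\ast f_i) = \lmsm(s f_i)$. You propose to apply Observation~\ref{obs:no-binomial-ann} \emph{in $A_{lt}$}, invoking the putative relation $s\,\lmsm(f_i) = m^\ast\,\lmsm(f_i)$ as a proper binomial annihilator of $\piltd(\lmsm(f_i))$. But in $A_{lt}$ that relation does not hold: by hypothesis $\pi_{lt}(s)\piltd(\lmsm(f_i)) = 0$, while $\pi_{lt}(m^\ast)\piltd(\lmsm(f_i)) \neq 0$ by compatibility, so the two sides differ. What you actually have is an equality of leading monomials \emph{in $A$}, namely $\lmsm(m^\ast\,\lmsm(f_i)) = \lmsm(s\,\lmsm(f_i))$ (valid whenever $s\,\lmsm(f_i) \neq 0$ in $A$, by Proposition~\ref{prop:mult}). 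That equality lives, after applying $\pi_{gen}$, in $A_{gen}$ for the restricted order $\prec'$ on $A\,\lvsm(f_i)$ — \emph{not} in $A_{lt}$ — and the paper applies Observation~\ref{obs:no-binomial-ann} there. Moreover, even that only delivers the preparatory fact $s\,\lmsm(f_i)=0$ in $A$; it does not yet give $m^\ast \prec' s$. The paper then needs a second argument: take $m$ to be the $\prec'$-largest monomial of $f_i$ with $sm\neq 0$, observe $m \prec' \lmsm(f_i)$, and derive a contradiction with $\lmsm(sf_i)=\lmsm(m^\ast f_i)$ from \ref{def:termorder:mult} if $m^\ast \succeq' s$. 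You have skipped this step entirely.

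For the $k\neq i$ sub-case, your proposed rescue — that incompatibility of $s$ with $\ltsm(f_i)$ forces $\lmsm(sf_i)$ \emph{strictly} below $\lmsm(s\,\lmsm(f_i))$ even when $s\,\lmsm(f_i)\neq 0$ in $A$ — is false: if $s\,\lmsm(f_i)\neq 0$ then $\ltsm(s)\ltsm(f_i)\neq 0$, and equation~\eqref{obs:mult-eqn2} of Proposition~\ref{prop:mult} gives $\lmsm(sf_i) = \lmsm(s\,\lmsm(f_i))$ exactly, never strictly below. So this proposed ``technical heart'' collapses on itself. (For what it is worth, the paper's published proof of part~(1) dispatches $k\neq i$ with a one-line remark and then analyzes only $k=i$ carefully; it does not attempt the strengthening you sketch. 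If there is genuine residual subtlety for $k\neq i$, it must be resolved by controlling the \emph{choice} of standard expression in the compatibility syzygy, not by the false inequality you propose.) Finally, your observation that $s$ cannot occur in $\ell_i$ is correct, but the cleanest justification is simply condition~(4) of Definition~\ref{defn:standard-expression}: $\ell_i$ is compatible with $\ltsm(f_i)$, and $s$ is not.
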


\begin{proof}
(1) If $s f_i = 0$, then $\piltd(\beta_i^s) = \alpha_i^s$, so they certainly have the same leading term. 

Otherwise, suppose $\beta_i^s = s e_i - \sum_k h_k e_k$. Then since $\sum h_k f_k$ is a standard expression for $sf_i$, we have that $\lmsm(h_k f_k) \preceq_0 \lmsm(s f_i)$ for all $k=1,\dotsc,d$. Since $\prec$ is assumed to be good for $(\prec_0, \varphi)$, this implies that $\lmsm(h_k e_k) \preceq \lmsm(s e_i)$ for all $k=1,\dotsc,d$. Now, for $k \neq i$, the two cannot be equal since they involve different basis elements, so we necessarily have the strict inequality $\lmsm(h_k e_k) \prec \lmsm(s e_i)$. For $k=i$, if $\lmsm(h_i f_i) \prec_0 \lmsm(s f_i)$, then we again have $\lmsm(h_i e_i) \prec \lmsm(s e_i)$, as desired. 

Now instead suppose that $\lmsm(h_i f_i) = \lmsm(s f_i)$. Since $h_i$ is compatible with $\lmsm(f_i)$, we have $\lmsm(h_i) \lmsm(f_i) \neq 0$. Next, we will show that we must have $s \lmsm(f_i) = 0$ in $A^e$ (since $\beta_i^s$ is a compatibility syzygy, by definition we have $\pi_{lt}(s) \piltd(\lmsm(f_i)) = 0$ in $A_{lt}^e$; our claim here is that the same happens in $A^e$). For if not, then by Proposition~\ref{prop:mult} we have 
\[
\lmsm(\lmsm(h_i) \lmsm(f_i)) = \lmsm(h_i f_i) = \lmsm(s f_i) = \lmsm(s \lmsm(f_i)).
\]

Now, since $\lmsm(h_i f_i) = \lmsm(s f_i)$ occurs in some copy of $A \subseteq A^e$, say $Av$, we may focus our attention only on those terms in $f_i$ in the coordinate $v$. Let $\preceq'$ denote the ordering on $A$ that agrees with the restriction of $\preceq_0$ to $Av \subseteq A^e$. 

Let $A_{gen}$ be the generic algebra of leading terms relative to $\preceq'$. Then applying $\pi_{gen}$ to the equation above, we get 
\[
\pi_{gen}(\lmsm(h_i)) \pi_{gen}(\lmsm(f_i)) = \pi_{gen}(s) \pi_{gen}(\lmsm(f_i)).
\]
As $h_i$ is compatible with $\lmsm(f_i)$ in $A^e$ (relative to $A_{lt}^e$)---since $\sum h_i f_i$ is a standard expression---and $s$ is not, we must have $\lmsm(h_i) \neq s$, and thus the preceding centered equation gives us a nontrivial binomial annihilator of $\pi_{gen}(\lmsm(f_i))$ in $A_{gen}$, contradicting Observation~\ref{obs:no-binomial-ann}. Thus we have that $s \lmsm(f_i) = 0$ in $A^e$.

We claim, finally, that $h_i \prec' s$. For suppose not; then $\lmsm(h_i) \succeq' s$. But now let $m$ be the largest monomial in $f_i$ such that $s m \neq 0$. By the preceding argument, $m$ is not the leading monomial of $f_i$, so in particular we have $m \prec' \lmsm(f_i)$. But then, since $\prec'$ is a pseudo-ASL term order on $A$, and we have $s \preceq' \lmsm(h_i)$, and $m \prec' \lmsm(f_i)$, we get
$\lmsm(s m) \prec' \lmsm(\lmsm(h_i) \lmsm(f_i))$, since both sides here are nonzero. However, the LHS is (by the definition of $m$) equal to $\lmsm(s f_i)$ and the RHS is equal to $\lmsm(h_i f_i)$. The strict inequality between the two sides contradicts our assumption that $\lmsm(h_i f_i) = \lmsm(s f_i)$, and thus we have $h_i \prec' s$. 

Since $\prec'$ was by definition the restriction of $\prec$ to standard monomial multiples of $\lvsm(f_i)$, $h_i \prec' s$ is the same as $h_i \lvsm(f_i) \prec s \lvsm(f_i)$. Thus, by Definition~\ref{def:good}(\ref{def:good:two}), this implies that $h_i e_i \prec s e_i$.

Thus $\ltsm(\beta_i^s) = \ltsm( s e_i)$, so 
\[
\piltd(\ltsm(\beta_i^s)) = \ltsm(\pi_{lt}(s) e_i) = \ltsm(\alpha_i^s).
\] 

(2)  We will first show that one of $t_i e_i$ or $t_j e_j$ is the leading term of $\tau_{ij}^m$. Let $t_i=\pi_{lt}^{-1}\left(\ltfrac{m}{\ltsm(f_i))}\right)$, and analogously for $t_j$. By construction we thus have 
\(
\ltsm(t_i f_i) = \ltsm(t_j f_j) = m,
\)
and by Observation~\ref{obs:s-poly-reduces-lm} we have 
\(
m \succ_0 \lmsm(\Ssm^m(f_i, f_j)).
\)
 Next, by the definition of standard expression and the definition of $\tau_{ij}^m$, we have 
 \(
 \lmsm(\Ssm^m(f_i, f_j)) \succeq_0 \lmsm(h_k f_k)
 \)
 for all $k=1,\dotsc,d$. Combining these together we have 
 \[
 m = \lmsm(t_i f_i) = \lmsm(t_j f_j) \succ_0 \lmsm(h_k f_k)
 \]
 for all $k=1,\dotsc,d$. 
 
Now, as $t_i$ is compatible with $\lmsm(f_i)$ by construction, and every standard monomial in $h_k$ is compatible with $\ltsm(f_k)$ by the definition of standard expression, we have that $\lmsm(h_k) e_k \prec t_i e_i$, since $\prec$ is good for $(\prec_0, \varphi)$ (similar to the proof of (1) above). The same holds for $t_j e_j$. Thus one of $t_i e_i, t_j e_j$ must be the leading term of $\tau_{ij}^m$. 

As $\sigma_{ij}^m = t_i e_i - t_j e_j$, one of $t_i e_i, t_j e_j$ is the leading term of $\sigma_{ij}^m$, and since the ordering on standard monomials is the same in $A^d$ and $A_{lt}^d$, we have $\pi_{lt}(\ltsm(\tau_{ij}^m)) = \ltsm(\sigma_{ij}^m)$, as claimed.
\end{proof}

\nocite{eisenbud}
\begin{theorem}[store=thmaslschreyer, note={Pseudo-ASL analogue of Schreyer's Syzygy Theorem, cf. \cite[Theorem~15.10]{eisenbud}}] \label{thm:schreyer}
Given $(A^e, \prec_0, A_{lt}^e)$, suppose that $f_1, \dotsc, f_d$ is a pseudo-ASL Gröbner basis such that $\ltsm(f_i)$ is a non-zerodivisor for all $i$. Let $\varphi \colon A^d \to A^e$ be the $A$-module homomorphism defined by $\varphi(e_i) = f_i$ for $i=1,\dotsc,d$. Let $\prec$ be any pseudo-ASL term order on $A^d$ that is good for $(\prec_0, \varphi)$ and suppose that $A_{lt}$ is an algebra of leading terms for $(A^d,\prec)$ as well. 

Then the following S-remainder syzygies and compatibility syzygies are a pseudo-ASL Gröbner basis for the syzygies on the $f_i$ relative to $(\prec, A_{lt}^d)$:
\[
\left\{\tau_{ij}^m \suchthat i,j \in [d], m \in \lcmlt(\pi_{lt}(f_i), \pi_{lt}(f_j))\right\} \cup \left\{\beta_i^s \suchthat (s,f_i) \text{ as in Definition~\ref{def:ann-closed}}\right\}.
\]
\end{theorem}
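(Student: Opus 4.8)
The plan is to verify the module version of the divisibility characterization of pseudo-ASL Gröbner bases, Proposition~\ref{prop:asl-grobner-divisibility-characterization}. Write $G$ for the proposed set of S-remainder syzygies $\tau_{ij}^m$ and compatibility syzygies $\beta_i^s$, and $N := \Syz(f_1,\dots,f_d) = \ker\varphi \subseteq A^d$. Since $\{f_1,\dots,f_d\}$ is a pseudo-ASL Gröbner basis whose leading terms are non-zerodivisors, it is Ann-closed by the forward direction of Theorem~\ref{thm:buchberger2}, so the set of $\beta_i^s$ is well-defined. I would then reduce the statement to a single ideal inclusion. By Lemma~\ref{lem:syz_leading}, $\piltd(\ltsm(\beta_i^s)) = \ltsm(\alpha_i^s)$ and $\piltd(\ltsm(\tau_{ij}^m)) = \ltsm(\sigma_{ij}^m)$, where $\alpha_i^s,\sigma_{ij}^m$ are the annihilating and divided Koszul syzygies of the leading monomials $\piltd(\lmsm(f_i))$ in $A_{lt}^d$; and by Lemma~\ref{lem:syz_monomial} (its ``furthermore'', valid because the induced order on $A_{lt}^d$ is a pseudo-ASL term order) those $\alpha_i^s$ and $\sigma_{ij}^m$ form a Gröbner basis of $\Syz(\piltd(\lmsm(f_1)),\dots,\piltd(\lmsm(f_d)))$ in the monomial pseudo-ASL $A_{lt}^d$. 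Hence $\langle \piltd(\ltsm(g)) \suchthat g\in G\rangle$ both contains and is contained in $\lismlt(\Syz(\piltd(\lmsm(f_\bullet))))$, while trivially $\langle \piltd(\ltsm(g)) \suchthat g\in G\rangle \subseteq \lismlt(N)$; so the whole theorem follows once I show the single inclusion $\lismlt(N) \subseteq \lismlt(\Syz(\piltd(\lmsm(f_\bullet))))$, i.e. that every $\rho \in N$ has $\piltd(\ltsm(\rho))$ equal to the leading monomial of some syzygy of the $\piltd(\lmsm(f_i))$.

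For that inclusion I would fix $\rho = \sum_i h_i e_i \in N$, put $\delta := \max_i \lmsm(h_i f_i)$ and $Q := \{i \suchthat \lmsm(h_i f_i) = \delta\}$, and use the non-zerodivisor hypothesis together with Proposition~\ref{prop:mult} and the goodness of $\prec$ for $(\prec_0,\varphi)$ (Definition~\ref{def:good}) to see that the index $i_0$ with $\lmsm(\rho) = \lmsm(h_{i_0})e_{i_0}$ lies in $Q$ and that the $\delta$-coefficients $c_i := \lcsm(h_i f_i)$ satisfy $\sum_{i\in Q} c_i = 0$. There are then three cases. If $\lmsm(h_{i_0})$ is incompatible with $\ltsm(f_{i_0})$, then $\pi_{lt}(\lmsm(h_{i_0})) \in \Ann(\piltd(\lmsm(f_{i_0})))$, so $\piltd(\ltsm(\rho))$ is a single standard term lying in $\Ann(\piltd(\lmsm(f_{i_0})))e_{i_0} \subseteq \Syz(\piltd(\lmsm(f_\bullet)))$ and we are done. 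If $\lmsm(h_{i_0})$ is compatible with $\ltsm(f_{i_0})$ \emph{and} every $i\in Q$ has $\lmsm(h_i)$ compatible with $\ltsm(f_i)$, then $\psi := \sum_{i\in Q}\lcsm(f_i)\,\pi_{lt}(\ltsm(h_i))e_i$ is a syzygy of the leading monomials: its image under $e_i \mapsto \piltd(\lmsm(f_i))$ is $(\sum_{i\in Q}c_i)\piltd(\delta) = 0$, using Definition~\ref{def:ALT}/Observation~\ref{obs:mlt} and Proposition~\ref{prop:mult}, and $\lmsm(\psi) = \piltd(\ltsm(\rho))$ up to a unit, so again we are done.

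The third case—$\lmsm(h_{i_0})$ compatible with $\ltsm(f_{i_0})$ but some $i\in Q$ with $\lmsm(h_i)$ incompatible with $\ltsm(f_i)$—is the hard part, and is precisely where the compatibility syzygies and the colon-ideal clause~\ref{def:ann-closed} of Ann-closure do their work. The idea is to subtract from $\rho$ a suitable combination of compatibility syzygies $\beta_j^{\bullet}$—these exist because $\{f_1,\dots,f_d\}$ is Ann-closed, via Lemma~\ref{lem:buchberger2}, which rewrites each incompatible product $m f_j$ into a standard expression with zero remainder—thereby reducing to the all-compatible second case above. I expect the main obstacle to be exactly the phenomenon already faced in the proof of Lemma~\ref{lem:buchbergerS}: this rewriting introduces new lower-order terms (which vanish identically in the classical and in the monomial-pseudo-ASL settings) and may even raise the working degree $\delta$. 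Handling this requires a two-level induction—downward on $\delta$, which is legitimate since $\prec_0$ is a well-order (Proposition~\ref{prop:well-order}), and within each $\delta$ an induction on ``distance from compatibility'' as in the proof of Lemma~\ref{lem:buchberger2}—while carefully tracking that the newly created terms stay $\prec \lmsm(\rho)$ and carrying the non-single-valued $\lcmlt$-sets through the argument as in Lemma~\ref{lem:syz_monomial}. This is the main point; the rest is bookkeeping.

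Finally I would record the degeneration: when $A_{lt} \cong_{sm} A_{gen}$—equivalently, by Observation~\ref{obs:alt-domain}, when $A$ is a domain and $A_{lt} \cong_{sm} A_{gen}$—every pair of standard monomials with nonzero product is compatible (Proposition~\ref{prop:Agen}), so the third case is vacuous and there are no compatibility syzygies at all, and the theorem collapses to the clean statement that the $\tau_{ij}^m$ alone are a pseudo-ASL Gröbner basis for the syzygies; cf. Corollary~\ref{cor:buchberger-agen}.
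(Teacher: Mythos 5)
Your skeleton matches the paper's: via Lemma~\ref{lem:syz_leading} and Lemma~\ref{lem:syz_monomial}, reduce to syzygies of the leading monomials $\piltd(\lmsm(f_j))$ in the monomial pseudo-ASL $A_{lt}^d$, split by compatibility, and finish with the divisibility characterization. Your cases~(i) and (ii) are correct, including the computation that $\psi := \sum_{i\in Q}\lcsm(f_i)\,\pi_{lt}(\ltsm(h_i))e_i$ is a syzygy of the $\piltd(\lmsm(f_i))$ whose leading term is $\piltd(\lmsm(\rho))$ up to a scalar.

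The genuine gap is case~(iii), which you name but do not prove, and your sketch for it diverges from the paper's actual argument. The paper does \emph{not} run an outer downward-on-$\delta$ induction. It fixes $t_i e_i := \ltsm(\rho)$ once and for all; when $\pi_{lt}(t_i)\piltd(\ltsm(f_i))\neq 0$, it invokes Lemma~\ref{lem:buchberger2} (whose proof already contains the inner distance-from-compatibility induction) to replace, \emph{only in the other components} $k\neq i$, each incompatible leading term $t_k$ by its standard expression; goodness of $\prec$ for $(\prec_0,\varphi)$ then guarantees that every new term introduced is $\prec t_i e_i$, so the target $\ltsm(\rho)=t_i e_i$ is preserved. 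After this one round of cleanup one forms $\sigma'=\sum_j \pi_{lt}(t_j)e_j$ over the top-degree indices and applies Lemma~\ref{lem:syz_monomial} once. No need to track $\delta$, $Q$, or $i_0$ through iterated subtractions, which is exactly the bookkeeping your sketch would have to carry. Relatedly, your worry that the rewriting ``may even raise the working degree $\delta$'' is misplaced under the theorem's hypotheses: since each $\ltsm(f_k)$ is a non-zerodivisor, the compatibility syzygy used to cancel $\lmsm(h_j)e_j$ for $j\neq i_0$ has all of its contributions bounded, via goodness and \ref{def:termorder:mult}, by $\delta$ in $\prec_0$-degree, so $\delta$ is non-increasing. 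The analogy you draw with Lemma~\ref{lem:buchbergerS} is also not quite right: there one starts from an \emph{arbitrary} expression $f=\sum h'_i g_i$ whose term-by-term compatibilization can genuinely push the working maximum above $\lmsm(f)$, whereas here you start from a true syzygy $\rho\in N$ and only ever subtract other elements of $N$ with strictly smaller leading term.

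Two smaller notes. Your appeal to Lemma~\ref{lem:syz_monomial} should be to the proof rather than the bare statement: what you actually use is that $\ltsm$ of any syzygy of the $\piltd(\lmsm(f_j))$ is divisible \emph{in $A_{lt}^d$} by $\ltsm(\alpha_i^s)$ or $\ltsm(\sigma_{ij}^m)$, which is what that proof establishes. And in the $A_{gen}$ degeneration you record at the end, case~(i) is vacuous as well as case~(iii), since compatibility in $A_{gen}$ is equivalent to the product being nonzero in $A$ (Proposition~\ref{prop:Agen}) and the $\lmsm(f_i)$ are non-zerodivisors; only then do the $\tau_{ij}^m$ alone suffice, as in Corollary~\ref{cor:buchberger-agen}.
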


\begin{proof}
Let $\sigma = \sum_{k=1}^d \ell_k e_k$ be a syzygy, that is, $\sum \ell_k f_k = 0$. We will show that $\pi_{lt}(\ltsm(\sigma))$ is divisible by the leading term of one of the S-remainder syzygies or compatibility syzygies in the statement of the theorem.  
For $k=1,\dotsc,d$, let $t_k e_k = \ltsm(\ell_k e_k)$. Since the terms $t_k e_k$ for distinct values of $k$ involve different basis elements, they cannot cancel one another, so there is some $i$ such that $\ltsm(\sigma) = t_i e_i$. 

If $\pi_{lt}(t_i) \piltd(\ltsm(f_i)) = 0$, then $\pi_{lt}(t_i) e_i$ is divisible by an annihilating syzygy of the form $s e_i$ for some $s \in \LAM(\pi_{lt}(\ltsm(f_i)))$. By Lemma~\ref{lem:syz_leading}, we then have that the corresponding compatibility syzygy has its leading term lt-dividing $t_i e_i = \ltsm(\sigma)$, and we are done.

On the other hand, if \[
\pi_{lt}(t_i) \piltd(\ltsm(f_i)) = \piltd(m) \neq 0,
\]
then we first proceed to eliminate other incompatible terms. For any $k$ such that 
\[
\pi_{lt}(t_k) \pi_{lt}(\ltsm(f_k)) = 0,
\]
we use the compatibility syzygies as in Lemma~\ref{lem:buchberger2} to replace those terms by other terms consisting of standard monomials $m_k$ times a basis vector $e_k$ such that $m_k$ is compatible with $\ltsm(f_k)$. (We note that, as in Lemma~\ref{lem:buchberger2}, our current proof of this uses the non-zerodivisor assumption. This is the only new place in this proof, beyond its use in previous lemmata, that we use that assumption.) Because the compatibility syzygies involved standard expressions, this does not increase the terms $t_k e_k$, and since none of those were the leading term $t_i e_i$, we get another syzygy where $t_i e_i$ is still the leading term, and now every $k$ satisfies $\pi_{lt}(t_k) \pi_{lt}(\ltsm(f_k)) \neq 0$.

Next, let 
\[
I = \left\{j \in [d] : \lmsm(\pi_{lt}(t_j) \piltd(\ltsm(f_j))) = \piltd(m) \right\}.
\] 
By Definition~\ref{def:good}(\ref{def:good:one}), we have that $m \succeq_0 \lmsm(t_j f_j)$ for all $j$, and hence the terms $\{\ltsm(t_j \ltsm(f_j)) : j \in I \}$ must cancel one another. Thus we have that 
\[
\sum_{j \in I} \pi_{lt}(t_j) \pi_{lt}(\ltsm(f_j)) = 0,
\]
so $\sigma' := \sum_{j \in I} \pi_{lt}(t_i) e_j$ is a syzygy among the $\pi_{lt}(\ltsm(f_j))$, over $A_{lt}$. By Lemma~\ref{lem:syz_monomial}, either a divided Koszul syzygy or an annihilating syzygy has the property that its leading term divides $\ltsm(\sigma')$. Finally, by Lemma~\ref{lem:syz_leading}, if a divided Koszul syzygy's leading term divides $\ltsm(\sigma')$, then the leading term of the corresponding S-remainder syzygy lt-divides $m= \lmsm(\sigma)$, and similarly, if an annihilating syzygy divides $\ltsm(\sigma')$, then the corresponding compatibility syzygy's leading term lt-divides $\lmsm(\sigma)$. And thus we conclude. 
\end{proof}


\section{Algorithms}
\label{sec:algorithms}

In this section we show that pseudo-ASL Gröbner bases can be computed by algorithms that are only slightly more complicated than the algorithms for computing ordinary Gröbner bases in a polynomial ring.

We assume that our pseudo-ASL is specified to the algorithms by giving: 
\begin{itemize}
\item $R$, a field in which the basic field operations are computable
\item the finite set $H$
\item A finite, minimal set of generators of the semigroup ideal $\Sigma \subseteq \W^H$
\item An algorithm for multiplying standard monomials in $A$. 
\item An algorithm for deciding the term order $\prec$, that is, the algorithm, given two standard monomials $m,m' \in A$, returns true or false according to whether $m \prec m'$ or not.
\item $A_{lt}$ is specified analogously to $A$ above, using the same $H,\Sigma$ as for $A$, with the further stipulation that we are given generators for the kernel of the map $R[H] \to A_{lt}$, such that the generators all have at most two monomials.
(For $A_{lt}$ we in fact need binomial generators of the defining ideal, and not merely a black box for multiplication in $A_{lt}$.)
\end{itemize}

Monomials, that is, elements of $\W^H$ (which may be thought of as living in one or more of $\W^H$, $R[H]$, $A$, or $A_{lt}$), are specified in the usual way by a list of their exponents. Polynomials, i.e. elements of $R[H]$, are given in their usual sparse representation as a list of monomials together with their coefficients. Elements of $A$ are also given as polynomials, the same as though they were in $R[H]$, but with the caveat that the only monomials allowed are the standard monomials. We assume a weak sort of type system, in which the data structure for a polynomial specifies whether it lives in $A$, $A_{lt}$, or $R[H]$ (so that, for example, if $f,g \in A$ are two polynomials, and we write $fg$, then the data structure implies that that means the product in $A$, rather than the product in $R[H]$ or $A_{lt}$; similarly if $f,g \in A_{lt}$ or $f,g \in R[H]$, and it is considered a type error to multiply a polynomial in $A$ by a polynomial in $A_{lt}$, etc.).

Throughout this section we present our algorithms for $A, A_{lt}$ for notational and linguistic simplicity (we never have to mention ``the vector supporting the leading term'' and so on), but the results and algorithms extend to $A^d, A_{lt}^d$ without any difficulty.

\subsection{Computing in pseudo-ASLs}
\begin{proposition} \label{prop:div-alg}
Given $(A, \preceq, A_{lt})$. There is an algorithm that halts in finite time such that, given two standard terms $t,t' \in A_{lt}$ as input, either outputs the unique (by Observation~\ref{obs:no-binomial-ann}) standard term $t/t'$, or a special symbol $\bot$ if $t'$ does not divide $t$.
\end{proposition}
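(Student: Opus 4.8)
The plan is to reduce every step to classical Gröbner basis computations in the parent polynomial ring $R[H]$, together with the black boxes the algorithm is handed: multiplication of standard monomials in $A$, the decision procedure for $\prec$, the generators of $\Sigma$, and the binomial generators of the ideal $K := \ker(R[H]\to A_{lt})$. We may assume $t,t'\neq 0$ (the other cases being immediate) and write $t = cm$, $t' = c'm'$ with $c,c'\in R^\times$ and $m,m'\in\W^H\setminus\Sigma$. The output $t/t'$, when it exists, is the unique standard term $s$ with $t's = t$; uniqueness is guaranteed by Lemma~\ref{lem:mon-div} (equivalently Observation~\ref{obs:no-binomial-ann}), which applies since $A_{lt}$ is a monomial pseudo-ASL by Lemma~\ref{lem:ALT} and admits the pseudo-ASL term order $\preceq_{lt}$ of Proposition~\ref{prop:orders}.

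The first key point is that one can \emph{effectively multiply standard monomials in $A_{lt}$}, even though this product is not provided directly. For standard monomials $m_1,m_2\in\W^H\setminus\Sigma$, the product $\pi_{lt}(m_1)\pi_{lt}(m_2)$ is the image in $A_{lt}=R[H]/K$ of the ordinary monomial $m_1m_2$, so it is $0$ iff $m_1m_2\in K$ — decidable by ideal membership in $R[H]$ using the given generators of $K$ — and otherwise, by Definition~\ref{def:ALT}, it equals $\pi_{lt}(\ltsm(m_1m_2))$, which is computed by forming $m_1\cdot_A m_2$ with the given multiplication algorithm for $A$ and extracting its $\prec$-leading term. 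Since $H\cap\Sigma=\emptyset$, every $u\in\W^H$ is a finite formal product $\prod_{h\in H}h^{u(h)}$ of standard monomials, and multiplying these out one factor at a time in $A_{lt}$ — which stays a single standard term or $0$ at each step, precisely because $A_{lt}$ is \emph{monomial} — yields $\bar u\in A_{lt}$ in the standard monomial basis after finitely many steps. Extending linearly gives a terminating subroutine $\mathrm{REDUCE}$ that expresses the image in $A_{lt}$ of any $g\in R[H]$ in the standard monomial basis. This is exactly the place where we avoid needing the pseudo-ASL structure of $A_{lt}$ to arise from a term order on $R[H]$.

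With $\mathrm{REDUCE}$ in hand, the algorithm runs as follows. A standard term $s$ with $t's=t$ exists iff $\bar m\in\bar{m'}A_{lt}$, iff $m\in\langle m'\rangle + K$ in $R[H]$; test this by a Gröbner basis computation in $R[H]$, and if it fails, output $\bot$. Otherwise, extract an element $f\in R[H]$ with $m - m'f\in K$ from the reduction of $m$ modulo a Gröbner basis of $\langle m'\rangle + K$, and set $\overline f = \mathrm{REDUCE}(f) = \sum_i r_i n_i$ with distinct standard monomials $n_i$ and nonzero $r_i\in R$, so that $\pi_{lt}(m')\overline f = \bar m$. Using the multiplication subroutine, discard the indices $i$ with $\pi_{lt}(m')\pi_{lt}(n_i)=0$; the remaining partial sum $\overline f'$ still satisfies $\pi_{lt}(m')\overline f' = \bar m$.

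The one step needing a genuine argument rather than bookkeeping is that $\overline f'$ must be a single standard term. Expanding $\pi_{lt}(m')\overline f' = \sum_i r_i\lcsm(m'n_i)\lmsm(m'n_i) = \bar m$: if two surviving indices $i\neq j$ had $\lmsm(m'n_i) = \lmsm(m'n_j)$, then $\lcsm(m'n_j)\,n_i - \lcsm(m'n_i)\,n_j$ would be a proper binomial annihilator of $\pi_{lt}(m')$ in $A_{lt}$ — proper because neither $n_i$ nor $n_j$ individually annihilates $\pi_{lt}(m')$ — contradicting Observation~\ref{obs:no-binomial-ann}. Hence the $\lmsm(m'n_i)$ over the surviving indices are pairwise distinct, so their weighted sum can equal the single standard monomial $\bar m$ only if exactly one index survives; thus $\overline f' = r_{i_0}n_{i_0}$, and $s := \tfrac{c}{c'}\overline f'$ is the desired $t/t'$, since $t's = c'm'\cdot\tfrac{c}{c'}\overline f' = c\,\pi_{lt}(m')\overline f' = c\bar m = t$. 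Every subroutine invoked is a classical polynomial-ring Gröbner basis routine or one of the given black boxes, and each halts, so the whole algorithm halts in finite time. The main obstacle, as indicated, is building the terminating $\mathrm{REDUCE}$ routine without assuming the pseudo-ASL structure is induced by a term order on $R[H]$; the resolution is to exploit that $A_{lt}$ is monomial, so each standard monomial, being a product of generators, reduces to a single standard term by iterated multiplication.
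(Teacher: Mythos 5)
Your proof is correct, but you arrive at the result by a computational route that is genuinely different from the paper's. The paper sets up an integer linear programming instance over the exponent lattice of $\W^H$: writing the defining kernel of $A_{lt}$ as $\langle B \cup M \rangle$ with $B$ binomials and $M$ monomials, it encodes the ambiguity of monomial representatives modulo $\langle B \rangle$ as integer unknowns $c_b$, solves the ILP to produce a single candidate monomial $u \in \W^H$ with $t'u \equiv t \pmod{\langle B\rangle}$, checks separately that $u$ is not killed by $M$, and then converts $u$ to the standard monomial basis using the assumed multiplication oracle for $A_{lt}$. You instead compute a Gröbner basis of $\langle m'\rangle + K$ in $R[H]$, test membership of $m$, pull a witness $f$ with $m - m'f \in K$ out of the transformation matrix, push $f$ into the standard basis of $A_{lt}$ with your $\mathrm{REDUCE}$ subroutine, and finish by a pigeonhole argument via Observation~\ref{obs:no-binomial-ann}. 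Both approaches are ``non-intrinsic'' in the same way — they route through $R[H]$ — but yours trades the paper's ILP for standard Gröbner-basis machinery, which is arguably closer to how one would actually implement it. Two further differences worth noting: (i) you \emph{derive} the multiplication algorithm for $A_{lt}$ from the one for $A$, Definition~\ref{def:ALT}, and ideal membership against $K$, whereas the paper lists an $A_{lt}$-multiplication oracle among its assumptions — your derivation shows that assumption is redundant given the others; (ii) the paper's ILP step returns a single (possibly non-standard) monomial and then expands it, while your $\bar f'$ may a priori have several standard terms, so you need the extra observation that distinct surviving $n_i$ would produce a forbidden proper binomial annihilator of $\pi_{lt}(m')$. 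That last step is correct and is exactly the content of Lemma~\ref{lem:mon-div}/Observation~\ref{obs:no-binomial-ann} applied to the right object; it is a slightly heavier invocation than the paper needs, but it closes the argument cleanly.
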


The proof in fact does not use $(A,\prec)$ at all, though it does use Observation~\ref{obs:no-binomial-ann}, which depends on the existence of a term order on $A_{lt}$.

\begin{proof}
We represent monomials in this algorithm by their exponent vectors. Let $e,e'$ be the exponent vectors corresponding to $t,t'$, respectively.

Suppose the defining ideal of $A_{lt}$ as a quotient of $R[H]$ is given by a generating set consisting of $B \cup M$, where $B$ consists of binomials and $M$ consists of monomials, that is, 
\[
A_{lt} = \bigslant{R[H]}{\langle B \cup M \rangle}.
\] 
For each binomial $b \in B$, let $e_b$ be the difference of the exponent vectors of the two monomials in $b$. For $i=1,\dotsc,|H|$, let $e_i$ denote the $i$-th standard basis vector, with a 1 in the $i$-th position and 0s elsewhere; equivalently, the exponent vector of the variable $x_i \in H$. Then the exponent vectors in $\W^H$ that yield (not necessarily standard) monomials $m$ such that $t'm$ and $t$ are supported on the same monomial are precisely the solution to the following system of integer linear (in)equations:
\[
e_{t'} + \underbrace{\sum_{i=1}^{|H|} m_i e_i}_{\substack{\text{multiply by an} \\ \text{arbitrary monomial}}} + \underbrace{\sum_{b \in B} c_b e_b}_{\substack{\text{work} \\ \text{modulo } B}}  = e_t \qquad m_i \in \Z_{\geq 0}, c_b \in \Z \\
\]
(Equivalently, this could be written as $\sum_{b \in B} c_b e_b \geq e_t - e_{t'}$ with $c_b \in \Z$, but the $m_i$ above are the exponents of the monomial we want to solve for.) 
This is an instance of the well-known Integer Linear Programming problem, which is in the complexity class $\mathsf{NP}$, hence is solvable in exponential time. This gives us some monomial $m \in \W^H$ such that $t'm$ and $t$ are supported on the same standard monomial modulo $\langle B \rangle$. Thus, there is a coefficient $r \in R$ such that $t' rm = t$ modulo $B$. That is, $t'rm \in t + \langle B \rangle$.

However, there are two remaining issues: (1) we have not yet accounted for the monomials $M$ that become zero in $A_{lt}$, and (2) $m$ need not be standard.  For the first issue, suppose that our (not necessarily standard) monomial $m \in \W^H$ in fact becomes zero in $A_{lt}$. This happens if and only if $m$ is in $\langle B \cup M \rangle$. But then our containment $t'rm \in t + \langle B \rangle$ tells us that in fact $t$ must be in $\langle B \rangle + \langle B \cup M \rangle = \langle B \cup M \rangle$, contradicting the fact that $t$ is a nonzero standard monomial in $A_{lt}$. Thus, $m \neq 0$ in $A_{lt}$ (that is, even taking into account all the defining relations of $A_{lt}$, not just the binomial ones).

Finally, suppose $m$ is not standard. By Observation~\ref{obs:no-binomial-ann}, if $t' \divides t$, then there is a unique standard monomial $m'$ such that $t' m'$ is supported on the same monomial as $t$. Using our assumptions, we can algorithmically rewrite $m$ as a linear combination of standard monomials: since we are given (by assumption) both $H$ and an algorithm for multiplication in $A_{lt}$, suppose $m = h_1^{e_1} h_2^{e_2} \dotsb h_k^{e_k}$ where $H = \{h_1,\dotsc,h_k\}$, then using the assumed algorithm we compute the product $h_1^{e_1} h_2^{e_2} \dotsb h_k^{e_k}$. By assumption, the output of that algorithm is then the standard monomial representation of $m$. Finally, at least one of those standard monomials occurring in the expression for $m$, say $m'$, must have the property that $t'm'$ is supported on the same monomial as $t$, so we simply check each one in turn until one is found. 
\end{proof}

\begin{corollary} \label{cor:std-exp-alg}
There is an algorithm that halts in finite time for computing \hyperref[defn:standard-expression]{standard expressions}. More specifically, given $(A,\preceq, A_{lt})$, as well as $f, g_1, \dotsc, g_k \in A$, the algorithm outputs $r,h_1, \dotsc, h_k \in A$ such that $(r,h_1,\dotsc,h_k)$ is a standard expression for $f$ with respect to $(g_1, \dotsc, g_k)$. 
\end{corollary}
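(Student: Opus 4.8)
The plan is to mimic the classical division algorithm in a polynomial ring, replacing each primitive operation by the corresponding algorithmic primitive we have already established. Concretely, given $f, g_1, \dotsc, g_k \in A$, I would maintain a current remainder-candidate $p$ (initialized to $f$) together with partial quotients $h_1, \dotsc, h_k$ (initialized to $0$) and an accumulating true remainder $r$ (initialized to $0$), maintaining throughout the invariant $f = p + r + \sum_i h_i g_i$, together with the invariant that every standard term already moved into $r$ is not $\ltdivides$-divisible by any $\ltsm(g_i)$, that each $h_i$ is compatible with $\ltsm(g_i)$, and that $\lmsm(p) \succeq \lmsm(h_i g_i)$ and $\lmsm(f) \succeq \lmsm(r)$. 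The loop: as long as $p \neq 0$, look at $\ltsm(p)$; test for each $i$ whether $\ltsm(g_i) \ltdivides \ltsm(p)$; this is exactly a divisibility test in $A_{lt}$, which is decidable by Proposition~\ref{prop:div-alg} (return $\bot$ or the quotient). If some $i$ works, let $t = \pi_{lt}^{-1}(\ltfrac{\ltsm(p)}{\ltsm(g_i)}) \in A$, set $h_i \leftarrow h_i + t$ and $p \leftarrow p - t g_i$ (computing $t g_i$ via the assumed multiplication algorithm in $A$, then subtracting standard-monomial-wise). If no $i$ works, move $\ltsm(p)$ into $r$, i.e. $r \leftarrow r + \ltsm(p)$ and $p \leftarrow p - \ltsm(p)$. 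When $p = 0$, output $(r, h_1, \dotsc, h_k)$.

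The key steps, in order: (1) establish that each primitive in the loop body is computable — divisibility and the quotient $\ltfrac{\cdot}{\cdot}$ by Proposition~\ref{prop:div-alg}, multiplication $t g_i$ in $A$ by the assumed multiplication oracle, and addition/subtraction of elements of $A$ termwise, all of which halt in finite time; (2) verify the loop invariants are preserved by each branch — the "divide" branch reduces $\ltsm(p)$ strictly by Equation~\eqref{eqn:f-hg-lt}-style reasoning (exactly as in the existence half of Proposition~\ref{prop:unique-remainder}: since $t$ and $\ltsm(g_i)$ are compatible, $\pi_{lt}(t)\pi_{lt}(\ltsm(g_i)) = \pi_{lt}(\ltsm(tg_i)) \neq 0$ equals $\pi_{lt}(\ltsm(p))$, so $\ltsm(tg_i) = \ltsm(p)$ and hence $\lmsm(p - tg_i) \prec \lmsm(p)$), while the "move to remainder" branch also strictly reduces $\lmsm(p)$ and keeps $r$'s terms non-$\ltdivides$-divisible; (3) the compatibility invariant for $h_i$ holds because every $t$ we add is compatible with $\ltsm(g_i)$ by construction of the division step; (4) the degree bound $\lmsm(f) \succeq \lmsm(h_i g_i)$ follows exactly as in the proof of Proposition~\ref{prop:unique-remainder}, by induction "upward towards $\lmsm(f)$": each standard monomial $m$ added to $h_i$ has $\ltsm(mg_i)$ equal either to a term of the original $f$ or to a term of a previously-processed $p$, hence $\preceq \lmsm(f)$; and (5) termination, since $\lmsm(p)$ strictly decreases in every iteration and $\preceq$ is a well-order (Proposition~\ref{prop:well-order}), so the loop halts after finitely many steps.

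The main obstacle — though a mild one — is bookkeeping around the fact that in a pseudo-ASL the product $t g_i$ need not be a single standard term: after straightening it is a linear combination of standard monomials, so subtracting it from $p$ may introduce new, lower-order standard monomials. One has to check these new monomials are genuinely lower in $\preceq$ than $\ltsm(p)$ (they are, since $\ltsm(tg_i) = \ltsm(p)$ cancels and everything else in $tg_i$ is $\prec \lmsm(p)$ by Proposition~\ref{prop:mult}), and that the upward induction used for the degree bound still closes — which it does, because every such new monomial sits below $\lmsm(p) \preceq \lmsm(f)$, and $\lmsm(p)$ never exceeds $\lmsm(f)$ thanks to the invariant. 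A second minor point is that $\ltfrac{\ltsm(p)}{\ltsm(g_i)}$ as produced by Proposition~\ref{prop:div-alg} lives in $A_{lt}$, so we apply $\pi_{lt}^{-1}$ (the identity on standard monomials, so computationally trivial) to get the element $t \in A$ that we actually multiply by.

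Since this is essentially the algorithmic content already latent in the existence proof of Proposition~\ref{prop:unique-remainder}, combined with the decidability of division from Proposition~\ref{prop:div-alg} and the assumed multiplication oracle for $A$, the proof is short: state the algorithm, observe its primitives are computable, note the invariants are exactly those verified in Proposition~\ref{prop:unique-remainder}, and invoke Proposition~\ref{prop:well-order} for termination.
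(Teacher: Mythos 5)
Your proof is correct and follows essentially the same route as the paper's: the classical division algorithm, with divisibility tests in $A_{lt}$ delegated to Proposition~\ref{prop:div-alg}, with correctness of the resulting standard expression following the existence half of Proposition~\ref{prop:unique-remainder}, and with termination via Proposition~\ref{prop:well-order}. Your version is somewhat more explicit than the paper's sketch — you lay out the loop invariants and fold the ``lower terms'' pass into a single loop, whereas the paper describes it as a separate second phase — but the underlying argument is the same.
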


\begin{proof}
Initially, set 
\[
h_1 = \dotsb = h_k = 0.
\]
Repeatedly test $\pi_{lt}(\lmsm(f))$ for divisibility by $\pi_{lt}(\lmsm(g_i))$ for each $i$, using Proposition~\ref{prop:div-alg}. If it is found to be divisible, say $\pi_{lt}(\ltsm(f)) = m \pi_{lt}(\ltsm(g_i))$ for some standard term $m$, then replace $f$ by $f - \pi_{lt}^{-1}(m) g_i$ and continue, and add $\pi_{lt}^{-1}(m)$ to $h_i$. At the end, we are left with a remainder $r$; for the remainder, we must do the same procedure but with all terms of $r$, not only its leading term. 
\end{proof}

\subsection{Computing S-closures}

Fix $(A, \preceq, A_{lt})$. An \emph{S-closure} of a set $G=\{g_1, \dotsc, g_k\} \subseteq A$ is a set $G'$ such that $G \subseteq G' \subseteq \langle G \rangle$ such that $G'$ is S-closed (Definition~\ref{defn:S-closed}). Computing S-closures is of course quite similar to Buchberger's original algorithm in polynomial rings; the main difference in our setting is that LCMs of monomials need not be unique, and we need to show that we can compute them.

\begin{algorithm}[h]
\caption{Computes S-closure of a finite set with respect to $(A, \preceq, A_{lt})$}
\label{alg:s-closure}
\begin{algorithmic}[1]
\Require{$(A, \preceq, A_{lt})$}
\medskip
\Procedure{S-closure}{$f_1, \ldots, f_s$ (elements of $A$)}
    \State $G \leftarrow \{f_1, \ldots, f_s\}$
    \Repeat
    	\State $G_{prev} \leftarrow G$
         \For{each $p, q \in G_{prev}$}
         		\State $R \leftarrow \{\textsc{Remainder}(s, G') \suchthat s \in \Ssm(p, q)\} \backslash \{0\}$ 
		\State $G \leftarrow G \cup R$
         \EndFor
    \Until{$G = G_{prev}$}
    \State \Return $G$
\EndProcedure
\end{algorithmic}
\end{algorithm}

\begin{theorem}[store=thmcomputablesclosure] 
\label{thm:computable-Sclosure}
Given $(A, \preceq, A_{lt})$, there is an algorithm (Algorithm \ref{alg:s-closure}) that halts in finite time and returns an S-closure of the given set $\{f_1, \dotsc, f_k\}$ of elements of $A$.
\end{theorem}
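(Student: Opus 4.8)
The plan is to separate the proof into three parts: that each individual step of Algorithm~\ref{alg:s-closure} is effectively computable, that the outer \textbf{repeat} loop terminates after finitely many iterations, and that the output is correct (i.e.\ really an S-closure). The last part follows almost mechanically from the exit condition, so the substance is in the first two.

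\emph{Computability of the steps.} One pass through the inner \textbf{for} loop, for a fixed pair $p,q$ from the (finite) set $G_{prev}$, requires computing the S-set $\Ssm(p,q)$ and then computing a remainder of each of its finitely many elements relative to the current $G$. The first is handled by Lemma~\ref{lem:comp-Ssm}, which in turn uses Lemma~\ref{lem:comp-LCM} to compute the set $\lcmlt(\ltsm(p),\ltsm(q))$; the second is handled by Corollary~\ref{cor:std-exp-alg}, which gives an algorithm that halts and returns a standard expression, and hence a remainder. Since $G_{prev}$ is finite and each $\Ssm(p,q)$ is finite, one pass through the \textbf{for} loop halts in finite time.

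\emph{Termination of the \textbf{repeat} loop — the main obstacle.} Here I would run the usual Noetherianity argument, but carried out inside $A_{lt}$. The key observation is that whenever a nonzero element $r$ is adjoined to $G$ on some pass, $r$ is a remainder relative to the then-current $G$, so by the definition of standard expression (Definition~\ref{defn:standard-expression}) we have $\ltsm(g) \nmid_{lt} \ltsm(r)$ for every $g$ already in $G$. Applying Lemma~\ref{lem:monomial} to the standard monomial ideal $\langle \pi_{lt}(\ltsm(g)) \suchthat g \in G\rangle \subseteq A_{lt}$, this says $\pi_{lt}(\ltsm(r))$ is \emph{not} in that ideal, so adjoining $r$ strictly enlarges it. Since $A_{lt}$ is a finitely generated algebra over the field $R$, it is Noetherian, so there is no infinite strictly ascending chain of ideals; hence only finitely many passes can adjoin a new element. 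The first pass that adjoins nothing has $G = G_{prev}$, so the loop exits. Together with the previous paragraph, the algorithm halts in finite time. I expect essentially all the real content of the proof to be concentrated in this paragraph; everything else is bookkeeping.

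\emph{Correctness.} Every element ever adjoined to $G$ is a remainder of an element of $\Ssm(p,q)$ for $p,q$ in the current $G$, hence lies in the ideal generated by that $G$; by induction on the passes, it lies in $\langle f_1,\dotsc,f_s\rangle$. Since the $f_i$ are never removed, on output we have $\{f_1,\dotsc,f_s\} \subseteq G \subseteq \langle f_1,\dotsc,f_s\rangle$. On exit, $G = G_{prev}$, and inspection of the \textbf{for} loop shows this forces $\textsc{Remainder}(s,G) = 0$ for every $s \in \Ssm(p,q)$ and every pair $p,q \in G$, which is exactly the defining property of S-closedness (Definition~\ref{defn:S-closed}). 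Hence $G$ is an S-closure of $\{f_1,\dotsc,f_s\}$, completing the proof.
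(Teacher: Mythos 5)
Your proof is correct and follows the same strategy as the paper's: verify computability of each step via Lemma~\ref{lem:comp-Ssm} and Corollary~\ref{cor:std-exp-alg}, and establish termination by observing that each newly adjoined remainder strictly enlarges the standard monomial ideal $\langle \pi_{lt}(\ltsm(g)) : g \in G\rangle \subseteq A_{lt}$, which by Noetherianity of $A_{lt}$ can happen only finitely often. Your version is in fact slightly more explicit than the paper's in two places: you invoke Lemma~\ref{lem:monomial} to justify that non-divisibility of $\pi_{lt}(\ltsm(r))$ implies non-membership in the leading ideal, and you explicitly verify the containment $G' \subseteq \langle f_1,\dotsc,f_s\rangle$ required by the definition of S-closure, which the paper leaves tacit.
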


\begin{lemma} \label{lem:comp-LCM}
In $A_{lt}$, the function $(M,N) \mapsto \lcmlt(M,N)$ (see Definition~\ref{defn:lcms}) is computable.
\end{lemma}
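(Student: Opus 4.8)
The plan is to reduce the problem to a routine ideal-intersection computation in the parent polynomial ring $R[H]$, push the answer back down to $A_{lt}$, and prune. Recall that, by Definition~\ref{defn:lcms} and the discussion following it, $\lcmlt(M,N)$ is the unique minimal generating set of standard monomials of the ideal $\langle M\rangle \cap \langle N\rangle \subseteq A_{lt}$, which is a standard monomial ideal by Lemma~\ref{lem:ALT} (so $A_{lt}$ is a monomial pseudo-ASL) together with Corollary~\ref{cor:intersection-monomial}. Hence, using the division algorithm of Proposition~\ref{prop:div-alg} and Lemma~\ref{lem:unique-min}, it suffices to compute \emph{some} finite generating set of $\langle M\rangle\cap\langle N\rangle$ consisting of standard monomials and then discard the redundant ones.

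First I would work in $R[H]$. Regard the input exponent vectors $M,N$ as ordinary monomials in $\W^H \subseteq R[H]$ (legitimate since $H\cap\Sigma=\emptyset$), let $K = \ker(\pi\colon R[H]\to A_{lt})$ be the defining ideal of $A_{lt}$, given to the algorithm by a finite generating set, and set $\widehat M := \langle M\rangle_{R[H]} + K$ and $\widehat N := \langle N\rangle_{R[H]} + K$. A finite generating set $g_1,\dotsc,g_r$ of $\widehat M \cap \widehat N$ can be computed by the standard elimination trick (compute a Gröbner basis of $t\widehat M + (1-t)\widehat N \subseteq R[H][t]$ under an order eliminating $t$ and keep the $t$-free generators); this halts because the field operations of $R$ are assumed computable. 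Next I would push each $g_i$ down to $A_{lt}$: write $g_i$ as an $R$-linear combination of monomials $\mu\in\W^H$; each $\mu$ is a product of elements of $H$, hence a product of standard monomials of $A_{lt}$, so iterating the given multiplication algorithm for standard monomials of $A_{lt}$ (exactly the maneuver used at the end of the proof of Proposition~\ref{prop:div-alg}) computes $\pi(\mu)$, and thus $\pi(g_i)$, as an $R$-linear combination of standard monomials. Let $S$ be the finite set of standard monomials occurring with nonzero coefficient in some $\pi(g_i)$.

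Then $S$ generates $\langle M\rangle\cap\langle N\rangle$ in $A_{lt}$: since $\widehat M,\widehat N\supseteq K = \ker\pi$, a short diagram chase gives $\pi(\widehat M\cap\widehat N) = \pi(\widehat M)\cap\pi(\widehat N) = \langle M\rangle_{A_{lt}}\cap\langle N\rangle_{A_{lt}}$, so this ideal is generated by $\pi(g_1),\dotsc,\pi(g_r)$; as it is a standard monomial ideal, Lemma~\ref{lem:monomial} forces every standard monomial appearing in any $\pi(g_i)$, i.e.\ every element of $S$, to lie in it, while conversely each $\pi(g_i)$ is an $R$-combination of elements of $S$, so $\langle S\rangle$ equals the intersection. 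Finally I would prune $S$: repeatedly test, for distinct $m,m'\in S$, whether $m \ltdivides m'$ via Proposition~\ref{prop:div-alg}, and if so delete $m'$ (deleting a divisible generator does not change the ideal); the process terminates since $S$ only shrinks, leaving a generating set of $\langle M\rangle\cap\langle N\rangle$ whose elements are pairwise non-dividing, which by Lemma~\ref{lem:unique-min} is exactly $\lcmlt(M,N)$, and the algorithm outputs it. The module version reduces immediately: $\lcmlt(me_i,m'e_j)$ is empty if $i\neq j$ and is $\lcmlt(m,m')\,e_i$ if $i=j$, cf.\ Observation~\ref{obs:lcm-unique-disc}. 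I expect the only real subtlety—rather than a genuine obstacle—to be keeping straight the bookkeeping between ordinary monomials of $R[H]$ and the $\Sigma$-standard monomials of $A_{lt}$, i.e.\ that $\pi$ is effective on monomials; this is handled by the iterated-multiplication argument already used for Proposition~\ref{prop:div-alg}, and everything else is a routine polynomial-ring computation glued to the monomial-pseudo-ASL results of Section~\ref{sec:monomial}.
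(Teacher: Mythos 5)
Your proof is correct and follows essentially the same route as the paper's: compute $(K+\langle M\rangle)\cap(K+\langle N\rangle)$ in $R[H]$, push the generators down to $A_{lt}$, and prune to the division-minimal standard monomials via Proposition~\ref{prop:div-alg} and Lemma~\ref{lem:unique-min}. Your version spells out more explicitly how the images $\pi(g_i)$ are turned into a set of standard-monomial generators (extracting all standard monomials appearing in each $\pi(g_i)$ and invoking Lemma~\ref{lem:monomial} to justify they all lie in the intersection), which is a useful elaboration of a step the paper treats more tersely.
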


\begin{proof}
Let $\pi \colon R[H] \to A_{lt}$ be the defining map, with kernel $K$. Then, in $R[H]$, compute generators of the ideal intersection 
\[
I := (K + \ideal{M}) \cap (K + \ideal{N}).
\]
Since $A_{lt}$ is a monomial pseudo-ASL, it follows that $K$ is a binomial ideal and $\pi(I)$ is a standard monomial ideal (Cor.~\ref{cor:intersection-monomial}). Map the generators of $I$ into $A_{lt}$ using $\pi$, then compute the unique minimum generating set of $\pi(I)$ as follows. By Lemma~\ref{lem:unique-min}, this is the subset of the generators of $I$ that are $\ltdivides$-division-minimal. We can test division-minimality among the given generating set, after mapping it into $A_{lt}$ using $\pi$, using the algorithm of Proposition~\ref{prop:div-alg} or using standard Gröbner bases in $R[H]$ as in \cite{ES}. By Lemma~\ref{lem:unique-min} the minimum generating set is unique, and this is precisely $\lcmlt(M,N)$.
\end{proof}

\begin{remark}
The procedure in Lemma \ref{lem:comp-LCM} and other algorithms in this section are written for arbitrary pseudo-ASLs. Although the above algorithm does work in $R[H]$, rather than intrinsically in $A$, we note that it only works with binomial ideals in $R[H]$. However, if one knew more about the particular pseudo-ASL we are working in, we can expect that the algorithms can be streamlined to take advantage of the structure of the pseudo-ASL. For instance, in Section \ref{sec:bd-gb-algos}, we give a specifically tailored algorithm to compute the set of LCMs in the ASL structure imposed on the polynomial ring by the basis of standard bideterminants, and that algorithm is entirely using combinatorics of bitableaux, not appealing to calculations in $R[H]$ at all.
\end{remark}

\begin{lemma} \label{lem:comp-Ssm}
The function $(p,q) \mapsto \Ssm(p,q)$ is computable.
\end{lemma}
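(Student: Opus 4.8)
The plan is to unwind Definition~\ref{defn:sset} and observe that every ingredient appearing in $\Ssm(p,q)$ is computable from the data the algorithms are given, by assembling the pieces already in hand. Fix $(A, \preceq, A_{lt})$ and inputs $p, q$ (elements of $A$, or more generally $A^d$), each presented as an $R$-linear combination of standard monomials. The target set $\Ssm(p,q)$ is by definition $\{\Ssm^l(p,q) \suchthat l \in \lcmlt(\ltsm(p), \ltsm(q))\}$, so it suffices to (i) compute $\ltsm(p)$ and $\ltsm(q)$, (ii) compute the finite LCM set, and (iii) for each $l$ in that set compute $\Ssm^l(p,q) = \pi_{lt}^{-1}(\ltfrac{l}{\ltsm(p)})\cdot p - \pi_{lt}^{-1}(\ltfrac{l}{\ltsm(q)})\cdot q$.

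For step (i) I would use the assumed algorithm for deciding $\preceq$: scan the finitely many standard monomials occurring in $p$, return the $\preceq$-largest one together with its coefficient to obtain $\lmsm(p)$, $\lcsm(p)$, hence $\ltsm(p)$, and likewise for $q$ (in the module case one also records the leading basis vector at this point). For step (ii) I would invoke Lemma~\ref{lem:comp-LCM} to compute the finite set $\lcmlt(\ltsm(p), \ltsm(q)) \subseteq A_{lt}$. For step (iii), for each $l$ in this set I would run the division algorithm of Proposition~\ref{prop:div-alg} on the pairs $(l, \ltsm(p))$ and $(l, \ltsm(q))$; since $l$ was produced as a common multiple of $\ltsm(p)$ and $\ltsm(q)$ in $A_{lt}$, that algorithm never returns $\bot$ on these inputs, and by Observation~\ref{obs:no-binomial-ann} its outputs $\ltfrac{l}{\ltsm(p)}$ and $\ltfrac{l}{\ltsm(q)}$ are the unique standard terms with $\pi_{lt}(\ltsm(p))\cdot(\ltfrac{l}{\ltsm(p)}) = l$ and similarly for $q$. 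Applying $\pi_{lt}^{-1}$ to a standard term requires no computation: $\pi_{lt}$ is the identity on standard monomials, so $\pi_{lt}^{-1}(\ltfrac{l}{\ltsm(p)})$ is represented by the same exponent vector and coefficient, merely re-typed as living in $A$. Finally I would use the assumed multiplication algorithm for $A$ to form $\pi_{lt}^{-1}(\ltfrac{l}{\ltsm(p)})\cdot p$ and $\pi_{lt}^{-1}(\ltfrac{l}{\ltsm(q)})\cdot q$ in $A$ (expanding and straightening as needed), subtract, and record the result $\Ssm^l(p,q)$; collecting these over all $l$ yields $\Ssm(p,q)$.

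I do not expect a genuine obstacle in this lemma: every step above halts in finite time -- finding leading terms is a finite scan through the oracle for $\preceq$; the LCM set is finite and computable by Lemma~\ref{lem:comp-LCM}; for each of finitely many $l$, Proposition~\ref{prop:div-alg} halts, the passage through $\pi_{lt}^{-1}$ is free, and multiplication in $A$ is assumed to halt. The only point I would flag explicitly is the one that makes the whole thing well-posed rather than implementation-dependent: $\ltfrac{l}{\ltsm(p)}$ denotes an honest standard term (not a coset of choices) precisely because $A_{lt}$ carries a pseudo-ASL term order, so Observation~\ref{obs:no-binomial-ann} applies and division by $\ltsm(p)$ in $A_{lt}$ is unique; this is exactly what guarantees that the output of Proposition~\ref{prop:div-alg} -- and hence of this procedure -- is canonical. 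The argument goes through verbatim for $p,q \in A^d$, using leading vectors when extracting leading terms, as noted at the start of this section.
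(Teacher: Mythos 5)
Your argument is correct and matches the paper's proof: both reduce computability of $\Ssm(p,q)$ to the same ingredients—computing leading terms by a finite scan with the $\preceq$ oracle, computing $\lcmlt$ via Lemma~\ref{lem:comp-LCM}, dividing in $A_{lt}$ via Proposition~\ref{prop:div-alg}, passing through $\pi_{lt}^{-1}$ for free since it is the identity on the underlying data, and finishing with the assumed multiplication/subtraction in $A$. Your additional remarks on why Proposition~\ref{prop:div-alg} never returns $\bot$ here and on uniqueness via Observation~\ref{obs:no-binomial-ann} are correct and make explicit something the paper leaves implicit, but do not constitute a different approach.
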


\begin{proof}
To compute $\Ssm(p,q)$, in looking at the definition we see it suffices to be able to:
\begin{enumerate}
\item Compute $\lcmlt(p,q)$
\item Divide an element of $A$ by its leading coefficient
\item Multiply and subtract two elements of $A$
\item Compute $a / b$ for two standard monomials $a,b \in A_{lt}$
\end{enumerate}

We note that, while technically $\pi_{lt}$ and $\pi_{lt}^{-1}$ are used in the definition, computationally these come ``for free'', as we may assume our elements of $A$ and $A_{lt}$ are given as lists consisting of standard monomials and their coefficients. Thus, in terms of the underlying data structure, a standard monomial $m$ in $A$ is represented by the \emph{same} data structure as $\pi_{lt}(m)$, and vice versa.

Now, (1) is computable by Lemma~\ref{lem:comp-LCM}. 

(2) is computable as we have assumed that arithmetic in $R$ is computable.

(3) is computable by assumption. Subtraction is nearly trivial, as elements of $A$ are represented by lists of standard monomials and their coefficients.

Finally, (4) is computable by Proposition~\ref{prop:div-alg}.
\end{proof}

\begin{proof}[Proof of Theorem~\ref{thm:computable-Sclosure}]
Since Algorithm~\ref{alg:s-closure} only ever adds elements to $G$, the output of the algorithm contains its input as a subset. Furthermore, if the algorithm halts, then we have $G = G_{prev}$ (following notation in Algorithm~\ref{alg:s-closure}), whence we must have had $R = \emptyset$. But the latter is only possible if all S-remainders reduced to 0, i.e., if $G_{prev}=G$ was S-closed. Therefore, if the algorithm halts, then it outputs an S-closure of its input.

For the algorithm to be computable, it is necessary and sufficient that each individual step be computable, and that the loop halts after finitely many iterations. Most of the steps are computable in standard fashion, with the possible exceptions of \textsc{Remainder} and $\Ssm(p,q)$, which are computable by Corollary~\ref{cor:std-exp-alg} and Lemma~\ref{lem:comp-Ssm}, respectively.

Finally, it remains to show that the main loop exits after finitely many iterations. The key idea here is to use Noetherianity in $A_{lt}$. Suppose $r \in R$ is a nonzero remainder to get added to $G$ in Algorithm~\ref{alg:s-closure}. As $r$ is a remainder (of something) relative to $G_{prev}$, it follows that for all $g \in G_{prev}$, $g \nmid_{lt} r$. As $I := \langle \pi_{lt}(\ltsm(g)) \suchthat g \in G_{prev} \rangle$ is a standard monomial ideal in $A_{lt}$, this means that $\pi_{lt}(\ltsm(r))$ is not in $I$. Thus, by adding $r$ to $G$, we have added new elements to $\langle \pi_{lt}(\ltsm(g)) \suchthat g \in G \rangle$. As $A_{lt}$ is Noetherian, this can only happen finitely many times. When it fails to happen, we will necessarily have $R = \emptyset$, hence $G=G_{prev}$ and the algorithm will terminate.
\end{proof}

\begin{remark} The computational complexity of the algorithm (such as runtime, space, completeness for various complexity classes, etc.) will of course depend on that of the algorithm to compute $\lcmlt(M,N)$ and the algorithm to compute remainders, as well as the structure of the pseudo-ASL $A$ and the complexity of multiplication---and in particular, the complexity of straightening---in $A$. Beyond those, we expect the complexity of this algorithm to be similar to that of Buchberger's algorithm for ordinary Gröbner bases, and we expect our algorithm is also amenable to improvements analogous to those made to algorithms for ordinary Gröbner bases over the last 40 years, e.\,g., \cite{GBsurvey, GB50, F4, F5}.
\end{remark}

In the nicest cases---which are indeed common in applications of (pseudo-)ASLs---already the ``ordinary'' Buchberger's algorithm for S-closure as above solves the problem of computing pseudo-ASL Gröbner bases, by Corollary~\ref{cor:buchberger-agen}: 

\begin{corollary} \label{cor:AgenGB}
If $A_{lt}$ is a domain (equivalently, by Obs.~\ref{obs:alt-domain}, $A$ is a domain and $A_{lt} \cong A_{gen}$), then Algorithm~\ref{alg:s-closure} correctly computes pseudo-ASL Gröbner bases in $A$.
\end{corollary}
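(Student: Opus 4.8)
The plan is to assemble this from two already-established results: the Buchberger-type criterion of Corollary~\ref{cor:buchberger-agen}, and the correctness/termination of the S-closure algorithm from Theorem~\ref{thm:computable-Sclosure}. First I would unpack the hypothesis using Observation~\ref{obs:alt-domain}: $A_{lt}$ being a domain is equivalent to $A$ being a domain together with $A_{lt} \cong_{sm} A_{gen}$. Since $A$ is then a domain, Proposition~\ref{prop:domain} gives that every product of standard monomials is nonzero, so in particular every standard monomial---hence every leading term $\ltsm(g)$ for $g \in A$---is a non-zerodivisor. Thus both hypotheses of Corollary~\ref{cor:buchberger-agen} (with $d=1$, where the single restricted order $\preceq_1$ is just $\preceq$ and $A_{gen}^{\preceq_1} = A_{gen}$) are met for every finite subset of $A$.

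Next I would run through what the algorithm produces. Given input $f_1,\dotsc,f_s \in A$, Theorem~\ref{thm:computable-Sclosure} says Algorithm~\ref{alg:s-closure} halts and returns a set $G'$ with $\{f_1,\dotsc,f_s\} \subseteq G' \subseteq \langle f_1,\dotsc,f_s\rangle$ that is S-closed in the sense of Definition~\ref{defn:S-closed}. The two containments force $\langle G' \rangle = \langle f_1,\dotsc,f_s\rangle =: I$, so $G'$ generates the intended ideal. Applying Corollary~\ref{cor:buchberger-agen} to $G'$ (whose leading terms are non-zerodivisors, $A$ being a domain), S-closedness of $G'$ is equivalent to $G'$ being a pseudo-ASL Gröbner basis for $\langle G' \rangle = I$ relative to $(\preceq, A_{lt})$. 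That is precisely the claim that Algorithm~\ref{alg:s-closure} correctly computes pseudo-ASL Gröbner bases in $A$ under the stated hypothesis.

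I do not expect a real obstacle here; the corollary is essentially bookkeeping on top of Corollary~\ref{cor:buchberger-agen} and Theorem~\ref{thm:computable-Sclosure}. The one place to be slightly careful is matching hypotheses: Corollary~\ref{cor:buchberger-agen} wants $A_{lt} \cong_{sm} A_{gen}$ and the non-zerodivisor condition for the elements of the \emph{output} set $G'$, not merely for the input $f_i$. The isomorphism follows from Observation~\ref{obs:alt-domain}, and since $A$ is a domain the non-zerodivisor condition holds for every element of $A$, so it holds for all of $G'$ automatically---no separate argument is needed. It is also worth a sentence to confirm that S-closure preserves the generated ideal, which is immediate from $G' \subseteq \langle G' \rangle$ together with $\{f_i\} \subseteq G'$ giving the reverse containment.
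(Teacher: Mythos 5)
Your argument is correct and is exactly the route the paper intends: unpack the hypothesis via Observation~\ref{obs:alt-domain}, invoke Theorem~\ref{thm:computable-Sclosure} for halting and for the fact that the output $G'$ is S-closed with $\langle G' \rangle = \langle f_1,\dotsc,f_s\rangle$, and then conclude via Corollary~\ref{cor:buchberger-agen}. The paper leaves this as an immediate consequence without spelling out the bookkeeping; your write-up simply makes the routine hypothesis-matching (non-zerodivisor condition on all of $G'$, the $d=1$ instance of Corollary~\ref{cor:buchberger-agen}, preservation of the generated ideal) explicit, which is fine and adds nothing that would need separate justification.
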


\subsection{Computing Ann-closures and pseudo-ASL \gb bases}

The ``only'' remaining difficulty in computing pseudo-ASL Gröbner bases in general is to be able to compute Ann-closure. As with S-closure, we say that \emph{$G'$ is an Ann-closure of $G$} if $G \subseteq G' \subseteq \langle G \rangle$ and $G'$ is Ann-closed.

\begin{theorem} \label{thm:alg-ann-closure}
Given $(A, \preceq, A_{lt})$ and an oracle that decides if a set is Ann-closed, there is an algorithm (Algorithm~\ref{alg:ann-closure}) that halts in finite time that, given a finite set $G$, computes an Ann-closure of $G$.
\end{theorem}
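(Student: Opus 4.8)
The plan is to make Algorithm~\ref{alg:ann-closure} a fixpoint iteration structurally parallel to the S-closure procedure of Algorithm~\ref{alg:s-closure}, but driven by the assumed Ann-closedness oracle. I would have the algorithm maintain a current set $G$ (initialised to the input), a current finite set $S$ of compatibility syzygies relative to $G$ (initialised to $\emptyset$; see Definition~\ref{defn:compat-syzygy}), and a FIFO queue of \emph{pending pairs} $(m_0,g_0)$, initially loaded with all pairs $(s,g)$ such that $g\in G$ and $\pi_{lt}(s)\in\LAM(\piltd(\ltsm(g)))$, i.e.\ the pairs demanded by the first clause of Definition~\ref{def:ann-closed}. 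Each iteration begins by querying the oracle on $G$; if $G$ is reported Ann-closed the algorithm halts and returns $G$. Otherwise it dequeues a pending pair $(m_0,g_0)$ for which $S$ currently holds no compatibility syzygy, computes a standard expression (Definition~\ref{defn:standard-expression}) $m_0 g_0 = r + \sum_{g\in G}\ell_g g$ relative to $G$ using Corollary~\ref{cor:std-exp-alg}, and branches: if $r\neq 0$ it adjoins $r$ to $G$ (legitimately, since $r = m_0 g_0 - \sum_g \ell_g g \in \langle G\rangle$), resets $S$ to $\emptyset$ and reloads the queue from the enlarged $G$; if $r = 0$ it adjoins the genuine compatibility syzygy $m_0 e_{g_0} - \sum_g \ell_g e_g$ to $S$ and enqueues the pairs that this new syzygy forces under the second clause of Definition~\ref{def:ann-closed}, namely $(\lmsm(tm),g)$ for each standard monomial $m$ occurring in some $\ell_g$ and each standard monomial $t$ in the unique minimal generating set of the colon ideal $(\Ann(\piltd(\ltsm(g))):\pi_{lt}(m))$. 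First I would check that every per-iteration step is effective: $\LAM$ and the colon-ideal generating sets are standard-monomial data in the monomial pseudo-ASL $A_{lt}$, computable either via the division algorithm of Proposition~\ref{prop:div-alg} (keeping only the $\ltdivides$-division-minimal elements, unique by the module analogue of Lemma~\ref{lem:unique-min}, and using Lemma~\ref{lem:monomial_ann} together with \cite{ES} for the annihilators) or via ordinary Gröbner bases applied to the binomial defining ideal of $A_{lt}$ in $R[H]$, exactly as in the proof of Lemma~\ref{lem:comp-LCM}; products such as $m_0 g_0$ and $\lmsm(tm)$ are formed with the assumed multiplication algorithm in $A$; and standard expressions come from Corollary~\ref{cor:std-exp-alg}.

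Correctness is then immediate, granting termination. Every element ever adjoined to $G$ is a standard-expression remainder relative to the then-current $G$, hence by induction lies in $\langle G_{\mathrm{in}}\rangle$ for $G_{\mathrm{in}}$ the input; since the algorithm only ever enlarges $G$, the invariant $G_{\mathrm{in}}\subseteq G\subseteq\langle G_{\mathrm{in}}\rangle$ (equivalently $\langle G\rangle=\langle G_{\mathrm{in}}\rangle$) holds throughout, and when the algorithm halts it does so precisely because the oracle certified the final $G$ to be Ann-closed. So the output is an Ann-closure of $G_{\mathrm{in}}$.

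For termination I would argue by contradiction. Suppose the algorithm runs forever. Each time a nonzero remainder $r$ is adjoined, $\pi_{lt}(\ltsm(r))$ is not $\ltdivides$-divisible by any $\pi_{lt}(\ltsm(g))$ with $g$ in the current $G$ (the defining property of a remainder), so by Lemma~\ref{lem:monomial} it does not lie in $\langle\pi_{lt}(\ltsm(g)):g\in G\rangle\subseteq A_{lt}$; thus each such adjunction strictly enlarges the leading ideal in $A_{lt}$, and since $A_{lt}$ is Noetherian this occurs only finitely often. Hence after finitely many iterations $G$ stabilises at a finite set $G^{*}$ that is never altered again; let the queue be reloaded for the last time from $G^{*}$. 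From then on every dequeued pair has a \emph{zero}-remainder standard expression relative to $G^{*}$ (a nonzero remainder would contradict stability), so every syzygy subsequently added to $S$ is a genuine compatibility syzygy relative to $G^{*}$, and the enqueuing rule reproduces exactly the closure prescribed by the two clauses of Definition~\ref{def:ann-closed}. Since, by hypothesis, the algorithm never halts and the queue is FIFO, every enqueued pair is eventually processed, so the union $S_{\infty}$ of all syzygies ever added after stabilisation is a set of genuine compatibility syzygies relative to $G^{*}$ satisfying both clauses of Definition~\ref{def:ann-closed}. As that definition requires only the \emph{existence} of such a set $S$, with no finiteness hypothesis, $S_{\infty}$ witnesses that $G^{*}$ is Ann-closed---so the very next oracle query returns ``Ann-closed'' and the algorithm halts, contradicting the assumption. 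Therefore the algorithm halts.

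The hard part is exactly this termination argument, and the conceptual obstacle it overcomes: one cannot in general expect the algorithm to finitely \emph{construct} a witnessing family of compatibility syzygies, because the colon-ideal closure in the second clause of Definition~\ref{def:ann-closed} can generate an unbounded cascade of ever-larger incompatible monomials $\lmsm(tm)$ (a phenomenon absent from ordinary Gröbner basis theory, where these annihilators vanish). The resolution is to let Noetherianity of $A_{lt}$ control the only part of the state that truly must converge---the leading ideal of $G$, hence $G$ itself---and to let the oracle detect Ann-closedness of the stabilised $G^{*}$ from the mere \emph{existence} of the (possibly infinite) syzygy family the algorithm is generating, without ever having to complete that generation. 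This is the ``gap'' the oracle hides, and Remark~\ref{rmk:oracle} is where I would then explain how to implement the oracle itself by passing to the parent polynomial ring $R[H]$.
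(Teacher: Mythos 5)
Your proposal is correct and follows essentially the same strategy as the paper's: use Noetherianity of $A_{lt}$ to argue $G$ can only be strictly enlarged finitely often, then argue that once $G$ has stabilised every dequeued pair yields a zero-remainder standard expression, so a (possibly infinite) witnessing set of compatibility syzygies exists and the oracle detects Ann-closure. The only differences are cosmetic: your algorithm resets $S$ and reloads the queue whenever $G$ grows, whereas the paper's Algorithm~\ref{alg:ann-closure} retains a $Done$ set and keeps going (relying on the observation that a compatibility syzygy relative to the old $G$ remains one relative to the new $G$); and you spell out the witnessing set $S_\infty$ and the FIFO fairness condition more explicitly than the paper's terser termination paragraph, which is a welcome clarification rather than a deviation.
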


For discussion about the oracle, see Remark~\ref{rmk:oracle} below.

\begin{algorithm}[h]
\caption{Computes Ann-closure of a finite set with respect to $(A, \preceq, A_{lt})$}
\label{alg:ann-closure}
\begin{algorithmic}[1]
\Require{$(A, \preceq, A_{lt})$, Oracle \textsc{Is-Ann-Closed}}
\medskip
\Procedure{Ann-closure}{$f_1, \ldots, f_s$ (elements of $A$)}
    \State $G \gets \{f_1, \ldots, f_s\}$
    \State $Q \gets \{(s,g) \suchthat s \in LAM(\pi_{lt}(\ltsm(g))), g \in G\}$ \label{alg:ann-closure:LAM}
    \State $Done \gets \{\}$
    \Repeat
    	\State Pick and remove $(s,g)$ from $Q$, and add it to $Done$
	\State Compute a standard expression $\pi_{lt}^{-1}(s) g = r + \sum_{\gamma \in G} h_\gamma \gamma$ \label{alg:ann-closure:std}
 	  \If{$r \neq 0$}
	    \State $G \gets G \cup \{r\}$ \label{alg:ann-closure:add-remainder} \label{alg:ann-closure:add-to-G}
	    \State $Q \gets Q \cup \{(s',r) : s' \in LAM(\pi_{lt}(\ltsm(r)))\}$
	  \EndIf
	  \State $Q \gets Q \cup \{(\lmsm(tm),\gamma) \suchthat m \in h_{\gamma}, t \in LAM(\pi_{lt}(m\ltsm(\gamma))), \gamma \in G\} \backslash Done$ \label{alg:ann-closure:Qupdate}
    \Until{$Q$ is empty or \textsc{Is-Ann-Closed}$(G)$} \label{alg:ann-closure-termination}
    \State \Return $G$
\EndProcedure
\end{algorithmic}
\end{algorithm}

\begin{proof}[Proof of Theorem \ref{thm:alg-ann-closure}]
To compute an Ann-closure of $G \subseteq A$, we follow the outline suggested by Definition~\ref{def:ann-closed} and Lemma~\ref{lem:buchberger2}. The idea is to build the set $S$ of compatibility syzygies from Definition~\ref{def:ann-closed} as follows. We maintain a set $Q$ (we might call it a ``queue'', but it doesn't matter whether it is first-in, first-out or not) of pairs $(s,g)$ where $s$ is a standard monomial and $g \in G$ such that $s$ is incompatible with $\ltsm(g)$, and such that we want to have a compatibility syzygy for $(s,g)$ in $S$. Initially, $Q$ is comes from the least annihilating monomials of each $\pi_{lt}(\ltsm(g))$ ($g \in G$) (line \ref{alg:ann-closure:LAM} of Algorithm~\ref{alg:ann-closure}). On subsequent rounds, $Q$ gets appended by pairs coming from $(\Ann_{A_{lt}}(\pi_{lt}(\ltsm(g))) \suchthat m)$ (line~\ref{alg:ann-closure:Qupdate}), where $m$ is a standard monomial occurring in some compatibility syzygy in $S$. 

When attempting to compute a compatibility syzygy for $(s,g)$, it may be that the standard expression for $\pi_{lt}^{-1}(s)g$ relative to the current $G$ has some non-zero remainder $r$ (line~\ref{alg:ann-closure:std}; in this case, $G$ is not a pASL Gröbner basis, and so is certainly not yet Ann-closed). In that case, we add $r$ to $G$ (line~\ref{alg:ann-closure:add-remainder}) so that we then have a compatibility syzygy for $(s,g)$.

Having overviewed the algorithm, we now show that each of its steps is computable.
The only points whose computability requires a little work are computing the updates to $Q$ (lines~\ref{alg:ann-closure:LAM} and \ref{alg:ann-closure:Qupdate}) and computing a standard expression (line~\ref{alg:ann-closure:std}). The latter is the content of Corollary~\ref{cor:std-exp-alg}. Computing a minimal generating set of $\Ann(\pi_{lt}(\ltsm(g)))$ can be done using standard Gröbner basis methods in $A_{lt}$, since we have its defining ideal by assumption and the latter is binomial; indeed, if $K$ is the defining ideal of $A_{lt}$, then we get $\Ann(m)$ in $A_{lt}$ by taking the colon ideal $(K : m)$ in $R[H]$, and taking those generators of $(K : m)$ that are not in $K$; these are necessarily monomials that are standard relative to a leading ideal of $K$ \cite{ES}, and by Observation~\ref{lem:standard-all-sigma}, we can then convert those to standard monomials (relative to the given pASL structure) generating $\Ann(m)$ in $A_{lt}$. Instantiating $m$ as $\pi_{lt}(\ltsm(g))$ gives us an algorithm for line~\ref{alg:ann-closure:LAM}, and instantiating it as $\pi_{lt}(m\ltsm(g))$ (in the notation of Algorithm~\ref{alg:ann-closure}) gives us an algorithm for line~\ref{alg:ann-closure:Qupdate}.

\textit{Termination.} If $G$ has stabilized (gotten to the point where line~\ref{alg:ann-closure:add-to-G} will never be executed again), then we claim that $G$ is Ann-closed. For the construction of $Q$ on lines~\ref{alg:ann-closure:LAM} and \ref{alg:ann-closure:Qupdate} is readily seen to be verifying the conditions of Ann-closure. If $G$ has stabilized, then it means that  line~\ref{alg:ann-closure:add-to-G} will never be executed again, which in turn means that on every syzygy computed on line~\ref{alg:ann-closure:std} going forward, the result has 0 remainder, and therefore that $G$ is Ann-closed. Now, to see that this must eventually happen, note that when $G$ is added to on line~\ref{alg:ann-closure:add-to-G}, it is necessarily the case that the ideal $\langle \pi_{lt}(g) : g \in G \rangle \subseteq A_{lt}$ is strictly increased; by Noetherianity, the latter can happen at most finitely many times. And thus eventually either the $Q$ is empty, or $G$ is Ann-closed, in which case the oracle correctly reports that this is the case and the algorithm correctly terminates.
\end{proof}

Several remarks are in order about the use of the oracle in Algorithm~\ref{alg:ann-closure}. 

\begin{remark}[Regarding the oracle assumption] \label{rmk:oracle}
First, we remark on the difficulty that led us to use the oracle in the first place. As written, it is conceivable that the queue $Q$ in Algorithm~\ref{alg:ann-closure} never empties; we conjecture that this cannot happen, but have been unable to either prove it or find a counterexample. 

Another natural approach to termination is to store the syzygies computed so far in a set $S$, and only add to $S$ and $Q$ the new syzygy from line~\ref{alg:ann-closure:std} does not reduce to $0$ relative to $S$ (this requires something like a term order on the ambient module in which $S$ lives, but that is not a significant issue, as for this purpose it seems any term order will do). Then one might hope to argue via Noetherianity that, once $G$ is not further expanded, $S$ can only be expanded finitely many times, thus ensuring termination. The issue here is that we are no longer assured that what we have at the end is Ann-closed, because the algorithm is no longer directly checking the definition of Ann-closure. Just because the new syzygy reduces to $0$ relative to $S$ does not necessarily mean that the second part of the Ann-closure definition is handled for all the coefficients of the new syzygy. 

Second, we remark that there is a slight modification where the oracle can be replaced by an actual algorithm. Namely, if we don't separate out the Ann-closure and S-closure computations, and instead try to compute a pseudo-ASL Gröbner basis all at once by, for example, computing S-closure every time $G$ is increased by the Ann-closure algorithm (line~\ref{alg:ann-closure:add-to-G} of Algorithm~\ref{alg:ann-closure}), then instead of an oracle to test Ann-closure we need an oracle to test whether a given set is a pASL Gröbner basis. But such an oracle can be replaced by an actual algorithm: by reverting to the parent polynomial ring, one can compute the (affine) Hilbert series of $I$ and of $\langle \pi_{lt}(g) : g \in G \rangle$; $G$ is a Gröbner basis if and only if the two are equal. On the one hand, this shows that such an oracle is not strictly necessary, and hence that computing pseudo-ASL Gröbner basis is computable in finite time; on the other hand, it is very much anathema to the theme of our development here, which is to compute intrinsically in $A$ (and perhaps $A_{lt}$) as much as possible. (We note that our other excursions into $R[H]$ have been limited to binomial ideals.)

Finally, we mention another possible approach suggested by the first paragraph of this remark. Another issue with the procedure described above is that even if the new syzyg is in $\langle S \rangle$, that does not guarantee that it reduces to $0$ modulo $S$. To ensure this reduction to $0$, one would then want to compute a pASL Gröbner bases for the syzygy module itself. This then leads to an iterative procedure computing syzygies, second syzygies, and so on, essentially computing a resolution ``all at once.'' We believe such an approach, analogous to that in La Scala \& Stillman \cite{LaScalaStillman}, can work ``intrinsically in $A$.'' This is quite a bit more complicated, however, and also would only terminate if the ideal being resolved has a resolution of finite length (which is no longer guaranteed when working with $A$-modules instead of modules over the polynomial ring, as in \cite{LaScalaStillman}). 
\end{remark}

\begin{algorithm}[h]
\caption{Computes a pseudo-ASL \gb of an ideal with respect to any $(A, \preceq, A_{lt})$}
\label{alg:asl-gb}
\begin{algorithmic}[1]
\Require{$(A, \preceq, A_{lt})$}
\medskip
\Procedure{pseudo-ASL-Gröbner-Basis}{$f_1, \ldots, f_s$}
\State $G \gets \{f_1, \ldots, f_s\}$
\Repeat
\State $G_{prev} \gets G$
\State $G \gets \textsc{S-closure}(G)$
\State $G \gets \textsc{Ann-closure}(G)$
\Until $G = G_{prev}$
\State \Return $G$
\EndProcedure
\end{algorithmic}
\end{algorithm}

\begin{theorem} \label{thm:asl-gb-alt}
Given $(A,\preceq, A_{lt})$ and a finite set $G \subseteq A$, there is an algorithm (Algorithm~\ref{alg:asl-gb}) that computes a set $G'$ that is both an Ann-closure and an S-closure of $G$ relative to $(\preceq, A_{lt})$. In particular, when $A$ is a domain, $G'$ is a pseudo-ASL Gröbner basis for $\langle G \rangle$.
\end{theorem}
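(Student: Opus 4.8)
The plan is to check three things about Algorithm~\ref{alg:asl-gb}: that it halts in finite time, that its output $G'$ is simultaneously an S-closure and an Ann-closure of the input $G$ relative to $(\preceq, A_{lt})$, and that the Gröbner basis conclusion for domains then follows at once from our Buchberger-like criterion. First I would record the building blocks. Each call to \textsc{S-closure} halts and returns an S-closed set containing its argument and contained in the ideal it generates, by Theorem~\ref{thm:computable-Sclosure}; each call to \textsc{Ann-closure} halts and returns an Ann-closed set with the same containments, by Theorem~\ref{thm:alg-ann-closure} (carrying along its oracle \textsc{Is-Ann-Closed}, or replacing it as in Remark~\ref{rmk:oracle}). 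In particular both subroutines are purely additive: they only ever enlarge the working set and never change the ideal it generates. Hence, throughout the entire run of Algorithm~\ref{alg:asl-gb}, the working set $G$ is monotonically nondecreasing and $\langle G\rangle$ stays equal to its initial value.

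The crux is termination of the outer \texttt{repeat}--\texttt{until} loop, and the key observation is that every augmentation of $G$, wherever it occurs, strictly enlarges the leading ideal $\langle \pi_{lt}(\ltsm(g)) : g\in G\rangle \subseteq A_{lt}$. Indeed, any element appended to $G$ inside either subroutine is a nonzero remainder of a standard expression computed against the then-current set, so by Definition~\ref{defn:standard-expression} none of its standard terms -- in particular its leading term -- is $\ltdivides$-divisible by $\ltsm(g)$ for any $g$ currently in $G$; by Lemma~\ref{lem:monomial} its image under $\pi_{lt}$ therefore lies outside the standard monomial ideal generated by the leading terms already present. Since $R$ is a field, $A_{lt}$ is Noetherian, so this leading ideal can strictly increase only finitely often, hence $G$ is augmented only finitely often. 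Once $G$ is no longer augmented, a full pass of the outer loop leaves $G$ unchanged and the loop exits. I expect this to be the step requiring the most care: one must see that the two inner algorithms, each separately proved to terminate, cannot conspire to make the outer loop cycle -- and this works precisely because they are purely additive, so a single monotone Noetherian invariant (the leading ideal in $A_{lt}$) controls the entire nested process.

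For correctness of the output, consider the final iteration of the loop: let $G^{(0)}$ be the value of $G$ at its start, $G^{(1)}=\textsc{S-closure}(G^{(0)})$, and $G^{(2)}=\textsc{Ann-closure}(G^{(1)})$. The loop exits because $G^{(2)}=G^{(0)}$; combined with the monotonicity $G^{(0)}\subseteq G^{(1)}\subseteq G^{(2)}$ this forces $G^{(0)}=G^{(1)}=G^{(2)}=:G'$. Then $G'=G^{(1)}$ is S-closed by correctness of \textsc{S-closure}, and $G'=G^{(2)}$ is Ann-closed by correctness of \textsc{Ann-closure}; chaining the containments over all iterations gives $G\subseteq G'\subseteq\langle G\rangle$, so $G'$ is both an S-closure and an Ann-closure of the original $G$, with $\langle G'\rangle=\langle G\rangle$. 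Finally, when $A$ is a domain, every $\ltsm(g)$ with $g\in G'$ is a non-zerodivisor in $A$, so Theorem~\ref{thm:buchberger2} applies to the S-closed and Ann-closed set $G'$ and yields that $G'$ is a pseudo-ASL Gröbner basis of $\langle G'\rangle=\langle G\rangle$, completing the argument.
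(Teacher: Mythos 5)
Your proposal is correct and follows essentially the same line as the paper's proof: invoke the subroutine-correctness results (Theorems~\ref{thm:computable-Sclosure} and \ref{thm:alg-ann-closure}), terminate via Noetherianity of $A_{lt}$ on the increasing chain of leading ideals, and conclude for domains from the Buchberger-like criterion (Theorem~\ref{thm:buchberger2}). Your write-up is actually a bit more careful than the paper's at the termination step, in that you explicitly observe that any element added to $G$ inside either subroutine is a nonzero remainder whose leading term (by Definition~\ref{defn:standard-expression} and Lemma~\ref{lem:monomial}) lies outside the current leading ideal in $A_{lt}$, so each augmentation strictly enlarges that ideal; the paper states the increasing chain and appeals to Noetherianity without spelling this implication out.
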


\begin{proof}
The correctness of $\textsc{S-closure}(\cdot)$ is the content of Theorem~\ref{thm:computable-Sclosure}, and the correctness of $\textsc{Ann-closure}(\cdot)$ is the content of Theorem~\ref{thm:alg-ann-closure}. Thus the set $G$ that is returned by Algorithm \ref{alg:asl-gb}, whenever the algorithm terminates, is obviously both an S-closure and an Ann-closure of $G$, hence is a pASL Gröbner basis if $A$ is a domain.

Denote the sequence of intermediate $G$'s that are computed at each iteration of the loop in Algorithm \ref{alg:asl-gb} as $G_1, G_2, \ldots$. Notice that 
\[\ideal{\pi_{lt}(\ltsm(g)) \suchthat g \in G_1} \subseteq \ideal{\pi_{lt}(\ltsm(g)) \suchthat g \in G_2} \subseteq \ldots,\] is an increasing sequence of ideals in $A_{lt}$, and this is guaranteed to eventually stablize due to Noetherianity of $A_{lt}$. This in turn means that the loop terminates in finite time.
\end{proof}

\begin{example}
If $A$ is a domain, then $\{ f\}$ is a pseudo-ASL Gröbner basis for $\langle f \rangle$ relative to $A_{gen}$, since in this case by Corollary~\ref{cor:AgenGB} a set is a pseudo-ASL Gröbner basis if and only if it is S-closed, and a singleton is always S-closed.

More generally, even if $A$ is not a domain, if $m$ is a standard monomial, then $\{ m \}$ is a pseudo-ASL Gröbner basis for $\langle m \rangle$ relative to $A_{gen}$, since beyond S-closure (which holds vacuously because $G$ is a singleton), we only need to check standard monomial annihilators of $m$ in $A_{gen}$. But any such annihilator must also send $m$ to zero in $A$, hence $\{m\}$ is also Ann-closed.
\end{example}

However, relative to other $A_{lt}$, even singletons need not be pseudo-ASL Gröbner bases:

\begin{example} \label{ex:singleton}
We use as an example the bideterminant ASL structure on $2 \times 2$ matrices; this is isomorphic to the polynomial ring $R[x_{11}, x_{12}, x_{21}, x_{22}]$. While the larger explanation for this ASL structure is covered in the next section, here it is small enough we can write it out explicitly. We have $H = \{x_{11}, x_{12}, x_{21}, x_{22}, d\}$, with $\Sigma = \langle x_{21}x_{21} \rangle$, and the straightening law $x_{12} x_{21} = x_{11} x_{22} - d$, let 
\[
A = \bigslant{R[H]}{\langle x_{12} x_{21} - (x_{11} x_{22}-d) \rangle}.
\]
Consider the ideal $I \subseteq A$ generated by $x_{12}$. Let us compute a pseudo-ASL Gröbner basis for $I$ relative to $\Adisc$. As a singleton, it is already S-closed. The annihilator of $x_{12}$ in $A_{disc}$ is $\langle x_{21} \rangle$, and when we multiply $x_{12}$ by $x_{21}$ in $A$, we get $x_{11} x_{22}-d$, and this cannot be further reduced by a standard expression in terms of $x_{12}$, so we must add $x_{11} x_{22} - d$ to our Gröbner basis.

Depending on the term order, when we take the S-polynomial of $x_{12}$ and $x_{11} x_{22} - d$, we either get $x_{12} x_{11} x_{22}$ or $x_{12} d$, and either of these is $\ltdivides$-divisible by $x_{12}$, so the S-remainder is zero. Similarly, since both terms of $x_{11} x_{22} - d$ are nonzerodivisors in $A_{disc}$, our set is now Ann-closed as well, and thus the pseudo-ASL Gröbner basis of $\langle x_{12} \rangle$ relative to $A_{disc}$ is $\{x_{12}, x_{11} x_{22} - d\}$. In fact, since we were working with $A_{lt} = \Adisc$ and our argument worked regardless of which term order was used, this is the a universal pseudo-ASL Gröbner basis for $\langle x_{12} \rangle$, which in this case is reduced (universally, i.e., relative to all choices of term order and algebra of leading terms), hence is the unique reduced universal pASL Gröbner basis.
\end{example}


\section{Application to bideterminants and Grassmannians}
\label{sec:bideterminants}

In this section we shall demonstrate an application of our theory of ASL \gb bases. Specifically we will show that there is a theory of \gb bases in the polynomial ring in the basis of what are called standard bideterminants. 

\begin{remark}
Although we will not elaborate further on this point, it is standard that the same machinery will then apply to the coordinate rings of Grassmannians (see, e.g., \cite{DEPYoung, sturmfels}), which are generated by standard products of maximal minors.
\end{remark}

\subsection{Background on the bideterminant basis for the polynomial ring}
\label{sec:bidet-background}

We begin by providing some background on the bideterminant basis for the polynomial ring. Our notation and definitions will --- with some important modifications --- mostly follow \cite{doubilet1974foundations, desarmenien1982invariant, grinberginvariant}, though other important developments are in \cite{DEPYoung, brunsconcagrobner, brunsconcakrs}.

\subsubsection{Basic definitions}
\label{subsec:basic-bidet-definitions-poly-ring}

Let $X$ be an $n \times m$ matrix, each of whose entries is a different indeterminate $x_{i,j}$. We refer to $X$ as a ``generic matrix.'' A \emph{bideterminant} (of $X$) is a minor of $X$ (of any size) or a product thereof. To keep track of these, Young (bi)tableaux are a convenient and meaningful notation, which we build up to and recall here.

We will indicate the minor in rows $r_1, \dotsc, r_{k}$ and columns $c_1, \dotsc, c_k$ by the notation
\[
\bigbidet{r_1, \ldots, r_{k}}{c_1, \ldots, c_{k}} := \mathrm{det}\left(\begin{bmatrix}
x_{r_1, c_1} & \ldots & x_{r_1, c_k} \\
\vdots & \ddots & \vdots \\
x_{r_k, c_1} & \ldots & x_{r_k, c_k}
\end{bmatrix}\right).
\]
When a bideterminant is a product of minors, in order to more clearly see the relationship between its minor factors, we use a column-wise notation as follows (note that this is the transpose of what some authors use). A single minor as above will be denoted by the pair of columns of natural numbers:
\[
\bitab{R}{C} = \bigbitab{\tableau{r_1  \\ \vdots \\ r_{k}}}{\tableau{c_1 \\ \vdots \\ c_{k}}},
\]
When we wish to distinguish between the pair of columns as a combinatorial object versus the minor (a polynomial), we use square brackets $\bitab{R}{C}$ for the combinatorial object and parentheses $\bidet{R}{C}$ for the corresponding bideterminant, viz. with $R,C$ as above we have $\bidet{R}{C} = \bigbidet{r_1, \ldots, r_{k}}{c_1, \ldots, c_{k}}$.

For bideterminants that are products of minors, we extend the notation as follows. For a product of minors of sizes $\lambda_1 \geq \lambda_2 \geq \dotsc \geq \lambda_p$, we have
\begin{equation}
\label{eqn:generic-bitab}
\bitab{R}{C} = \bigbitab{\tableau{r_1^{(1)} & r_1^{(2)} & \bl & r_1^{(p)} \\ \vdots & \vdots & \bl & \vdots \\ \vdots & \vdots & \bl{\ldots} & r_{\lambda_p}^{(p)}\\ \vdots & r_{\lambda_2}^{(2)} \\ r_{\lambda_1}^{(1)}}}{\tableau{c_1^{(1)} & c_1^{(2)} & \bl & c_1^{(p)} \\ \vdots & \vdots & \bl & \vdots \\ \vdots & \vdots & \bl{\ldots} & c_{\lambda_p}^{(p)}\\ \vdots & c_{\lambda_2}^{(2)} \\ c_{\lambda_1}^{(1)}}},
\end{equation}
as the combinatorial datum defining the bideterminant:
\[
\bigbidet{R}{C} := \prod_{i=1}^{p} \left(\left.r_1^{(i)}, \ldots, r_{\lambda_i}^{(i)}\,\right|\,c_1^{(i)}, \ldots, c_{\lambda_i}^{(i)}\right).
\]
We refer to the combinatorial datum $\bitab{R}{C}$ as a \emph{bitableaux} (whose columns we assume are always put in non-increasing order by height), corresponding to the bideterminant $\bidet{R}{C}$. 

(One may think of the $i$-th column of $R$ as inextricably linked to the $i$-th column of $C$ in this notation. In which case one might prefer to think of $R$ and $C$ as stacked ``in front/behind'' of one another, coming out of the page.)

Given a bideterminant $f$, with the above conventions the bitableaux giving rise to $f$ is almost unique: the only possible choices are to re-order multiple columns of the same length. 

Young tableaux formalize what kind of combinatorial object $R$ and $C$ are (hence the name ``bitableaux'' for the pair $\bitab{R}{C}$). As is standard, we build up to Young tableaux starting from partitions of integers. For any $n \in \Z_{\ge 0}$, $\lambda = (\lambda_1, \ldots, \lambda_p)$ is defined as a \emph{partition} of $n$ if 
\[ \sum_{i=1}^p \lambda_i = n, \qquad \text{and}\qquad \lambda_1 \ge \ldots \ge \lambda_p > 0.\] 
We will use $\mu = (4, 2, 2, 1)$ as a running example; $\mu$ is a partition of $9$.

For a partition of $n$, $\lambda = (\lambda_1, \ldots, \lambda_p)$, we define its \emph{shape}, denoted $\shapetab{\lambda}$ (or sometimes just $\lambda$ when no confusion can arise) as the set of integer points $(i, -j)$ in the plane, with $1 \le i \le p$ and $1 \le j \le \lambda_i$. For example, the shape of $\mu$, i.e. $\shapetab{\mu}$, is 
\[ \tableau[s]{& & & \\ & & \\ \\ \\},\] with empty boxes in place of points just for easier visualization. We will say that the \emph{size of the shape/partition} $\lambda$, denoted as $\sizetab{\lambda}$, is $n$. For example, the size of $\mu = (4, 2, 2, 1)$, i.e. $\sizetab{\mu}=9$. 

A \emph{Young tableau} of shape $\lambda$ with values in a totally ordered set $E$ is an assignment of an element of $E$ to each point in the shape $\lambda$. For example, $Y_1$ and $Y_2$ are two Young tableaux of shape $\mu$ with values in $2\N$ and $\N$, respectively:
\[ Y_1 = \tableau[s]{2 & 2 & 4 & 8 \\ 4 & 4 & 10 \\6 \\ 10\\}, \qquad Y_2 = \tableau[s]{3 & 6 & 7 & 9\\4 & 9 & 8\\5 \\ 6\\}.\] 

It should now be apparent that the combinatorial objects $R,C$ arising in our indexing scheme for bideterminants are indeed Young tableaux, and a bitableaux $\bitab{R}{C}$ is any pair of Young tableaux of the same shape, which we also refer to as the shape of the bitableaux, $\shapebitab(\bitab{R}{C}) = \shapetab{R} = \shapetab{C}$. Note that, for a bideterminant $f$, if $f$ has multiple minor factors of the same size, there may be multiple distinct bitableaux that give rise to it, but they all have the same shape. Thus, if $f$ is a bideterminant, we may speak of its shape $\shapetab{f}$ unambiguously as the shape of any bitableaux giving rise to $f$.

For $\lambda = \shapebitab(\bitab{R}{C})$, we have that the length of the largest column of $\lambda$ is the degree of the largest minor factor of $\bidet{R}{C}$. Also, $|\lambda| = \deg(\bidet{R}{C})$, i.e. the size of $\lambda$ is the degree of $\bidet{R}{C}$ taken as a polynomial in the variables $x_{i,j}$. The number of parts of the partition $\lambda$ is the number of minor factors of $\bidet{R}{C}$, which is well-defined since minors are irreducible.

The convenience of Young tableaux as an indexing scheme for bideterminants will become more apparent as we recall their additional combinatorial properties. A Young tableau shall be called \hypertarget{def:normal}{\emph{normal}} if the entries in each column are strictly increasing from top to bottom. A Young tableau shall be called \emph{semi-standard}\footnote{\cite{desarmenien1982invariant} define the notion of a ``standard'' tableau, which is the transpose of what we are calling semi-standard. We choose to use the term semi-standard because ``standard'' refers to something else in modern combinatorics parlance (as in \cite{grinberginvariant}).} if it is normal and the entries in each row are non-decreasing left to right. For example, both $Y_1$ and $Y_2$ above are normal; $Y_1$ is semi-standard while $Y_2$ is not, because of the $9,8$ that are in decreasing order in the second row.

We shall call a bitableau \emph{standard} if both the row and column tableaux are semi-standard, and the bideterminant corresponding to a standard bitableau will be called a \emph{standard bideterminant}. These will end up being the standard monomials in the bideterminant ASL structure, so the nomenclature ``standard'' is consistent. 
Also, by convention, we define $\bidet{}{} = 1$. Correspondingly, we will say $\bitab{}{}$ is a standard bitableau, and thus $1$ is a standard bideterminant.

Using basic facts about the determinant, observe that if either the column $(r_1^{(i)}, \ldots, r_{\lambda_i}^{(i)})$ or $(c_1^{(i)}, \ldots, c_{\lambda_i}^{(i)})$ contains repeated entries, then $\bigbidet{r_1^{(i)}, \ldots, r_{\lambda_i}^{(i)}}{c_1^{(i)}, \ldots, c_{\lambda_i}^{(i)}} = 0$. Also, for permutations $\pi_r, \pi_c \in \mathfrak{S}_{\lambda_i}$, we have that \[\bigbidet{r_{\pi_r(1)}^{(i)}, \ldots, r_{\pi_r(\lambda_i)}^{(i)}}{c_{\pi_c(1)}^{(i)}, \ldots, c_{\pi_c(\lambda_i)}^{(i)}} = \sgn{\pi_r}\cdot \sgn{\pi_c}\cdot\bidet{r_1^{(i)}, \ldots, r_{\lambda_i}^{(i)}}{c_1^{(i)}, \ldots, c_{\lambda_i}^{(i)}}.\]

\begin{remark}
\label{rem:bidet-bitab-correspondence}
Although the correspondence between bitableaux and bideterminants is surjective but not injective (as remarked above where we defined $\shapetab{f}$), the correspondence between standard bitableaux and standard bideterminants is bijective.
\end{remark}

The \emph{content of a minor} $\bigbidet{r_1^{(i)}, \ldots, r_{\lambda_i}^{(i)}}{c_1^{(i)}, \ldots, c_{\lambda_i}^{(i)}}$ is a pair of vectors $(\alpha, \beta)$, $\alpha \in \Z_{\ge 0}^m$, and $\beta \in \Z_{\ge 0}^n$, where $\alpha_s = 1$ if $s \in \left\{r_1^{(i)}, \ldots, r_{\lambda_i}^{(i)}\right\}$, and $\alpha_s = 0$ otherwise, and $\beta_t = 1$ if $t \in \left\{c_1^{(i)}, \ldots, c_{\lambda_i}^{(i)}\right\}$, and $\beta_t = 0$ otherwise. Note that $\sum_{i=1}^m \alpha_i = \sum_{j=1}^n \beta_j$ and these sums are equal to the degree of the minor $\bigbidet{r_1^{(i)}, \ldots, r_{\lambda_i}^{(i)}}{c_1^{(i)}, \ldots, c_{\lambda_i}^{(i)}}$. The \emph{content of a bideterminant} is the (vector) sum of the contents of the minors that make up the bideterminant (counting multiplicity, if the same minor occurs more than once). For a bideterminant $f$, we will use the notation $\chi(f)$ to denote its content. 
We can also define the \emph{content of a bitableau} as the content of the corresponding bideterminant, noting that this is well-defined despite what is noted in Remark \ref{rem:bidet-bitab-correspondence}. Although we will not make use of this fact, it may help some readers to know that the content of a bideterminant is the same as its weight when the polynomial ring $\F[X]$ is viewed as a $\text{GL}_n \times \text{GL}_m$-representation under the action by left- and right-multiplication.

\subsubsection{Bideterminant ASL structure on the polynomial ring}

N.B: While the machinery we use in this section is not new \cite{desarmenien1982invariant,desarmenien1980algorithm}, our exact notion of ordering differs in subtle but crucial ways from the ordering used there, in order to get not just an ASL structure (which goes back to \cite{deconcini1987hodge,eisenbud-intro-1980}), but also an ASL term order.

The straightening law says that every bideterminant, and in turn, every element of $\F[X]$, has a unique presentation as a $\F$-linear combination of standard bideterminants. Additionally, the expression of a non-standard bideterminant as a $\F$-linear combination of standard bideterminants possesses some additional structure that makes it an ASL with a viable ASL term order, and in turn admits a theory of \gb bases in the basis of standard bideterminants. To make a precise statement (Theorem \ref{thm:straightening}), we need to introduce a few more definitions.

\begin{definition}[Partial ordering on minors]
\label{defn:po-minors}
We define a partial order $(\le)$ on minors as follows. Given two minors $\bidet{a_1, \ldots, a_m}{b_1, \ldots, b_m}$ and $\bidet{c_1, \ldots, c_n}{d_1, \ldots, d_n}$,
\[\bidet{a_1, \ldots, a_m}{b_1, \ldots, b_m} \le \bidet{c_1, \ldots, c_n}{d_1, \ldots, d_n} \iff m \ge n \text{ and } \forall i \in [n], a_i \le c_i \text{ \& } b_i \le d_i.\]
\end{definition}

There is an equivalent definition of standard bideterminant in terms of this order. Namely, a bidetermintant $f = \prod_{i=1}^t f_i$, where all $f_i$ are minors, is a standard bideterminant if all the $f_i$ are pairwise comparable in the above partial order. Equivalently, $f$ is a standard bideterminant if there is some $\sigma \in \mathfrak{S}_t$ such that \[f_{\sigma(1)} \le \ldots \le f_\sigma(t).\] In fact, this equivalent definition makes Remark \ref{rem:bidet-bitab-correspondence} even more clear because there is only one order of minors which will satisfy the requirement, which in turn indicates that there is a unique standard bitableau corresponding to this standard bideterminant.

The now-standard Theorem \ref{thm:bd-hodge-algebra} establishes that there is an ASL structure in the polynomial ring in the basis of standard bideterminants, relative to the ordering of minors of Definition~\ref{defn:po-minors}:

\begin{theorem}[\cite{deconcini1987hodge, eisenbud-intro-1980}] 
\label{thm:bd-hodge-algebra}
Let $H \subseteq \F[X]$ be the set of minors of $X$, and let $\phi: H \hookrightarrow \F[X]$ be the identity map. Let $\Sigma \subseteq \W^H$ be the set of non-standard bideterminants. Finally, let $\le$ be the order from Definition \ref{defn:po-minors}. Then $A^{bd} := (\F[X], H, \le, \Sigma, \phi)$ is a $\F$-ASL.
\end{theorem}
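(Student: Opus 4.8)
The plan is to recall the classical straightening algorithm for bideterminants and verify it produces exactly the three data required by Definition~\ref{defn:pseudo-ASL} plus the extra ASL ordering condition of Definition~\ref{defn:hodge-algebra}, using the partial order of Definition~\ref{defn:po-minors} as the partial order on the generating set $H$. First I would recall (citing \cite{deconcini1987hodge, eisenbud-intro-1980, doubilet1974foundations, desarmenien1982invariant}) the basic straightening result: every product of minors can be rewritten uniquely as an $\F$-linear combination of standard bideterminants, and the rewriting of a single product of two incomparable minors $\bidet{R_1}{C_1}\bidet{R_2}{C_2}$ (i.e.\ the generators of $\Sigma$) is the classical Laplace-type straightening relation (also called the Grassmann--Pl\"ucker relation in the maximal-minors case). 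The key structural input is that in this rewriting, each standard bideterminant that appears has the property that one of its constituent minors has a ``smaller'' top row (and smaller top column) than the larger of the two original minors --- this is precisely the statement of the ASL axiom, once phrased through Definition~\ref{defn:po-minors}.

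The steps, in order, would be: (1) Confirm $H$, the set of all minors of $X$, is a generating set of $\F[X]$ as an $\F$-algebra (immediate, since each variable $x_{i,j}$ is a $1\times 1$ minor). (2) Confirm $\Sigma$, the set of non-standard bideterminants viewed in $\W^H$, is an ideal of the monoid $\W^H$: if a bitableau is non-standard, multiplying it by any further column (any further minor factor) keeps it non-standard, because non-standardness of a pair of incomparable columns is preserved under adding more columns; and check the irredundancy condition $H\cap\Sigma=\varnothing$, which holds because a single minor is trivially a standard bitableau. (3) Confirm that the standard bideterminants form an $\F$-basis of $\F[X]$ --- this is exactly the classical straightening theorem (existence = every bideterminant is an $\F$-combination of standard ones, iterated down to arbitrary polynomials since all polynomials are $\F$-combinations of products of variables hence of bideterminants; uniqueness = linear independence of standard bideterminants), so $(\F[X], H, \Sigma, \phi)$ is a pseudo-ASL. (4) Finally, verify the ASL ordering axiom of Definition~\ref{defn:hodge-algebra}: take a generator $m\in\Sigma$, which is a product $\bidet{R_1}{C_1}\bidet{R_2}{C_2}$ of two incomparable minors (with, say, $\bidet{R_1}{C_1}$ of degree $\geq$ that of $\bidet{R_2}{C_2}$, and $\bidet{R_2}{C_2}$ not $\leq \bidet{R_1}{C_1}$); write its straightening $m=\sum_i r_i m_i$; and show that for every minor factor $x$ of $m$ with $x\divides m$, every $m_i$ has a minor factor $y_i$ with $y_i\divides m_i$ and $y_i < x$ in the sense of Definition~\ref{defn:po-minors}. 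Here one uses the explicit form of the straightening relation: the minors appearing on the right-hand side are obtained by ``shuffling'' row/column indices in a way that strictly lowers the relevant top entries, which is the content of the classical analyses in \cite{doubilet1974foundations, desarmenien1982invariant, grinberginvariant}.

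I would expect step (4) --- matching the combinatorics of the classical straightening relation against the precise quantifier structure of the ASL ordering axiom in Definition~\ref{defn:hodge-algebra} --- to be the main obstacle, and indeed the main reason the theorem is stated with a citation rather than proved from scratch: the bookkeeping of which index in which column decreases, and ensuring this holds simultaneously for \emph{every} divisor $x$ of $m$ and \emph{every} term $m_i$, is exactly the delicate part of standard monomial theory. In practice I would not reprove this; I would cite \cite{deconcini1987hodge} (and \cite{eisenbud-intro-1980}) where the bideterminant ASL structure is established, and at most sketch how Definition~\ref{defn:po-minors}'s partial order is the one making the straightening law ``straightening'' in the ASL sense. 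One subtlety worth flagging explicitly is the direction convention: because our bitableaux are transposes of those in \cite{desarmenien1982invariant} and our notion of ``larger shape / longer partition'' is reversed relative to the classical literature (see Definition~\ref{def:shape-order} and the surrounding discussion), one must be careful that Definition~\ref{defn:po-minors} gives the order in which straightening \emph{decreases}; checking this consistency is the one genuinely non-bureaucratic point, and it is what licenses the later claim (Theorem~\ref{thm:bd-grobner-basis}) that this ASL additionally admits a pseudo-ASL term order.
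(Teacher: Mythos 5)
Your proposal is correct and does essentially what the paper does: Theorem~\ref{thm:bd-hodge-algebra} is stated as a known fact with the citations \cite{deconcini1987hodge, eisenbud-intro-1980} and the paper supplies no proof of its own, exactly as you propose. Your unpacking into steps (1)--(4) is a faithful account of what those citations are carrying, and step (4) is correctly identified as the content: once one has Theorem~\ref{thm:straightening}\ref{point:straightening} (for two incomparable minors $f,g$, each term $\eta_{i,1}\eta_{i,2}$ of the straightening of $fg$ has $\eta_{i,\sigma(1)} < f$ and $\eta_{i,\sigma(1)} < g$), the ASL axiom of Definition~\ref{defn:hodge-algebra} follows for the generators of $\Sigma$ (which are exactly the products of two incomparable minors) by taking $y_i := \eta_{i,\sigma(1)}$, since the only $x \in H$ dividing $m = fg$ are $x = f$ or $x = g$.

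One small clarification on your final paragraph: the transpose-and-reversal conventions you flag apply to the pseudo-ASL \emph{term} order on standard bideterminants (Definitions~\ref{def:shape-order} and~\ref{defn:order-bitab}), which is a total order and is the subject of Theorem~\ref{thm:bd-grobner-basis}, not to the partial order $\le$ on $H$ from Definition~\ref{defn:po-minors} that is the datum in the ASL axiom. The ASL axiom itself involves only this partial order on minors, and no directionality convention needs to be reconciled there; the form of the classical straightening relation already delivers it, as noted above. You are right that there is a genuine directionality/consistency check, but it belongs to Theorem~\ref{thm:bd-grobner-basis} (where one must show $\prec$ refines the degree grading and behaves multiplicatively on leading terms), not to the verification that $A^{bd}$ is an ASL. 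Since you already attribute the check to ``the later claim,'' this is a point of emphasis rather than a substantive error.
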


We will use the ordering on minors (or rather, its reverse) to build an ASL term order on bideterminants. As our orderings are slight modifications of known orderings, we define them here; we will show they give an ASL term order in Theorem~\ref{thm:bd-grobner-basis} below. 

\begin{definition}[store=defshapeorder, note={Total order on shapes ($\prec$)}]
\label{def:shape-order}
Given two shapes $\lambda$ and $\mu$, we say $\lambda$ is \emph{smaller than} $\mu$ if $\lambda$ is of smaller size than $\mu$, or if $\lambda$ and $\mu$ have the same size, and $\lambda$ is lexicographically larger than $\mu$ (when viewed as partitions), equivalently in symbols we define:
\[
\lambda \prec \mu \iff [\sizetab{\lambda} < \sizetab{\mu}] \text{ or } [\sizetab{\lambda} = \sizetab{\mu} \text{ and } \lambda >_{lex} \mu].\]
\end{definition}

\begin{definition}[store=defnorderbitab, note={Total order on standard bitableaux/bideterminants $(\prec)$\footnote{Abusing notation by reusing $\prec$.}}]
\label{defn:order-bitab}
For a standard bitableau $\bitab{R}{C}$, let $\theta(\bitab{R}{C})$ denote the sequence obtained by first reading off $R$ row by row, from left to right within each row, followed by the same for $C$. Given two standard bitableaux or standard bideterminants $f$ and $g$, we define
\[f \prec g \iff \left[\shapetab{f} \prec \shapetab{g}\right] \text{ or } \left[\shapetab{f} = \shapetab{g} \text{ and } \theta(f) >_{lex} \theta(g)\right].\]
\end{definition}

\begin{remark}
Compared to the ordering on shapes defined in \cite{desarmenien1982invariant}, in order to get an ASL term order where ``greater'' agrees with the desired notion in the standard theory of Gröbner bases, our ordering first orders by size, and among shapes of the same size is the reverse of the order on shapes from \cite{desarmenien1982invariant}. In contrast, they do not first order by size, although \cite{grinberginvariant} states a caveat about comparing two shapes where one of them is a prefix of another. Nonetheless, because the straightening algorithm of \cite[Theorem~1]{desarmenien1982invariant} always produces tableaux of the same content, they necessarily have the same size, and in that context our ordering is simply the reverse of theirs, so we get that their straightening algorithm only produces bitableaux that are lower (in our ordering).

Also, the order on standard bitableaux of the same shape, i.e. the order on the $\theta$ sequences of bitableaux, is the `transpose' of the order suggested in the proof of \cite[Theorem~1]{desarmenien1982invariant} (simply because we use the transpose convention for bitableaux everywhere), and is the reverse of the order suggested in \cite{desarmenien1980algorithm} (again, in order to make ours an ASL term order).
\end{remark}

\begin{example}
By Definition \ref{defn:po-minors}, $\bidet{1, 3, 7}{1, 2, 9} < \bidet{2, 3}{3, 4}$, whereas $\bidet{1, 3, 7}{1, 2, 9}$ and $\bidet{1 2}{3 4}$ are incomparable. Note, however, that although $\bidet{1, 3, 7}{1, 2, 9} < \bidet{2, 3}{3, 4}$ as minors, as 1-column bideterminants we have the opposite ordering relative to $\prec$ of Definition~\ref{defn:order-bitab} (this is similar to the reversal that happens with the grevlex ordering in ordering Gröbner bases).
By Definition \ref{def:shape-order}, $(5, 3) \prec (5, 2, 2, 2, 1)$, whereas $(5, 3) \succ (5, 4, 4, 2, 1)$.

For more complicated examples of Definition \ref{defn:order-bitab}, consider the bitableaux 
\[
T_1 = \bigbitab{\tableau[s]{2 & 2 & 4 & 8 \\ 4 & 4 & 10 \\6 \\ 10\\}}{\tableau[s]{2 & 2 & 4 & 8 \\ 4 & 4 & 10 \\6 \\ 10\\}}, \; T_2 = \bigbitab{\tableau[s]{2 & 2 \\ 4 & 4 \\ 6 & 7 \\ 10\\}}{\tableau[s]{2 & 2 \\ 4 & 4 \\ 6 & 7 \\ 10\\}},
\]  
and
\[
T_3 = \bigbitab{\tableau[s]{2 & 2 & 3 & 9 \\ 4 & 4 & 10 \\6 \\ 10\\}}{\tableau[s]{2 & 2 & 4 & 8 \\ 4 & 4 & 10 \\6 \\ 10\\}}. 
\]
$T_1$ has shape $\shapetab{\lambda_1} = (4, 2, 2, 1)$, and $T_2$ has shape $\shapetab{\lambda_2} = (4, 3)$, so since $\shapetab{\lambda_1} <_{lex} \shapetab{\lambda_2}$, $\shapetab{\lambda_1} \succ \shapetab{\lambda_2}$, and consequently $T_1 \succ T_2$. For the same reason $T_3 \succ T_2$. Finally, $\theta(T_1) = (2, 2, 4, 8, 4, \ldots, 6, 10)$, and $\theta(T_3) = (2, 2, 3, 9, 4, \ldots, 10)$, and since $\theta(T_1) >_{lex} \theta(T_3)$, $T_1 \prec T_3$.
\end{example}

\begin{theorem}[Straightening law \cite{hodge, doubilet1974foundations, desarmenien1982invariant}, see also \cite{DEPYoung, ABW, brunsconcagrobner, brunsconcakrs}]
\label{thm:straightening}
Let $\F$ be any arbitrary commutative ring, and let $\F[X] := \F[\{X_{ij} : i \in [n], j \in [m]\}]$.
\begin{enumerate}[label=(\Roman*),ref=(\Roman*),itemindent=-10pt]
\item \label{point:straightening} Suppose for two minors $f, g \in \F[X]$, the product $fg$ is not a standard bideterminant, then there is a unique representation of $fg$ as the following finite sum
\[fg = \sum_i {r_i} \eta_{i, 1} \eta_{i, 2},\]
where for all $i$, $r_i \in \F$ (in fact they are integers), $\eta_{i, 1} \eta_{i, 2}$ is a standard bitableau, and for some $\sigma \in \mathfrak{S}_2$, $\eta_{i, \sigma(1)} < f$, $\eta_{i, \sigma(1)} < g$, $f < \eta_{i, \sigma(2)}$, and $g < \eta_{i, \sigma(2)}$.
\item \label{point:bideterminant-in-sbd-basis} By successive application of \ref{point:straightening}, any arbitrary bideterminant $\bidet{R}{C}$ can be uniquely expressed as follows:
\begin{equation} \label{eqn:bitab-straighten} \bidet{R}{C} = \sum a_{R_i, C_i} \bidet{R_i}{C_i}, \end{equation} where $\bidet{R_i}{C_i}$ are standard, and only finitely many $a_{R_i, C_i} \in \F$ are non-zero. 
\item \label{point:straightening-orders} Equation \eqref{eqn:bitab-straighten} satisfies the additional property that, for all $i$ with $a_{R_i, C_i} \neq 0$, \[\shapebitab{\bidet{R_i}{C_i}} \le \shapebitab{\bidet{R}{C}}, \qquad \text{and} \qquad \chi(\bidet{R_i}{C_i}) = \chi(\bidet{R}{C}).\]
\item \label{point:polynomial-in-bd-basis} Since every monomial is trivially a bideterminant corresponding to a bitableau where each tableau has columns all of length one, and number of columns equal to the degree of the monomial, as a consequence of \ref{point:bideterminant-in-sbd-basis}, any polynomial $f \in \F[X]$ may be written uniquely as follows: 
\begin{equation} \label{eqn:poly-bidet-basis} f = \sum a_{R, C} \bidet{R}{C}, \end{equation}
where the bideterminants $\bidet{R}{C}$ are standard, and only finitely many $a_{R,C} \in \F$ are nonzero. Thus the standard bideterminants form a $\F$-linear basis of the polynomial ring.
\end{enumerate}
\end{theorem}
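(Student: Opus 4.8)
The statement is classical \cite{hodge, doubilet1974foundations, desarmenien1982invariant}, so the plan is really to sketch the route I would take, flagging where our conventions (ordering shapes first by size and then by reverse lexicographic order on partitions, and transposing tableaux relative to the cited sources) enter. Since $\F[X] = \F \otimes_{\Z} \Z[X]$ as graded $\F$-modules and every straightening relation used below has coefficients in $\Z$, it suffices to prove everything for $\F = \Z$: once the standard bideterminants are shown to be a $\Z$-basis of the free $\Z$-module $\Z[X]$, this survives base change to any commutative ring. The plan is then: (a) establish the basic quadratic straightening relation for a product of two minors, yielding \ref{point:straightening}; (b) iterate it along a well-founded induction in the shape order of Definition~\ref{def:shape-order} to get \ref{point:bideterminant-in-sbd-basis} together with the shape-and-content bookkeeping of \ref{point:straightening-orders}; (c) observe that monomials are bideterminants of a very special shape, giving the spanning half of \ref{point:polynomial-in-bd-basis}; and (d) prove $\Z$-linear independence of standard bideterminants by a leading-term argument, finishing \ref{point:polynomial-in-bd-basis} and supplying the uniqueness clauses in \ref{point:straightening} and \ref{point:bideterminant-in-sbd-basis}.

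The heart is step (a). Let $f$ and $g$ be minors (of sizes $k \ge \ell$) whose product $fg$ is not a standard bitableau, i.e., the row columns or the column columns fail the comparability of Definition~\ref{defn:po-minors} in some coordinate. The engine is the Grassmann--Plücker / van der Waerden ``shuffle'' relation: antisymmetrizing over which of the offending row indices (and, separately, the offending column indices) go to which factor produces an identity $fg = \sum_i \pm\,\eta_{i,1}\eta_{i,2}$ with integer coefficients in which the violation is pushed strictly downward. Reading \cite[Thm.~1]{desarmenien1982invariant} through our transpose-and-reverse convention, I would check that each $\eta_{i,1}\eta_{i,2}$ is either standard or has bitableau strictly $\prec$-smaller in the sense of Definition~\ref{defn:order-bitab}, and that $\eta_{i,\sigma(1)} < f, g < \eta_{i,\sigma(2)}$ for a suitable $\sigma \in \mathfrak{S}_2$. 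Two features are crucial for \ref{point:straightening-orders}: every term is multihomogeneous of the same $\mathrm{GL}_n \times \mathrm{GL}_m$ weight as $fg$, so all contents agree; and applying the relation to a \emph{non-standard} product never $\prec$-increases the shape (its size cannot grow, and at equal size it lexicographically increases as a partition, hence $\preceq$-decreases). This extracts both the content equality and the shape inequality of \ref{point:straightening-orders}.

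For (b): given an arbitrary bideterminant $\bidet{R}{C}$, repeatedly pick two minor factors whose product is non-standard and rewrite that product by (a); by bilinearity of multiplication this expresses $\bidet{R}{C}$ as a $\Z$-combination of bideterminants, each standard or with a strictly $\prec$-smaller bitableau, and all of the same content. Since the set of standard bitableaux of a fixed content is finite, $\prec$ is a well-order on it, so the rewriting terminates, proving \ref{point:bideterminant-in-sbd-basis}; tracking the content and shape inequalities through every step gives \ref{point:straightening-orders}. For (c): a degree-$d$ monomial $\prod_{i,j} x_{i,j}^{e_{ij}}$ is the bideterminant of the bitableau whose row and column tableaux each consist of $d$ columns of height $1$, so by \ref{point:bideterminant-in-sbd-basis} every monomial, and hence every element of $\Z[X]$, is a finite $\Z$-linear combination of standard bideterminants.

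Step (d) is the other place where real work is needed. I would fix the diagonal term order on $\Z[X]$, for which the leading monomial of a minor $(r_1 < \dots < r_k \mid c_1 < \dots < c_k)$ is the product $x_{r_1,c_1} \cdots x_{r_k,c_k}$ of its diagonal entries with coefficient $1$; since term orders are multiplicative, the leading monomial of a standard bideterminant is the product of the diagonal monomials of its minor factors, again with coefficient $1$. The combinatorial lemma to prove is that the map sending a standard bitableau to this diagonal monomial is \emph{injective}: from the multiset $\{x_{r_t,c_t}\}$ one recovers, using semistandardness, which entries occupy which column of each tableau (this is the bijective content of RSK insertion in our conventions). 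Granting it, a hypothetical nontrivial $\Z$-relation among standard bideterminants would have a well-defined leading monomial, and since distinct standard bideterminants have distinct coefficient-$1$ leading monomials, that monomial could be contributed by only one summand --- a contradiction. Hence the standard bideterminants are $\Z$-linearly independent, so with (c) they form a $\Z$-basis of $\Z[X]$, and tensoring with $\F$ gives \ref{point:polynomial-in-bd-basis} in general. The hard parts will thus be the precise shape of the shuffle relation in (a) --- integer coefficients together with a strict $\prec$-decrease that preserves content, which is exactly where all of \ref{point:straightening-orders} comes from --- and the injectivity lemma in (d); a standard way to avoid the latter would be to instead count, showing that the number of standard bideterminants of each multidegree equals $\dim_{\Q}$ of the corresponding graded piece via the Cauchy identity $\prod_{i,j}(1 - x_i y_j)^{-1} = \sum_\lambda s_\lambda(x) s_\lambda(y)$ and invoking that a surjection of free $\Z$-modules of equal finite rank is an isomorphism, but the leading-term route fits the rest of the paper better.
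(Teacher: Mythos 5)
Your proposal aims for a fuller self-contained proof than the paper does: the paper treats Theorem~\ref{thm:straightening} as classical, citing the literature, and Appendix~\ref{sec:app:bidet-straightening} only records the shuffle identity (Lemma~\ref{lem:shuffle}) together with a short termination sketch for part~\ref{point:straightening} deferring to \cite{grinberginvariant}; the linear-independence claim in part~\ref{point:polynomial-in-bd-basis} is not argued in the paper at all. Your steps (a)--(c) are consistent with that route. The gap is in your step (d).

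The claim that the map sending a standard bitableau to the product of the diagonal monomials of its minor factors is injective is false. Already for a $2\times 2$ generic matrix, the standard bitableau $\bigbitab{\tableau[s]{1 & 2}}{\tableau[s]{1 & 2}}$ (two columns of height one; as a polynomial, $x_{11}x_{22}$) and the standard bitableau $\bigbitab{\tableau[s]{1 \\ 2}}{\tableau[s]{1 \\ 2}}$ (one column of height two; as a polynomial, $x_{11}x_{22}-x_{12}x_{21}$) are distinct standard bitableaux with the \emph{same} diagonal leading monomial $x_{11}x_{22}$. So in a putative nontrivial $\Z$-relation, the maximal monomial in the diagonal order can be contributed by several summands at once, and the cancellation argument does not close; this is consistent with the paper's Lemma~\ref{lem:ordinary}, whose first item is deliberately stated with a non-strict inequality because this map is many-to-one. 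RSK is a bijection between monomials and standard bitableaux, but that bijection is not given by the diagonal-monomial map, so invoking it does not repair the step. Your own fallback --- comparing graded ranks over $\Q$ via the Cauchy identity $\prod_{i,j}(1-x_iy_j)^{-1}=\sum_\lambda s_\lambda(x)\,s_\lambda(y)$, deducing linear independence over $\Z$, and then tensoring with $\F$ --- is correct and should replace the injectivity claim.
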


See Appendix \ref{sec:app:bidet-straightening} for the algorithmic details of Theorem \ref{thm:straightening}\ref{point:straightening}. For any $f \in \F[X]$, we say that a standard bideterminant $\bidet{R}{C}$ \emph{occurs} or \emph{appears} in $f$ if it is accompanied with a non-zero coefficient in the expression of $f$ (see Equation \eqref{eqn:poly-bidet-basis}) in the basis of standard bideterminants. We are careful to only apply this term to standard bideterminants.

Given two bitableaux $\bitab{R}{C}$ and $\bitab{R'}{C'}$, we define $\bitab{R + R'}{C + C'}$ to be any bitableau corresponding to the bideterminant $\bidet{R}{C} \cdot \bidet{R'}{C'}$. This corresponds to $R+R'$ having its set of columns be the union of the sets of columns of $R$ and $R'$, and similarly for $C$ and $C'$ (taking care to maintain the association between columns of $R$ with columns of $C$, and columns of $R'$ with their corresponding columns in $C'$). We may sometimes refer to this as ``merging'' the columns of $R$ and $R'$, and those of $C$ and $C'$, though the operation must be done simultaneously on both $R,R'$ and $C,C'$.

\begin{remark}
\begin{enumerate}
\item $\bitab{R + R'}{C + C'}$ is not well-defined usually because, if there are columns of the same height, all permutations of those columns are acceptable. For example, if
\[\bitab{R}{C} = \bigbitab{\tableau[s]{1 & 2 \\ 4 & 5\\ 5  \\ 7 \\ }}{\tableau[s]{1 & 1 \\ 2 & 6 \\ 4 \\ 5 \\ }}, \qquad \text{and} \qquad \bitab{R'}{C'} = \bigbitab{\tableau[s]{ \tf{2} & \tf{2} \\  \tf{4} \\  \tf{5} \\  \tf{9} \\ }}{\tableau[s]{ \tf{1} &  \tf{2} \\  \tf{3} \\  \tf{5} \\  \tf{9} \\ }},\]
then any of
\[\bigbitab{\tableau[s]{\tf{2} & 1 & 2 & \tf{2} \\ \tf{4} & 4 & 5 \\ \tf{5} & 5 & \bl \\ \tf{9} & 7 \\ }}{\tableau[s]{ \tf{1} & 1 & 1 & \tf{2} \\ \tf{3} & 2 & 6 \\ \tf{5} & 4  & \bl \\ \tf{9} & 5 \\ }} \qquad \text{and} \qquad \bigbitab{\tableau[s]{1 & \tf{2} & 2 & \tf{2} \\ 4 & \tf{4} & 5 \\ 5 & \tf{5} & \bl \\ 7 & \tf{9} \\ }}{\tableau[s]{ 1 & \tf{1} & 1 & \tf{2} \\ 2 & \tf{3} & 6 \\ 4  & \tf{5} & \bl \\ 5 & \tf{9} \\ }}\]
are valid representations of $\bitab{R+R'}{C+C'}$.
\item If it is possible to form a standard bitableau by merging, then there is only one such. In the above example,
\[\bigbitab{\tableau[s]{1 & \tf{2} & 2 & \tf{2} \\ 4 & \tf{4} & 5 \\ 5 & \tf{5} & \bl \\ 7 & \tf{9} \\ }}{\tableau[s]{ 1 & \tf{1} & 1 & \tf{2} \\ 2 & \tf{3} & 6 \\ 4  & \tf{5} & \bl \\ 5 & \tf{9} \\ }}\]
is the only standard bitableau, while if
\[\bitab{R}{C} = \bigbitab{\tableau[s]{1}}{\tableau[s]{2}}, \qquad \text{and} \qquad \bitab{R'}{C'} = \bigbitab{\tableau[s]{\tf{2}}}{\tableau[s]{\tf{1}}},\]
then there is no candidate for $\bitab{R+R'}{C+C'}$ which is a standard bitableau.
\item Although $\bitab{R + R'}{C + C'}$ is not uniquely defined, $\bidet{R+R'}{C+C'}$ is, and satisfies
\[\bidet{R+R'}{C+C'} = \bidet{R}{C} \cdot \bidet{R'}{C'}.\]
\end{enumerate}
\end{remark}

The following result gives the leading standard bitableau (or equivalently standard bideterminant) in the expression \eqref{eqn:bitab-straighten}. For a \hyperlink{def:normal}{normal} $\bitab{R}{C}$, define $\bitab{\stdize{R}}{\stdize{C}}$ to be the bitableau obtained from $\bitab{R}{C}$ by writing the entries of each row of each of the tableaux in ascending order. 

\begin{remark}
For those who are used to the transpose convention of ours, note that obtaining $\bitab{\stdize{R}}{\stdize{C}}$ does \emph{not} merely correspond to rearranging the order of indices in each minor, and thereby correspond just to a sign change of the bideterminant; it is really (potentially) changing which minors are occurring. Also, the sorting operation is done separately to $R$ and $C$, which again can change the bideterminant.
\end{remark}

\begin{theorem}[\cite{desarmenien1980algorithm}]
\label{thm:leading-bd}
\begin{enumerate}[label=(\Roman*),ref=(\Roman*),itemindent=-10pt]
\item Let $\bitab{R}{C}$ be a normal bitableau. Then $\bitab{\stdize{R}}{\stdize{C}}$ is the largest standard bitableau (relative to our ordering $\prec$ of Definition~\ref{defn:order-bitab}) whose bideterminant appears in the decomposition of $\bidet{R}{C}$ as in expression \eqref{eqn:bitab-straighten}, and has coefficient 1 in that expression.
\item Given two standard bitableaux $\bitab{R_1}{C_1}$ and $\bitab{R_2}{C_2}$, $\bitab{\stdize{R_1 + R_2}}{\stdize{C_1 + C_2}}$ is a standard bitableau, and \[LT_{\prec}(\bidet{R_1}{C_1}\bidet{R_2}{C_2}) = \bidet{\stdize{R_1 + R_2}}{\stdize{C_1 + C_2}}.\] 
\end{enumerate}
\end{theorem}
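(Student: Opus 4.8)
The plan is to reduce part (II) to part (I), and to establish part (I) by combining one short combinatorial lemma with Desarmenien's analysis of the straightening algorithm \cite{desarmenien1980algorithm}, carefully matched to our (transposed, size-first, then-reversed) conventions.

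\textbf{Part (I).} The first step is to check that $\bitab{\stdize{R}}{\stdize{C}}$ really is a standard bitableau. Sorting the entries of each row leaves the rows weakly increasing, so the only thing to verify is that column-strictness is preserved. This follows from the elementary order-statistics fact that if $x_1 < y_1, \dots, x_\ell < y_\ell$ then the $j$-th smallest of $x_1,\dots,x_\ell$ is strictly less than the $j$-th smallest of $y_1,\dots,y_\ell$: apply it to two consecutive rows $i$ and $i+1$ of $R$ (and of $C$), noting that column-strictness of $R$ gives exactly the hypothesis $R_{i,j} < R_{i+1,j}$ for every column $j$ that reaches row $i+1$, and that row $i$ may contain additional entries (from taller columns that do not reach row $i+1$), which can only lower an order statistic; one concludes that after sorting, the $j$-th entry of row $i$ stays strictly below the $j$-th entry of row $i+1$. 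The second step is to record that $\bitab{\stdize{R}}{\stdize{C}}$ has the same shape and the same content $\chi$ as $\bitab{R}{C}$, since sorting within rows changes neither. The third step invokes Theorem~\ref{thm:straightening}\ref{point:straightening-orders} together with the ordering translation recorded in the remark following Definition~\ref{defn:order-bitab}: every standard bitableau occurring in \eqref{eqn:bitab-straighten} has content $\chi$, hence the same size as $\bitab{R}{C}$, and thus lies at a shape $\preceq \shapebitab(\bitab{R}{C})$ in our order, so that the $\prec$-largest terms are exactly those of the full shape $\shapebitab(\bitab{R}{C})$. To single out $\bitab{\stdize{R}}{\stdize{C}}$ among the terms of that shape and to pin its coefficient to $1$, I would appeal to \cite{desarmenien1980algorithm} directly: the straightening algorithm applied to a normal bitableau returns its row-sorted standardization as the extremal term with coefficient $1$, and ``extremal in Desarmenien's order'' becomes ``$\prec$-largest'' for us once the transpose-and-reverse identification of orders in the remark after Definition~\ref{defn:order-bitab} is applied. (Alternatively, one may re-run the straightening algorithm of Appendix~\ref{sec:app:bidet-straightening} and track the leading standard bitableau through each shuffle step.)

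\textbf{Part (II).} By multiplicativity of the bitableau notation, $\bidet{R_1}{C_1}\bidet{R_2}{C_2} = \bidet{R_1+R_2}{C_1+C_2}$ (the remark after Theorem~\ref{thm:straightening}), and each column of $R_1+R_2$ (resp.\ $C_1+C_2$) is a column of a semistandard tableau, hence strictly increasing, so $\bitab{R_1+R_2}{C_1+C_2}$ is a normal bitableau. Its row-sorted standardization does not depend on the non-unique choice of how equal-height columns are interleaved in the merge, since sorting each row is insensitive to the initial column order. Part (I) applied to this normal bitableau then gives directly that $\bitab{\stdize{R_1+R_2}}{\stdize{C_1+C_2}}$ is a standard bitableau and that $LT_{\prec}(\bidet{R_1}{C_1}\bidet{R_2}{C_2}) = \bidet{\stdize{R_1+R_2}}{\stdize{C_1+C_2}}$; uniqueness of the bitableau representing this standard bideterminant is Remark~\ref{rem:bidet-bitab-correspondence}.

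\textbf{Main obstacle.} Most of this is bookkeeping: the size/shape control comes for free from Theorem~\ref{thm:straightening}\ref{point:straightening-orders}, the ``standardization is standard'' lemma is the order-statistics argument above, and part (II) is immediate from part (I). The one genuinely non-trivial ingredient is the coefficient-$1$ and within-a-fixed-shape maximality assertion in part (I), which rests on the internal structure of the straightening (the Garnir/Pl\"ucker shuffles); I would lean on \cite{desarmenien1980algorithm} for it, the only real care being to line up our transpose convention and our ``size first, then reverse'' ordering with Desarmenien's exactly as described in the remark after Definition~\ref{defn:order-bitab}.
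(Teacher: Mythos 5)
Your proposal is correct and takes essentially the same route as the paper, which offers no proof of its own and simply cites \cite{desarmenien1980algorithm} for this result. You defer the genuinely nontrivial content (the within-shape maximality and the coefficient $1$) to Desarmenien exactly as the paper does, while usefully spelling out the easy supporting steps — the order-statistics argument showing standardization preserves column-strictness, the shape control from Theorem~\ref{thm:straightening}\ref{point:straightening-orders}, the independence of the row-sort from the column interleaving, and the reduction of (II) to (I) — all of which check out.
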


\begin{example}[Merging and sorting]
For example, \leavevmode{
\begin{table}[hbt!]
\centering
\begin{tabular}{cccc}
$\bitab{R}{C}$ & $\bitab{R'}{C'}$ & $\bitab{R + R'}{C + C'}$ & $\bigbitab{\stdize{R + R'}}{\stdize{C + C'}}$\\
& & & \\
\parbox{2.5cm}{\centering $\bigbitab{\tableau[s]{1 & 1 \\ 4 & 5\\ 5  & 6 \\ 7 \\ }}{\tableau[s]{1 & 1 \\ 2 & 2 \\ 4  & 4 \\ 5 \\ }}$} & \parbox{2.5cm}{\centering $\bigbitab{\tableau[s]{ \tf{2} & \tf{2} \\  \tf{3} & \tf{3} \\  \tf{5} \\  \tf{9} \\ }}{\tableau[s]{ \tf{1} &  \tf{2} \\  \tf{3} & \tf{4} \\  \tf{5} \\  \tf{9} \\ }}$} & \parbox{4.2cm}{\centering $\bigbitab{\tableau[s]{1 & \tf{2} & 1 & \tf{2} \\ 4 & \tf{3} & 5 & \tf{3} \\ 5 & \tf{5} & 6 & \bl \\ 7 & \tf{9} \\ }}{\tableau[s]{ 1 & \tf{1} & 1 & \tf{2} \\ 2 & \tf{3} & 2 & \tf{4} \\ 4  & \tf{5} & 4 & \bl \\ 5 & \tf{9} \\ }}$} & \parbox{4.2cm}{\centering $\bigbitab{\tableau[s]{1 & 1 & \tf{2} & \tf{2} \\ \tf{3} & \tf{3} & 4 & 5 \\ 5 & \tf{5} & 6 & \bl \\ 7 & \tf{9} \\ }}{\tableau[s]{ 1 & \tf{1} & 1 & \tf{2} \\ 2 & 2 & \tf{3} & \tf{4} \\ 4  & 4 & \tf{5} & \bl \\ 5 & \tf{9} \\ }}$}\\
\end{tabular}.
\end{table}}
\end{example}

\subsection{bd-\gb bases in the polynomial ring}

Theorem \ref{thm:bd-grobner-basis} below is the main theorem of this section, which establishes that there is a theory of ASL Gröbner bases in the basis of standard bideterminants in the polynomial ring.

\begin{theorem}[store=thmbdgrobnerbasis]
\label{thm:bd-grobner-basis} 
Let $A^{bd} = (\F[X], H, \le, \Sigma, \phi)$ be as defined in Theorem \ref{thm:bd-hodge-algebra}. Then the order $\preceq$ in Definition \ref{defn:order-bitab} is an ASL term order on $A^{bd}$. Thus $(A^{bd}, \preceq)$ admits a theory of ASL Gröbner bases.
\end{theorem}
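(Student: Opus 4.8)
The plan is to verify the two axioms of a pseudo-ASL term order (Definition~\ref{def:termorder}) for the order $\preceq$ on standard bideterminants given by Definition~\ref{defn:order-bitab}, and then invoke Observation~\ref{obs:graded_order} to upgrade to a \emph{graded} pseudo-ASL term order if desired (since $A^{bd}$ is $\N$-graded by total degree, which corresponds to the size $|\lambda|$ of the shape). Since $A^{bd}$ is in fact an ASL (Theorem~\ref{thm:bd-hodge-algebra}), verifying the pseudo-ASL term order axioms automatically gives an ASL term order, and then the general theory developed in Section~\ref{sec:theory} (existence of Gröbner bases, Theorem~\ref{thm:asl-gb-existence}, etc.) applies. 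So the content is entirely in checking \ref{def:termorder:positive} and \ref{def:termorder:mult}.

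\textbf{Positivity \ref{def:termorder:positive}.} I would check that $1 = \bidet{}{} \preceq f$ for every standard bideterminant $f$. The empty bitableau has shape of size $0$, and every nonempty standard bideterminant has shape of positive size; by Definition~\ref{def:shape-order}, a strictly smaller size makes the shape strictly smaller in $\prec$, and by Definition~\ref{defn:order-bitab} a strictly smaller shape makes the bideterminant strictly smaller. Hence $1 \prec f$ for all $f \neq 1$, which gives \ref{def:termorder:positive}. This step is essentially immediate from the definitions.

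\textbf{Multiplicativity \ref{def:termorder:mult}.} This is the main work. Given standard bideterminants $f, g, h, k$ with $f \prec g$ and $h \preceq k$, and assuming $fh \neq 0$ and $gk \neq 0$ (in $A^{bd} = \F[X]$, which is a domain, so this last condition is automatic), I must show $\lmsm(fh) \prec \lmsm(gk)$. By Theorem~\ref{thm:leading-bd}(II), for standard bitableaux the leading term of a product is $\bidet{\stdize{R+R'}}{\stdize{C+C'}}$, obtained by merging columns and then sorting rows. So $\lmsm(fh)$ corresponds to the bitableau gotten by merging the bitableaux of $f$ and $h$ and sorting, and similarly for $\lmsm(gk)$. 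Now I would argue in two layers matching the lexicographic structure of $\prec$ (first shape, then the $\theta$-sequence within a fixed shape). \emph{Shape layer:} merging column-sets is additive on shapes in the sense that $\shapetab{fh}$ is the partition whose multiset of column-heights is the union of those of $\shapetab{f}$ and $\shapetab{h}$; I would show this ``union of column-heights'' operation is monotone with respect to $\prec$ on shapes, so that $\shapetab{f} \prec \shapetab{g}$ and $\shapetab{h} \preceq \shapetab{k}$ force $\shapetab{fh} \prec \shapetab{gk}$ (strictly), using that $\prec$ on shapes first compares size — which is purely additive — and then reverse-lex. \emph{$\theta$-sequence layer:} in the case $\shapetab{f} = \shapetab{g}$ and $\shapetab{h} = \shapetab{k}$, so that all four shapes and both products have equal shape, I compare $\theta(\lmsm(fh))$ with $\theta(\lmsm(gk))$ and show that since $\theta(f) >_{lex} \theta(g)$ (strictly) and $\theta(h) \geq_{lex} \theta(k)$, the merge-and-sort operation on the row tableaux (and then the column tableaux) respects this, i.e.\ $\theta(\lmsm(fh)) >_{lex} \theta(\lmsm(gk))$ strictly. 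Here I would exploit that sorting each row into ascending order and then reading off is compatible with the lexicographic comparison of the interleaved column-multisets. The cleanest route may be to reduce \ref{def:termorder:mult} to the domain-simplified form \eqref{eq:ATO2equiv} (valid by Remark~\ref{rem:ato-2-equivalent} since $\F[X]$ is a domain), which only requires $f \prec g \Rightarrow \lmsm(fh) \prec \lmsm(gh)$ for a single common factor $h$; this eliminates the need to juggle $h$ and $k$ simultaneously and isolates the monotonicity of merge-and-sort in each coordinate.

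\textbf{Main obstacle.} The delicate point is the $\theta$-sequence layer: proving that the merge-and-sort operation of Theorem~\ref{thm:leading-bd} is \emph{strictly} monotone on $\theta$-sequences within a fixed shape, because sorting rows can in principle ``mix'' entries in a way that is not obviously order-preserving, and one must be careful that a strict inequality on inputs does not collapse to an equality on outputs. I expect this to require a careful entry-by-entry argument: locate the first row (top to bottom) and first position (left to right) at which $\theta(f)$ and $\theta(g)$ differ, track where those entries land after merging with the common tableau of $h$ and re-sorting, and check that the discrepancy survives. The size/reverse-lex structure of the shape order should make the shape layer comparatively routine, but it too needs the observation that taking the union of column-height multisets is strictly monotone for the reverse-lex order at fixed size, which is where I would spend a moment making sure no edge case (e.g.\ columns of equal height being permuted) causes trouble.
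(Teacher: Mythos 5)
Your proposal is correct and follows essentially the same route as the paper's proof: use the fact that $\F[X]$ is a domain to replace \ref{def:termorder:mult} by the single-factor form \eqref{eq:ATO2equiv}, split on whether $\shapetab{f} = \shapetab{g}$, and reduce the comparison of merged-and-sorted bitableaux (via Theorem~\ref{thm:leading-bd}) to a lemma about lexicographic monotonicity of sorted multiset unions, which is precisely the paper's Claim~\ref{claim:lexicographic-union}. The one further simplification the paper makes — and which you may find useful for making your ``entry-by-entry'' argument precise — is to first take $h$ to be a single minor (one column), so the multiset-union lemma only needs to handle adjoining a single element $\gamma$, and in the equal-shape case to first treat $f,g$ with a single row, locating the first differing row $\kappa$ and observing (via Theorem~\ref{thm:leading-bd}) that merging with $h$ preserves that the first discrepancy lies in row $\kappa$.
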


\begin{proof}
Theorem \ref{thm:bd-hodge-algebra} gives us that $A^{bd}$ is indeed an ASL. All we have to do is to prove that $\preceq$ is a valid ASL term order. Once this is in place, all the results of the prior sections may be applied in the setting of the standard bideterminant basis.

First observe that Axiom \ref{def:termorder:positive} is easily proven by noting that $1$ corresponds to the bitableau $\bitab{}{}$, which in turn is the only bitableau that has size $0$; Definition \ref{defn:order-bitab} first orders by size, which ensures that $1 \preceq m$ for all standard bideterminants $m$.

To establish \ref{def:termorder:mult}, first notice that the product of two standard bideterminants is never $0$. Since $A^{bd}$ is a domain, by Remark~\ref{rem:ato-2-equivalent}, it is necessary and sufficient to show that for standard bideterminants $f, g, h$, if $f \prec g$, then $\lmsm(fh) \prec \lmsm(gh)$.

So we now show $\lmsm(fh) \prec \lmsm(gh)$ when $f \prec g$. We first prove this for the case where $h$ is a single minor, and then use this to derive the general case below. So suppose $h$ is a single minor of size $l_h$. 

We need the following simple claim which is proved in Appendix \ref{sec:app:bideterminants}.

\begin{claim}[store=claimlexicographicunion]
\label{claim:lexicographic-union}
Given a multiset $A$ with elements from $\N$, let $\mathcal{D}(A)$ denote the list of elements of $A$ in descending order, and let $\mathcal{U}(A)$ denote the list of elements of A in ascending order.
\begin{enumerate}[label=(\Roman*),ref=(\Roman*),itemindent=-10pt]
\item \label{point:descending} Suppose $\mathcal{D}(A) <_{lex} \mathcal{D}(B)$. Then for any $\gamma \in \N$, $\mathcal{D}(A \cup \{\gamma\}) <_{lex} \mathcal{D}(B \cup \gamma)$.
\item \label{point:ascending} Suppose $\mathcal{U}(A) <_{lex} \mathcal{U}(B)$. Then for any $\gamma \in \N$, $\mathcal{U}(A \cup \{\gamma\}) <_{lex} \mathcal{U}(B \cup \gamma)$.
\end{enumerate}

\end{claim}

[Case 1 $\shapebitab(f) \neq \shapebitab(g)$]: Then since $f \prec g$, $\shapebitab(f) \prec \shapebitab(g)$. From Theorem \ref{thm:leading-bd}, we have that 
\[ \sizetab{\shapebitab(\lmsm(fh))} = \sizetab{\shapebitab(f)} + l_h, \text{ and }\sizetab{\shapebitab(\lmsm(gh))} = \sizetab{\shapebitab(g)} + l_h.\] Thus, if $\sizetab{\shapebitab(f)} < \sizetab{\shapebitab(g)}$, then $\sizetab{\lmsm(fh)} < \sizetab{\lmsm(gh)}$, proving that $\lmsm(fh) \prec \lmsm(gh)$. 

It remains to consider the case where $\sizetab{\shapebitab(f)} = \sizetab{\shapebitab(g)}$. Since $f \prec g$, it must be true that $\shapebitab(g) <_{lex} \shapebitab(f)$. Now, by Theorem \ref{thm:leading-bd}, we have that the shape of the leading standard monomial that appears in $fh$ is $\shapebitab(\lmsm(fh)) = \mathcal{D}(\shapebitab(f) \cup \{l_h\})$,\footnote{We abuse notation here by treating $\shapebitab(f)$ as a set while it is supposed to be a list.} where $l_h$ is the height of the single column of the bitableau corresponding to $h$. Also, the shape $\shapebitab(\lmsm(gh)) = \mathcal{D}(\shapebitab(g) \cup \{l_h\})$. Thus by Claim \ref{claim:lexicographic-union}-\ref{point:descending}, we have that $\shapebitab(\lmsm(gh)) <_{lex} \shapebitab(\lmsm(fh))$, which implies that $\shapebitab(\lmsm(fh)) \prec \shapebitab(\lmsm(gh))$, and consequently, $\lmsm(fh) \prec \lmsm(gh)$.

[Case 2 $\shapebitab(f) = \shapebitab(g)$]: In this case, we must have $\theta(g) <_{lex} \theta(f)$. We shall first prove that $\lmsm(fh) \prec \lmsm(gh)$ when $f$ (and of course $g$ too because $\shapebitab(f) = \shapebitab(g)$) has exactly one row. Later, we will claim that this is without loss of generality, thus establishing $\lmsm(fh) \prec \lmsm(gh)$ for any $f, g, h$. Let the bitableaux be 
\[ 
f = \bigbitab{\tableau{f^{(r)}_1 & \ldots & f^{(r)}_s}}{\tableau{f^{(c)}_1 & \ldots & f^{(c)}_s}}, g = \bigbitab{\tableau{g^{(r)}_1 & \ldots & g^{(r)}_s}}{\tableau{g^{(c)}_1 & \ldots & g^{(c)}_s}},
\] 
and 
\[ 
h = \bigbitab{\tableau{h^{(r)}_1 \\ \vdots \\ h^{(r)}_t}}{\tableau{h^{(c)}_1 \\ \vdots \\ h^{(c)}_t}}.
\] 
By Theorem \ref{thm:leading-bd}, we have that 
\[
\theta(\lmsm(fh)) = \left(\underbrace{\mathcal{U}(\{f^{(r)}_1, \ldots, f^{(r)}_s, h^{(r)}_1\})}_{\mathcal{F}_1}, h^{(r)}_2, \ldots, h^{(r)}_t, \underbrace{\mathcal{U}(\{f^{(c)}_1, \ldots, f^{(c)}_s, h^{(c)}_1\})}_{\mathcal{F}_2}, h^{(c)}_2, \ldots, h^{(c)}_t\right),
\] 
and 
\[
\theta(\lmsm(gh)) = \left(\overbrace{\mathcal{U}(\{g^{(r)}_1, \ldots, g^{(r)}_s, h^{(r)}_1\})}^{\mathcal{G}_1}, h^{(r)}_2, \ldots, h^{(r)}_t, \overbrace{\mathcal{U}(\{g^{(c)}_1, \ldots, g^{(c)}_s, h^{(c)}_1\})}^{\mathcal{G}_2}, h^{(c)}_2, \ldots, h^{(c)}_t\right).
\] 
By definition, we have 
\[
\theta(f)=(f^{(r)}_1, \ldots, f^{(r)}_s, f^{(c)}_1, \ldots, f^{(c)}_s), \qquad \text{and} \qquad \theta(g)=(g^{(r)}_1, \ldots, g^{(r)}_s, g^{(c)}_1, \ldots, g^{(c)}_s).
\] Since $\theta(g) <_{lex} \theta(f)$, we can consider where the first mismatch in these two sequences occurs. It must either occur in the entries coming from the row tableaux or the column tableaux. 

If the first mismatch comes from the row tableaux, that is, in $(f^{(r)}_1, \ldots, f^{(r)}_s)$ and $(g^{(r)}_1, \ldots, g^{(r)}_s)$, then we are certain to have a mismatch between $\mathcal{F}_1$ and $\mathcal{G}_1$. By Claim \ref{claim:lexicographic-union}-\ref{point:ascending}, we have that 
\[
\mathcal{U}(\{g^{(r)}_1, \ldots, g^{(r)}_s, h^{(r)}_1\}) <_{lex} \mathcal{U}(\{f^{(r)}_1, \ldots, f^{(r)}_s, h^{(r)}_1\}),
\]
 proving that $\theta(\lmsm(gh)) <_{lex} \theta(\lmsm(fh))$, and in turn, $\lmsm(fh) \prec \lmsm(gh)$. 

On the other hand, suppose that the first mismatch between $\theta(f)$ and $\theta(g)$ occurs in the column tableaux, that is, between $(f^{(c)}_1, \ldots, f^{(c)}_s)$ and $(g^{(c)}_1, \ldots, g^{(c)}_s)$. Similar to before, comparing $\theta(\lmsm(fh))$ and $\theta(\lmsm(gh))$ is equivalent to comparing $\mathcal{F}_2$ and $\mathcal{G}_2$, and in this case, Claim \ref{claim:lexicographic-union}-\ref{point:ascending} gives us that $\theta(\lmsm(gh)) <_{lex} \theta(\lmsm(fh))$, implying $\lmsm(fh) \prec \lmsm(gh)$.

We have proved the claim for the case of single row bitableaux $f$ and $g$, and it remains to be argued that this is without loss of generality. Since $\shapebitab(f) = \shapebitab(g)$, the first mismatch between $\theta(f)$ and $\theta(g)$ will occur in some row $\kappa$ of both bitableaux. Even after multiplication by $h$, Theorem \ref{thm:leading-bd} implies that the mismatch between $\theta(\lmsm(fh))$ and $\theta(\lmsm(gh))$ will be continue to be in row $\kappa$ of $\lmsm(fh)$ and $(\lmsm(gh))$, and thus the same argument above, applied to just the row $\kappa$, gives us $\lmsm(fh) \prec \lmsm(gh)$ in full generality.
\end{proof}

\newcommand{\mset}[1]{\{\!\!\{#1\}\!\!\}}

\subsection{Algorithmic aspects of finding bd-\gb bases}
\label{sec:bd-gb-algos}

In this section we present and prove Algorithm \ref{alg:lcsm}, which takes two standard bideterminants as input and returns the set of least common standard multiples (Definition \ref{defn:lcms}) of these two standard bideterminants, relative to $A^{bd}_{gen}$. Note that it is completely straightforward to return least common multiples relative to $A_{disc}^{bd}$; it is exactly equivalent to taking LCMs by considering standard monomials as elements of $R[H]$ (see Obs.~\ref{obs:lcm-unique-disc}). While Lemma~\ref{lem:comp-LCM} shows that LCMs relative to any algebra of leading terms are computable in finite time, Algorithm \ref{alg:lcsm} is tailored to the specific case of bideterminants relative to $A_{gen}^{bd}$ and is not only more efficient than the generic one of Lemma~\ref{lem:comp-LCM}, but also yields more insight into the nature of LCMs in the bideterminant setting.

\begin{convention} \label{conv:multiset}
In this section, we will be dealing with multisets. Just to clarify, a multiset $A$ containing elements from $\N$ is naturally equivalent to a function $m_A: \N \rightarrow \W$. Given two multisets $A$ and $B$, $A \cup B$ will denote $m_{A \cup B}: \N \rightarrow \W$ that takes $x \mapsto \max\left\{m_A(x), m_B(x)\right\}$. $A \setminus B$ will denote $m_{A \setminus B} : \N \rightarrow \W$ that takes $x \mapsto \max\left\{m_A(x) - m_B(x), 0\right\}$. $|A|$ will denote $\sum_{x \in \N} m_A(x)$, and we will say $A \subseteq B$ if $m_A(x) \le m_B(x)$ for all $x \in \N$. Finally, instead of describing $A$ by $m_A$, we will describe $A$ by listing all the elements $x \in \N$ for which $m_A(x) > 0$, and in our description, each $x$ will be listed $m_A(x)$ number of times. For example,if $m_A$ is such that $m_A(1) = 1, m_A(2) = 2, m_A(3) = 1$, and $m_A(x) = 0$ for all $x > 3$, then we will write $A = \mset{1, 2, 2, 3}$.
\end{convention}

\begin{notation}
Given a tableau $T$, let $\mathcal{R}(T, i)$ denote the multiset of elements of the $i^{\text{th}}$ row of $T$ (which will be empty if $T$ has no $i^{\text{th}}$ row). 
\end{notation}

Also, we will take for granted the following two combinatorial procedures:

\newcommand{\getadsh}{\textsc{get-admissible-shapes}\xspace}
\newcommand{\getadsbd}{\textsc{get-admissible-standard-bd}\xspace}
\newcommand{\getvalidalt}{\textsc{get-LCMs-valid-in-$A_{lt}$}\xspace}

\begin{enumerate}
\item \getadsh$\bm{\left(\Lambda, (r_i)_{i \in [N]}, (c_i)_{i \in [N]}\right)}:$ The first parameter $\lambda$ is one single shape, and the second and third parameters, i.e. $(r_i)_{i \in [N]}$ and $(c_i)_{i \in [N]}$, are lists of sizes. This routine returns all valid shapes $\lambda'$ such that
\begin{enumerate}
\item number of columns in $\lambda'$ exactly equals $\max(r_1, \ldots, r_N, c_1, \ldots, c_N)$,
\item number of columns in the $i^{\text{th}}$ row of $\lambda'$ is at least $\max(r_i, c_i)$, and
\item treating $\lambda'$ and $\lambda$ as multisets, it is true that $\lambda \subseteq \lambda'$.
\end{enumerate}

\item \getadsbd$\bm{\left(\Lambda, (R_i)_{i \in [N]}, (C_i)_{i \in [N]}\right)}:$ The first parameter $\Lambda$ is a set of shapes. The second and third parameters are both a list of sets of elements; each $R_i$ contains a set of elements that must be present in the $i^{\text{th}}$ row of the row tableau of any bitableau that is returned, and each $C_i$ contains a list of elements that must be present in the $i^{\text{th}}$ row of the column tableau of any bitableau that is returned. This routine returns the set of all standard bitableaux whose shape $\lambda$ is in $\Lambda$, and which can be formed while ensuring that all elements of $R_i$ are present in the $i^{\text{th}}$ row of the row tableau and that all elements of $C_i$ are present in the $i^{\text{th}}$ row of the column bitableau.
\end{enumerate}

\begin{remark}
Note that the content of the $i$-th row will not be exactly $R_i$ (it will, by construction, be a super-multiset, but may properly contain $R_i$). There will in general be LCMs that realize this possibility, though there will also be at least one row of the row tableau whose content is exactly $R_i$ or one row of the column tableau whose content is exactly $C_i$.
\end{remark}

Theorem \ref{thm:lcm-algo} below proves the correctness of Algorithm \ref{alg:lcsm}.

\newcommand{\lambdaf}{\lambda^{(f)}}
\newcommand{\lambdag}{\lambda^{(g)}}
\newcommand{\lambdalcm}{\Lambda^{(lcm)}}

\begin{algorithm}
\caption{Algorithm to compute $\lcmlt(\cdot)$ for a specific $A_{lt}$}
\label{alg:lcsm}
\begin{algorithmic}[1]
\Require{Bitableau $f = \bitab{R_f}{C_f}$, $g = \bitab{R_g}{C_g}$, with $\shapetab{f} = (\lambdaf_1, \ldots, \lambdaf_r)$, $\shapetab{g} = (\lambdag_1, \ldots, \lambdag_s)$}
\medskip
\State $N = \max(\lambdaf_1, \lambdag_1)$ \label{line:N}
\For{$i \gets 1$ to $N$}
	\State $re_i \gets \mathcal{R}(R_f, i) \cup \mathcal{R}(R_g, i)$ \Comment{Multi-set union, see Convention~\ref{conv:multiset}} \label{line:re}
	\State $ce_i \gets \mathcal{R}(C_f, i) \cup \mathcal{R}(C_g, i)$ \Comment{Multi-set union} \label{line:ce}
	\State $rd_i \gets |re_i|$ \label{line:rd}
	\State $cd_i \gets |ce_i|$ \label{line:cd}
\EndFor
\medskip
\State $\lambdalcm \gets \getadsh\left(\shapetab{f} \cup \shapetab{g}, (rd_i)_{i \in [N]}, (cd_i)_{i \in [N]}\right)$ \label{line:admissible-shapes}
\medskip
\State $\mathfrak{L} \gets \getadsbd\left(\lambdalcm, (re_i)_{i \in [N]}, (ce_i)_{i \in [N]}\right)$ \Comment{LCMs relative to $A_{gen}^{bd}$} \label{line:admissible-sbd}
\State \Return $\{\bidet{R}{C} \in \mathfrak{L} \suchthat f \ltdivides \bidet{R}{C} \text{ and } g \ltdivides \bidet{R}{C}\}$. \label{line:check-divides}
\end{algorithmic}
\end{algorithm}

\begin{theorem}[store=thmlcmalgo]
\label{thm:lcm-algo}
Given two standard bideterminants $f = \bitab{R_f}{C_f}$ and $g = \bitab{R_g}{C_g}$ such that $\pi_{lt}(f)\pi_{lt}(g) \neq 0$, Algorithm~\ref{alg:lcsm} halts in finite time and returns the set of least common standard multiples of $f$ and $g$ relative to $A_{lt}^{bd}$.
\end{theorem}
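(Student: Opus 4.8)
The plan is to verify that Algorithm~\ref{alg:lcsm} correctly characterizes, in purely combinatorial terms, the minimal common standard multiples of $f$ and $g$ with respect to divisibility in $A^{bd}_{gen}$. First I would unpack what $h \gendivides k$ means for standard bideterminants via Theorem~\ref{thm:leading-bd}\ref{point:straightening}(II): $h \gendivides k$ iff there is a standard bitableau $\bitab{R'}{C'}$ such that $\bitab{\stdize{R_h + R'}}{\stdize{C_h + C'}} = k$ (as standard bitableaux, using the bijectivity in Remark~\ref{rem:bidet-bitab-correspondence}). The key structural observation is that merging-then-sorting interacts with rows in a controlled way: row $i$ of the sorted merge is obtained from the sorted merge of (the multiset union of) row $i$ of the two summands, and the number of columns in row $i$ of the result is the sum of the number of columns in row $i$ of each summand. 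So if $k = \bitab{R}{C}$ is a common standard multiple of $f$ and $g$, then necessarily (a) $\shapetab{k}$ contains $\shapetab{f} \cup \shapetab{g}$ as a multiset — since divisibility by $h$ forces $\shapetab{k}$, viewed as a multiset, to contain $\shapetab{h}$ — and also the row-count in row $i$ is at least $\max(rd_i, cd_i)$ (from the separate constraints on the row and column tableaux); (b) the $i$-th row of $R$ must contain $re_i = \mathcal R(R_f, i) \cup \mathcal R(R_g,i)$ as a submultiset, and similarly $ce_i \subseteq \mathcal R(C_i)$, because when you merge and re-sort by rows, every entry appearing in row $i$ of a divisor must land in row $i$ (it cannot move up or down a row under the merge-and-sort-within-rows operation). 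The routines \getadsh and \getadsbd exactly enumerate all standard bitableaux satisfying necessary conditions (a) and (b).

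Next I would argue the converse and the minimality. Conditions (a), (b) are necessary but not obviously sufficient for $f \gendivides k$ and $g \gendivides k$; this is precisely why line~\ref{line:check-divides} performs the explicit divisibility test (using the algorithm of Proposition~\ref{prop:div-alg}, or a direct combinatorial check). So the set returned is exactly $\{k \in \mathfrak L : f \gendivides k,\ g\gendivides k\}$ where $\mathfrak L$ is the finite enumerated set. The content is then: (i) every minimal common standard multiple lies in $\mathfrak L$ — this is the necessity argument above, which shows any common standard multiple satisfies (a), (b); combined with the divisibility filter on line~\ref{line:check-divides}, every common standard multiple that is in $\mathfrak L$ is returned, and in particular every \emph{minimal} one is (since a minimal common standard multiple is in particular a common standard multiple); (ii) the algorithm does not return only minimal ones, so strictly speaking I need to either add a post-processing step that discards non-minimal elements of the returned set, or argue that \getadsh already only produces shapes that are minimal in the appropriate sense. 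I would handle this the clean way: after line~\ref{line:check-divides}, take the $\gendivides$-minimal elements of the returned set; by Lemma~\ref{lem:unique-min} (for the monomial pseudo-ASL $A^{bd}_{gen}$), this minimal set is exactly $\lcmlt(f,g)$, the unique minimum generating set of $\langle \pi_{lt}(f)\rangle \cap \langle \pi_{lt}(g)\rangle$. (If the intended reading of the pseudocode is that \getadsh is already set up to produce only the minimal admissible shapes — e.g., it enumerates shapes $\lambda'$ minimal subject to $\lambda^{(f)}\cup\lambda^{(g)} \subseteq \lambda'$ and the row-count lower bounds — then minimality among the output follows directly and no post-processing is needed; I would state the lemma about \getadsh precisely and cite it.)

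For termination: $N$ is finite (line~\ref{line:N}), each $re_i, ce_i, rd_i, cd_i$ is computed by finitely many multiset operations, and the crucial point is that \getadsh returns a \emph{finite} list of shapes. This is because any admissible shape $\lambda'$ has at most $N = \max(\lambda^{(f)}_1,\lambda^{(g)}_1)$ columns, and its total size is bounded: the multiset containment $\lambda^{(f)}\cup\lambda^{(g)} \subseteq \lambda'$ together with the row-count bounds $\lambda'_i \ge \max(rd_i,cd_i)$ for $i\le N'$ (where $N'$ is the number of rows, itself bounded since all row-counts beyond the last nonempty $re_i, ce_i$ are $0$ and the containment forces the shape to stabilize) confine $\lambda'$ to a finite set. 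Likewise, \getadsbd produces finitely many standard bitableaux because the shape is fixed and the entries are bounded (row entries lie in $[n]$, column entries in $[m]$, using that $f, g$ are genuine bitableaux over an $n\times m$ matrix and that any standard bitableau dividing such a bideterminant has bounded entries). So the whole algorithm halts.

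The main obstacle, and the part requiring the most care, is the necessity direction for condition (b): proving that if $h \gendivides k$ then for each $i$ the $i$-th row of the row tableau of $k$ contains $\mathcal R(R_h,i)$ as a submultiset (and similarly for columns). The subtlety is that the merge-and-sort operation $\bitab{R'}{C'} \mapsto \bitab{\stdize{R_h+R'}}{\stdize{C_h+C'}}$ reshuffles entries \emph{within} each row but the bijection with a standard bitableau means the final row structure is rigid; I need Theorem~\ref{thm:leading-bd} to say precisely that the $i$-th row of the sorted merge is the sorted (multiset) union of the $i$-th rows of the two summands. This is stated (implicitly) in the row-wise description of $\stdize{R+R'}$, but I would want to isolate it as an explicit lemma — call it the "row-compatibility of merging" — and prove it by inducting on the straightening steps, since this is the load-bearing combinatorial fact underneath the whole algorithm. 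A secondary subtlety is that the filter on line~\ref{line:check-divides} is genuinely needed (necessity of (a), (b) is not sufficiency), and I would include a small example or remark explaining why — e.g., a shape and row-contents that admit a standard bitableau which nonetheless fails to be divisible by $f$ in $A^{bd}_{gen}$ because the entries can't be assigned to make the merge work out.
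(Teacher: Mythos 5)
Your necessity arguments for conditions (a) and (b), the role of line~\ref{line:check-divides} as a divisibility filter, and termination are all essentially on track (though you conflate $N$, the number of \emph{rows}, with the number of columns, which is $\max(rd_1,\dots,rd_N,cd_1,\dots,cd_N)$). The genuine gap is in how you treat minimality: you propose either a post-processing step discarding non-minimal elements or a re-reading of \getadsh as enumerating shape-minimal objects, and in both directions you miss the load-bearing role of the \emph{exact-equality} column-count constraint in \getadsh. That constraint is why Algorithm~\ref{alg:lcsm}, as written and with no post-processing, already returns precisely $\lcmlt(f,g)$. As your proposal stands, you would be proving correctness of a modified algorithm rather than the one in the statement.

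Concretely, the constraint that returned shapes have exactly $\max(rd_1,\dots,rd_N,cd_1,\dots,cd_N)$ columns does double duty. For soundness (everything returned is division-minimal): any returned $l$ has exactly that many columns, so any strict $\gendivides$-divisor $l'$ of $l$ has strictly fewer columns, since by Lemma~\ref{lem:lt-div-bd} the shape of $l'$ is a proper sub-multiset of that of $l$. But there is some index $j$ with $|re_j|$ or $|ce_j|$ equal to the maximum, and if $l'$ were a common multiple it would need $re_j$ (resp.\ $ce_j$) as a sub-multiset of its $j$-th row, forcing $l'$ to have at least that many columns --- a contradiction. So nothing returned is strictly divisible by another common multiple, and no post-processing is needed. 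For completeness: your claim that ``every minimal common standard multiple lies in $\mathfrak{L}$'' does not follow from conditions (a), (b) alone, because \getadsh additionally requires \emph{equality} in the column count while a priori an LCM could have more columns; the paper closes this by arguing that a common multiple with strictly more columns than $\max(rd_i,cd_i)$ can have a column removed (adjusting entries as needed) to yield a strictly smaller common multiple, hence was not minimal. Your hedged alternative (``\getadsh enumerates shapes minimal under containment and the row-count lower bounds'') is also not what the pseudocode does --- shapes with extra boxes in some rows are returned so long as the column count is exactly $\max(rd_i,cd_i)$ --- so the minimality of the output really does rest on the interaction sketched above, not on \getadsh enumerating shape-minimal objects.
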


Understanding how divisibility works in $A^{bd}_{gen}$ is the key piece required in the proof of Theorem \ref{thm:lcm-algo}. To get our more combinatorial understanding of divisibility in $A^{bd}_{gen}$, we use the following definition.

\begin{definition}[$\bitab{R' \setminus R}{C' \setminus C}$]
Let $\bitab{R'}{C'}$ and $\bitab{R}{C}$ be standard bitableaux of shapes $\lambda' = (\lambda'_1, \ldots, \lambda'_s)$ and $\lambda = (\lambda_1, \ldots, \lambda_t)$ respectively, such that $\lambda  \subseteq \lambda'$ as multisets, and the multiset of elements of every row of $R$ (resp., $C$) is a sub-multiset of the elements of the same row in $R'$ (resp., $C'$).
Then $\bitab{R' \setminus R}{C' \setminus C}$ is defined to be the standard bitableau of shape $\shapetab{\lambda'} \setminus \shapetab{\lambda}$ and \[\mathcal{R}(R' \setminus R, i) = \mathcal{R}(R', i) \setminus \mathcal{R}(R, i), \qquad \text{and} \qquad \mathcal{R}(C' \setminus C, i) = \mathcal{R}(C', i) \setminus \mathcal{R}(C, i).\] 
\end{definition}

\begin{example}[Formation of $\bitab{R' \setminus R}{C' \setminus C}$]
Given standard bitableaux $\bitab{R}{C}$ and $\bitab{R'}{C'}$, it might be useful to view the formation of $\bitab{R' \setminus R}{C' \setminus C}$ as a three step process: (Step 1) shape removal, (Step 2) element deletion, and (Step 3) compactification. Suppose \[\bitab{R'}{C'} = \bigbitab{\tableau[s]{1 & 2 & 2 \\ 3 & 3 & 5 \\ 5 & 6 & \bl \\ 9 \\ }}{\tableau[s]{ 1 & 1 & 2 \\ 2 & 3 & 4 \\ 4  & 5 \\ 9 \\ }}, \qquad \text{and} \qquad \bitab{R}{C} = \bigbitab{\tableau[s]{1 \\ 5\\ 6 \\ 9}}{\tableau[s]{1 \\ 4 \\ 5 \\ 9}},\qquad \] then computing $\bitab{R' \setminus R}{C' \setminus C}$ can be done as follows:
\leavevmode{\begin{table}[H]
\centering
\begin{tabular}{ccc}
{1. $\bitab{R'}{C'}$} & {2. remove shape of $\bidet{R}{C}$} &  {3. delete elements of $\bidet{R}{C}$}\\
$\downarrow$ & $\downarrow$ & $\downarrow$ \\
\parbox{3.3cm}{\centering
$\bigbitab{\tableau[s]{1 & 2 & 2 \\ 3 & 3 & 5 \\ 5 & 6 & \bl \\ 9 \\ }}{\tableau[s]{ 1 & 1 & 2 \\ 2 & 3 & 4 \\ 4  & 5 \\ 9 \\ }}$} & \parbox{3.3cm}{\centering
$\bigbitab{\tableau[s]{\bl{1} & 2 & 2 \\ \bl{3} & 3 & 5 \\ \bl{5} & 6 \\ \bl{9}}}{\tableau[s]{ \bl{1} & 1 & 2 \\ \bl{2} & 3 & 4 \\ \bl{4}  & 5 \\ \bl{9}}}$} &  \parbox{3.3cm}{\centering
$\bigbitab{\tableau[s]{\bl{} & 2 & 2 \\ \bl{3} & 3 &  \\ \bl{5} &  \\ \bl{}}}{\tableau[s]{ \bl{} & 1 & 2 \\ \bl{2} & 3 &  \\ \bl{4}  &  \\ \bl{}}}$}\\ \\
& {4. compactify to get $\bitab{R' \setminus R}{C' \setminus C}$}\\
& $\downarrow$ & \\ 
 & \parbox{2.5cm}{\centering
$\bigbitab{\tableau[s]{2 & 2 \\ 3 & 3 \\ 5}}{\tableau[s]{1 & 2 \\ 2 & 3 \\ 4}}$.}\\
\end{tabular}.
\end{table}}
\end{example}

\begin{lemma} \label{lem:lt-div-bd}
Consider $(A^{bd}, A^{bd}_{lt})$. Suppose $f = \bidet{R_f}{C_f}$ and $g = \bidet{R_g}{C_g}$ are both standard. If $f \ltdivides g$, then $\bitab{R_g \setminus R_f}{C_g \setminus C_f}$ is defined and satisfies \[\bidet{R_g \setminus R_f}{C_g \setminus C_f} = \ltfrac{g}{f}.\]
\end{lemma}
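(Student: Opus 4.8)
The statement asserts that when $f \ltdivides g$ for standard bideterminants $f,g$ (divisibility in $A^{bd}_{gen}$), the ``quotient'' $\ltfrac{g}{f}$, which by Lemma~\ref{lem:mon-div} and Observation~\ref{obs:no-binomial-ann} is a unique standard term of $A^{bd}_{lt}$, is precisely the standard bideterminant obtained by the combinatorial subtraction $\bitab{R_g \setminus R_f}{C_g \setminus C_f}$. The natural approach is to work entirely in $A^{bd}_{gen}$ (noting that $\ltdivides$ is defined via $A_{gen}$ per Definition~\ref{def:ltdivides}, or if $A^{bd}_{lt}$ is a more restrictive algebra of leading terms, divisibility there implies divisibility in $A_{gen}$ and the quotient is the same standard term), use Theorem~\ref{thm:leading-bd}(II) which tells us that products of leading terms in $A^{bd}_{gen}$ are given by ``merge and row-sort,'' i.e. $\pi_{lt}(\bidet{R_1}{C_1}) \pi_{lt}(\bidet{R_2}{C_2}) = \pi_{lt}(\bidet{R_1^{(s)} + R_2^{(s)}}{C_1^{(s)} + C_2^{(s)}})$ where the superscript $(s)$ denotes the standardization-by-row-sorting operation, and then verify the two obligations that characterize $\ltfrac{g}{f}$: (a) that $\bitab{R_g \setminus R_f}{C_g \setminus C_f}$ is well-defined (i.e. the multiset conditions in its definition are met), and (b) that $\pi_{lt}(f) \cdot \pi_{lt}(\bidet{R_g \setminus R_f}{C_g \setminus C_f}) = \pi_{lt}(g)$ in $A^{bd}_{gen}$.

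First I would establish (a). Since $f \ltdivides g$, there is \emph{some} standard term $t$ with $\pi_{lt}(f)\,\pi_{lt}(\pi_{lt}^{-1}(t)) = \pi_{lt}(g)$; write $t$ (up to scalar) as $\bidet{R_t}{C_t}$, a standard bideterminant. By Theorem~\ref{thm:leading-bd}(II), $\pi_{lt}(g) = \pi_{lt}(\bidet{\stdize{R_f + R_t}}{\stdize{C_f + C_t}})$, and since distinct standard bideterminants map to distinct basis elements of $A^{bd}_{gen}$ (Remark~\ref{rem:bidet-bitab-correspondence}), we get the bitableau identity $\bitab{R_g}{C_g} = \bitab{\stdize{R_f + R_t}}{\stdize{C_f + C_t}}$ as standard bitableaux. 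Now the right-hand side is formed by merging columns of $R_f,R_t$ (resp. $C_f,C_t$) and then sorting each row into ascending order. The key combinatorial observation is that the multiset of entries in row $i$ of $\stdize{R_f + R_t}$ equals $\mathcal{R}(R_f,i) \cup \mathcal{R}(R_t,i)$ as a \emph{multiset} (merging columns of fixed heights distributes entries by row, and row-sorting only permutes within a row; here the union is the additive multiset union since $R_f + R_t$ merges column sets). Hence $\mathcal{R}(R_g,i) = \mathcal{R}(R_f,i) \uplus \mathcal{R}(R_t,i) \supseteq \mathcal{R}(R_f,i)$, and similarly for columns; moreover $\shapetab{g}$ as a multiset of column-heights equals $\shapetab{f} \uplus \shapetab{t} \supseteq \shapetab{f}$. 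These are exactly the hypotheses required for $\bitab{R_g \setminus R_f}{C_g \setminus C_f}$ to be defined, and by construction its row-$i$ content is $\mathcal{R}(R_g,i) \setminus \mathcal{R}(R_f,i) = \mathcal{R}(R_t,i)$ (and likewise for columns), and its shape is $\shapetab{g} \setminus \shapetab{f} = \shapetab{t}$. Since a standard bitableau is determined by its shape together with its row-contents (standardness forces each row to be the unique ascending arrangement, and the column-strict condition then pins down the assignment—this is where I'd invoke Remark~\ref{rem:bidet-bitab-correspondence} and the semistandard structure), we conclude $\bitab{R_g \setminus R_f}{C_g \setminus C_f} = \bitab{R_t}{C_t}$.

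It then follows immediately that $\pi_{lt}(f) \cdot \pi_{lt}(\bidet{R_g \setminus R_f}{C_g \setminus C_f}) = \pi_{lt}(f)\,\pi_{lt}(\bidet{R_t}{C_t}) = \pi_{lt}(g)$, so by uniqueness of the quotient standard term (Lemma~\ref{lem:mon-div}, Observation~\ref{obs:no-binomial-ann}) we get $\ltfrac{g}{f} = \bidet{R_g \setminus R_f}{C_g \setminus C_f}$ as claimed. If $A^{bd}_{lt}$ is some algebra of leading terms other than $A^{bd}_{gen}$, one notes that $\pi_{lt}^{A_{lt}}(f)$ and $\pi_{lt}^{A_{lt}}(\bidet{R_t}{C_t})$ are compatible (their product in $A_{lt}$ is nonzero, since it equals $\pi_{lt}(g)$ by the defining property of algebras of leading terms together with the $A_{gen}$ computation above), so the same identity holds there. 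The main obstacle I anticipate is purely bookkeeping in the paragraph above: making airtight the claim that ``standardize-by-row-sorting a merged bitableau has row-$i$ content equal to the multiset union of the row-$i$ contents,'' and that row-contents plus shape determine a standard bitableau. Both are elementary facts about semistandard Young tableaux, but they must be stated carefully because the tableaux here use the transpose convention and because column-heights (not just row-lengths) are what get merged; I would isolate the first as a short sub-claim, proving it by observing that merging columns of heights $h_1 \ge h_2 \ge \cdots$ places, for each column of height $h$, one entry into each of rows $1,\dots,h$, so the total row-$i$ multiset is insensitive to the order of the columns and to any within-row sorting, and the second by the standard fact that a semistandard filling of a fixed shape with prescribed row-content is unique when it exists (equivalently, the column-strict condition forces it).
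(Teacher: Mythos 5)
Your proof is correct and elaborates exactly the same reasoning the paper compresses to ``immediate consequence of Theorem~\ref{thm:leading-bd}'': Theorem~\ref{thm:leading-bd}(II) gives $g = \bidet{\stdize{R_f + R_t}}{\stdize{C_f + C_t}}$ for the quotient $t$, and then the combinatorics of merge-and-row-sort identifies the row-contents and shape of $\bitab{R_g \setminus R_f}{C_g \setminus C_f}$ with those of $\bitab{R_t}{C_t}$. The two facts you isolate and prove carefully — that merging columns and row-sorting makes the row-$i$ content the additive multiset union of the factors' row-$i$ contents, and that a semistandard tableau is determined by its shape and row-contents — are precisely the content the paper is implicitly invoking, and your reduction from general $A_{lt}^{bd}$ to $A_{gen}^{bd}$ via Definition~\ref{def:ALT} is a correct handling of the $A_{lt}$ quantifier.
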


\begin{proof}
Immediate consequence of Theorem \ref{thm:leading-bd}.
\end{proof}

\begin{proof}[Proof of Theorem \ref{thm:lcm-algo}]
Since we are working in the ring $\F[\{X_{i,j}\}_{i \in [m], j \in [n]}]$, the columns of our bitableaux can be at most $\min(m, n)$ long, because any minor corresponding to a taller column is zero since it has a repeated row or repeated column. Thus both procedures \getadsh and \getadsbd halt in finite time, as they are exploring finite sets for bitableaux satisfying certainly easily recognizable conditions. By Prop.~\ref{prop:div-alg}, the division checks in $A_{lt}$ on line~\ref{line:check-divides} are also computable in finite time. These together imply that Algorithm \ref{alg:lcsm} indeed halts in finite time. It remains to establish that, relative to $A_{gen}^{bd}$, the algorithm returns all least common multiples (completeness), and returns nothing other than least common multiples (soundness). Once we show that it works correctly relative to $A^{bd}_{gen}$, we note that line~\ref{line:check-divides} then ensures it works correctly relative to $A^{bd}_{lt}$: For if $m,m'$ are two standard monomials in any $A_{lt}$, then the presence of term ordering guarantees that if $m \ltdivides m'$, then there is a unique standard term $m' / m$ (Lemma~\ref{lem:mon-div}); thus if we can compute $t = m' / m$ in $A_{gen}$, then we can simply check in $A_{lt}$ whether $tm = m'$ (in which case $m'$ is indeed a multiple of $m$ in $A_{lt}$) or $tg = 0$ (in which case it is not). 

(Soundness.) In Line \ref{line:admissible-sbd}, Algorithm \ref{alg:lcsm} computes $\mathfrak{L}$ which we claim is exactly equal to the set of LCMs of $f$ and $g$ relative to $A_{gen}^{bd}$. By the description of $\getadsbd$, we can see that in every standard bideterminant returned in Line \ref{line:admissible-sbd}, the $i^{\text{th}}$ row of the row tableau contains both $\mathcal{R}(R_f, i)$ and $\mathcal{R}(R_g, i)$ as subsets, and the $i^{\text{th}}$ row of the column tableau contains both $\mathcal{R}(C_f, i)$ and $\mathcal{R}(C_g, i)$ as sub-multisets. Also, by the description of $\getadsh$, the call in Line \ref{line:admissible-shapes} ensures that the shapes of the standard bitableaux that are eventually returned by Algorithm \ref{alg:lcsm} are such that they contain both $\shapetab{f}$ and $\shapetab{g}$ as subshapes. Thus, by Lemma \ref{lem:lt-div-bd}, we can see that every standard bideterminant that is returned by Algorithm \ref{alg:lcsm} is certainly a common multiple of $f$ and $g$.

To see that the common multiples returned are least, we have to show that for every standard bideterminant $l$ that is returned by the algorithm, there is no $l' \neq l$ that is a common multiple of $f,g \in A^{bd}_{gen}$ satisfying $l' \gendivides l$. Observe in the description of $\getadsh$ that all shapes returned have length exactly equal to $\max(r_1, \ldots, r_N, c_1, \ldots, c_N)$. By Lemma \ref{lem:lt-div-bd}, for any such $l'$ 
to satisfy $l' \gendivides l$, we would need the length of the shape of $l'$ to be strictly smaller than the length of the shape of $l$ (the length of the shape of $R \backslash R'$ is strictly less than that of $R$ unless $R'$ is the empty tableaux). This is impossible because it would mean that the elements in some row of the row tableau of $l'$ is a strict sub-multiset of the corresponding row in $R_f$ or $R_g$, or the elements in some row of the column tableau of $l'$ is a strict sub-multiset of the corresponding row in $C_f$ or $C_g$. This in turn would mean that $l'$ cannot possibly be a common multiple of $f$ and $g$, again by Lemma \ref{lem:lt-div-bd}. This proves the soundness of the algorithm. 

(Completeness.) To establish completeness, we first argue that the fact that the length of shapes returned by Algorithm \ref{alg:lcsm} is equal to $\max(r_1, \ldots, r_N, c_1, \ldots, c_N)$ is a necessary condition for being a least common multiple. We have already argued that it is not possible for a least common multiple, say $l$, to have length strictly smaller than $\max(r_1, \ldots, r_N, c_1, \ldots, c_N)$. It is also not possible for $l$ to have length strictly larger than $\max(r_1, \ldots, r_N, c_1, \ldots, c_N)$ because in that case, we would be able to remove a column from $l$, and after replacing some elements if necessary (to ensure that $f \gendivides l'$ and $g \gendivides l'$), we can form $l'$ which is also a common multiple of $f$ and $g$; since it is true by construction that $l' \gendivides l$, this would mean that $l$ was not a least common multiple to begin with. Thus it is clear that all $l$ have to necessarily have length equal to $\max(r_1, \ldots, r_N, c_1, \ldots, c_N)$. It is also a necessary condition that any shape of a least common multiple must the contain the shapes of $f$ and $g$ as subshapes; this is guaranteed from the description of $\getadsh$. Hence, we can conclude that the shapes returned by $\getadsh$ exactly equals the set of shapes in $LCM_{sm}^{gen}(f, g)$.

By Lemma \ref{lem:lt-div-bd}, it is clear that it is a necessary condition that the $i^{\text{th}}$ row of the row tableau of any least common multiple $l$ must contain both the elements of the $i^{\text{th}}$ row of $R_f$ and the $i^{\text{th}}$ row of $R_g$, and it is also a necessary condition that the $i^{\text{th}}$ row of the column tableau of $l$ must contain both the elements of the $i^{\text{th}}$ row of $C_f$ and the $i^{\text{th}}$ row of $C_g$. Looking at the description of $\getadsbd$, it is clear that these conditions are satisfied. Finally, since we enumerate all possible standard bideterminants which satisfy the above necessary conditions, it is clear that the algorithm returns all least common multiples, thus establishing the completeness of the algorithm.
\end{proof}

\begin{example}[Example run of Algorithm \ref{alg:lcsm}] Let the ambient ring be $\F[\{X_{i,j}\}_{i \in [3], j \in [3]}]$.
Let the input to Algorithm \ref{alg:lcsm} be \[f = \bigbitab{R_f}{C_f} = \bigbitab{\tableau[s]{1 & 1 \\ 2 & 2 \\ 3 }}{\tableau[s]{1 & 1 \\ 2 & 2 \\ 3}}, \qquad \text{and} \qquad g = \bigbitab{R_g}{C_g} = \bigbitab{\tableau[s]{1 \\ 3}}{\tableau[s]{1 \\ 2}},\] where $\shapetab{f} = (3, 2)$ and $\shapetab{g} = (2)$. Also suppose that we wish to compute the set of LCMs of $f$ and $g$ relative to $A_{gen}^{bd}$.

Since $N = \max(3, 2) = 3$ (Line \ref{line:N}), Lines~\ref{line:re}--\ref{line:ce} compute
\[re_1 = \{\!\!\{1, 1\}\!\!\}, re_2 = \{\!\!\{2, 2, 3\}\!\!\}, re_3 = \{\!\!\{3\}\!\!\}, \qquad ce_1 = \{\!\!\{1, 1\}\!\!\}, ce_2 = \{\!\!\{2, 2\}\!\!\}, ce_3 = \{\!\!\{3\}\!\!\},\]
and Lines~\ref{line:rd}--\ref{line:cd} compute 
\[rd_1 = 2, rd_2 = 3, rd_3 = 1, \qquad cd_1 = 2, cd_2 = 2, cd_3 = 1.\]
In Line \ref{line:admissible-shapes}, we look for shapes that satisfy the conditions that the number of columns of the shape is equal to $\max(2, 3, 1, 2, 2, 1) = 3$, the shape contains $\shapetab{f} \cup \shapetab{g} = (3,2)$ as a sub-multiset, and that the $i^{\text{th}}$ row has at least $\max(rd_i, cd_i)$ boxes. We find
\[\getadsh\left((3,2), (2,3,1), (2,2,1)\right) \qquad \text{returns} \qquad \lambdalcm = \{(3,3,2), (3,2,2)\}.\]
Finally, for each $\lambda \in \{(3,3,2), (3,2,2)\}$, in Line \ref{line:admissible-sbd}, the call 
\[
\getadsbd\left(\{(3,3,2), (3,2,2)\}, (\{\!\!\{1, 1\}\!\!\}, \{\!\!\{2, 2, 3\}\!\!\}, \{\!\!\{3\}\!\!\}), (\{\!\!\{1, 1\}\!\!\}, \{\!\!\{2, 2\}\!\!\}, \{\!\!\{3\}\!\!\})\right)
\] 
looks for all standard bitableaux of shape $\lambda$ such that the $i^{\text{th}}$ row of the row tableau contains $re_i$ and the $i^{\text{th}}$ row of the column tableau contains $ce_i$. For $\lambda = (3,3,2)$, we find that 
\[
\bigbitab{\tableau[s]{\tf{1} & \tf{1} & 1 \\ \tf{2} & \tf{2} & \tf{3} \\ \tf{3} & 3}}{\tableau[s]{\tf{1} & \tf{1} & 1 \\ \tf{2} & \tf{2} & 2 \\ \tf{3} & 3}}, \bigbitab{\tableau[s]{\tf{1} & \tf{1} & 1 \\ \tf{2} & \tf{2} & \tf{3} \\ \tf{3} & 3}}{\tableau[s]{\tf{1} & \tf{1} & 1 \\ \tf{2} & \tf{2} & 3 \\ \tf{3} & 3}}, \bigbitab{\tableau[s]{\tf{1} & \tf{1} & 1 \\ \tf{2} & \tf{2} & \tf{3} \\ \tf{3} & 3}}{\tableau[s]{\tf{1} & \tf{1} & 2 \\ \tf{2} & \tf{2} & 3 \\ \tf{3} & 3}},
\]
and
\[
\bigbitab{\tableau[s]{\tf{1} & \tf{1} & 2 \\ \tf{2} & \tf{2} & \tf{3} \\ \tf{3} & 3}}{\tableau[s]{\tf{1} & \tf{1} & 1 \\ \tf{2} & \tf{2} & 2 \\ \tf{3} & 3}}, \bigbitab{\tableau[s]{\tf{1} & \tf{1} & 2 \\ \tf{2} & \tf{2} & \tf{3} \\ \tf{3} & 3}}{\tableau[s]{\tf{1} & \tf{1} & 1 \\ \tf{2} & \tf{2} & 3 \\ \tf{3} & 3}}, \bigbitab{\tableau[s]{\tf{1} & \tf{1} & 2 \\ \tf{2} & \tf{2} & \tf{3} \\ \tf{3} & 3}}{\tableau[s]{\tf{1} & \tf{1} & 2 \\ \tf{2} & \tf{2} & 3 \\ \tf{3} & 3}},
\]
are valid standard bitableaux. For $\lambda = (3,2,2)$, we find that \[\bigbitab{\tableau[s]{\tf{1} & \tf{1} & 1 \\ \tf{2} & \tf{2} & \tf{3} \\ \tf{3}}}{\tableau[s]{\tf{1} & \tf{1} & 1 \\ \tf{2} & \tf{2} & 2 \\ \tf{3}}}, \bigbitab{\tableau[s]{\tf{1} & \tf{1} & 1 \\ \tf{2} & \tf{2} & \tf{3} \\ \tf{3}}}{\tableau[s]{\tf{1} & \tf{1} & 1 \\ \tf{2} & \tf{2} & 3 \\ \tf{3}}}, \bigbitab{\tableau[s]{\tf{1} & \tf{1} & 1 \\ \tf{2} & \tf{2} & \tf{3} \\ \tf{3}}}{\tableau[s]{\tf{1} & \tf{1} & 2 \\ \tf{2} & \tf{2} & 3 \\ \tf{3}}},\]
and
\[\bigbitab{\tableau[s]{\tf{1} & \tf{1} & 2 \\ \tf{2} & \tf{2} & \tf{3} \\ \tf{3}}}{\tableau[s]{\tf{1} & \tf{1} & 1 \\ \tf{2} & \tf{2} & 2 \\ \tf{3}}}, \bigbitab{\tableau[s]{\tf{1} & \tf{1} & 2 \\ \tf{2} & \tf{2} & \tf{3} \\ \tf{3}}}{\tableau[s]{\tf{1} & \tf{1} & 1 \\ \tf{2} & \tf{2} & 3 \\ \tf{3}}}, \bigbitab{\tableau[s]{\tf{1} & \tf{1} & 2 \\ \tf{2} & \tf{2} & \tf{3} \\ \tf{3}}}{\tableau[s]{\tf{1} & \tf{1} & 2 \\ \tf{2} & \tf{2} & 3 \\ \tf{3}}}\]
are valid standard bitableaux (the boxes with thicker border contains elements of $re_*$ or $ce_*$, and the remaining boxes can be freely set whilst ensuring that the resulting bitableaux is standard). All of the above standard bitableaux comprise $\lcmlt(f, g)$.
\end{example}

\subsection{Universal bd-Gröbner bases for determinantal ideals}

For $S \subseteq [m], T \subseteq [n]$, write $M_{S,T}$ for the minor of the $m \times n$ matrix $X$, whose rows are indexed by $S$ and columns are indexed by $T$.
\begin{theorem}[store=thmtminorsbdgb]
\label{thm:t-minors-bdgb}
Let $m > n \geq r$, and consider the determinantal ideal $I_r \subseteq \F[\{X_{i, j}\}_{i \in [m], j \in [n]}] = A^{bd}$ generated by the size-$r$ minors of $X$. Then
\[
G_r := \left\{M_{S, T} : S \subseteq [m], T \subseteq[n], n \geq |S|=|T| \geq r\right\}
\]
is a universal bd-Gröbner basis for $I_r$ (relative to any $A_{lt}^{bd}$).
\end{theorem}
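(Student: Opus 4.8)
The plan is to reduce everything to the divisibility characterization of pseudo-ASL Gröbner bases (Proposition~\ref{prop:asl-grobner-divisibility-characterization}), together with two classical facts about minors and one observation about algebras of leading terms. First I would record that $\langle G_r\rangle = I_r$: the inclusion $I_r\subseteq\langle G_r\rangle$ is immediate since $G_r$ contains all $r$-minors, and conversely $G_r\subseteq I_r$ because every $k$-minor with $k\ge r$ lies in $I_r$ (Laplace expansion along any $r$ rows, i.e. $I_k\subseteq I_r$). Second I would invoke the classical description of determinantal ideals inside the bideterminant ASL $A^{bd}$ (Theorem~\ref{thm:bd-hodge-algebra}): the set $\Omega_r$ of minors of size $\ge r$ is a poset ideal for the order of Definition~\ref{defn:po-minors} (a minor below another has at least as many rows), $I_r = A^{bd}\Omega_r$ by the previous sentence, and hence $I_r$ has as $\F$-basis exactly the standard bideterminants $[R\,|\,C]$ whose longest column has length $\ge r$ (see, e.g., \cite{DEPYoung, bruns2022determinants}). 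In particular, for every nonzero $f\in I_r$ and every pseudo-ASL term order $\preceq$, the leading standard bideterminant $\lmsm(f)$ has longest column of length $\ge r$.

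Now fix an arbitrary pair $(\preceq,A_{lt}^{bd})$ and a nonzero $f\in I_r$. Write $\lmsm(f)=[R\,|\,C]$ as a product of minors $\delta_1\le\delta_2\le\dots\le\delta_p$ in the order of Definition~\ref{defn:po-minors}; then $\delta_1$ is the (longest) first column of the standard bitableau, a single $\ell$-minor $M_{S,T}$ with $r\le\ell\le n$ (the upper bound using $m>n$, so every column has length at most $n$). Thus $\delta_1\in G_r$, and since $\delta_1$ is a single minor with sorted index sets it is itself a standard monomial, so $\ltsm(\delta_1)=\delta_1$. It remains to check $\delta_1 \ltdivides \lmsm(f)$ in $A_{lt}^{bd}$, i.e. $\pi_{lt}(\delta_1)\divides\pi_{lt}([R\,|\,C])$. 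The key point is that, at the level of abstract monomials in $\W^H$, the product $\delta_1\cdot m'$—where $m'$ is the standard bideterminant obtained by deleting the first column of $[R\,|\,C]$—is again the standard abstract monomial $[R\,|\,C]$; no straightening is involved. Hence in \emph{any} algebra of leading terms $A_{lt}^{bd}$, which is governed by the same $(H,\Sigma)$ and in which $H$ generates, multiplicativity of $\hat\phi$ forces $\pi_{lt}(\delta_1)\,\pi_{lt}(m')=\pi_{lt}([R\,|\,C])\ne 0$ (a basis element of $A_{lt}^{bd}$). Therefore $\pi_{lt}(\delta_1)\divides\pi_{lt}(\lmsm(f))$, so $\ltsm(\delta_1)\ltdivides\ltsm(f)$.

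By Proposition~\ref{prop:asl-grobner-divisibility-characterization} this shows $G_r$ is a pseudo-ASL Gröbner basis for $I_r$ relative to $(\preceq,A_{lt}^{bd})$; since the pair was arbitrary, $G_r$ is a universal bd-Gröbner basis. I expect the only genuinely substantive points to be: (a) citing (or reproving from the general ASL machinery) the $\F$-basis description of $I_r$ in terms of standard bideterminants of longest column $\ge r$; and (b) the observation in the last paragraph—the reason the divisibility holds simultaneously for \emph{every} $A_{lt}^{bd}$, not just $A_{gen}^{bd}$ or $A_{disc}^{bd}$, is precisely that the relevant product of standard monomials is straightening-free and hence is computed the same way in every algebra of leading terms. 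Everything else is bookkeeping.
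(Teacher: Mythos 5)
Your proposal is correct and takes essentially the same route as the paper's proof: both reduce to the divisibility characterization (Proposition~\ref{prop:asl-grobner-divisibility-characterization}), both invoke the classical $\F$-basis description of $I_r$ inside the bideterminant ASL, and both hinge on the same observation that the quotient $\lmsm(f)/\delta_1$ is a standard monomial whose product with $\delta_1$ involves no straightening, so the divisibility holds simultaneously in every $A_{lt}^{bd}$. Your version is marginally more self-contained (you verify $\langle G_r\rangle=I_r$ explicitly and work with an arbitrary $A_{lt}^{bd}$ from the start, whereas the paper first routes through $A_{disc}^{bd}$ via Theorem~\ref{thm:universal} before ending up making the same argument for all $A_{lt}^{bd}$ anyway).
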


In some sense, this proof is just a restatement of the standard fact (which follows from the bideterminant straightening law) that $I_r$ consists precisely of the polynomials each of whose standard monomials is a standard bideterminant of length $\geq r$. The details in the following proof are just to verify that all the ASL pieces fit together properly. 

\begin{proof} 
By the proof of Theorem~\ref{thm:universal}, it suffices to show that $G_r$ is a universal ASL Gröbner basis relative to arbitrary ASL term orders for $A^{bd}$ and with algebra of leading terms $A^{bd}_{disc}$. We use the characterization of ASL Gröbner bases in terms of divisibility (Proposition~\ref{prop:asl-grobner-divisibility-characterization}).

Since the elements of $G_r$ are themselves standard monomials, they are equal to their own leading terms regardless of choice of term order. Next, by the straightening law (Theorem \ref{thm:straightening} and the algorithm described in Appendix \ref{sec:app:bidet-straightening}), a polynomial $f$ is in $I_r$ if and only if for every standard monomial $m$ occurring in $f$ there is a minor of size $\geq r$ that divides $m$ (divides in $A^{bd}$). The latter set of minors is precisely $G_r$. Suppose $f \in I_r$. Regardless of term order, its leading term is thus divisible (in $A^{bd}$) by some element of $G_r$; say the leading term is $m$ and let $M_{S,T} \in G_r$ be an element of $G_r$ that divides $m$ in $A^{bd}$. All that remains is to show that $\pi_{disc}(M_{S,T}) \divides \pi_{disc}(m)$ in $A^{bd}_{disc}$. But this is almost immediate: for $m / M_{S,T}$ is another standard monomial $m'$, and since the product $m' M_{S,T}$ is standard in $A^{bd}$, in any algebra of leading terms we must have $\pi_{lt}(m') \pi_{lt}(M_{S,T}) = \pi_{lt}(m)$. Thus $G_r$ is an ASL Gröbner basis relative to any term order and any algebra of leading terms.
\end{proof}

\begin{remark}
In the case of Theorem \ref{thm:t-minors-bdgb}, we see that a bd-\gb basis can be much larger than an ordinary \gb basis. In fact, all standard bideterminants with at least one of the minors being one of $G_{r}$ (as defined in Theorem \ref{thm:t-minors-bdgb}) are a $\F$-basis of $I_r$ \cite[Corollary 3.4.2]{bruns2022determinants}. On the other hand, the proof of the theorem is extremely short and elementary.
\end{remark}

\begin{corollary}[store=thmmaximalminorsbdgb,note={of Theorem \ref{thm:t-minors-bdgb}}]
\label{cor:maximal-minors-bdgb}
The set maximal minors of a generic matrix is a universal bd-Gröbner basis for the ideal they generate.
\end{corollary}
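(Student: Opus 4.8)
The plan is to obtain the statement almost immediately from Theorem~\ref{thm:t-minors-bdgb}, after a small bookkeeping reduction and a brief treatment of one degenerate case. First I would record that the bideterminant ASL structure is symmetric under transposing $X$: the ring isomorphism $\F[X]\to\F[X^{\mathsf T}]$ sending each variable to its transpose carries the minor $M_{S,T}$ to $M_{T,S}$ and a standard bitableau $\bitab{R}{C}$ to $\bitab{C}{R}$, which is again standard; it also carries pseudo-ASL term orders on $A^{bd}$ bijectively to pseudo-ASL term orders on the transposed ASL, so the property of being a \emph{universal} bd-Gröbner basis is preserved. Hence we may assume without loss of generality that $X$ is an $m\times n$ matrix with $m\ge n$, so that the maximal minors of $X$ are exactly its $n\times n$ minors.

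In the case $m>n$ I would simply invoke Theorem~\ref{thm:t-minors-bdgb} with $r=n$. There the set $G_n=\{M_{S,T}\suchthat S\subseteq[m],\,T\subseteq[n],\,n\ge|S|=|T|\ge n\}$ forces $|T|=n$, hence $T=[n]$, and $|S|=n$; thus $G_n$ is precisely the collection of all $n\times n$ minors of $X$, i.e.\ the maximal minors, and $I_n$ is the ideal they generate. Theorem~\ref{thm:t-minors-bdgb} then says verbatim that $G_n$ is a universal bd-Gröbner basis for $\langle\text{maximal minors}\rangle$, relative to every pseudo-ASL term order on $A^{bd}$ and every algebra of leading terms $A^{bd}_{lt}$, which is exactly the assertion.

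The only situation not covered by the hypothesis $m>n$ is the square case $m=n$, where the ideal is the principal ideal $\langle d\rangle$ with $d:=M_{[n],[n]}$, itself a standard bideterminant (its bitableau is the single column $(1,\dots,n)$ on both sides, which is semistandard). Here I would argue directly. The key point is that $md$ is a standard bideterminant for \emph{every} standard monomial $m\in A^{bd}$: in forming $md$ one inserts the full column $(1,\dots,n)$ among the tallest columns of the (standard) bitableau of $m$, and this keeps both tableaux semistandard, since in each row $j$ the new entry is $j$ while the entries $s_1<\dots<s_k$ of the next column satisfy $s_j\ge j$. Consequently $\pi_{lt}(m)\pi_{lt}(d)=\pi_{lt}(\ltsm(md))\ne 0$ in every $A^{bd}_{lt}$, so $d$ has trivial annihilator and $\LAM(\pi_{lt}(d))=\emptyset$; moreover $d$ is a non-zerodivisor because $A^{bd}=\F[X]$ is a domain. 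Hence $\{d\}$ is trivially S-closed and vacuously Ann-closed, so by the pseudo-ASL Buchberger criterion (Theorem~\ref{thm:buchberger2}) it is a pseudo-ASL Gröbner basis for $\langle d\rangle$ relative to every $(\preceq,A^{bd}_{lt})$, i.e.\ a universal bd-Gröbner basis. (Alternatively, $\{d\}$ is a Gröbner basis relative to $A^{bd}_{gen}$ by Corollary~\ref{cor:buchberger-agen}, and one extends to all $A^{bd}_{lt}$ via Proposition~\ref{thm:diff}.)

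I do not expect any real obstacle here: all of the content lies in Theorem~\ref{thm:t-minors-bdgb}, and what remains is the routine identification of $G_n$ with the set of maximal minors together with the short dispatch of the square case. The single point that genuinely needs a line of verification is the semistandardness claim used in the square case, namely that adjoining the full column $(1,\dots,n)$ to a standard bitableau again yields a standard bitableau; this is the easy inequality $s_j\ge j$ sketched above.
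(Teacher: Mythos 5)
Your proof is essentially the paper's: the paper dispatches the corollary in one line by setting $r=n$ in Theorem~\ref{thm:t-minors-bdgb}. Your careful observation that the theorem's hypothesis $m>n$ does not literally cover the square case $m=n$ is well taken, and your direct treatment of it is correct: since the full determinant $d=M_{[n],[n]}$ is the minimum element in the partial order on minors, $md$ is a standard bideterminant for every standard monomial $m$, hence $\pi_{lt}(m)\pi_{lt}(d)\neq 0$ in every $A_{lt}^{bd}$, so $\{d\}$ is vacuously Ann-closed and trivially S-closed, and Theorem~\ref{thm:buchberger2} applies. The transposition reduction is also sound since the bideterminant ASL structure and the class of pseudo-ASL term orders are symmetric under transposing $X$.

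One small slip: the parenthetical alternative for the square case ``extends to all $A^{bd}_{lt}$ via Proposition~\ref{thm:diff}'' runs in the wrong direction. Proposition~\ref{thm:diff} says the reduced Gröbner basis relative to $A_{gen}$ is \emph{contained in} the reduced Gröbner basis relative to any other $A_{lt}$, so being a Gröbner basis relative to $A_{gen}$ does not by itself imply being one relative to, say, $A_{disc}$ (Example~\ref{ex:singleton} is exactly such a case). The Buchberger-criterion route you give as your main argument is the correct one; the parenthetical alternative should be dropped.
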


\begin{proof}
Immediate by setting $r = n$ in Theorem \ref{thm:t-minors-bdgb}.
\end{proof}

\begin{remark}
It is worth noting how easy proofs of Corollary \ref{cor:maximal-minors-bdgb} and Theorem \ref{thm:t-minors-bdgb} were compared to analogous proofs using ordinary Gröbner bases. Indeed, since we are working in an ASL where each minor is a single variable, there was no choice for what the leading standard monomial was, whereas in the ordinary polynomial ring each such minor has $n!$ terms and one must contend with the various possibilities for what the leading terms are. 
\end{remark}


\subsection{Relation to ordinary Gröbner bases} \label{sec:ordinary}
In this section we show that every bd-Gröbner basis relative to $\Adisc^{bd}$ is also an ordinary Gröbner basis relative to the diagonal term order, but the converse does not hold.

Let $<_{diag}$ denote the so-called ``diagonal ordering'' on monomials in the polynomial ring $\F[\{x_{ij}\}_{i \in [n], j \in [m]}]$, that is, $<_{diag}$ is the lexicographic ordering on monomials induced by the following order on variables: 
\[
x_{11} >_{diag} \dotsb >_{diag} x_{1m} >_{diag} x_{21} >_{diag} \dotsb >_{diag} x_{2m} >_{diag} \dotsb >_{diag} x_{n1} >_{diag} \dotsb >_{diag} x_{nm}.
\]
We use $\lm_{diag}$, $\lt_{diag}$, etc. to denote the leading monomial, leading term, etc. with respect to the diagonal ordering.

The following lemma is the key to showing that bd-Gröbner bases relative to $\Adisc^{bd}$ are also ordinary Gröbner basis relative to the diagonal term order.

\begin{lemma} \label{lem:ordinary}
\begin{enumerate}
\item If $f,g$ are standard bideterminants with $f \leq g$, then $\lm_{diag}(f) \leq_{diag} \lm_{diag}(g)$.

\item For all polynomials $f$, $\lm_{diag}(f) = \lm_{diag}(\lmsm^{bd}(f))$.

\item \label{lem:ordinary:div} Suppose $f,g$ are two standard bideterminants with $g \ltdivides f$ in $\Adisc^{bd}$. Then $\lm_{diag}(g)$ divides $\lm_{diag}(f)$.
\end{enumerate}
\end{lemma}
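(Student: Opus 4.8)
Here is my plan for proving Lemma~\ref{lem:ordinary}.

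\textbf{Overall strategy.} The three parts build on each other, so I would prove them in order: (1) first, then use (1) to get (2), then use (1) and (2) to get (3). The conceptual heart is part (1): once we know the diagonal leading monomial is monotone with respect to the partial order on bideterminants (and hence, as we will see, with respect to $\preceq$), the rest should follow from the multiplicative structure.

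\textbf{Part (1).} For a single minor $\bidet{r_1,\dots,r_k}{c_1,\dots,c_k}$, the diagonal leading monomial is $x_{r_1 c_1} x_{r_2 c_2} \cdots x_{r_k c_k}$, i.e. the product along the main diagonal, because the entries are placed in increasing order down the rows and increasing order along the columns, so the diagonal permutation is the lex-largest contribution under $<_{diag}$ (this is the classical fact underlying \cite{sturmfels-stanley-gb}). For a standard bideterminant $f = \prod_i f_i$, the diagonal leading monomial is the product of the diagonal leading monomials of the factors, since $<_{diag}$ is multiplicative. Now given $f \leq g$ as standard bideterminants, I would use the characterization via Definition~\ref{defn:po-minors}: after suitably ordering the minor factors, each minor factor $f_i$ of $f$ is $\leq$ a corresponding minor factor $g_{\sigma(i)}$ of $g$ (with $g$ possibly having extra factors, since $\leq$ on minors allows $g$'s factors to be shorter). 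Componentwise, $f \leq g$ on minors means $k_f \geq k_g$ and $r_i^{(f)} \leq r_i^{(g)}$, $c_i^{(f)} \leq c_i^{(g)}$ for the common indices, which forces $x_{r_i^{(f)} c_i^{(f)}} \geq_{diag} x_{r_i^{(g)} c_i^{(g)}}$ in each coordinate — wait, I must be careful about the direction here. Indeed smaller row/column indices give $\geq_{diag}$ variables, so a \emph{smaller} minor (in the order of Definition~\ref{defn:po-minors}) has a \emph{diagonal-larger} leading monomial on those coordinates; but the smaller minor is also \emph{longer}, contributing more variables. I would organize the argument so that comparing $\lm_{diag}(f)$ and $\lm_{diag}(g)$ reduces to a coordinatewise/lex comparison and conclude $\lm_{diag}(f) \leq_{diag} \lm_{diag}(g)$ exactly when $f \leq g$; the key algebraic input is that padding $g$ to match $f$'s shape only makes its diagonal monomial larger. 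This matching of conventions (ours vs. classical, and the reversal between the minor order and $\prec$) is the step I expect to be the main obstacle — getting every inequality to point the right way.

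\textbf{Part (2).} Write $f = \sum_i a_i \bidet{R_i}{C_i}$ in the standard bideterminant basis, with $\lmsm^{bd}(f) = \bidet{R_1}{C_1}$ the $\prec$-largest. I want to show the diagonal leading monomial of the whole sum equals that of the single term $\bidet{R_1}{C_1}$. This should follow from part (1) applied within each straightening step, together with the compatibility of $\prec$ and $\leq$: by Theorem~\ref{thm:straightening}\ref{point:straightening-orders}, every standard bideterminant appearing in $f$ that is $\prec$-smaller than $\lmsm^{bd}(f)$ either has strictly smaller size (hence strictly fewer variables in its diagonal monomial, so cannot match) or has the same size and is $\leq$-related; in the latter case part (1) gives that its diagonal leading monomial is $\leq_{diag}$ that of $\bidet{R_1}{C_1}$. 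A slight subtlety: I need strict inequality (so that no cancellation of the top diagonal monomial can occur), which I get because among standard bideterminants of the same shape, the $\theta$-order (Definition~\ref{defn:order-bitab}) should refine the diagonal order strictly when the tableaux differ — this again reduces to a lex comparison on the entries and their images under the diagonal-monomial map. Hence $\lm_{diag}(f) = \lm_{diag}(\bidet{R_1}{C_1}) = \lm_{diag}(\lmsm^{bd}(f))$.

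\textbf{Part (3).} Suppose $g \ltdivides f$ in $\Adisc^{bd}$. By Lemma~\ref{lem:lt-div-bd} (or directly, since in $\Adisc^{bd}$ divisibility of standard monomials is ordinary monomial divisibility in $R[H]$ by Observation~\ref{obs:lcm-unique-disc}), there is a standard bideterminant $h$ with $g h = f$ as standard monomials, i.e. $\bidet{R_f}{C_f} = \bidet{R_g \setminus\text{(something)}}{\dots}$ — more precisely $f$ is obtained by merging the columns of $g$ and $h$ and then sorting (Theorem~\ref{thm:leading-bd}), so $\lmsm^{bd}(gh) = \bidet{\stdize{R_g+R_h}}{\stdize{C_g+C_h}}$. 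Applying part (2), $\lm_{diag}(f) = \lm_{diag}(\lmsm^{bd}(gh)) = \lm_{diag}(g h)$ — here I use that $gh$, as a product of polynomials, has $\lm_{diag}(gh) = \lm_{diag}(g)\lm_{diag}(h)$ because $<_{diag}$ is a term order and $\F[X]$ is a domain. Therefore $\lm_{diag}(g)$ divides $\lm_{diag}(f)$, as claimed. The only thing to double-check is that the $\lmsm^{bd}$ of the \emph{product polynomial} $gh$ (which is what $g \ltdivides f$ really encodes, via $f = \lmsm^{bd}(gh)$) matches the merged-and-sorted bitableau, which is exactly the content of Theorem~\ref{thm:leading-bd}(II). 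I expect parts (2) and (3) to be essentially bookkeeping once part (1) is nailed down; the convention-wrangling in part (1) is where the real care is needed.
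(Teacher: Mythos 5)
You have misread the relation in part~(1). The $\leq$ there is the ASL term order $\preceq$ of Definition~\ref{defn:order-bitab}, not the partial order on minors from Definition~\ref{defn:po-minors}. This is forced by how part~(2) invokes part~(1): it applies it to $\lmsm^{bd}(f)$ and an arbitrary standard bideterminant $b$ appearing in $f$, which are $\preceq$-comparable because $\preceq$ is total, but which need not be comparable under the Definition~\ref{defn:po-minors} partial order. Under your interpretation the claim in part~(1) is in fact \emph{false}: in that partial order a taller minor with smaller entries is smaller, so for instance $\bidet{1,2}{1,2} \leq \bidet{2}{3}$, yet $\lm_{diag}\bigl(\bidet{1,2}{1,2}\bigr) = x_{11}x_{22} >_{diag} x_{23} = \lm_{diag}\bigl(\bidet{2}{3}\bigr)$. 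You flagged a ``direction issue'' and that was the right instinct; the resolution is that you should be comparing in $\preceq$, not in the minor partial order.

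This misreading propagates into part~(2). You write that every $b \prec \lmsm^{bd}(f)$ of the same size is ``$\leq$-related'' to $\lmsm^{bd}(f)$, but that does not hold for the Definition~\ref{defn:po-minors} order (it is a genuine partial order and most pairs of same-size bideterminants are incomparable). You then propose to patch this with a direct $\theta$-lex comparison, but that is exactly the content of part~(1) once $\leq$ is correctly read as $\preceq$. In other words, the fall-back argument you sketch \emph{is} the paper's proof of part~(1): write $f,g$ via their bitableaux, compare $\theta$-sequences, and observe that the first differing entry (a larger row/column index in the $\prec$-smaller bideterminant, by Definition~\ref{defn:order-bitab}) means the $\prec$-larger one contributes a $>_{diag}$-larger variable to its diagonal monomial. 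No decomposition into minor factors or componentwise pairing is involved. For part~(3) your route through $\lmsm^{bd}(gh)$ and part~(2) is correct but unnecessary: in $\Adisc^{bd}$, $g \ltdivides f$ means precisely that the columns of $g$'s bitableaux form a sub-multiset of $f$'s, so the product $gh$ involves no straightening and $\lm_{diag}(g)\lm_{diag}(h) = \lm_{diag}(f)$ directly; the paper makes exactly this one-line observation.
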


\begin{proof}
First, note that the leading monomial of a minor is precisely the product of its diagonal elements. (This can be proved by induction: if the rows involved are $r_1 < r_2 < \dotsb < r_k$ and the columns are $c_1 < c_2 < \dotsb < c_k$, then the largest variable involved is $x_{r_1,c_1}$. Now induct on the remaining $(k-1) \times (k-1)$ minor.)

\begin{enumerate}
\item Let $f,g$ be standard bideterminants with $f \leq g$. Suppose $f = \bitab{R}{C}$ and $g = \bitab{R'}{C'}$. If the bitableau first differ at $r = R_{ij} > R'_{i'j'} = r'$, then we have $\lm_{diag}(g)$ has at least one more variable from row $r'$ than $\lm_{diag}(f)$, and thus $\lm_{diag}(f) < \lm_{diag}(g)$. If the first difference is in the column tableaux, the same proof works \emph{mutatis mutandis}.

\item Since $\lmsm^{bd}(f)$ is greater than all other standard bideterminants $b$ appearing in $f$, we have $\lm_{diag}(\lmsm^{bd}(f)) > \lm_{diag}(b)$, by part (1). Thus $\lm_{diag}(\lmsm^{bd}(f)) = \lm_{diag}(f)$.

\item Let $f,g$ be two standard bideterminants with $g$ dividing $f$ in $\Adisc^{bd}$. Then the columns of the bitableaux for $g$ are a subset of the columns of the bitableaux for $f$. Since leading monomials multiply (for ordinary monomials in the polynomial ring), the leading monomial for $g$ in the diagonal ordering is thus a sub-product of the leading monomial for $f$. \qedhere
\end{enumerate}
\end{proof}

\begin{theorem} \label{thm:gb_diag}
If $G$ is a bd-Gr\"{o}bner basis for an ideal $I$ relative to $\Adisc^{bd}$, then it is also a Gr\"{o}bner basis with respect to the diagonal ordering.
\end{theorem}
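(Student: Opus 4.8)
The plan is to use Lemma~\ref{lem:ordinary}, especially part~\ref{lem:ordinary:div}, together with the divisibility characterization of pseudo-ASL Gröbner bases (Proposition~\ref{prop:asl-grobner-divisibility-characterization}) and the analogous standard fact for ordinary Gröbner bases. The whole argument is a short ``transport'' of the divisibility condition from $A^{bd}_{disc}$ to $\F[X]$ with the diagonal order.

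\begin{proof}
Let $G$ be a bd-Gröbner basis for $I$ relative to $\Adisc^{bd}$. To show $G$ is an ordinary Gröbner basis for $I$ with respect to $<_{diag}$, by the standard divisibility characterization of ordinary Gröbner bases it suffices to show that for every nonzero $f \in I$ there is some $g \in G$ such that $\lm_{diag}(g)$ divides $\lm_{diag}(f)$.

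So fix a nonzero $f \in I$. By Proposition~\ref{prop:asl-grobner-divisibility-characterization}, since $G$ is a bd-Gröbner basis relative to $(\preceq, \Adisc^{bd})$, there is some $g \in G$ with $\ltsm(g) \ltdivides \ltsm(f)$ in $\Adisc^{bd}$; equivalently, $\lmsm^{bd}(g) \ltdivides \lmsm^{bd}(f)$. By Lemma~\ref{lem:ordinary}\ref{lem:ordinary:div}, it follows that $\lm_{diag}(\lmsm^{bd}(g))$ divides $\lm_{diag}(\lmsm^{bd}(f))$ in the polynomial ring. Finally, by Lemma~\ref{lem:ordinary}(2), we have $\lm_{diag}(\lmsm^{bd}(g)) = \lm_{diag}(g)$ and $\lm_{diag}(\lmsm^{bd}(f)) = \lm_{diag}(f)$. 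Combining these, $\lm_{diag}(g)$ divides $\lm_{diag}(f)$, as desired. Hence $G$ is a Gröbner basis for $I$ with respect to the diagonal ordering.
\end{proof}

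The only point requiring any care is the bookkeeping between the two notions of leading monomial, which is exactly what Lemma~\ref{lem:ordinary} provides; there is no genuine obstacle here, since $\Adisc^{bd}$ is by definition a quotient of the polynomial ring on minors by a monomial ideal, so divisibility in $\Adisc^{bd}$ is just containment of column-multisets of bitableaux, and Lemma~\ref{lem:ordinary}(1)--(3) already translate this into the diagonal-order statement. (One should note that $G \subseteq I$ is automatic from the definition of bd-Gröbner basis, so $G$ is genuinely a subset of $I$ as required in the definition of ordinary Gröbner basis.) The converse failing — an ordinary Gröbner basis relative to $<_{diag}$ need not be a bd-Gröbner basis relative to $\Adisc^{bd}$ — would then be shown by a small explicit example, which is presumably given right after this theorem in the paper.
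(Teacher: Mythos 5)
Your proof is correct and takes essentially the same approach as the paper: use the divisibility characterization (Proposition~\ref{prop:asl-grobner-divisibility-characterization}) to get $\ltsm(g) \ltdivides \ltsm(f)$ in $\Adisc^{bd}$, then transfer to the diagonal order via Lemma~\ref{lem:ordinary}. Your version is slightly more explicit in separately invoking Lemma~\ref{lem:ordinary}(2) to pass from $\lm_{diag}(\lmsm^{bd}(\cdot))$ to $\lm_{diag}(\cdot)$, which the paper leaves implicit.
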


\begin{proof}
Let $f \in I$. Then $\pi_{disc}(\lmsm^{bd}(f))$ is divisible by $\pi_{disc}(\lmsm^{bd}(g))$ in $\Adisc^{bd}$ for some $g \in G$, so by Lemma~\ref{lem:ordinary}(\ref{lem:ordinary:div}), $\lm_{diag}(f)$ is divisible by $\lm_{diag}(g)$. Thus $\{\lm_{diag}(g) : g \in G\}$ generates $\lm_{diag}(I)$.
\end{proof}

We remark that the divisibility-theoretic characterization of Gr\"{o}bner and bd-Gr\"{o}bner bases was very helpful here; if we instead used Buchberger's criterion, it was not at all clear---at least to us!---that all the S-polynomials with respect the diagonal ordering would reduce to 0.

Finally, we close this section by showing that the converse does not hold; that is, the concept of bd-Gr\"{o}bner basis is not equivalent to Gr\"{o}bner basis in the diagonal order. In fact, Example~\ref{ex:singleton} already does this, as it gives a principal ideal in the polynomial ring but whose smallest bd-Gröbner basis has size 2. Here we give a related example where the ordinary Gröbner basis is not a singleton, just for some variety:

\begin{example}[A Gr\"{o}bner basis in the diagonal ordering that is not a bd-Gr\"{o}bner basis]
Let $n=m=2$, and consider $I = \langle x_{11}, x_{12} \rangle$. Since $I$ is a monomial ideal already, it is easy to see that $\Gamma = \{x_{11}, x_{12}\}$ is the unique universal reduced (ordinary) Gr\"{o}bner basis for $I$. However, $\Gamma$ is not a bd-Gr\"{o}bner basis relative to $\Adisc^{bd}$. For $I$ contains the minor $x_{11} x_{22} - x_{12} x_{21}$, but in the bideterminant ASL structure the latter is the single variable $X_{12|12}$, yet no element of of $\Gamma$ divides the variable $X_{12|12}$ in $\Adisc^{bd}$.

The failure here is due to the fact that $\Gamma$ is not Ann-closed. For the annihilator of $x_{12}$ in $A^{bd}_{disc}$ is $\langle x_{21} \rangle$.
Since we have 
\[
x_{21}x_{12} = \bigbitab{\tableau[s]{1 & 2}}{\tableau[s]{2 & 1}} = \bigbitab{\tableau[s]{1 \\ 2}}{\tableau[s]{1 \\ 2}} - \bigbitab{\tableau[s]{1 & 2}}{\tableau[s]{1 & 2}},
\]
if we add $X_{12|12}$ to our generating set, we get an Ann-closed and S-closed generating set (as in Example~\ref{ex:singleton}), and thus a bd-Gröbner basis $\{X_{1|1}, X_{1|2}, X_{12|12}\}$.
\end{example}


\section{Hilbert--Poincar\'e series and Krull dimension in pseudo-ASLs}
\label{sec:dimension}

In this section we show that in the $\N$-graded case we can use pseudo-ASL Gröbner bases---in particular through the monomial pseudo-ASLs of leading terms $A_{lt}$---to compute dimension of quotients of pseudo-ASLs using an algorithm similar to that in the ordinary case of polynomial rings.

\subsection{Background and Preliminaries}
\label{sec:hilbert-background}
We begin by recalling some standard definitions and results regarding Hilbert--Poincar\'e series. Let $A$ be an $\N$-graded ring over a commutative ring $R$, that is, $A = \bigoplus_{s \in \N} A_s$, where $A_i \cdot A_j \subseteq A_{i+j}$, and further suppose that each $A_s$ is a free $R$-module (if $R$ is a field this is automatic). If $a \in A$ lies entirely in $A_s$ for some $s$ then we say that $a$ is \emph{homogeneous of degree $\deg(a)=s$}. An ideal in $I$ is homogeneous relative to this grading if $I = \bigoplus_s (A_s \cap I)$, and if $I$ is a homogeneous ideal then the quotient ring $\bigslant{A}{I}$ inherits the grading from $A$ in the natural way $\left(\bigslant{A}{I}\right)_s := \bigslant{A_s}{(I \cap A_s)}$. The \emph{Hilbert--Poincar\'e function} of $A$ is $h_A(s) := \dim_R A_s$ and its \emph{Hilbert--Poincar\'e series} is
\[
H_A(t) := \sum_{s=0}^{\infty} h_A(s) t^s \in \Z[\![t]\!].
\]

As we will be particularly interested in the situation where $A$ is an $\N$-graded pseudo-ASL---meaning (Def.~\ref{defn:graded-asl}) that $A$ is $\N$-graded and all standard monomials are homogeneous---we focus on that situation; in this preliminary section we first focus on the case of an ordinary polynomial ring. 

Given a grading on the polynomial ring $R[X_1,\dotsc,X_n]$, if all monomials are homogeneous, then in particular the generating variables $X_i$ are homogeneous, so there is a function $d\colon [n] \to \N$ where $d(i) = \deg(X_i)$, and the degree of a monomial in $R[X_1,\dotsc,X_n]$ is then 
\[
\mathrm{deg}_d(x_1^{c_1}\ldots x_n^{c_n}) = \sum_{i=1}^n c_i \cdot d(i).
\] 
Since any function $d \colon [n] \to \N$ defines such a grading, given $d$ we refer to the $d$-degree, $d$-homogeneity, etc. as the corresponding concepts for the grading given by $d$ as above.

The following theorem collects some known facts about Hilbert--Poincar\'e series in relation to (ordinary) Gröbner bases. Recall (Def.~\ref{defn:graded-asl}) that a term order $<$ on a $\Z$-graded polynomial ring $R[X_1,\dotsc,X_n]$ is called a \emph{graded term order} if $\mathrm{deg}(m) < \mathrm{deg}(m') \Rightarrow m < m'$, where here the grading is given by the function $\deg \colon \W^n \to \Z$. 

\begin{theorem} \label{thm:hilbert-series-facts}
\begin{enumerate}[label=(\Roman*),ref=(\Roman*)]
\item (cf. \cite[\S 9.3, Prop.~4]{CLO}) Let $<$ be graded term order on monomials in $R[X_1, \ldots, X_n]$, relative to the grading given by $\mathrm{deg}_d$. Then for any $d$-homogeneous ideal $I \subseteq R[X_1, \ldots, X_n]$, \[H_{R[X_1, \ldots, X_n] / I} = H_{R[X_1, \ldots, X_n] / LI(I)}.\] If $G$ is a Gröbner basis for $I$ relative to $<$, then \[H_{R[X_1, \ldots, X_n] / I} = H_{R[X_1, \ldots, X_n] / \ideal{\{LT(g) \suchthat g \in G\}}}.\]

\item (See App.~\ref{sec:app:hilbert-series-algorithm} below, cf. \cite[\S 10.2, Thm.~6]{CLO}.) There is an algorithm which, given a monomial ideal $I \subseteq R[X_1, \ldots, X_n]$, computes the Hilbert--Poincar\'e series of $\bigslant{R[X_1, \ldots, X_n]}{I}$ as a rational function. 

\item \label{thmhilberseriesfacts-pole} (\cite[Thm.~5.5]{smoke}; see also \cite[Prop~5.3.2]{benson}, \cite[p.~58]{stanley}.) The Krull dimension of $R[X_1,\dotsc,X_n]/I$ is equal to the order of the pole at $t=1$ of $H_{R[X_1,\dotsc,X_n] / I}$. 
\end{enumerate}
\end{theorem}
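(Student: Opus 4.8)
\textbf{Proof plan for Theorem~\ref{thm:hilbert-series-facts}, part~\ref{thmhilberseriesfacts-pole}.}

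The statement here is a standard fact about $\N$-graded rings of the form $R[X_1,\dotsc,X_n]/I$ with $R$ a field (or, more carefully, with each graded piece a free $R$-module of finite rank), and the authors explicitly attribute it to \cite{smoke}, \cite{benson}, and \cite{stanley}. So the plan is simply to cite those references, and to add just enough of a sketch to make the statement self-contained and to flag the (mild) hypotheses needed.

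First I would recall the setup: the Hilbert--Poincar\'e series $H_{R[X_1,\dotsc,X_n]/I}(t) = \sum_s h(s) t^s$ of a finitely generated $\N$-graded algebra is, by the Hilbert--Serre theorem, a rational function of the form $P(t)/\prod_{i=1}^n (1 - t^{d(i)})$ with $P(t) \in \Z[t]$, where $d(i) = \deg(X_i)$; after cancelling common factors we may write it as $Q(t)/(1-t)^e$ with $Q(1) \neq 0$, and $e$ is then the order of the pole at $t=1$. The content of the theorem is that this $e$ equals the Krull dimension of $R[X_1,\dotsc,X_n]/I$. Then I would indicate the standard two-step argument: (i) by Noether normalization, $S := R[X_1,\dotsc,X_n]/I$ is a finite module over a graded polynomial subring $R[y_1,\dotsc,y_e]$ on $e = \dim S$ homogeneous elements of positive degree, so $H_S(t) = N(t)/\prod_{j=1}^e(1-t^{a_j})$ where $N(t)$ is the (Laurent) generating function of a finitely generated graded module, i.e. $N(t) \in \Z[t,t^{-1}]$; this already shows the pole order at $t=1$ is at most $e$; (ii) conversely, the pole order is at least $e$ because the Hilbert function of $S$ grows like a polynomial of degree $e-1$ (this is the graded analogue of the fact that the dimension equals the degree of the Hilbert polynomial plus one), which forces $N(1) \neq 0$ up to the $(1-t)^e$ in the denominator. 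Combining the two bounds gives that the pole order is exactly $\dim S$.

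I do not expect any real obstacle here: this is textbook material and the cleanest route is to defer to the cited sources rather than reprove Noether normalization and the growth estimate for Hilbert functions. The only point requiring a word of care is that our base ring $R$ is, by our standing assumption, a field (or at worst a PID), so the relevant graded pieces are finitely generated free $R$-modules and the notions of ``dimension of $A_s$'' and Krull dimension behave as in the classical references; in the PID case one would pass to the fraction field or a residue field, but since the main applications in Section~\ref{sec:dimension} are over a field $\F$ (indeed over $\C$), this subtlety does not arise and I would simply note it in passing. Thus the proof will consist of: recalling the rational-function form of $H_S$, invoking \cite[Thm.~5.5]{smoke} (equivalently \cite[Prop.~5.3.2]{benson} or \cite[p.~58]{stanley}) for the identification of the pole order with the Krull dimension, and remarking on the hypotheses on $R$.
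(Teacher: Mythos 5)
Your plan for part~\ref{thmhilberseriesfacts-pole} matches what the paper actually does: Theorem~\ref{thm:hilbert-series-facts} is stated without proof as a collection of background facts, each attributed to the cited references, and the paper relies on those sources exactly as you propose. Your Noether-normalization sketch (upper bound on pole order from finiteness over a graded polynomial subring; lower bound from the growth rate of the Hilbert function) is the standard route to \cite[Thm.~5.5]{smoke} and is correct, and your remark that the argument uses $R$ being a field so that each graded piece is a finite-rank free $R$-module is the right hypothesis to flag; note only that the statement as given covers parts (I) and (II) as well, of which (II) is handled in Appendix~\ref{sec:app:hilbert-series-algorithm} and (I) is likewise deferred to \cite{CLO}, so a complete treatment of the theorem would briefly acknowledge those parts too.
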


\subsection{Moving to pseudo-ASLs}
\label{sec:hilbert-asl}
Our strategy to compute the Hilbert--Poincaré series of $\bigslant{A}{I}$ is:
\begin{enumerate}
\item Compute a pseudo-ASL Gröbner basis $G$ for $I$. By Theorem~\ref{thm:hilbert-asl-alt}, $H_{A/I} = H_{A_{lt} / \lism^{lt}(I)}$.

\item Use Prop.~\ref{prop:hilbertInPolyRing} to lift $\langle \pi_{lt}(g) : g \in G \rangle$ to a binomial ideal in $R[H]$ and compute there.

\item In particular, if we use $A_{lt} = A_{disc}$, then the straightening relations for $A_{disc}$ are monomials, so when we lift $\langle \pi_{disc}(g) : g \in G \rangle$ to $R[H]$ we get a \emph{monomial} ideal, for which no further Gröbner basis computation is necessary, as at that point one can directly compute the Hilbert--Poincaré series and Krull dimension (as in, e.g., App.~\ref{sec:app:hilbert-series-algorithm} below, cf. \cite[\S 10.2, Thm.~6]{CLO}).
\end{enumerate}
In the remainder of this section we develop the results that support this approach, namely Theorem~\ref{thm:hilbert-asl-alt} and Proposition~\ref{prop:hilbertInPolyRing}.

Recall the definition of graded pseudo-ASL and graded pseudo-ASL term order (Def.~\ref{defn:graded-asl}). An $\N$-graded pseudo-ASL can be equivalently defined as: there is a function $d \colon H \to \N$, using which we define the $d$-degree of monomials $m \in \W^H$ as above, i.e. ($\deg_d(m) := \sum_{h \in H} m(h) d(h)$), and such that the straightening relations in $A$ are $d$-homogeneous. Observe that the homogeneity of the straightening relations is a necessary and sufficient condition for the $d$-degree of standard monomials to be well-defined.

Henceforth, we will refer to the grading induced by $d$ as the $d$-grading on $A$. Since $A$ is finitely generated by $H$, we have an $R$-algebra isomorphism $\phi: A \rightarrow \bigslant{R[H]}{\mathcal{S}}$, which takes \[H \ni h \mapsto x_h \in R[H],\] naturally extended, where the ideal $\mathcal{S} \subseteq R[H]$ is the defining ideal of $A$ (and includes the straightening relations). As in the case of Gröbner bases, one can compute the Hilbert--Poincaré series in $A$ by passing to a polynomial ring, as in the next proposition; the goal of the remainder of this section is to develop theory to compute the Hilbert--Poincaré series of $A$ ``natively to $A$'', that is, without passing to the polynomial ring.

\begin{proposition} \label{prop:hilbertInPolyRing}
Let $d \colon H \to \N$ be a function such that the ASL $A = R[H]/\mathcal{S}$ is $\N$-graded via $d$. Let $\psi: R[H] \twoheadrightarrow \bigslant{R[H]}{\mathcal{S}}$ be the defining homomorphism of $A$, and for a $d$-homogeneous ideal $I \subseteq A$ let $I' := \psi^{-1}(I)$. Then \[H_{A / I} = H_{R[H] / I'}.\]
\end{proposition}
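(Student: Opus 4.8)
The plan is to observe that Proposition~\ref{prop:hilbertInPolyRing} is really a statement about graded $R$-modules, and that the map $\psi$ induces a \emph{graded} isomorphism after quotienting. First I would fix the grading conventions precisely: since $A$ is $\N$-graded via $d$, the defining ideal $\mathcal{S} \subseteq R[H]$ is $d$-homogeneous (each straightening relation has all its terms of the same $d$-degree, which is exactly the condition that makes $\deg_d$ well-defined on standard monomials), and $\psi \colon R[H] \to R[H]/\mathcal{S} = A$ is a homomorphism of $\N$-graded rings. Consequently, for each degree $s$, $\psi$ restricts to a surjection $R[H]_s \twoheadrightarrow A_s$ with kernel exactly $\mathcal{S} \cap R[H]_s = \mathcal{S}_s$ (homogeneity of $\mathcal{S}$ is what guarantees $\ker(\psi)_s = \mathcal{S}_s$ rather than something larger), so $A_s \cong R[H]_s / \mathcal{S}_s$ as $R$-modules.

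Next I would pass to the quotient by $I$. Let $I' = \psi^{-1}(I)$; since $I$ is $d$-homogeneous, so is $I'$ (preimages of homogeneous ideals under graded homomorphisms are homogeneous), and $\mathcal{S} \subseteq I'$ since $\mathcal{S} = \psi^{-1}(0) \subseteq \psi^{-1}(I)$. The key step is then the standard third-isomorphism-theorem computation, done degree by degree: because $\psi$ is surjective and graded with $\ker \psi = \mathcal{S} \subseteq I'$, it induces a graded isomorphism $R[H]/I' \xrightarrow{\ \sim\ } A/I$, hence in each degree $s$ an $R$-module isomorphism $\left(R[H]/I'\right)_s \cong \left(A/I\right)_s$. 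Taking $\dim_R$ of both sides (all the graded pieces are free $R$-modules, or $R$ is a field, so ranks are well-defined and equal), we get $h_{R[H]/I'}(s) = h_{A/I}(s)$ for all $s$, and summing against $t^s$ gives $H_{R[H]/I'} = H_{A/I}$, which is exactly the claim.

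The only technical care needed is in justifying that all the relevant graded pieces are free $R$-modules so that $\dim_R$ (equivalently, $R$-rank) makes sense and the Hilbert--Poincaré function is well-defined on both sides. Under the paper's standing assumption that $R$ is a field this is automatic; in the PID case one would note that $R[H]_s$ is free, $\mathcal{S}_s$ and $I'_s$ are submodules of a finitely generated free module over a PID hence free, and the quotients are finitely generated, so one works with ranks (or, over a field, plain dimensions) throughout — and the rank is additive on the relevant short exact sequences. I expect this to be entirely routine; the ``main obstacle,'' such as it is, is merely bookkeeping: making sure at each stage (the isomorphism $A_s \cong R[H]_s/\mathcal{S}_s$, the homogeneity of $I'$, and the induced graded iso $R[H]/I' \cong A/I$) that one is genuinely working with a morphism of \emph{graded} modules so that the degree-$s$ pieces match up, rather than just an ungraded isomorphism. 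Once that is in place the Hilbert series identity is immediate from the definition $H_M(t) = \sum_s (\dim_R M_s) t^s$.
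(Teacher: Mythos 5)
Your proof is correct and follows essentially the same route as the paper's: establish that $\mathcal{S}$ is $d$-homogeneous so that $\psi$ is a graded homomorphism, note that $I'$ is homogeneous and contains $\mathcal{S}$, and apply the third isomorphism theorem to get a graded $R$-algebra isomorphism $R[H]/I' \cong A/I$, from which the Hilbert--Poincar\'e series equality follows degree by degree. Your version spells out the degree-$s$ bookkeeping and the freeness of the graded pieces slightly more explicitly, but there is no substantive difference in approach.
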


\begin{proof}
Again, since we have stipulated that $d$ is such that the straightening relations in $A$ are homogeneous, if we give $x_h \in R[H]$ a degree of $d(h)$, $\mathcal{S}$ is a $d$-homogeneous ideal in $R[H]$, and thus $\bigslant{R[H]}{\mathcal{S}}$ inherits a $d$-grading. Since both $h$ and $x_h$ have the same degrees in $A$ and $\bigslant{R[H]}{\mathcal{S}}$ respectively, $\phi$ is in fact an isomorphism of graded $R$-algebras. By the third ring isomorphism theorem, the ideal $I' \subseteq R[H]$ is such that $I' := \mathcal{S} + I$ (where here we view elements of $I$ as linear combinations of standard monomials in $R[H]$), thus if $I$ is $d$-homogeneous in $A$, then $I'$ is $d$-homogeneous in $R[H]$. Thus once again, under the map which takes \[\bigslant{A}{I} \ni h + I \mapsto x_h + I' \in \bigslant{R[H]}{I'},\] $\bigslant{A}{I}$ and $\bigslant{R[H]}{I'}$ are isomorphic as $d$-graded $R$-algebras.
\end{proof}

\begin{remark}
While we restrict our presentation to homogeneous ideals, it is possible to consider affine Hilbert--Poincar\'e series in order to extend our presentation to arbitrary ideals, as in, e.\,g., \cite[Section 5.6]{kreuzer-robbiano-2}.
\end{remark}

\begin{remark}
In general, since every finitely generated algebra $A$ is isomorphic as an algebra to the quotient of a polynomial ring, say $A \cong \bigslant{R[X_1,\dotsc,X_n]}{I}$, one could define the Hilbert--Poincar\'e function of $A$ as the (affine) Hilbert--Poincar\'e series of $\bigslant{R[X_1,\dotsc,X_n]}{I}$. However, in this case, the Hilbert--Poincar\'e series itself is \emph{not} an invariant of $A$, nor even is the Hilbert--Poincaré polynomial: they both can depend on the generators chosen. See \cite[Remark~11.3(b)]{kemper} for an example. However the Krull dimension of $A$ is an invariant of $A$, and thus is independent of the chosen presentation of $A$ \cite[Theorem~5.6.36]{kreuzer-robbiano-2}, so can be computed using the affine Hilbert--Poincaré series of any presentation of $A$ as a quotient of a polynomial ring.
\end{remark}

Now let $A_{lt}$ be an algebra of leading terms for $(A,\preceq)$. Since $A_{lt}$ is finitely generated by $H$, we have an $R$-algebra isomorphism $\phi_{lt}: A_{lt} \rightarrow \bigslant{R[H]}{\mathcal{S}_{lt}}$ for some ideal $\mathcal{S}_{lt} \subseteq R[H]$. $\mathcal{S}_{lt}$ is a binomial ideal for all $A_{lt}$, and specifically, $\mathcal{S}_{disc}$ is a monomial ideal. 

\begin{theorem}[store=thmhilbertaslalt] 
\label{thm:hilbert-asl-alt}
Given a pseudo-ASL $A$ on $H$, $(A,\preceq,A_{lt})$, and $d\colon H \to \N$ such that $\deg_d$ makes $A$ into a graded pseudo-ASL and $\preceq$ a graded pseudo-ASL term order, let $I \subseteq A$ be a $d$-homogeneous ideal. Then defining $\deg_d$ on $A_{lt}$ such that it agrees with $\deg_d$ on $H$ makes $A_{lt}$ into a graded pseudo-ASL such that $\lismlt(I) \subseteq A_{lt}$ is $d$-homogeneous, and we have
\[
H_{A / I} = H_{A_{lt} / \lismlt(I)}.
\]
\end{theorem}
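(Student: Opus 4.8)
The plan is to reduce the statement to the already-established Theorem~\ref{thm:hilbert-series-facts}\ref{thmhilberseriesfacts-pole}-adjacent machinery, but adapted to the pseudo-ASL setting, by carefully tracking degrees through the straightening. First I would verify that $\deg_d$ as defined on $A_{lt}$ (agreeing with $\deg_d$ on $H$) is well-defined on standard monomials: since $A_{lt}$ has the same standard monomials $\W^H\setminus\Sigma$ as $A$, and the $d$-degree of a monomial $m\in\W^H$ is simply $\sum_{h\in H} m(h)d(h)$, this is automatic. The key point is that this makes $A_{lt}$ a \emph{graded} pseudo-ASL: for each $\sigma\in\Sigma$, the straightening relation in $A$ is $d$-homogeneous by hypothesis, i.e.\ every standard monomial on its right-hand side has the same $d$-degree as $\sigma$; since the straightening relation of $\sigma$ in $A_{lt}$ has right-hand side $\ltsm(\sigma)$ (in $A_{gen}$) or a subset of the monomials occurring on the right-hand side in $A$ (for a general $A_{lt}$, by Definition~\ref{def:ALT})---in all cases a subset of the monomials appearing on the RHS in $A$---homogeneity is inherited. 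Next I would check that $\preceq$ induces a graded pseudo-ASL term order on $A_{lt}$: by Proposition~\ref{prop:orders}, $\preceq$ transfers to a pseudo-ASL term order $\preceq_{lt}$ on $A_{lt}$ agreeing with $\preceq$ on standard monomials, and since the $d$-degree of a standard monomial is the same in $A$ and $A_{lt}$, the property ``$\deg_d(x) < \deg_d(y) \Rightarrow x\prec_{lt} y$'' is inherited directly.

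The heart of the argument is the equality of Hilbert--Poincar\'e series. The plan is to show that $\pi_{lt}$ restricts, in each degree $s$, to an $R$-linear isomorphism between the degree-$s$ graded pieces of $\bigslant{A}{I}$ and $\bigslant{A_{lt}}{\lismlt(I)}$. By Theorem~\ref{thm:macaulay} (pseudo-ASL analogue of Macaulay's Basis Theorem), the set $B = \{\rho(m) : m \text{ standard}, \pi_{lt}(m)\notin \lismlt(I)\}$ is an $R$-linear basis of $\bigslant{A}{I}$, where $\rho\colon A\to \bigslant{A}{I}$ is the quotient map. Applying the same theorem to the \emph{monomial} pseudo-ASL $A_{lt}$ and the \emph{standard monomial} ideal $\lismlt(I)$ (or alternatively applying Lemma~\ref{lem:monomial}, since $\lismlt(I)$ is a standard monomial ideal in the monomial pseudo-ASL $A_{lt}$, together with Green's \cite[p.~36]{green}), the images $\rho_{lt}(m)$ for exactly the same set of standard monomials $m$ (those with $\pi_{lt}(m)\notin\lismlt(I)$) form an $R$-basis of $\bigslant{A_{lt}}{\lismlt(I)}$. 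Thus the ``standard monomials not in the leading ideal'' give bases for both quotients, indexed by the \emph{identical} set of standard monomials. Since $I$ is $d$-homogeneous, $\lismlt(I)$ is $d$-homogeneous (it is generated by $\pi_{lt}(\ltsm(f))$ for $f\in I$, and we may take $f$ homogeneous, so $\ltsm(f)$ has well-defined $d$-degree), and the bijection between basis elements preserves $d$-degree since $\deg_d$ agrees on standard monomials in $A$ and $A_{lt}$. Hence $\dim_R (\bigslant{A}{I})_s = \#\{m \text{ standard} : \deg_d(m)=s,\ \pi_{lt}(m)\notin \lismlt(I)\} = \dim_R (\bigslant{A_{lt}}{\lismlt(I)})_s$ for every $s$, which gives $H_{A/I} = H_{A_{lt}/\lismlt(I)}$ coefficient by coefficient.

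The main obstacle I anticipate is \textbf{ensuring the degree bookkeeping is airtight across the $A$-to-$A_{lt}$ transfer}---specifically, confirming that the $d$-homogeneity of $I$ genuinely implies $d$-homogeneity of $\lismlt(I)$ in $A_{lt}$. One needs: (i) that a $d$-homogeneous ideal $I\subseteq A$ has an $R$-basis of $d$-homogeneous elements (standard in graded module theory, using that each graded piece $I\cap A_s$ is a direct summand), (ii) that for a $d$-homogeneous $f\in I$, every standard monomial appearing in $f$ has the same $d$-degree, so $\ltsm(f)$ has $d$-degree $\deg_d(f)$, and (iii) that $\lismlt(I)$, being generated by such $\ltsm(f)$'s (by Proposition~\ref{prop:LI}), is $d$-homogeneous as a standard monomial ideal in $A_{lt}$ (immediate once its generators are homogeneous). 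A secondary subtlety worth a sentence in the writeup is confirming that the two applications of Macaulay's Basis Theorem---one to $(A,I)$ and one to $(A_{lt},\lismlt(I))$---really do index their bases by the \emph{same} subset of $\W^H\setminus\Sigma$; this follows because the condition ``$\pi_{lt}(m)\notin\lismlt(I)$'' is literally the same condition in both cases (the leading ideal $\lismlt(I)$ is a fixed object living in $A_{lt}$), so no mismatch can arise. Everything else is routine verification of the axioms of graded pseudo-ASL and graded pseudo-ASL term order, which I would dispatch briskly.
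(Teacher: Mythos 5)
Your proof is correct, and it takes a genuinely different route from the paper's. The paper establishes, by an explicit Gaussian-elimination argument applied degree-by-degree, that $\dim_R(I\cap A_s)=\dim_R(\lismlt(I)\cap(A_{lt})_s)$ for every $s$; combined with the observation that $\pi_{lt}$ is a graded $R$-module isomorphism $A\to A_{lt}$ (so $H_A = H_{A_{lt}}$), this yields the equality of Hilbert--Poincaré series of the quotients by subtraction. You instead go straight at the quotients: Theorem~\ref{thm:macaulay} hands you a graded $R$-basis of $\bigslant{A}{I}$ indexed by the standard monomials $m$ with $\pi_{lt}(m)\notin\lismlt(I)$, and Lemma~\ref{lem:monomial} gives a graded $R$-basis of $\bigslant{A_{lt}}{\lismlt(I)}$ indexed by exactly the same set (the two bases are literally in bijection via $\pi_{lt}$, which preserves $\deg_d$ on standard monomials). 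The paper's row-reduction step is, in effect, re-proving the degree-$s$ slice of the pseudo-ASL Macaulay Basis Theorem on the fly; by citing Theorem~\ref{thm:macaulay} directly you avoid that repetition, at the small cost of the bookkeeping you flagged---namely verifying that $d$-homogeneity of $I$ forces $\lismlt(I)$ to be $d$-homogeneous, which you dispatch correctly via Proposition~\ref{prop:LI} and the fact that a homogeneous $f\in I$ has all its standard monomials (in particular $\ltsm(f)$) in a single degree. One cosmetic note: your secondary invocation of Theorem~\ref{thm:macaulay} for $(A_{lt},\lismlt(I))$ technically needs an algebra of leading terms for $A_{lt}$ itself (one can take $A_{lt}$, as any monomial pseudo-ASL is its own algebra of leading terms), but your alternative of applying Lemma~\ref{lem:monomial} directly sidesteps this cleanly and is the better choice. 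Your closing discussion of the shared index set and the degree-preservation under $\pi_{lt}$ is exactly the right thing to emphasize, and the part about $A_{lt}$ inheriting the $d$-grading (straightening laws in $A_{lt}$ involve a subset of the monomials on the RHS in $A$) matches the paper's argument.
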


Here, if $\pi_{lt} \colon A \to A_{lt}$ is the $R$-module isomorphism between $A$ and $A_{lt}$ that is the identity map on standard monomials in $A$, then we are extending $\deg_d$ to $A_{lt}$ by defining 
\[
\deg_d(\pi_{lt}(h)) := d(h).
\] 

\begin{proof}
To see that $A_{lt}$ is $\N$-graded by $\deg_d$, it is necessary and sufficient to show that the straightening relations for $A_{lt}$ are $d$-homogeneous. But since the straightening relations in $A_{lt}$ are truncations of the straightening relations in $A$, and the latter are $d$-homogeneous, the same must be true of the straightening relations in $A_{lt}$. 

Thus under the $d$-grading of $A_{lt}$ defined above, $\pi_{lt}: A \rightarrow A_{lt}$ is in fact a graded $R$-module isomorphism, so we have $H_A = H_{A_{lt}}$.

To complete the proof, we will show that for each $s \in \N$ we have 
\[
\dim_R (I \cap A_s) = \dim_R (\lismlt(I) \cap (A_{lt})_s).
\]
This proof is similar to the ``direct'' computational proof for ordinary Gröbner bases (that is, not using Gröbner degenerations). Namely, let $f_1,\dotsc,f_m$ be an $R$-linear basis for $I \cap A_s$, where $m = \dim_R (I \cap A_s)$. Let $\{m_1,\dotsc,m_N\}$ be a list of the standard monomials of $d$-degree exactly $s$, which is, in particular, a basis for $A_s$ (and also, under $\pi_{lt}$, $(A_{lt})_s$), and ordered such that $i < j$ iff $m_i \prec m_j$. Let $M$ be the $m \times \dim A_s$ matrix over $R$ whose $(i,j)$ entry is the coefficient of $m_j$ in $f_i$. Doing row operations on this matrix corresponds to taking $R$-linear combinations of the $f_i$, and if the row operations are invertible, then the polynomials corresponding to the rows still span $I \cap A_s$. Putting $M$ into row echelon form, we find a new basis for $I \cap A_s$ in which all pairs of basis elements have distinct leading monomials. Those leading monomials are then a basis for $\lismlt(I) \cap (A_{lt})_s$, which thus also has dimension $m = \dim (I \cap A_s)$. 
\end{proof}

\subsection{An example: rank-1 matrices}
\label{sec:krull-rank-1}

Fix any $n$ and $m$. In this section, we shall use the approach of the previous section to (re)derive the well known fact that the dimension of the affine variety of rank $1$ $n \times m$ matrices taken as elements of $\C^{n \times m}$ is equal to $n + m - 1$. Obviously this is not a new result, and we don't claim that the combinatorial calculations are easier than known (geometric or algebraic) proofs either. However, our analysis uses Theorem \ref{thm:t-minors-bdgb} which gives a $bd$-\gb basis of the ideal of $2$-minors of $X$, i.e. $I_2$, relative to $A_{disc}$, to calculate the Krull dimension of $\bigslant{\C[X]}{I_2}$. 
The main point in showing this calculation is that getting a $bd$-\gb basis of $I_2$ relative to $A^{bd}_{disc}$ was relatively easier, and that this notion is useful to calculate dimension. This suggests that in cases where obtaining a $bd$-\gb basis is easier, one could subsequently obtain dimension in a relatively easier fashion as well. Another point of this calculation is that we show that $A^{bd}_{disc}$ (as opposed to just $A^{bd}_{gen}$) can be used in an important way.

Consider the ASL $A^{bd}$ as defined in Theorem \ref{thm:bd-hodge-algebra}, where $X$ is now specifically a generic $n \times m$ matrix. The dimension of the variety of rank $1$ $n \times m$ matrices is, by definition, equal to the Krull dimension of $\bigslant{A^{bd}}{I_2}$, where $I_2$ is the ideal generated by the size $2$ minors of $X$. From Theorem \ref{thm:t-minors-bdgb}, we have that the set of minors of size $\ge 2$ forms a $bd$-\gb basis of $I_2$ relative to $A^{bd}_{disc}$. We shall calculate the Krull dimension of $\bigslant{A^{bd}}{I_2}$ by 
\begin{enumerate}
\item giving $A^{bd}$ a grading by $d: H \rightarrow \N$, which takes a minor $h$ and sets $d(h)$ equal to the size of the minor, and
\item calculating $H_{A^{bd} / I_2}$. Specifically, we will use Theorem \ref{thm:hilbert-asl-alt} and the strategy laid out at the beginning of Section~\ref{sec:hilbert-asl} to calculate $H_{A^{bd} / I_2}$ by calculating $H_{A^{bd}_{disc} / LI_{bd}^{disc}(I_2)}$, whose calculation reduces (without any further Gröbner basis calculation) to calculating the Krull dimension of a polynomial ring modulo a monomial ideal.

\item Calculating the latter Krull dimension using the standard algorithm.
\end{enumerate}

Define 
\[
\mathcal{R} := \C\left[\left\{X_{\bidet{S}{T}}\right\}_{S \subseteq [n], T \subseteq [m], |S| = |T|}\right].
\]
Also, let
\[
\phi_{disc}^{bd}: A_{disc}^{bd} \rightarrow \bigslant{\mathcal{R}}{\mathcal{S}_{disc}^{bd}}
\]
be the $\C$-algebra isomorphism that maps $h \mapsto x_h$, where $h \in H$ is a minor. Giving each $x_h \in \mathcal{R}$ a degree of $d(h)$, $\phi_{disc}^{bd}$ is in fact a graded $\C$-algebra isomorphism. The monomial ideal $\mathcal{S}^{bd}_{disc} \subseteq \mathcal{R}$ has a structure that is easy to describe:
\[\mathcal{S}^{bd}_{disc} = \ideal{\left\{x_{\bidet{S_1}{T_1}}x_{\bidet{S_2}{T_2}} \suchthat \bidet{S_1}{T_1}\bidet{S_2}{T_2} \text{ is a non-standard bideterminant in }A^{bd}\right\}}.\]

By Theorem \ref{thm:hilbert-asl-alt} and Proposition~\ref{prop:hilbertInPolyRing},
\[
H_{A^{bd} / I_2} = H_{A^{bd}_{disc} / LI_{bd}^{disc}(I_2)} = H_{\mathcal{R} / \left(\mathcal{S}_{disc} + \phi_{disc}\left(LI_{bd}^{disc}(I_2)\right)\right)}.
\] 
Conveniently, we have 
\[
\mathcal{S}_{disc} + \phi_{disc}\left(LI_{bd}^{disc}(I_2)\right) = \mathcal{I}_{n, m} + \ideal{\left\{x_{\bidet{S}{T}} \suchthat \text{ $|S| \ge 2$}\right\}},
\] 
where for $b \in [n]$ and $c \in [m]$, 
\begin{equation} 
\label{eqn:I-rank-1-adisc} 
\mathcal{I}_{b, c} := \ideal{\left\{x_{i_1,j_1}x_{i_2,j_2} \suchthat 1 \le i_1 < i_2 \le b, 1 \le j_2 < j_1 \le c\right\}},
\end{equation} 
where here and henceforth we will just write $x_{i, j}$ for $x_{\bidet{i}{j}}$. In other words, $\mathcal{I}_{b, c} \subseteq \mathcal{S}^{bd}_{disc} \subseteq \mathcal{R}$ clearly denotes the ideal generated by all two variable products, where the subscripts of the variables correspond to $1$-minors of the top-left $b \times c$ submatrix of $X$, such that the product of these $1$-minors is a non-standard bideterminant. For example, $\mathcal{I}_{2,3} = \ideal{x_{12}x_{21}, x_{13}x_{21}, x_{13}x_{22}}$. 

The key fact we are using here and below is that, in this particular case, because our bd-Gröbner basis consists of all minors of size $\geq 2$, after modding out by those, the only straightening rules that are left are those for variables corresponding to (products of pairs of) $1 \times 1$ minors. Using $\C[X] := \C\left[\left\{x_{\bidet{i}{j}}\right\}_{i \in [n], j \in [m]}\right]$ for brevity, we see that this is encapsulated in the isomorphism:
\[
\bigslant{\mathcal{R}}{\left(\mathcal{S}_{disc} + \phi_{disc}\left(LI_{bd}^{disc}(I_2)\right)\right)} \cong_{\text{graded $\C$-alg}} \bigslant{\C[X]}{\mathcal{I}_{n, m}},
\]
from which it follows that we just need to calculate the Hilbert--Poincar\'e series of $\bigslant{\C[X]}{\mathcal{I}_{n, m}}$. To have a grading consistent with the others, note that $\C[X]$ is given the standard grading (every variable $x_{i,j}$ has degree 1, as the variables generating $\C[X]$ correspond to $1 \times 1$ minors).	

To apply the inductive algorithm, we will show the following lemma:

\begin{lemma} \label{lem:dimension-bd}
For any fixed $b, c$, suppose that $A \subseteq \{x_{i, j} \suchthat i \in [n], j \in [m]\}$ is such that for all $x \in A$ and for all $y \in \mathcal{I}_{b, c}$, $x \nmid y$. Then we have
\begin{equation} \label{eqn:general-hp-series-big}
g(A, \mathcal{I}_{b, c}) := H_{\C[X] / (\ideal{A} + \mathcal{I}_{b, c})} = \frac{\sum_{i=0}^{\min(b, c) - 1} \binom{b-1}{i}\binom{c-1}{i} t^i}{(1 - t)^{nm - (b-1)(c-1) - |A|}}.
\end{equation}
\end{lemma}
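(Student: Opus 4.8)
The plan is to prove \eqref{eqn:general-hp-series-big} by induction on $c$, using the standard ``variable-peeling'' short exact sequence for Hilbert--Poincar\'e series of monomial ideals (the same recursion underlying the algorithm of Theorem~\ref{thm:hilbert-series-facts}(II), see Appendix~\ref{sec:app:hilbert-series-algorithm}): for any monomial ideal $J\subseteq\C[X]$ and any variable $x$, the graded exact sequence $0\to(\C[X]/(J:x))(-1)\xrightarrow{\cdot x}\C[X]/J\to\C[X]/(J+\langle x\rangle)\to0$ gives $H_{\C[X]/J}=H_{\C[X]/(J+\langle x\rangle)}+t\,H_{\C[X]/(J:x)}$. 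The base case is $c\le 1$: then $\mathcal{I}_{b,1}=0$, so $\C[X]/(\langle A\rangle+\mathcal{I}_{b,1})$ is a polynomial ring on the $nm-|A|$ variables outside $A$, with series $1/(1-t)^{nm-|A|}$; since $\min(b,1)-1=0$ and $(b-1)(c-1)=0$, the right-hand side of \eqref{eqn:general-hp-series-big} is the same.

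For the inductive step, assume $c\ge 2$ and that the formula holds for all smaller $c$ (all $b$, all admissible $A$). I would peel off the last column of the corner, $x_{1,c},x_{2,c},\dotsc,x_{b,c}$, one variable at a time: set $J_0=\langle A\rangle+\mathcal{I}_{b,c}$ and $J_k=J_0+\langle x_{1,c},\dotsc,x_{k,c}\rangle$, so that telescoping the exact sequence yields $H_{\C[X]/J_0}=H_{\C[X]/J_b}+\sum_{k=1}^{b}t\,H_{\C[X]/(J_{k-1}:x_{k,c})}$. The combinatorial heart is to identify each piece. First, since $x_{i,c}$ occurs in a generator of $\mathcal{I}_{b,c}$ only in the larger-column slot, adjoining all of column $c$ kills exactly the generators whose larger column is $c$, leaving precisely $\mathcal{I}_{b,c-1}$; hence $J_b=\langle A\cup\{x_{1,c},\dotsc,x_{b,c}\}\rangle+\mathcal{I}_{b,c-1}$, which (after checking the no-divisibility hypothesis for the enlarged set) equals $g(A\cup\{x_{1,c},\dotsc,x_{b,c}\},\mathcal{I}_{b,c-1})$, computed by the inductive hypothesis. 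Second, and most bookkeeping-heavy: I claim $J_{k-1}:x_{k,c}=\langle A\rangle+\langle x_{1,c},\dotsc,x_{k-1,c}\rangle+\langle x_{i,j}:k+1\le i\le b,\ 1\le j\le c-1\rangle+\mathcal{I}_{k,c-1}$, because dividing $\mathcal{I}_{b,c}$ by $x_{k,c}$ turns the generators $x_{k,c}x_{i,j}$ ($k+1\le i\le b$, $j\le c-1$) into the single variables $x_{i,j}$, and these together with the previously adjoined $x_{1,c},\dotsc,x_{k-1,c}$ then absorb every surviving generator of $\mathcal{I}_{b,c}$ except those of $\mathcal{I}_{k,c-1}$. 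Thus $H_{\C[X]/(J_{k-1}:x_{k,c})}=g(B_k,\mathcal{I}_{k,c-1})$ with $B_k=A\cup\{x_{1,c},\dotsc,x_{k-1,c}\}\cup\{x_{i,j}:k+1\le i\le b,\,1\le j\le c-1\}$ and $|B_k|=|A|+(k-1)+(b-k)(c-1)$, again handled by induction. (The only corner-column variable that can already lie in $A$ is $x_{b,c}$, and only at step $k=b$; then that exact sequence degenerates and a direct check shows the accounting still matches — I would dispose of this in a sentence.)

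Finally, substituting the inductive expressions, a short computation shows $nm-(k-1)(c-2)-|B_k|=nm-(b-1)(c-1)-|A|=:E$ for every $k$, while the $J_b$-term has denominator exponent $E-1$; so after clearing the common factor $(1-t)^E$, the identity \eqref{eqn:general-hp-series-big} reduces to the polynomial identity
\[
\sum_{i\ge 0}\binom{b-1}{i}\binom{c-1}{i}t^i \;=\; (1-t)\sum_{i\ge 0}\binom{b-1}{i}\binom{c-2}{i}t^i \;+\; \sum_{k=1}^{b}\sum_{i\ge 0}\binom{k-1}{i}\binom{c-2}{i}t^{i+1}.
\]
I would verify this by comparing the coefficient of $t^j$: using the hockey-stick identity $\sum_{k=1}^{b}\binom{k-1}{i}=\binom{b}{i+1}$, the right side contributes $\binom{b-1}{j}\binom{c-2}{j}-\binom{b-1}{j-1}\binom{c-2}{j-1}+\binom{b}{j}\binom{c-2}{j-1}$, which equals $\binom{b-1}{j}\binom{c-1}{j}$ after two applications of Pascal's rule (on $\binom{b}{j}$ and on $\binom{c-1}{j}$). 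The main obstacle is the middle step: carefully checking that the colon ideal $J_{k-1}:x_{k,c}$ collapses to $\mathcal{I}_{k,c-1}$ plus free variables. This is purely combinatorial — tracking which cells $(i,j)$ each variable or generator ``covers'' — but it is the step most prone to off-by-one errors near the boundary rows and columns of the corner, and getting the set $B_k$ and its cardinality exactly right is what makes the closing binomial identity come out clean.
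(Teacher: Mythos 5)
Your proposal is correct and takes essentially the same route as the paper: peel the last column of the $b\times c$ corner via the exact sequence $H_{\C[X]/J}=H_{\C[X]/(J+\langle x\rangle)}+tH_{\C[X]/(J:x)}$, identify each colon ideal $J_{k-1}:x_{k,c}$ as a set of free variables plus $\mathcal{I}_{k,c-1}$ (this is precisely the paper's Claim~\ref{claim:colon-ideal-recursive-call}), and verify the resulting binomial identity with the hockey-stick and Pascal identities (the paper's Claim~\ref{claim:numerators-match}). The only cosmetic deviations are that you induct on $c$ alone rather than on $b+c$, and that you peel all $b$ variables $x_{1,c},\dotsc,x_{b,c}$ — the last of which never appears in a generator of $\mathcal{I}_{b,c}$, so that peel is trivial and the paper simply stops at $x_{b-1,c}$ — which produces an algebraically equivalent final identity differing from the paper's by the factor $(1-t)$ you distribute onto the $J_b$ term.
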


We note that the preceding equation is already in reduced form: since the coefficients in the numerator are all strictly positive, $1$ is not a root of the numerator, so $(1-t)$ does not divide the numerator.

First let us see that this gives the dimension:

\begin{corollary}
For all $1 \leq n \leq m$ we have
\begin{equation} \label{eqn:hilbert-series-m-n}
g(\emptyset, \mathcal{I}_{n, m}) = \frac{\sum_{i=0}^{n - 1} \binom{n-1}{i}\binom{m-1}{i}t^i}{(1 - t)^{n + m - 1}}.
\end{equation}
In particular, the (affine) variety of rank $1$ $n \times m$ matrices has dimension $n+m-1$.
\end{corollary}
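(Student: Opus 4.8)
The plan is to obtain the corollary as a direct specialization of Lemma~\ref{lem:dimension-bd}, followed by an application of Theorem~\ref{thm:hilbert-series-facts}-\ref{thmhilberseriesfacts-pole} to read off the Krull dimension from the pole at $t=1$.

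First I would set $A = \emptyset$ (so that $\ideal{A} = 0$ and $|A| = 0$) and $b = n$, $c = m$ in Lemma~\ref{lem:dimension-bd}; the divisibility hypothesis on $A$ is then vacuous, and the lemma gives $g(\emptyset,\mathcal{I}_{n,m}) = H_{\C[X]/\mathcal{I}_{n,m}}$ as a rational function with numerator $\sum_{i=0}^{\min(n,m)-1}\binom{n-1}{i}\binom{m-1}{i}t^i$ and denominator $(1-t)^{nm-(n-1)(m-1)}$. Two elementary simplifications then yield \eqref{eqn:hilbert-series-m-n}: since $1 \le n \le m$ we have $\min(n,m) = n$, and $nm - (n-1)(m-1) = nm - (nm - n - m + 1) = n+m-1$.

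For the dimension statement, I would recall from the setup of Section~\ref{sec:krull-rank-1} that the affine variety of rank-$1$ $n\times m$ matrices has coordinate ring $\bigslant{A^{bd}}{I_2}$, and that the chain of graded isomorphisms established there---built from Theorem~\ref{thm:t-minors-bdgb} (minors of size $\ge 2$ form a bd-Gröbner basis of $I_2$ relative to $A^{bd}_{disc}$), Theorem~\ref{thm:hilbert-asl-alt}, and Proposition~\ref{prop:hilbertInPolyRing}---gives $H_{A^{bd}/I_2} = g(\emptyset,\mathcal{I}_{n,m})$. By Theorem~\ref{thm:hilbert-series-facts}-\ref{thmhilberseriesfacts-pole}, the Krull dimension of $\bigslant{\C[X]}{\mathcal{I}_{n,m}}$---hence of $\bigslant{A^{bd}}{I_2}$, since Krull dimension is invariant under ring isomorphism---equals the order of the pole of this series at $t=1$. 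The denominator contributes a pole of order $n+m-1$, and to see that this is the exact order I would invoke the remark made immediately after Lemma~\ref{lem:dimension-bd}: the numerator $\sum_{i=0}^{n-1}\binom{n-1}{i}\binom{m-1}{i}t^i$ has strictly positive coefficients, so it does not vanish at $t=1$ and the expression is already in lowest terms. Therefore the dimension is $n+m-1$.

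I do not expect a serious obstacle here: this corollary is essentially bookkeeping on top of Lemma~\ref{lem:dimension-bd}, with the only points requiring care being the arithmetic identity $nm - (n-1)(m-1) = n+m-1$ and the non-vanishing of the numerator at $t=1$. The genuine content---the inductive computation of the Hilbert--Poincaré series of $\C[X]/(\ideal{A}+\mathcal{I}_{b,c})$, presumably by peeling off one generator of $\mathcal{I}_{b,c}$ at a time and using the short exact sequence relating $\C[X]/J$, $\C[X]/(J+\ideal{x})$, and $\C[X]/(J:x)$---lives in the proof of Lemma~\ref{lem:dimension-bd}, not in this corollary.
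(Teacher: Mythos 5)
Your proposal is correct and matches the paper's proof essentially exactly: substitute $A=\emptyset$, $b=n$, $c=m$ into Lemma~\ref{lem:dimension-bd}, simplify $nm-(n-1)(m-1)=n+m-1$, and then apply Theorem~\ref{thm:hilbert-series-facts}-\ref{thmhilberseriesfacts-pole} together with the observation (made right after the lemma) that the numerator has positive coefficients so the fraction is in lowest terms. Nothing to add.
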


\begin{proof}
The result on $g(\emptyset, \mathcal{I}_{n,m})$ follows by plugging in $A = \emptyset$ to Lemma~\ref{lem:dimension-bd}. Since the expression above is in reduced form, the power of $(1-t)$ in the denominator gives the Krull dimension (Thm.~\ref{thm:hilbert-series-facts}(III) or \cite[Definition 5.4.1 + Theorem 5.6.36]{kreuzer-robbiano-2}).
\end{proof}

We now come to the proof of the lemma.

\begin{proof}[Proof of Lemma~\ref{lem:dimension-bd}]
We will prove Equation \eqref{eqn:general-hp-series-big} by induction on $b + c$. For the base case, we will prove Equation \eqref{eqn:general-hp-series-big} when $\min(b, c) = 1$, or equivalently, $\mathcal{I}_{b, c} = 0$. Without loss of generality, suppose $b = 1$. For any $A \subseteq \{x_{1, j}\}_{j \in [c]}$, we get
\[
g(A, \ideal{0}) = H_{\C[X] / \ideal{A}} = \frac{1}{(1-t)^{nm - |A|}},
\]
where the final equality is by Line \ref{alg:hps:line:degree1-trivial} and Line \ref{alg:hps:line:degree1} of Algorithm \ref{alg:hilbert-clo}. This proves the base case.

For the rest of this section, we shall use the $\sqcup$ symbol to denote a union of sets that we claim are disjoint.

We now have $\min(b, c) = 2$, and by the induction hypothesis, we are guaranteed that Equation \eqref{eqn:general-hp-series-big} holds for all smaller values of $b + c$. To derive $g(A, \mathcal{I}_{b, c})$, we shall run Algorithm \ref{alg:hilbert-clo} on the monomial ideal $\ideal{A} + \mathcal{I}_{b, c}$. In Line \ref{alg:hps:line:select-branching-variable}, we will select $x_{1, c}$ (which occurs in the monomial $x_{1,c} x_{2,c-1} \in \mathcal{I}_{b,c}$; here we are using the fact that $\min(b,c) \geq 2$). The monomial ideal in the first recursive call in Line \ref{alg:hps:line-recursion} is 
\[
I^{(1)}_1 := \left(\ideal{A} + \mathcal{I}_{b, c} + \ideal{x_{1, c}}\right).
\] In Claim \ref{claim:colon-ideal-recursive-call}, we will show that 
\[
I^{(1)}_1 = \left(\ideal{A \sqcup \{x_{1, c}\}} + \mathcal{I}_{b, c}\right).
\] The monomial ideal in the second recursive call in Line \ref{alg:hps:line-recursion} is 
\[
I^{(1)}_2 := \left(\ideal{A} + \mathcal{I}_{b, c}\right) \suchthat \ideal{x_{1, c}}.
\] In Claim \ref{claim:colon-ideal-recursive-call}, we will show that 
\[
I^{(1)}_2 = \ideal{A \sqcup \{x_{r, s}\}_{\substack{1 < r \le b \\ 1 \le s \le c-1}}},
\] and that $I^{(1)}_2$ can be handled by the induction hypothesis. In the recursive call with $I_{1}^{(1)}$, we will now choose $x_{2, c}$ in Line \ref{alg:hps:line:select-branching-variable}. After doing so, the monomial ideals in calls in Line \ref{alg:hps:line-recursion} are 
\[
I^{(2)}_1 := I_1^{(1)} + \ideal{x_{2, c}} \quad \text{and} \qquad I^{(2)}_2 := I_1^{(1)} \suchthat \ideal{x_{2, c}},
\] satisfying 
\[
H_{\C[X] / I_1^{(1)}} = H_{\C[X] / I_1^{(2)}} + t\cdot H_{\C[X] / I_2^{(2)}}.
\] Again, Claim \ref{claim:colon-ideal-recursive-call} will show that 
\[
I^{(2)}_1 = \left(\ideal{A \sqcup \{x_{1, c}, x_{2, c}\}} + \mathcal{I}_{b, c}\right) \quad \text{and} \quad I^{(2)}_2 = \ideal{A \sqcup \{x_{1, c}\} \sqcup \{x_{r, s}\}_{\substack{2 < r \le b \\ 1 \le s \le c-1}}} + \mathcal{I}_{2, c-1},
\] and that $I^{(2)}_2$ can be handled by the induction hypothesis.

This pattern will continue. Every time we choose $x_{j, c}$, for $j \in [b-1]$, in Line \ref{alg:hps:line:select-branching-variable}, Claim \ref{claim:colon-ideal-recursive-call} will show that $I_2^{(j)}$ can be handled by the induction hypothesis. Finally, for $j = b-1$, Claim \ref{claim:colon-ideal-recursive-call} will also show that $I_1^{(b-1)}$ can be handled by the induction hypothesis as well, thus closing the recursion.

\begin{claim}[store=claimcolonidealrecursivecall]
\label{claim:colon-ideal-recursive-call}
Let $I_{1}^{(0)} = \ideal{A} + \mathcal{I}_{b, c}$. For $j \in [b-1]$, define 
\begin{equation} \label{eqn:recursive-definitions-both-i1-i2}
I_{1}^{(j)} = I_{1}^{(j-1)} + \ideal{x_{j, c}} \qquad \text{and} \qquad I_{2}^{(j)} = I_{1}^{(j-1)} \suchthat \ideal{x_{j, c}}. \end{equation}
By definition 
\begin{equation}
\label{eqn:recursive-step-onI1}
H_{\C[X] / I_1^{(j)}} = H_{\C[X] / I_1^{(j+1)}} + t\cdot H_{\C[X] /  I_2^{(j+1)}}.
\end{equation}
For all $j \in [b-1]$, we have
\begin{equation} \label{eqn:i-1-of-recursive}
I_{1}^{(j)} = \ideal{A \sqcup \{x_{i, c}\}_{i \in [j]}} + \mathcal{I}_{b, c},
\end{equation}
and 
\begin{equation} \label{eqn:i-2-of-recursive}
I_2^{(j)} = \ideal{A \sqcup \{x_{i, c}\}_{i \in [j-1]} \sqcup \{x_{r, s}\}_{\substack{j < r \le b \\ 1 \le s \le c-1}}} + \mathcal{I}_{j, c-1}.
\end{equation}
Consequently, 
\begin{equation} \label{eqn:colon-induction-hypothesis} 
H_{\C\left[X\right] / I_{2}^{(j)}} = g(A \sqcup \{x_{i, c}\}_{i \in [j-1]} \sqcup \{x_{r, s}\}_{\substack{j < r \le b \\ 1 \le s \le c-1}}, \mathcal{I}_{j, c-1}),
\end{equation} 
and
\begin{equation} \label{eqn:terminal-i1}
H_{\C\left[X\right] / I_{1}^{(b-1)}} = g(A \sqcup \{x_{i, c}\}_{i \in [b-1]}, \mathcal{I}_{b, c-1}).
\end{equation}
\end{claim}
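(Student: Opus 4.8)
The plan is to prove Claim~\ref{claim:colon-ideal-recursive-call} by a short induction on $j$, treating \eqref{eqn:recursive-step-onI1} as an instance of the standard monomial-ideal Hilbert--Poincar\'e recursion and then doing a careful but elementary bookkeeping of monomial generators. First, \eqref{eqn:recursive-step-onI1} is immediate: since $I_1^{(j+1)} = I_1^{(j)} + \langle x_{j+1,c}\rangle$ and $I_2^{(j+1)} = I_1^{(j)}:\langle x_{j+1,c}\rangle$ by \eqref{eqn:recursive-definitions-both-i1-i2}, and $x_{j+1,c}$ has degree $1$ in the standard grading of $\C[X]$, this is exactly the branching step of Algorithm~\ref{alg:hilbert-clo} (Line~\ref{alg:hps:line-recursion}), whose correctness comes from the short exact sequence $0 \to (\C[X]/(I:x))(-1)\xrightarrow{\cdot x}\C[X]/I\to \C[X]/(I+\langle x\rangle)\to 0$. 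Likewise \eqref{eqn:i-1-of-recursive} is immediate by unwinding \eqref{eqn:recursive-definitions-both-i1-i2}: $I_1^{(j)} = I_1^{(0)} + \langle x_{1,c},\dots,x_{j,c}\rangle = \langle A\rangle + \mathcal{I}_{b,c} + \langle x_{1,c},\dots,x_{j,c}\rangle$.

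The content of the claim is the colon identity \eqref{eqn:i-2-of-recursive}. Here I would use that for monomial ideals $\langle S\rangle, \langle T\rangle$ and a monomial $m$ one has $(\langle S\rangle+\langle T\rangle):m = \langle s/\gcd(s,m):s\in S\rangle + \langle t/\gcd(t,m):t\in T\rangle$, so that, writing $I_1^{(j-1)} = \langle A\rangle + \langle x_{1,c},\dots,x_{j-1,c}\rangle + \mathcal{I}_{b,c}$ (which is \eqref{eqn:i-1-of-recursive} at step $j-1$), I can compute $I_2^{(j)} = I_1^{(j-1)}:x_{j,c}$ piece by piece. The variable parts are unchanged because $x_{j,c}\notin A$ (every element of $A$ fails to divide a generator of $\mathcal{I}_{b,c}$, whereas $x_{j,c}$, with $j<b$ and $c\ge 2$, divides the inversion $x_{j,c}x_{j+1,1}\in\mathcal{I}_{b,c}$) and $x_{j,c}\notin\{x_{1,c},\dots,x_{j-1,c}\}$. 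For $\mathcal{I}_{b,c}:x_{j,c}$, the inversion generators $x_{i_1,j_1}x_{i_2,j_2}$ (with $i_1<i_2$, $j_1>j_2$) divisible by $x_{j,c}$ are precisely those with $(i_1,j_1)=(j,c)$, which upon dividing contribute exactly the new variables $\{x_{r,s}: j<r\le b,\ 1\le s\le c-1\}$; an \emph{un}divided inversion whose lower-left entry $x_{i_2,j_2}$ has $i_2>j$ is then divisible by one of these new variables and so is redundant, leaving as the only surviving inversions those confined to $[j]\times[c]$, i.e.\ the generators of $\mathcal{I}_{j,c}$. Finally, since $x_{1,c},\dots,x_{j-1,c}$ are already present and every inversion of $\mathcal{I}_{j,c}$ with largest column $c$ has upper-left entry $x_{i_1,c}$ with $i_1\le j-1$, such generators are redundant, so $\langle x_{1,c},\dots,x_{j-1,c}\rangle + \mathcal{I}_{j,c} = \langle x_{1,c},\dots,x_{j-1,c}\rangle + \mathcal{I}_{j,c-1}$, which gives \eqref{eqn:i-2-of-recursive}. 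The identical redundancy argument applied to \eqref{eqn:i-1-of-recursive} with $j=b-1$ collapses $\mathcal{I}_{b,c}$ to $\mathcal{I}_{b,c-1}$ and yields the stated form of $I_1^{(b-1)}$.

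To deduce \eqref{eqn:colon-induction-hypothesis} and \eqref{eqn:terminal-i1} from this, I would check that the ideals just produced have exactly the shape $\langle A'\rangle + \mathcal{I}_{b',c'}$ required by the definition of $g(-,-)$ and satisfy the hypothesis of Lemma~\ref{lem:dimension-bd} with strictly smaller $b'+c'$. The three index sets ($A$; the already-added $x_{i,c}$'s; the new $x_{r,s}$'s) are pairwise disjoint: the $x_{i,c}$'s and the $x_{r,s}$'s sit in different columns, and neither set meets $A$ since each of their elements divides some generator of $\mathcal{I}_{b,c}$ while no element of $A$ does. And none of these variables divides a generator of $\mathcal{I}_{j,c-1}$ (resp.\ $\mathcal{I}_{b,c-1}$): the $x_{i,c}$'s lie in column $c>c-1$, the $x_{r,s}$'s lie in rows $r>j$, and the only element of $A$ that can possibly lie inside $[j]\times[c-1]$ (resp.\ $[b]\times[c-1]$) is $x_{1,1}$, which divides no inversion of any $\mathcal{I}_{\bullet,\bullet}$. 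With these invariants in hand, \eqref{eqn:colon-induction-hypothesis} and \eqref{eqn:terminal-i1} are just the definition of $g$ applied to $I_2^{(j)}$ and $I_1^{(b-1)}$.

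The main obstacle is the combinatorial bookkeeping in the colon step: verifying cleanly that $\mathcal{I}_{b,c}:x_{j,c}$, modulo the newly introduced lower-left variables, is exactly $\mathcal{I}_{j,c}$, and that reintroducing the already-present column-$c$ variables collapses $\mathcal{I}_{j,c}$ to $\mathcal{I}_{j,c-1}$ (and, at $j=b-1$, $\mathcal{I}_{b,c}$ to $\mathcal{I}_{b,c-1}$). Beyond that, the only thing requiring care is preserving the ``no chosen variable divides a generator of the inversion ideal'' invariant so that the notation $g(-,-)$ and the outer induction hypothesis of Lemma~\ref{lem:dimension-bd} legitimately apply at each recursive call.
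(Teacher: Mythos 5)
Your proposal is correct and takes essentially the same approach as the paper: both identify the variables $\{x_{r,s}\}_{j<r\le b,\,1\le s\le c-1}$ as the new degree-one generators that arise in the colon $I_1^{(j-1)}:\langle x_{j,c}\rangle$ (because $x_{j,c}$ forms an inversion with each of them), observe that modulo these and the already-present column-$c$ variables the surviving inversions are exactly those of $\mathcal{I}_{j,c-1}$, and then appeal to the definition of $g$. Your write-up is somewhat more explicit in tracing the generator-by-generator colon computation and in verifying the disjointness/non-divisibility hypothesis needed to invoke Lemma~\ref{lem:dimension-bd}, but the underlying argument is the same one the paper gives.
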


We defer the proof of Claim \ref{claim:colon-ideal-recursive-call} to Appendix \ref{sec:app:dimension}.

We deduce
\begin{align}
&g(A, \mathcal{I}_{b, c}) = H_{\C[X], (\ideal{A} + \mathcal{I}_{b, c})} \nonumber \\
&\,= H_{\C[X], I_{1}^{(1)}} + t\cdot H_{\C[X], I_{2}^{(1)}} \eqcommentnear{Line \ref{alg:hps:line-recursion} of Algorithm \ref{alg:hilbert-clo}} \nonumber \\
&\,= H_{\C[X], I_{1}^{(1)}} + t\cdot g(A \sqcup \{x_{r, s}\}_{\substack{1 < r \le b \\ 1 \le s \le c-1}}, \mathcal{I}_{1, c-1}) \eqcommentnear{by Equation \eqref{eqn:colon-induction-hypothesis}} \nonumber \\
&\,= H_{\C[X], I_1^{(2)}} + t\cdot H_{\C[X], I_2^{(2)}} + t\cdot g(A \sqcup \{x_{r, s}\}_{\substack{1 < r \le b \\ 1 \le s \le c-1}}, \mathcal{I}_{1, c-1}) \eqcommentnear{by Equation \eqref{eqn:recursive-step-onI1}} \nonumber \\
&\,= H_{\C[X], I_1^{(2)}} + t\cdot g(A \sqcup \{x_{1, c}\} \sqcup \{x_{r, s}\}_{\substack{2 < r \le b \\ 1 \le s \le c-1}}, \mathcal{I}_{2, c-1}) \nonumber \\
&\qquad \quad + t\cdot g(A \sqcup \{x_{r, s}\}_{\substack{1 < r \le b \\ 1 \le s \le c-1}}, \mathcal{I}_{1, c-1}) \eqcommentnear{by Equation \eqref{eqn:colon-induction-hypothesis}} \nonumber \\
&\,= H_{\C[X], I_1^{(2)}} \nonumber \\
&\qquad \quad + t\cdot \left(\sum_{j=1}^{2} g(A \sqcup \{x_{i, c}\}_{i \in [j-1]} \sqcup \{x_{r, s}\}_{\substack{j < r \le b \\ 1 \le s \le c-1}}, \mathcal{I}_{j, c-1})\right) \nonumber \\
&\,= H_{\C[X], I_1^{(b-1)}} \nonumber \\
&\qquad \quad + t\cdot \left(\sum_{j=1}^{b-1} g(A \sqcup \{x_{i, c}\}_{i \in [j-1]} \sqcup \{x_{r, s}\}_{\substack{j < r \le b \\ 1 \le s \le c-1}}, \mathcal{I}_{j, c-1})\right) \eqcommentnear{telescoping with \eqref{eqn:recursive-step-onI1} and \eqref{eqn:colon-induction-hypothesis}} \nonumber \\
&\,= g(A \sqcup \{x_{i, c}\}_{i \in [b-1]}, \mathcal{I}_{b, c-1}) \nonumber \\
&\qquad \quad + t\cdot \left(\sum_{j=1}^{b-1} g(A \sqcup \{x_{i, c}\}_{i \in [j-1]} \sqcup \{x_{r, s}\}_{\substack{j < r \le b \\ 1 \le s \le c-1}}, \mathcal{I}_{j, c-1})\right) \eqcommentnear{by Equation \eqref{eqn:terminal-i1}} \nonumber \\
&\,= \frac{\sum_{i=0}^{\min(b, c-1) - 1} \binom{b-1}{i}\binom{c-2}{i} t^i}{(1 - t)^{nm - (b-1)(c-2) - |A| - (b-1)}} \nonumber \\
&\qquad \quad + t\cdot \left(\sum_{j=1}^{b-1} \frac{\sum_{i=0}^{\min(j, c-1) - 1} \binom{j-1}{i}\binom{c-2}{i} t^i}{(1 - t)^{nm - (j-1)(c-2) - |A|-(j-1)-(c-1)(b-j)}}\right) \eqcommentnear{induction hypothesis} \nonumber \\
&\,= \frac{\sum_{i=0}^{\min(b, c-1) - 1} \binom{b-1}{i}\binom{c-2}{i} t^i + \sum_{j=1}^{b-1} \sum_{i=0}^{\min(j, c-1) - 1} \binom{j-1}{i}\binom{c-2}{i} t^{i+1}}{(1 - t)^{nm - (b-1)(c-1) - |A|}}. \label{eqn:induction-step-final}
\end{align}

The denominator of Equation \eqref{eqn:induction-step-final} matches the denominator of the RHS of Equation \eqref{eqn:general-hp-series-big}, thus we only need to verify that the numerators match. 
\begin{claim}[store=claimnumeratorsmatch] 
\label{claim:numerators-match} 
The numerators of Equation \eqref{eqn:induction-step-final} and the RHS of Equation \eqref{eqn:general-hp-series-big} match.
\end{claim}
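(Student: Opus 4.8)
The plan is to prove Claim~\ref{claim:numerators-match} by extracting and comparing the coefficient of $t^k$ on both sides for each $k \geq 0$. The numerator on the RHS of \eqref{eqn:general-hp-series-big} has $t^k$-coefficient $\binom{b-1}{k}\binom{c-1}{k}$, while the numerator of \eqref{eqn:induction-step-final} receives a contribution $\binom{b-1}{k}\binom{c-2}{k}$ from its first sum and, after the reindexing $i = k-1$ in the double sum, a contribution $\binom{c-2}{k-1}\sum_{j=k}^{b-1}\binom{j-1}{k-1}$ (for $k\geq 1$; nothing for $k=0$). So the claim reduces to a binomial identity in $k$.

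First I would note that every upper limit of the form $\min(\cdot,\cdot)-1$ appearing in these sums may be freely replaced by $b-1$ (or any large enough bound), since each binomial coefficient $\binom{c-2}{i}$, $\binom{j-1}{i}$, etc.\ is automatically $0$ once the bottom index exceeds the top; this removes all bookkeeping with the $\min$'s. Next I would apply the hockey-stick identity $\sum_{m=r}^{n}\binom{m}{r}=\binom{n+1}{r+1}$ with $r=k-1$, $n=b-2$ (via $m = j-1$) to obtain $\sum_{j=k}^{b-1}\binom{j-1}{k-1}=\binom{b-1}{k}$, so the double-sum contribution becomes $\binom{b-1}{k}\binom{c-2}{k-1}$.

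Adding the two contributions then gives, for $k \geq 1$,
\[
\binom{b-1}{k}\binom{c-2}{k}+\binom{b-1}{k}\binom{c-2}{k-1}=\binom{b-1}{k}\left(\binom{c-2}{k}+\binom{c-2}{k-1}\right)=\binom{b-1}{k}\binom{c-1}{k}
\]
by Pascal's rule, and for $k=0$ both numerators have constant term $1$. Since this agrees with the $t^k$-coefficient of the RHS numerator for every $k$, the two polynomials coincide, proving the claim (and hence completing the induction in Lemma~\ref{lem:dimension-bd}). I expect no genuine obstacle here: the only thing to be careful about is the consistent extension of the summation ranges so that the hockey-stick and Pascal manipulations apply verbatim, which is purely cosmetic.
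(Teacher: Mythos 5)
Your proof is correct and uses essentially the same ingredients as the paper's (coefficient comparison, the hockey-stick identity, and Pascal's rule); the only difference is that you handle all $k$ uniformly by observing that the $\min(\cdot,\cdot)$ bounds on the sums are redundant since the binomial coefficients vanish anyway, whereas the paper treats $k=0$, $1\le k\le b-2$, and $k=b-1$ (further split into $b=c$ and $b<c$) as separate cases. Your streamlining is sound and arguably cleaner.
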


Claim \ref{claim:numerators-match}, whose proof is deferred to Appendix \ref{sec:app:dimension}, completes the proof.
\end{proof}


\section{Future directions and open questions}
While we have laid much of the groundwork for this theory, much also remains to be done. We highlight here a few avenues we find particularly interesting and think should be approachable.

\subsection{Algorithms and complexity}
As one of the motivations of our theory is potential computational advantages---both for theoretical, ``by-hand'' computations, and algorithms implemented in software---it would be natural, interesting, and useful to implement our ideas in a standard computer algebra system. This would be especially interesting if implemented in a way that allowed one to incorporate more efficient straightening algorithms for the underlying pseudo-ASL in a modular fashion.

\begin{open}
What is the complexity of computing pseudo-ASL Gröbner bases? Can they always be computed uses an exponential amount of space?
\end{open}

Because ordinary \gb bases are a special case of pseudo-ASL \gb bases, computing pseudo-ASL Gröbner bases in general is at least as hard as computing ordinary Gröbner bases. In particular, it is $\mathsf{EXPSPACE}$-hard \cite{MayrMeyer}. A positive answer to the above question would show that this complexity bound gives a tight characterization. Nonetheless, it is also possible that for particular pseudo-ASLs, computing pseudo-ASL Gröbner bases might be easier, or strictly harder; are there nontrivial examples of this phenomenon?

In the case of the ASL structure given by the bideterminant basis, our work highlights the following question:

\begin{open}
What is the complexity of straightening bideterminants? We conjecture that it is $\mathsf{\# P}$-hard.
\end{open}

Here the input is a single, not necessarily standard, bitableau $\bitab{R}{C}$, and one must compute the set of standard bideterminants occurring in the expansion of $\bidet{R}{C}$ in the basis of standard bideterminants, as well as their coefficients. One can ask more fine-grained, related questions. For example, consider the problem: given a (not necessarily standard) bitableaux $\bitab{R}{C}$ and a standard bitableau $\bitab{R'}{C'}$, compute the coefficient of $\bidet{R'}{C'}$ in the expansion of $\bidet{R}{C}$ in the basis of standard bideterminants. Is this problem in $\mathsf{GapP}$? Is computing the absolute value of the coefficient in $\mathsf{\# P}$ and the sign in $\mathsf{FP}$? What is the complexity of deciding whether $\bidet{R'}{C'}$ occurs in $\bidet{R}{C}$ with nonzero coefficient? Is it in $\mathsf{PSPACE}, \mathsf{PH}, \mathsf{NP}, \mathsf{P}$, or $\mathsf{C_{=} P}$? Is it $\mathsf{NP}$-hard?

Our conjecture of $\mathsf{\# P}$-hardness is based partly on intuition from computing many examples and seeing the complicated structure, and partly on $\mathsf{\# P}$-hardness results for other related problems on Young tableaux (e.\,g., \cite{narayanan, BIKronecker, FIplethysm, ARYSchubert}). Although there have been practical improvements in the complexity of straightening bideterminants \cite{desarmenien1980algorithm, ShaoLi}, to our knowledge their formal complexity has not been analyzed. 

A question of a similar flavor is the complexity of computing the set of LCMs of two standard bitableaux relative to $A_{gen}$; we have currently left our algorithms \getadsh and \getadsbd above as naive exhaustive searches over combinatorial spaces that are easily seen to have exponential size. Is this necessary, that is, can a pair of standard bideterminants of an $n \times n$ matrix and degree $n^{O(1)}$ in general have $\geq \exp^{n^{\Omega(1)}}$ distinct LCMs? If so, can those LCMs at least be enumerated by an algorithm with $n^{O(1)}$ delay? 

\subsection{Theory and geometry}
In the theory of ordinary Gröbner bases, the leading ideal is a flat deformation of the original ideal, which implies that the two share many invariants, and reduces the computation of such invariants to the case of monomial ideals. We believe, especially in light of our results on Hilbert series and dimension (Section~\ref{sec:dimension}), that a similar geometric picture should be true for pseudo-ASL Gröbner bases. In particular, we conjecture the following:

\begin{conjecture}[Flatness for pseudo-ASL Gröbner bases] \label{conj:flatdeform}
\begin{enumerate}
\item Given $(A,\preceq)$, $A_{gen}$ is a flat deformation of $A$.

\item Given $(A, \preceq, A_{lt})$, $A_{lt}$ is a flat deformation of $A_{gen}$ (through algebras of leading terms), and $A_{disc}$ is a flat deformation of $A_{lt}$ (through algebras of leading terms).

\item Given $(A,\preceq)$, if two algebras of leading terms are each flat deformations of the other, then the natural bijection between their standard monomials is in fact an isomorphism of algebras.

\item Given $(A,\preceq)$ and two algebras of leading terms $A_{lt}, A'_{lt}$, let $Z$ (resp., $Z'$) be the set of those pairs of monomials in $\W^H \times \W^H$ whose products become $0$ in $A_{lt}$ (resp., $A'_{lt}$). If $Z \subseteq Z'$, then $A'_{lt}$ is a flat deformation of $A_{lt}$.

\item Given $(A,\preceq, A_{lt})$ and an ideal $I \subseteq A$, $\bigslant{A_{lt}}{\lismlt(I)}$ is a flat deformation of $\bigslant{A}{I}$.
\end{enumerate}
\end{conjecture}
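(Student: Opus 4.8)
The plan is to realize each of the five claimed flat deformations as the special fibre of an explicit Rees-type family over $R[t]$, and to verify flatness through the fibre-dimension criterion, with the pseudo-ASL Macaulay basis theorem (Theorem~\ref{thm:macaulay}) as the main engine.

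First, for part (1): by Remark~\ref{rmk:semigroup} and the discussion following it, $A$ is filtered by the monoid $\mathcal{M}$ of standard monomials and $A_{gen}$ is its associated graded algebra. I would build a Rees algebra $\mathcal{R}$ inside $A \otimes_R R[t]$ by homogenizing each straightening relation $m = \sum_i r_i m_i$ to $t^{a_0} m - \sum_i r_i t^{a_i} m_i$, where the exponents $a_i$ come from a weight function $w\colon \mathcal{M}\to\W$ chosen so that $\ltsm(m)$ is the unique monomial whose homogenized term carries the minimal $t$-power (hence survives at $t=0$). Setting $t=1$ recovers $A$ and $t=0$ recovers $A_{gen}$ by Definition~\ref{defn:agen-adisc}. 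In this $\W$-graded setting, flatness over $R[t]$ is equivalent to $t$ being a non-zerodivisor on $\mathcal{R}$, i.e. to $\mathcal{R}$ having an $R[t]$-basis of lifts of the standard monomials; this says precisely that straightening, carried out "with $t$", never creates cancellation among leading terms, which is guaranteed by axiom \ref{def:termorder:mult} together with Proposition~\ref{prop:mult}.

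The subtlety just used --- and what I expect to be the main obstacle --- is the existence of a weight function $w\colon\mathcal{M}\to\W$ strictly refining $\preceq$ and compatible with multiplication, which is essentially Open Question~\ref{q:orders} and need not have a global solution. I would circumvent this by working one ideal at a time: for a fixed ideal $I$ with a fixed finite pseudo-ASL Gröbner basis $G=\{g_1,\dots,g_k\}$ (Theorem~\ref{thm:asl-gb-existence}), only finitely many standard monomials occur in the $g_i$, in the straightenings of their relevant products, and in the reductions witnessing Proposition~\ref{prop:asl-grobner-divisibility-characterization}; on this finite sub-poset of $\mathcal{M}$ one can always choose a compatible $\W$-valued weight, which suffices to run the degeneration. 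Alternatively one can deform over $\operatorname{Spec} R[\mathcal{M}^{\mathrm{gp}}]$ and then restrict to a generic line.

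For part (5): I would combine the family of (1) with a simultaneous degeneration of $I$. Given a pseudo-ASL Gröbner basis $G$ of $I$ relative to $(\preceq, A_{lt})$, homogenize each $g_i$ along with the straightening relations, obtaining a family over $R[t]$ with generic fibre $A/I$ and special fibre $A_{lt}/\lismlt(I)$. Flatness reduces to the fibre-dimension criterion: a finitely generated graded $R[t]$-module whose two fibres have equal $R$-rank in every degree is $t$-torsion-free, hence flat. Here the common rank in each $\mathcal{M}$-degree (or $\N$-degree, in the graded case) is the number of standard monomials $m$ with $\pi_{lt}(m)\notin\lismlt(I)$ in that degree --- exactly Theorem~\ref{thm:macaulay} for the generic fibre and the trivial description of a standard monomial ideal for the special fibre --- and the equality is lifted to the whole family by the row-echelon argument already used in the proof of Theorem~\ref{thm:hilbert-asl-alt}. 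Parts (2) and (4) are the special cases $I=0$ where the deformation only "forces more products of standard monomials to zero": for a newly-vanishing product $(m,m')\in Z'\setminus Z$ one homogenizes to $mm' - t\cdot(\text{its }A_{lt}\text{-value})$, the special fibre is $A'_{lt}$ by construction, and flatness is again the statement that the standard monomials form an $R[t]$-basis, which holds because $A_{lt}$ is a monomial pseudo-ASL so no straightening-cancellation occurs. Part (3) is the genuinely different statement: a two-sided flat deformation forces the Hilbert functions of $A_{lt}$ and $A'_{lt}$ to agree in every $\mathcal{M}$-degree and the vanishing loci $Z,Z'$ to coincide, and then one must show the twisting cocycles $\tau,\tau'$ of Remark~\ref{rmk:semigroup} agree as well, since otherwise the two monomial pseudo-ASLs are not isomorphic; establishing this cocycle-rigidity --- presumably by showing the Rees algebra of each deformation is $R[t]$-torsion-free and maps isomorphically onto the other's --- is where I expect the real work to lie.
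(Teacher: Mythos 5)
The statement you attempt to prove is labeled a \emph{conjecture} in the paper; the authors give no proof, and the sentence immediately following it flags Open Question~\ref{q:orders} --- characterizing pseudo-ASL term orders, ideally in terms of weights --- as the expected obstruction to exactly the weight-vector degeneration you sketch. You correctly identify this as the crux, but your workarounds do not close the gap. For part (1), ``working one ideal at a time'' cannot help, since the deformation from $A$ to $A_{gen}$ involves no ambient ideal: what degenerates is the defining presentation of $A$ itself. A monoid-compatible weight is a homomorphism $w\colon\W^H\to\W$, hence determined by its finitely many values on $H$; the requirement that it refine $\preceq$ on the finitely many basic straightening relations is a finite system of strict linear inequalities in those values, and you give no argument that the system is feasible. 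It is also not automatic that those relations generate the defining ideal of $A$ in the non-graded case, as the paper itself warns. Your appeal to axiom~\ref{def:termorder:mult} and Proposition~\ref{prop:mult} to justify $t$-torsion-freeness of the Rees family already presupposes a suitable weight exists. For part (5), Theorems~\ref{thm:macaulay} and~\ref{thm:hilbert-asl-alt} do give the equal-Hilbert-function input, but this certifies flatness only once a family over $R[t]$ with those two fibers has been constructed, which again hinges on the weight.

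Your treatment of part (3) also contains a misconception. By Definition~\ref{def:ALT}, every algebra of leading terms for the same $(A,\preceq)$ sends a \emph{nonzero} product of standard monomials to the same element $\pi_{lt}(\ltsm(mm'))$; hence the twisting cocycle $\tau$ of Remark~\ref{rmk:semigroup} is literally identical across all of them, and two algebras of leading terms for $(A,\preceq)$ differ only in their vanishing sets $Z$. Consequently the natural bijection $\pi'_{lt}\circ\pi_{lt}^{-1}$ is an algebra isomorphism iff $Z=Z'$, and your ``cocycle-rigidity'' step is vacuous: the actual content of (3) is that mutual flat deformation forces $Z=Z'$, and for that you offer no argument. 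Parts (2) and (4) are the most plausible pieces of your sketch, since there the deformation parameter enters the binomial relations linearly and every specialization shares the same underlying $R$-module; but even there the flatness of the total family over $R[t]$ requires a verification you have only gestured at.
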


As the proof of flat deformations for Gröbner bases that we are aware of uses the fact that every term order is a lexicographic product of weighted-degree orders \cite{robbianoTermOrder}, a satisfactory resolution of the following question may be useful for proving the preceding conjecture:

\begin{open} \label{q:orders}
Characterize pseudo-ASL term orders. In particular, is there a characterization in terms of weights similar to Robbiano's characterization \cite{robbianoTermOrder} in the case of term orders on polynomial rings?
\end{open}

A short proof shows that every pseudo-ASL term order on $A_{gen}$ is isomorphic to an pseudo-ASL term order on $A$ (via Proposition~\ref{prop:orders}), and therefore it suffices to characterize pseudo-ASL term orders on $A_{gen}$, which may be easier since the standard monomials form a multiplicative basis (up to scalars) in $A_{gen}$.

Next, we note that Section~\ref{sec:ordinary} was an instance in which there were two pseudo-ASL structures, each with pseudo-ASL term orders, on the same (or isomorphic) rings, that were in some sense compatible with one another (as in Lemma~\ref{lem:ordinary}). It would be interesting to better understand the relationship between pseudo-ASL Gröbner bases for a single ideal relative to multiple different pseudo-ASL structures. We note that on a polynomial ring $A$ with $n$ variables, any factorization $n=ab$ gives a bideterminant pseudo-ASL structure viewing it as the coordinate ring of $a \times b$ matrices. In fact, even just permuting the entries of a matrix of fixed size can give a different bideterminant pseudo-ASL on the same polynomial ring.

Lastly, in terms of further theoretical developments, we ask:

\begin{open}
Is there a pseudo-ASL analogue of Diana Taylor's complex for monomial ideals \cite{taylor} (see \cite[Ex.~17.11]{eisenbud} for a more widely-accessible description), but now for standard monomial ideals in a monomial pseudo-ASL?
\end{open}

\subsection{Bipermanents}

Despite the superficial similarity between the permanent and determinant polynomials, our understanding of determinantal varieties and ideals is vastly more developed than our understanding of their permanent-based counterparts (see \cite{gesmundoperm,BORALEVI2025110079} for some recent work on permanental ideals). We saw how easy it was to obtain bd-\gb bases of determinantal ideals in Theorem \ref{thm:t-minors-bdgb}. It is possible that a theory of ``bp-\gb'' bases could aid in understanding permanental ideals. A starting point towards such a theory is that there are also straightening laws for bipermanents \cite{clausen,allen}, though we do not know if they actually give an ASL structure, or even a pseudo-ASL structure. 

\begin{open}
Does the coordinate ring of $n \times m$ matrices admit a pseudo-ASL structure in which the standard monomials are standard bipermanents, and which admits a pseudo-ASL term order?
\end{open}

A positive resolution of the preceding question would thereby give a theory of ``bp-\gb bases.'' One might then hope to resolve the question of the codimension of the singular locus of the permanent hypersurface (highlighted in \cite[Open Question 6.3.3.7]{landsberggct}; we believe it originates implicitly in \cite{vzg87}) along the following lines: find a bp-\gb basis of the ideal of $n-1$ subpermaments (the ideal corresponding to the singular locus of the permanent hypersurface), and apply the results of Section~\ref{sec:dimension} (specifically Theorem \ref{thm:hilbert-asl-alt} and Proposition~\ref{prop:hilbertInPolyRing}) to compute its dimension.

\subsection{Extensions to other settings}
\label{sec:conclusion-weyl}

It would be interesting to extend our theory from pseudo-ASL Gröbner basis for ideals to bases of subalgebras, so-called SAGBI bases, but now in pseudo-ASLs.

Finally, as with ordinary Gröbner bases, we expect that the theory here should be extendible to pseudo-ASLs in certain mildly noncommutative settings. In fact, this work first arose out of the authors' attempt to calculate the algebraic de Rham cohomology of certain determinantal varieties using Weyl algebras; the authors are thus working to extend the theory developed in this paper to (pseudo)-ASL structures on Weyl algebras. Denoting the Weyl algebra on $n$ variables as over a ring $R$ as $W_n(R)$, since 
\[
W_n(R) \cong_{R\text{-mod}} R[X_1, \ldots, X_n] \otimes_{R} R[\partial X_1, \ldots, \partial X_n],
\]
we immediately get an $R$-linear ``bideterminant'' basis, consisting of standard bideterminats in the $x_{ij}$ times standard bideterminants in the $\partial_{ij}$. A theory of bd-\gb bases in Weyl algebras could help in studying annihilating D-ideals, which are crucial towards computing algebraic de Rham cohomology.

\bibliographystyle{alphaurl} 
\bibliography{hodge-grobner} 

\begin{thebibliography}{GHSW25}

\bibitem[Abh88]{abhyankar}
Shreeram~S. Abhyankar.
\newblock {\em Enumerative combinatorics of {Y}oung tableaux}, volume 115 of
  {\em Monographs and Textbooks in Pure and Applied Mathematics}.
\newblock Marcel Dekker, Inc., New York, 1988.

\bibitem[ABW81]{ABW}
Kaan Akin, David~A. Buchsbaum, and Jerzy Weyman.
\newblock Resolutions of determinantal ideals: the submaximal minors.
\newblock {\em Adv. in Math.}, 39(1):1--30, 1981.
\newblock \href {https://doi.org/10.1016/0001-8708(81)90055-4}
  {\path{doi:10.1016/0001-8708(81)90055-4}}.

\bibitem[All99]{allen}
Edward~E. Allen.
\newblock Some graded representations of the complex reflection groups.
\newblock {\em J. Combin. Theory Ser. A}, 87(2):287--332, 1999.
\newblock \href {https://doi.org/10.1006/jcta.1999.2963}
  {\path{doi:10.1006/jcta.1999.2963}}.

\bibitem[ARY21]{ARYSchubert}
Anshul Adve, Colleen Robichaux, and Alexander Yong.
\newblock An efficient algorithm for deciding vanishing of {S}chubert
  polynomial coefficients.
\newblock {\em Adv. Math.}, 383:Paper No. 107669, 38, 2021.
\newblock \href {https://doi.org/10.1016/j.aim.2021.107669}
  {\path{doi:10.1016/j.aim.2021.107669}}.

\bibitem[BC01]{brunsconcakrs}
Winfried Bruns and Aldo Conca.
\newblock {KRS} and determinantal ideals.
\newblock In {\em Geometric and combinatorial aspects of commutative algebra
  ({M}essina, 1999)}, volume 217 of {\em Lecture Notes in Pure and Appl.
  Math.}, pages 67--87. Dekker, New York, 2001.
\newblock Preprint
  arXiv:\href{https://arxiv.org/abs/math/0005260}{math/0005260}.

\bibitem[BC03]{brunsconcagrobner}
Winfried Bruns and Aldo Conca.
\newblock Gr\"{o}bner bases and determinantal ideals.
\newblock In {\em Commutative algebra, singularities and computer algebra
  ({S}inaia, 2002)}, volume 115 of {\em NATO Sci. Ser. II Math. Phys. Chem.},
  pages 9--66. Kluwer Acad. Publ., Dordrecht, 2003.
\newblock Preprint
  arXiv:\href{https://arxiv.org/abs/math/0302058}{math/0302058}.

\bibitem[BC14]{BokutChen}
L.~A. Bokut and Yuqun Chen.
\newblock Gr{\"o}bner--{Shirshov} bases and their calculation.
\newblock {\em Bull. Math. Sci.}, 4(3):325--395, 2014.
\newblock \href {https://doi.org/10.1007/s13373-014-0054-6}
  {\path{doi:10.1007/s13373-014-0054-6}}.

\bibitem[BCMV25]{BORALEVI2025110079}
A.~Boralevi, E.~Carlini, M.~Michałek, and E.~Ventura.
\newblock On the codimension of permanental varieties.
\newblock {\em Adv. Math.}, 461:110079, 2025.
\newblock \href {https://doi.org/10.1016/j.aim.2024.110079}
  {\path{doi:10.1016/j.aim.2024.110079}}.

\bibitem[BCRV22]{bruns2022determinants}
Winfried Bruns, Aldo Conca, Claudiu Raicu, and Matteo Varbaro.
\newblock {\em Determinants, Gr{\"o}bner bases and cohomology}, volume~24.
\newblock Springer, 2022.
\newblock \href {https://doi.org/10.1007/978-3-031-05480-8}
  {\path{doi:10.1007/978-3-031-05480-8}}.

\bibitem[Ben98]{benson}
D.~J. Benson.
\newblock {\em Representations and cohomology. {II}}, volume~31 of {\em
  Cambridge Studies in Advanced Mathematics}.
\newblock Cambridge University Press, Cambridge, second edition, 1998.
\newblock Cohomology of groups and modules.
\newblock \href {https://doi.org/10.1017/CBO9780511623622}
  {\path{doi:10.1017/CBO9780511623622}}.

\bibitem[BFS03]{BFS}
L.~Bokut, Y.~Fong, and L.-S. Shiao.
\newblock Gr\"obner--{Shirshov} bases for algebras, groups, and semigroups.
\newblock In {\em Proceedings of the {T}hird {I}nternational {A}lgebra
  {C}onference ({T}ainan, 2002)}, pages 17--32. Kluwer Acad. Publ., Dordrecht,
  2003.
\newblock \href {https://doi.org/10.1007/978-94-017-0337-6_2}
  {\path{doi:10.1007/978-94-017-0337-6_2}}.

\bibitem[BI08]{BIKronecker}
Peter B\"urgisser and Christian Ikenmeyer.
\newblock The complexity of computing {K}ronecker coefficients.
\newblock In {\em 20th {A}nnual {I}nternational {C}onference on {F}ormal
  {P}ower {S}eries and {A}lgebraic {C}ombinatorics ({FPSAC} 2008)}, volume~AJ
  of {\em Discrete Math. Theor. Comput. Sci. Proc.}, pages 357--368. Assoc.
  Discrete Math. Theor. Comput. Sci., Nancy, 2008.
\newblock \href {https://doi.org/10.46298/dmtcs.3622}
  {\path{doi:10.46298/dmtcs.3622}}.

\bibitem[BW93]{BW}
Thomas Becker and Volker Weispfenning.
\newblock {\em Gr{\"o}bner bases: a computational approach to commutative
  algebra. {In} cooperation with {Heinz} {Kredel}}, volume 141 of {\em Grad.
  Texts Math.}
\newblock New York: Springer-Verlag, 1993.
\newblock \href {https://doi.org/10.1007/978-1-4612-0913-3}
  {\path{doi:10.1007/978-1-4612-0913-3}}.

\bibitem[BZ93]{bernstein1993combinatorics}
David Bernstein and Andrei Zelevinsky.
\newblock Combinatorics of maximal minors.
\newblock {\em J. Alg. Combin.}, 2:111--121, 1993.
\newblock \href {https://doi.org/10.1023/A:1022492222930}
  {\path{doi:10.1023/A:1022492222930}}.

\bibitem[CDNG15]{cdg-universal-2015}
Aldo Conca, Emanuela De~Negri, and Elisa Gorla.
\newblock Universal {Gr\"obner} bases for maximal minors.
\newblock {\em Int. Math. Res. Not. IMRN}, (11):3245--3262, 2015.
\newblock \href {https://doi.org/10.1093/imrn/rnu032}
  {\path{doi:10.1093/imrn/rnu032}}.

\bibitem[CGG90]{caniglia}
L.~Caniglia, J.~A. Guccione, and J.~J. Guccione.
\newblock Ideals of generic minors.
\newblock {\em Comm. Algebra}, 18(8):2633--2640, 1990.
\newblock \href {https://doi.org/10.1080/00927879008824043}
  {\path{doi:10.1080/00927879008824043}}.

\bibitem[Cla82]{clausen}
Michael Clausen.
\newblock A straightening formula for bipermanents.
\newblock {\em Linear and Multilinear Algebra}, 11(1):33--38, 1982.
\newblock \href {https://doi.org/10.1080/03081088208817430}
  {\path{doi:10.1080/03081088208817430}}.

\bibitem[CLO15]{CLO}
David~A. Cox, John Little, and Donal O'Shea.
\newblock {\em Ideals, varieties, and algorithms}.
\newblock Undergraduate Texts in Mathematics. Springer, Cham, fourth edition,
  2015.
\newblock \href {https://doi.org/10.1007/978-3-319-16721-3}
  {\path{doi:10.1007/978-3-319-16721-3}}.

\bibitem[dCEP80]{DEPYoung}
C.~de~Concini, David Eisenbud, and C.~Procesi.
\newblock Young diagrams and determinantal varieties.
\newblock {\em Invent. Math.}, 56(2):129--165, 1980.
\newblock \href {https://doi.org/10.1007/BF01392548}
  {\path{doi:10.1007/BF01392548}}.

\bibitem[dCEP82]{deconcini1987hodge}
Corrado de~Concini, David Eisenbud, and Claudio Procesi.
\newblock {\em Hodge algebras}, volume~91 of {\em Ast\'{e}risque}.
\newblock Soci\'{e}t\'{e} Math\'{e}matique de France, Paris, 1982.
\newblock URL: \url{http://numdam.org/item/AST_1982__91__1_0/}.

\bibitem[dCP76]{deconciniprocesi1976characteristic}
C.~de~Concini and C.~Procesi.
\newblock A characteristic free approach to invariant theory.
\newblock {\em Advances in Math.}, 21(3):330--354, 1976.
\newblock \href {https://doi.org/10.1016/S0001-8708(76)80003-5}
  {\path{doi:10.1016/S0001-8708(76)80003-5}}.

\bibitem[D{\'e}s80]{desarmenien1980algorithm}
Jacques D{\'e}sarm{\'e}nien.
\newblock An algorithm for the {Rota} straightening formula.
\newblock {\em Discrete Mathematics}, 30(1):51--68, 1980.
\newblock \href {https://doi.org/10.1016/0012-365X(80)90062-X}
  {\path{doi:10.1016/0012-365X(80)90062-X}}.

\bibitem[DKR78]{desarmenien1982invariant}
J.~D\'{e}sarm\'{e}nien, Joseph P.~S. Kung, and Gian-Carlo Rota.
\newblock Invariant theory, {Y}oung bitableaux, and combinatorics.
\newblock {\em Advances in Math.}, 27(1):63--92, 1978.
\newblock \href {https://doi.org/10.1016/0001-8708(78)90077-4}
  {\path{doi:10.1016/0001-8708(78)90077-4}}.

\bibitem[DKR17]{grinberginvariant}
J.~D\'{e}sarm\'{e}nien, Joseph P.~S. Kung, and Gian-Carlo Rota.
\newblock Invariant theory, {Y}oung bitableaux, and combinatorics.
\newblock \url{http://www.cip.ifi.lmu.de/~grinberg/algebra/dkr1978.pdf}, 2017.
\newblock Unofficial annotated re-edition by Darij Grinberg.

\bibitem[DRS74]{doubilet1974foundations}
Peter Doubilet, Gian-Carlo Rota, and Joel Stein.
\newblock On the foundations of combinatorial theory. {IX}. {C}ombinatorial
  methods in invariant theory.
\newblock {\em Studies in Appl. Math.}, 53:185--216, 1974.
\newblock \href {https://doi.org/10.1002/sapm1974533185}
  {\path{doi:10.1002/sapm1974533185}}.

\bibitem[EF17]{GBsurvey}
Christian Eder and Jean-Charles Faug\`ere.
\newblock A survey on signature-based algorithms for computing {G}r\"obner
  bases.
\newblock {\em J. Symbolic Comput.}, 80:719--784, 2017.
\newblock \href {https://doi.org/10.1016/j.jsc.2016.07.031}
  {\path{doi:10.1016/j.jsc.2016.07.031}}.

\bibitem[Eis80]{eisenbud-intro-1980}
David Eisenbud.
\newblock Introduction to algebras with straightening laws.
\newblock In {\em Ring theory and algebra, {III} ({P}roc. {T}hird {C}onf.,
  {U}niv. {O}klahoma, {N}orman, {O}kla., 1979)}, volume~55 of {\em Lect. Notes
  Pure Appl. Math.}, pages 243--268. Dekker, New York, 1980.

\bibitem[Eis95]{eisenbud}
David Eisenbud.
\newblock {\em Commutative algebra}, volume 150 of {\em Graduate Texts in
  Mathematics}.
\newblock Springer-Verlag, New York, 1995.
\newblock With a view toward algebraic geometry.
\newblock \href {https://doi.org/10.1007/978-1-4612-5350-1}
  {\path{doi:10.1007/978-1-4612-5350-1}}.

\bibitem[ES96]{ES}
David Eisenbud and Bernd Sturmfels.
\newblock Binomial ideals.
\newblock {\em Duke Math. J.}, 84(1):1--45, 1996.
\newblock \href {https://doi.org/10.1215/S0012-7094-96-08401-X}
  {\path{doi:10.1215/S0012-7094-96-08401-X}}.

\bibitem[Fau99]{F4}
Jean-Charles Faug\'ere.
\newblock A new efficient algorithm for computing {G}r\"obner bases {$(F_4)$}.
\newblock {\em J. Pure Appl. Algebra}, 139(1--3):61--88, 1999.
\newblock Effective methods in algebraic geometry (Saint-Malo, 1998).
\newblock \href {https://doi.org/10.1016/S0022-4049(99)00005-5}
  {\path{doi:10.1016/S0022-4049(99)00005-5}}.

\bibitem[Fau02]{F5}
Jean-Charles Faug\`ere.
\newblock A new efficient algorithm for computing {G}r\"obner bases without
  reduction to zero {$(F_5)$}.
\newblock In {\em Proceedings of the 2002 {I}nternational {S}ymposium on
  {S}ymbolic and {A}lgebraic {C}omputation}, pages 75--83. ACM, New York, 2002.
\newblock \href {https://doi.org/10.1145/780506.780516}
  {\path{doi:10.1145/780506.780516}}.

\bibitem[FFG93]{FFG}
Daniel~R. Farkas, C.~D. Feustel, and Edward~L. Green.
\newblock Synergy in the theories of {Gr{\"o}bner} bases and path algebras.
\newblock {\em Can. J. Math.}, 45(4):727--739, 1993.
\newblock \href {https://doi.org/10.4153/CJM-1993-041-8}
  {\path{doi:10.4153/CJM-1993-041-8}}.

\bibitem[FGKK93]{FGKK}
Charles~D. Feustel, Edward~L. Green, Ellen Kirkman, and James Kuzmanovich.
\newblock Constructing projective resolutions.
\newblock {\em Commun. Algebra}, 21(6):1869--1887, 1993.
\newblock \href {https://doi.org/10.1080/00927879308824658}
  {\path{doi:10.1080/00927879308824658}}.

\bibitem[FI20]{FIplethysm}
Nick Fischer and Christian Ikenmeyer.
\newblock The computational complexity of plethysm coefficients.
\newblock {\em Comput. Complexity}, 29(2):Paper No. 8, 43, 2020.
\newblock \href {https://doi.org/10.1007/s00037-020-00198-4}
  {\path{doi:10.1007/s00037-020-00198-4}}.

\bibitem[GHSW25]{gesmundoperm}
Fulvio Gesmundo, Hang Huang, Hal Schenck, and Jerzy Weyman.
\newblock Bernstein--{Gelfand}--{Gelfand} meets geometric complexity theory:
  resolving the {{\(2 \times 2\)}} permanents of a {{\(2 \times n\)}} matrix.
\newblock {\em Trans. Am. Math. Soc.}, 378(5):3573--3596, 2025.
\newblock \href {https://doi.org/10.1090/tran/9376}
  {\path{doi:10.1090/tran/9376}}.

\bibitem[GP92]{GrabePauer}
Hans-Gert Gr{\"a}be and Franz Pauer.
\newblock A remark on {Hodge} algebras and {Gr{\"o}bner} bases.
\newblock {\em Czech. Math. J.}, 42(2):331--338, 1992.
\newblock URL: \url{https://eudml.org/doc/31274}, \href
  {https://doi.org/10.21136/CMJ.1992.128327}
  {\path{doi:10.21136/CMJ.1992.128327}}.

\bibitem[Gre99]{green}
Edward~L. Green.
\newblock Noncommutative {G}r\"obner bases, and projective resolutions.
\newblock In {\em Computational methods for representations of groups and
  algebras ({E}ssen, 1997)}, volume 173 of {\em Progr. Math.}, pages 29--60.
  Birkh\"auser, Basel, 1999.
\newblock \href {https://doi.org/10.1007/978-3-0348-8716-8_2}
  {\path{doi:10.1007/978-3-0348-8716-8_2}}.

\bibitem[Gre03]{greenGroupBook}
David~J. Green.
\newblock {\em Gr{\"o}bner bases and the computation of group cohomology.},
  volume 1828 of {\em Lect. Notes Math.}
\newblock Berlin: Springer, 2003.
\newblock \href {https://doi.org/10.1007/b93836} {\path{doi:10.1007/b93836}}.

\bibitem[Hib83]{hibiSemigroup}
Takayuki Hibi.
\newblock Affine semigroup rings and {Hodge} algebras.
\newblock In {\em Some recent development in the theory of commutative rings.
  Proceedings of a Symposium held at the Research Institute for Mathematical
  Sciences, Kyoto University, Kyoto, February 25–26, 1983}, pages 42--51.
  Kyoto University, Research Institute for Mathematical Sciences, Kyoto, 1983.
\newblock URL: \url{http://hdl.handle.net/2433/103435}.

\bibitem[Hib86]{hibi}
Takayuki Hibi.
\newblock Every affine graded ring has a {Hodge} algebra structure.
\newblock {\em Rend. Sem. Mat. Univ. Politec. Torino}, 44(2):277--286, 1986.
\newblock URL:
  \url{http://www.seminariomatematico.polito.it/rendiconti/cartaceo/44-2/277.pdf}.

\bibitem[Hib18]{GB50}
Takayuki Hibi, editor.
\newblock {\em The 50th anniversary of {G}r\"obner bases}, volume~77 of {\em
  Advanced Studies in Pure Mathematics}. Mathematical Society of Japan, Tokyo,
  2018.
\newblock \href {https://doi.org/10.2969/aspm/07710000}
  {\path{doi:10.2969/aspm/07710000}}.

\bibitem[Hod43]{hodge}
W.~V.~D. Hodge.
\newblock Some enumerative results in the theory of forms.
\newblock {\em Proc. Cambridge Philos. Soc.}, 39:22--30, 1943.
\newblock \href {https://doi.org/10.1017/s0305004100017631}
  {\path{doi:10.1017/s0305004100017631}}.

\bibitem[HOS21]{HOS}
Amir Hashemi, Matthias Orth, and Werner~M. Seiler.
\newblock Relative {G}r\"obner and involutive bases for ideals in quotient
  rings.
\newblock {\em Math. Comput. Sci.}, 15(3):453--482, 2021.
\newblock \href {https://doi.org/10.1007/s11786-021-00513-4}
  {\path{doi:10.1007/s11786-021-00513-4}}.

\bibitem[Kal09]{kalinin}
M.~Yu. Kalinin.
\newblock Universal and comprehensive {G}r\"obner bases of the classical
  determinantal ideal.
\newblock {\em Zap. Nauchn. Sem. S.-Peterburg. Otdel. Mat. Inst. Steklov.
  (POMI)}, 373:134--143, 348, 2009.
\newblock \href {https://doi.org/10.1007/s10958-010-9990-1}
  {\path{doi:10.1007/s10958-010-9990-1}}.

\bibitem[Kem11]{kemper}
Gregor Kemper.
\newblock {\em A course in commutative algebra}, volume 256 of {\em Graduate
  Texts in Mathematics}.
\newblock Springer, Heidelberg, 2011.
\newblock \href {https://doi.org/10.1007/978-3-642-03545-6}
  {\path{doi:10.1007/978-3-642-03545-6}}.

\bibitem[KR00]{kreuzer-robbiano-1}
Martin Kreuzer and Lorenzo Robbiano.
\newblock {\em Computational commutative algebra. {I}}.
\newblock Berlin: Springer, 2000.
\newblock \href {https://doi.org/10.1007/978-3-540-70628-1}
  {\path{doi:10.1007/978-3-540-70628-1}}.

\bibitem[KR05]{kreuzer-robbiano-2}
Martin Kreuzer and Lorenzo Robbiano.
\newblock {\em Computational commutative algebra. 2}.
\newblock Springer-Verlag, Berlin, 2005.
\newblock \href {https://doi.org/10.1007/3-540-28296-3}
  {\path{doi:10.1007/3-540-28296-3}}.

\bibitem[KRW90]{KRW}
A.~Kandri-Rody and Volker Weispfenning.
\newblock Non-commutative {Gr{\"o}bner} bases in algebras of solvable type.
\newblock {\em J. Symb. Comput.}, 9(1):1--26, 1990.
\newblock \href {https://doi.org/10.1016/S0747-7171(08)80003-X}
  {\path{doi:10.1016/S0747-7171(08)80003-X}}.

\bibitem[Lan17]{landsberggct}
J.~M. Landsberg.
\newblock {\em Geometry and complexity theory}, volume 169 of {\em Cambridge
  Studies in Advanced Mathematics}.
\newblock Cambridge University Press, Cambridge, 2017.
\newblock \href {https://doi.org/10.1017/9781108183192}
  {\path{doi:10.1017/9781108183192}}.

\bibitem[LR08]{SMT1}
Venkatramani Lakshmibai and Komaranapuram~N. Raghavan.
\newblock {\em Standard monomial theory. {Invariant} theoretic approach},
  volume 137 of {\em Encycl. Math. Sci.}
\newblock Berlin: Springer, 2008.
\newblock \href {https://doi.org/10.1007/978-3-540-76757-2}
  {\path{doi:10.1007/978-3-540-76757-2}}.

\bibitem[LSHL14]{li-reduction-2014}
Hongbo Li, Changpeng Shao, Lei Huang, and Yue Liu.
\newblock Reduction among bracket polynomials.
\newblock In {\em Proceedings of the 39th International Symposium on Symbolic
  and Algebraic Computation}, ISSAC '14, page 304–311, New York, NY, USA,
  2014. Association for Computing Machinery.
\newblock \href {https://doi.org/10.1145/2608628.2608630}
  {\path{doi:10.1145/2608628.2608630}}.

\bibitem[LSS98]{LaScalaStillman}
Roberto La~Scala and Michael Stillman.
\newblock Strategies for computing minimal free resolutions.
\newblock {\em J. Symb. Comput.}, 26(4):409--431, 1998.
\newblock \href {https://doi.org/10.1006/jsco.1998.0221}
  {\path{doi:10.1006/jsco.1998.0221}}.

\bibitem[Ma94]{ma-minors-1994}
Yonghao Ma.
\newblock On the minors defined by a generic matrix.
\newblock {\em J. Symbolic Comput.}, 18(6):503--518, 1994.
\newblock \href {https://doi.org/10.1006/jsco.1994.1062}
  {\path{doi:10.1006/jsco.1994.1062}}.

\bibitem[MM82]{MayrMeyer}
Ernst~W. Mayr and Albert~R. Meyer.
\newblock The complexity of the word problems for commutative semigroups and
  polynomial ideals.
\newblock {\em Adv. in Math.}, 46(3):305--329, 1982.
\newblock \href {https://doi.org/10.1016/0001-8708(82)90048-2}
  {\path{doi:10.1016/0001-8708(82)90048-2}}.

\bibitem[Mor88]{mora}
Teo Mora.
\newblock Seven variations on standard bases.
\newblock Technical report, U. Genova, 1988.

\bibitem[Mor94]{moraIntro}
Teo Mora.
\newblock An introduction to commutative and noncommutative {G}r\"obner bases.
\newblock {\em Theoret. Comput. Sci.}, 134(1):131--173, 1994.
\newblock Second International Colloquium on Words, Languages and Combinatorics
  (Kyoto, 1992).
\newblock \href {https://doi.org/10.1016/0304-3975(94)90283-6}
  {\path{doi:10.1016/0304-3975(94)90283-6}}.

\bibitem[Mor15]{moraZacharias}
Ferdinando Mora.
\newblock De nugis {Groebnerialium}. {IV}: {Zacharias}, {Spears}, {M{\"o}ller}.
\newblock In {\em Proceedings of the 40th international symposium on symbolic
  and algebraic computation, ISSAC 2015, Bath, UK, July 6--9, 2015}, pages
  283--290. New York, NY: Association for Computing Machinery (ACM), 2015.
\newblock \href {https://doi.org/10.1145/2755996.2756640}
  {\path{doi:10.1145/2755996.2756640}}.

\bibitem[Mor20]{moraMonoid}
Teo Mora.
\newblock {Zacharias} representation of effective associative rings.
\newblock {\em J. Symb. Comput.}, 99:147--188, 2020.
\newblock \href {https://doi.org/10.1016/j.jsc.2019.04.002}
  {\path{doi:10.1016/j.jsc.2019.04.002}}.

\bibitem[Nar86]{narasimhan}
Himanee Narasimhan.
\newblock The irreducibility of ladder determinantal varieties.
\newblock {\em J. Algebra}, 102(1):162--185, 1986.
\newblock \href {https://doi.org/10.1016/0021-8693(86)90134-1}
  {\path{doi:10.1016/0021-8693(86)90134-1}}.

\bibitem[Nar06]{narayanan}
Hariharan Narayanan.
\newblock On the complexity of computing {Kostka} numbers and
  {Littlewood}--{Richardson} coefficients.
\newblock {\em J. Algebraic Combin.}, 24(3):347--354, 2006.
\newblock \href {https://doi.org/10.1007/s10801-006-0008-5}
  {\path{doi:10.1007/s10801-006-0008-5}}.

\bibitem[NS01]{NS}
Graham~H. Norton and Ana S{\u{a}}l{\u{a}}gean.
\newblock Strong {Gr{\"o}bner} bases for polynomials over a principal ideal
  ring.
\newblock {\em Bull. Aust. Math. Soc.}, 64(3):505--528, 2001.
\newblock \href {https://doi.org/10.1017/S0004972700019973}
  {\path{doi:10.1017/S0004972700019973}}.

\bibitem[OT99]{oakutakayama-derhamalgo-1999}
Toshinori Oaku and Nobuki Takayama.
\newblock An algorithm for de {Rham} cohomology groups of the complement of an
  affine variety via {$D$}-module computation.
\newblock {\em J. Pure Appl. Algebra}, 139(1-3):201--233, 1999.
\newblock Effective methods in algebraic geometry (Saint-Malo, 1998).
\newblock \href {https://doi.org/10.1016/S0022-4049(99)00012-2}
  {\path{doi:10.1016/S0022-4049(99)00012-2}}.

\bibitem[OT01]{oakutakayama-derham-2001}
Toshinori Oaku and Nobuki Takayama.
\newblock Computing de {Rham} cohomology groups.
\newblock In {\em Proceedings of the 33rd {S}ymposium on {R}ing {T}heory and
  {R}epresentation {T}heory ({S}himane, 2000)}, pages 19--22. Tokyo Univ.
  Agric. Technol., Tokyo, 2001.

\bibitem[Rei95]{reinert}
Birgit Reinert.
\newblock {\em On {Gröbner} bases in monoid and group rings}.
\newblock PhD thesis, U. Kaiserslautern, 1995.
\newblock Available as arXiv:\href{https://arxiv.org/abs/0903.5044}{0903.5044
  [math.RA]}.

\bibitem[Rob85]{robbianoTermOrder}
Lorenzo Robbiano.
\newblock Term orderings on the polynomial ring.
\newblock In {\em E{UROCAL} '85, {V}ol.\ 2 ({L}inz, 1985)}, volume 204 of {\em
  Lecture Notes in Comput. Sci.}, pages 513--517. Springer, Berlin, 1985.
\newblock \href {https://doi.org/10.1007/3-540-15984-3\_321}
  {\path{doi:10.1007/3-540-15984-3\_321}}.

\bibitem[Rob86]{robbiano}
Lorenzo Robbiano.
\newblock On the theory of graded structures.
\newblock {\em J. Symbolic Comput.}, 2(2):139--170, 1986.
\newblock \href {https://doi.org/10.1016/S0747-7171(86)80019-0}
  {\path{doi:10.1016/S0747-7171(86)80019-0}}.

\bibitem[Ses14]{SMT2}
C.~S. Seshadri.
\newblock {\em Introduction to the theory of standard monomials. {With} notes
  by {Peter} {Littelmann} and {Pradeep} {Shukla}. {Appendix} by {V}.
  {Lakshmibai}}, volume~46 of {\em Texts Read. Math.}
\newblock New Delhi: Hindustan Book Agency, 2nd ed. edition, 2014.
\newblock \href {https://doi.org/10.1007/978-981-10-1813-8}
  {\path{doi:10.1007/978-981-10-1813-8}}.

\bibitem[Sie96]{siebert}
Thomas Siebert.
\newblock On strategies and implementations for computations of free
  resolutions.
\newblock Technical Report~8, Zentrum für Computeralgebra, Universtät
  Kaiserslauten, 1996.
\newblock Reports on Computer Algebra.
\newblock URL:
  \url{https://www.singular.uni-kl.de/zca/Reports_on_ca/08/paper_html/paper.html}.

\bibitem[{\v S}ir62]{shirshov}
A.~I. {\v S}ir{\v s}ov.
\newblock Some algorithm problems for {Lie} algebras.
\newblock {\em Sibirsk. Mat. \v Z.}, 3:292--296, 1962.

\bibitem[SL18]{ShaoLi}
Changpeng Shao and Hongbo Li.
\newblock Fast straightening algorithm for bracket polynomials based on tableau
  manipulations.
\newblock In Manuel Kauers, Alexey Ovchinnikov, and {\'{E}}ric Schost, editors,
  {\em Proceedings of the 2018 {ACM} on International Symposium on Symbolic and
  Algebraic Computation, {ISSAC} 2018, New York, NY, USA, July 16-19, 2018},
  pages 367--374. {ACM}, 2018.
\newblock \href {https://doi.org/10.1145/3208976.3208978}
  {\path{doi:10.1145/3208976.3208978}}.

\bibitem[Smo72]{smoke}
William Smoke.
\newblock Dimension and multiplicity for graded algebras.
\newblock {\em J. Algebra}, 21:149--173, 1972.
\newblock Announced in \emph{Bull. Amer. Math. Soc.}, 1970
  DOI:\href{https://doi.org/10.1090/S0002-9904-1970-12531-9}{10.1090/S0002-9904-1970-12531-9}.
\newblock \href {https://doi.org/10.1016/0021-8693(72)90014-2}
  {\path{doi:10.1016/0021-8693(72)90014-2}}.

\bibitem[Spe77]{spear}
David~A. Spear.
\newblock A constructive approach to commutative ring theory.
\newblock In {\em Proc. 1977 MACSYMA User's Conference}, pages 369--376. NASA,
  1977.
\newblock URL:
  \url{https://ntrs.nasa.gov/api/citations/19770021806/downloads/19770021806.pdf}.

\bibitem[SST13]{saito1998grobner}
Mutsumi Saito, Bernd Sturmfels, and Nobuki Takayama.
\newblock {\em Gr{\"o}bner deformations of hypergeometric differential
  equations}, volume~6.
\newblock Springer Science \& Business Media, 2013.
\newblock \href {https://doi.org/10.1007/978-3-662-04112-3}
  {\path{doi:10.1007/978-3-662-04112-3}}.

\bibitem[Sta78]{stanley}
Richard~P. Stanley.
\newblock Hilbert functions of graded algebras.
\newblock {\em Advances in Math.}, 28(1):57--83, 1978.
\newblock \href {https://doi.org/10.1016/0001-8708(78)90045-2}
  {\path{doi:10.1016/0001-8708(78)90045-2}}.

\bibitem[Stu90]{sturmfels-stanley-gb}
Bernd Sturmfels.
\newblock Gr\"obner bases and {S}tanley decompositions of determinantal rings.
\newblock {\em Math. Z.}, 205(1):137--144, 1990.
\newblock \href {https://doi.org/10.1007/BF02571229}
  {\path{doi:10.1007/BF02571229}}.

\bibitem[Stu96]{sturmfels1996grobner}
Bernd Sturmfels.
\newblock {\em Gr\"obner bases and convex polytopes}, volume~8 of {\em
  University Lecture Series}.
\newblock American Mathematical Society, Providence, RI, 1996.
\newblock \href {https://doi.org/10.1090/ulect/008}
  {\path{doi:10.1090/ulect/008}}.

\bibitem[Stu08]{sturmfels}
Bernd Sturmfels.
\newblock {\em Algorithms in invariant theory}.
\newblock Texts and Monographs in Symbolic Computation. SpringerWienNewYork,
  Vienna, second edition, 2008.
\newblock \href {https://doi.org/10.1007/978-3-211-77417-5}
  {\path{doi:10.1007/978-3-211-77417-5}}.

\bibitem[SZ93]{sturmfels-maximal-1993}
Bernd Sturmfels and Andrei Zelevinsky.
\newblock Maximal minors and their leading terms.
\newblock {\em Adv. Math.}, 98(1):65--112, 1993.
\newblock \href {https://doi.org/10.1006/aima.1993.1013}
  {\path{doi:10.1006/aima.1993.1013}}.

\bibitem[Tay66]{taylor}
Diana~Kahn Taylor.
\newblock {\em Ideals generated by monomials in an {R}-sequence}.
\newblock PhD thesis, The University of Chicago, 1966.
\newblock URL:
  \url{https://www.proquest.com/dissertations-theses/ideals-generated-monomials-r-sequence/docview/302227382/se-2}.

\bibitem[Tru91]{trung}
Ng{\^o}~Vi{\^e}t Trung.
\newblock On the presentation of {Hodge} algebras and the existence of {Hodge}
  algebra structures.
\newblock {\em Commun. Algebra}, 19(4):1183--1195, 1991.
\newblock \href {https://doi.org/10.1080/00927879108824197}
  {\path{doi:10.1080/00927879108824197}}.

\bibitem[Vil15]{villarreal-monomial-2015}
Rafael~H. Villarreal.
\newblock {\em Monomial algebras}.
\newblock Monographs and Research Notes in Mathematics. CRC Press, Boca Raton,
  FL, second edition, 2015.

\bibitem[vzG87]{vzg87}
Joachim von~zur Gathen.
\newblock Permanent and determinant.
\newblock {\em Linear Algebra Appl.}, 96:87--100, 1987.
\newblock \href {https://doi.org/10.1016/0024-3795(87)90337-5}
  {\path{doi:10.1016/0024-3795(87)90337-5}}.

\bibitem[Whi90]{white}
Neil White.
\newblock Implementation of the straightening algorithm of classical invariant
  theory.
\newblock In {\em Invariant theory and tableaux}, volume~19 of {\em IMA Vol.
  Math. Appl.}, pages 36--45. Springer, 1990.

\bibitem[Zac78]{zacharias}
G.~Zacharias.
\newblock Generalized {Gr{\"o}bner} bases in commutative polynomial rings.
\newblock Bachelor's thesis, M.~I.~T., 1978.

\bibitem[ZW08]{zhouWinkler}
Meng Zhou and Franz Winkler.
\newblock Computing difference-differential dimension polynomials by relative
  {Gr{\"o}bner} bases in difference-differential modules.
\newblock {\em J. Symb. Comput.}, 43(10):726--745, 2008.
\newblock \href {https://doi.org/10.1016/j.jsc.2008.02.001}
  {\path{doi:10.1016/j.jsc.2008.02.001}}.

\end{thebibliography}

\appendix

\section{Deferred proofs from Sections \ref{sec:theory} and \ref{sec:sygyzies}}
\label{sec:app:theory}

\getkeytheorem{lemmonomial}

\begin{proof}[Proof of Lemma~\ref{lem:monomial}]
The equivalence of the two conditions is a direct consequence of \cite[Cor.~1.6(b)]{ES}, by Observation~\ref{lem:standard-all-sigma}.

To see the ``furthermore:'' suppose $G = \{m_1, \dotsc, m_k\}$ is a generating set of $I$ consisting only of standard monomials, and $m$ is an arbitrary standard monomial. If $m_i | m$ then $m \in I$. Conversely, if $m \in I$, we can write $m = \sum_{i=1}^k f_i m_i$ for some $f_i \in A'$. Write each $f_i$ as a sum of standard monomials, say $f_i = \sum_{j=1}^{k_i} r_{ij} m_{ij}$ with $r_{ij} \in R$. Then we have \[m = \sum_{i=1}^k \sum_{j=1}^{k_i} r_{ij} m_{ij} m_i.\] Because the product of standard monomials is zero or a standard term, the same is true of each $m_{ij} m_i$. Since the standard monomials form an $R$-basis of $A'$, some nonzero multiple of $m$ must occur in the set $\{m_{ij} m_i \suchthat i \in [k], j \in [k_i]\}$. Thus $m$ is divisible by some $m_i$.
\end{proof}

\getkeytheorem{thmmacaulay}

\begin{proof}[Proof of Theorem~\ref{thm:macaulay}]
In this proof we will use the additional notation 
\[
\lism(I) := \{ \ltsm(f) \suchthat f \in I \}  \subseteq A.
\] 
(Note that $\lism(I) = \pi_{lt}^{-1}(\lismlt(I))$; it is not an ideal, but it is an $R$-subspace.)

First we show that $\bigslant{A}{I} = Span_R(B)$. Suppose $f$ is a nonzero element of $\bigslant{A}{I}$. Then there is a nonzero element $\hat{f} \in A$ such that $\hat{f} \equiv f \pmod{I}$. If $\hat{f}$ contains any standard terms that are in $\lism(I)$, then we may add elements of $I$ to $\hat{f}$ to get another $\hat{f}'$ such that $\hat{f}' \equiv f \pmod I$ and no standard term occurring in $\hat{f}'$ is in $\lism(I)$. But the elements of $B$ are precisely the standard monomials not occuring in $\lism(I)$, and thus $\hat{f}'$ can be written as an $R$-linear combination of elements of $B$, and the same is true of $f$.

Next we show linear independence. Suppose that there is some nontrivial $R$-linear dependence among the elements of $B$, say $\sum r_i m_i \equiv 0 \pmod{I}$ where $r_i$ are nonzero elements of $R$, and $m_i \in B$ for all $i$. Then $\sum r_i m_i \in I$. Without loss of generality suppose we have indexed these monomials so that $m_1 \succ m_2 \succ \dotsb \succ m_k$. Then $m_1 = \lmsm\left(\sum r_i m_i\right)$, and therefore $m_1 \in \lism(I)$. But by assumption we have $m_1 \in B$, contradicting the fact that $m_1 \in \lism(I)$. Thus the elements of $B$ are $R$-linearly independent modulo $I$.
\end{proof}

\getkeytheorem{claimlmsmlessthankappa}

\begin{proof}[Proof of Observation~\ref{claim:lmsm-less-than-kappa}]
Clearly $\lmsm(f_1 - \ltsm(f_1)) \prec \lmsm(f_1)$. Since $f_1$ and $f_2$ are compatible, multiplying both sides here by $\lmsm(f_2)$ results in nonzero elements on both sides, and thus, by \ref{def:termorder-modules:mult}
\[
\lmsm(\lmsm(f_1 - \ltsm(f_1))\lmsm(f_2)) \prec \lmsm(\lmsm(f_1)\lmsm(f_2)).
\]
Next, since the relevant products are nonzero by compatibility, by the module analogue of Equation \eqref{obs:mult-eqn2} of Proposition \ref{prop:mult}, this implies that 
\[
\lmsm((f_1 - \ltsm(f_1))f_2) \prec \lmsm(f_1f_2),
\]
as required.
\end{proof}

\getkeytheorem{thmreduced}

\begin{proof}[Proof of Theorem~\ref{thm:reduced}]
Let $G$ be an ASL Gröbner basis for $I$ with respect to $(A,\preceq, A_{lt})$. 

(Existence and algorithm) First we discard certain elements of $G$ to make it irredundant, and then we make it reduced.

For the algorithmic result, we will use the fact that division of standard terms in $A_{lt}$ is computable (Proposition~\ref{prop:div-alg}). 

To make $G$ irredundant, if $g_j \ltdivides g_i$ for some $j \neq i$, then remove $g_i$ from $G$ (if $g_i$ and and $g_j$ have the same leading monomial, we may remove either one of them). We claim that $G' := G \backslash \{g_i\}$ is still an ASL Gröbner basis for the same ideal. Let $f \in \langle G \rangle$. Then by Proposition~\ref{prop:asl-grobner-divisibility-characterization}, there is some $g \in G$ such that $g \ltdivides f$. If $g \neq g_i$, then $g$ is in $G'$, and thus there is some $g \in G'$ such that $\ltsm(g) \ltdivides \ltsm(f)$. If $g = g_i$, then we have $\ltsm(g_j) \ltdivides \ltsm(g_i) \ltdivides \ltsm(f)$, and thus $\ltsm(g_j) \ltdivides \ltsm(f)$. 

To see that $\langle G' \rangle = \langle G \rangle$, suppose not. Let $f \in \langle G \rangle$ be such that $\ltsm(f)$ is minimal according to $\preceq$ among the leading terms of elements that are in $\langle G \rangle$ but not in $\langle G' \rangle$. By what we have said above, there is some $g \in G'$ such that $g \ltdivides f$. Let $t$ be a standard term such that $\ltsm(gt) = \ltsm(f)$. Then $f-gt$ is in $\langle G \rangle$, and $f-gt$ cannot be in $\langle G' \rangle$ (for otherwise $f$ itself would be). But $\ltsm(f-gt) \prec \ltsm(f)$, contradicting the assumed minimality of $\ltsm(f)$. Thus $G$ and $G'$ generate the same ideal. Finally, by what we have said above, for all $f \in I$, there is some $g \in G'$ such that $\ltsm(g) \ltdivides \ltsm(f)$, and thus by Proposition~\ref{prop:asl-grobner-divisibility-characterization}, $G'$ is an ASL Gröbner basis for $\langle G \rangle$.

Repeat the above process, removing elements of $G$, until the resulting set is an irredundant ASL Gröbner basis.

Now let $G$ be an irredundant ASL Gröbner basis. We will show how to make it reduced.

Rescale each element of $G$ to have leading coefficient $1$.

Let $B_i$ be the set of standard monomials $m$ appearing in $g_i$ such that some other $g_j \ltdivides m$, and let $B = \bigcup_i B_i$. We proceed by downward induction on $m = \max B$. If $B$ is empty, then we are done. Otherwise, let $m = \max B$ (maximum according to $\preceq$), and let 
\[
\mathcal{I}_m = \{i \in \{1,\dotsc,k\} : m \in B_i\}.
\]
Say $g_j \ltdivides m$. Then for each $i \in \mathcal{I}_m$, let $t_i$ be a standard term such that $m$ does not appear in $g_i - t_i g_j$. Replace $g_i$ by $g_i - t_i g_j$. (Note that there is no circularity here, as $j$ cannot be in $\mathcal{I}_m$ because we had previously made $G$ irredundant.) After doing this for all $i \in \mathcal{I}_m$, we strictly reduce $\max B$, and by well-orderedness of ASL term orders (Proposition~\ref{prop:well-order}), this can happen at most finitely many times before the resulting $G$ is reduced. This completes the algorithm and proof of existence.

(Uniqueness) As $\lismlt(I)$ is a standard monomial ideal in $A_{lt}$, by Lemma~\ref{lem:unique-min}, it has a unique minimum generating set of standard monomials, say $M$. Let $M = \{m_1, \dotsc, m_k \}$ where 
\[
m_1 \preceq m_2 \preceq \dotsb \preceq m_k.
\] 

Now, suppose $G,G'$ are two reduced ASL Gröbner bases for $I$ with respect to $(\preceq, A_{lt})$. We proceed by induction on $i$. Let $0 \leq i < k$. Suppose by induction that we have found elements $g_1, \dotsc, g_i \in G$ such that $\lmsm(g_j) = m_j$ for all $1 \leq j \leq i$ and all $g_j$ are also in $G'$; when $i=0$ this set is empty and the ``such that'' is vacuously true. Then there are elements $g_{i+1} \in G$ and $g'_{i+1} \in G'$ such that 
\[
\lmsm(g_{i+1}) = \lmsm(g'_{i+1}) = m_{i+1}.
\]
If $g_{i+1} \neq g'_{i+1}$, then $g_{i+1} - g'_{i+1}$ is nonzero and has leading term $m$ strictly smaller than $\ltsm(g_{i+1})$. Note that $m$ must occur among the standard monomials in either $g_{i+1}$ or $g'_{i+1}$. Without loss of generality, by swapping the roles of $G$ and $G'$ if needed, suppose that $m$ is in $g_{i+1}$. Since $G$ is an ASL Gröbner basis, there must be some $g \in G$ such that $g \ltdivides m$ (by Proposition~\ref{prop:asl-grobner-divisibility-characterization}). But since $m$ is in $g_{i+1}$, this contradicts the reducedness of $G$. Thus we have $g_{i+1} = g'_{i+1}$, as desired. By induction, we thus conclude that $G = G'$.
\end{proof}

\getkeytheorem{lemuniversal}

\begin{proof}[Proof of Lemma~\ref{lem:universal}]
Let \[\mathcal{L} = \{ (\lismlt)_{\preceq}(I) \suchthat \preceq \text{ is an ASL term order on $A$} \},\] where here each element of $\mathcal{L}$ is an ideal in $\Adisc$, and suppose for the sake of contradiction that $\mathcal{L}$ is infinite. Let $f_1$ be a nonzero polynomial in $I$. Then every leading ideal of $I$, regardless of term order, must contain at least one of the monomials of $f_1$. Since $\mathcal{L}$ is infinite, there must be at least one monomial, say $m_1$, that occurs in $f_1$, and such that $m_1$ is contained in infinitely many ideals in $\mathcal{L}$. 

Now suppose by induction that we have found $k \geq 1$ monomials $m_1, \dotsc, m_k$ such that there are infinitely many elements of $\mathcal{L}$ containing $\{m_1, \dotsc, m_k\}$; call the resulting set of ideals $\mathcal{L}_k$. Let $\preceq$ be a term order such that the leading ideal relative to $\preceq$ occurs in $\mathcal{L}_k$, that is, contains $m_1, \dotsc, m_k$. Since $\mathcal{L}_k$ is infinite, in particular we have $|\mathcal{L}_k| \geq 2$, and thus there is some pseudo-ASL term order $\preceq$ such that the leading ideal of $I$ with respect to $(\preceq,\Adisc)$ is in $\mathcal{L}_k$ and is not equal to $\langle m_1, \dotsc, m_k \rangle$ (just because the latter, if it is in $\mathcal{L}_k$ at all, cannot be the only ideal in $\mathcal{L}_k$). Now, since $(\lismlt)_{\preceq}(I) \supsetneq \langle m_1, \dotsc, m_k \rangle$, there is some element $f_{k+1} \in I$ whose $\preceq$-leading monomial is not in $\langle m_1, \dotsc, m_k \rangle$. Among the finitely many standard monomials in $f_{k+1}$, every leading ideal must contain at least one of them. Since $\mathcal{L}_k$ is infinite, there is some monomial $m_{k+1}$ occurring in $f_{k+1}$ such that infinitely many of the ideals in $\mathcal{L}_k$ contain $m_{k+1}$. 

By induction, this gives us a strictly increasing chain of (standard monomial) ideals in $\Adisc$:
\[
\langle m_1 \rangle \subsetneq \langle m_1, m_2 \rangle \subsetneq \langle m_1, m_2, m_3 \rangle \subsetneq \dotsb.
\]
But this contradicts the fact that $\Adisc$ is Noetherian.
\end{proof}

\section{Deferred proofs of claims from Section \ref{sec:bideterminants}}
\label{sec:app:bideterminants}

\getkeytheorem{claimlexicographicunion}

\begin{proof}[Proof of Claim \ref{claim:lexicographic-union}]
Let $\mathcal{D}(A) = (a_1, \ldots, a_s)$ and $\mathcal{D}(B) = (b_1, \ldots, b_t)$. By appending a $0$ to the end of both lists, we can assume, without loss of generality,\footnote{The artifice of appending a $0$ helps us avoid treating the case of $\mathcal{D}(A)$ being a prefix of $\mathcal{D}(B)$ seperately.} that $\mathcal{D}(A) <_{lex} \mathcal{D}(B)$ because there is an index $1 \le i \le \min(s, t)$ such that $a_i < b_i$ and $a_j = b_j$ for all $j < i$. 

The claim is easy to verify when $\gamma \ge b_i$ because this implies that $\gamma > a_i$, which in turn means that in $\mathcal{D}(A \cup \{\gamma\}) = (a_1, \ldots, \gamma, \ldots, a_i, \ldots)$ and $\mathcal{D}(B \cup \{\gamma\}) = (b_1, \ldots, \gamma, \ldots, b_i, \ldots)$, implying $\mathcal{D}(A \cup \{\gamma\}) <_{lex} \mathcal{D}(B \cup \{\gamma\})$. The claim is also easy to verify when $\gamma \le a_i$, because this implies that $\gamma < b_i$ which in turn means that in $\mathcal{D}(A \cup \{\gamma\}) = (a_1, \ldots, a_i, \ldots, \gamma, \ldots)$ and $\mathcal{D}(B \cup \{\gamma\}) = (b_1, \ldots, b_i, \ldots, \gamma, \ldots)$, implying $\mathcal{D}(A \cup \{\gamma\}) <_{lex} \mathcal{D}(B \cup \{\gamma\})$ even in this case.

\begin{table}[H]
\centering
\begin{tabular}{|ccccccc|}
\multicolumn{7}{c}{$\gamma \ge b_i > a_i$}\\
\hline
$\mathcal{D}(A \cup \{\gamma\}):$ & $a_1$ & $\ldots$ & $\gamma$ & $\ldots$ & $a_i$ & $\ldots$ \\
& $\veq$ & $\veq$ & $\veq$ & $\veq$ & $\vlt$ & \\
$\mathcal{D}(B \cup \{\gamma\}):$ & $b_1$ & $\ldots$ & $\gamma$ & $\ldots$ & $b_i$ & $\ldots$ \\
\hline
\end{tabular}
\hspace{1pt}
\begin{tabular}{|cccccc|}
\multicolumn{6}{c}{$\gamma \le a_i < b_i$}\\
\hline
$\mathcal{D}(A \cup \{\gamma\}):$ & $a_1$ & $\ldots$ & $a_i$ & $\ldots$ & $\gamma$\\
& $\veq$ & $\veq$ & $\vlt$ & & \\
$\mathcal{D}(B \cup \{\gamma\}):$ & $b_1$ & $\ldots$ & $b_i$ & $\ldots$ & $\gamma$\\
\hline
\end{tabular}
\end{table}

The only case left to be considered is when $a_i < \gamma < b_i$.  Notice that it is not possible that $\gamma > a_{i-1}$, because if so $\gamma > a_{i-1} = b_{i-1} \ge b_i$, which contradicts our assumption. Thus gamma must necessarily be inserted in between $a_{i-1}$ and $a_i$, which implies that $\mathcal{D}(A \cup \{\gamma\}) = (a_1, \ldots, a_{i-1}, \gamma, a_i, \ldots)$ and $\mathcal{D}(B \cup \{\gamma\}) = (b_1, \ldots, b_{i-1}, b_i, \ldots, \gamma, \ldots)$. Since $\gamma < b_i$, we have that $\mathcal{D}(A \cup \{\gamma\}) <_{lex} \mathcal{D}(B \cup \{\gamma\})$ even in this final case, completing the proof of \ref{point:descending}.

\begin{table}[H]
\centering
\begin{tabular}{|cccccccc|}
\multicolumn{8}{c}{$a_i < \gamma < b_i$}\\
\hline
$\mathcal{D}(A \cup \{\gamma\}):$ & $a_1$ & $\ldots$ & $a_{i-1}$ & $\gamma$ & $a_i$ & $\ldots$ & \\
& $\veq$ & $\veq$ & $\veq$ & $\vlt$ & & & \\
$\mathcal{D}(B \cup \{\gamma\}):$ & $b_1$ & $\ldots$ & $b_{i-1}$ & $b_i$ & $\ldots$ & $\gamma$ & $\ldots$ \\
\hline
\end{tabular}
\end{table}

The proof of \ref{point:descending} gives a proof of \ref{point:ascending} as well.
\end{proof}

\section{Straightening algorithm of \cite{desarmenien1982invariant}}
\label{sec:app:bidet-straightening}

This section gives algorithmic details about Theorem \ref{thm:straightening}-\ref{point:straightening}. Lemma \ref{lem:shuffle} below from \cite{desarmenien1982invariant} helps rewrite a bideterminant $\bidet{R}{C}$ of length $2$, i.e. a product of two minors, as a sum of two kinds of terms. The first kind contains bideterminants of length $2$, and of the same shape as that of $\bidet{R}{C}$. The second kind contains bideterminants of length $3$, with a shape strictly smaller than that of $\bidet{R}{C}$. All bideterminants have the same content; the only difference is that the elements in the tableaux are \emph{shuffled} around.

\begin{lemma}[\cite{desarmenien1982invariant}]
\label{lem:shuffle}
Suppose $i_1 < \ldots < i_p < l_1 < \ldots < l_q$, and $j_1 < \ldots, j_s$, $t_1 < \ldots < t_n$. Also, suppose $s < p + q$, and $n < p + q$. Then
\begin{align}&\left(\left.\parbox{0.5cm}{\centering \tableau{ i_1 \\ \vdots \\ i_s}}\,\right|\parbox{0.5cm}{\centering \tableau{j_1 \\ \vdots \\ j_s}}\right) \left(\left.\parbox{0.5cm}{\centering \tableau{ l_1 \\ \vdots \\ l_n}}\,\right|\parbox{0.5cm}{\centering \tableau{t_1 \\ \vdots \\ t_n}}\right)\nonumber \\
=\;&- \sum_{\substack{\sigma \in \perms{(i_1, \ldots, i_p, l_1, \ldots, l_q)} \setminus \{\mathrm{id}\} \\ \sigma(i_1) < \ldots < \sigma(i_p) \\ \sigma(l_1) < \ldots < \sigma(l_q)}} \sgn{\sigma} \left(\left.\parbox{0.5cm}{\centering \tableau[m]{ \sigma(i_1) \\ \vdots \\ \sigma(i_p) \\ i_{p+1} \\ \vdots \\ i_s}}\,\right|\parbox{0.5cm}{\centering \tableau[m]{j_1 \\ \vdots \\ \vdots \\ \vdots \\ \vdots \\ j_s}}\right) \left(\left.\parbox{0.5cm}{\centering \tableau[m]{ \sigma(l_1) \\ \vdots \\ \sigma(l_q) \\ l_{q+1} \\ \vdots \\ l_n}}\,\right|\parbox{0.5cm}{\centering \tableau[m]{t_1 \\ \vdots \\ \vdots \\ \vdots \\ \vdots \\ t_n}}\right) \nonumber \\
&+ \sum_{\substack{\tau \in \perms{(j_1, \ldots, j_s)}\\ \tau(j_1) < \ldots < \tau(j_p) \\ \tau(j_{p+1}) < \ldots < \tau(j_s) \\ \mu \in \perms{(t_1, \ldots, t_n)} \\ \mu(t_1) < \ldots < \mu(t_q) \\ \mu(t_{q+1}) < \ldots < \mu(t_n)}} \sgn{\tau} \cdot \sgn{\mu} \left(\left.\parbox{0.5cm}{\centering \tableau[m]{ i_1 \\ \vdots \\ i_{p} \\ l_1 \\ \vdots \\ l_q}}\,\right|\parbox{0.5cm}{\centering \tableau[m]{\tau(j_1) \\ \vdots \\ \tau(j_p) \\ \mu(t_1) \\ \vdots \\ \mu(t_q)}}\right) \left(\left.\parbox{0.5cm}{\centering \tableau[l]{ i_{p+1} \\ \vdots \\ i_s}}\,\right|\parbox{0.5cm}{\centering \tableau[l]{\tau(j_{p+1}) \\ \vdots \\ \tau(j_s)}}\right) \left(\left.\parbox{0.5cm}{\centering \tableau[l]{ l_{q+1} \\ \vdots \\ l_n}}\,\right|\parbox{0.5cm}{\centering \tableau[l]{\mu(t_{q+1}) \\ \vdots \\ \mu(t_n)}}\right)\nonumber
\end{align}

\end{lemma}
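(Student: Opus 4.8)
The plan is to prove this as an identity of polynomials in the entries of the generic matrix $X$ — every bideterminant appearing specializes from such an identity, all of them have the same content, so the identity is automatically homogeneous of the correct multidegree and nothing is lost by working in $\C[X]$ (or $R[X]$). The first move is to rewrite the claim in a cleaner, symmetric form. Introduce the \emph{full} alternating row–shuffle sum, now \emph{including} the identity permutation:
\[
\Phi := \sum_{\sigma} \sgn{\sigma}\,\bidet{\sigma(i_1),\dots,\sigma(i_p),i_{p+1},\dots,i_s}{j_1,\dots,j_s}\,\bidet{\sigma(l_1),\dots,\sigma(l_q),l_{q+1},\dots,l_n}{t_1,\dots,t_n},
\]
where $\sigma$ runs over $\perms{(i_1,\dots,i_p,l_1,\dots,l_q)}$ subject to $\sigma(i_1)<\dots<\sigma(i_p)$ and $\sigma(l_1)<\dots<\sigma(l_q)$. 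Its $\sigma=\mathrm{id}$ term is exactly the left-hand side of the Lemma, so the asserted identity is equivalent to the single statement
\[
\Phi = \sum_{\tau,\mu} \sgn{\tau}\,\sgn{\mu}\,\bidet{i_1,\dots,i_p,l_1,\dots,l_q}{\tau(j_{1..p}),\mu(t_{1..q})}\,\bidet{i_{p+1..s}}{\tau(j_{p+1..s})}\,\bidet{l_{q+1..n}}{\mu(t_{q+1..n})},
\]
with $\tau,\mu$ ranging exactly as in the statement of the Lemma.

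To prove this I would start from the right-hand side and run two classical Laplace expansions in sequence. For each fixed pair $(\tau,\mu)$, Laplace-expand the $(p+q)\times(p+q)$ minor $\bidet{i_1,\dots,i_p,l_1,\dots,l_q}{\tau(j_{1..p}),\mu(t_{1..q})}$ along its first $p$ \emph{columns}: this rewrites it as $\sum_{\sigma}\varepsilon(\sigma)\,\bidet{\sigma(i_{1..p})}{\tau(j_{1..p})}\,\bidet{\sigma(l_{1..q})}{\mu(t_{1..q})}$, where $\sigma$ runs over exactly the same family of shuffles of the $p+q$ row values and $\varepsilon(\sigma)$ is the Laplace sign. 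Substituting and interchanging the (finite) sums, the $\tau$-dependence collects into $\sum_{\tau}\sgn{\tau}\,\bidet{\sigma(i_{1..p})}{\tau(j_{1..p})}\,\bidet{i_{p+1..s}}{\tau(j_{p+1..s})}$, which is precisely the Laplace expansion along the first $p$ rows of the single size-$s$ minor $\bidet{\sigma(i_{1..p}),\,i_{p+1..s}}{j_1,\dots,j_s}$ (this uses $p\le s$, which the hypotheses force); symmetrically the $\mu$-sum re-collects to the size-$n$ minor $\bidet{\sigma(l_{1..q}),\,l_{q+1..n}}{t_1,\dots,t_n}$. What remains is exactly $\sum_{\sigma}\varepsilon(\sigma)\,\bidet{\sigma(i_{1..p}),i_{p+1..s}}{j_1,\dots,j_s}\,\bidet{\sigma(l_{1..q}),l_{q+1..n}}{t_1,\dots,t_n}$, which is $\Phi$ provided one checks $\varepsilon(\sigma)=\sgn{\sigma}$.

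The main obstacle is the sign bookkeeping, where three sources of signs must be reconciled: the Laplace signs $\varepsilon(\sigma)$ from the column expansion of the $(p+q)$-minor, the permutation signs $\sgn{\sigma}$ attached to a shuffle, and the signs implicit in the normalization conventions (each shuffle is represented by the unique $\sigma$ with increasing blocks, each of $\tau,\mu$ likewise, and one must also account for the reordering of the concatenated column list $\tau(j_{1..p}),\mu(t_{1..q})$ into increasing order). These assemble into one global sign that has to come out trivial; I would pin this down by the standard "count inversions across the split'' computation, cross-checking on the base case $p=q=1$, $s=n=1$, where the identity collapses to the defining $2\times 2$ determinant expansion. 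Finally I would record the role of the hypotheses: $p\le s$ and $q\le n$ are needed merely so the shuffled blocks fit into the minors being re-expanded, while the strict inequalities $s<p+q$ and $n<p+q$ are what make the bideterminants on the right have \emph{strictly smaller shape} — equivalently, strictly lower in the order of Definition~\ref{defn:order-bitab} — which is the property that makes the Lemma usable as a step of the straightening algorithm, even though the bare algebraic identity itself is just the double-Laplace computation above.
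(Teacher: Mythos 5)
The paper does not actually prove this lemma: it is quoted verbatim from \cite{desarmenien1982invariant} (see Appendix~\ref{sec:app:bidet-straightening}), so there is no internal proof to compare against. Your double-Laplace argument is, however, essentially the classical proof of this syzygy, and its skeleton is sound. Reformulating the claim as $\Phi=\sum_{\tau,\mu}(\cdots)$ by absorbing the identity shuffle into the left-hand sum is the right normalization, and both re-collection steps are legitimate: the column-Laplace expansion of the $(p+q)$-minor along its first $p$ columns is indexed exactly by the row shuffles $\sigma$ (the rows $i_1<\dots<i_p<l_1<\dots<l_q$ are globally increasing by hypothesis, so positions and values match up), and for fixed $\sigma$ the $\tau$- and $\mu$-sums factor and reassemble, via row-Laplace, into the single $s$- and $n$-minors. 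The one piece of genuine work you defer --- that the Laplace sign of a complementary-minor pair equals the sign of the corresponding shuffle permutation, i.e.\ $\varepsilon(\sigma)=\sgn{\sigma}$ and likewise for $\tau,\mu$ --- is the standard inversion count $\sum_{r\in S}r-\binom{p+1}{2}$ and does come out trivial, so the gap is one of detail rather than substance; you should also note explicitly that the identity is insensitive to repeated column labels in the big minor (such terms vanish on both sides). Two small corrections to your closing remarks: the terms of the \emph{first} sum on the right have the \emph{same} shape as the left-hand side (only the second sum produces the longer, hence $\prec$-smaller, shapes), so the lemma alone does not make the recursion terminate --- that is the separate argument the paper delegates to \cite[Theorem~3.2]{grinberginvariant}; and the hypotheses $s<p+q$, $n<p+q$ are needed for the identity to be a nontrivial rewriting (they guarantee the second sum is where the "new" longer shapes appear), not merely for $p\le s$ and $q\le n$, which are implicit in the indexing.
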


\begin{proof}[Proof sketch of Theorem \ref{thm:straightening}-\ref{point:straightening}]
Straightening of any arbitrary bideterminant is done by recursively applying Lemma \ref{lem:shuffle}. If a bideterminant is non-standard, then it is sure to contain at least two minors that are incomparable according to the partial order on minors (c.f. Definition \ref{defn:po-minors}). In symbols, suppose that without loss of generality, $s \ge t$, $i_1 < \ldots i_s$, $j_1 < \ldots < j_s$, $l_1 < \ldots < l_n$, and $t_1 < \ldots < t_n$, and that the two \emph{clashing} minors are 
\[\bigbidet{\tableau{i_1 \\ \vdots \\ i_s}}{\tableau{j_1 \\ \vdots \\ j_s}} \qquad \text{and} \qquad \bigbidet{\tableau{l_1 \\ \vdots \\ l_n}}{\tableau{t_1 \\ \vdots \\ t_n}}.\] 

This means that, again without loss of generality, we have an index $r \le n$ such that $i_{\alpha} \le l_{\alpha}$ for all $\alpha < r$, and $i_r > l_r$. This in turn means we have a sequence $l_1 < \ldots < l_r < i_r < i_{r+1} < \ldots < i_s$, which is a strictly increasing sequence of length $s+1$. Thus the hypotheses of Lemma \ref{lem:shuffle} are satisfied, and we can rewrite the above product of two minors. In \cite[Theorem~3.2]{grinberginvariant} it is shown that consistently repeating this procedure will eventually terminate in finding an expression that only contains standard bideterminants.
\end{proof}

\section{Supplement to Section \ref{sec:dimension}}

\subsection{Algorithm to compute Hilbert--Poincar\'e series in the polynomial ring}
\label{sec:app:hilbert-series-algorithm}

Algorithm \ref{alg:hilbert-clo} computes the Hilbert--Poincar\'e series for a monomial ideal in the polynomial ring $R[X_1, \ldots, X_n]$. Algorithm \ref{alg:hilbert-clo} works for any $\N$-grading reposed on $R[X_1, \ldots, X_n]$. The presentation is adapted from \cite[Chapter 10 - \S 2, Theorem 6]{CLO} which is tailored to the standard grading.

\begin{algorithm}[H]
\caption{Algorithm to compute $H_{R[X_1, \ldots, X_n], I}$ when $I$ is a monomial ideal, where $\deg(X_i) = d_i$ for all $i$}
\label{alg:hilbert-clo}
\begin{algorithmic}[1]
\Require{A monomial ideal $I \subseteq R[X_1, \ldots, X_n]$}
\medskip
\Procedure{HPS}{$I$}
\State $T \gets$ minimal set of generators of $I$
\If{$T = \emptyset$}
\State \Return $\prod_{i=1}^{n}\frac{1}{(1-t^{d_i})}$ \label{alg:hps:line:degree1-trivial}
\ElsIf{$T = \{1\}$}
\State \Return $0$
\ElsIf{$T$ consists of variables $x_{i_1}, \dotsc, x_{i_p}$}
\State \Return $\prod_{i \notin \{i_1, \dotsc, i_p\}} \frac{1}{(1-t^{d_i})}$ \label{alg:hps:line:degree1}
\Else
\State Select $x_i$ appearing in a monomial in $T$ that is more than just a lone variable \label{alg:hps:line:select-branching-variable}
\State Return $\textsc{HPS}(I + \ideal{x_i}) + t^{d_i} \cdot\textsc{HPS}(I \suchthat x_i)$ \label{alg:hps:line-recursion}
\EndIf
\EndProcedure
\end{algorithmic}
\end{algorithm}

\subsection{Deferred Proofs}
\label{sec:app:dimension}

\getkeytheorem{claimcolonidealrecursivecall}

\begin{proof}[Proof of Claim \ref{claim:colon-ideal-recursive-call}]
Equation \eqref{eqn:recursive-step-onI1} follows from Line \ref{alg:hps:line-recursion} of Algorithm \ref{alg:hilbert-clo}.

Equation \eqref{eqn:i-1-of-recursive} is immediate from definition \eqref{eqn:recursive-definitions-both-i1-i2}, and the fact that since no $x \in A$ divides any $y \in \mathcal{I}_{b, c}$, $A \cap \{x_{i, c}\}_{i \in [j]} = \emptyset$.

By definition \[I_{2}^{(j)} = I_{1}^{(j-1)} \suchthat \ideal{x_{j, c}} = \left(\ideal{A \sqcup \{x_{i, c}\}_{i \in [j-1]}} + \mathcal{I}_{b, c}\right) \suchthat \ideal{x_{j, c}}.\] For any fixed $j$, $x_{j, c}$ is incompatible with all of $\{x_{r,s}\}_{\substack{j < r \le b \\ 1 \le s \le c-1}}$, thus all of them are present as degree $1$ variables in $I_{2}^{(j)}$, which in turn eliminates all incompatibilities involving $\{x_{r,s}\}_{\substack{j < r \le b \\ 1 \le s \le c-1}}$. In other words, $I_2^{(j)}$ will not contain any incompatibilities from row $j+1$ or higher. Also, since the degree $1$ variables $\{x_{i, c}\}_{i \in [j-1]}$ are already present, $I_2^{(j)}$ will not contain any incompatibilities involving $\{x_{i, c}\}_{i \in [j-1]}$. Finally, $x_{j, c}$ cannot be present in the list of incompatibilities because (a) no incompatibilities from row $j+1$ or higher are present, and (b) nothing in row $1$ to row $j$ can be incompatible with $x_{j, c}$. Thus, we are left with all the incompatibilities in the top $j \times (c-1)$ submatrix of $X$, i.e. $\mathcal{I}_{j, c-1}$. This completes the proof of Equation \eqref{eqn:i-2-of-recursive}, i.e. that \[
I_2^{(j)} = \ideal{A \sqcup \{x_{i, c}\}_{i \in [j-1]} \sqcup \{x_{r, s}\}_{\substack{j < r \le b \\ 1 \le s \le c-1}}} + \mathcal{I}_{j, c-1}.\]

Equation \eqref{eqn:colon-induction-hypothesis} follows just by Equation \eqref{eqn:i-2-of-recursive} and by definition of $g$, i.e. \eqref{eqn:general-hp-series-big}.

Finally, note that (a) $I_{1}^{(b-1)}$ contains $\{x_{i, c}\}_{i \in [b-1]}$ are degree $1$ variables, which in turn means all incompatibilities involving $\{x_{i, c}\}_{i \in [b-1]}$ are no longer present in $I_{1}^{(b-1)}$, and (b) $x_{b, c}$ is compatible with all elements of $X$. This in turn means that $I_{1}^{(b-1)}$ contains $\mathcal{I}_{b, c-1}$, thus proving Equation \eqref{eqn:terminal-i1}.
\end{proof}

\getkeytheorem{claimnumeratorsmatch}
\begin{proof}[Proof of Claim \ref{claim:numerators-match}]
Without loss of generality, suppose that $b \le c$. The numerator of Equation \eqref{eqn:induction-step-final}, say $P_1 \in \Z[t]$, and the numerator of the RHS of Equation \eqref{eqn:general-hp-series-big}, say $P_2 \in \Z[t]$, are both polynomials of degree exactly $b-1$. We will match the coefficients of $P_1$ and $P_2$. The coefficient of $t^0$ in $P_1$ is $\binom{b-1}{0}\binom{c - 2}{0} = 1$, which matches the coefficient of $t^0$ in $P_2$. 

Next we shall match the coefficients of $t^{b-1}$ in $P_1$ and $P_2$. If $b = c$, then the coefficient of $t^{b-1}$ in $P_1$ is equal to $\binom{b-2}{b-2}\binom{b-2}{b-2} = 1$ which matches the coefficient of $t^{b-1}$ in $P_2$. Otherwise it means that $b < c$. Then the coefficient of $t^{b-1}$ in $P_1$ is equal to $\binom{c-2}{b-1} + \binom{b-2}{b-2}\binom{c-2}{b-2} = \binom{c-1}{b-1}$, which again matches the coefficient of $t^{b-1}$ in $P_2$.

Finally, for any $b-2 \ge d \ge 1$, the coefficient of $t^d$ in $P_1$ is
\begin{align}
\binom{b-1}{d}\binom{c-2}{d} &+ \sum_{j=d}^{b-1} \binom{j-1}{d-1}\binom{c-2}{d-1} \nonumber \\
&= \binom{b-1}{d}\binom{c-2}{d} + \binom{c-2}{d-1}\left(\binom{d-1}{d-1} + \binom{d}{d-1} + \ldots + \binom{b-2}{d-1}\right) \nonumber \\
&= \binom{b-1}{d}\binom{c-2}{d} + \binom{c-2}{d-1}\left(\binom{d}{d} + \binom{d}{d-1} + \ldots + \binom{b-2}{d-1}\right) \nonumber \\
&= \binom{b-1}{d}\binom{c-2}{d} + \binom{c-2}{d-1}\binom{b-1}{d} \nonumber \\
&= \binom{b-1}{d}\binom{c-1}{d} \nonumber,
\end{align}
which matches the coefficient of $t^d$ in $P_2$.
\end{proof}

\end{document}